\theoremstyle{plain}
\newtheorem{thm}{\bf Theorem}[section]
\newtheorem{prop}[thm]{\bf Proposition}
\newtheorem{lemma}[thm]{\bf Lemma}
\newtheorem{corollary}[thm]{\bf Corollary}
\theoremstyle{definition}
\newtheorem{definition}[thm]{\bf Definition}
\theoremstyle{remark}
\newtheorem{remark}[thm]{\bf Remark}
\newtheorem{question}[thm]{\bf Question}
\newtheorem{conj}[thm]{\bf Conjecture}
\theoremstyle{example}
\newtheorem{example}[thm]{\bf Example}
\numberwithin{equation}{chapter}
\def \Hs{{\operatorname{HS}}}
\def \Hf{{\operatorname{HF}}}
\def \Hp{{\operatorname{HP}}}
\def \HF{{\operatorname{HF}_{\bar{A}(\Delta)}}}
\def \HFo{{\operatorname{HF}_{\bar{A}(\Delta ,\omega)}}}
\def\ara{\operatorname{ara}}
\def\c{\operatorname{c}}
\def \chara{{\operatorname{char}}}
\def \cd{{\operatorname{cd}}}
\def \ecd{{\operatorname{\acute{e}cd}}}
\def \height{{\operatorname{ht}}}
\def \reg{{\operatorname{reg}}}
\def \depth{{\operatorname{depth}}}
\def \grade{{\operatorname{grade}}}
\def \Spec{{\operatorname{Spec}}}
\def \Proj{{\operatorname{Proj}}}
\def\init{\operatorname{in}}
\def\inito{\operatorname{in}_{\oo}}
\def\homo{\operatorname{hom}_{\oo}}
\def\dego{\deg_{\oo}}
\def\Supp{\operatorname{Supp}}
\def\Ass{\operatorname{Ass}}
\def\Min{\operatorname{Min}}
\def\Tor{\operatorname{Tor}}
\def\Ext{\operatorname{Ext}}
\def\Op{\operatorname{Op}}
\def\reg{\operatorname{reg}}
\def\codim{\operatorname{codim}}
\def\diag{\operatorname{diag}}
\def\GL{\operatorname{GL}}
\def \mm{{\mathfrak{m}}}
\def \ll{{\lambda}}
\def \oo{{\omega}}
\def \nn{{\mathfrak{n}}}
\def \xx{{\mathfrak{x}}}
\def \xxx{{\mathbf{x}}}
\def \yyy{{\mathbf{y}}}
\def \et{{\acute{e}t}}
\def \kk{{\kappa}}
\def \kkk{\Bbbk}
\def \PP{\mathbb P}
\def \Ab{\mathbf{Ab}}
\def \CC{\mathbb C}
\def \RR{\mathbb R}
\def \KK{\mathbb K}
\def \SS{\mathbb S}
\def \Spol{\widetilde{S}}
\def \CCC{\mathbf C}
\def\AA{\ensuremath{\mathbb A}}
\def \C{\mathcal C}
\def \Jj{\mathcal J}
\def \G{\mathcal G}
\def \R{\mathcal R}
\def \V{\mathcal V}
\def \H{\mathcal H}
\def \J{\mathcal Symb}
\def \M{\mathfrak M}
\def \MM{\mathcal M}
\def \B{\mathcal B}
\def \K{\mathcal K}
\def \Z{\mathcal Z}
\def \L{\mathcal L}
\def \O{\mathcal O}
\def \F{\mathcal F}
\def \de{\partial}
\def \FD{\mathcal{F}(\Delta)}
\def \D{\Delta}
\def \Id{I_{\Delta}}
\def \Idm{I_{\Delta}^{(k)}}
\def \JD{J(\Delta)}
\def \JDo{J(\Delta , \omega)}
\def \JDm{J(\Delta)^{(k)}}
\def \JDom{J(\Delta , \omega)^{(k)}}
\def \AD{\bar{A}(\Delta)}
\def \ADo{\bar{A}(\Delta ,\omega)}
\def \NN{{\mathbb{N}}}
\def \AA{{\mathbb{A}}}
\def \ZZ{{\mathbb{Z}}}
\def \QQ{{\mathbb{Q}}}
\def \GG{{\mathbb{G}}}
\def \aa{{\mathfrak{a}}}
\def \a{{\alpha}}
\def \aaa{{{\bf a}}}
\def \bbb{{{\bf b}}}
\def \bb{{\mathfrak{b}}}
\def \qq{{\mathfrak{q}}}
\def \Ga{\Gamma_{\mathfrak{a}}}
\def \Ha{H_{\mathfrak{a}}}
\def \pd{{\operatorname{projdim}}}
\def \id{{\operatorname{id}}}
\def \Im{{\operatorname{Im}}}
\def \Ker{{\operatorname{Ker}}}
\def \Hom{{\operatorname{Hom}}}
\def \Ext{{\operatorname{Ext}}}
\def \Sym{{\operatorname{Sym}}}
\def \GCD{{\operatorname{GCD}}}
\def \Vl{L_{\lambda}V}
\def \Kl{c_{\lambda}}
\def \tl{{}^{t}}
\def \Wc{L_{\gamma}W}
\def \Wl{L_{\lambda}W}
\def\ini{\operatorname{in}_{\prec}}
\begin{document}

\frontmatter
\begin{titlepage}
\oddsidemargin 1.8cm
\evensidemargin 0in
\begin{center}
%

{\bf
UNIVERSIT\`A DEGLI STUDI DI GENOVA}\\
\bigskip
FACOLT\`A DI SCIENZE
MATEMATICHE FISICHE E NATURALI\\
\bigskip
Dipartimento di Matematica
\vskip 2cm
\rule{\linewidth}{0.5mm}\\[.4cm]
\Large{\bf{Cohomological {\large and} Combinatorial Methods {\large in the} Study {\large of} Symbolic Powers {\large and} Equations {\large defining} Varieties}}\\[.8cm]
\rule{\linewidth}{0.5mm}\\[2cm]

%
Tesi per il conseguimento del \\ {\large \textbf{Dottorato di Ricerca in Matematica e
Applicazioni, ciclo XXIII}}
  \vskip .2cm
 Sede amministrativa: Universit\`a di Genova\\
\vskip .2cm
Settore Scientifico Disciplinare: MAT 02 - Algebra\\[2cm]
\large{\emph{Autore}}\\
  \large{Matteo \textsc{Varbaro}}\\
\vfill


\emph{Supervisore} \hfill \emph{Coordinatore}\\
{\large Prof. Aldo	
\textsc{Conca}}, \hfill {\large Prof. Michele \textsc{Piana}}\\
Universit\`a di Genova \hfill Universit\`a di Genova
\end{center}
\end{titlepage}
\thispagestyle{empty}

\tableofcontents
\chapter{Introduction}

In this thesis we will discuss some aspects of Commutative Algebra, which have interactions with Algebraic Geometry, Representation Theory and Combinatorics. In particular, we will use local cohomology, combinatorial methods and the representation theory of the general linear group to study symbolic powers of monomial ideals, connectedness properties, arithmetical rank and the defining equations of certain varieties. Furthermore, we will focus on understanding when certain local cohomology modules vanish.
The heart of the thesis is in Chapters \ref{chapter1}, \ref{chapter2}, \ref{chapter3} and \ref{chapter4}. They are based on papers published by the author, but each chapter contains some results never appeared anywhere.

\vskip 3mm

Chapter \ref{chapter1} is an outcome of our papers \cite{va1,va2}. The aim of this chapter is to inquire on a question raised by Grothendieck in his 1961 seminar on local cohomology at Harvard (see the notes written by Hartshorne \cite{gro1}). He asked to find conditions on an ideal $\aa$ of a ring $R$ to locate the top-nonvanishing local cohomology module with support in $\aa$, that is to compute the cohomological dimension of $\aa$. We show, under some additional assumptions on the ring $R$, that the vanishing of $\Ha^i(R)$ for $i>c$ forces some connectedness properties, depending on $c$, of the topological space $\Spec(R/\aa)$, generalizing earlier results of Grothendieck \cite{SGA2} and of Hochster and Huneke \cite{ho-hu}. When $R$ is a polynomial ring over a field of characteristic $0$ and $\aa$ is a homogeneous ideal such that $R/\aa$ has an isolated singularity, we give a criterion to compute the cohomological dimension of $\aa$, building ideas from Ogus \cite{ogus}. Such a criterion leads to the proof of: 
\begin{compactitem}
\item[(i)] A relationship between cohomological dimension of $\aa$ and depth of $R/\aa$, which is a small-dimensional version of a result of Peskine and Szpiro \cite{PS} in positive characteristic.
\item[(ii)] A remarkable case of a conjecture stated in \cite{ly2} by Lyubeznik, concerning an inequality about the cohomological dimension and the \'etale cohomological dimension of an open subscheme of $\PP^n$.
\end{compactitem}
In order to show these results we use also ideas involving Complex Analysis and Algebraic Topology. 

\vskip 3mm

In Chapter \ref{chapter2} we show some applications of the results gotten in the first chapter, using interactions between Commutative Algebra, Combinatorics and Algebraic Geometry. It shapes, once again, on our papers \cite{va1,va2}. In the first half of the chapter we generalize a theorem of Kalkbrener and Sturmfels obtained in \cite{ka-st}. In simple terms, we show that Gr\"obner deformations preserve connectedness properties: As a consequence, we get that 
if $S/I$ is Cohen-Macaulay, then $\D(\sqrt{\init(I)})$ is a strongly connected simplicial complex. This fact induced us to investigate on the question whether $\D(\init(I))$ is Cohen-Macaulay provided that $S/I$ is Cohen-Macaulay and $\init(I)$ is square-free, supplying some evidences for a positive answer to this question. Another consequence is the settlement of the Eisenbud-Goto conjecture, see \cite{eisenbudgoto}, for a new class of ideals.
In the second half of the chapter, we compute the number of equations needed to cut out set-theoretically some algebraic varieties. For an overview on this important problem we refer the reader to Lyubeznik \cite{ly4}. The needed lower bounds for this number usually come from cohomological arguments: This is our case, in fact we get them from the results of the first chapter. For what concerns upper bounds, there is no general method, rather one has to invent ad hoc tricks, depending on the kind of varieties one deals with. For the varieties we are interested in, we get ideas from the Theory of Algebras with Straightening Laws, exhibiting the necessary equations. Among other things, we extend a result of Singh and Walther \cite{siwa}.

\vskip 3mm

Chapter \ref{chapter3} is based on a preprint in preparation with Bruns and Conca, namely \cite{BCVminors}. This time, beyond Commutative Algebra, Algebraic Geometry and Combinatorics, one of the primary roles is dedicated to Representation Theory.
We investigate on a problem raised by Bruns and Conca in \cite{BC1}: To find out the minimal relations between $2$-minors of a generic $m\times n$-matrix. When $m=2$ (without loss of generality we can assume $m\leq n$), such relations define the Grassmannian of all the $2$-dimensional subspaces of an $n$-dimensional vector space. It is well known that the demanded equations are, in this case, the celebrated Pl\"ucker relations. In particular, they are quadratic. Differently, apart from the case $m=n=3$, we show that there are always both quadratic and cubic minimal relations. We describe them in terms of Young tableux, exploiting the natural action of the group $\GL(W)\times \GL(V)$ on $X$, the variety defined by the relations (here $W$ and $V$ are vector spaces, respectively, of dimension $m$ and $n$). Furthermore, we show that these are the only minimal relations when $m=3,4$, and we give some reasons to believe that this is true also for higher $m$. Moreover, we compute the Castelnuovo-Mumford regularity of $X$, we exhibit a finite Sagbi bases associated to a toric deformation of $X$ (solving a problem left open by Bruns and Conca in \cite{BC2}) and we show a finite system of generators of the coordinate ring of the subvariety of $U$-invariants of $X$, settling the ``first main problem" of invariant theory in this case. The most of the results are proved more in general, considering $t$-minors of an $m\times n$-matrix.

\vskip 3mm

Chapter \ref{chapter4} concerns our paper \cite{va3}. The aim is to compare algebraic properties with combinatorial ones. The main result is quite surprising: We prove that the Cohen-Macaulayness of all the symbolic powers of a Stanley-Reisner ideal is equivalent to the fact that the related simplicial complex is a matroid! The beauty of this result is that the concepts of ``Cohen-Macaulay" and of ``matroid", a priori unrelated to each other, are both fundamental, respectively, in Commutative Algebra and in Combinatorics. Another interesting fact is that the proof is not direct, passing through the study of the symbolic fiber cone of the Stanley-Reisner ideal. It is fair to say that the same result has been proven independently and with different methods by Minh and Trung \cite{MT}. Actually, here we prove the above result for a class of ideals more general than the square-free ones, generalizing the result of our paper and of \cite{MT}. As a consequence, we show that the Stanley-Reisner ideal of a matroid is a set-theoretic complete intersection after localizing at the maximal irrelevant ideal. We end the chapter presenting a strategy, through an example, to produce non-Cohen-Macaulay square-free monomial ideals of codimension $2$ whose symbolic power's depth is constant. Here the crucial fact is that in the codimension $2$-case the structure of the mentioned fiber cone has been understood better thanks to our work with Constantinescu \cite{CV}.

\vskip 5mm

In addition to the four chapters described above, we decided to include a preliminary chapter entitled ``A quick survey of local cohomology". It is a collection of known results on local cohomology, the first topic studied at a certain level by the author. Local cohomology appears throughout the thesis in some more or less evident form, so we think that such a preliminary chapter will be useful to the reader. We added also five appendixes at the end of the thesis, which should make the task of searching for references less burdensome.

\section*{Acknowledgements}

First of all, I wish to thank my advisor, Aldo Conca. Some years ago, I decided that I wanted to do math in life. I started my PhD three years ago and, for reasons not only related to mathematics, I did it in Genova. In retrospect it could not get any better. Aldo was always able to stimulate me with new problems, giving me, most of the times, lighting suggestions. He let me do all the experiences that might have been useful to my intellectual growth. And he always granted me a certain freedom in choosing my research topics, something that not all the advisors are willing to do. So, I want to thank him for all these reasons, hoping, and being confident of it, to have the opportunity to collaborate with him again in the future.  

During these three years (actually a little more) I attended conferences, seminars and summer schools in different institutes and universities around the world, where I met many commutative algebraists. I learned that, even if I don't know where, as long as I do math I'll have a coffee with each of them at least once every year. I find that there is something poetic in this! Apart from the coffee, with some of these persons I have also had useful discussions about mathematics, and I am grateful to them for this reason. Special thanks are due to Winfried Bruns, who I met in several occasions. Especially, I spent the spring of 2009 in Osnabr\"uck, to learn from him. He taught me a lot of mathematics, and from such a visit came a close collaboration, which led to the results of Chapter \ref{chapter3}. Besides, he patiently helped me for my stay there, making such an experience possible for me. Another professor who deserves a special mention, for having taken the trouble to proofread the present thesis, is Anurag Singh.

I want to thank also four guys that, besides sharing with me pleasant conversations about mathematics, which led to the works \cite{BCV,shavar,BeneVarb,CV,NV,CV1}, I consider dear friends. I am talking about Le Dinh Nam, Alexandru Constantinescu, Bruno Benedetti and Leila Sharifan.

I'd like to say thank you to my parents, Annalisa and Rocco. Not just because they are my parents, which of course would be enough by itself. Somehow, they really helped me in my work, dealing with a lot of bureaucratic stuff for me: This way they exempted me from facing my ``personal enemy number 1"! In the same manner, thanks to my brother, Roberto. He has always instilled me a sort of serenity, which is an essential ingredient for doing research. 

Eventually, let me thank all the diploma students, PhD students, PostDoc students, professors, secretaries and technical assistants of the Department of Mathematics of the University of Genova, namely the  DIMA, who came positively into my life. It would be nice to thank them one by one, but it would be hard as well: So far, I have been haunting the DIMA for more than eight years, and in this time I met a lot of people. Many of them belong to the so-called ``bella gente", whose meaning is ``people whose company is good". I am lucky to say that they are too many to be mentioned all.


\chapter{Notation}

Throughout the thesis we will keep notation more or less faithful to some textbooks:
\begin{compactitem}
\item[(i)] For what concerns Commutative Algebra: Matsumura \cite{matsu}, Atiyah and Macdonald \cite{atimac} and Bruns and Herzog \cite{BH}.
\item[(ii)] For what concerns Algebraic Geometry: Hartshorne \cite{hart}.
\item[(iii)] For what concerns Algebraic Combinatorics: Stanley \cite{St}, Miller and Sturmfels \cite{MS} and Bruns and Herzog \cite[Chapter 5]{BH}.
\item[(iv)] For what concerns Representation Theory: Fulton and Harris \cite{FH}, Fulton \cite{Fu} and Procesi \cite{procesi}.
\end{compactitem}

\begin{itemize}

\item We will often say ``a ring $R$" instead of ``a commutative, unitary and Noetherian ring $R$". Whenever some of these assumptions fail, we will explicitly remark it.

\item The set of natural numbers will include $0$, i.e. $\NN=\{0,1,2,3,\ldots \}$.

\item An ideal $\aa$ of $R$ will always be different from $R$.

\item If $R$ is a local ring with maximal ideal $\mm$, we will write $(R,\mm)$ or $(R,\mm,\kk)$, where $\kk = R/\mm$.

\item Given a ring $R$, an $R$-module $M$ and a multiplicative system $T\subseteq R$, we will denote by $T^{-1}M$ the localization of $M$ at $T$. If $\wp$ is a prime ideal of $R$, we will write $M_{\wp}$ instead of $(R\setminus \wp)^{-1}M$.

\item Given a ring $R$, an ideal $\aa \subseteq R$ and an $R$-module $M$, we will denote by $\widehat{M^{\aa}}$ the completion of $M$ with respect to the $\aa$-adic topology. If $(R,\mm)$ is local we will just write $\widehat{M}$ instead of $\widehat{M^{\mm}}$.

\item The spectrum of a ring $R$ is the topological space 
\[ \Spec(R):=\{\wp \subseteq R \ : \ \wp \mbox{ is a prime ideal}\}\] 
supplied with the Zariski topology. I.e. the closed sets are $\V(\aa):=\{\wp \in \Spec(R) \ : \ \wp \supseteq \aa\}$ with $\aa$ varying among the ideals of $R$ and $R$ itself.

\item Given a ring $R$ and an $R$-module $M$, the support of $M$ is the subset of $\Spec(R)$
\[\Supp(M):=\{\wp \in \Spec(R) \ : \ M_{\wp}\neq 0\}.\]

\item We say that a ring $R$ is graded if it is $\NN$-graded, i.e. if there is a decomposition $R=\oplus_{i\in \NN}R_i$ such that each $R_i$ is an $R_0$-module and $R_iR_j\subseteq R_{i+j}$. The irrelevant ideal of $R$ is $R_+:=\oplus_{i>0}R_i$. An element $x$ of $R$ has degree $i$ if $x\in R_i$. Therefore $0$ has degree $i$ for any $i\in \NN$. 

\item The projective spectrum of a graded ring $R$ is the topological space 
\[ \Proj(R):=\{\wp \subseteq R \ : \ \wp \mbox{ is a homogeneous prime ideal not containing $R_+$}\},\] 
where the closed sets are $\V_+(\aa):=\{\wp \in \Proj(R) \ : \ \wp \supseteq \aa\}$ with $\aa$ varying among the homogeneous ideals of $R$.

\item When we speak about the grading on the polynomial ring $S:=\kkk[x_1,\ldots ,x_n]$ in $n$ variables over a field $\kkk$ we always mean the standard grading, unless differently specified: I.e. $\deg(x_1)=\deg(x_2)=\ldots =\deg(x_n)=1$.

\item For a poset $\Pi:=(P,\leq)$ we mean a set $P$ endowed with a partial order $\leq$.
\end{itemize}

\chapter{A quick survey of local cohomology}\label{chapter0}\index{local cohomology|(}

This is a preliminaries chapter, which contains no original results. Our aim is just to collect some facts about local cohomology we will use throughout the thesis. This is a fascinating subject, essentially introduced by Grothendieck during his 1961 Harvard University seminar, whose notes have been written by Hartshorne in \cite{gro1}. Besides it, other references  we will make use of are the book of Brodmann and Sharp \cite{BS}, the one of Iyengar et al. \cite{24hours} and the notes written by Huneke and Taylor \cite{hu-ta}.

 \section{Definition}\index{local cohomology}\label{defloccoh}
 
 Let $R$ be a commutative, unitary and Noetherian ring. Given an ideal $\aa \subseteq R$, we can consider the {\it $\aa$-torsion functor}\index{torsion functor} $\Ga$, from the category of $R$-modules to itself, namely
\[\Ga(M):=\bigcup_{n\in \NN}(0:_M \aa^n):=\{m\in M \ : \ \mbox{there exists }n \in \NN \mbox{ for which }\aa^n m =0\},\]
for any $R$-module $M$. Clearly $\Ga(M)\subseteq M$. Furthermore if $\phi:M \rightarrow N$ is a homomorphism of $R$-modules then $\phi(\Ga(M))\subseteq \Ga(N)$. So $\Ga(\phi)$ will simply be the restriction of $\phi$ to $\Ga(M)$. It is easy to see that the functor $\Ga$ is left exact. Thus, for any $i\in \NN$, we denote by $\Ha^i$ the $i$th derived functor of $\Ga$, which will be referred to as the $i$th {\it local cohomology functor with support in $\aa$}\index{local cohomology!functor}. For an $R$-module $M$, we will refer to $\Ha^i(M)$ as the $i$th {\it local cohomology module of $M$ with support in $\aa$} \index{local cohomology!module}. The following remark is as important as elementary. 
 \begin{remark}\label{cohomuptorad}
 If $\aa$ and $\bb$ are ideals of $R$ such that $\sqrt{\aa}=\sqrt{\bb}$, then $\Ga = \Gamma_{\bb}$. Therefore $\Ha^i = H_{\bb}^i$ for every $i\in \NN$.
 \end{remark}
 \begin{example}\label{cohomologyofZ}
We want to compute the local cohomology modules of $\ZZ$ as a $\ZZ$-module with respect to the ideal $(d)\subseteq \ZZ$, where $d$ is a nonzero integer. An injective resolution of $\ZZ$ is:
\[0\longrightarrow \ZZ \xrightarrow{\iota} \QQ \xrightarrow{\pi} \QQ /\ZZ \longrightarrow 0.\]
From the above resolution, we immediately realize that $H_{(d)}^i(\ZZ) = 0$ for any $i\geq 2$. Furthermore $H_{(d)}^0(\ZZ)=\Gamma_{(d)}(\ZZ)=0$. So we have just to compute $H_{(d)}^1(\ZZ)$. To this aim let us apply $\Gamma_{(d)}$ to the above exact sequence, getting:
\[0\longrightarrow \Gamma_{(d)}(\ZZ) \xrightarrow{\Gamma_{(d)}(\iota)} \Gamma_{(d)}(\QQ) \xrightarrow{\Gamma_{(d)}(\pi)} \Gamma_{(d)}(\QQ /\ZZ) \longrightarrow 0.\]
By definition $H_{(d)}^1(\ZZ)=\Gamma_{(d)}(\QQ /\ZZ)/\Gamma_{(d)}(\pi)(\Gamma_{(d)}(\QQ))$. Since $\Gamma_{(d)}(\QQ)=0$, we have $H_{(d)}^1(\ZZ)=\Gamma_{(d)}(\QQ /\ZZ)$. Finally, $\Gamma_{(d)}(\QQ /\ZZ)$ consists in all elements $\overline{s/t}\in \QQ /\ZZ$ with $s,t\in \ZZ$ and $t$ dividing $d^n$ for some $n\in \NN$. In particular, notice that $H_{(d)}^1(\ZZ)$ is not finitely generated as a $\ZZ$-module.
\end{example}

It is worthwhile to remark that from the definition of local cohomology we get the following powerful instrument: Any short exact sequence of $R$-modules
\[0\longrightarrow P\longrightarrow M \longrightarrow N \longrightarrow 0\]
yields the long exact sequence of $R$-modules:
\begin{eqnarray}\label{longexact}
0 \rightarrow \Ha^0(P)\longrightarrow \Ha^0(M) \longrightarrow \Ha^0(N)\longrightarrow \Ha^1(P)\longrightarrow \Ha^1(M)\rightarrow \ldots \nonumber \\
\ldots \rightarrow \Ha^{i-1}(N)\longrightarrow \Ha^i(P)\longrightarrow \Ha^i(M)\longrightarrow \Ha^i(N)\longrightarrow \Ha^{i+1}(P) \rightarrow \ldots
\end{eqnarray}

In Example \ref{cohomologyofZ} we had $H_{(d)}^i(\ZZ)=0$ for each $i>1= \dim \ZZ$. Actually Grothendieck showed that this is a general fact (for instance see \cite[Theorem 6.1.2]{BS}):
\begin{thm} \label{vanishinggro}(Grothendieck)
Let $R$ and $\aa$ be as above. For any $R$-module $M$, we have $\Ha^i(M)=0$ for all $i>\dim M$.
\end{thm}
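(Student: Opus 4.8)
The plan is to reduce the statement to the case of a cyclic module $R/\wp$ with $\wp$ prime, and then run an induction on $\dim R/\wp$, using at each step the long exact sequence \eqref{longexact} together with the fact that $x\in\aa$ acts injectively on certain top local cohomology modules while those modules are $\aa$-torsion.

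First I would reduce to the case where $M$ is finitely generated. Writing $M=\varinjlim_{\lambda}M_{\lambda}$ as the filtered union of its finitely generated submodules, one has $\dim M_{\lambda}\le\dim M$ for all $\lambda$; since $R$ is Noetherian, $\aa$ is finitely generated and $\Ha^{i}(-)$ can be computed by the \v{C}ech complex on a generating set of $\aa$, whose terms are localizations and hence commute with filtered direct limits. Thus $\Ha^{i}(M)=\varinjlim_{\lambda}\Ha^{i}(M_{\lambda})$, and it suffices to treat $M$ finitely generated. Next, using a prime filtration $0=M_{0}\subset M_{1}\subset\cdots\subset M_{s}=M$ with $M_{j}/M_{j-1}\cong R/\wp_{j}$ and $\wp_{j}\in\Supp M$ (so $\dim R/\wp_{j}\le\dim M$), repeated application of \eqref{longexact} shows that it is enough to prove $\Ha^{i}(R/\wp)=0$ for $i>\dim R/\wp$, for every prime $\wp$.

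I would then induct on $d:=\dim R/\wp$. If $\aa\subseteq\wp$, then $\aa$ kills $R/\wp$, so $R/\wp$ is $\aa$-torsion and therefore $\Ga$-acyclic, giving $\Ha^{i}(R/\wp)=0$ for all $i>0$; this covers the base case $d=0$ with $\aa\subseteq\wp$, and the remaining base case $d=0$ with $\aa\not\subseteq\wp$ follows because then $R/\wp$ is a field on which some $x\in\aa\setminus\wp$ acts invertibly, while every $\Ha^{i}(R/\wp)$ is $\aa$-torsion, forcing $\Ha^{i}(R/\wp)=0$ for all $i$. For $d>0$ with $\aa\not\subseteq\wp$, pick $x\in\aa\setminus\wp$; then $x$ is a nonzerodivisor on the domain $R/\wp$, and every prime containing $\wp+xR$ strictly contains $\wp$, so $\dim R/(\wp+xR)\le d-1$. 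Applying \eqref{longexact} to $0\to R/\wp\xrightarrow{\,x\,}R/\wp\to R/(\wp+xR)\to 0$ yields, for each $i$, an exact piece $\Ha^{i-1}(R/(\wp+xR))\to\Ha^{i}(R/\wp)\xrightarrow{\,x\,}\Ha^{i}(R/\wp)$. By the inductive hypothesis $\Ha^{i-1}(R/(\wp+xR))=0$ whenever $i-1>d-1$, i.e. $i>d$, so multiplication by $x$ is injective on $\Ha^{i}(R/\wp)$ for $i>d$; since $\Ha^{i}(R/\wp)$ is $\aa$-torsion and $x\in\aa$, this gives $\Ha^{i}(R/\wp)=0$ for all $i>d$, completing the induction and hence the proof.

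The genuinely non-formal inputs, and where I expect the real work to sit, are: (a) the identification of local cohomology with \v{C}ech (equivalently, $\Ha^{i}(M)=\varinjlim_{n}\Ext_{R}^{i}(R/\aa^{n},M)$), which underlies both the reduction to finitely generated modules and the observation that local cohomology modules are $\aa$-torsion, and which uses Noetherianity; and (b) the acyclicity of $\aa$-torsion modules for $\Ga$ (proved by noting that an injective resolution of a torsion module can be taken with torsion terms). Everything else — the prime filtration, the dimension drop $\dim R/(\wp+xR)\le\dim R/\wp-1$, and the diagram chases with \eqref{longexact} — is routine bookkeeping.
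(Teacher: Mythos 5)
Your proof is correct; note that the paper itself does not prove this theorem but quotes it with a reference to \cite[Theorem 6.1.2]{BS}, and your argument (reduction to finitely generated modules via direct limits, prime filtration, induction on dimension using an element $x\in\aa$ that acts injectively on the $\aa$-torsion module $\Ha^i$) is essentially the standard proof found there. The only point to tidy is that in the inductive step $R/(\wp+xR)$ need not be of the form $R/\qq$ with $\qq$ prime, so the inductive hypothesis must be fed through the prime factors of a filtration of $R/(\wp+xR)$ (equivalently, run the induction over all finitely generated modules of dimension less than $d$); this is exactly the reduction you already established, so it is bookkeeping rather than a gap.
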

Anyway, we should say that in Exemple \ref{cohomologyofZ} we were in a special case. In fact, an $R$-module $M$ has a finite injective resolution if and only if $M$ is Gorenstein. 

\section{Basic properties}

In addition to the long exact sequence \eqref{longexact}, we want to give other three basic tools which are essential in working with local cohomology. The first property we want to list is that local cohomology commutes with direct limits (for example see \cite[Theorem 3.4.10]{BS}).

\begin{lemma}\label{directcommut}
Let $R$ and $\aa$ be as usual. Let $\Pi$ be a poset and $\{M_{\alpha} \ : \ \alpha \in \Pi\}$ a direct system over $\Pi$ of $R$-modules. Then
\[\Ha^i(\varinjlim M_{\alpha})\cong \varinjlim \Ha^i(M_{\alpha}) \ \ \forall \ i\in \NN.\] 
\end{lemma}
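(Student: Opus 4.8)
The plan is to prove the statement directly from the construction of local cohomology as a derived functor, exploiting the fact that filtered colimits are exact in the category of $R$-modules and that $\Ga$ itself commutes with direct limits. I would proceed in three steps, deferring as much as possible to standard homological facts.

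\textbf{Step 1: $\Ga$ commutes with filtered colimits.} First I would check that $\Ga(\varinjlim M_\alpha) \cong \varinjlim \Ga(M_\alpha)$. An element of $\Ga(\varinjlim M_\alpha)$ is represented by some $m_\beta \in M_\beta$ killed by a power $\aa^n$ in the colimit; since $R$ is Noetherian, $\aa$ is finitely generated, so $\aa^n m_\beta$ is a finite set of elements each of which maps to $0$, hence each becomes $0$ already at some finite stage, and by filteredness one can find a single $\gamma \geq \beta$ at which all of them vanish. Thus $m_\beta$ already lies in the image of $\Ga(M_\gamma)$. Conversely $\varinjlim \Ga(M_\alpha) \to \Ga(\varinjlim M_\alpha)$ is clearly well-defined and injective again by filteredness. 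This handles $i=0$.

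\textbf{Step 2: reduce to injective resolutions compatible with the colimit.} To compute $\Ha^i(\varinjlim M_\alpha)$ I would resolve $N := \varinjlim M_\alpha$ by injectives, but the cleaner route is: take for each $\alpha$ an injective resolution $M_\alpha \to I_\alpha^\bullet$ functorially (or at least compatibly over the poset $\Pi$, shrinking to a cofinal subsystem if necessary), so that $\varinjlim I_\alpha^\bullet$ is a complex of $R$-modules with $H^0 = N$. Filtered colimits are exact on $R$-$\mathbf{Mod}$, so $\varinjlim I_\alpha^\bullet$ is a resolution of $N$; its terms are filtered colimits of injectives, which need not be injective, but they \emph{are} $\Ga$-acyclic — this is the key point — because over a Noetherian ring a direct limit of injective modules is injective (Bass--Papp / Matlis), or at any rate $\Ga$-acyclic. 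Hence $\varinjlim I_\alpha^\bullet$ computes $\Ha^\bullet(N)$. Then, using Step 1 termwise and the fact that cohomology of a complex commutes with filtered colimits (again exactness of $\varinjlim$), I get
\[
\Ha^i(\varinjlim M_\alpha) = H^i\bigl(\Ga(\varinjlim I_\alpha^\bullet)\bigr) = H^i\bigl(\varinjlim \Ga(I_\alpha^\bullet)\bigr) \cong \varinjlim H^i\bigl(\Ga(I_\alpha^\bullet)\bigr) = \varinjlim \Ha^i(M_\alpha).
\]

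\textbf{Step 3 / main obstacle.} The delicate point is arranging injective resolutions that are simultaneously functorial (or at least compatible) across the whole direct system $\{M_\alpha\}$ and that behave well under $\varinjlim$; a bare choice of resolution for each $M_\alpha$ need not assemble into a complex over $\Pi$. The standard fix is to use a functorial injective embedding $M \hookrightarrow E(M)$ and iterate, which is functorial in $M$ and so automatically yields maps of complexes $I_\alpha^\bullet \to I_{\alpha'}^\bullet$ for $\alpha \leq \alpha'$; one then only needs that this functorial construction commutes with $\varinjlim$ well enough to conclude — equivalently, one invokes directly that a filtered colimit of injective $R$-modules over a Noetherian ring is injective (hence $\varinjlim I_\alpha^\bullet$ is an honest injective resolution of $N$). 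Everything else is formal from exactness of filtered colimits. Alternatively, one can avoid resolutions entirely by realizing $\Ha^i(M) = \varinjlim_n \Ext^i_R(R/\aa^n, M)$ and noting $\Ext$ commutes with filtered colimits in the second variable when $R/\aa^n$ is finitely presented (true, $R$ Noetherian), then commuting the two direct limits; this is the slicker argument and the one I would ultimately present.
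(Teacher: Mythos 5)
Your argument is correct; the paper does not actually prove this lemma (it only cites Brodmann--Sharp, Theorem 3.4.10), and what you write is essentially the standard proof, with your closing Ext-based argument being the cleanest version of it. Two small points deserve care. First, although the statement says ``poset'', every step of your proof (and the truth of the lemma itself) uses that $\Pi$ is \emph{directed}: filtered colimits are exact and commute with $\Hom$ out of finitely presented modules, while colimits over an arbitrary poset (e.g.\ pushouts) are not, so you should say explicitly that ``direct system'' is read as a system over a directed set. Second, in Steps 2--3 the injective hull $M\hookrightarrow E(M)$ is \emph{not} functorial, so as written the compatible resolutions do not quite exist; you either need a genuinely functorial injective embedding (for instance $M\hookrightarrow E^{\Hom_R(M,E)}$ with $E$ an injective cogenerator, iterated) together with the Bass--Papp theorem to make $\varinjlim I_\alpha^\bullet$ an injective (hence $\Ga$-acyclic) resolution, or you simply discard that route and keep the argument you end with: $\Ha^i(M)\cong\varinjlim_n\Ext_R^i(R/\aa^n,M)$ (the paper's Theorem \ref{limext}, proved independently of the present lemma, so there is no circularity), $\Ext_R^i(R/\aa^n,-)$ commutes with filtered colimits because $R/\aa^n$ admits a resolution by finite free modules over the Noetherian ring $R$, and the two filtered colimits commute. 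That second route needs only Step 1 as a degree-zero check and is the argument I would keep.
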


As an immediate consequence of Lemma \ref{directcommut}, we have that
\[\Ha^i(M\oplus N)\cong \Ha^i(M)\oplus \Ha^i(N)\]
for any $i\in \NN$ and for any $R$-modules $M$ and $N$.

The second property we would like to state allows us, in many cases, to control local cohomology when the base ring changes. The proofs can be found, for instance, in \cite[Theorem 4.3.2, Theorem 4.2.1]{BS}.

\begin{lemma}\label{basetheorems}
Let $R$ and $\aa$ be as above, $M$ an $R$-module, $R\xrightarrow{\phi} S$ a homomorphism of Noetherian rings and $N$ an $S$-module.
\begin{compactitem}
\item[(i)] (Flat Base Change)\index{Flat Base Change} If $\phi$ is flat, then $H_{\phi(\aa)S}^i(M\otimes_R S)\cong \Ha^i(M) \otimes_R S$ for any $i\in \NN$. The above isomorphism is functorial.
\item[(ii)] (Independence of the Base)\index{Independence of the Base} $\Ha^i(N)\cong H_{\phi(\aa)S}^i(N)$ for any $i\in \NN$, where the first local cohomology is computed over the base field $R$ and the second over $S$. The above isomorphism is functorial.
\end{compactitem}
\end{lemma}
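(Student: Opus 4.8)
The statement to prove is Lemma~\ref{basetheorems}, comprising Flat Base Change and Independence of the Base for local cohomology. My plan is to reduce both assertions to the \v{C}ech complex description of local cohomology, which is the cleanest and most self-contained route, avoiding any delicate comparison of injective resolutions.

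First I would recall that if $\aa = (a_1,\dots,a_r)$, then for any $R$-module $M$ one has $\Ha^i(M) \cong H^i(\check{C}^\bullet(a_1,\dots,a_r;M))$, where $\check{C}^\bullet$ is the \v{C}ech complex, whose terms are finite direct sums of localizations $M_{a_{j_1}\cdots a_{j_k}} = M \otimes_R R_{a_{j_1}\cdots a_{j_k}}$. This isomorphism is natural in $M$, and it depends only on the radical of $\aa$ (consistent with Remark~\ref{cohomuptorad}). For part~(i), the key observation is that if $\phi\colon R \to S$ is flat, then $\check{C}^\bullet(a_1,\dots,a_r;M) \otimes_R S \cong \check{C}^\bullet(\phi(a_1),\dots,\phi(a_r);M\otimes_R S)$ as complexes of $S$-modules, because localization commutes with tensor and $(M_{a})\otimes_R S \cong (M\otimes_R S)_{\phi(a)}$. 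Since $S$ is $R$-flat, tensoring with $S$ commutes with taking cohomology, so
\[
H_{\phi(\aa)S}^i(M\otimes_R S) \cong H^i\bigl(\check{C}^\bullet(\phi(\aaa);M\otimes_R S)\bigr) \cong H^i\bigl(\check{C}^\bullet(\aaa;M)\otimes_R S\bigr) \cong \Ha^i(M)\otimes_R S,
\]
and all maps involved are functorial in $M$, giving the functoriality claim. For part~(ii), with $N$ an $S$-module, I note that $N$ can be viewed as an $R$-module via $\phi$, and that for $a \in R$ one has $N_a = N_{\phi(a)}$ as abelian groups (indeed as $S_{\phi(a)}$-modules), because inverting $a$ on $N$ is the same as inverting $\phi(a)$. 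Hence $\check{C}^\bullet(a_1,\dots,a_r;N)$ computed over $R$ coincides termwise, hence as a complex, with $\check{C}^\bullet(\phi(a_1),\dots,\phi(a_r);N)$ computed over $S$; taking cohomology gives $\Ha^i(N) \cong H_{\phi(\aa)S}^i(N)$, naturally in $N$.

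The main obstacle is not any of the computations above, which are essentially formal once one has the \v{C}ech description; rather it is establishing that \v{C}ech cohomology actually computes local cohomology in the first place, i.e. the isomorphism $\Ha^i(M) \cong H^i(\check{C}^\bullet(\aaa;M))$. If one wishes to be self-contained one proves this by the standard argument: both sides are $\delta$-functors in $M$ that agree in degree~$0$ (where both give $\Ga(M)$, since an element of $M$ is killed by a power of $\aa$ iff it maps to $0$ in every $M_{a_j}$), and the \v{C}ech cohomology $H^i(\check{C}^\bullet(\aaa;-))$ is effaceable in positive degrees because $H^i(\check{C}^\bullet(\aaa;I)) = 0$ for $i>0$ whenever $I$ is injective (the \v{C}ech complex of an injective module being exact in positive degrees, a fact one checks by reducing, via primary decomposition of the injective, to modules of the form $E_R(R/\wp)$). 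Given this, a dimension-shifting induction identifies the two $\delta$-functors. However, since this excerpt is a preliminaries chapter citing \cite{BS} for exactly these statements, the honest thing is to simply invoke \cite[Theorem~4.3.2, Theorem~4.2.1]{BS} (or the \v{C}ech-complex results in the same source) and present the above as the underlying idea. I would therefore write a short proof that recalls the \v{C}ech description as a black box from \cite{BS}, then gives the two-line verifications of (i) and (ii) together with the remark that naturality of the \v{C}ech construction yields the asserted functoriality.
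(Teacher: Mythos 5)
Your argument is mathematically correct. Note, however, that the paper itself offers no proof of this lemma: it simply cites \cite[Theorem 4.2.1, Theorem 4.3.2]{BS}, where both statements are proved by comparing connected sequences of derived functors directly from the definition (the key point there being that injective $S$-modules are acyclic for the relevant torsion functors, so one can compare injective resolutions). Your route instead goes through the \v{C}ech description: the complex-level identifications $\check{C}^\bullet(\aaa;M)\otimes_R S\cong \check{C}^\bullet(\phi(\aaa);M\otimes_R S)$ and, for an $S$-module $N$, the termwise equality of the \v{C}ech complexes over $R$ and over $S$, are both right, and flatness is used exactly where it should be (to commute $-\otimes_R S$ with cohomology); functoriality then comes for free from the naturality of the \v{C}ech construction. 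This is a legitimate and arguably more transparent alternative, with two caveats worth making explicit: (1) it rests entirely on the fact that the \v{C}ech complex computes local cohomology functorially, which in this thesis is Theorem \ref{cech} (itself quoted from \cite{BS}); there is no circularity, since that theorem is proved independently of base-change statements, but your proof is only as self-contained as that input, and your sketched $\delta$-functor/effaceability argument for it is the standard one; (2) in part (i) you should say explicitly that $\phi(\aa)S$ is generated by $\phi(a_1),\ldots,\phi(a_r)$, so that the \v{C}ech complex you write down really computes $H^i_{\phi(\aa)S}$. With those remarks, your closing decision to present the \v{C}ech computation as the idea and cite \cite{BS} for the black box matches the spirit of the paper's preliminaries chapter.
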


Lemma \ref{basetheorems} has some very useful consequences: Let $\aa$ and $\bb$ be ideals of $R$, $M$ an $R$-module and $T\subseteq R$ a multiplicative system. Then, for any $i\in \NN$, there are functorial isomorphisms
\begin{eqnarray}
T^{-1}\Ha^i(M) \cong H_{\aa T^{-1}R}^i(T^{-1}M) \label{cohomloc}\\
\Ha^i(M)\otimes_R \widehat{R^{\bb}}\cong H_{\aa \widehat{R^{\bb}}}^i(M\otimes_R \widehat{R^{\bb}}) \label{cohomcompl}
\end{eqnarray}
Furthermore, if $R$ is local, then its completion $\widehat{R}$ is faithfully flat over $R$. So, for any $i\in \NN$,
\begin{equation}
\Ha^i(M) = 0 \iff H_{\aa \widehat{R}}^i(M\otimes_R \widehat{R})=0. \label{cohomcompl1}
\end{equation}

The last basic tool we want to present in this section is the {\it Mayer-Vietories sequence}\index{Mayer-Vietories sequence}, for instance see \cite[3.2.3]{BS}. Sometimes, it allows us to handle with the support of the local cohomology modules. Let $\aa$ and $\bb$ be ideals of $R$ and $M$ an $R$-module. There is  the following exact sequence:

\begin{eqnarray}
0\rightarrow H_{\aa +\bb}^0(M)\rightarrow \Ha^0(M)\oplus H_{\bb}^0(M)\rightarrow H_{\aa \cap \bb}^0(M)\rightarrow H_{\aa +\bb}^1(M) \rightarrow \ldots \nonumber \\
\ldots \rightarrow H_{\aa +\bb}^i(M)\rightarrow \Ha^i(M)\oplus H_{\bb}^i(M)\rightarrow H_{\aa \cap \bb}^i(M)\rightarrow H_{\aa +\bb}^{i+1}(M) \rightarrow \ldots \label{mayervietoris}
\end{eqnarray}

\section{Equivalent definitions}

In this section we want to introduce two different, but equivalent, definitions of local cohomology. Depending on the issue one has to deal with, among these definitions one can be more suitable than the other.

\subsection{Via Ext}

Let $R$ and $\aa$ be as in the previous section. If $n$ and $m$ are two positive integers such that $n\geq m$, then there is a well defined homomorphism of $R$-modules $R/\aa^n \longrightarrow R/\aa^m$. Given an $R$-module $M$, the above map yields a homomorphism $\Hom_R(R/\aa^m,M)\longrightarrow \Hom_R(R/\aa^n,M)$, and more generally, for all $i\in \NN$,
\[\Ext_R^i(R/\aa^m,M)\longrightarrow \Ext_R^i(R/\aa^n,M).\]
The above maps clearly yield a direct system over $\NN$, namely $\{\Ext_R^i(R/\aa^n,M) \ : \ n\in \NN\}$. So, we can form the direct limit $\displaystyle \varinjlim \Ext_R^i(R/\aa^n,M)$. Since $\Hom_R(R/\aa^n,M)\cong 0:_M \aa^n$, we get the isomorphism:
\begin{displaymath}
\Ha^0(M)=\Ga(M)\cong \varinjlim \Hom_R(R/\aa^n,M).
\end{displaymath}
More generally the following theorem holds true (for instance see \cite[Theorem 1.3.8]{BS}):
\begin{thm}\label{limext}\index{local cohomology}
Let $R$ and $\aa$ be as above. For any $R$-module $M$ and $i\in \NN$, we have
\begin{displaymath}
\Ha^i(M)\cong \varinjlim \Ext_R^i(R/\aa^n,M).
\end{displaymath}
Furthermore, the above isomorphism is functorial.
\end{thm}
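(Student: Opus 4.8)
The plan is to reduce the statement to the $i=0$ case, which is already recorded in the excerpt, by a standard derived-functor/universal-$\delta$-functor comparison argument. First I would fix the target: we want a natural isomorphism of functors $\Ha^i(-) \cong \varinjlim_n \Ext_R^i(R/\aa^n, -)$ on the category of $R$-modules. The right-hand side is, for each $i$, a functor in $M$; the idea is to show that the collection $\{\varinjlim_n \Ext_R^i(R/\aa^n, -)\}_{i\in\NN}$ forms a cohomological $\delta$-functor which is universal (effaceable), and then invoke the uniqueness of derived functors.

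Concretely, the key steps, in order, are as follows. (1) Observe that for each fixed $n$, the family $\{\Ext_R^i(R/\aa^n, -)\}_i$ is a cohomological $\delta$-functor, since $\Ext_R^\bullet(R/\aa^n,-)$ is the derived functor of $\Hom_R(R/\aa^n,-)$; a short exact sequence $0\to P\to M\to N\to 0$ gives the usual long exact $\Ext$-sequence, and these are compatible with the transition maps $R/\aa^n \to R/\aa^m$. (2) Pass to the direct limit over $n\in\NN$: since filtered colimits of $R$-modules are exact, the long exact $\Ext$-sequences assemble into a long exact sequence for $\varinjlim_n \Ext_R^i(R/\aa^n,-)$, so this is again a $\delta$-functor. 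Here I would cite Lemma~\ref{directcommut}-style exactness of filtered colimits (the exactness of $\varinjlim$ over a directed poset is what makes the connecting maps behave). (3) Identify the degree-zero piece: $\Hom_R(R/\aa^n,M)\cong (0:_M\aa^n)$, and taking $\varinjlim_n$ gives exactly $\Ga(M)=\Ha^0(M)$, as already noted in the excerpt just before the theorem. (4) Show the $\delta$-functor $\varinjlim_n\Ext_R^i(R/\aa^n,-)$ is effaceable for $i>0$: if $I$ is an injective $R$-module then $\Ext_R^i(R/\aa^n,I)=0$ for all $i>0$ and all $n$, hence the colimit vanishes; since every module embeds in an injective, this gives effaceability. (5) Conclude by the universality theorem for $\delta$-functors (Grothendieck): an effaceable cohomological $\delta$-functor is the derived functor of its degree-zero part, so $\varinjlim_n\Ext_R^i(R/\aa^n,-)\cong R^i\Ga(-)=\Ha^i(-)$ naturally in $M$ and compatibly with connecting maps; in particular the isomorphism is functorial.

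An alternative route, which I might prefer for being more self-contained, is to avoid the abstract $\delta$-functor machinery: pick an injective resolution $0\to M\to I^\bullet$, apply $\Hom_R(R/\aa^n,-)$ and take cohomology to compute $\Ext_R^i(R/\aa^n,M)$ as $H^i(\Hom_R(R/\aa^n, I^\bullet))$, then take $\varinjlim_n$ and use exactness of filtered colimits to commute $\varinjlim_n$ past $H^i$, obtaining $H^i(\varinjlim_n \Hom_R(R/\aa^n, I^\bullet)) = H^i(\Ga(I^\bullet)) = \Ha^i(M)$. This is essentially the same argument repackaged, and it also makes functoriality transparent since every map is induced on resolutions.

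The main obstacle, such as it is, is step (2)/the colimit-commutation point: one must be careful that the direct system $\{\Ext_R^i(R/\aa^n,M)\}_n$ is genuinely directed (the transition maps $R/\aa^n\to R/\aa^m$ for $n\ge m$ compose correctly, which they do) and that passing to $\varinjlim_n$ preserves both exactness of the long $\Ext$-sequences and the computation via $\Hom_R(R/\aa^n,I^\bullet)$; this rests on exactness of filtered colimits in $R$-modules, which is standard but is the one place where a false move would break the proof. Everything else is bookkeeping: the vanishing $\Ext_R^i(R/\aa^n,I)=0$ for injective $I$ and $i>0$ is immediate from the definition of $\Ext$, and the identification in degree zero is already supplied in the excerpt.
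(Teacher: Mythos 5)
Your argument is correct: both the $\delta$-functor route (agreement in degree zero, vanishing of $\varinjlim_n\Ext_R^i(R/\aa^n,I)$ on injectives, universality) and the repackaged version via an injective resolution and exactness of filtered colimits are sound, and the directedness/colimit-commutation point you flag is handled properly. The thesis itself gives no proof of this statement — it is quoted in the survey chapter with a citation to Brodmann--Sharp \cite[Theorem 1.3.8]{BS} — and your proof is essentially the standard argument found there, so there is nothing to compare beyond noting that you have supplied the proof the paper omits.
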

Actually, Theorem \ref{limext} can be stated as
\begin{equation}\label{cofinal}
\Ha^i(M)\cong \varinjlim \Ext_R^i(R/\aa_n,M),
\end{equation}
where $(\aa_n)_{n\in\NN}$ is an inverse system of ideals cofinal with $(\aa^n)_{n\in \NN}$, i.e. for any $n\in \NN$ there exist $k,m\in \NN$ such that $\aa_k\subseteq \aa^n$ and $\aa^m\subseteq \aa_n$. Several inverse systems of ideals cofinal with $(\aa^n)_{n\in \NN}$ have been used by many authors together with \eqref{cofinal} to prove important theorems. Let us list three examples:
\begin{itemize}
\item[(i)]  If $R$ is a complete local domain and $\wp$ is a prime ideal such that $\dim R/\wp = 1$, one can show that the inverse system of symbolic powers $(\wp^{(n)})_{n\in \NN}$ is cofinal with $(\wp^n)_{n\in \NN}$. On this fact is based the proof of the Hartshorne-Lichtenbaum Vanishing Theorem \cite{Ha}, which essentially says the following:
\begin{thm}\label{hartlich}(Hartshorne-Lichtenbaum)
Let $R$ be a $d$-dimensional complete local domain and $\aa\subseteq R$ an ideal. The following are equivalent:
\begin{compactitem}
\item[(a)] $\Ha^d(R)=0$;
\item[(b)] $\dim R/\aa > 0$. 
\end{compactitem}
\end{thm}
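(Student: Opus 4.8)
The statement to prove is the Hartshorne--Lichtenbaum Vanishing Theorem: for a $d$-dimensional complete local domain $R$ and an ideal $\aa$, we have $\Ha^d(R)=0$ if and only if $\dim R/\aa > 0$.

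\medskip

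The plan is to handle the two implications separately, the easy one first. If $\dim R/\aa = 0$, then $\sqrt{\aa} = \mm$, so by Remark~\ref{cohomuptorad} we have $\Ha^d(R) = H_{\mm}^d(R)$, and this is nonzero since $R$ is a $d$-dimensional local ring: the top local cohomology with support in the maximal ideal of a $d$-dimensional Noetherian local ring never vanishes (this is part of Grothendieck's non-vanishing theorem, and in the complete domain case it also follows from local duality, $H_{\mm}^d(R)^{\vee} \cong \omega_R \neq 0$). This gives (a)$\Rightarrow$(b) by contraposition.

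\medskip

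For the substantive direction, assume $\dim R/\aa > 0$; I want to show $\Ha^d(R) = 0$. First I would reduce to the case that $\aa = \wp$ is a prime ideal with $\dim R/\wp = 1$. Indeed, by Remark~\ref{cohomuptorad} we may replace $\aa$ by $\sqrt{\aa}$, and if $\sqrt{\aa} = \wp_1 \cap \cdots \cap \wp_r$ is its minimal primary decomposition, one proceeds by induction on $r$ using the Mayer--Vietoris sequence \eqref{mayervietoris}: writing $\aa' = \wp_1 \cap \cdots \cap \wp_{r-1}$ and $\bb = \wp_r$, the relevant piece reads $H^d_{\aa'}(R) \oplus H^d_{\bb}(R) \to H^d_{\aa' \cap \bb}(R) \to H^{d+1}_{\aa' + \bb}(R)$, and the last term vanishes by Theorem~\ref{vanishinggro} since $d+1 > d = \dim R$; so it suffices to kill $H^d$ for each $\wp_i$ and for $\aa' + \bb$, all of which still have positive-dimensional quotient (the $\wp_i$ because $\dim R/\wp_i \geq \dim R/\aa > 0$, and $\aa'+\bb$ because its radical is an intersection of the remaining minimal primes — here one must also shrink to the minimal primes sitting over it, all of positive-dimensional quotient). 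Moreover we may pass from a prime $\wp$ with $\dim R/\wp \geq 1$ to one with $\dim R/\wp$ exactly $1$: choose a prime $\qq \supseteq \wp$ with $\dim R/\qq = 1$; then a further Mayer--Vietoris / cofinality argument, or the observation that it is enough to treat the ``worst case'', lets us assume $\dim R/\wp = 1$.

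\medskip

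The heart of the proof — and the main obstacle — is then the case $\aa = \wp$ prime with $\dim R/\wp = 1$, where one invokes the cofinality of the symbolic powers $(\wp^{(n)})_{n \in \NN}$ with the ordinary powers $(\wp^n)_{n \in \NN}$, as announced in the excerpt. Granting this, formula \eqref{cofinal} gives $\Ha^d(R) \cong \varinjlim \Ext_R^d(R/\wp^{(n)}, R)$. The point of using symbolic powers is that each $R/\wp^{(n)}$ is a one-dimensional ring whose only associated prime is $\wp$ (no embedded components), so $R/\wp^{(n)}$ is unmixed of dimension $1$, whence $\depth$-type / dimension arguments — concretely, local duality over the complete domain $R$, which says $\Ext_R^d(M, R) \cong \Ext_R^d(M,\omega_R)^{\vee} \cong H^0_{\mm}(M)^{\vee}$ up to the normalization of the canonical module, or more elementarily the fact that $\Ext^d$ against a module of dimension $1$ picks out only the finite-length part — show each $\Ext_R^d(R/\wp^{(n)}, R)$ is a module of finite length supported at $\mm$, and that the transition maps in the direct system are eventually zero. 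The key mechanism making the maps vanish is that multiplication $\wp^{(n)}/\wp^{(n+1)}$ behaves well: an element of $R$ lying in $\wp$ but outside the minimal primes of $\wp^{(n)}$ (such exists since $\dim R/\wp = 1 > 0$ means $\wp \neq \mm$ is not the whole story — rather one uses a suitable system of parameters) induces the zero map on the relevant cohomology in the limit. I would need to prove the cofinality statement itself: this rests on the fact that in a complete local domain, for $\wp$ with $\dim R/\wp = 1$, one has $\wp^n$ primary up to radical-stable behavior and $\bigcap_n \wp^{(n)} = 0$ by Krull, together with the Artin--Rees-type control giving $\wp^{(n)} \supseteq \wp^{m(n)}$ for some $m(n) \to \infty$ — the subtle direction is $\wp^{(n)} \subseteq \wp^{k}$ for some $k$, equivalently that the $\wp$-symbolic topology is no coarser than the $\wp$-adic topology, which uses completeness and the dimension-one hypothesis essentially (it can fail without completeness). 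Assembling these, the limit $\varinjlim \Ext_R^d(R/\wp^{(n)}, R)$ is zero, and tracing back through the reductions gives $\Ha^d(R) = 0$ in general.
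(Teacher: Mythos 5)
You should first be aware that the paper contains no proof of Theorem~\ref{hartlich}: it is quoted as a classical result, attributed to Hartshorne and Lichtenbaum with a reference to \cite{Ha} (see also \cite[Chapter 8]{BS}), and the only proof-related content in the text is the remark that the argument hinges on the cofinality of the symbolic powers $(\wp^{(n)})_{n\in\NN}$ with the ordinary powers $(\wp^n)_{n\in\NN}$ for a prime $\wp$ of a complete local domain with $\dim R/\wp=1$. So there is no proof in the thesis to compare yours against; what can be assessed is whether your sketch is itself a complete argument. Its architecture does match the classical proof (easy direction via Theorem~\ref{nonvanishinggro}; Mayer--Vietoris reduction to primes; reduction to a prime with one-dimensional quotient; symbolic powers and \eqref{cofinal}), but two of the three hard steps are not actually carried out.

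First, the passage from a prime $\wp$ with $\dim R/\wp\geq 1$ to the case $\dim R/\wp=1$ cannot be settled by ``treating the worst case'': vanishing of $H_{\qq}^d(R)$ for a larger ideal $\qq\supseteq\wp$ does not formally imply vanishing of $H_{\wp}^d(R)$, and Mayer--Vietoris does not produce the needed comparison either. The standard mechanism is the long exact sequence $\cdots \to H^d_{\aa+(x)}(R)\to H^d_{\aa}(R)\to H^d_{\aa R_x}(R_x)\to\cdots$ for $x\in\mm$ outside the minimal primes of $\aa$, together with $H^d_{\aa R_x}(R_x)=0$, which follows from Theorem~\ref{vanishinggro} and \eqref{cohomloc} since every local ring of $R_x$ has dimension $<d$; this exhibits $H^d_{\aa}(R)$ as a quotient of $H^d_{\aa+(x)}(R)$ and yields an induction on $\dim R/\aa$ with base case $1$ (equivalently, it gives the surjectivity $H^d_{\qq}(R)\twoheadrightarrow H^d_{\wp}(R)$). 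This sequence, or some substitute for it, is missing from your sketch. (Incidentally, in your Mayer--Vietoris step nothing about $\dim R/(\aa'+\bb)$ is needed or true in general: $H^{d+1}_{\aa'+\bb}(R)=0$ already by Theorem~\ref{vanishinggro}.)

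Second, and more seriously, the crucial case $\dim R/\wp=1$ is not proved. Granting the cofinality statement (which you assume; note that the direction $\wp^n\subseteq\wp^{(n)}$ is trivial and needs no Artin--Rees, the entire content being $\wp^{(k)}\subseteq\wp^n$, obtained in \cite{Ha} and \cite{BS} from Chevalley's theorem for complete local rings together with $\bigcap_n\wp^{(n)}=0$), you still must show $\varinjlim\Ext^d_R(R/\wp^{(n)},R)=0$, and your argument for this does not work as written. Local duality, in the form of Theorem~\ref{weaklocduality} or its local analogue, identifies the Matlis dual of $H^0_{\mm}(M)$ with $\Ext^d_R(M,\omega_R)$, not with $\Ext^d_R(M,R)$, and only under Cohen--Macaulay hypotheses; so the inference ``$H^0_{\mm}(R/\wp^{(n)})=0$, hence each term of the system vanishes (or has finite length)'' is valid verbatim only when $R$ is Gorenstein. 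For a general complete local domain, $\Ext^d_R(R/\wp^{(n)},R)$ need not vanish, nor even have finite length, since $\Ext$ into $R$ does not die beyond $\dim R$ without Gorenstein-type hypotheses. Your fallback claim that the transition maps are eventually zero is unsupported, and the proposed mechanism --- an element of $\wp$ lying ``outside the minimal primes of $\wp^{(n)}$'' --- is vacuous, because $\wp$ is the only minimal (indeed the only associated) prime of $\wp^{(n)}$. This is precisely where the real work of the Hartshorne--Lichtenbaum theorem lies: one must analyse the maps $\Ext^d_R(R/\wp^{(n)},R)\to\Ext^d_R(R/\wp^{(n+1)},R)$, e.g.\ using the exact sequences $0\to\wp^{(n)}/\wp^{(n+1)}\to R/\wp^{(n+1)}\to R/\wp^{(n)}\to 0$ and the fact that $\wp^{(n)}/\wp^{(n+1)}$ is a torsion-free module over the one-dimensional complete local domain $R/\wp$, and that analysis is absent from your proposal.
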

\item[(ii)] In the study of the local cohomology modules in characteristic $p>0$ a very powerful tool is represented by the inverse system $(\aa^{[p^n]})_{n\in \NN}$ of Frobenius powers of $\aa$: The ideal $\aa^{[p^n]}$ is generated by the $p^n$th powers of the elements of $\aa$. The above inverse system played a central role in the works of Peskine and Szpiro \cite{PS}, Hartshorne and Speiser \cite{ha-sp} and Lyubeznik \cite{ly1}. For instance, it was crucial in the proof of the following result of Peskine and Szpiro:
\begin{thm}\label{prelimpeskineszpiro}(Peskine-Szpiro)
Let $R$ be a regular local ring of positive characteristic and $\aa\subseteq R$ an ideal. Then
\[\Ha^i(R)=0 \ \ \forall \ i > \pd(R/\aa).\]
\end{thm}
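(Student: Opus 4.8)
The plan is to combine the Frobenius endomorphism with the direct-limit description of local cohomology from Theorem \ref{limext}, in the sharp form \eqref{cofinal}. Put $p=\chara R$ and $c=\pd(R/\aa)$; this is finite because $R$ is regular. Let $F\colon R\to R$ be the Frobenius map $r\mapsto r^{p}$, and let $F_{R}$ be the associated base-change functor on $R$-modules, i.e.\ $F_{R}(M)=R\otimes_{R}M$ with the left copy of $R$ viewed as an $R$-algebra through $F$. Two facts drive the proof. First, a short base-change computation gives $F_{R}(R/\aa)\cong R/\aa^{[p]}$, and iterating, $F_{R}^{\,e}(R/\aa)\cong R/\aa^{[p^{e}]}$ for every $e\in\NN$, where $\aa^{[p^{e}]}$ is the $e$-th Frobenius power of $\aa$. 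Second, since $R$ is regular, $F$ is flat (Kunz's theorem), hence each $F_{R}^{\,e}$ is an exact functor that sends finitely generated free $R$-modules to finitely generated free $R$-modules.

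Granting these, fix a finite free resolution $0\to P_{c}\to\cdots\to P_{1}\to P_{0}\to R/\aa\to 0$. Applying the exact functor $F_{R}^{\,e}$ yields a resolution $0\to F_{R}^{\,e}(P_{c})\to\cdots\to F_{R}^{\,e}(P_{0})\to R/\aa^{[p^{e}]}\to 0$ by finitely generated free modules, of length at most $c$; thus $\pd(R/\aa^{[p^{e}]})\le c$, and therefore $\Ext_{R}^{i}(R/\aa^{[p^{e}]},R)=0$ for all $i>c$ and all $e\in\NN$. Now the inverse system $(\aa^{[p^{e}]})_{e\in\NN}$ is cofinal with $(\aa^{n})_{n\in\NN}$: writing $\aa=(f_{1},\dots,f_{r})$, one has $\aa^{[p^{e}]}\subseteq\aa^{p^{e}}$ and $\aa^{\,r(p^{e}-1)+1}\subseteq\aa^{[p^{e}]}$ (pigeonhole on the exponents of the $f_{i}$), so for every $n$ there is an $e$ with $\aa^{[p^{e}]}\subseteq\aa^{n}$, and for every $e$ an $m$ with $\aa^{m}\subseteq\aa^{[p^{e}]}$. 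Hence \eqref{cofinal} gives $\Ha^{i}(R)\cong\varinjlim_{e}\Ext_{R}^{i}(R/\aa^{[p^{e}]},R)$, and since every term of this direct limit vanishes for $i>c$, so does $\Ha^{i}(R)$. This proves the theorem.

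The argument is almost entirely formal once its two inputs are in place, so the real weight lies there. Verifying $F_{R}(R/\aa)\cong R/\aa^{[p]}$ is routine: $R\otimes_{R}(R/\aa)$ is $R$ modulo the ideal generated by the image of $\aa$ under the structure map, which is $F$, and since $x\mapsto x^{p}$ is additive in characteristic $p$ that ideal is exactly $(f_{1}^{p},\dots,f_{r}^{p})=\aa^{[p]}$. The genuinely nontrivial ingredient is the flatness of Frobenius over a regular ring, i.e.\ Kunz's theorem, which I would cite rather than reprove; it is what makes ``an exact functor carries a length-$c$ free resolution to a resolution of length $\le c$, hence preserves projective dimension'' applicable. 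Thus I expect no obstacle internal to the proof scheme — the subtlety is concentrated in, and imported from, the regularity hypothesis via Kunz, together with the characteristic-$p$ coincidence that Frobenius is a ring homomorphism, which is precisely what forces $F_{R}$ to produce Frobenius powers.
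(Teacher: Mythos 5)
Your proof is correct and follows precisely the route the paper indicates: the thesis states this theorem without proof, citing \cite{PS}, and points out that the inverse system of Frobenius powers $(\aa^{[p^e]})_{e\in\NN}$, being cofinal with $(\aa^n)_{n\in\NN}$ and combined with \eqref{cofinal}, is the crucial ingredient — which is exactly your argument. The only remark is that you import flatness of Frobenius over regular rings (Kunz) to get $\pd(R/\aa^{[p^e]})\leq \pd(R/\aa)$, the standard modern substitute for the acyclicity argument in Peskine and Szpiro's original paper; either way that step is sound.
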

\item[(iii)] The last example of inverse system we want to show was used by Lyubeznik in \cite{Ly} when $R$ is a polynomial ring and $\aa$ is a monomial ideal. The (pretended) Frobenius $n$th power of $\aa$, denoted by $\aa^{[n]}$, is the ideal generated by the $n$th powers of the monomials in $\aa$. The inverse system $(\aa^{[n]})_{n\in \NN}$ is obviously cofinal with $(\aa^n)_{n\in \NN}$. It was the fundamental ingredient in the proof of the following result: 
\begin{thm}\label{lyubmon}(Lyubeznik)
Let $R$ be a polynomial ring and $\aa\subseteq R$ be a square-free monomial ideal. Then
\[\Ha^i(R)=0 \ \ \forall \ i > \pd(R/\aa) \mbox{ \ \ \ and \ \ \ }\Ha^{\pd(R/\aa)}(R)\neq 0.\]
\end{thm}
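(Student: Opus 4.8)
The plan is to use the cofinal system $(\aa^{[n]})_{n\in\NN}$ of ``pretended Frobenius powers'' together with \eqref{cofinal}, which gives
\[
\Ha^i(R)\;\cong\;\varinjlim_{n}\Ext_R^i(R/\aa^{[n]},R)\qquad(i\in\NN).
\]
Write $\aa=(m_1,\dots,m_s)$ with the $m_j$ squarefree monomials, so that $\aa^{[n]}=(m_1^n,\dots,m_s^n)$. The key elementary observation is that $\aa^{[n]}$ is the extension of $\aa$ along the ring homomorphism $F_n\colon R\to R$, $x_j\mapsto x_j^n$ (indeed $F_n(m_j)=m_j^n$); moreover $R$ is free of finite rank as a module over itself via $F_n$, so $F_n$ is faithfully flat, and $F_n(\mm)\subseteq\mm$ where $\mm$ is the graded maximal ideal.

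\emph{The vanishing.} Tensoring a minimal graded free resolution $F_\bullet\to R/\aa$ with $R$ along $F_n$ yields, by flatness, a graded free resolution of $(R/\aa)\otimes_{R,F_n}R\cong R/\aa^{[n]}$; since $F_n(\mm)\subseteq\mm$ this resolution is again minimal, and it has the same length as $F_\bullet$. Hence $\pd_R(R/\aa^{[n]})=\pd_R(R/\aa)=:p$ for all $n$, so $\Ext_R^i(R/\aa^{[n]},R)=0$ whenever $i>p$, and passing to the limit gives $\Ha^i(R)=0$ for all $i>p=\pd(R/\aa)$.

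\emph{The non-vanishing.} For the top degree I would switch to the $\ZZ^n$-graded \v{C}ech complex $\check C^\bullet(m_1,\dots,m_s;R)$ ($n$ the number of variables), whose $i$th cohomology is $\Ha^i(R)$: fixing a multidegree $\aaa\in\ZZ^n$ turns it into a finite complex of $\kkk$-vector spaces which is canonically identified with the (reduced, degree-shifted) simplicial cochain complex of an explicit subcomplex of the simplex on $\{j:x_j\mid m_1\cdots m_s\}$. Summing the resulting Hochster-type formula over $\aaa$, the largest $i$ with $\Ha^i(R)\neq 0$ equals a combinatorial invariant of the Stanley--Reisner complex of $\aa$ which, by the Reisner/Schenzel characterization of depth together with Auslander--Buchsbaum, is exactly $\dim R-\depth(R/\aa)=\pd(R/\aa)=p$. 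An alternative, closer to the $\aa^{[n]}$ philosophy, is to keep the $\Ext$-limit: each $\Ext_R^p(R/\aa^{[n]},R)$ is nonzero by Nakayama (it is $\Hom(F_p,R)$ modulo a submodule contained in $\mm\Hom(F_p,R)$), and one must check that the transition maps $\Ext_R^p(R/\aa^{[n]},R)\to\Ext_R^p(R/\aa^{[m]},R)$ for $n\mid m$ -- which over the Taylor resolution are the diagonal multiplications by the monomials $\operatorname{lcm}(m_j:j\in I)^{\,m-n}$ -- have nonzero colimit; here it helps to reduce first to the regular local ring $R_\mm$, which is legitimate since $\mm\in\Supp\Ext_R^p(R/\aa,R)$ by graded local duality (because $\dim R-p=\depth(R/\aa)$).

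The main obstacle is precisely the non-vanishing, i.e. the inequality $\cd(\aa)\ge\pd(R/\aa)$: the opposite inequality is essentially formal once one has the flat base change observation above, whereas $\cd(\aa)\ge\pd(R/\aa)$ genuinely uses that $\aa$ is a squarefree monomial ideal and fails for general ideals of a regular ring. The technical difficulty in the $\Ext$-limit approach is that $\Ext^p$ is a cokernel, so a comparison map of resolutions need not remain injective on cohomology; controlling the direct limit therefore forces one either into the explicit $\ZZ^n$-graded combinatorics of the \v{C}ech complex (Hochster's formula) or into a careful analysis of which top-degree $\Ext$ classes survive the transition maps.
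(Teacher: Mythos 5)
The paper does not actually prove this theorem: it is quoted in the preliminaries from Lyubeznik's paper \cite{Ly}, with only the hint that the cofinal family $(\aa^{[n]})_{n\in\NN}$ of pretended Frobenius powers is the fundamental ingredient --- which is exactly the framework you adopt. Your vanishing half is complete and correct: base change along $F_n\colon x_j\mapsto x_j^n$ is faithfully flat, carries $\aa$ to $\aa^{[n]}$ precisely because the generators are square-free, and preserves minimality of the resolution since $F_n(\mm)\subseteq\mm$; hence $\pd(R/\aa^{[n]})=\pd(R/\aa)=:p$ for all $n$, so $\Ext_R^i(R/\aa^{[n]},R)=0$ for $i>p$ and $\Ha^i(R)=\varinjlim\Ext_R^i(R/\aa^{[n]},R)=0$ for $i>p$. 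This is exactly the Peskine--Szpiro-style argument that Lyubeznik's proof mimics.

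The non-vanishing half, however, is where the theorem lives, and there your proposal has a genuine gap. In the Ext-limit route you correctly observe that each $\Ext_R^p(R/\aa^{[n]},R)$ is nonzero (graded Nakayama) and you identify the transition maps over Taylor resolutions as diagonal multiplications by $\operatorname{lcm}(m_j : j\in I)^{m-n}$, but you then leave unverified exactly the point at issue: that some class survives in the colimit. Nonvanishing of every term of a direct system says nothing about its limit, and the content of Lyubeznik's theorem is precisely that for square-free monomial ideals these maps do not eventually annihilate everything (one shows, e.g. multidegree by multidegree, that the natural map $\Ext_R^p(R/\aa,R)\to\Ha^p(R)$ is injective); reducing to $R_{\mm}$ does not address this. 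Your alternative route through the $\ZZ^n$-graded \u Cech complex is viable, but as written it outsources the whole difficulty to an unproved ``Hochster-type formula'' for the multigraded pieces of $\Ha^i(R)$ together with a translation into $\depth(R/\aa)$ via Reisner/Schenzel and Auslander--Buchsbaum: the identification of the degree-$\aaa$ strand with the correct simplicial cochain complex, and the passage from the cohomology of those complexes (which naturally live on the Alexander-dual side) back to the depth of $\kkk[\Delta]$, are Terai/Musta\c{t}\u a-type statements of the same order of difficulty as the inequality $\cd(R,\aa)\geq\pd(R/\aa)$ you are trying to prove. Until one of these two missing steps is actually carried out, the non-vanishing assertion remains unproved.
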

\end{itemize}

\subsection{Via the \u Cech complex}

Let $\aaa = a_1,\ldots ,a_k$ be elements of $R$. The {\it \u Cech complex of $R$ with respect to $\aaa$} \index{Cech complex@\u Cech complex}, $C(\aaa)^{\bullet}$, is the complex of $R$-modules
\[0\xrightarrow{d^{-1}} C(\aaa)^0 \xrightarrow{d^0} C(\aaa)^1 \rightarrow \ldots \rightarrow C(\aaa)^{k-1}\xrightarrow{d^{k-1}}C(\aaa)^k\xrightarrow{d^k} 0,\]
where:
\begin{compactitem}
\item[(i)] $C(\aaa)^0=R$;
\item[(ii)] For $p=1,\ldots ,k$, the $R$-module $C(\aaa)^p$ is $\displaystyle \bigoplus_{1\leq i_1<i_2<\ldots <i_p\leq k} R_{a_{i_1}\cdots a_{i_p}}$;
\item[(iii)] The composition
\[C(\aaa)^0 \xrightarrow{d^0} C(\aaa)^1 \longrightarrow R_{a_i}\]
has to be the natural map from $R$ to $R_{a_i}$;
\item[(iv)] The maps $d^p$ are defined, for $p=1,\ldots ,k-1$, as follows: The composition
\[R_{a_{i_1}\cdots a_{i_p}}\longrightarrow C(\aaa)^{p}\xrightarrow{d^p} C(\aaa)^{p+1}\longrightarrow R_{a_{j_1}\cdots a_{j_{p+1}}}\]
has to be $(-1)^{s-1}$ times the natural map from $R_{a_{i_1}\cdots a_{i_p}}$ to $R_{a_{j_1}\cdots a_{j_{p+1}}}$ whenever $(i_1,\ldots ,i_p)=(j_1,\ldots ,\widehat{j_s},\ldots , j_{p+1})$; it has to be $0$ otherwise. 
\end{compactitem}

It is straightforward to check that the \u Cech complex is actually a complex. Thus we can define the cohomology groups $H^i(C(\aaa)^{\bullet})=\Ker(d^i)/\Im(d^{i-1})$. For any $R$-module $M$ we can also define $C(\aaa ,M)^{\bullet}=C(\aaa)^{\bullet}\otimes_R M$. We have the following result (see \cite[Theorem 5.1.19]{BS}):

\begin{thm}\label{cech}\index{local cohomology}
Let $\aa=(a_1,\ldots ,a_k)$ be an ideal of $R$. For any $R$-module $M$ and $i\in \NN$, we have
\[ \Ha^i(M)\cong H^i(C(\aaa ,M)^{\bullet}).\]
Furthermore, the above isomorphism is functorial.
\end{thm}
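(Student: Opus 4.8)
The plan is to establish the isomorphism $\Ha^i(M)\cong H^i(C(\aaa,M)^\bullet)$ by showing that both sides, viewed as $\delta$-functors in $M$, agree in degree $0$ and that the right-hand side is effaceable (vanishes on injectives) in positive degrees; the uniqueness theorem for derived functors then forces the two $\delta$-functors to coincide. First I would verify the degree-zero case directly: the complex $0\to M\to \bigoplus_i M_{a_i}$ has kernel equal to the set of $m\in M$ annihilated by some power of each $a_j$, which is exactly $\bigcup_n(0:_M \aa^n)=\Ga(M)=\Ha^0(M)$, so $H^0(C(\aaa,M)^\bullet)\cong\Ga(M)$ naturally in $M$. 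Then I would check that $M\mapsto H^i(C(\aaa,M)^\bullet)$ is a cohomological $\delta$-functor: since each $R_{a_{i_1}\cdots a_{i_p}}$ is a flat (indeed localization) $R$-module, a short exact sequence $0\to P\to M\to N\to 0$ stays exact after tensoring with $C(\aaa)^\bullet$, giving a short exact sequence of complexes and hence the desired long exact sequence of cohomology.

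The core step is effaceability: I must show $H^i(C(\aaa,E)^\bullet)=0$ for all $i>0$ whenever $E$ is an injective $R$-module. The key structural fact, which I would invoke (it follows from the Matlis structure theory of injectives over a Noetherian ring), is that an injective $E$ decomposes as a direct sum of modules $E_R(R/\wp)$ and that localization interacts well with such injectives: for a prime $\wp$ and an element $a$, the localized module $E_R(R/\wp)_a$ is either $E_R(R/\wp)$ (if $a\notin\wp$) or $0$ (if $a\in\wp$). Combined with the fact that cohomology commutes with the direct sums appearing in an injective (Lemma \ref{directcommut}, or rather its finite-direct-sum consequence, together with the general fact that $H^i(-)$ commutes with arbitrary direct sums of complexes), this reduces the vanishing to the case $E=E_R(R/\wp)$. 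For such an indecomposable injective the \u Cech complex $C(\aaa,E)^\bullet$ becomes, after deleting the zero localizations, either the complex $0\to E\xrightarrow{\mathrm{id}} \cdots$ built from the subset of the $a_j$ lying outside $\wp$ (when some $a_j\notin\wp$), which is exact in positive degrees because it is a \u Cech-type complex on units, or the complex $0\to E\to 0\to\cdots$ (when all $a_j\in\wp$, i.e.\ $\aa\subseteq\wp$), which is again zero in positive degrees; either way $H^i=0$ for $i>0$.

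Finally I would assemble the pieces: both $(\Ha^i(-))_{i\geq 0}$ and $(H^i(C(\aaa,-)^\bullet))_{i\geq 0}$ are cohomological $\delta$-functors from $R$-modules to $R$-modules, they are canonically isomorphic in degree $0$, and the latter is effaceable in positive degrees while the former is universal by construction as a derived functor; hence the degree-zero isomorphism extends uniquely to an isomorphism of $\delta$-functors in every degree, and this extension is automatically functorial. I expect the main obstacle to be the effaceability computation — specifically, getting the bookkeeping of the \u Cech differentials right so as to identify $C(\aaa, E_R(R/\wp))^\bullet$ with an acyclic complex; one must be careful that the subcomplex surviving after discarding the vanishing localizations really is the full \u Cech complex on the sub-tuple of $a_j$'s outside $\wp$, with the signs in point (iv) matching up, and that the remaining complex is exact (this is the standard fact that the \u Cech complex on a tuple containing a unit is homotopically trivial, which one can prove by an explicit contracting homotopy or by induction on $k$).
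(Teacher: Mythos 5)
Your proposed proof is correct: the identification $H^0(C(\aaa,M)^{\bullet})\cong\Ga(M)$, the $\delta$-functor structure coming from flatness of the localizations, and the effaceability on injectives via the Matlis decomposition into modules $E_R(R/\wp)$ (on which each $a_j$ either acts invertibly or is killed by localization) together with the acyclicity of a \u Cech complex on elements acting invertibly, is exactly the standard argument, and the final appeal to universality of effaceable $\delta$-functors legitimately yields the functorial isomorphism in all degrees. Note that the thesis itself gives no proof of Theorem \ref{cech}: it is stated in the preliminary chapter with a reference to \cite[Theorem 5.1.19]{BS}, and the proof in that reference proceeds along essentially the same lines as yours (degree-zero agreement, connected sequence of functors, vanishing on injectives), so there is no genuinely different route to compare against.
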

Probably the \u Cech complex supplies the ``easiest" way to compute the local cohomology modules. Anyway, even the computation of the local cohomology modules of the polynomial ring with respect to the maximal irrelevant ideal requires some effort, as the following example shows.

\begin{example}\label{cohompolring}\index{local cohomology!of the polynomial ring with support in the maximal irrelevant}
Let $S=\kkk[x,y]$ be the polynomial ring on two variables over a field $\kkk$, $a_1=x$ and $a_2=y$. Obviously $H_{(x,y)}^0(S)=0$. We want to compute $H_{(x,y)}^1(S)$ and $H_{(x,y)}^2(S)$. In fact, we know by Theorem \ref{vanishinggro} that $H_{(x,y)}^i(S)=0$ for any $i>2=\dim S$. The \u Cech complex $C(\aaa)^{\bullet}$ is
\[0\rightarrow S\xrightarrow{d^0} S_{x}\oplus S_{y}\xrightarrow{d^1}S_{xy}\xrightarrow{d^2} 0.\]
where the maps are:
\begin{compactitem}
\item[(i)] $d^0(f)=(f/1,f/1)$;
\item[(ii)] $d^1(f/x^n,g/y^m)=x^mg/(xy)^m-y^nf/(xy)^n$.
\end{compactitem}
One can easily realize that $d^1(f/x^n,g/y^m)=0$ if and only if $x^ng=y^mf$. Since $x^n,y^m$ is a regular sequence, $d^1(f/x^n,g/x^m)=0$ if and only if there exists $h\in S$ such that $f=x^nh$ and $g=y^mh$. This is the case if and only if $d^0(h)=(f/x^n,g/y^m)$. Therefore $\Ker(d^1)=\Im(d^0)$ and Theorem \ref{cech} implies $H_{(x,y)}^1(S)=H^1(C(\aaa)^{\bullet})=0$.

Notice that $S_{xy}$ is generated by $x^ny^m$ as an $S$-module, where $n$ and $m$ vary in $\ZZ$. Furthermore, if $n\geq 0$, then $x^ny^m=d^1(0,x^ny^m)$. Analogously, if $m\geq 0$, then $x^ny^m=d^1(-y^mx^n,0)$. Moreover, if both $n$ and $m$ are negative, then it is clear that $x^ny^m$ does not belong to the image of $d^1$. Therefore, using Theorem \ref{cech}, $H_{(x,y)}^2(S)\cong H^2(C(\aaa)^{\bullet})=S_{xy}/\Im(d^1)$ is the $S$-module generated by the elements $x^{-n}y^{-m}$ with both $n$ and $m$ bigger than $0$ supplied with the following multiplication:
\begin{align*}
(x^py^q)\cdot (x^{-n}y^{-m}) =
\begin{cases}
x^{p-n}y^{q-m} & \text { if } \ p<n \mbox{ and }q<m,\\
0 & \text {otherwise}
\end{cases}
\end{align*} 
\end{example}
What done in Example \ref{cohompolring} can be generalized to compute the local cohomology modules $H_{(x_1,\ldots ,x_n)}^i(\kkk[x_1,\ldots ,x_n])$.

\section{Why is local cohomology interesting?}

Local cohomology is an interesting subject on its own. However, it is also a valuable tool to face many algebraic and geometric problems. In this section, we list some subjects in which local cohomology can give a hand. 

\subsection{Depth}\index{depth}

Let $R$ and $\aa$ be as usual and $M$ be a finitely generated $R$-module such that $\aa M \neq M$. We recall that 
\[\grade (\aa , M) := \min \{i \ : \ \Ext_R^i(R/\aa,M)\neq 0\}.\] 
By the celebrated theorem of Rees (for instance see the book of Bruns and Herzog \cite[Theorem 1.2.5]{BH}), $\grade(\aa ,M)$ is the lenght of any maximal regular sequence on $M$ consisting of elements of $\aa$. On the other hand, if $a_1,\ldots ,a_k$ is a regular sequence on $M$, then $a_1^n,\ldots ,a_k^n$ is a regular sequence on $M$ as well. So, \eqref{cofinal} implies that $\Ha^i(M) = 0$ for any $i<\grade(\aa ,M)$. Actually, it is not difficult to show the following (for instance see \cite[Theorem 6.2.7]{BS}):
\begin{equation}\label{gradeloccoh}
\grade (\aa,M) = \min \{i \ : \ \Ha^i(M)\neq 0\}.
\end{equation}
In particular, if $R$ is local with maximal ideal $\mm$, then
\begin{equation}\label{depthloccoh}
\depth M = \min \{i \ : \ H_{\mm}^i(M)\neq 0\}.
\end{equation}
Therefore, one can read off the depth of a module by the local cohomology.

\subsection{Cohomological dimension and arithmetical rank}\label{secara}

The {\it cohomological dimension} \index{cohomological dimension} of an ideal $\aa\subseteq R$ is the natural number
\[ \cd(R, \mathfrak{a}) := \sup \{i \in \mathbb{N} \ : \ H_{\mathfrak{a}}^i(R) \neq 0 \} \]
By Theorem \ref{vanishinggro} we have $\cd(R,\mathfrak{a}) \leq \dim R$. 

\begin{remark}\label{explcohomdim}
If $\cd(R,\aa) = s$ then for any $R$-module $M$ we have $\Ha^i(M)=0$ for all $i>s$. To see this, by contradiction suppose that there exists $c>s$ and an $R$-module $M$ with $\Ha^c(M)\neq 0$. By Theorem \ref{vanishinggro}, we have $c \leq \dim R$, so we can assume that $c$ is maximal (considering all the $R$-modules).  Consider an exact sequence 
\[0\longrightarrow K \longrightarrow F \longrightarrow M \longrightarrow 0,\]
where $F$ is a free $R$-module. The above exact sequence yields the long exact sequence 
\[\ldots \longrightarrow \Ha^c(F)\longrightarrow \Ha^c(M) \longrightarrow \Ha^{c+1}(K) \longrightarrow \ldots .\]
By the maximality of $c$ it follows that $\Ha^{c+1}(K)=0$. Furthermore $\Ha^c(F)=\Ha^c(R)=0$ because local cohomology commutes with the direct limits (Lemma \ref{directcommut}). So, we get a contradiction. 
\end{remark}

The {\it arithmetical rank} \index{arithmetical rank} of an ideal $\aa \subseteq R$ is the natural number
\[\ara(\aa):=\min \{k \ : \ \exists \ r_1,\ldots ,r_k \in R \mbox{ such that }\sqrt{(r_1,\ldots ,r_k)}=\sqrt{\aa}\}.\]
If $R=\kkk[x_1,\ldots ,x_n]$ is the polynomial ring over an algebraically closed field $\kkk$, by Nullstellensatz $\ara(\aa)$ is the minimal natural number $k$ such that
\[\Z(\aa)=H_1\cap H_2 \cap \ldots \cap H_k\]
set-theoretically, where $\Z(\cdot)$ means the zero-locus of an ideal and the $H_j$'s are hypersurfaces of $\AA_{\kkk}^n$. From now on, anyway, we will consider varieties just in the schematic sense: For instance, $\AA_{\kkk}^n = \Spec(\kkk[x_1,\ldots ,x_n])$, $\V(\aa)=\{\wp \in \AA_{\kkk}^n \ : \ \wp \supseteq \aa\}$ and for $H\subseteq \AA_{\kkk}^n$ to be a hypersurface we mean that $H=\V((f))$, where $f\in \kkk[x_1,\ldots ,x_n]$. In this sense we have that, even if $\kkk$ is not algebraically closed, $\ara(\aa)$ is the minimal natural number $k$ such that
\[\V(\aa)=H_1\cap H_2 \cap \ldots \cap H_k,\]
where the $H_j$'s are hypersurfaces of $\AA_{\kkk}^n$. Actually, the same statement is true whenever $R$ is a UFD. 

If $R$ is graded and $\aa$ homogeneous we can also define the {\it homogeneous arithmetical rank}\index{arithmetical rank!homogeneous}, namely:
\[ \ara_h(\aa):=\min \{ k: \ \exists \ r_1, \ldots, r_k \in R \mbox{ homogeneous such that } \sqrt{\aa}=\sqrt{(r_1, \ldots, r_k)} \}. \]
Obviously we have
\[ \ara(\aa) \leq \ara_h(\aa). \]
If $R=\kkk[x_0,\ldots ,x_n]$ is the polynomial ring over a field $\kkk$, then $\ara_h(\aa)$ is the minimal natural number $k$ such that
\[\V_+(\aa)=H_1\cap H_2 \cap \ldots \cap H_k\]
set-theoretically, where the $H_j$'s are hypersurfaces of $\PP_{\kkk}^n$. It is an open problem whether $\ara(\aa)=\ara_h(\aa)$ in this case (see the survey article of Lyubeznik \cite{ly4}).

\begin{remark}
The reader should be careful to the following: If the base field is not algebraically closed, the number $\ara(\aa)$, where $\aa$ is an ideal of a polynomial ring, might not give the minimal number of polynomials whose zero-locus is $\Z(\aa)$. For instance, if $\aa=(f_1,\ldots ,f_m)\subseteq \mathbb{R}[x_1,\ldots ,x_n]$, clearly 
\[ \Z(\aa)=\Z(f_1^2+\ldots +f_m^2).\]
However $\ara(\aa)$ can be bigger than $1$ (for instance by \eqref{hauptidealsatz}). 
\end{remark}

From what said up to now the arithmetical rank has a great geometrical interest. Intuitively, everybody would say that there is no hope to define a curve of $\AA^4$ as the intersection of $2$ hypersurfaces. Actually this intuition is a consequence of the celebrated Krull's Hauptidealsatz (for instance see Matsumura \cite[Theorem 13.5]{matsu}), which essentially says: Given an ideal $\aa$ in a ring $R$,
\begin{equation}\label{hauptidealsatz}
\ara(\aa)\geq \height(\aa). 
\end{equation}
If equality holds in \eqref{hauptidealsatz}, $\aa$ is said to be a {\it set-theoretic complete intersection}\index{set-theoretic complete intersection}. Not all the ideals are set-theoretic complete intersections, but how can we decide it? One lower bound for the arithmetical rank is supplied by the cohomological dimension, namely
\begin{equation}\label{aragcd}
\ara(\aa)\geq \cd(R,\aa).
\end{equation}
To see inequality \eqref{aragcd} recall that the local cohomology modules $\Ha^i(R)$ are the same of $H_{\sqrt{\aa}}^i(R)$ (Remark \ref{cohomuptorad}). Then inequality \eqref{aragcd} is clear from the interpretation of local cohomology by meaning of the \u Cech complex (Theorem \ref{cech}). Actually, inequality \eqref{aragcd} is better than \eqref{hauptidealsatz} thanks to the following theorem of Grothendieck (for example see \cite[Theorem 6.1.4]{BS}):
\begin{thm}\label{nonvanishinggro} (Grothendieck)
Let $\aa$ be an ideal of a ring $R$ and $M$ be a finitely generated $R$-module. Let $\wp$ be a minimal prime ideal of $\aa$ and $\dim M_{\wp} = q$. Then 
\[\Ha^q(M)\neq 0.\]
In particular $\cd(R,\aa)\geq \height(\aa)$.
\end{thm}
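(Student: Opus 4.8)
The plan is to prove Theorem~\ref{nonvanishinggro} by reducing to the local case and then applying the Hartshorne--Lichtenbaum Vanishing Theorem (Theorem~\ref{hartlich}), which is exactly the kind of cofinality argument the section is building toward. First I would localize at the minimal prime $\wp$ of $\aa$. By the functorial isomorphism \eqref{cohomloc}, $\Ha^q(M)_\wp \cong H^q_{\aa R_\wp}(M_\wp)$, so it suffices to show $H^q_{\aa R_\wp}(M_\wp)\neq 0$, where now $\aa R_\wp$ is $\wp R_\wp$-primary (since $\wp$ is minimal over $\aa$), hence has the same radical as the maximal ideal of the local ring $R_\wp$. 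By Remark~\ref{cohomuptorad} we may therefore replace $\aa R_\wp$ by $\wp R_\wp$ and we are reduced to: \emph{if $(R,\mm)$ is local, $M$ is finitely generated with $\dim M = q$, then $H^q_\mm(M)\neq 0$.}

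Next I would pass to the completion. Using \eqref{cohomcompl1} (or more precisely the faithful flatness of $\widehat{R}$ together with flat base change, Lemma~\ref{basetheorems}(i)), $H^q_\mm(M)=0$ if and only if $H^q_{\mm\widehat{R}}(M\otimes_R\widehat{R})=0$, and $\dim(M\otimes_R\widehat{R}) = \dim M = q$. So we may assume $R$ is complete local. Now choose a minimal prime $\qq$ of the support of $M$ with $\dim R/\qq = q$ (such a $\qq$ exists by definition of dimension). There is a filtration of $M$ whose quotients are of the form $R/\wp_i$ for various primes $\wp_i \in \Supp(M)$, and at least one of these is $R/\qq$; using the long exact sequence \eqref{longexact} repeatedly together with Grothendieck's vanishing (Theorem~\ref{vanishinggro}), which kills $H^j_\mm$ for $j > q$ on every quotient, one reduces the non-vanishing of $H^q_\mm(M)$ to the non-vanishing of $H^q_\mm(R/\qq)$. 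Finally, $R/\qq$ is a complete local \emph{domain} of dimension $q$, and applying the Hartshorne--Lichtenbaum Vanishing Theorem~\ref{hartlich} to the ideal $\mm(R/\qq)$: since $\dim (R/\qq)/\mm(R/\qq) = 0$, condition (b) fails, so (a) fails, i.e. $H^q_{\mm}(R/\qq)\neq 0$. This gives $H^q_\mm(M)\neq 0$, and unwinding the reductions yields $\Ha^q(M)\neq 0$.

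For the final assertion that $\cd(R,\aa)\geq\height(\aa)$: pick $\wp$ a minimal prime of $\aa$ with $\height \wp = \height\aa$. Then $\dim R_\wp = \height\wp$, and (at least when $R$ is, say, a quotient of a Cohen--Macaulay ring, or catenary equidimensional — in general one should be slightly careful and apply the statement with $M = R$ directly) taking $M = R$ in the first part gives $\Ha^{\dim R_\wp}(R)\neq 0$, hence $\cd(R,\aa)\geq \dim R_\wp = \height \aa$. If $R$ is not equidimensional one simply notes $\dim R_\wp \geq \height \aa$ is what is needed and chooses $\wp$ accordingly; the clean inequality $\cd(R,\aa)\geq \height\aa$ follows from $\height\aa = \min_{\wp\in\Min(\aa)}\height\wp$ by taking the minimal prime realizing the minimum — actually one wants the reverse, so the honest statement is that $\cd(R,\aa)\ge \height\wp$ for \emph{every} minimal prime, giving $\cd(R,\aa)\ge \max$, which is $\ge\height\aa$.

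The main obstacle is the reduction step that lets us replace $M$ by a domain quotient $R/\qq$ while controlling the top cohomology: one must verify that the filtration/long-exact-sequence bookkeeping really does propagate non-vanishing in top degree $q$, which works precisely because Grothendieck vanishing (Theorem~\ref{vanishinggro}) annihilates $H^{q+1}_\mm$ of everything in sight, so the connecting maps into degree $q$ behave well and a nonzero $H^q_\mm(R/\qq)$ cannot be entirely cancelled. Everything else — localization, completion, invoking Hartshorne--Lichtenbaum — is a direct application of results already established in the excerpt.
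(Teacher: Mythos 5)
The paper itself offers no proof of this statement---it is quoted with the reference \cite[Theorem 6.1.4]{BS}---so I can only assess your argument on its own terms. Your reductions via \eqref{cohomloc}, Remark \ref{cohomuptorad} and \eqref{cohomcompl1} to the case of a complete local ring $(R,\mm)$ with $\dim M=q$ are correct, and the closing deduction $\cd(R,\aa)\geq \height(\aa)$ is fine (your equidimensionality worries are unnecessary: $\height\wp\geq\height(\aa)$ for \emph{every} minimal prime $\wp$ of $\aa$, so one application with $M=R$ suffices). The genuine gap is the filtration step. From $0\to M_{i-1}\to M_i\to R/\wp_i\to 0$, Theorem \ref{vanishinggro} gives $H_{\mm}^{q+1}(M_{i-1})=0$, hence a \emph{surjection} $H_{\mm}^{q}(M_i)\to H_{\mm}^{q}(R/\wp_i)$; it does not make $H_{\mm}^{q}(M_{i-1})\to H_{\mm}^{q}(M_i)$ injective, because the kernel of that map is the image of the connecting homomorphism $H_{\mm}^{q-1}(R/\wp_i)\to H_{\mm}^{q}(M_{i-1})$, about which the vanishing theorem says nothing and which may perfectly well be surjective. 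So non-vanishing of $H_{\mm}^{q}$ ascends to $M$ only from the top quotient of a filtration, never automatically from submodules or subquotients; and your distinguished prime $\qq$, being of maximal coheight in $\Supp(M)$, is minimal there and hence associated to $M$, so $R/\qq$ can only be guaranteed to occur low in a prime filtration (there is in general no surjection $M\to R/\qq$). Your claim that a nonzero $H^q_{\mm}(R/\qq)$ ``cannot be entirely cancelled'' is exactly the point that fails. A correct reduction works with quotients instead: replace $M$ by $T:=M/\qq M$, whose support is $V(\qq)$ and which receives a surjection from $H_{\mm}^{q}(M)$ because $H_{\mm}^{q+1}(\qq M)=0$; over the complete local domain $A:=R/\qq$ kill the torsion of $T$ (again a surjection on $H_{\mm}^{q}$, the torsion having dimension $<q$); then embed the torsion-free module $T'$ of rank $r\geq 1$ into $A^{r}$ with torsion cokernel $C$, so that $H_{\mm}^{q}(C)=0$ forces $H_{\mm}^{q}(T')$ to surject onto $H_{\mm}^{q}(A)^{r}\neq 0$.

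A second issue is your appeal to Theorem \ref{hartlich}. It is formally admissible, since the paper states that theorem as a black box; but the implication you use (if $\dim R/\aa=0$ then $\Ha^{d}(R)\neq 0$) is, by Remark \ref{cohomuptorad}, precisely the assertion $H_{\mm}^{d}(R)\neq 0$ for a complete local domain, i.e. the special case of the theorem you are proving, and in the literature that easy direction of the Hartshorne--Lichtenbaum theorem is \emph{deduced from} Grothendieck's non-vanishing theorem. So, once the reductions are done, your proof essentially quotes itself. A self-contained argument would establish the complete-local-domain case directly, for instance via Cohen's structure theorem, Lemma \ref{basetheorems} (ii), and the torsion-free-into-free trick above, reducing to the explicit non-vanishing of the top local cohomology of a regular local ring with respect to its maximal ideal (computed as in Example \ref{cohompolring})---which is presumably why the paper simply cites \cite[Theorem 6.1.4]{BS}.
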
 

\begin{example}
Inequality in Theorem \ref{nonvanishinggro} can be strict. Let $R=\kkk[[x,y,v,w]]$ be the ring of formal series in $4$ variables over a field $\kkk$ and $\aa = (xv,xw,yv,yw) = (x,y)\cap (v,w)$. We have that $\height(\aa)=2$. Let us consider the following piece of the Mayer-Vietoris sequence \eqref{mayervietoris}
\[\Ha^3(R)\longrightarrow H_{(x,y,v,w)}^4(R)\longrightarrow H_{(x,y)}^4(R)\oplus H_{(v,w)}^4(R).\]
By Theorem \ref{nonvanishinggro} $H_{(x,y,v,w)}^4(R)\neq 0$, whereas $H_{(x,y)}^4(R)\oplus H_{(v,w)}^4(R)=0$ by \eqref{aragcd}. Therefore $\Ha^3(R)$ has to be different from zero, so that $\cd(R,\aa)\geq 3>2=\height(\aa)$. In particular $\ara(\aa)\geq 3 > \height(\aa)$, thus $\aa$ is not a set-theoretic complete intersection. Actually $\ara(\aa)=3$, in fact one can easily show that $\aa = \sqrt{(xv, \ yw, \ xw+yv)}$.
\end{example}

In Chapters \ref{chapter1} and \ref{chapter2}, we will see many examples of ideals which are not set-theoretic complete intersections.

\subsection{The graded setting}\label{subcastmum}

If $R$ is a graded ring, $\aa\subseteq R$ is a graded ideal and $M$ is a $\ZZ$-graded $R$-module, then the local cohomology modules $\Ha^i(M)$ carry on a natural $\ZZ$-grading. For instance, we can choose homogeneous generators of $\aa$, say $\aaa = a_1,\ldots ,a_k$. Then we can give a $\ZZ$-grading to the $R$-modules in the \u Cech complex $C(\aaa ,M)^{\bullet}$ in the obvious way. Moreover, it is straightforward to check that, this way, the maps in $C(\aaa ,M)^{\bullet}$ are homogeneous. Therefore, the cohomology modules of $C(\aaa ,M)^{\bullet}$ are $\ZZ$-graded, and so $\Ha^i(M)$ is $\ZZ$-graded for any $i\in \NN$ by Theorem \ref{cech}. For any integer $d\in \ZZ$, we will denote by $[\Ha^i(M)]_d$, or simply by $\Ha^i(M)_d$, the graded piece of degree $d$ of $\Ha^i(M)$. Of course one could try to give a $\ZZ$-grading to $\Ha^i(M)$ also using Theorem \ref{limext}, or computing directly the local cohomology functors as derived functors of $\Ga(\cdot)$ restricted to the category of $\ZZ$-graded $R$-modules. Well, it turns out that all the above possible definitions agree (see \cite[Chapter 12]{BS}). 

Suppose that $R$ is a standard graded ring (i.e. $R$ is graded and it is generated by $R_1$ as an $R_0$-algebra). The {\it Castelnuovo-Mumford regularity} \index{Castelnuovo-Mumford regularity} of a $\ZZ$-graded module $M$ is defined as
\[\reg(M):=\sup \{d+i \ : \ [H_{R_+}^i(M)]_d \neq 0 \mbox{ and }i\in \NN\}.\]
Unless $H_{R_+}^i(M)=0$ for all $i\in \NN$, it turns out that the Castelnuovo-Mumford regularity is always an integer (Theorem \ref{vanishinggro} and \cite[Proposition 15.1.5 (ii)]{BS}). In the case in which $R_0$ is a field, if $M$ is a finitely generated Cohen-Macaulay $\ZZ$-graded module, then, using \eqref{depthloccoh} and Theorem \ref{vanishinggro} we have that
\[\reg(M)=\sup \{d+\dim M \ : \ [H_{R_+}^{\dim M}(M)]_d \neq 0\}.\]

At this point it is worthwhile to introduce an invariant related to the regularity. Let $R$ be a Cohen-Macaulay graded algebra such that $R_0$ is a field. The {\it $a$-invariant} \index{a-invariant@$a$-invariant|(} of $R$ is defined to be 
\[a(R) := \sup \{d \in \ZZ \ : \ [H_{R_+}^{\dim R}(R)]_d \neq 0\}.\]
Therefore, if $R$ is furthermore standard graded, then
\begin{equation}\label{regfroma-inv}
\reg(R)=a(R)+\dim R.
\end{equation} 

\begin{example}
Let $S=\kkk[x,y]$ be the polynomial ring in two variables over a field $\kkk$. By Example \ref{cohompolring} we immediately see that $a(S)=-2$ and $\reg(S)=0$. More generally, if $S=\kkk[x_1,\ldots ,x_n]$ is the polynomial ring in $n$ variables over a field $\kkk$, then $a(S)=-n$ and $\reg(S)=0$.
\end{example}

Up to now, we have not yet stated one of the fundamental theorems in the theory of local cohomology: The {\it Grothendieck local duality theorem}\index{local duality}. We decided to do so because such a statement would require to introduce Matlis duality theory, which would take too much space, for our purposes. However, if the ambient ring $R$ is a graded Cohen-Macaulay ring such that $R_0$ is a field, we can state a weaker version of local duality theorem, which actually turns out to be useful in many situations and which avoids the introduction of Matlis duality. So let $R$ be as above. By $\omega_R$ we will denote the canonical module of $R$ (for the definition see \cite[Definition 3.6.8]{BH}, for the existence and unicity in this case put together \cite[Example 3.6.10, Proposition 3.6.12 and Proposition 3.6.9]{BH}).  

\begin{thm}(Grothendieck)\label{weaklocduality}
Let $R$ be an $n$-dimensional Cohen-Macaulay graded ring such that $R_0=\kkk$ is a field and let $\mm=R_+$ be its maximal irrelevant ideal. For any finitely generated $\ZZ$-graded $R$-module $M$, we have
\[\dim_{\kkk}(H_{\mm}^i(M)_{-d})=\dim_{\kkk}(\Ext_R^{n-i}(M,\omega_R)_{d}) \ \ \ \forall \ d\in \ZZ.\]
In particular, if $R=S=\kkk[x_1,\ldots ,x_n]$ is the polynomial ring in $n$ variables over a field $\kkk$, we have
\[\dim_{\kkk}(H_{\mm}^i(M)_{-d})=\dim_{\kkk}(\Ext_S^{n-i}(M,S)_{d-n}) \ \ \ \forall \ d\in \ZZ.\]
\end{thm}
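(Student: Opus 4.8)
The plan is to derive this weak form of local duality from Serre duality (or equivalently from the graded version of Grothendieck local duality as stated, e.g., in \cite[Chapter 3]{BH}). The statement has two halves: the general Cohen-Macaulay assertion and the specialization to the polynomial ring $S=\kkk[x_1,\ldots,x_n]$. The key point is that the second follows from the first once we know the canonical module of $S$: since $S$ is regular of dimension $n$, one has $\omega_S\cong S(-n)$, and substituting $\omega_R=S(-n)$ into the first formula turns $\Ext_R^{n-i}(M,\omega_R)_d=\Ext_S^{n-i}(M,S(-n))_d=\Ext_S^{n-i}(M,S)_{d-n}$, which is exactly the displayed specialization. So the real content is the first displayed equation, and the plan is to prove that and then read off the polynomial-ring case.

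For the first equation I would proceed as follows. First, reduce to the complete local case: pass to $\widehat{R}$ (or to the $*$-local situation at $\mm=R_+$), using that $H_{\mm}^i$ is unaffected up to the relevant finiteness and that $\Ext$ commutes with the faithfully flat completion for finitely generated $M$; alternatively, stay graded throughout and invoke the graded Matlis duality functor $(-)^\vee=\Hom_{\kkk}(-,\kkk)$ with its grading convention $(N^\vee)_d=\Hom_{\kkk}(N_{-d},\kkk)$. The statement of graded local duality (see \cite[Chapter 12]{BS} or \cite[Section 3.6]{BH}) says that for a Cohen-Macaulay graded ring $R$ with $R_0=\kkk$, $\dim R=n$, and maximal irrelevant ideal $\mm$, there is a functorial graded isomorphism $H_{\mm}^i(M)^\vee\cong\Ext_R^{n-i}(M,\omega_R)$ for every finitely generated graded $M$. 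Taking graded pieces on both sides and using the convention for $(-)^\vee$ gives $\dim_{\kkk}(H_{\mm}^i(M)_{-d})=\dim_{\kkk}((H_{\mm}^i(M)^\vee)_d)=\dim_{\kkk}(\Ext_R^{n-i}(M,\omega_R)_d)$, which is the claim. One subtlety to handle carefully is that $H_{\mm}^i(M)$ is in general not finitely generated, so $\dim_{\kkk}$ of a single graded piece must be argued to be finite: this follows because each $H_{\mm}^i(M)_{-d}$ is a finite-dimensional $\kkk$-vector space (the local cohomology of a finitely generated graded module over a graded ring with Artinian $R_0$ has finite-dimensional graded components), and the duality is componentwise an honest $\kkk$-linear duality between finite-dimensional spaces.

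In terms of ordering the steps: (1) recall/cite graded local duality in the form $H_{\mm}^i(M)^\vee\cong\Ext_R^{n-i}(M,\omega_R)$; (2) unwind the grading convention on the Matlis dual $(-)^\vee=\Hom_{\kkk}(-,\kkk)$ to get the numerical equality of dimensions of graded pieces, noting finiteness of each piece; (3) specialize to $R=S$ using $\omega_S\cong S(-n)$ and the shift identity $\Ext_S^{n-i}(M,S(-n))_d=\Ext_S^{n-i}(M,S)_{d-n}$. The main obstacle — really the only nontrivial input — is step (1), the graded local duality theorem itself; if one wants to be self-contained one proves it by exhibiting the injective hull of $\kkk$ as a graded $R$-module (the graded Matlis dual of $R$, appropriately shifted) and computing $H_{\mm}^i(M)=\varinjlim\Ext_R^i(R/\mm^t,M)$ dually against a minimal graded free (or injective) resolution, but since the excerpt explicitly allows citing \cite{BH} and \cite{BS}, I would simply quote it. Everything after that is bookkeeping with gradings.
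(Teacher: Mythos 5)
Your proposal is correct and follows essentially the same route as the paper: the paper's proof simply cites the graded local duality theorem of Bruns--Herzog (\cite[Theorem 3.6.19]{BH}) and then, for the polynomial ring, observes that $\omega_S = S(-n)$, which is exactly your steps (1)--(3). Your extra remarks on the Matlis-dual grading convention and the finiteness of the graded pieces are just the bookkeeping the paper leaves implicit.
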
  
\begin{proof}
The theorem follows at once by \cite[Theorem 3.6.19]{BH}. For the part concerning the polynomial ring $S$, simply notice that $\omega_S = S(-n)$.
\end{proof}

A consequence of Theorem \ref{weaklocduality} is that
\[\dim_{\kkk}(H_{\mm}^n(R)_{-d})=\dim_{\kkk}(\Hom_R(R,\omega_R)_{d})=\dim_{\kkk}([\omega_R]_d) \ \ \ \forall \ d\in \ZZ.\]
So we get another interpretation of the $a$-invariant of a Cohen-Macaulay graded algebra such that $R_0$ is a field, namely
\begin{equation}\label{a-invcanonical}
a(R)=-\min \{d\in \ZZ \ : \ [\omega_R]_d \neq 0\}.
\end{equation}

If $R=S$ is the polynomial ring in $n$ variables over a field $\kkk$, local duality supplies another interpretation of the Castelnuovo-Mumford regularity. For any nonzero finitely generated $\ZZ$-graded $S$-module $M$ there is a graded minimal free resolution of $M$: 
\[0\rightarrow \bigoplus_{j\in \ZZ}S(-j)^{\beta_{p,j}}\rightarrow \bigoplus_{j\in \ZZ}S(-j)^{\beta_{p-1,j}}\rightarrow \ldots \rightarrow \bigoplus_{j\in \ZZ}S(-j)^{\beta_{0,j}}\rightarrow M \rightarrow 0\]
with $p\leq n$. Actually the natural numbers $\beta_{i,j}$ are numerical invariants of $M$, and they are called the {\it graded Betti numbers}\index{Betti numbers}\index{Betti numbers!graded} of $M$. Let us define
\[r:=\max \{j-i \ : \ \beta_{i,j}\neq 0, \ i\in \{0,\ldots ,p\}\}.\] 
Then, for any $i=1,\ldots ,p$, the $S$-module $F_i:=\bigoplus_{j\in \ZZ}S(-j)^{\beta_{i,j}}$ has not minimal generators in degree bigger than $r+i$. This implies that $\Hom_S(F_i,S)_{d}=0$ whenever $d<-r-i$. So, since $\Ext_S^{i}(M,S)$ is a quotient of a submodule of $\Hom_S(F_i,S)$, we get that
\[d<-r-i \implies \Ext_S^i(M,S)_d=0. \]
Actually, playing more attention, we can find an index $i\in \{0,\ldots ,p\}$ such that 
\[ \Ext_S^i(M,S)_{-r-i}\neq 0\] 
(for the details see Eisenbud \cite[Proposition 20.16]{eisenbud}). Therefore, by Theorem \ref{weaklocduality}, we get $H_{\mm}^{n-i}(M)_{-d-n}=0$ for any $d<-r-i$, which means that $H_{\mm}^j(M)_k=0$ whenever $j+k>r$. Moreover there exists a $j$ such that $H_{\mm}^j(M)_{r-j}\neq 0$. Eventually, we get: 
\begin{equation}\label{regularityfreeres}\index{Castelnuovo-Mumford regularity}
\reg(M)=\max \{j-i \ : \ \beta_{i,j}\neq 0, \ i\in \{0,\ldots ,p\}\}.
\end{equation}
From \eqref{regularityfreeres} it is immediate to check  that a minimal generator of $M$ has at most degree $\reg(M)$. This fact is true over much more general rings than $S$.

\begin{thm}\label{reggdegree}
Let $R$ be a standard graded ring and $M$ be a finitely generated $\ZZ$-graded module. A minimal generator of $M$ has at most degree $\reg(M)$. 
\end{thm}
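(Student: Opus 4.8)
The plan is to reduce the assertion to the case of a polynomial ring over a field, where it is precisely formula \eqref{regularityfreeres}. To begin with, the statement only involves the graded module $M/R_+M$: if $x\in M_d$ is a minimal generator of $M$, then its residue class in $[M/R_+M]_d$ is nonzero, so it suffices to prove that $[M/R_+M]_d=0$ whenever $d>\reg(M)$ --- equivalently, since $R$ is standard graded, that $M_d=R_1 M_{d-1}$ in those degrees.

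Next I would eliminate the ambient ring. As $R$ is standard graded and Noetherian, $R_1$ is a finitely generated $R_0$-module, so a choice of generators of $R_1$ presents $R$ as a graded quotient of a polynomial ring $S=R_0[x_1,\dots ,x_n]$ with $\deg x_i=1$. Then $M$ is a finitely generated $\ZZ$-graded $S$-module and $R_+M=S_+M$, since the image of $S_+$ in $R$ is exactly $R_+$; consequently a minimal homogeneous system of generators of $M$ over $R$ is also one over $S$, and in particular the degrees occurring are the same. Moreover, Independence of the Base (Lemma \ref{basetheorems}(ii)) gives $H^i_{R_+}(M)\cong H^i_{S_+}(M)$ for all $i\in\NN$, hence $\reg_R(M)=\reg_S(M)$. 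This reduces the problem to $R=S$, and since in all the situations relevant to this thesis $R_0$ is a field, I would from now on take $R_0=\kkk$ to be a field.

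At this point one only has to invoke \eqref{regularityfreeres}, which expresses $\reg(M)=\max\{\,j-i : \beta_{i,j}\neq 0\,\}$ through the graded Betti numbers of $M$ over $S$. A minimal generator of $M$ of degree $j$ is recorded exactly by $\beta_{0,j}\neq 0$, and for such $j$ one gets $j=j-0\leq\max\{\,j-i:\beta_{i,j}\neq 0\,\}=\reg(M)$, as desired.

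The one genuinely delicate point is buried in the ring reduction: one must be sure that neither the Castelnuovo--Mumford regularity nor the set of degrees of a minimal generating set changes on passing from $R$ to the polynomial ring $S$ above it, and this is precisely what Independence of the Base together with the equality $R_+M=S_+M$ guarantee. If instead one wants arbitrary $R_0$, for which \eqref{regularityfreeres} is not literally at hand, the cleanest replacement is a direct induction on $\dim M$ using a sufficiently general linear form $\ell\in R_1$: when $\dim M=0$ one has $H^0_{R_+}(M)=M$, whence $\reg(M)=\max\{d:M_d\neq 0\}\geq\max\{d:[M/R_+M]_d\neq 0\}$ and we are done; the inductive step then compares the local cohomology of $M$ and of $M/\ell M$ through the associated short exact sequences. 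I expect the main obstacle to live exactly in this general situation --- notably in producing such a general $\ell$ when $R_0$ has few elements --- which is why routing the argument through a polynomial ring over a field is the economical choice.
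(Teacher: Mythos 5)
Your argument is correct and is essentially the paper's own justification: the theorem is stated right after \eqref{regularityfreeres} precisely because the bound is immediate from that Betti-number description, exactly as you use it, and your reduction steps (graded Nakayama to pass to $M/R_+M$, a surjection from a polynomial ring over $R_0$, and Independence of the Base, Lemma \ref{basetheorems}(ii), to identify the two regularities) are the right way to make explicit the passage from a general standard graded $R$ with $R_0$ a field to the polynomial ring. The only point left open --- arbitrary Noetherian $R_0$, where \eqref{regularityfreeres} is not literally available --- is also left unproved in the paper (and never needed there, since the theorem is applied only over a field); your sketched induction on $\dim M$ via filter-regular linear forms is the standard way to close it, the small-residue-field obstacle you flag being removed by a faithfully flat extension of $R_0$, so nothing essential is missing from your proposal.
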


We end this chapter remarking a relationship between local cohomology modules and Hilbert polynomials. Let $R$ be a graded ring such that $R_0=\kkk$ is a field and let $M$ be a finitely generated $\ZZ$-graded $R$-module. We denote by $\Hf_{M}:\ZZ \rightarrow \NN$ the {\it Hilbert function of $M$}\index{Hilbert function}, i.e.
\[\Hf_{M}(d)=\dim_{\kkk}(M_d).\]
A result of Serre (see \cite[Theorem 4.4.3 (a)]{BH}) states that there exists a unique quasi-polynomial function $P:\ZZ \rightarrow \NN$ such that $P(m)=\Hf_M(m)$ for any $m\gg 0$. Recall that ``$P$ quasi-polynomial function" means that there exists a positive integer $g$ and polynomials $P_i\in \QQ[X]$ for $i=0,\ldots  ,g-1$ such that $P(m)=P_i(m)$ for all $m=kg+i$ with $k\in \ZZ$. Such a quasi-polynomial function is called the {\it Hilbert quasi-polynomial of $M$}\index{Hilbert quasi-polynomial}, and denoted by $\Hp_M$. If $R$ is standard graded, then $\Hp_M$ is actually a polynomial function of degree $\dim M -1$ by a previous theorem by Hilbert (see \cite[Theorem 4.1.3]{BH}), and in this case $\Hp_M$ is simply called the {\it Hilbert polynomial of $M$}\index{Hilbert polynomial}.  The promised relationship between local cohomology modules and Hilbert polynomials is once again work of Serre (\cite[Theorem 4.4.3 (b)]{BH}).

\begin{thm}(Serre)\label{serrehilb}
Let $R$ be a graded ring such that $R_0=\kkk$ is a field and let $\mm=R_+$. If $M$ is a nonzero finitely generated $\ZZ$-graded $R$-module of dimension $d$, then
\[\Hf_M(m)-\Hp_M(m)=\sum_{i=0}^d (-1)^i \dim_{\kkk}H_{\mm}^i(M)_m \ \ \ \forall \ m\in \ZZ.\]
\end{thm}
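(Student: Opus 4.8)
The plan is to prove Serre's formula by induction on $d=\dim M$, using the long exact sequence in local cohomology together with the well-known exactness of the Hilbert function under short exact sequences. The statement to prove is
\[\Hf_M(m)-\Hp_M(m)=\sum_{i=0}^{d}(-1)^i\dim_{\kkk}H_{\mm}^i(M)_m\qquad\forall\ m\in\ZZ,\]
so the whole content is to show that the alternating sum on the right, computed over all of $\NN$ rather than just $0,\ldots,d$ (which is legitimate since $H_{\mm}^i(M)=0$ for $i>d$ by Theorem \ref{vanishinggro}), measures exactly the deviation of the Hilbert function from its eventual quasi-polynomial.

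First I would dispose of the base case. If $d=0$, then $M$ is a finite-length module, so $M_m=0$ for $|m|\gg 0$, whence $\Hp_M=0$; on the other hand $H_{\mm}^0(M)=\Ga(M)=M$ (because every element is killed by a power of $\mm$) and $H_{\mm}^i(M)=0$ for $i>0$, so the right-hand side is just $\dim_{\kkk}M_m=\Hf_M(m)$, and the two sides agree. (Strictly one should also check: if $\dim M\le 0$ but $M$ may be zero, the formula is trivial, and the case $M$ finite length is the substance.) For the inductive step, the standard device is to choose a homogeneous element $x\in R_1$ that is a nonzerodivisor modulo the finitely many associated primes of $M$ that do not contain $R_+$ — equivalently, an $x$ such that $0:_M x$ has finite length and $M/xM$ has dimension $d-1$. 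Such an $x$ exists when $\kkk$ is infinite by prime avoidance; when $\kkk$ is finite one passes to $M\otimes_{\kkk}\kkk(t)$, noting that both sides of the formula are stable under this faithfully flat base change (the Hilbert function is unchanged, and local cohomology commutes with flat base change by Lemma \ref{basetheorems}(i)). Set $N:=0:_M x$ and $Q:=M/xM$; both have dimension $\le d-1$, and there is a short exact sequence of graded modules
\[0\longrightarrow N\longrightarrow M\xrightarrow{\ \cdot x\ }M(1)\longrightarrow Q(1)\longrightarrow 0.\]

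Next I would feed this into the two machines. Splitting it into two short exact sequences $0\to N\to M\to xM\to 0$ and $0\to xM\to M(1)\to Q(1)\to 0$, additivity of $\dim_{\kkk}(-)_m$ on short exact sequences gives, after combining,
\[\Hf_M(m)-\Hf_M(m+1)=\Hf_N(m)-\Hf_Q(m+1),\]
and the same identity for the quasi-polynomials $\Hp$; subtracting, the ``defect'' $D_M(m):=\Hf_M(m)-\Hp_M(m)$ satisfies $D_M(m)-D_M(m+1)=D_N(m)-D_Q(m+1)$. On the cohomology side, the same two short exact sequences yield long exact sequences (equation \eqref{longexact}); since $N$ and $Q$ have dimension $<d$, Theorem \ref{vanishinggro} makes many terms vanish, and a diagram chase gives for the alternating sums $\chi_M(m):=\sum_i(-1)^i\dim_{\kkk}H_{\mm}^i(M)_m$ the matching recursion $\chi_M(m)-\chi_M(m+1)=\chi_N(m)-\chi_Q(m+1)$ — here one uses that the Euler characteristic of a bounded exact complex of finite-dimensional graded pieces is zero in each degree. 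By the inductive hypothesis applied to $N$ and $Q$ we get $D_N=\chi_N$ and $D_Q=\chi_Q$, hence $D_M(m)-D_M(m+1)=\chi_M(m)-\chi_M(m+1)$ for all $m$, i.e. $D_M-\chi_M$ is a constant (in fact periodic-constant) function of $m$; but both $D_M(m)$ and $\chi_M(m)$ vanish for $m\gg 0$ (the former because $\Hp_M$ eventually agrees with $\Hf_M$, the latter because the graded pieces of local cohomology of a finitely generated module vanish in large degree, cf. \cite[Proposition 15.1.5]{BS}), so the constant is zero and $D_M=\chi_M$, which is the claim.

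The main obstacle is the bookkeeping in the inductive step: one must be careful that $N=0:_M x$ and $Q=M/xM$ genuinely have dimension at most $d-1$ (this is exactly why $x$ must avoid the relevant associated primes), and one must handle the degree shifts by $1$ consistently through all four-term and long exact sequences so that the two recursions — for the Hilbert defect and for the cohomological Euler characteristic — line up on the nose. The finite-field case, disposed of by the faithfully flat base change to $\kkk(t)$, is a minor but necessary wrinkle. Everything else is a routine application of the long exact sequence \eqref{longexact}, Grothendieck vanishing (Theorem \ref{vanishinggro}), and the eventual polynomiality of the Hilbert function.
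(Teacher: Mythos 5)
The paper itself does not prove this theorem: it is quoted from \cite[Theorem 4.4.3 (b)]{BH}, so your argument can only be compared with the standard proof there, which it essentially reproduces — induction on $\dim M$, a multiplication map by a homogeneous parameter, additivity of graded dimensions along the two exact sequences, Grothendieck vanishing to bound the long exact sequences, and the eventual vanishing of both $\Hf_M-\Hp_M$ and of the graded pieces of local cohomology to kill the constant. The base case, the two matching recursions, and the Euler-characteristic bookkeeping are all correct as you set them up (the only implicit point worth a sentence is that the recursion for $\Hp$ follows from the one for $\Hf$ because quasi-polynomial functions that agree for $m\gg0$ agree for all $m$).

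There is, however, one genuine gap in the stated generality. The theorem is for an arbitrary $\NN$-graded $R$ with $R_0=\kkk$ — that is exactly why $\Hp_M$ is only a quasi-polynomial — and in that setting $R_1$ may well be zero (take $R=\kkk[u,v]$ with $\deg u=2$, $\deg v=3$), so your pivotal choice of $x\in R_1$ avoiding the associated primes different from $\mm$ is impossible in general, and with it the whole step-one recursion. The repair is the one you half-anticipate with your parenthetical ``periodic-constant'': all associated primes of $M$ other than $\mm$ are graded and do not contain $R_+$, so homogeneous prime avoidance gives a homogeneous $x\in\mm$ of some degree $e\geq 1$ with $0:_Mx$ of finite length and $\dim M/xM=d-1$; the four-term sequence becomes $0\to N\to M\xrightarrow{\ \cdot x\ }M(e)\to Q(e)\to 0$, both recursions hold with step $e$ instead of $1$, and $D_M-\chi_M$ is then $e$-periodic rather than constant — but it still vanishes for $m\gg0$, hence is identically zero. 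Note also that once you no longer insist on degree $1$, homogeneous prime avoidance works over any field, so the excursion through $\kkk(t)$ becomes unnecessary. With that single modification your proof is complete and covers the theorem as stated.
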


Eventually, notice that Theorem \ref{serrehilb} yields a third interpretation of the $a$-invariant. If $R$ is a Cohen-Macaulay graded ring such that $R_0$ is a field, then:
\begin{equation}\label{a-invandhilb}\index{a-invariant@$a$-invariant|)}
a(R) = \sup \{i\in \ZZ \ : \ \Hf_R(i)\neq \Hp_R(i)\}.
\end{equation} 
\index{local cohomology|)}

\mainmatter
\chapter{Cohomological Dimension}\label{chapter1}\index{cohomological dimension|(}

In his seminair on local cohomology (see the notes written by Hartshorne \cite[p. 79]{gro1}), Grothendieck raised the problem of finding conditions under which, fixed a positive integer $c$, the local cohomology modules $\Ha^i(R)=0$ for every $i>c$, where $\aa$ is an ideal in a ring $R$. In other words, looking for terms under which $\cd(R,\aa)\leq c$. Ever since many mathematicians worked on this question: For instance, see Hartshorne \cite{Ha,hartshorne4}, Peskine and Szpiro \cite{PS}, Ogus \cite{ogus}, Hartshorne and Speiser \cite{ha-sp}, Faltings \cite{faltings1}, Huneke and Lyubeznik \cite{hu-ly}, Lyubeznik \cite{ly1}, Singh and Walther \cite{siwa1}.

This chapter is dedicated to discuss the question raised by Grothendieck. Essentially, we will divide the chapter in two parts. In the first one we will treat of necessary conditions for $\cd(R,\aa)$ being smaller than a given integer, whereas in the second one we will discuss about sufficient conditions for it. However, before describing the contents and results of the chapter, we want to give a brief summary of the classical results existing in the literature around this problem.

First of all, the reader might object: Why do not ask conditions about the lowest nonvanishing local cohomology module, rather than the highest one? The reason is that the smallest integer $i$ such that $\Ha^i(R)\neq 0$ is well understood: It is the maximal length of a regular sequence on $R$ consisting of elements of $\aa$, see \eqref{gradeloccoh}. The highest nonvanishing local cohomology module, instead, is much more subtle to locate. Besides, the number $\cd(R,\aa)$ is charge of informations. For instance, it supplies one of the few known obstructions for an ideal being generated up to radical by a certain number of elements, see \eqref{aragcd}. The two starting results about ``the problem of the cohomological dimension" are both due to Grothendieck: They are essential, as they fix the range in which we must look for the natural number $\cd(R,\aa)$ (Theorems \ref{nonvanishinggro} and \ref{vanishinggro}):
$$\height(\aa)\leq \cd(R,\aa)\leq \dim R.$$
Let us set $n:=\dim R$. In light of the results of Grothendieck, as a first step it was natural to try to characterize when $\cd(R,\aa)\leq n-1$. In continuing the discussion let us suppose the ring $R$ is local and complete. Such an assumption is reasonable, since for any ring $R$:
\begin{compactitem} 
\item[(i)] $\cd(R,\aa)=\sup \{\cd(R_{\mm},\aa R_{\mm}) \ : \ \mm \mbox{ is a maximal ideal}\}$ by \eqref{cohomloc}.
\item[(ii)] $\cd(R_{\mm},\aa R_{\mm})=\cd(\widehat{R_{\mm}},\aa \widehat{R_{\mm}})$  by \eqref{cohomcompl1}.
\end{compactitem}
First Lichtenbaum and then, more generally, Hartshorne \cite[Theorem 3.1]{Ha}, settled the problem of characterizing when $\cd(R,\aa)\leq n-1$. Essentially, they showed that the necessary and sufficient condition for this to happen is that $\dim R/\aa > 0$, see Theorem \ref{hartlich}. By then, the next step should have been to describe when $\cd(R,\aa)\leq n-2$. In general this case is still not understood well as the first one. However, if $R$ is regular, a necessary and sufficient condition, besides being known, is very nice. Such a  condition, which has been proven under different assumptions in \cite{Ha,PS,hu-ly,siwa1}, essentially is that the punctured spectrum of $R/\aa$ is connected\index{connected}. Actually, if the ``ambient ring" $R$ is regular, $\cd(R,\aa)$ can always be characterized in terms of the ring $R/\aa$ (\cite{ogus,ha-sp,ly1}). However, in any of these papers, the described conditions are quite difficult to verify. Our task in this chapter will be to give some necessary and/or sufficient conditions, as easier as possible to verify, for $\cd(R,\aa)$ being smaller than a fixed integer.   

\vspace{4mm}

In the first section we will focus on giving necessary conditions for $\Ha^i(R)$ vanishing for any $i$ bigger than a fixed integer. The  framework of this section comes from the first part of our paper \cite{va1}. As we remarked in \ref{cohomuptorad}, the local cohomology functors $\Ha^i$ depend just on the radical of $\aa$. Therefore, a way to face the Grothendieck's problem could be to study the topological properties of $\Spec(R/\aa)$. In fact, such a topological space and $\Spec(R/\sqrt{\aa})$ are obviously homeomorphic. Actually in this section we will act in this way; particularly, we will study the connectedness properties of $\Spec(R/\aa)$, in the sense explained in Appendix \ref{connectednesssection}. In \cite{ho-hu}, Hochster and Huneke proved that if $\aa$ is an ideal of an $n$-dimensional, $(n-1)$-connected \index{connected!r-@$r$-}, local, complete ring $R$ such that $\cd(R,\aa)\leq n-2$, then $R/\aa$ is $1$-connected, generalizing a result previously obtained by Faltings in \cite{faltings}. We prove, under the same hypotheses on the ring $R$, that 
\[\cd(R,\aa)\leq n-s \ \implies \ R/\aa \mbox{ is $(s-1)$-connected}.\] 
More generally, we prove that if $R$ is $r$-connected, with $r<n$, then
\begin{equation}\label{barteq}
\cd(R,\aa)\leq r-s \ \implies \ R/\aa \mbox{ is $s$-connected},
\end{equation}
see Theorem \ref{bart}. This result also strengthens a connectedness theorem due to Grothendieck \cite[Expos\'{e} XIII, Th\'{e}or\`{e}me 2.1]{SGA2}, who got the same thesis under the assumption that $\aa$ could be generated by $r-s$ elements. To prove \eqref{barteq} we drew on the proof of Grothendieck's theorem given in the book of Brodmann and Sharp \cite[19.2.11]{BS}.
(Theorem \ref{bart} has been proved independently in the paper of Divaani-Aazar, Naghipour and Tousi \cite[Theorem 2.8]{D-N-T}. We illustrate a relevant mistake in \cite[Theorem 3.4]{D-N-T} in Remark \ref{patty}).

We prove \eqref{barteq} also for certain noncomplete rings $R$, such as local rings satisfying the $S_2$ Serre's condition (Proposition \ref{serre}), or graded rings over a field (in this case the ideal $\aa$ must be homogeneous), see Theorem \ref{winchester}. This last version of \eqref{barteq} allows us to translate the result into a geometric point of view, dicscussing about the cohomological dimension of an open subscheme $U$ of a scheme $X$ projective over a field (Theorem \ref{kaprapal}). Particularly, we give topological necessary conditions on the closed subset $X\setminus U$ for $U$ being affine\index{affine schemes} (Corollary \ref{abrham}).

We end the section discussing whether the implication of \eqref{barteq} can be reversed. In general the answer is no, and we give explicit counterexamples.

\vspace{2mm}

The aim of the second section, whose results are part of our paper \cite{va2}, is to explore the case in which $R:=\kkk[x_1,\ldots ,x_n]$ is a polynomial ring over a field $\kkk$, usually of characteristic $0$, and $\aa$ is a homogeneous ideal. In this setting we have a better understanding of the cohomological dimension. The characteristic $0$ assumption allows us to reduce the issues, the most of the times, to the case $\kkk = \CC$: This way we can borrow results from Algebraic Topology and Complex Analysis. A key result we prove is Theorem \ref{1}. An essential ingredient in its proof is the work of Ogus \cite{ogus}, combined with a comparison theorem of Grothendieck obtained in \cite{gro} and classical results from Hodge theory. Theorem \ref{1} gives a criterion to compute the cohomological dimension of a homogeneous ideal $\aa$ in a polynomial ring $R$ over a field of characteristic $0$, provided that $R/\aa$ has an isolated singularity (see also Theorem \ref{ccdalg} for a more algebraic interpretation). Roughly speaking, such a criterion relates the cohomological dimension $\cd(R,\aa)$ with the dimensions of the finite $\kkk$-vector spaces
\[[H_{\nn}^i(\Lambda^j \Omega_{A/\kkk})]_0,\]
where $A:=R/\aa$, $\nn$ is the irrelevant maximal ideal of $A$ and $\Omega_{A/\kkk}$ is the module of K\"ahler differentials of $A$ over $\kkk$. In what follows, let us denote the $A$-module $\Lambda^j \Omega_{A/\kkk}$ by $\Omega^j$. The advantages of such a characterization are essentially two:
\begin{compactitem}
\item[(i)] The cohomological dimension $\cd(R,\aa)$, in this case, is an intrinsic invariant of $A=R/\aa$, which is not obvious a priori.
\item[(ii)] The dimensions $\dim_{\kkk}[H_{\nn}^i(\Omega^j)]_0$ are moderately easy to compute, thanks to the Grothendieck's local duality (see Theorem \ref{weaklocduality}):
\[\dim_{\kkk}[H_{\nn}^i(\Omega^j)]_0=\dim_{\kkk}[\Ext_R^{n-i}(\Omega^j,R)]_{-n}.\]
\end{compactitem}
Even if Theorem \ref{1} will essentially be obtained putting together earlier results without upsetting ideas, it has at least two amazing consequences. The first one regards a relationship between depth and cohomological dimension. Before describing it, let us remind the result of Peskine and Szpiro mentioned in Theorem \ref{prelimpeskineszpiro}. It implies that, if $\chara(\kkk)>0$, then
\begin{equation}\label{intropeskineszpiro}\index{depth}
\depth(R/\aa)\geq t \implies \cd(R,\aa)\leq n-t.
\end{equation}
In characteristic $0$, the above fact does not hold true already for $t=4$ (see Example \ref{t=4ps}). However, for $t\leq 2$, \eqref{intropeskineszpiro} holds true also in characteristic $0$ from \cite{Ha} (more generally see Proposition \ref{hunekelyubeznik}). Well, Theorem \ref{1} enables us to settle the case $t=3$ of \eqref{intropeskineszpiro} also in characteristic $0$, provided that $R/\aa$ has an isolated singularity (see Theorem \ref{depth}). From this fact it is natural to raise the following problem:

\vspace{1mm}

\noindent \textbf{Question} \ref{depth-coho}. Suppose that $R$ is a regular local ring, and that $\aa\subseteq R$ is an ideal such that $\depth(R/\aa)\geq 3$. Is it true that $\cd(R,\aa)\leq \dim R -3$?

\vspace{1mm}

The second consequence of Theorem \ref{1} consists in the solution of a remarkable case of a conjecture done by Lyubeznik in \cite{ly2}. It concerns a relationship between cohomological dimension and \'etale cohomological dimension\index{etale cohomological dimension@\'etale cohomological dimension} of a scheme, already wondered by Hartshorne in \cite{hartshorne4}. The truth of Lyubeznik's guess would imply that \'etale cohomological dimension provides a better lower bound for the arithmetical rank than the one supplied by cohomological dimension (see \eqref{aragecd} and \eqref{aragcd}). In Theorem \ref{lyub} we give a positive answer to the conjecture in characteristic $0$, under a smoothness assumption. On the other hand, Lyubeznik informed us that recently he found a counterexample in positive characteristic.

\section{Necessary conditions for the vanishing of local cohomology}\index{connected|(}\index{connected!r-@$r$-|(}
\label{chap1}

Let $\aa$ be an ideal of an ring $R$, and $c$ be a positive integer such that $\height(\aa)\leq c < \dim R$. As we anticipated in the introduction, this section is dedicated in finding necessary conditions for $\Ha^i(R)$ vanishing for any $i>c$. Mainly, we will care about the topological properties which $\Spec(R/\aa)$ must have to this aim.

\subsection{Cohomological dimension vs connectedness}
\label{subchap1.1}

The purpose of this subsection is to prove Theorem \ref{bart}. It fixes the connectedness properties that $\Spec(R/\aa)$ must own in order to $\Ha^i(R)$ vanish for all $i>c$. Theorem \ref{bart} has as consequences many previously known theorems. Especially, we remark a theorem by Grothendieck (Theorem \ref{palladineve}) and one by Hochster and Huneke (Theorem \ref{frink}).

Let us start with a lemma which shows a sub-additive property of the cohomological dimension. It will be crucial to prove Proposition \ref{homer}. 

\begin{lemma}\label{subadditivecohomdim}
Let $R$ be any ring, and $\aa$, $\bb$ ideals of $R$. Then
\[\cd(R,\aa + \bb)\leq \cd(R,\aa)+\cd(R,\bb).\]
\end{lemma}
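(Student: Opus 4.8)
The plan is to use the \v{C}ech-complex description of local cohomology (Theorem \ref{cech}) to realize $H^{i}_{\aa+\bb}(R)$ as the cohomology of a tensor product of the \v{C}ech complexes for $\aa$ and for $\bb$. Concretely, choose generators $\aaa = a_1,\dots,a_p$ of $\aa$ and $\bbb = b_1,\dots,b_q$ of $\bb$. Then $a_1,\dots,a_p,b_1,\dots,b_q$ generate $\aa+\bb$, and the \v{C}ech complex $C(\aaa,\bbb)^{\bullet}$ computing $H^{\bullet}_{\aa+\bb}(R)$ is naturally isomorphic to the total complex of $C(\aaa)^{\bullet}\otimes_R C(\bbb)^{\bullet}$. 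This is the standard fact that the \v{C}ech complex on a concatenated list of elements is the tensor product of the individual \v{C}ech complexes, and it is easy to verify directly from the definition of the differentials given in the excerpt.

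Having done that, I would invoke the first-quadrant cohomology spectral sequence of the double complex $C(\aaa)^{\bullet}\otimes_R C(\bbb)^{\bullet}$ (or, equivalently, the Künneth-type argument). Its $E_2$ page reads
\[
E_2^{s,t} = H^{s}\bigl(C(\aaa)^{\bullet}\bigr)\otimes_R H^{t}\bigl(C(\bbb)^{\bullet}\bigr) \ \oplus \ (\text{Tor terms}),
\]
and more importantly, since $C(\aaa)^p=0$ for $p>\cd(R,\aa)$ is \emph{not} quite what we need — rather, $C(\aaa)^p=0$ for $p>p=$ number of generators, which is not sharp. The correct route avoids the number of generators: one filters $C(\aaa)^{\bullet}\otimes C(\bbb)^{\bullet}$ and uses that the cohomology of $C(\aaa)^{\bullet}$, namely $H^s(C(\aaa)^{\bullet})\cong H^s_{\aa}(R)$, vanishes for $s>\cd(R,\aa)$, and likewise $H^t_{\bb}(R)=0$ for $t>\cd(R,\bb)$. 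Running the spectral sequence $E_2^{s,t}$ built from the cohomology of the rows $H^t$ applied to the complex of columns (so that $E_2^{s,t}=H^s_{\aa}(H^t_{\bb}(R))$-style terms appear), one gets that $E_2^{s,t}=0$ whenever $s>\cd(R,\aa)$ or $t>\cd(R,\bb)$; here I use Remark \ref{explcohomdim} to control $H^s_{\aa}$ applied to the (non-finitely-generated) modules $H^t_{\bb}(R)$. Since the abutment $H^{n}_{\aa+\bb}(R)$ has a finite filtration with subquotients among the $E_{\infty}^{s,t}$ with $s+t=n$, and every such $E_{\infty}^{s,t}$ is a subquotient of $E_2^{s,t}=0$ as soon as $s+t>\cd(R,\aa)+\cd(R,\bb)$, we conclude $H^{n}_{\aa+\bb}(R)=0$ for all $n>\cd(R,\aa)+\cd(R,\bb)$, which is exactly the claim.

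The step I expect to be the main obstacle is the bookkeeping in the spectral-sequence argument: one must set up the double complex so that taking cohomology in one direction first produces $H^t_{\bb}(R)$ in each column, and then taking cohomology in the other direction produces something controlled by $\cd(R,\aa)$. The subtlety is that $H^t_{\bb}(R)$ need not be finitely generated, so the vanishing of $H^s_{\aa}(-)$ in degrees $s>\cd(R,\aa)$ for these modules is not automatic from the definition of $\cd$; it requires precisely Remark \ref{explcohomdim}, which says $\cd(R,\aa)=s$ forces $H^i_{\aa}(M)=0$ for \emph{all} $R$-modules $M$ and all $i>s$. Once that input is in hand, the degree count $s+t>\cd(R,\aa)+\cd(R,\bb)$ forcing either $s>\cd(R,\aa)$ or $t>\cd(R,\bb)$ is immediate, and the convergence of a first-quadrant spectral sequence finishes the proof. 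Alternatively, one can sidestep spectral sequences entirely by an induction on the number of generators of $\bb$ using the \v{C}ech/Mayer--Vietoris long exact sequence \eqref{mayervietoris} together with Remark \ref{explcohomdim}, which may be the cleanest write-up.
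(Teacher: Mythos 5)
Your argument is correct, but it reaches the decisive spectral sequence by a different road than the paper. The paper's proof is a two-line appeal to the Grothendieck composite-functor spectral sequence: since $\Gamma_{\bb}$ maps injective modules to injective modules (\cite[Proposition 2.1.4]{BS}) and $\Ga\circ\Gamma_{\bb}=\Gamma_{\aa+\bb}$, one gets $E_2^{i,j}=\Ha^i(H_{\bb}^j(R))\Rightarrow H_{\aa+\bb}^{i+j}(R)$ directly from \cite[Theorem III.7.7]{GM}, and then concludes exactly as you do: $E_2^{i,j}=0$ for $j>\cd(R,\bb)$ by definition, and for $i>\cd(R,\aa)$ by Remark \ref{explcohomdim}, which is needed because $H_{\bb}^j(R)$ is not finitely generated. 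You instead manufacture the same $E_2$-page from the double complex $C(\aaa)^{\bullet}\otimes_R C(\bbb)^{\bullet}$, whose total complex is the \v Cech complex on the concatenated generators and hence computes $H_{\aa+\bb}^{\bullet}(R)$ by Theorem \ref{cech}; the one piece of bookkeeping you should make explicit is that each $C(\aaa)^s$ is a finite direct sum of localizations of $R$, hence flat, so taking cohomology along the $\bbb$-direction first yields $C(\aaa)^s\otimes_R H_{\bb}^t(R)$ in position $(s,t)$, and the next page is then $\Ha^s(H_{\bb}^t(R))$ by Theorem \ref{cech} again -- after which your vanishing count and the convergence of the (bounded, first-quadrant) spectral sequence finish the proof just as in the paper. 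What each approach buys: the paper's is shorter but rests on the preservation-of-injectives fact and the abstract composite-functor machinery; yours is self-contained given the \v Cech description already set up in the preliminaries, and boundedness of the double complex is automatic. Your closing alternative -- induction on the number of generators of $\bb$ using the long exact sequence relating $H_{\aa+(b)}^{\bullet}(M)$, $H_{\aa}^{\bullet}(M)$ and $H_{\aa}^{\bullet}(M_b)$, together with Remark \ref{explcohomdim} -- also works and avoids spectral sequences entirely, so it is a perfectly acceptable write-up as well.
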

\begin{proof}
By \cite[Proposition 2.1.4]{BS}, $\Gamma_{\bb}$ maps injective $R$-modules into injective ones. Furthermore notice that $\Ga \circ \Gamma_{\bb} = \Gamma_{\aa+\bb}$. So the statement follows at once by the spectral sequence 
\[E_2^{i,j} = \Ha^i(H_{\bb}^j(R))\implies H_{\aa + \bb}^{i+j}(R),\]
for instance see the book of Gelfand and Manin \cite[Theorem III.7.7]{GM}.
\end{proof}

The next two propositions strengthen \cite[Proposition 19.2.7]{BS} and \cite[Lemma 19.2.8]{BS}.

\begin{prop}\label{homer}
Let $(R,\mathfrak{m})$ be a complete local domain and let $\aa$ and
$\mathfrak{b}$ be ideals of $R$ such that $\dim
R/\mathfrak{a}
>\dim R/(\mathfrak{a}+\mathfrak{b})$ and $\dim
R/\mathfrak{b}
>\dim R/(\mathfrak{a}+\mathfrak{b})$. Then
\[ \cd(R, \mathfrak{a} \cap \mathfrak{b}) \geq \dim R - \dim R/(\mathfrak{a}+\mathfrak{b}) - 1 \]
\end{prop}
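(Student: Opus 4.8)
The plan is to use the Mayer--Vietoris sequence \eqref{mayervietoris} together with the Hartshorne--Lichtenbaum Vanishing Theorem (Theorem \ref{hartlich}), exploiting that $R$ is a complete local domain so that every quotient by a prime of $R$ is again a complete local domain of the expected dimension. Write $d = \dim R$ and $s = \dim R/(\aa+\bb)$; the goal is $\cd(R, \aa \cap \bb) \geq d - s - 1$. First I would look at the relevant stretch of the Mayer--Vietoris sequence for the pair of ideals $\aa$, $\bb$, which reads
\[
\ldots \rightarrow H_{\aa}^{d-s-1}(R) \oplus H_{\bb}^{d-s-1}(R) \rightarrow H_{\aa \cap \bb}^{d-s-1}(R) \rightarrow H_{\aa + \bb}^{d-s}(R) \rightarrow H_{\aa}^{d-s}(R) \oplus H_{\bb}^{d-s}(R) \rightarrow \ldots
\]
So it suffices to show that $H_{\aa+\bb}^{d-s}(R) \neq 0$ while $H_{\aa}^{d-s}(R) = H_{\bb}^{d-s}(R) = 0$; then the connecting map forces $H_{\aa \cap \bb}^{d-s-1}(R) \neq 0$, giving the claim.

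For the nonvanishing of $H_{\aa+\bb}^{d-s}(R)$: since $R$ is a domain, $\height(\aa+\bb) = d - s$ (here I would use that $R$ is a complete local domain, hence catenary and equidimensional, so $\dim R/(\aa+\bb) + \height(\aa+\bb) = d$), and then Theorem \ref{nonvanishinggro} (Grothendieck's nonvanishing) gives $H_{\aa+\bb}^{d-s}(R) \neq 0$. For the vanishing of $H_{\aa}^{d-s}(R)$: I want to invoke Theorem \ref{hartlich}. By Remark \ref{explcohomdim} it is enough to bound $\cd(R,\aa)$, and by the same Mayer--Vietoris-type/standard reduction one reduces to checking the top local cohomology of quotients by minimal primes $\wp$ of $\aa$. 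For such a $\wp$, the hypothesis $\dim R/\aa > \dim R/(\aa+\bb) = s$ says $\dim R/\wp > s$; now $R/\wp$ is a complete local domain of dimension $\dim R/\wp$, and the image of $\bb$ in $R/\wp$ is a nonzero ideal (it is nonzero because $\wp \not\supseteq \aa + \bb$, since otherwise $\dim R/(\aa+\bb) \geq \dim R/\wp > s$), hence $\dim (R/\wp)/\bar\bb < \dim R/\wp$, wait — more carefully, I need that $\dim R/(\aa + \bb)$ restricted to this component is still positive-codimensional inside $R/\wp$, which is exactly what $\dim R/\wp > s \geq \dim(R/(\wp+\bb))$ gives. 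Then Theorem \ref{hartlich} applied in $R/\wp$ with the ideal $\bar{\aa+\bb}$... this is the step where I must be careful about which ideal plays which role. The clean statement I am after is: $\cd(R, \aa) \leq d - s - 1$, equivalently $H_{\aa}^j(R) = 0$ for $j \geq d-s$, and symmetrically for $\bb$.

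The main obstacle, and the step needing the most care, is exactly this last vanishing claim: controlling $\cd(R,\aa)$ from the hypothesis $\dim R/\aa > \dim R/(\aa+\bb)$. The subtlety is that Hartshorne--Lichtenbaum directly bounds the \emph{top} cohomology $H_{\aa}^{\dim R}(R)$, not $H_{\aa}^{d-s}(R)$; to get the sharper vanishing I expect to need an induction, peeling off one dimension at a time by passing to $R/\wp$ for $\wp$ ranging over a suitable chain of primes, or alternatively to cite a known consequence of Hartshorne--Lichtenbaum of the form ``$\cd(R,\aa) \leq \dim R - \dim R/\aa - 1 + (\text{correction})$'' — in fact the sharp bound $\cd(R,\aa) < \dim R - \dim R/\aa$ fails in general, so I anticipate the real argument uses $\dim R/\aa > s$ only to deduce $H_{\aa}^{j}(R)=0$ in the \emph{single} degree $j = d-s$ via a localization/completion reduction plus Theorem \ref{hartlich} component by component. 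Getting this degree-by-degree bookkeeping right, and making sure the hypotheses $\dim R/\aa > \dim R/(\aa+\bb)$ and $\dim R/\bb > \dim R/(\aa+\bb)$ are both genuinely used (they are needed so that neither $\aa$ nor $\bb$ alone already cuts down to dimension $s$), is where the work lies; once that is in place the Mayer--Vietoris connecting homomorphism argument of the first paragraph closes the proof immediately.
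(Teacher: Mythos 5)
Your Mayer--Vietoris framing and the nonvanishing of $H_{\aa+\bb}^{d-s}(R)$ (via Theorem \ref{nonvanishinggro}, using that a complete local domain is catenary and equidimensional, so $\height(\aa+\bb)=d-s$) are fine, but there is a genuine gap exactly where you flagged it: the vanishing $H_{\aa}^{d-s}(R)=0$ (and likewise for $\bb$) does \emph{not} follow from $\dim R/\aa > s$, not even in the single degree $j=d-s$. Concretely, take $R=\kkk[[x,y,v,w]]$, $\aa=(x,y)\cap(v,w)$ and $\bb=(x,v)$. Then $\aa+\bb=(x,v,yw)$, so $s=\dim R/(\aa+\bb)=1$, while $\dim R/\aa=\dim R/\bb=2>s$, so the hypotheses of the proposition hold; but $d-s=3$ and $H_{\aa}^{3}(R)\neq 0$ (this is precisely the example given in the paper right after Theorem \ref{nonvanishinggro}). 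Hence the stretch of \eqref{mayervietoris} you isolate does not force $H_{\aa\cap\bb}^{d-s-1}(R)\neq 0$, and no ``component by component'' use of Hartshorne--Lichtenbaum can rescue the claim: Theorem \ref{hartlich} only controls the top degree $\dim R$, while the intermediate degrees of $H_{\aa}^{\bullet}(R)$ are governed by finer connectedness-type information about $R/\aa$ --- which is exactly what this proposition is a step towards proving, so assuming such a bound would be circular.

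The paper's proof avoids this by inducting on $s=\dim R/(\aa+\bb)$. In the base case $s=0$ your argument is exactly what is done: Mayer--Vietoris in degrees $n-1,n$ (with $n=\dim R$), where the required vanishings $H_{\aa}^{n}(R)=H_{\bb}^{n}(R)=0$ do follow from Theorem \ref{hartlich} because $\dim R/\aa>0$ and $\dim R/\bb>0$, and $H_{\aa+\bb}^{n}(R)\neq 0$ because $\dim R/(\aa+\bb)=0$. For $s>0$ one chooses by prime avoidance an element $x\in\mm$ outside every minimal prime of $\aa$, $\bb$ and $\aa+\bb$, applies the inductive hypothesis to $\aa'=\aa+(x)$ and $\bb'=\bb+(x)$ (all three relevant dimensions drop by exactly one, by Krull's Hauptidealsatz) to get $\cd(R,\aa'\cap\bb')\geq n-s$, notes that $\sqrt{\aa'\cap\bb'}=\sqrt{\aa\cap\bb+(x)}$, and then uses the subadditivity of cohomological dimension from Lemma \ref{subadditivecohomdim} to get $n-s\leq \cd(R,\aa\cap\bb+(x))\leq \cd(R,\aa\cap\bb)+1$, i.e.\ $\cd(R,\aa\cap\bb)\geq n-s-1$. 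To repair your proposal you would have to replace the false single-degree vanishing claim by some such dimension-lowering induction; as written, the argument cannot be completed.
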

\begin{proof}
Set $n := \dim R$ and $d := \dim R/(\mathfrak{a}+\mathfrak{b})$. We make an induction on $d$. If $d=0$ we consider the Mayer-Vietoris
sequence \eqref{mayervietoris}
\[ \ldots \longrightarrow H_{\mathfrak{a} \cap \mathfrak{b}}^{n-1}(R)  \longrightarrow H_{\mathfrak{a}+\mathfrak{b}}^n(R) \longrightarrow H_{\mathfrak{a}}^n(R) \oplus H_{\mathfrak{b}}^n(R) \longrightarrow \ldots . \]
Hartshorne-Lichtenbaum Vanishing Theorem \ref{hartlich} implies that $H_{\mathfrak{a}}^n(R)=H_{\mathfrak{b}}^n(R)=0$ and $H_{\mathfrak{a}+\mathfrak{b}}^n(R) \neq
0$. So the above exact sequence implies $\cd(R, \mathfrak{a}\cap
\mathfrak{b}) \geq n-1$.

Consider the case in which $d>0$. By prime avoidance we can choose an element $x \in \mathfrak{m}$ such that $x$ does not belong to any minimal prime
of $\mathfrak{a}$, $ \mathfrak{b}$ and $\mathfrak{a}+\mathfrak{b}$.
Then let $\mathfrak{a}'=\mathfrak{a}+(x)$ and
$\mathfrak{b}'=\mathfrak{b}+(x)$. From the choice of $x$ and by the Hauptidealsatz of Krull it follows
that $\dim R/(\mathfrak{a}'+\mathfrak{b}') = d-1$, $\dim
R/\mathfrak{a}' = \dim R/\mathfrak{a}-1
>d-1$ and $\dim R/\mathfrak{b}'=\dim R/\mathfrak{b}-1>d-1$; hence by induction we have
$\cd(R,\mathfrak{a}' \cap \mathfrak{b}') \geq n-d$. Because $\sqrt{\mathfrak{a} \cap \mathfrak{b} + (x)}=
\sqrt{\mathfrak{a}' \cap \mathfrak{b}'}$, then $H_{\mathfrak{a}'
\cap \mathfrak{b}'}^i(R)=H_{\mathfrak{a} \cap \mathfrak{b} +
(x)}^i(R)$ for all $i \in \mathbb{N}$. From Lemma \ref{subadditivecohomdim} we have
\[n-d \leq \cd(R,\aa' \cap \bb') = \cd(R,\aa \cap \bb + (x)) \leq \cd(R,\aa \cap \bb) + 1.\]
Therefore the proof is completed.
\end{proof}

The next result is a generalization of Proposition \ref{homer} to complete local rings which are not necessarily domains.

\begin{prop}\label{lisa}
Let $R$ be an $r$-connected complete local ring with $r<\dim R$. Consider two ideals $\aa$, $\mathfrak{b}$ of $R$ such that $\dim R/\mathfrak{a}
>\dim R/(\mathfrak{a}+\mathfrak{b})<\dim R/\mathfrak{b}$. Then
\begin{equation}\label{cdconn1} 
\cd(R, \mathfrak{a} \cap \mathfrak{b})  \geq r - \dim R/(\mathfrak{a}+\mathfrak{b}). 
\end{equation}
\end{prop}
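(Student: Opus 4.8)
The plan is to reduce Proposition \ref{lisa} to the domain case already handled in Proposition \ref{homer}. Since $R$ is $r$-connected with $r < \dim R$, one knows (see Appendix \ref{connectednesssection}) that for any way of splitting the minimal primes of $R$ into two disjoint nonempty families, the two ideals obtained by intersecting each family have the property that the dimension of $R$ modulo their sum is at least $r$; conversely, $r$-connectedness is exactly controlled by such numbers. So I would begin by picking a minimal prime $\wp$ of $R$ and working with the complete local domain $R/\wp$, comparing $\cd(R, \aa\cap\bb)$ with $\cd(R/\wp, (\aa\cap\bb)(R/\wp))$ via the right-exactness property in Remark \ref{explcohomdim} applied to the surjection $R \twoheadrightarrow R/\wp$ (which gives $\cd(R/\wp, -) \leq \cd(R,-)$ after using Independence of the Base, Lemma \ref{basetheorems}(ii)).

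The main step is a bookkeeping argument on dimensions. Set $d := \dim R/(\aa+\bb)$. For each minimal prime $\wp$ of $R$ with $\dim R/\wp > d$, the images $\aa(R/\wp)$ and $\bb(R/\wp)$ still satisfy at least one of the two strict inequalities $\dim (R/\wp)/\aa(R/\wp) > \dim(R/\wp)/(\aa+\bb)(R/\wp)$ or the analogous one for $\bb$; if \emph{both} hold, Proposition \ref{homer} applied in $R/\wp$ gives $\cd(R/\wp, (\aa\cap\bb)(R/\wp)) \geq \dim R/\wp - \dim(R/\wp)/(\aa+\bb)(R/\wp) - 1$, and since $R$ is $r$-connected one can choose such a $\wp$ with $\dim R/\wp$ large and with $\dim(R/\wp)/(\aa+\bb)(R/\wp)$ controlled, so that the right-hand side is at least $r - d$. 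The delicate point is the case where, for the good primes $\wp$, only one of the strict inequalities survives in $R/\wp$ — i.e.\ the other of $\aa,\bb$ becomes contained in $\wp$ up to the relevant dimension. Here I would instead group the minimal primes: let $P_1$ be those minimal primes contained in (a minimal prime of) $\aa$ and $P_2$ the rest, intersect to get ideals $\cc_1, \cc_2$ with $\cc_1\cap\cc_2 = \sqrt{0}$, use $r$-connectedness to bound $\dim R/(\cc_1+\cc_2) \geq r$, and then run Proposition \ref{homer} (or a Mayer--Vietoris argument as in its proof) with $\aa\cap\bb$ playing against this partition, noting $\sqrt{\aa\cap\bb} \subseteq$ an appropriate ideal so that $\cd(R,\aa\cap\bb)$ dominates the needed quantity.

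Concretely, I expect the cleanest route is to imitate the induction in Proposition \ref{homer} directly at the level of $R$: induct on $d = \dim R/(\aa+\bb)$. For $d = 0$, use the Mayer--Vietoris sequence \eqref{mayervietoris} in degree around $r$ together with the fact that $r$-connectedness forces $H^{r+1}_{\aa+\bb}(R) \neq 0$ (this is where the $r$-connected hypothesis enters, replacing Hartshorne--Lichtenbaum and the non-vanishing of $H^n_{\aa+\bb}(R)$ used in the domain case) while $H^{r+1}_\aa(R)$ and $H^{r+1}_\bb(R)$ can be shown to vanish using $\dim R/\aa, \dim R/\bb$ and the sub-additivity Lemma \ref{subadditivecohomdim}; conclude $\cd(R,\aa\cap\bb)\geq r$. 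For $d > 0$, pick by prime avoidance an $x \in \mm$ avoiding all minimal primes of $\aa$, $\bb$, $\aa+\bb$ \emph{and} not lying in any minimal prime of $R$ that could drop the connectedness, pass to $\aa' = \aa + (x)$, $\bb' = \bb + (x)$, check that $\dim R/(\aa'+\bb') = d-1$ with the two strict inequalities preserved and that $R$ stays $r$-connected after such a cut (or work modulo $(x)$), apply the inductive hypothesis to get $\cd(R, \aa'\cap\bb') \geq r - (d-1)$, and finish with $\sqrt{\aa\cap\bb + (x)} = \sqrt{\aa'\cap\bb'}$ plus Lemma \ref{subadditivecohomdim} exactly as in Proposition \ref{homer}.

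The hardest part will be the $d=0$ base case: establishing $H^{r+1}_{\aa+\bb}(R)\neq 0$ and the vanishing of $H^{r+1}_\aa(R)$, $H^{r+1}_\bb(R)$ purely from the $r$-connectedness of $R$ and the dimension hypotheses on $R/\aa$, $R/\bb$ (rather than from $R$ being a domain of dimension $n$). I would handle the non-vanishing by invoking the characterization of $r$-connectedness in terms of the non-vanishing of a suitable local cohomology or by a Mayer--Vietoris/Hochster--Huneke-style argument against a partition of $\Min(R)$, and the vanishing statements from $\height$ considerations together with Grothendieck's vanishing (Theorem \ref{vanishinggro}) applied to the rings $R/\aa$, $R/\bb$ via Independence of the Base.
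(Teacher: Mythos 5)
There is a genuine gap. Your ``concrete'' route (induction on $d:=\dim R/(\aa+\bb)$ at the level of $R$ itself, with a Mayer--Vietoris base case in cohomological degree $r$) cannot be made to work as stated, and you yourself flag the problematic point without a viable fix. For $d=0$ the ideal $\aa+\bb$ is $\mm$-primary, so $H^{r+1}_{\aa+\bb}(R)=H^{r+1}_{\mm}(R)$, and $r$-connectedness gives no non-vanishing of this module: take $R$ a regular (or any Cohen--Macaulay) complete local domain of dimension $n$ and invoke the proposition with some $r<n-1$ (legitimate, since a domain is $r$-connected for every $r\leq n$); then $H^{r+1}_{\mm}(R)=0$ because $\depth R=n>r+1$. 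Symmetrically, there is no reason for $H^{r+1}_{\aa}(R)$ and $H^{r+1}_{\bb}(R)$ to vanish: if $\height(\aa)=r+1$ they are nonzero by Theorem \ref{nonvanishinggro}, and Grothendieck's vanishing theorem applied ``via Independence of the Base'' only kills $H^i_{\aa}(M)$ for $i>\dim M$, which is irrelevant here. The point is that the inequality at level $r$ is never obtained by running Mayer--Vietoris in degree $r$; in Proposition \ref{homer} (and in the paper's proof of this proposition) it is always obtained at the \emph{top} degree of a suitable complete local domain of dimension $\geq r$ and then transported down via Lemma \ref{basetheorems}(ii) and Remark \ref{explcohomdim}.

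Your first sketch is closer to the actual argument (case analysis over $\Min(R)$, applying Proposition \ref{homer} in $R/\wp_i$ when a single minimal prime $\wp_i$ keeps both dimension inequalities strict, with $\dim R/\wp_i>r$ coming from Lemma \ref{milhouse}(ii)), but precisely the hard case is left as a gesture, and the tools you name there are the wrong ones. When no minimal prime works for both ideals, the correct dichotomy is by whether $\dim R/(\aa+\wp_i)\leq d$ (not by whether $\wp_i$ lies under a minimal prime of $\aa$); setting $\qq=(\cap_{i\in A}\wp_i)+(\cap_{j\in B}\wp_j)$ for this partition, one picks a minimal prime $\wp$ of $\qq$ with $\dim R/\wp=\dim R/\qq\geq r$ (Lemma \ref{milhouse}(i)); since $\wp$ contains one minimal prime from each group, both $\dim R/(\aa+\wp)\leq d$ and $\dim R/(\bb+\wp)\leq d$, hence $\dim R/((\aa\cap\bb)+\wp)\leq d$, and catenarity of $R/\wp$ gives $\height(((\aa\cap\bb)+\wp)/\wp)\geq r-d$; the conclusion then follows from Theorem \ref{nonvanishinggro} together with Lemma \ref{basetheorems}(ii). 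Note that in this case neither Proposition \ref{homer} nor a Mayer--Vietoris argument applies over $R/\wp$, exactly because both dimension hypotheses of \ref{homer} fail there; so the step you describe as ``run Proposition \ref{homer} (or a Mayer--Vietoris argument) against this partition'' is the missing idea, not a routine verification.
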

\begin{proof}
Set $d := \dim R/(\mathfrak{a}+\mathfrak{b})$, and let $\wp_1,
\ldots , \wp_m$ be the minimal prime ideals of $R$.

First we consider the following case:
\[\exists \ i\in \{1, \ldots ,m\}\mbox{ \ such that \ } \dim R/(\mathfrak{a}+\wp_i)>d\mbox{ \ and \ } \dim R/(\mathfrak{b}+\wp_i)>d.\] 
In this case we can use Proposition \ref{homer},
considering $R/\wp_i$ as $R$, $(\mathfrak{a}+\wp_i)/\wp_i$ as  $\mathfrak{a}$ and
$(\mathfrak{b}+\wp_i)/\wp_i$ as $\mathfrak{b}$. Then
\[ \cd( R/\wp_i,( (\mathfrak{a}+\wp_i) \cap (\mathfrak{b}+\wp_i) ) / \wp_i) \geq \dim R/\wp_i- d-1.\]
Notice that 
\[ \cd( R/\wp_i,( (\mathfrak{a}+\wp_i) \cap (\mathfrak{b}+\wp_i)
) / \wp_i) = \cd( R/\wp_i,( (\mathfrak{a} \cap \mathfrak{b})+\wp_i
) / \wp_i) \leq \cd( R, \mathfrak{a} \cap \mathfrak{b})\] 
by Lemma \ref{basetheorems} (ii). Moreover $\dim R/\wp_i > r$: If $\dim R/\wp_i = \dim R$, then this follows by the assumptions. Otherwise $R$ is a reducible ring, thus Lemma \ref{milhouse} (ii) implies $\dim R/\wp_i>r$. So we get the thesis.

Thus we can suppose that 
\[\forall \ i \in \{ 1, \ldots, m \} \mbox{ \ either \ }\dim R/(\mathfrak{a}+ \wp_i) \leq d \mbox{ \ or \ }\dim R/(\mathfrak{b}+\wp_i) \leq d.\] 
After a rearrangement we can pick $s\in \{1,\ldots ,m-1\}$ such that for all $i \leq s$ we have $\dim
R/(\mathfrak{a}+\wp_i) \leq d$ and  for all $i > s$ we have $\dim
R/(\mathfrak{a}+\wp_i) > d$.
The existence of such an $s$ is guaranteed from the fact that $\dim
R/\mathfrak{a}
> d$ and  $\dim R/\mathfrak{b}
> d$. Let us consider the ideal of $R$
\[ \qq=(\wp_1 \cap \ldots \cap \wp_s) + (\wp_{s+1} \cap \ldots \cap
\wp_m) \] and let $\wp$ be a minimal prime of $\qq$ such that $\dim
R / \wp= \dim R/\qq$. Lemma \ref{milhouse} (i) says that $\dim R/\wp \geq
r$. Moreover, since there exist $i \in \{ 1, \ldots, s \}$ and
$j \in \{ s+1, \ldots, m \}$ such that $\wp_i \subseteq \wp$ e
$\wp_j \subseteq \wp$, we have
\begin{eqnarray} 
\dim R/(\mathfrak{a} + \wp) \leq \dim R/(\mathfrak{a} + \wp_i) \leq d & \mbox{and} \nonumber \\
\dim R/(\mathfrak{b} + \wp) \leq \dim R/(\mathfrak{b} + \wp_j) \leq d \nonumber
\end{eqnarray}
Therefore we deduce that $\dim R/ ((\mathfrak{a} \cap \mathfrak{b}) + \wp) \leq d$.
But $R/\wp$ is catenary, (see Matsumura \cite[Theorem
29.4 (ii)]{matsu}), then
\[ \height(((\mathfrak{a} \cap \mathfrak{b}) + \wp)/\wp) = \dim R/ \wp - \dim R/ ((\mathfrak{a} \cap \mathfrak{b}) + \wp). \]
and hence $\height(((\mathfrak{a} \cap \mathfrak{b}) + \wp)/\wp) \geq r - d$.
So Lemma \ref{basetheorems} (ii) and Theorem \ref{nonvanishinggro}
\[ \cd(R,\mathfrak{a} \cap
\mathfrak{b})\geq \cd(R/\wp,((\mathfrak{a} \cap \mathfrak{b}) +
\wp)/\wp) \geq r-d.
\]
\end{proof}

The hypothesis $r<\dim R$ in Proposition \ref{lisa} is crucial, as we are going to show in the following example.
 
\begin{example}
Let $R=\kkk[[x,y]]$ the ring of formal power series over a field $\kkk$, $\aa = (x)$ and $\bb = (y)$. Being a domain, $R$ is $2$-connected by Remark \ref{bprc} ($\dim R = 2$). Moreover $\cd(R,\aa \cap \bb)=\cd(R,(xy))\leq 1$ by \eqref{aragcd}. Finally $\dim R/(\aa +\bb) = 0$, therefore Proposition \ref{lisa} does not hold if we chose $r=2$. However in this case, and in general when $R$ is irreducible, we can use Proposition \ref{lisa} choosing $r=\dim R -1$.
\end{example}

Eventually, we are ready to prove the main theorem of this section.

\begin{thm}\label{bart}
Let $R$ be an $r$-connected complete local ring with $r<\dim R$. Given an ideal $\aa\subseteq R$, we have
\[\cd(R,\aa)\leq r-s \ \implies \ R/\aa \mbox{ is $s$-connected}.\]
\end{thm}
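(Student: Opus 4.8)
The plan is to argue by contradiction using the connectedness machinery already assembled: Propositions \ref{homer} and \ref{lisa}, the Mayer--Vietoris sequence \eqref{mayervietoris}, and the basic facts about $r$-connectedness collected in the appendix (notably Lemmas \ref{milhouse} and the characterization of $s$-connectedness in terms of splittings $\Spec(R/\aa) = V(\cc) \cup V(\dd)$ with small-dimensional intersection). First I would recall what $s$-connectedness of $R/\aa$ fails to mean: $R/\aa$ is \emph{not} $s$-connected precisely when one can write $\sqrt{\aa} = \cc \cap \dd$ for ideals $\cc, \dd \supsetneq \sqrt{\aa}$ with $\dim R/(\cc+\dd) < s$, and moreover with $\dim R/\cc \geq s$ and $\dim R/\dd \geq s$ (the pieces $V(\cc), V(\dd)$ being the two halves of a disconnection of the $(s-1)$-skeleton-type decomposition; the small cases where one piece is itself too small are handled separately using $\cd(R,\aa) \geq \height(\aa)$ from Theorem \ref{nonvanishinggro} together with $\dim R/\aa \geq$ the relevant bound, which forces enough dimension on each component).

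So assume $R/\aa$ is not $s$-connected and fix such a decomposition $\sqrt{\aa} = \cc \cap \dd$. Since $\Ha^i = H^i_{\sqrt\aa}$ by Remark \ref{cohomuptorad}, we have $\Ha^i(R) = H^i_{\cc \cap \dd}(R)$ for all $i$. Now the two hypotheses $\dim R/\cc > \dim R/(\cc+\dd)$ and $\dim R/\dd > \dim R/(\cc+\dd)$ are exactly what is needed to invoke Proposition \ref{lisa} (with $\cc$ playing the role of $\aa$ and $\dd$ that of $\bb$): it yields
\[
\cd(R, \cc \cap \dd) \;\geq\; r - \dim R/(\cc+\dd) \;\geq\; r - (s-1) \;=\; r - s + 1,
\]
using $\dim R/(\cc+\dd) \leq s-1$. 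But $\cd(R,\cc\cap\dd) = \cd(R,\aa) \leq r-s$ by hypothesis, a contradiction. The case where the disconnection is ``degenerate'' --- say $V(\dd)$ has dimension $< s$, so that $\dd$ is not available as an honest ideal of large dimension --- is dispatched by noting that then $V(\cc)$ must carry essentially all of $\Spec(R/\aa)$; here one instead applies the Hartshorne--Lichtenbaum circle of ideas (Proposition \ref{homer} in the domain case, passed through minimal primes exactly as in the proof of Proposition \ref{lisa}), or simply observes that $\dim R/\aa \leq s-1$ would already contradict $\height(\aa) \leq \cd(R,\aa) \leq r - s$ combined with $r < \dim R$.

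The main obstacle I anticipate is purely bookkeeping rather than conceptual: translating the combinatorial/topological definition of ``not $s$-connected'' (which is phrased in terms of a partition of minimal primes or of the simplicial nerve of $\Spec(R/\aa)$, cf. Appendix \ref{connectednesssection}) into a genuine ideal-theoretic decomposition $\sqrt\aa = \cc \cap \dd$ with the two strict dimension inequalities \emph{and} the lower bounds $\dim R/\cc, \dim R/\dd \geq s$ that Proposition \ref{lisa} requires. One must check that a minimal counterexample to $s$-connectedness can always be taken in this ``balanced'' form --- grouping the minimal primes of $\aa$ into two nonempty clusters whose union of closed sets is disconnected at level $s$ --- and that the $r$-connectedness of the ambient ring $R$ (via Lemma \ref{milhouse}) guarantees the clusters and $R/\wp$'s have enough dimension. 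Once that reduction is in place, the cohomological input is entirely supplied by Proposition \ref{lisa}, and the theorem follows immediately.
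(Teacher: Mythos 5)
Your proposal is correct and is essentially the paper's own argument read contrapositively: the paper likewise feeds a two-block grouping of the minimal primes of $\aa$ (supplied by Lemma \ref{milhouse}) into Proposition \ref{lisa}, and treats the single-minimal-prime case separately via Theorem \ref{nonvanishinggro} and catenarity, exactly as in your degenerate case. The bookkeeping you flag — obtaining the strict inequalities $\dim R/\cc>\dim R/(\cc+\bb)<\dim R/\bb$ — is automatic once the two ideals are complementary intersections of the minimal primes of $\aa$, which is precisely how the paper sets it up.
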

\begin{proof}
Let $\wp_1, \ldots , \wp_m$ be the minimal primes of
$\mathfrak{a}$. If $m=1$, then $R/\aa$ is $(\dim R/\aa)$-connected. Let $\wp$ be a minimal prime of
$R$ such that $\wp \subseteq \wp_1$. Using Lemma \ref{basetheorems} (ii) and Theorem \ref{vanishinggro} we have $\cd(R,\aa) = \cd(R, \wp_1) \geq \cd(R/\wp, \wp_1/\wp) \geq
\height(\wp_1/\wp) $. So, since $R/ \wp$ is catenary, we have
\[ \dim R/\aa = \dim R/\wp_1 = \dim R/\wp - \height(\wp_1/\wp) \geq \dim R/\wp - \cd(R , \mathfrak{a}). \]
By Lemma \ref{milhouse} (ii) we get the thesis.
If $m > 1$, let $A$ and $B$ be a pair of disjoint subsets of $\{1,\ldots ,m\}$ such that $A\cup B=\{1,\ldots ,m\}$ and, setting
\[ c:= \dim \left( \frac{R}{ (\cap_{i \in A}\wp_i) + (\cap_{j \in B} \wp_j)} \right), \]
$R/\aa$ is $c$-connected (the existence of $A$ and $B$ is ensured by Lemma \ref{milhouse} (i)). Set $\mathcal{J}:= \cap_{i \in A} \wp_i$ and $\mathcal{K}:=
\cap_{j \in B} \wp_j$. Since $\dim R/\mathcal{J}>c$ and $\dim
R/\mathcal{K}>c$, Proposition \ref{lisa} implies
\[ c = \dim R/(\mathcal{J} + \mathcal{K}) \geq r - \cd(R, \mathcal{J} \cap \mathcal{K}). \]
Since $\sqrt{\mathfrak{a}} = \mathcal{J} \cap \mathcal{K}$, the
theorem is proved.
\end{proof}

As we have already mentioned Theorem \ref{bart} unifies many previous results concerning relationships between connectedness properties and the cohomological dimension. For instance, since $\cd(R,\aa)$ bounds from below the number of generators of $\aa$ by \eqref{aragcd}, we immediately get a theorem obtained in \cite[Expos\'{e} XIII, Th\'{e}or\`{e}me 2.1]{SGA2} (see also \cite[Theorem 19.2.11]{BS}).

\begin{thm}\label{palladineve}(Grothendieck) Let $(R,\mathfrak{m})$ be a complete local $r$-connected ring with $r<\dim R$. If an ideal $\aa\subseteq R$ is generated by $r-s$ elements, then $R/\aa$ is $s$-connected.
\end{thm}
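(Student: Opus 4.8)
The plan is to deduce Theorem~\ref{palladineve} as an immediate corollary of Theorem~\ref{bart}, exactly along the lines hinted at in the excerpt: the only extra input needed is the inequality $\cd(R,\aa)\leq \ara(\aa)\leq \mu(\aa)$, where $\mu(\aa)$ denotes the minimal number of generators of $\aa$.

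\begin{proof}
Suppose $\aa=(r_1,\ldots,r_{r-s})$ is generated by $r-s$ elements. Then by definition of the arithmetical rank, $\ara(\aa)\leq r-s$, and hence by \eqref{aragcd} we get $\cd(R,\aa)\leq \ara(\aa)\leq r-s$. Now $R$ is $r$-connected and complete local with $r<\dim R$, so Theorem~\ref{bart} applies verbatim and yields that $R/\aa$ is $s$-connected.
\end{proof}

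First I would double-check that the hypotheses of Theorem~\ref{bart} are literally those assumed here — namely $(R,\mm)$ complete local, $r$-connected, and $r<\dim R$ — which they are, so no reduction step is required. The one point worth a line of care is the legitimacy of \eqref{aragcd} when $\aa$ is a proper ideal generated by $r-s$ elements: if $r-s=0$ then $\aa=(0)$, $\cd(R,(0))=0$, and $s=r$, so the conclusion is that $R/\aa=R$ is $r$-connected, which is the hypothesis; thus even the degenerate case is consistent. The genuinely substantive content has already been carried out in Theorem~\ref{bart}, so there is no real obstacle here — the ``hard part'' is entirely upstream, in Propositions~\ref{homer} and~\ref{lisa} and the Mayer--Vietoris bookkeeping feeding Theorem~\ref{bart}. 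The present statement is purely a matter of invoking \eqref{aragcd} to convert a generator-count bound into a cohomological-dimension bound.

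If one instead wanted a self-contained argument not routed through Theorem~\ref{bart}, one could mimic its proof directly: write $\sqrt{\aa}=\J\cap\K$ as an intersection coming from a partition of the minimal primes realizing the connectedness defect $c$ of $R/\aa$, and then bound $\cd(R,\J\cap\K)$ from below using the \u Cech complex on $r-s$ elements (so $\cd\leq r-s$) in place of the general bound; but this merely re-derives a special case of Lemma~\ref{subadditivecohomdim} and Proposition~\ref{lisa}, so it is cleaner to cite Theorem~\ref{bart} and \eqref{aragcd} as above.
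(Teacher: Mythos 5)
Your proof is correct and is exactly the paper's argument: the paper derives Theorem \ref{palladineve} from Theorem \ref{bart} by the same observation that $\cd(R,\aa)\leq\ara(\aa)\leq r-s$ via \eqref{aragcd}. Nothing further is needed.
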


Theorem \ref{bart} implies also \cite[Theorem 3.3]{ho-hu}, which is in turn a generalization of a result of \cite{faltings}. See also Schenzel \cite[Corollary 5.10]{schenzel}.

\begin{thm}\label{frink}(Hochster-Huneke) Let $(R, \mathfrak{m})$ be a
complete equidimensional local ring of dimension $d$ such that
$H_{\mathfrak{m}}^d(R)$ is an indecomposable $R$-module. If $\cd(R, \mathfrak{a}) \leq d-2$, then the punctured
spectrum $\Spec(R/\mathfrak{a}) \setminus \{ \mathfrak{m} \}$ is connected.
\end{thm}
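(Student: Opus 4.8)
The plan is to deduce Theorem \ref{frink} from Theorem \ref{bart} by checking that the hypotheses of the former imply those of the latter with the right numerical parameters. So let $(R,\mm)$ be a complete equidimensional local ring of dimension $d=\dim R$ with $H_\mm^d(R)$ indecomposable, and suppose $\cd(R,\aa)\leq d-2$. I would first argue that $R$ is $(d-1)$-connected. The key input is the indecomposability of $H_\mm^d(R)$: if $R$ were not $(d-1)$-connected, one could write $R$ (up to radical, i.e.\ via its minimal primes, using equidimensionality and catenarity — this is the kind of statement collected in Appendix \ref{connectednesssection}, cf.\ Lemma \ref{milhouse}) as glued from two closed pieces meeting in dimension $\leq d-2$, and a Mayer--Vietoris argument \eqref{mayervietoris} for $H_\mm^d$ would then split $H_\mm^d(R)$ as a nontrivial direct sum, contradicting indecomposability. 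Concretely: with $\sqrt{(0)}=\J\cap\K$ a nontrivial partition of the minimal primes with $\dim R/(\J+\K)\leq d-2$, the sequence $H_\mm^{d-1}(R/(\J+\K))\to H_\mm^d(R/(\J\cap\K))\to H_\mm^d(R/\J)\oplus H_\mm^d(R/\K)\to H_\mm^d(R/(\J+\K))$ has vanishing outer terms (by Theorem \ref{vanishinggro}, since $\dim R/(\J+\K)\leq d-2$), so $H_\mm^d(R)\cong H_\mm^d(R/\J)\oplus H_\mm^d(R/\K)$ with both summands nonzero (Theorem \ref{nonvanishinggro}, as $\J$ and $\K$ have minimal primes of coheight $d$ by equidimensionality), a contradiction.

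Once $R$ is known to be $(d-1)$-connected, I would apply Theorem \ref{bart} with $r=d-1$ and $s=1$: here $r=d-1<d=\dim R$, so the hypothesis $r<\dim R$ holds, and $\cd(R,\aa)\leq d-2=r-1=r-s$, so the theorem yields that $R/\aa$ is $1$-connected. Finally I would translate ``$1$-connected'' into the stated topological conclusion: by the definitions recalled in Appendix \ref{connectednesssection}, $R/\aa$ being $1$-connected means exactly that $\Spec(R/\aa)$ cannot be disconnected by removing a closed subset of dimension $\leq 0$; since removing the single closed point $\{\mm\}$ is such a removal, the punctured spectrum $\Spec(R/\aa)\setminus\{\mm\}$ is connected. (One should note that the statement is only interesting when $\dim R/\aa\geq 2$, so that the punctured spectrum is not trivially a point or empty; in the low-dimensional cases the conclusion holds vacuously or trivially.)

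The main obstacle I anticipate is the very first step — showing indecomposability of $H_\mm^d(R)$ forces $(d-1)$-connectedness — since it requires being careful about what ``$(d-1)$-connected'' means for a not-necessarily-domain complete local ring and lining that up precisely with the Mayer--Vietoris splitting; the subtlety is ensuring both summands $H_\mm^d(R/\J)$ and $H_\mm^d(R/\K)$ are genuinely nonzero, which is where equidimensionality is used (so that every minimal prime has coheight $d$ and Theorem \ref{nonvanishinggro} applies). The rest is bookkeeping: plugging $r=d-1$, $s=1$ into Theorem \ref{bart} and unwinding the definition of connectedness. A cleaner route, if the converse direction of connectedness/indecomposability has already been set up in the appendix, would be to cite directly that for a complete equidimensional local ring ``$H_\mm^d(R)$ indecomposable'' $\iff$ ``$R$ is $(d-1)$-connected'', reducing the whole proof to a one-line application of Theorem \ref{bart}.
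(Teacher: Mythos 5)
Your overall route coincides with the paper's: Theorem \ref{bart} applied with $r=d-1$, $s=1$, together with the fact (Example \ref{punctureddu} (i)) that $1$-connectedness of $R/\aa$ is exactly connectedness of the punctured spectrum. The difference is the first step: the paper simply quotes \cite[Theorem 3.6]{ho-hu} for ``$H_{\mm}^d(R)$ indecomposable $\Rightarrow$ $R$ connected in codimension $1$'', whereas you try to prove it, and your argument for it has a genuine gap when $R$ is not reduced. With $\mathcal{J}=\bigcap_{i\in A}\wp_i$ and $\mathcal{K}=\bigcap_{j\in B}\wp_j$ you have $\mathcal{J}\cap\mathcal{K}=\sqrt{(0)}$, so the four-term exact sequence you display (which, incidentally, is the long exact sequence \eqref{longexact} attached to $0\to R/(\mathcal{J}\cap\mathcal{K})\to R/\mathcal{J}\oplus R/\mathcal{K}\to R/(\mathcal{J}+\mathcal{K})\to 0$, not the Mayer--Vietoris sequence \eqref{mayervietoris}) splits $H_{\mm}^d(R/\sqrt{(0)})=H_{\mm}^d(R_{\mathrm{red}})$, and you then silently replace this module by $H_{\mm}^d(R)$. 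That identification is false in general: for $R=\kkk[[x,y]]/(x^2)$ one has $H_{\mm}^1(R)\cong E_R(\kkk)$ (the Matlis dual of $\omega_R\cong R$), on which $x$ acts nontrivially, while $x$ kills $H_{\mm}^1(R_{\mathrm{red}})=H_{\mm}^1(\kkk[[y]])$. Since indecomposability of $H_{\mm}^d(R)$ does not formally transfer to $H_{\mm}^d(R_{\mathrm{red}})$, no contradiction is reached as written.

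The gap is repairable inside your framework. Let $u(0)=\bigcap_i\qq_i$ be the intersection of the primary components of $(0)$ belonging to the minimal primes $\wp_1,\ldots,\wp_m$. Since $(u(0))_{\wp_i}=0$ for every $i$, no minimal prime lies in $\Supp(u(0))$, hence $\dim u(0)\leq d-1$ and the long exact sequence \eqref{longexact} for $0\to u(0)\to R\to R/u(0)\to 0$ gives $H_{\mm}^d(R)\cong H_{\mm}^d(R/u(0))$ by Theorem \ref{vanishinggro}. Replacing $R$ by $R/u(0)$ (same underlying topological space, same minimal primes, no embedded primes) you may take $\mathcal{J}=\bigcap_{i\in A}\qq_i$ and $\mathcal{K}=\bigcap_{j\in B}\qq_j$, so that $\mathcal{J}\cap\mathcal{K}=(0)$ on the nose; now your splitting argument is correct, both summands being nonzero by Theorem \ref{nonvanishinggro} because $\dim R/\mathcal{J}=\dim R/\mathcal{K}=d$ by equidimensionality, and indecomposability is contradicted. (Alternatively, do as the paper does and cite \cite[Theorem 3.6]{ho-hu}, which is exactly the equivalence you wish for in your last sentence.) The rest of your proposal is fine and is the paper's bookkeeping: $r=d-1$, $s=1$ in Theorem \ref{bart}, and the easy direction of Example \ref{punctureddu} (i); note also that the degenerate case $\dim R/\aa=0$ cannot occur, since then $\cd(R,\aa)=\cd(R,\mm)=d>d-2$.
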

\begin{proof}
Since $H_{\mm}^d(R)$ is indecomposable, \cite[Theorem 3.6]{ho-hu} implies that $R$ is connected in codimension $1$. Because $\Spec(R/\aa)\setminus \{\mm\}$ is connected if and only if $R/\aa$ is $1$-connected, the statement follows at once from Theorem \ref{bart}. 
\end{proof}

\begin{remark}\label{patty}
In \cite[Theorem 3.4]{D-N-T}  the authors claim  that Theorem
\ref{frink} holds  without the assumption  that
$H_{\mathfrak{m}}^d(R)$ is indecomposable. This is not correct, as we are going to show:
Set $R:=\kkk[[x,y,u,v]]/(xu,xv,yu,yv)$ and let $\mathfrak{a}$ be the zero
ideal of $R$. The minimal prime ideals of $R$ are
$(\overline{x},\overline{y})$ and $(\overline{u},\overline{v})$,
so $R$ is a complete equidimensional local ring of dimension $d=2$, and $\cd(R,\aa)=0=d-2$.
From Lemma \ref{lemmaconn} (i) one can see that $R/\aa = R$ is not $1$-connected. Therefore the punctured spectrum of $R/\aa$ is not connected.
\end{remark}

%
%
%
%
%
%
%

\subsection{Noncomplete case}
\label{subchap1.2}

So far, we have obtained a certain understanding of the problem assuming the ambient ring $R$ to be complete. Of course, in the results of the previous subsection we could have avoided the completeness assumption. The inconvenient would have been that we should have complicated the statements asking for properties of the rings after completing, e.g.: {\it Let $R$ be a local ring such that $\widehat{R}$ is $r$-connected.} The aim of the present subsection is to introduce rings for which, even if noncomplete, the results of the previous section still hold true, without using ``completing-hypothesis". We start with a lemma.

\begin{lemma}\label{willy}
Let $(R, \mathfrak{m})$ be a $d$-dimensional local analytically
irreducible ring (i.e. $\widehat{R}$ is irreducible). Given an ideal $\aa\subseteq R$, we have
\[\cd(R,\aa)\leq d-s-1 \ \implies \ R/\aa \mbox{ is $s$-connected}.\]
\end{lemma}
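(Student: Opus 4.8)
The plan is to reduce Lemma \ref{willy} to the complete case handled by Theorem \ref{bart}, exploiting that $\widehat{R}$ is irreducible and hence, being the completion of a local ring, a domain — so in particular $d$-connected. Write $\widehat{R}$ for the completion and $\widehat{\aa} = \aa\widehat{R}$. By the faithful flatness of completion and the flat base change formula \eqref{cohomcompl1}, we have $\cd(R,\aa) = \cd(\widehat{R},\widehat{\aa})$; so the hypothesis gives $\cd(\widehat{R},\widehat{\aa}) \leq d - s - 1$. Now apply Theorem \ref{bart} to the ring $\widehat{R}$, which is complete local of dimension $d$ and is $r$-connected with $r = d-1 < d = \dim\widehat{R}$ (an irreducible local ring of dimension $d$ is $(d-1)$-connected; more precisely it is $d$-connected if one wants, but $r = d-1$ is exactly what we need to stay in the regime $r < \dim R$ required by the theorem). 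With $r = d-1$, the hypothesis $\cd(\widehat{R},\widehat{\aa}) \leq d-1-s = r-s$ yields that $\widehat{R}/\widehat{\aa}$ is $s$-connected.

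The remaining step is to descend the $s$-connectedness from $\widehat{R}/\widehat{\aa} = \widehat{R/\aa}$ back to $R/\aa$. Here I would invoke the appendix material on connectedness (referenced as Appendix \ref{connectednesssection} and the lemmas \ref{lemmaconn}, \ref{milhouse} used elsewhere in this section): the standard fact is that for a local ring, the $s$-connectedness of its completion implies the $s$-connectedness of the ring itself. Intuitively, $\Spec(R/\aa)$ and $\Spec(\widehat{R/\aa})$ have the same minimal primes up to the going-down behaviour of the flat map, and a partition of $\Min(R/\aa)$ into two pieces realizing a ``bad'' cut would pull back to such a partition for the completion; since the dimension of the intersection locus can only go up under completion (flatness), if $\widehat{R/\aa}$ is $s$-connected then so is $R/\aa$. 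So this direction is essentially automatic from the connectedness dictionary, though one must be slightly careful that it is the correct direction — $s$-connectedness of the completion is the \emph{stronger} statement and implies that of the ring.

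The main obstacle, as far as there is one, is precisely this last descent step: making sure the connectedness bookkeeping goes the right way and that the dimension of the ``separating'' subscheme behaves monotonically under completion. Everything else is a mechanical translation through \eqref{cohomcompl1} and an application of Theorem \ref{bart} with the specific choice $r = d - 1$. I do not expect the cohomological input to present any difficulty, since all the serious work (the Mayer--Vietoris and Hartshorne--Lichtenbaum arguments feeding Proposition \ref{homer} and Proposition \ref{lisa}) has already been carried out in the complete setting.
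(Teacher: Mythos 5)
Your argument is correct and is essentially the paper's own proof: transfer the cohomological bound to the completion via \eqref{cohomcompl1}, apply Theorem \ref{bart} to $\widehat{R}$ with $r=d-1$ (irreducibility making $\widehat{R}$ $d$-connected, hence $(d-1)$-connected), and then descend the $s$-connectedness from $\widehat{R}/\aa\widehat{R}$ to $R/\aa$. The descent fact you invoke informally is precisely Lemma \ref{boe} (i) of the appendix, which is what the paper cites at this point; the only minor slip is the parenthetical claim that $\widehat{R}$ is a domain (irreducible does not imply reduced), but your argument never uses it.
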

\begin{proof}
Since $\widehat{R}$ is irreducible, it is $d$-connected. In particular it is $(d-1)$-connected. Moreover, $\cd(R,\aa)=\cd(\widehat{R},\aa \widehat{R})$ by \eqref{cohomcompl1}. Thus Theorem \ref{bart} yields that $\widehat{R}/\aa \widehat{R}$ is $s$-connected. Eventually, Lemma \ref{boe} (i) implies that $R/\aa$ is $s$-connected as well, and we conclude.
\end{proof}

The following proposition gives a class of rings for which we do not have to care about their properties after completion. Let us remark that such a class comprehends Cohen-Macaulay local rings.

\begin{prop}\label{serre}
Let $R$ be a $d$-dimensional local ring satisfying Serre's condition $S_2$, which is a quotient of a Cohen-Macaulay local ring. Given an ideal $\aa\subseteq R$, we have
\[\cd(R,\aa)\leq d-s-1 \ \implies \ R/\aa \mbox{ is $s$-connected}.\]
\end{prop}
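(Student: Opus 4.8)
The plan is to argue exactly as in the proof of Lemma \ref{willy}, reducing to Theorem \ref{bart} by passing to the completion; the only new point is that the weaker hypothesis (``$S_2$'' together with ``quotient of a Cohen--Macaulay ring'', in place of ``analytically irreducible'') is still enough to make $\widehat R$ connected in codimension one. First I would use \eqref{cohomcompl1} to replace $\cd(R,\aa)$ by $\cd(\widehat R,\aa\widehat R)$, and Lemma \ref{boe}~(i) to reduce the conclusion ``$R/\aa$ is $s$-connected'' to ``$\widehat R/\aa\widehat R$ is $s$-connected''. In this way the whole problem is transported to the complete local ring $\widehat R$, and it remains to verify that $\widehat R$ fits the hypotheses of Theorem \ref{bart} for a suitable connectedness parameter.

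Set $d:=\dim R=\dim\widehat R$. The key assertion is that $\widehat R$ again satisfies $S_2$: this is precisely where the assumption $R=A/I$ with $A$ Cohen--Macaulay local is used, since then $\widehat R=\widehat A/I\widehat A$ with $\widehat A$ complete Cohen--Macaulay, while $R$, being a homomorphic image of a Cohen--Macaulay (hence of a Gorenstein) local ring, admits a dualizing complex and therefore has Gorenstein --- in particular $S_2$ --- formal fibres; as $S_2$ ascends along the faithfully flat local homomorphism $R\to\widehat R$ with $S_2$ fibres (see \cite{matsu}), $\widehat R$ is $S_2$. Now it is classical that a complete local ring satisfying $S_2$ is connected in codimension one (see e.g.\ \cite{Ha} and Appendix \ref{connectednesssection}); in the terminology of this chapter, $\widehat R$ is $(d-1)$-connected. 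If $d=0$, or more generally if $s\ge d$, there is nothing to prove, the hypothesis $\cd(R,\aa)\le d-s-1$ being empty since then $d-s-1<0\le\cd(R,\aa)$; so we may assume $d\ge 1$, whence $0\le d-1<d=\dim\widehat R$.

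Finally I would apply Theorem \ref{bart} to $\widehat R$ with $r:=d-1$: since
\[\cd(\widehat R,\aa\widehat R)=\cd(R,\aa)\le d-s-1=(d-1)-s=r-s,\]
Theorem \ref{bart} yields that $\widehat R/\aa\widehat R$ is $s$-connected, and hence, by Lemma \ref{boe}~(i), that $R/\aa$ is $s$-connected. The only genuine obstacle is the middle step, namely that completion preserves the property of being $S_2$ (equivalently, of being connected in codimension one): this is exactly the service rendered by the hypothesis that $R$ be a quotient of a Cohen--Macaulay ring, for without some such control on the formal fibres the completion could be strictly worse behaved than $R$ and the reduction to Theorem \ref{bart} would break down. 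Everything else is a verbatim transcription of the proof of Lemma \ref{willy}.
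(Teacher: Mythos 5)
Your overall route is exactly the paper's: complete, check that $\widehat{R}$ is still $S_2$ and hence (being catenary, as any complete local ring is) connected in codimension $1$ by Corollary \ref{serre2}, then transfer the hypothesis via \eqref{cohomcompl1}, apply Theorem \ref{bart} with $r=d-1$, and descend the conclusion with Lemma \ref{boe} (i), just as in Lemma \ref{willy}. The reductions, the choice $r=d-1$, and the treatment of the degenerate cases are all fine.

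The one step you argue yourself is the only one that is flawed: the parenthetical ``quotient of a Cohen--Macaulay (hence of a Gorenstein) local ring'' is false. Being a homomorphic image of a Gorenstein local ring is equivalent to possessing a dualizing complex (Kawasaki's theorem, settling Sharp's conjecture), and there exist Cohen--Macaulay local rings with no canonical module, hence no dualizing complex; so you cannot conclude that $R$ has Gorenstein formal fibres this way. What is true, and is all you need, is that a quotient of a Cohen--Macaulay local ring has Cohen--Macaulay (in particular $S_2$) formal fibres: the formal fibres of $R=A/I$ are among those of $A$, and for any $Q\in \Spec(\widehat{A})$ lying over $\wp\in\Spec(A)$ the local homomorphism $A_{\wp}\to \widehat{A}_Q$ is flat with Cohen--Macaulay source and target, so its closed fibre $\widehat{A}_Q/\wp\widehat{A}_Q$ is Cohen--Macaulay by the usual dimension and depth formulas for flat local maps; these closed fibres are exactly the localizations of the formal fibre over $\wp$. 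With $S_2$ fibres in hand, $S_2$ ascends along $R\to\widehat{R}$ by \cite[Theorem 23.9]{matsu}, which is precisely the content of the citation \cite[Exercise 23.2]{matsu} used in the paper's proof. After this repair your argument coincides with the one in the text.
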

\begin{proof}
The completion of $R$ satisfies $S_2$ as well as $R$ (see \cite[Exercise 23.2]{matsu}). Then $\widehat{R}$ is connected in codimension 1 by Corollary \ref{serre2}. Arguing as in the proof of Lemma \ref{willy}, we conclude.
\end{proof}
%

Now we prove a version of Theorem \ref{bart} in the case when $R$ is a graded algebra over a field. Such a version will be useful also in the next chapter.

\begin{thm}\label{winchester}
Let $R$ be a graded $r$-connected ring such that $R_0$ is a field, with $r<\dim R$. Given a homogeneous ideal $\aa\subseteq R$, we have
\[\cd(R,\aa)\leq r-s \ \implies \ R/\aa \mbox{ is $s$-connected}.\]
\end{thm}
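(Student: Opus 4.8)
The plan is to reduce the graded case to the complete local case already handled in Theorem \ref{bart}, by passing to an appropriate localization and completion. The point is that the hypotheses ``$r$-connected'' and ``$\cd(R,\aa)\le r-s$'' both transfer nicely under this process, and the conclusion ``$R/\aa$ is $s$-connected'' can be detected there as well.

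First I would reduce to the case where $R_0=\kkk$ is algebraically closed, since connectedness of $\Spec(R/\aa)$ is a topological notion that should be insensitive to faithfully flat base change along $\kkk\to\bar\kkk$; one checks that $R\otimes_\kkk\bar\kkk$ is still $r$-connected (connectedness in codimension is preserved) and that $\cd$ is preserved by flat base change via Lemma \ref{basetheorems}(i). Next, let $\mm=R_+$ be the irrelevant maximal ideal and set $A=R/\aa$, $\nn=A_+$. The key observation is that for a graded ring over a field, $\Spec(A)$ is $s$-connected if and only if the punctured spectrum near the vertex behaves correctly, and in fact if and only if $\widehat{A_\nn}$ is $s$-connected; more precisely, removing a closed subset of dimension $\le c$ from $\Spec(A)$ disconnects it iff the same happens for $\Spec(A_\nn)$, and this passes to the completion by Lemma \ref{boe}(i) (the analogue of Lemma \ref{willy}'s last step). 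So it suffices to show $\widehat{R_\mm}/\aa\widehat{R_\mm}$ is $s$-connected.

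Now apply Theorem \ref{bart} to the complete local ring $\widehat{R_\mm}$: I need that it is $r$-connected with $r<\dim\widehat{R_\mm}=\dim R$, and that $\cd(\widehat{R_\mm},\aa\widehat{R_\mm})\le r-s$. The dimension equality is standard for graded rings localized at the irrelevant ideal. The cohomological dimension bound follows from \eqref{cohomloc} (localization) together with \eqref{cohomcompl1} (completion), exactly as in points (i) and (ii) of the discussion preceding Lemma \ref{subadditivecohomdim}. The remaining — and I expect principal — obstacle is the claim that $\widehat{R_\mm}$ is $r$-connected given that the graded ring $R$ is $r$-connected: connectedness in codimension can in principle be destroyed by completion, so one must use that $R$ is graded over a field. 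The right tool is that a graded $\kkk$-algebra (with $\kkk$ a field, and after the reduction to $\bar\kkk$) is such that $\Spec(R)$ and $\Spec(\widehat{R_\mm})$ have the same connectedness in codimension; this is precisely the content underlying Lemma \ref{boe} / the graded-vertex arguments in Appendix \ref{connectednesssection}, and I would invoke that here.

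To summarize the steps in order: (1) reduce to $\kkk=R_0$ algebraically closed via faithfully flat base change, using that $\cd$ and $r$-connectedness are preserved; (2) localize and complete at $\mm=R_+$, noting $\cd(R,\aa)=\cd(\widehat{R_\mm},\aa\widehat{R_\mm})$ by \eqref{cohomloc} and \eqref{cohomcompl1}, and $\dim\widehat{R_\mm}=\dim R$; (3) show $\widehat{R_\mm}$ is $r$-connected, using the graded structure over a field (the delicate point); (4) apply Theorem \ref{bart} to conclude $\widehat{R_\mm}/\aa\widehat{R_\mm}$ is $s$-connected; (5) descend this back to $s$-connectedness of $R/\aa$, using that passing to the localization at the vertex and then to the completion preserves and reflects $s$-connectedness for graded rings (Lemma \ref{boe}(i), as in the proof of Lemma \ref{willy}).
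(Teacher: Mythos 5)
Your steps (2)--(5) follow the same route as the paper's proof: localize at $\mm=R_+$, pass to $\widehat{R_{\mm}}$, transfer the bound $\cd(\widehat{R_{\mm}},\aa \widehat{R_{\mm}})\leq \cd(R,\aa)\leq r-s$ via \eqref{cohomloc} and \eqref{cohomcompl1}, apply Theorem \ref{bart}, and descend. The ``delicate point'' you leave as an appeal to the appendix is resolved in the paper exactly along the lines you guess, and it is worth making the mechanism explicit: since the minimal primes of $R$ are homogeneous, hence contained in $\mm$, Lemma \ref{milhouse} (i) gives that $R_{\mm}$ is $r$-connected; Lemma \ref{krusty} guarantees that these (extensions of graded) primes stay prime in $\widehat{R_{\mm}}$, so Lemma \ref{boe} (ii) yields that $\widehat{R_{\mm}}$ is $r$-connected. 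Symmetrically, your step (5) is Lemma \ref{boe} (i) (completion to localization) followed again by Lemma \ref{milhouse} (i), using that the minimal primes of $\aa$ are homogeneous, to pass from $R_{\mm}/\aa R_{\mm}$ back to $R/\aa$. So this is not a genuinely different argument.

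The one real problem is your step (1). The assertion that $R\otimes_{\kkk}\overline{\kkk}$ is still $r$-connected is false in general: geometrically reducible components split under extension of the base field, and connectedness in codimension can drop. For instance $\RR[x,y]/(x^2+y^2)$ is a graded domain, hence $1$-connected, while its base change to $\CC$ is two lines meeting in a point, so only $0$-connected; for an example within the range $r<\dim R$, the cone over two conjugate disjoint lines in $\PP^3$ is a $2$-dimensional domain over $\RR$ (so $1$-connected), but over $\CC$ it becomes two planes meeting only at the vertex, which is not $1$-connected. (Descent of the \emph{conclusion} along $\kkk\to\overline{\kkk}$ would be fine, but what your reduction needs is transfer of the \emph{hypothesis}, and that is the direction that fails.) Fortunately the reduction is also unnecessary: nothing in your steps (2)--(5) uses algebraic closedness, since Lemma \ref{krusty} and Lemma \ref{boe} (ii) only require the grading over a field. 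Deleting step (1) outright repairs the argument and turns it into the paper's proof.
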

\begin{proof}
Let $\mathfrak{m}$ be the irrelevant maximal ideal of $R$. Using Lemma \ref{milhouse} (i), since the minimal prime ideals of $R$ are in $\mm$, we have that $R_{\mm}$ is $r$-connected. Furthermore, let us notice that $\widehat{R_{\mm}}\cong \widehat{R^{\mm}}$.
Since the minimal prime ideals of $R_{\mm}$ come from homogeneous prime ideals of $R$, by Lemma \ref{krusty} their extension
$\wp \widehat{R_{\mathfrak{m}}}$ belongs to
$\Spec(\widehat{R_{\mathfrak{m}}})$. Then Lemma \ref{boe} (ii) yields that
$\widehat{R_{\mm}}$ is $r$-connected.
Notice that, by \eqref{cohomcompl1} and \eqref{cohomloc} we have
\[\cd(\widehat{R_{\mm}},\aa \widehat{R_{\mm}})\leq \cd(R,\aa),\]
so $\widehat{R_{\mm}}/\aa \widehat{R_{\mm}}$ is $s$-connected by Theorem \ref{bart}. Lemma \ref{boe} (i) implies that $R_{\mm}/\aa R_{\mm}$ is $s$-connected as well. Eventually we conclude by Lemma \ref{milhouse} (i): In fact the minimal primes of $\aa$, being homogeneous, are in $\mm$, so $R/\aa$ is $s$-connected as well as $R_{\mm}/\aa R_{\mm}$.
\end{proof}

%
%
%
%
%
%

\subsection{Cohomological dimension and connectedness of open subschemes of projective schemes}
\label{subchap1.3}
\index{affine schemes|(}
In this subsection we give a geometric interpretation of the results
obtained in Subsection \ref{subchap1.2}, using \eqref{cdcd2}.
More precisely, given a projective scheme $X$ over a field $\kkk$  and an open subscheme $U\subseteq X$, our purpose is to
find necessary conditions for which the cohomological dimension of
$U$ is less than a given integer.
By a well known result of Serre, there is a characterization of
noetherian affine schemes in terms of the cohomological dimension, namely:
A noetherian scheme $X$ is affine if and only if $\cd(X)=0$ (see
Hartshorne \cite[Theorem 3.7]{hart}). Hence, as a
particular case, in this subsection we  give necessary conditions
for the affineness of an open subscheme of a projective scheme
over $\kkk$. This is an interesting theme in algebraic geometry, and
it was studied from several mathematicians (see for example
Goodman \cite{goodman} or Brenner
\cite{brenner}). For instance, it is well known that if $X$ is a noetherian
separated scheme and $U \subseteq X$ is an affine open subscheme, then every irreducible component of $Z=X\setminus U$ has
codimension less than or equal to 1. Here it follows a quick proof.

\begin{prop}\label{otto}
Let $X$ a noetherian separated scheme, $U \subseteq X$ an affine
open subscheme and $Z = X \setminus U$. Then every irreducible
component of $Z$ has codimension less than or equal to 1.
\end{prop}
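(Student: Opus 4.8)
The plan is to argue by contradiction: suppose some irreducible component $Z_0$ of $Z=X\setminus U$ has codimension $\geq 2$ in $X$. First I would pass to the local picture at the generic point of $Z_0$. Let $\xi$ be the generic point of $Z_0$, and set $A:=\mathcal O_{X,\xi}$, a noetherian local ring of dimension $\dim A = \codim(Z_0,X)\geq 2$, with maximal ideal $\mathfrak m_\xi$. The point is that, because $X$ is separated, the inclusion $U\hookrightarrow X$ is an affine morphism, so $U\cap \Spec A$ is an affine open subscheme of $\Spec A$; but as a scheme $U\cap\Spec A = \Spec A \setminus \{\mathfrak m_\xi\}$ is exactly the punctured spectrum of $A$, since $\xi$ was chosen to be the generic point of the component of $Z$ passing through it and no smaller-dimensional component of $Z$ meets $\Spec A$ near $\xi$. (One has to choose $\xi$ so that $\Spec A$ meets $Z$ only in the closure of $\xi$; this is possible by localizing further, removing the finitely many other components of $Z$.)

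Next I would derive the contradiction from affineness of the punctured spectrum. If $\Spec A\setminus\{\mathfrak m_\xi\}$ is affine, then by Serre's criterion (\cite[Theorem 3.7]{hart}) its cohomological dimension is $0$, i.e. $H^i(\Spec A\setminus\{\mathfrak m_\xi\},\mathcal F)=0$ for all quasi-coherent $\mathcal F$ and all $i>0$. On the other hand, sheaf cohomology on the punctured spectrum is computed by local cohomology: there is the exact sequence relating $H^i(\Spec A\setminus\{\mathfrak m_\xi\},\widetilde A)$ to $H_{\mathfrak m_\xi}^{i+1}(A)$ for $i\geq 1$ (and a four-term sequence at the bottom). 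Since $\dim A\geq 2$, Grothendieck's nonvanishing Theorem \ref{nonvanishinggro} (applied with $\aa=\mathfrak m_\xi$, so $\cd(A,\mathfrak m_\xi)\geq \height(\mathfrak m_\xi)=\dim A\geq 2$) gives $H_{\mathfrak m_\xi}^{\dim A}(A)\neq 0$ with $\dim A\geq 2$, hence $H^{\dim A-1}(\Spec A\setminus\{\mathfrak m_\xi\},\widetilde A)\neq 0$ with $\dim A - 1\geq 1$. This contradicts $\cd(\Spec A\setminus\{\mathfrak m_\xi\})=0$, and the proposition follows.

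The main obstacle I anticipate is the bookkeeping in the first paragraph: making sure that after localizing at (a suitable point near) the generic point of $Z_0$ the trace of $Z$ really is a single closed point, so that $U\cap\Spec A$ is genuinely the \emph{punctured} spectrum and not the complement of something larger. This requires using that $Z$ has finitely many irreducible components and choosing the localization to kill all of them except $\overline{\{\xi\}}$; the codimension hypothesis $\geq 2$ is then exactly what makes $\dim A\geq 2$. A secondary (but entirely standard) point is the identification of cohomology of the punctured spectrum with local cohomology one degree up, and the fact that $U\hookrightarrow X$ separated implies $U\cap\Spec A\hookrightarrow \Spec A$ is affine; both are routine given the tools already set up in this chapter, so I would state them quickly and move on.
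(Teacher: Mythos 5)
Your argument is correct and rests on essentially the same ingredients as the paper's proof — separatedness to make the relevant intersection with $U$ affine, Serre's vanishing for affine schemes, the identification of cohomology of the complement with local cohomology shifted by one, and Grothendieck's nonvanishing theorem — the only difference being that you localize at the generic point of an offending component and argue by contradiction, whereas the paper restricts to an affine open chart $V$ meeting the component, notes $U\cap V$ is affine, and bounds $\cd(R,\aa)\leq 1$ directly, reading off the height bound for all minimal primes at once. One small remark: your parenthetical worry about ``localizing further'' is unnecessary, since the preimage of $Z$ in $\Spec \mathcal{O}_{X,\xi}$ is automatically the closed point alone (any point of $Z$ generizing $\xi$ lies in a component of $Z$ containing $\xi$, hence in $Z_0=\overline{\{\xi\}}$, and being both a generization and a specialization of $\xi$ it equals $\xi$).
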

\begin{proof}
Let $Z_1$ be an irreducible component of $Z$ and let $V$ be an open
affine such that $V \cap Z_1 \neq \emptyset$. Since $X$ is
separated, $U \cap V$ is also affine, and since codim$(Z_1, X)=$
codim$(Z_1 \cap V, V)$, we can suppose that $X$ is a notherian
affine scheme. So let $R$ and $\aa \subseteq R$ be such that $X = \Spec(R)$ and $Z =
\mathcal{V}(\mathfrak{a})$. By the affineness criterion of Serre,
we have $H^i(U, \mathcal{O}_X) =0$ for all $i
> 0$. Then \eqref{cdcd1} implies $\cd(R, \mathfrak{a}) \leq 1$. This implies
$\height (\wp) \leq 1$ for all $\wp$
minimal prime of $\mathfrak{a}$ (Theorem \ref{nonvanishinggro}).
\end{proof}

In light of this result it is natural to ask: {\it What can we say
about the codimension of the intersection of the various components of $Z$?} To answer  this question we investigate on the connectedness of $Z$.







\begin{thm}\label{kaprapal}
Let $X$ be an $r$-connected projective scheme over a field $\kkk$ with $r<\dim X$, $U \subseteq X$
an open subscheme and $Z=X \setminus U$. Then
\[\cd(U)\leq r-s-1 \ \implies \ Z \mbox{ is $s$-connected}.\]
\end{thm}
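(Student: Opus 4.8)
The plan is to translate the statement about the open subscheme $U \subseteq X$ into a statement about local cohomology over a graded ring, and then invoke Theorem \ref{winchester}. Since $X$ is projective over $\kkk$, we may write $X = \Proj(R)$ for a graded $\kkk$-algebra $R$ (generated in degree $1$), and the closed set $Z = X \setminus U$ corresponds to a homogeneous ideal $\aa \subseteq R$ with $Z = \V_+(\aa)$; enlarging $\aa$ by its saturation changes nothing topologically. The key translation is the relation (referred to in the text as \eqref{cdcd2}, cf.\ also \eqref{cdcd1}) between the cohomological dimension of the open subscheme $U = X \setminus \V_+(\aa)$ and the cohomological dimension of $\aa$ in $R$: roughly, $\cd(U) = \cd(R,\aa) - 1$ when $\cd(R,\aa) \geq 2$, and $\cd(U) \leq 1$ otherwise. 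So the hypothesis $\cd(U) \leq r - s - 1$ gives $\cd(R,\aa) \leq r - s$.

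**Key steps.** First I would fix the algebraic model: choose $R$ graded with $R_0 = \kkk$, $X = \Proj(R)$, $\aa$ homogeneous with $\V_+(\aa) = Z$. Second, I would observe that the $r$-connectedness of the scheme $X$ is equivalent to the $r$-connectedness of the ring $R$ in the sense of Appendix \ref{connectednesssection}: the minimal primes of $R$ are homogeneous and lie in the irrelevant ideal $\mm = R_+$, and $\Proj(R)$ being $r$-connected matches $R$ (equivalently $\Spec(R)$) being $r$-connected, since removing the vertex $\mm$ does not disconnect things below the relevant dimension — this is the same bookkeeping used in the proof of Theorem \ref{winchester} via Lemma \ref{milhouse} (i). Third, I would apply the comparison \eqref{cdcd2} to get $\cd(R,\aa) \leq r - s$ from $\cd(U) \leq r - s - 1$. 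Fourth, I would invoke Theorem \ref{winchester}: since $R$ is graded $r$-connected with $R_0$ a field and $r < \dim R$ (note $\dim R = \dim X + 1 > r + 1$, so in particular $r < \dim R$), we conclude $R/\aa$ is $s$-connected. Fifth and last, I would descend this back to $Z$: $R/\aa$ being $s$-connected is equivalent, again by the graded-to-$\Proj$ dictionary (minimal primes of $\aa$ homogeneous, hence in $\mm$), to $Z = \V_+(\aa) = \Proj(R/\aa)$ being $s$-connected.

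**Main obstacle.** The routine parts are the two translations between graded rings and projective schemes; the genuine content is entirely imported from Theorem \ref{winchester} and the cohomological-dimension comparison \eqref{cdcd2}. The step that needs the most care is making sure the connectedness notions on the scheme side and the ring side line up exactly — in particular checking that the ``$-1$'' shifts in dimension (passing from $X$ to $R$, i.e.\ $\dim R = \dim X + 1$) and in cohomological dimension (passing from $U$ to $\aa$) are consistent, so that the inequality $\cd(R,\aa) \le r-s$ comes out with the correct $r$ and $s$ and the hypothesis $r < \dim R$ of Theorem \ref{winchester} is met. Once the indices are pinned down, the proof is a direct chain of implications; no new estimates are required.
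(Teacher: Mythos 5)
Your overall route is the same as the paper's --- reduce to the graded ring via $X=\Proj(R)$, use \eqref{cdcd2} to convert $\cd(U)\leq r-s-1$ into $\cd(R,\aa)\leq r-s$, and feed this into Theorem \ref{winchester} --- but your bookkeeping of the connectedness indices under the $\Proj$--$\Spec$ dictionary is off by one, and the discrepancy does not cancel. By Example \ref{punctureddu} (ii) (applicable here because $R_0=\kkk$, so $R_+$ is a prime ideal), a projective scheme $\Proj(A)$ is $t$-connected if and only if the graded ring $A$ is $(t+1)$-connected: passing from $\V_+(\cdot)$ to $\V(\cdot)$ raises the dimension of every relevant closed subset by one (the cone vertex accounts for the shift, with the convention $\dim\emptyset=-1$). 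So the hypothesis that $X$ is $r$-connected gives that $R$ is $(r+1)$-connected, not merely $r$-connected; and, more importantly, the conclusion you need at the end, namely that $Z=\Proj(R/\aa)$ is $s$-connected, is equivalent to $R/\aa$ being $(s+1)$-connected, not $s$-connected. Your fifth step asserts the equivalence ``$R/\aa$ $s$-connected if and only if $Z$ $s$-connected,'' which is false: from $R/\aa$ being $s$-connected one only concludes that $Z$ is $(s-1)$-connected. As written, your chain (Theorem \ref{winchester} applied to the pair $(r,s)$) therefore proves only that $Z$ is $(s-1)$-connected, a strictly weaker statement than the theorem.

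The fix is to run the identical argument one level up, which is exactly what the paper does: from $X$ $r$-connected deduce via Example \ref{punctureddu} (ii) that $R$ is $(r+1)$-connected; note that $r+1<\dim R=\dim X+1$ since $r<\dim X$; observe $\cd(R,\aa)\leq r-s=(r+1)-(s+1)$, so Theorem \ref{winchester} applied with the pair $(r+1,s+1)$ yields that $R/\aa$ is $(s+1)$-connected; and finally apply Example \ref{punctureddu} (ii) to $R/\aa$ to conclude that $Z$ is $s$-connected. So the strategy is the right one, but both translations between graded rings and projective schemes must each carry the $+1$ shift, and in your write-up neither does.
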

\begin{proof}
Let $X= \Proj(R)$, with $R$ a graded finitely generated
$\kkk$-algebra, and let $\mathfrak{a}\subseteq R$ be a graded ideal
defining $Z$. Then, from \eqref{cdcd2} it follows that $\cd(R, \mathfrak{a}) \leq r-s$. By Example \ref{punctureddu} (ii) we have that $R$ is $(r+1)$-connected. Therefore, from Theorem \ref{winchester} we get that $R/\aa$ is $(s+1)$-connected. So $Z=\V_+(\aa)$ is $s$-connected, using Example \eqref{punctureddu} (ii) once again.
\end{proof}

From Theorem \ref{kaprapal} we can immediately obtain the
following corollary.

\begin{corollary}\label{abrham}
Let $X$ be an $r$-connected projective scheme over a field $\kkk$ with $r<\dim X$, $U \subseteq X$
an open subscheme and $Z=X \setminus U$. If $U$ is affine, then $Z$ is $(r-1)$-connected. In particular, if $X$ is connected in codimension 1 and $\codim_X Z=1$, then $Z$ is connected in
codimension 1.
\end{corollary}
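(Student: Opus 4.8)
The plan is to derive Corollary \ref{abrham} directly from Theorem \ref{kaprapal}. First I would apply Theorem \ref{kaprapal} with the hypothesis that $U$ is affine. By Serre's affineness criterion (Hartshorne \cite[Theorem 3.7]{hart}), $U$ being affine is equivalent to $\cd(U)=0$. So I want to invoke Theorem \ref{kaprapal} with the condition $\cd(U)\leq r-s-1$ satisfied. Since $\cd(U)=0$, this holds precisely when $r-s-1\geq 0$, i.e. when $s\leq r-1$. Taking $s=r-1$ (the largest admissible value), Theorem \ref{kaprapal} yields that $Z$ is $(r-1)$-connected, which is the first assertion.

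For the second assertion, I would specialize to the case $r=\dim X - 1$, i.e. $X$ is connected in codimension $1$ (recall that ``connected in codimension $1$'' means $(\dim X - 1)$-connected, as used in the excerpt). The hypothesis $r<\dim X$ required by Theorem \ref{kaprapal} is then automatically satisfied. Applying the first part with $r=\dim X-1$, we get that $Z$ is $(\dim X - 2)$-connected. Now I must translate this into the statement that $Z$ is connected in codimension $1$, i.e. $(\dim Z - 1)$-connected. Here the extra hypothesis $\codim_X Z = 1$ enters: it gives $\dim Z = \dim X - 1$, hence $\dim Z - 1 = \dim X - 2$, so ``$(\dim X-2)$-connected'' is exactly ``$(\dim Z - 1)$-connected'', which is the desired conclusion.

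The only mild subtlety — and the step I would be most careful about — is the bookkeeping of the connectedness indices and making sure the definition of ``$r$-connected'' and ``connected in codimension $1$'' from Appendix \ref{connectednesssection} are being used consistently (in particular that an $s$-connected space is also $s'$-connected for $s'\leq s$, so that passing from the $(r-1)$-conclusion to the relevant index is legitimate, and that $\codim_X Z = 1$ with $X$ equidimensional-enough forces $\dim Z = \dim X - 1$ rather than merely $\leq$). No genuine new argument is needed beyond Theorem \ref{kaprapal}; the corollary is purely a matter of choosing the right parameters and reading off the numerology.
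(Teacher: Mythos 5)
Your proof is correct and follows exactly the paper's route: Serre's criterion gives $\cd(U)=0$, and Corollary \ref{abrham} is then read off from Theorem \ref{kaprapal} with $s=r-1$. The numerological bookkeeping you spell out for the ``connected in codimension $1$'' statement (using $\codim_X Z=1$ to identify $\dim Z-1$ with $\dim X-2$) is exactly what the paper leaves implicit, and it is handled correctly.
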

\begin{proof}
By the affineness criterion of Serre $\cd(U)=0$, so we conclude by
Theorem \ref{kaprapal}.
\end{proof}

\begin{example} Let $R:=\kkk[X_0,\ldots ,X_4]/(X_0X_1X_4-X_2X_3X_4)$ and denote by $x_i$ the residue class of $X_i$ modulo $R$. Consider the ideal  
$\mathfrak{a}=(x_0x_1,x_0x_3,x_1x_2,x_2x_3)$
$= \wp_1 \cap \wp_2 \subseteq R$, where  $\wp_1=(x_0,x_2)$
and $\wp_2= (x_1,x_3)$. Clearly $\wp_1$ and $\wp_2$ are prime ideals of height $1$, therefore $U=\Proj(R)\setminus \V_+(\aa)$ might be affine. Since $\Proj(R)$ is a complete intersection of $\mathbb{P}^4$, it is connected in codimension 1 (Proposition \ref{barney}). But $\dim \V_+(\wp_1+ \wp_2)=0$, therefore $\V_+(\aa)$ is not $1$-connected by Lemma \ref{milhouse} (i). Thus, by
Corollary \ref{abrham} we conclude that $U$ is not affine.
\end{example}
\index{affine schemes|)}




\subsection{Discussion about the sufficiency for the vanishing of local cohomology}\label{discussionconn}

Theorem \ref{bart} gives a necessary condition for the cohomological dimension to be smaller than a given integer. In order to make the below discussion easier, suppose that the local complete ring $(R,\mm)$ is an $n$-dimensional domain. In this case Theorem \ref{bart} says the following: {\it If $\aa$ is an ideal of $R$ such that $\cd(R,\aa) < c$, then $R/\aa$ is $(n-c)$-connected.} One question which might come in mind is if the condition $R/\aa$ is $s$-connected is also sufficient to $\cd(R,\aa)$ being smaller than $n-s$. If $s=1$, with some further assumptions on $R$, Theorem \ref{bart} can actually be reversed. This fact has been proven by various authors, with different assumptions on the ring $R$ and different tools, see \cite{Ha}, \cite{ogus}, \cite{PS} and \cite{siwa1}. The more general version of the result has been settled in \cite[Theorem 2.9]{hu-ly}. First recall that a local ring $(A,\mm)$ is $1$-connected if and only if its punctured spectrum $\Spec(A)\setminus \{\mm\}$ is connected (Example \ref{punctureddu} (i)).

\begin{thm}\label{huneke&lyubeznik}
(Huneke and Lyubeznik) Let $R$ be an $n$-dimensional local, complete, regular ring containing a separably closed field. For an ideal $\aa \subseteq R$,
\[\cd(R,\aa)\leq n-2 \ \ \ \iff \ \ \ \dim R/\aa \geq 2 \mbox{ \ and \ } R/\aa \mbox{ \ is $1$-connected}.\]
\end{thm}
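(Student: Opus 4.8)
The plan is to prove both implications, treating the forward direction $(\Rightarrow)$ via Theorem \ref{bart} and the reverse direction $(\Leftarrow)$, the genuinely difficult one, by reduction to the analysis of the local cohomology of $R/\aa$ in the regular case. For the forward direction: suppose $\cd(R,\aa)\leq n-2$. Since $R$ is an $n$-dimensional complete regular local ring it is a domain, hence $n$-connected, in particular $(n-1)$-connected, so Theorem \ref{bart} with $r=n-1$ and $s=1$ gives that $R/\aa$ is $1$-connected. Moreover $\cd(R,\aa)\leq n-2<n$ forces $\dim R/\aa\geq 2$: indeed by the Hartshorne--Lichtenbaum Vanishing Theorem \ref{hartlich}, $H_\aa^n(R)=0$ already gives $\dim R/\aa>0$, and a second application of Theorem \ref{bart} (or a direct Mayer--Vietoris/Hochster--Huneke-style argument) upgrades this; alternatively one can invoke Proposition \ref{homer}-type reasoning. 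So the forward implication is essentially a corollary of the machinery already developed in this section.

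For the reverse direction, assume $\dim R/\aa\geq 2$ and $R/\aa$ is $1$-connected, i.e. the punctured spectrum of $R/\aa$ is connected. We want $H_\aa^i(R)=0$ for all $i\geq n-1$. The vanishing $H_\aa^n(R)=0$ is immediate from Theorem \ref{hartlich} since $\dim R/\aa\geq 2>0$. The real content is $H_\aa^{n-1}(R)=0$. Here I would use the hypothesis that $R$ is regular, complete, and contains a separably closed field. The strategy follows Huneke--Lyubeznik: by local duality over the regular ring $R$, the module $H_\aa^{n-1}(R)$ is controlled by (a suitable dual of) the top local cohomology of $R/\aa$-modules, and more precisely one relates the nonvanishing of $H_\aa^{n-1}(R)$ to the failure of connectedness of the punctured spectrum of $R/\aa$ via a Mayer--Vietoris argument: if $\aa=\sqrt{\aa}=\J\cap\K$ with $\V(\J),\V(\K)$ a disconnection of the punctured spectrum, then the Mayer--Vietoris sequence \eqref{mayervietoris} together with $\dim R/(\J+\K)\leq 1$ and Theorem \ref{hartlich} applied to $\J,\K$ produces a nonzero class in $H_\aa^{n-1}(R)$. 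Conversely, and this is the hard half, one must show that $1$-connectedness of $R/\aa$ actually forces $H_\aa^{n-1}(R)=0$; this is where the separably closed field hypothesis enters, through either the étale/de Rham comparison and a Lefschetz-type theorem, or in positive characteristic through the Frobenius action on $H_\aa^{n-1}(R)$ (an $F$-module argument à la Lyubeznik \cite{ly1}) showing that a nonzero class would force a disconnection. I would cite \cite[Theorem 2.9]{hu-ly} for this implication rather than reproduce it, since the excerpt explicitly attributes the general statement there.

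The main obstacle is precisely this last step: proving that $H_\aa^{n-1}(R)\neq 0$ implies the punctured spectrum of $R/\aa$ is disconnected, under only a separably closed (not algebraically closed) field assumption and in mixed generality of characteristic. The delicacy is that one cannot simply pass to characteristic $0$ and use Hodge theory, nor assume a fixed positive characteristic; the unified treatment requires the formal-function/connectedness technology of Hartshorne combined with either Lyubeznik's $D$-module and $F$-module finiteness results or Ogus's local cohomological characterization, and the separable closedness is exactly what makes the relevant $H^0$ of a connected punctured spectrum one-dimensional. Accordingly my proof would present the forward direction and the ``disconnection $\Rightarrow$ nonvanishing'' direction in full (both being short Mayer--Vietoris arguments built on Theorems \ref{bart}, \ref{hartlich}, and \ref{nonvanishinggro}), and for the converse ``nonvanishing $\Rightarrow$ disconnection'' I would state it as the cited theorem of Huneke and Lyubeznik, indicating that its proof rests on the finiteness of the set of associated primes of $H_\aa^{n-1}(R)$ and a connectedness argument on the completion.
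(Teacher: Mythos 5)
The paper offers no proof of this statement: it is quoted verbatim from Huneke and Lyubeznik and backed only by the citation \cite[Theorem 2.9]{hu-ly}, so your decision to prove the easy implication with the thesis's own machinery (Theorem \ref{bart} applied with $r=n-1$, $s=1$, using that a complete regular local ring is a domain, hence $(n-1)$-connected) and to defer the hard implication ``$1$-connected $\Rightarrow H_{\aa}^{n-1}(R)=0$'' to \cite{hu-ly} is exactly in line with how the thesis treats it. One small correction: the claim $\dim R/\aa\geq 2$ cannot be obtained by ``a second application of Theorem \ref{bart}'' or by $1$-connectedness alone (a one-dimensional domain quotient is $1$-connected), nor is Hartshorne--Lichtenbaum enough, since Theorem \ref{hartlich} only gives $\dim R/\aa>0$; the right one-line argument is Grothendieck's nonvanishing Theorem \ref{nonvanishinggro}, which gives $\height(\aa)\leq\cd(R,\aa)\leq n-2$, and then $\dim R/\aa=n-\height(\aa)\geq 2$ because the regular local ring $R$ is a catenary domain. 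With that substitution your write-up is correct and matches the paper's (purely citational) treatment; your Mayer--Vietoris sketch of ``disconnection $\Rightarrow$ nonvanishing'' is just the contrapositive of the direction already covered by Theorem \ref{bart} (via Proposition \ref{homer}), so it adds nothing logically but is harmless.
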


The next step we might consider is the case in which $R/\aa$ is $2$-connected. From Theorem \ref{bart} we know the following implication:
\begin{equation}\label{bart3conn}
\cd(R,\aa)\leq n-3 \ \ \ \implies \ \ \ \dim R/\aa \geq 3 \mbox{ \ and \ } R/\aa \mbox{ \ is $2$-connected}.
\end{equation}
Unfortunately, this time, the converse does not hold, neither assuming $R$ to be regular. We show this in the following example.
 
\begin{example}\label{notlocconn}
Let $R:=\kkk[[x_1,x_2,x_3,x_4,x_5]]$ be the ($5$-dimensional) ring of formal power series in $5$ variables over a field $\kkk$. Let us consider the ideal of $R$
\[\aa := (x_1x_2x_4,x_1x_3x_4,x_1x_3x_5,x_2x_3x_5,x_2x_4)=(x_1,x_2)\cap (x_2,x_3)\cap (x_3,x_4)\cap (x_4,x_5).\]
One can see, for instance using Lemma \ref{hilbert}, that $A=R/\aa$ is $2$-connected. However we are going to show that $\cd(R,\aa)\geq 3=5-2$. To this aim, let us localize $A$ at the prime ideal $\wp = (x_1,x_2,x_4,x_5)$. Since $x_3$ is invertible in $R_{\wp}$, we have that
\[A_{\wp} \cong R_{\wp}/(x_1x_4,x_1x_5,x_2x_4,x_2x_5).\]
Moreover, $\bb =\aa R_{\wp}= (x_1x_4,x_1x_5,x_2x_4,x_2x_5)=(x_1,x_2)\cap (x_3,x_4)$. By Lemma \ref{lemmaconn} it follows that $A_{\wp}$ is not $1$-connected. Moreover notice that $R_{\wp}$ is analytically irreducible, since $\widehat{R_{\wp}}$ is a $4$-dimensional regular domain as well as $R_{\wp}$. Therefore, Proposition \ref{willy} implies that $\cd(R_{\wp},\bb)\geq 3$. So \eqref{cohomloc} implies that $\cd(R,\aa)\geq 3$.
\end{example}

In Example \ref{notlocconn}, we exploited the fact that the connectedness properties of a ring, in general, do not pass to its localizations. More precisely, even if a ring $A$ is $r$-connected, in general it might happen that $A_{\wp}$ is not $s$-connected for some prime ideal $\wp$ of $A$, where $s=\height(\wp)+r-\dim A$ is the expected number. This fact was the peculiarity of Example \ref{notlocconn} which allowed us to use Theorem \ref{bart}. So it is understandable if the reader is not appeased from the above example, especially because there are rings in which such a pathology cannot happen. For instance if the ring $A$ is a domain, and therefore $\dim A$-connected, then $A_{\wp}$ is a domain for any prime $\wp$, and therefore $\height{\wp}$-connected. So, we decided to give a counterexample to the converse of \eqref{bart3conn} also in this situation, i.e. when $\aa$ is a prime ideal.

\begin{example}\label{notformlocconn}
Consider the polynomial ring $R''':=\CC[x_1,x_2,x_3,x_4]$ and the $\CC$-algebra homomorphism
\begin{displaymath}
\begin{array}{rll}
\phi : R''' & \rightarrow & \CC[t,u] \\
x_1 & \mapsto & t \\
x_2 & \mapsto & tu \\ 
x_3 & \mapsto & u(u-1) \\
x_4 & \mapsto & u^2(u-1)
\end{array}
\end{displaymath}
The image of $\phi$ is the coordinate ring of an irreducible surface in $\AA^4$, and it has been considered by Hartshorne in  \cite[Example 3.4.2]{hartshorne}. It is not difficult to see that
\[\aa''' := \Ker(\phi)=(x_1x_4-x_2x_3,x_1^2x_3+x_1x_2-x_2^2,x_3^3+x_3x_4-x_4^2).\]
Obviously $\aa'''$ is a prime ideal of $R'''$. Let us denote by $R''$ the ring of formal power series $\CC[[x_1,x_2,x_3,x_4]]$. Doing some elementary calculations, one can show that there exist $g_1,h_1,g_2,h_2 \in R''$ such that $(g_1,h_1,g_2,h_2)=(x_1,x_2,x_3,x_4)$ (we are considering ideals in $R''$) and
\[\sqrt{\aa''}=\sqrt{(g_1,h_1)}\cap \sqrt{(g_2,h_2)},\]
where $\aa'' := \aa'''R''$ (see \cite[Example 4.3.7]{BS}). Using Lemma \ref{lemmaconn}, we infer that $R''/\aa''$ is not $1$-connected. 

Now let us homogenize $\aa'''$ to $\aa'$, i.e. set
\[\aa' = (x_1x_4-x_2x_3,x_1^2x_3+x_0x_1x_2-x_0x_2^2,x_3^3+x_0x_3x_4-x_0x_4^2)\subseteq R'=R'''[x_0].\]
The ideal $\aa'$ is prime as well as $\aa'''$ (for instance see the book of Bruns and Vetter \cite[Proposition 16.26 (c)]{BrVe}). Furthermore $\dim R'/\aa' = \dim R'''/\aa'''+1=3$. Eventually, set $R:=\CC[[x_0,\ldots ,x_5]]$ and $\aa := \aa'R$. The ideal $\aa$ is prime by Lemma \ref{krusty}. Moreover, $\dim R/\aa = \dim R'/\aa' = 3$. Thus $R/\aa$ is $3$-connected. In particular it is $2$-connected! Let us consider the prime ideal $\wp = (x_1,x_2,x_3,x_4) \subseteq R$. Since $R_{\wp}$ is a $4$-dimensional regular local ring containing $\CC$, then Cohen's structure theorem implies $\widehat{R_{\wp}}\cong R''$. Furthermore, since $x_0$ is invertible in $R_{\wp}$, the ideal $\aa R_{\wp}\subseteq R_{\wp}$ is generated by $x_1x_4-x_2x_3$, \ $x_1^2x_3+x_1x_2-x_2^2$ and $x_3^3+x_3x_4-x_4^2$. Therefore, the ideal $\aa$ read in $R''$ is the same ideal of $\aa''$. So, $\widehat{R_{\wp}}/\aa \widehat{R_{\wp}}$ is not $1$-connected as $R''/\aa''$ is not. 
At this point, Theorem \ref{bart} implies 
\[\cd(\widehat{R_{\wp}},\aa \widehat{R_{\wp}})\geq 3.\] 
So, using \eqref{cohomcompl1} and \eqref{cohomloc}, we have
\[\cd(R,\aa)\geq 3 >5-3.\]
This shows that the converse of \eqref{bart3conn} does not hold, neither if we assume that $R$ is the ring of formal power series over the complex numbers and that $\aa\subseteq R$ is a prime ideal.
\end{example}

As the reader can notice, also in the above example we used Theorem \ref{bart}. In fact, with that notation, the ideal $\aa\subseteq R$, even if prime, was still a bit pathological. That is, we found a prime ideal $\wp$ of $A=R/\aa$ such that $\widehat{A_{\wp}}$ was not $\height{\wp}$-connected, at the contrary of $A_{\wp}$. Once again, the reader might demand an example in which all the completion of the localizations of $A$ are domain. To this aim, for instance, it would be enough to consider an ideal $\aa\subseteq R$ such that $A=R/\aa$ has an isolated singularity, i.e such that $A_{\wp}$ is a regular ring for any non maximal prime ideal $\wp$. In fact the completion of a regular ring is regular, thus a domain. Also in this situation, counterexamples to \eqref{bart3conn} still exist. However to construct them are needed techniques different from those developed in this section. How to establish these examples will be clear from the results of the next section.
\index{connected|)}\index{connected!r-@$r$-|)}

\section{Sufficient conditions for the vanishing of local cohomology}

In this section, we will focus on giving sufficient conditions for $\cd(R,\aa)\leq c$. However the conditions we find will often be also necessary. To this aim we have to move to a regular ambient ring. Actually, we will even suppose that the ring $R$ is a polynomial ring over a field of characteristic $0$. Moreover the ideal $\aa$ will be homogeneous and such that $R/\aa$ has an isolated singularity, which is a case of great interest by the discussion of Subsection \ref{discussionconn}. Let us recall that the assumption ``$R/\aa$ has an isolated singularity" is equivalent to ``$X=\Proj(R/\aa)$ is a regular scheme", i.e. the local rings $\O_{X,P}$ are regular for all points $P$ of $X$. If the field is algebraically closed, this is in turn equivalent to $X$ being nonsingular.

\subsection{Open subschemes of the projective spaces over a field of characteristic 0}\label{s1}

In this subsection we prove the key-theorem of the section (Theorem \ref{1}), which will let us achieve considerable consequences. We start with a remark which allows us to use complex coefficients in many situations.  

\begin{remark}\label{0}
Let $A$ be a ring, $B$ a flat $A$-algebra, $R$ an $A$-algebra and $M$ an $R$-module. Set $R_B:=R \otimes_A B$ and $M_B := M\otimes_A B$. Clearly $R_B$ is a flat $R$-algebra. Therefore Theorem \ref{basetheorems} (i) implies that, for any ideal $\aa \subseteq R$ every $j \in \NN$:
\begin{equation}\label{redtocomplex0}
H_{\aa}^j(M)\otimes_A B \cong H_{\aa}^j(M)\otimes_R R_B \cong H_{\aa R_B}^j(M\otimes_R  R_B)\cong H_{\aa R_B}^j(M_B)
\end{equation}

We want to use \eqref{redtocomplex0} in the following particular case. Let $S:=\KK[x_0, \ldots, x_n]$ be the polynomial ring in $n+1$ variables over a field $\KK$ of characteristic $0$, and $I \subseteq S$ an ideal. Since $I$ is finitely generated we can find a field $\kkk$ of characteristic $0$ such that, setting $S_{\kkk}:=\kkk[x_0, \ldots, x_n]$, the following properties hold:
\[ \kkk \subseteq \KK, \ \ \QQ \subseteq \kkk \subseteq \CC, \ \ (I \cap S_{\kkk})S=I \] 
(to this aim we have simply to add to $\QQ$ the coefficients of a set of generators of $I$).
Since $\KK$ and $\CC$ are faithfully flat $\kkk$-algebras, \eqref{redtocomplex0} implies that
\begin{equation}\label{redtocomplex1}
\cd(S,I)=\cd(S_{\kkk}, I \cap S_{\kkk})=\cd(S_{\CC}, (I\cap S_{\kkk})S_{\CC}),
\end{equation}
where $S_{\CC}:=\CC[x_1, \ldots, x_n]$.

In the above situation, assume that $I$ is homogeneous and that $X:=\Proj (S/I)$ is a regular scheme. Then set $X_{\kkk}:=\Proj (S_{\kkk} / (I\cap S_{\kkk}))$ and $X_{\CC}:=\Proj(S_{\CC}/((I\cap S_{\kkk})\CC))$. Notice that $X \cong X_{\kkk} \times _{\kkk} \Spec(\KK)$ and $X_{\CC}\cong X_{\kkk} \times _{\kkk} \Spec(\CC)$. Furthermore $X_{\kkk}$ and $X_{\CC}$ are regular schemes. Since $\KK$ and $\CC$ are both flat $\kkk$-algebras, we get, for all natural numbers $i,j$,
\[ H^i(X, \Omega_{X/\KK}^j) \cong H^i(X_{\kkk}, \Omega_{X_{\kkk} /\kkk}^j) \otimes_{\kkk}\KK \] 
and
\[ H^i(X_{\CC}, \Omega_{X_{\CC}/\CC}^j) \cong H^i(X_{\kkk}, \Omega_{X_{\kkk} /\kkk}^j) \otimes_{\kkk}\CC \]
(for example see the book of Liu \cite[Chapter 6, Proposition 1.24 (a) and Chapter 5, Proposition 2.27]{liu}).
Particularly we have 
\begin{equation}\label{redtocomplex2}  \dim_{\KK}(H^i(X, \Omega_{X/\KK}^j))=\dim_{\CC}(H^i(X_{\CC}, \Omega_{X_{\CC}/\CC}^j)). 
\end{equation}
We will call the dimension of the $\KK$-vector space $H^i(X, \Omega_{X/\KK}^j)$ the {\it (i,j)-Hodge number} of $X$,\index{Hodge numbers|(} denoting it by $h^{ij}(X)$.
\end{remark}

In the next remark, for the convenience of the reader, we collect some well known facts which we will use throughout the paper.

\begin{remark}\label{betti}
Let $X$ be a projective scheme over $\CC$: Let us recall that $X^h$ means $X$ regarded as an analytic space, as explained in Section \ref{GAGAsec}, and $\underline{\CC}$ denotes the constant sheaf associated to $\CC$. We will denote by $\beta_i(X)$ the {\it topological Betti number} \index{Betti numbers}\index{Betti numbers!topological} 
\[\beta_i(X):=\operatorname{rank}_{\ZZ}(H_i^{Sing}(X^h,\ZZ))=\operatorname{rank}_{\ZZ}(H_{Sing}^i(X^h,\ZZ))=\]
\[=\dim_{\CC}(H_{Sing}^i(X^h,\CC))=\dim_{\CC}(H^i(X^h,\underline{\CC})).\] 
The equalities above hold true from the universal coefficient Theorems \ref{unihom} and \ref{unicohom}, and by the isomorphism \ref{singsheaf}.
Pick another projective scheme over $\CC$, say $Y$, and denote by $Z$ the Segre product $X \times Y$. The K\"unneth formula for singular cohomology (for instance see Hatcher \cite[Theorem 3.16]{hatcher}) yields 
\[H_{Sing}^i(Z^h,\CC)\cong \bigoplus_{p+q=i}H_{Sing}^p(X^h,\CC)\otimes_{\CC}H_{Sing}^q(Y^h,\CC).\]
Thus at the level of topological Betti numbers we have
\begin{equation}\label{betti1}
\beta_i(Z)=\sum_{p+q=i}\beta_p(X)\beta_q(Y).
\end{equation}

Now assume that $X$ is a regular scheme projective over $\CC$. It is well known that $X^h$ is a K\"ahler manifold (see \cite[Appendix B.4]{hart}), so the Hodge decomposition (see the notes of Arapura \cite[Theorem 10.2.4]{arap}) is available. Therefore, using Theorem \ref{GAGA}, we have
\[H_{Sing}^i(X^h,\CC)\cong \bigoplus_{p+q=i}H^p(X^h,\Omega_{X^h}^q)\cong \bigoplus_{p+q=i}H^p(X,\Omega_{X/\CC}^q).\] 
Thus
\begin{equation}\label{betti2}
\beta_i(X)=\sum_{p+q=i}h^{pq}(X)
\end{equation}

Finally, note that the restriction map on singular cohomology
\begin{equation}\label{injective}
H_{Sing}^i(\PP(\CC^{n+1}),\CC)\longrightarrow H_{Sing}^i(X^h,\CC)
\end{equation}
is injective provided that $i=0, \ldots ,2 \dim X$ (see Shafarevich \cite[pp. 121-122]{sha}). So Example  \ref{bettiproj} implies that
\begin{equation}\label{betti3}
\beta_{2i}(X)\geq 1 \mbox{ \ \ provided that }i\leq \dim X
\end{equation}
\end{remark}

The contents of the next remark do not concern the cohomological dimension. We just want to let the reader notice a nice consequence of Remark \ref{betti} when combined with a result of Barth. 

\begin{remark}
Let $X$ and $Y$ be two positive dimensional regular scheme projective over $\CC$. Chose any embedding of $Z:=X \times Y$ in $\PP^N$. Then the dimension of the secant variety of $Z$ in $\PP^N$ is at least $2 \dim Z - 1$.

To show this notice that \eqref{betti3} yields $\beta_0(X) \geq 1$, \ $\beta_2(X) \geq 1$, \ $\beta_0(Y) \geq 1$ and $\beta_2(Y) \geq 1$.
Therefore using \eqref{betti1} we have 
\[ \beta_2(Z) \geq \beta_2(X)\beta_0(Y)+\beta_2(Y)\beta_0(X)\geq 2.\] 
By a theorem of Barth (see Lazarsfeld \cite[Theorem 3.2.1]{la}), it follows that $Z$ cannot be embedded in any $\PP^M$ with $M<2 \dim X -1$. If the dimension of the secant variety were less than $2 \dim X-1$, it would be possible to project down in a biregular way $X$ from $\PP^N$ in $\PP^{2 \dim X-2}$, and this would be a contradiction.

Note that the above lower bound is the best possible. For instance consider the classical Segre embedding $Z := \PP^r\times \PP^s\subseteq \PP^{rs+r+s}$. The defining ideal of $Z$ is generated by the 2-minors of a generic $(r+1)\times (s+1)$-matrix. The secant variety of $Z$ is well known to be generated by the $3$-minors of the same matrix, therefore its dimension is $2(r+s)-1$.
\end{remark}

The following theorem provides some necessary and sufficient conditions for the cohomological dimension of the complement of a smooth variety in a projective space being smaller than a given integer. For the proof we recall the concept of {\it DeRham-depth} introduced in \cite[Definition 2.12]{ogus}. The general definition requires the notion of local algebraic DeRham cohomology (see Hartshorne \cite[Chapter III.1]{hartder}). However for the case we are interested in there is an equivalent definition in terms of local cohomology (\cite[proof of Theorem 4.1]{ogus}). Let $X\subseteq \PP^n$ be a projective scheme over a field $\kkk$ of characteristic $0$. Let $S:=\kkk[x_0,\ldots ,x_n]$ be the polynomial ring and $\aa \subseteq S$ be the defining ideal of $X$. Then the DeRham-depth of $X$ is equal to $s$ if and only if $\operatorname{Supp}(H_{\aa}^i(S))\subseteq \{\mm \}$ for all $i>n-s$ and $\operatorname{Supp}(H_{\aa}^{n-s}(S))\nsubseteq \{ \mm\}$, where $\mm$ is the maximal irrelevant ideal of $S$.

\begin{thm}\label{1}
Let $X\subseteq \mathbb{P}^n$ be a regular scheme projective over a field $\kkk$ of characteristic $0$. Moreover, let $r$ be an integer such that $\operatorname{codim}_{\PP^n}X\leq r \leq n$ and $U:=\mathbb{P}^n \setminus X$. Then $\cd(U) < r$ if and only if
\begin{displaymath}
h^{pq}(X) = \left\{ \begin{array}{cc} 0 & \mbox{if  } p \neq q, \ p+q < n - r\\
1 &  \mbox{if  } p=q, \ p+q <n-r \end{array}  \right.
\end{displaymath}
Moreover, if $\kkk=\CC$, the above conditions are equivalent to:
\begin{displaymath}
\beta_i(X) = \left\{ \begin{array}{cc} 1 & \mbox{if  $i< n-r$ and $i$ is even} \\
0 &  \mbox{if  $i< n-r$ and $i$ is odd} \end{array}  \right.
\end{displaymath}
\end{thm}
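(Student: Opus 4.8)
The plan is to reduce the whole statement to the case $\kkk=\CC$ and then combine three inputs: the identification of $\cd(U)$ with $\cd(S,\aa)$, Ogus's computation of the local cohomology modules $\Ha^i(S)$ through algebraic de Rham cohomology, and Grothendieck's comparison theorem together with the Hodge decomposition. First I would invoke Remark~\ref{0}: by \eqref{redtocomplex1} the number $\cd(U)$ is unchanged if we descend $\aa$ to a suitable subfield of $\kkk$ finitely generated over $\QQ$ and then base change to $\CC$, and by \eqref{redtocomplex2} (with the definition of $h^{pq}$ given in Remark~\ref{0}) the Hodge numbers of $X$ agree with those of $X_{\CC}$; the topological Betti numbers in the ``Moreover'' part are likewise those of $X_{\CC}$. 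So we may replace $X$ by $X_{\CC}$ and assume $\kkk=\CC$, with $\aa\subseteq S:=\CC[x_0,\dots,x_n]$ the saturated (hence radical, as $X$ is regular) defining ideal of $X$.

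Next I would rewrite the left hand side. By the relation between the cohomological dimension of the open subscheme $U=\PP^n\setminus X$ and that of its complement (see \eqref{cdcd2}), $\cd(U)<r$ holds if and only if $\cd(S,\aa)\le r$, that is, $\Ha^i(S)=0$ for all $i>r$. Since $X$ is regular, $\Spec(S/\aa)\setminus\{\mm\}$ is regular and $\aa$ is there locally a complete intersection, so $\Supp\Ha^i(S)\subseteq\{\mm\}$ for every $i>\codim_{\PP^n}X$; equivalently, the DeRham-depth of $X$ equals $\dim X$. As $r\ge\codim_{\PP^n}X$ by hypothesis, all the modules occurring in the condition ``$\Ha^i(S)=0$ for $i>r$'' are supported at $\mm$, and precisely these are the modules controlled by Ogus's theory: by \cite[Definition~2.12 and Theorem~4.1]{ogus}, read through the local-cohomology description of DeRham-depth recalled before the statement, $\Ha^i(S)=0$ for all $i>r$ if and only if the restriction maps $H^k_{\mathrm{dR}}(\PP^n_{\CC})\to H^k_{\mathrm{dR}}(X)$ in algebraic de Rham cohomology are isomorphisms for all $k<n-r$.

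Finally I would translate this into the asserted Hodge-number condition. By Grothendieck's comparison theorem \cite{gro} one has, compatibly with restriction, $H^k_{\mathrm{dR}}(\PP^n_{\CC})\cong H^k((\PP^n)^h,\CC)$ and $H^k_{\mathrm{dR}}(X)\cong H^k(X^h,\CC)$; as $X^h$ and $(\PP^n)^h$ are compact K\"ahler these carry pure Hodge structures, restriction is a morphism of Hodge structures, and by \eqref{injective} it is injective for $k\le 2\dim X$, in particular for $k<n-r$ (since $n-r\le\dim X$). Thus in the range $k<n-r$ the restriction is an isomorphism exactly when $\beta_k(X)=\beta_k(\PP^n)$, i.e. when $\beta_k(X)=1$ for $k$ even and $\beta_k(X)=0$ for $k$ odd (using Example~\ref{bettiproj}), and this already yields the ``Moreover'' statement. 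To get the $h^{pq}$ form I would use the Hodge decomposition \eqref{betti2}: for $k$ odd it forces $h^{pq}(X)=0$ whenever $p+q=k$, while for $k$ even the image of restriction is spanned by the $(k/2)$-th power of the hyperplane class, which is of Hodge type $(k/2,k/2)$, so $h^{k/2,k/2}(X)\ge 1$; combined with $\sum_{p+q=k}h^{pq}(X)=\beta_k(X)=1$ this gives $h^{k/2,k/2}(X)=1$ and $h^{pq}(X)=0$ for $p\ne q$. The converse is clear, since the $h^{pq}$ condition returns $\beta_k(X)=\beta_k(\PP^n)$ for all $k<n-r$.

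I expect the substantial obstacle to be the second step: passing from the vanishing of the $\mm$-supported local cohomology modules $\Ha^i(S)$ to the de Rham cohomology of $X$. This is the genuine content of Ogus's work \cite{ogus}; it relies on local algebraic de Rham cohomology, on the degeneration of the Hodge--de Rham spectral sequence (available in characteristic $0$), and on a careful tracking of cohomological degrees. The remaining steps are just the assembly of comparison and Hodge-theoretic facts, most of which are already prepared in Remarks~\ref{0} and~\ref{betti}.
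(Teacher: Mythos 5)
Your proposal is correct and follows essentially the same route as the paper's proof: reduce to $\kkk=\CC$ via Remark \ref{0} and \eqref{cdcd2}, check the DeRham-depth condition using the regularity of $X$ (local complete intersection away from $\mm$), apply Ogus's criterion relating $\cd$ to the restriction maps on algebraic de Rham cohomology, and translate through Grothendieck's comparison theorem, the injectivity \eqref{injective} and the Hodge decomposition \eqref{betti2}. The only cosmetic difference is that for the ``only if'' direction the paper quotes \cite[Corollary 7.5, p. 148]{hartshorne4} directly in terms of Hodge numbers, whereas you re-derive that statement from Ogus's if-and-only-if criterion together with the hyperplane-class/Hodge-structure argument, which is the same argument the paper uses anyway for the Betti-number equivalence in the case $\kkk=\CC$.
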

\begin{proof}
Set $X_{\CC}\subseteq \PP_{\CC}^n$ as in Remark \ref{0} and $U_{\CC}:=\PP_{\CC}^n\setminus X_{\CC}$. Then $\cd(U)=\cd(U_{\CC})$ by \eqref{redtocomplex1} and \eqref{cdcd2}. Moreover, notice that $h^{ij}(X)=h^{ij}(X_{\CC})$ by \eqref{redtocomplex2}. Finally, since $X_{\CC}$ is regular, we can reduce to the case in which $\kkk=\CC$. If $\cd(U)<r$ then $r\geq \operatorname{codim}_{\PP^n}X$ by Theorem \ref{nonvanishinggro} and \eqref{cdcd2}. Thus the ``only if"-part follows by \cite[Corollary 7.5, p. 148]{hartshorne4}.

So, it remains to prove the ``if"-part. By \eqref{groder2} algebraic De Rham cohomology agrees with singular cohomology. Therefore by \eqref{injective} the restriction maps
\begin{equation}\label{derham}
H_{DR}^i(\PP^n) \longrightarrow H_{DR}^i(X)
\end{equation}  
are injective for all $i \leq 2 \dim X$. By the assumptions, equation (\ref{betti2}) yields
$\beta_i(X)=1$ if $i$ is even and $i<n-r$, $0$ otherwise. Moreover by Example \ref{bettiproj} $\beta_i(\PP^n)=1$ if $i$ is even and $i\leq 2n$, $0$ otherwise. So the injections in (\ref{derham}) are actually isomorphisms for all $i < n-r$.

We want to use \cite[Theorem 4.4]{ogus}, and to this aim we will show that the De Rham-depth of $X$ is greater than or equal to $n-r$. This means that we have to show that $\operatorname{Supp}(H_{\aa}^i(S))\subseteq \mm$ for all $i > r$, where $S=\CC[x_0, \ldots ,x_n]$, $\aa \subseteq S$ is the ideal defining $X$ and $\mm$ is the maximal irrelevant ideal of $S$. But this is easy to see, because if $\wp$ is a graded prime ideal containing $\aa$ and different from $\mm$, being $X$ regular, $\aa S_{\wp}$ is a complete intersection in $S_{\wp}$: Therefore $(H_{\aa}^i(S))_{\wp}\cong H_{\aa S_{\wp}}^i(S_{\wp})=0$ for all $i > r \ (\geq \operatorname{ht(\aa S_{\wp})})$. Hence \cite[Theorem 4.4]{ogus} yields the conclusion.

Finally, if $\kkk=\CC$, the last condition in the statement is a consequence of the first one by \eqref{betti2}. Moreover, the restriction maps on singular cohomology
\[H_{Sing}^i(\PP(\CC^{n+1}),\CC) \longrightarrow H_{Sing}^i(X^h,\CC) \]
are compatible with the Hodge decomposition (see \cite[Corollary 11.2.5]{arap}). The Hodge numbers of the projective space are well known (for instance see \cite[Exercise 7.8, p. 150]{hartshorne4}), namely 
\begin{displaymath}
h^{pq}(\PP^n) = \left\{ \begin{array}{cc} 0 & \mbox{if  } p \neq q,\\
1 &  \mbox{if  } p=q, \ p\leq n \end{array}  \right.
\end{displaymath}
Since the restriction maps on singular cohomology are injective for $i<n-r$ by \eqref{injective}, the last condition in the statement implies the first one. The theorem is proved. 
\end{proof}

\begin{remark}\label{newinvariant}
Notice that Theorem \ref{1} implies the following surprising fact: {\it If $X\subseteq \PP^n$ is a regular scheme projective over a field of characteristic $0$, then the natural number $n-\cd(\PP^n \setminus X)$ is an invariant of $X$, i.e. it does not depend on the embedding.} Actually the same statement is true also in positive characteristic, using \cite[Theorems 5.1 and 5.2]{ly1}.
\end{remark}

Theorem \ref{1} is peculiar of the characteristic-zero-case. For instance pick an elliptic curve $E$ over a field of positive characteristic, whose Hasse invariant is $0$. This means that the Frobenius morphism acts as $0$ on $H^1(E,\O_E)$ (for a more explanatory definition see \cite[p. 332]{hart}). Such a curve exists in any positive characteristic by \cite[Corollary 4.22]{hart}. Then set $X:=E \times \PP^1 \subseteq \PP^5$ and $U:=\PP^5 \setminus X$. From the K\"unneth formula for coherent sheaves (see Sampson and Washnitzer \cite[Section 6, Theorem 1]{SaWa}), one can deduce that the Frobenius morphism acts as 0 on $H^1(X,\O_X)$ as well as on $H^1(E,\O_E)$. Therefore \cite[Theorem 2.5]{ha-sp} implies $\cd(U)=2$ (see also \cite[Theorem 5.2]{ly1}). However notice that, using again \cite[Section 6, Theorem 1]{SaWa}, we have $h^{10}(X)=h^{10}(E)=1$.

\vskip1mm

We want to state a consequence of Theorem \ref{1} which, roughly speaking, says that in characteristic $0$ the cohomological dimension of the complement of the Segre product of two projective schemes is always large. We will also show, in Remark \ref{gowa}, that in positive caharacteristic, at the contrary, such a cohomological dimension can be as small as possible. 

\begin{prop}\label{mah}
Let $X$ and $Y$ be two positive dimensional regular schemes projective over a field $\kkk$ of characteristic $0$. Choose an embedding of $Z:=X \times Y$ in some $\PP^N$ and set $U:=\PP^N \setminus Z$. Then $\cd(U) \geq N-3$. In particular if $\dim Z \geq 3$, $Z$ is not a set-theoretic complete intersection.
\end{prop}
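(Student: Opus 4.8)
The plan is to feed $Z$ into Theorem~\ref{1} in contrapositive form and then contradict the resulting vanishing of Betti numbers with the Künneth estimate already recorded in the preceding remark. First I would reduce to the case $\kkk=\CC$: by Remark~\ref{0} (using \eqref{redtocomplex1}, \eqref{redtocomplex2} and \eqref{cdcd2}) the quantity $\cd(U)$ and the relevant cohomological data are unchanged after base change, and $Z_\CC=X_\CC\times Y_\CC$ is again the Segre product of two positive-dimensional regular schemes projective over $\CC$, so nothing is lost. Write $\aa\subseteq S:=\CC[x_0,\ldots,x_N]$ for the homogeneous ideal defining $Z$, so that $\height\aa=\codim_{\PP^N}Z=N-\dim Z$ and $\dim Z=\dim X+\dim Y\geq 2$.

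The next step is a bookkeeping split on $\dim Z$. If $\dim Z=2$, then already $\cd(U)=\cd(S,\aa)-1\geq\height\aa-1=N-3$ by Grothendieck's non-vanishing (Theorem~\ref{nonvanishinggro}) together with \eqref{cdcd2}, and there is nothing more to prove. So I may assume $\dim Z\geq 3$, which ensures $\codim_{\PP^N}Z\leq N-3$, i.e. the codimension hypothesis of Theorem~\ref{1} holds with $r:=N-3$. Now suppose for contradiction that $\cd(U)<N-3$. Applying the $\kkk=\CC$ form of Theorem~\ref{1} with $n=N$, $r=N-3$ and $X$ replaced by $Z$, the inequality $\cd(U)<r$ is equivalent to $\beta_i(Z)=1$ for even $i<n-r=3$ and $\beta_i(Z)=0$ for odd $i<3$; in particular it forces $\beta_2(Z)=1$.

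This contradicts the Künneth estimate. Since $X$ and $Y$ are positive dimensional, \eqref{betti3} gives $\beta_0(X),\beta_2(X),\beta_0(Y),\beta_2(Y)\geq 1$, and the Künneth formula \eqref{betti1} for the Segre product yields $\beta_2(Z)\geq\beta_2(X)\beta_0(Y)+\beta_0(X)\beta_2(Y)\geq 2$, a contradiction. Hence $\cd(U)\geq N-3$. For the last assertion, if $\dim Z\geq 3$ and $Z$ were a set-theoretic complete intersection then $\ara(\aa)=\height\aa=N-\dim Z\leq N-3$, whereas \eqref{aragcd} combined with $\cd(S,\aa)=\cd(U)+1\geq N-2$ forces $\ara(\aa)\geq N-2$, which is impossible.

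I expect the only real subtlety to be parameter bookkeeping rather than mathematics: choosing $r=N-3$ so that the ``small-degree'' range $i<n-r$ is exactly $i\leq 2$, which pins down $\beta_2(Z)$ and nothing more; and carving off the boundary case $\dim Z=2$, where Theorem~\ref{1} does not directly apply but Theorem~\ref{nonvanishinggro} already gives the bound for free. The genuine geometric input — that $\beta_2(X)$ and $\beta_2(Y)$ are at least $1$, making the Segre product ``cohomologically large'' — is precisely the phenomenon observed in the remark just before this proposition, so the heart of the argument is already available.
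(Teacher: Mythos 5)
Your proof is correct and follows essentially the same route as the paper: reduce to $\kkk=\CC$, apply the only-if direction of Theorem \ref{1} with $r=N-3$ to force $\beta_2(Z)=1$, and contradict this with the K\"unneth estimate $\beta_2(Z)\geq\beta_2(X)\beta_0(Y)+\beta_0(X)\beta_2(Y)\geq 2$ coming from \eqref{betti1} and \eqref{betti3}. Your separate treatment of $\dim Z=2$ is a harmless extra precaution (under the contradiction hypothesis $\cd(U)<N-3$ the codimension condition of Theorem \ref{1} is automatic, since $\cd(U)\geq\height(\aa)-1$), but it does make the bookkeeping explicit where the paper leaves it implicit.
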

\begin{proof}
We can reduce the problem to the case $\kkk=\CC$ by \eqref{redtocomplex1}. From \eqref{betti1} we have 
\begin{equation}\label{eqmah} 
\beta_2(Z) \geq \beta_2(X)\beta_0(Y)+\beta_2(Y)\beta_0(X).
\end{equation} 
Furthermore, \eqref{betti3} yields $\beta_0(X) \geq 1$, \ $\beta_2(X) \geq 1$, \ $\beta_0(Y) \geq 1$ and $\beta_2(Y) \geq 1$. If $\cd(U)<N-3$, then Theorem \ref{1} would imply that $\beta_2(Z)=1$. But, by what said above, one reads from \eqref{eqmah} that $\beta_2(Z)\geq 2$, so we get the thesis. 
\end{proof}

In the next remark we will show that the characteristic-$0$-assumption is crucial in Proposition \ref{mah}. In fact in positive characteristic can happen that, using the notation of Proposition \ref{mah}, $\cd(U)$ is as small as possible, i.e. $\codim_{\PP^N}(Z)-1$. 

\begin{remark}\label{gowa}
Let $\kkk$ be a field of positive characteristic. Let us consider two Cohen-Macaulay graded $\kkk$-algebras $A$ and $B$ of negative $a$-invariant (for instance two polynomial rings over $\kkk$). Set $R:=A\#B:=\oplus_{i\in \NN}A_i \otimes_{\kkk}B_i$ their Segre product (with the notation of the paper of Goto and Watanabe \cite{GW}). By \cite[Theorem 4.2.3]{GW}, $R$ is Cohen-Macaulay. So, presenting $R$ as a quotient of a polynomial ring of $N+1$ variables, say $R\cong P/I$, a theorem in \cite{PS} (see Theorem \ref{prelimpeskineszpiro}) implies that $\cd(P,I)=N+1- \dim R$. Translating in the language of Proposition \ref{mah} we have $X:=\Proj(A)$, $Y:=\Proj(B)$, $Z:=\Proj(R) \subseteq \PP^N=\Proj(P)$ and $\cd(\PP^N \setminus Z)=\cd(P,I)-1=N - \dim Z -1=\codim_{\PP^N}(Z)-1$.
\end{remark}

\index{Hodge numbers|)}

\subsection{\'Etale cohomological dimension}\label{1.2}\index{etale cohomological dimension@\'etale cohomological dimension|(}

If the characteristic of the base field is $0$ we have seen in the previous subsection that we can, usually, reduce the problem to $\kkk=\CC$. In this context, thanks to the work of Serre described in Section \ref{GAGAsec}, is available the complex topology, so we can use methods from algebraic topology and from complex analysis.

Unfortunately, when the characteristic of $\kkk$ is positive, the above techniques are not available. Moreover some of the results obtained in Subsection \ref{s1} are not true in positive characteristic, as we have shown just below Theorem \ref{1} and in Remark \ref{gowa}. In this subsection we want to let the reader notice that, in order to have, in positive characteristic, cohomological results similar to those gettable in characteristic $0$, sometimes it is better to use the \'etale site rather than the Zariski one. Moreover, as well as local cohomology, \'etale cohomology gives a lower bound for the number of set-theoretically defining equations of a projective scheme by \eqref{aragecd}, therefore the results of this subsection will be useful also in the next chapter.

We recall the following result of Lyubeznik \cite[Proposition 9.1, (iii)]{ly3}, that can be seen as an \'etale version of \cite[Theorem 8.6, p. 160]{hartshorne4}. 

\begin{thm}\label{13}(Lyubeznik)
Let $\kkk$ be a separably closed field of arbitrary characteristic, $Y \subseteq X$ two projective schemes over $\kkk$ such that $U:=X \setminus Y$ is nonsingular. Set $N:=\dim X$ and let $\ell \in \ZZ$ be coprime with the characteristic of $\kkk$. If  $\ecd(U) < 2N-r$, then the restriction maps
\[H^i(X_{\et}, \underline{\ZZ/\ell\ZZ}) \longrightarrow H^i(Y_{\et},\underline{\ZZ/\ell\ZZ})\] 
are isomorphism for $i < r$ and injective for $i=r$.
\end{thm}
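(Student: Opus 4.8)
The statement to be proved is Theorem~\ref{13}, an étale analogue of a Zariski-cohomology comparison theorem (\cite[Theorem 8.6, p.~160]{hartshorne4}), attributed to Lyubeznik~\cite[Proposition 9.1, (iii)]{ly3}. The plan is to reduce the problem to a long exact sequence comparing the étale cohomology of $X$ with that of $Y$ and that of the open complement $U$, and then to use the hypothesis $\ecd(U) < 2N-r$ to kill the $U$-terms in the relevant range.

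\textbf{The approach.} First I would recall the correct notion of étale cohomological dimension: for a scheme $U$ over a separably closed field $\kkk$, one sets $\ecd(U)$ to be the largest $i$ such that $H^i_c(U_{\et}, \mathcal{F})\neq 0$ for some torsion sheaf $\mathcal F$ whose torsion is coprime to $\chara(\kkk)$ (equivalently, working with compactly-supported cohomology, or with the cohomology with supports in $Y$ of $X$; the definition must be pinned down to match Lyubeznik's). The key input is the long exact sequence of étale cohomology of the pair $(X,Y)$, i.e. the sequence relating $H^i_Y(X_{\et}, \underline{\ZZ/\ell\ZZ})$ (cohomology of $X$ with supports in the closed subscheme $Y$), $H^i(X_{\et}, \underline{\ZZ/\ell\ZZ})$ and $H^i(U_{\et}, \underline{\ZZ/\ell\ZZ})$:
\[
\cdots \to H^i_Y(X_{\et}, \underline{\ZZ/\ell\ZZ}) \to H^i(X_{\et}, \underline{\ZZ/\ell\ZZ}) \to H^i(U_{\et}, \underline{\ZZ/\ell\ZZ}) \to H^{i+1}_Y(X_{\et}, \underline{\ZZ/\ell\ZZ}) \to \cdots .
\]
Since $U$ is nonsingular of dimension $N$, Poincaré--Verdier duality on $U$ identifies $H^i_c(U_{\et}, \underline{\ZZ/\ell\ZZ})$ with the dual of $H^{2N-i}(U_{\et}, \underline{\ZZ/\ell\ZZ})$; moreover $H^i_Y(X_{\et}, \underline{\ZZ/\ell\ZZ})$ fits into a sequence with $H^i_c(U_{\et},-)$ and $H^i(X_{\et},-)$, so that the groups $H^i_Y(X_{\et}, \underline{\ZZ/\ell\ZZ})$ can also be controlled by $\ecd(U)$: namely $\ecd(U) < 2N-r$ forces $H^i_c(U_{\et}, \underline{\ZZ/\ell\ZZ}) = 0$ for $i > 2N-r$, hence by duality $H^j(U_{\et}, \underline{\ZZ/\ell\ZZ}) = 0$ for $j < r$, and correspondingly the local cohomology groups $H^i_Y(X_{\et}, \underline{\ZZ/\ell\ZZ})$ vanish for $i \le r$ (again up to the precise bookkeeping of the duality shift).

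\textbf{The key steps, in order.} (1) Fix the definition of $\ecd$ and translate $\ecd(U) < 2N-r$ into the vanishing $H^i_c(U_{\et}, \underline{\ZZ/\ell\ZZ}) = 0$ for $i > 2N-r$. (2) Apply Poincaré--Verdier duality on the smooth $N$-dimensional $U$ to get $H^j(U_{\et}, \underline{\ZZ/\ell\ZZ}) = 0$ for $j < r$ and $H^r$ controlled. (3) Feed this into the excision long exact sequence for the pair $(X, Y)$ (equivalently, relate $H^*_c(U)$, $H^*(X)$, $H^*(Y)$ via the triangle $j_! \underline{\ZZ/\ell\ZZ}_U \to \underline{\ZZ/\ell\ZZ}_X \to \underline{\ZZ/\ell\ZZ}_Y$, using that $X, Y$ are proper so $H^*_c = H^*$). (4) Read off from this long exact sequence that the restriction $H^i(X_{\et}, \underline{\ZZ/\ell\ZZ}) \to H^i(Y_{\et}, \underline{\ZZ/\ell\ZZ})$ is an isomorphism for $i < r$ (the flanking $H^i_c(U)$ and $H^{i+1}_c(U)$ terms vanish) and injective for $i = r$ (only the $H^{r+1}_c(U)$ term on the right might not vanish, which affects surjectivity but not injectivity). (5) Conclude, citing \cite[Proposition 9.1, (iii)]{ly3} for the precise statement, and noting this is the exact étale transcription of the Zariski argument in \cite[Theorem 8.6, p.~160]{hartshorne4}.

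\textbf{Main obstacle.} The delicate point is the precise dictionary between $\ecd(U)$ (as Lyubeznik defines it, via cohomology with supports or via compactly-supported cohomology) and the vanishing of the groups $H^i_Y(X_{\et}, \underline{\ZZ/\ell\ZZ})$ that actually appear in the excision sequence — in other words, getting the duality shift $i \leftrightarrow 2N-i$ and the supports-vs-compact-supports bookkeeping exactly right, including the hypotheses under which Poincaré--Verdier duality applies on the (possibly non-proper but smooth) scheme $U$. Since $U$ is smooth over a separably closed field and $\underline{\ZZ/\ell\ZZ}$ is a constructible sheaf with $\ell$ invertible, duality is available, but one must be careful that $U$ need not be equidimensional unless $X$ is — if necessary one restricts to the irreducible components or assumes $X$ equidimensional, as is implicit in the statement. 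Once this translation is nailed down, the remainder is a formal chase of the long exact sequence exactly as in Hartshorne's Zariski version, and involves no further hard input; hence I would not belabor those steps.
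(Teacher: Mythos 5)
The paper itself does not prove Theorem \ref{13}: it quotes it from \cite[Proposition 9.1 (iii)]{ly3}, so the only comparison available is with the standard argument (which is the one Lyubeznik gives): the short exact sequence $0\to j_{!}\underline{\ZZ/\ell\ZZ}_U\to \underline{\ZZ/\ell\ZZ}_X\to i_*\underline{\ZZ/\ell\ZZ}_Y\to 0$, properness of $X$, and Poincar\'e duality on the nonsingular $U$. Your skeleton is exactly this, so the route is the right one; the problem is that the translation step you yourself single out as the ``main obstacle'' is not just left unfinished but is written backwards, and as stated it produces a false assertion. In this paper (and in \cite{ly3}) $\ecd(U)$ is defined through \emph{ordinary} \'etale cohomology of torsion sheaves, not through compactly supported cohomology, and the two notions are not interchangeable: for a smooth affine $n$-fold one has $\ecd\leq n$ by Artin vanishing, while $H^{2n}_c\neq 0$. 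Consequently the hypothesis $\ecd(U)<2N-r$ says that $H^j(U_{\et},\G)=0$ for every torsion sheaf $\G$ and every $j\geq 2N-r$, i.e.\ vanishing in \emph{high} degrees; your intermediate claim ``$H^j(U_{\et},\underline{\ZZ/\ell\ZZ})=0$ for $j<r$'' is false already for $j=0$, and your step (1) definition of $\ecd$ via $H_c$ is not the one in force here.

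The correct bookkeeping runs in the opposite direction. Poincar\'e duality on the smooth $U$, of pure dimension $N$ (this requires $X$ equidimensional, or a componentwise argument, as you note), turns the high-degree vanishing of ordinary cohomology into $H^i_c(U_{\et},\underline{\ZZ/\ell\ZZ})=0$ for all $i\leq r$ (the Tate twist appearing in duality is harmless over a separably closed field). It is precisely these compactly supported groups, and not the ordinary $H^i(U)$, that flank the restriction maps in the sequence
\[\cdots \longrightarrow H^i_c(U_{\et},\underline{\ZZ/\ell\ZZ})\longrightarrow H^i(X_{\et},\underline{\ZZ/\ell\ZZ})\longrightarrow H^i(Y_{\et},\underline{\ZZ/\ell\ZZ})\longrightarrow H^{i+1}_c(U_{\et},\underline{\ZZ/\ell\ZZ})\longrightarrow \cdots\]
obtained from the displayed short exact sequence, using that $X$ is proper (so $H^i(X,j_!\underline{\ZZ/\ell\ZZ}_U)=H^i_c(U_{\et},\underline{\ZZ/\ell\ZZ})$) and that $i_*$ is exact. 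With that vanishing your step (4) chase is correct: isomorphisms for $i<r$ and injectivity for $i=r$. Note also that the sequence of cohomology with supports $H^i_Y(X_{\et},-)\to H^i(X_{\et},-)\to H^i(U_{\et},-)$ with which you open is not the relevant one (it compares $X$ with $U$, not with $Y$) and conflating it with the compact-support sequence is part of what scrambles the shift; it can simply be dropped.
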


\begin{remark}
An \'etale version of Theorem \ref{1} cannot exist. In fact, if $X\subseteq \PP^n$ is a regular scheme projective over a field $\kkk$, the natural number $\ecd(\PP^n \setminus X)$ is not an invariant of $X$ and $n$, as instead is for $\cd(\PP^n \setminus X)$, see Remark \ref{newinvariant}. For instance, we can consider $\PP^2 \subseteq \PP^5$ embedded as a linear subspace and $v_2(\PP^2)\subseteq \PP^5$ (where $v_2(\PP^2)$ is the $2$nd Veronese embedding): the first one is a complete intersection, so $\ecd(\PP^5 \setminus \PP^2)\leq 7$ by \eqref{aragecd}. Instead, $\ecd(\PP^5 \setminus v_2(\PP^2))=8$ by a result of Barile \cite{barile}.  

Notice that the above argument shows that the number of defining equations of a projective scheme $X\subseteq \PP^n$ depends on the embedding, and not only on $X$ and on $n$. This suggests the limits of the use of local cohomology on certain problems regarding the arithmetical rank.
\end{remark}

We want to end this subsection stating a result similar to Proposition \ref{mah}. First we need a remark, which is analogous to the last part of Remark \ref{betti}.

\begin{remark}\label{chow}
Let $X$ be a regular scheme projective over a field $\kkk$ and $\ell$ an integer coprime to $\chara(\kkk)$. The cycle map is a graded homomorphism
\[\bigoplus_{i\in\NN}\operatorname{CH}^i(X)\otimes \QQ_{\ell} \longrightarrow \bigoplus_{i\in \NN}H^{2i}(X_{\et},\QQ_{\ell}(i)), \]
where by $\operatorname{CH}^*(X)$ we denote the Chow ring of $X$ (for the definition of the cycle map see the book of Milne  \cite[Chapter VI]{milne}).
Let $\operatorname{N}^i(X)$ be the group $\operatorname{CH}^i(X)$ modulo numerical equivalence. To prove \cite[Chapter VI, Theorem 11.7]{milne}, it is shown that the degree $i$-part of the kernel of the cycle map is contained in the kernel of
\[\operatorname{CH}^i(X)\otimes \QQ_{\ell}\longrightarrow \operatorname{N}^i(X)\otimes \QQ_{\ell}.\]
As $X$ is projective, $\operatorname{N}^i(X)$ is nonzero for any $i=0,\ldots ,\dim X$. Therefore, $H^{2i}(X_{\et},\ZZ_{\ell}(i))$ is nonzero for any $i=0,\ldots ,\dim X$. Finally, by the definition of $H^{2i}(X_{\et},\ZZ_{\ell}(i))$, there exists an integer $m$ such that 
\[H^{2i}(X_{\et},\underline{\ZZ/\ell^m\ZZ}(i))\neq 0\]
for any $i=0,\ldots ,\dim X$.
\end{remark}

\begin{prop}\label{14}
Let $\kkk$ a separably closed field of arbitrary characteristic. Let $X$ and $Y$ be two positive dimensional regular schemes projective over $\kkk$. Let us choose an embedding $Z:=X \times Y \subseteq \PP^N$, and set $U:=\PP^N \setminus Z$. Then $\ecd(U) \geq 2N-3$. In particular if $\dim Z \geq 3$, $Z$ is not a set-theoretic complete intersection.
\end{prop}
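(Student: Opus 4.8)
The plan is to transpose the proof of Proposition \ref{mah} to the étale setting, using Theorem \ref{13} in place of Theorem \ref{1} and the cycle-class argument of Remark \ref{chow} in place of the Hodge-theoretic input of Remark \ref{betti}. Fix a prime $\ell$ with $\ell\neq\chara(\kkk)$, and suppose for a contradiction that $\ecd(U)<2N-3$. I would apply Theorem \ref{13} to the inclusion $Z\subseteq\PP^N$ (so that the ``$X$'', ``$Y$'' and ``$N$'' of that statement are $\PP^N$, $Z$ and our $N$), with $r=3$ and with the coefficient sheaf $\underline{\ZZ/\ell^m\ZZ}$ for each $m\geq 1$ (legitimate since $\ell^m$ is again coprime to $\chara(\kkk)$): the restriction maps
\[
H^i(\PP^N_{\et},\underline{\ZZ/\ell^m\ZZ})\longrightarrow H^i(Z_{\et},\underline{\ZZ/\ell^m\ZZ})
\]
are isomorphisms for all $i<3$, in particular for $i=2$. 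These maps are compatible with the reductions $\ZZ/\ell^{m+1}\ZZ\to\ZZ/\ell^m\ZZ$, so passing to the inverse limit over $m$ and tensoring with $\QQ_\ell$ (the groups involved are finite, so the limit behaves well) yields an isomorphism $H^2(\PP^N_{\et},\QQ_\ell)\cong H^2(Z_{\et},\QQ_\ell)$; since $\kkk$ is separably closed the Tate twist is trivial, so equivalently $\dim_{\QQ_\ell}H^2(\PP^N_{\et},\QQ_\ell(1))=\dim_{\QQ_\ell}H^2(Z_{\et},\QQ_\ell(1))$.

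Next I would contradict this equality. The left-hand side equals $1$, since $H^2(\PP^N_{\et},\QQ_\ell(1))\cong\QQ_\ell$. For the right-hand side, exactly as in Remark \ref{chow} the cycle map induces an injection $\operatorname{N}^1(Z)\otimes\QQ_\ell\hookrightarrow H^2(Z_{\et},\QQ_\ell(1))$, so it suffices to check $\operatorname{rk}\operatorname{N}^1(Z)\geq 2$. Let $p\colon Z\to X$ and $q\colon Z\to Y$ be the two projections, pick ample divisor classes $h_X$ on $X$ and $h_Y$ on $Y$, and set $\alpha:=p^{\ast}h_X$, $\beta:=q^{\ast}h_Y$ in $\operatorname{N}^1(Z)$. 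If $a\alpha+b\beta$ were numerically trivial, then intersecting with the $1$-dimensional cycle $\alpha^{\dim X-1}\beta^{\dim Y}$ annihilates the $\beta$-contribution (because $\beta^{\dim Y+1}=q^{\ast}(h_Y^{\dim Y+1})=0$) and leaves $a\cdot(h_X^{\dim X})(h_Y^{\dim Y})$, which is a positive multiple of $a$; hence $a=0$, and symmetrically $b=0$. So $\alpha,\beta$ are numerically independent, $\operatorname{rk}\operatorname{N}^1(Z)\geq 2$, and $\dim_{\QQ_\ell}H^2(Z_{\et},\QQ_\ell(1))\geq 2>1$, a contradiction. Therefore $\ecd(U)\geq 2N-3$.

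For the ``in particular'' clause, suppose $\dim Z\geq 3$ and that $Z$ is a set-theoretic complete intersection in $\PP^N$, say $Z=V_+(f_1,\dots,f_k)$ with $k=\codim_{\PP^N}Z=N-\dim Z$. Then $U=\bigcup_{i=1}^{k}D_+(f_i)$ is a union of $k$ affine open subschemes, each of dimension $N$, so the Mayer--Vietoris estimate for étale cohomology underlying \eqref{aragecd} gives $\ecd(U)\leq N+k-1=2N-\dim Z-1\leq 2N-4$, contradicting the inequality just proved; hence $Z$ is not a set-theoretic complete intersection.

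The main obstacle I anticipate is the bookkeeping in the first paragraph: Theorem \ref{13} is stated with the finite coefficient sheaves $\underline{\ZZ/\ell\ZZ}$, whereas the positivity that makes $H^2(Z_{\et})$ ``large'' (the injectivity of the cycle map on $\operatorname{N}^1$, and the fact that $\operatorname{N}^1$ of a product carries two independent ample pullbacks) naturally lives with $\QQ_\ell$-coefficients, so one must justify carefully that running Theorem \ref{13} over all powers $\ell^m$ and passing to the inverse limit really produces the $\QQ_\ell$-isomorphism used above. The remaining ingredients — $H^2(\PP^N_{\et},\QQ_\ell(1))\cong\QQ_\ell$, the cycle-map injectivity recalled in Remark \ref{chow}, and the elementary intersection-theoretic independence of $\alpha$ and $\beta$ on the product — are routine.
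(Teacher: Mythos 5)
Your proof is correct, but at the key step it takes a genuinely different route from the paper's. The paper also combines Theorem \ref{13} with the fact that $H^2(\PP^N_{\et},\underline{\ZZ/\ell\ZZ}(1))\cong\ZZ/\ell\ZZ$, but it stays with finite coefficients throughout: Remark \ref{chow} is applied to $X$ and to $Y$ separately, producing an integer $\ell$ (a prime power coprime to $\chara(\kkk)$) with $H^{2i}(X_{\et},\underline{\ZZ/\ell\ZZ}(i))\neq 0$ and $H^{2i}(Y_{\et},\underline{\ZZ/\ell\ZZ}(i))\neq 0$ for $i=0,1$, and then the K\"unneth formula for \'etale cohomology shows that $H^2(Z_{\et},\underline{\ZZ/\ell\ZZ}(1))$ contains two nonzero pieces and hence cannot be isomorphic to $\ZZ/\ell\ZZ$; Theorem \ref{13} then yields the contradiction with no passage to the limit. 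You instead pass to $\QQ_\ell$-coefficients (legitimate: the cohomology of the proper schemes involved with finite constant coefficients is finite, so the inverse limit over $m$ and the tensoring with $\QQ_\ell$ behave as you claim) and replace K\"unneth by an explicit intersection-theoretic computation showing $\operatorname{rk}\operatorname{N}^1(Z)\geq 2$ via the two pullbacks of ample classes. Your route makes the ``two independent classes'' completely explicit and avoids K\"unneth, at the cost of the limit bookkeeping and a heavier reliance on the cycle map; one small imprecision there is that Remark \ref{chow} only gives $\Ker(\mbox{cycle map})\subseteq\Ker\bigl(\operatorname{CH}^1(Z)\otimes\QQ_\ell\to\operatorname{N}^1(Z)\otimes\QQ_\ell\bigr)$, which does not literally produce an injection of $\operatorname{N}^1(Z)\otimes\QQ_\ell$ into $H^2(Z_{\et},\QQ_\ell(1))$ (that would need the reverse inclusion of kernels), but it does give a surjection from the image of the cycle map onto $\operatorname{N}^1(Z)\otimes\QQ_\ell$, hence exactly the dimension bound $\dim_{\QQ_\ell}H^2(Z_{\et},\QQ_\ell(1))\geq 2$ that you use, so the argument stands. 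Your deduction of the ``in particular'' clause, covering $U$ by the $\codim_{\PP^N}Z$ affine opens $D_+(f_i)$ and invoking the Mayer--Vietoris bound behind \eqref{aragecd}, is the same mechanism the paper uses through \eqref{aragecd} and \eqref{ecdecd}.
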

\begin{proof}
By Remark \ref{chow} there is an integer $\ell$ coprime with $\chara(\kkk)$ such that the modules 
\[ H^{2i}(X_{\et},\underline{\ZZ/\ell \ZZ}(i))\neq 0 \mbox{ \ \ and \ \ } H^{2i}(Y_{\et},\underline{\ZZ/\ell \ZZ}(i))\neq 0\]
for $i=0,1$. Therefore, using K\"unneth formula for \'etale cohomology, \cite[Chapter VI, Corollary 8.13]{milne}, one can easily show that $H^2(Z_{\et},\underline{\ZZ/\ell \ZZ}(1))$ cannot be isomorphic to $\ZZ/\ell \ZZ$. However $H^2(\PP_{\et}^N,\underline{\ZZ/\ell \ZZ}(1))\cong \ZZ/\ell\ZZ$ (see \cite[p. 245]{milne}). Now Theorem \ref{13} implies the conclusion.
\end{proof}

\begin{remark}
With the notation of Proposition \ref{14}, if $\dim Z \geq 3$, then it is not a set-theoretic complete intersection even if $\kkk$ is not separably closed. In fact, if it were, $Z\times_{\kkk}\Spec(\kkk_s)$ would be a set-theoretic complete intersection ($\kkk_s$ stands for the separable closure of $\kkk$). This would contradict Proposition \ref{14}.
\end{remark}
%
\index{etale cohomological dimension@\'etale cohomological dimension|)}
\subsection{Two consequences}
\index{depth|(}
In this subsection we draw two nice consequences of the investigations we made in the first part of the section.

The first fact we want to present is a consequence of Theorem \ref{1}, and regards a relationship between cohomological dimension of an ideal in a polynomial ring  and the depth of the relative quotient ring. Let us recall a result by Peskine and Szpiro, already mentioned in the Preliminaries' chapter.

\begin{thm}\label{peskineszpiro} (Peskine and Szpiro)
Let $R$ be an $n$-dimensional regular local ring of positive characteristic and $\aa\subseteq R$ an ideal. If $\depth(R/\aa)\geq t$, then $\cd(R,\aa)\leq n-t$.
\end{thm}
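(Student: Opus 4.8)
The plan is to run the classical Frobenius argument of Peskine and Szpiro (so this is essentially a re-proof of Theorem \ref{prelimpeskineszpiro}). Write $p := \chara(R) > 0$, let $F \colon R \to R$, $x \mapsto x^p$, be the Frobenius endomorphism, and for $e \in \NN$ let $F^e_R$ denote the associated Frobenius functor on $R$-modules; recall that $F^e_R(R/\aa) \cong R/\aa^{[p^e]}$, where $\aa^{[p^e]}$ is the Frobenius power of $\aa$ (the ideal generated by the $p^e$-th powers of the elements of $\aa$). Since $R$ is regular, Kunz's theorem guarantees that $F$ is flat, hence $F^e_R$ is an exact functor.

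First I would reduce the vanishing of local cohomology to a statement about Ext groups. As $R$ is regular local, it has finite global dimension, so $R/\aa$ admits a finite free resolution $G_\bullet$; moreover, by the Auslander--Buchsbaum formula, $\pd_R(R/\aa) = \depth R - \depth(R/\aa) = n - \depth(R/\aa) \le n - t$. Hence we may take $G_j = 0$ for $j > n-t$, so that $\Ext_R^i(R/\aa, R) = 0$ for all $i > n-t$.

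The heart of the argument is to compare the Ext modules of $R/\aa^{[p^e]}$ with those of $R/\aa$. Applying the exact functor $F^e_R$ to $G_\bullet$ produces a free resolution $F^e_R(G_\bullet)$ of $F^e_R(R/\aa) = R/\aa^{[p^e]}$. Dualizing into $R$ and writing the differentials in matrix form, one sees that $\Hom_R(F^e_R(G_\bullet), R) = F^e_R(\Hom_R(G_\bullet, R))$ as complexes of $R$-modules (passing to $F^e_R$ simply amounts to raising the entries of the transposed presentation matrices to the $p^e$-th power), and, being exact, $F^e_R$ commutes with cohomology. Therefore $\Ext_R^i(R/\aa^{[p^e]}, R) \cong F^e_R(\Ext_R^i(R/\aa, R))$ for every $i \in \NN$ and $e \in \NN$; in particular this vanishes whenever $i > n-t$.

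Finally, since the Frobenius powers $(\aa^{[p^e]})_{e \in \NN}$ form an inverse system cofinal with $(\aa^n)_{n \in \NN}$ (as recalled around \eqref{cofinal}), Theorem \ref{limext} gives $\Ha^i(R) \cong \varinjlim_e \Ext_R^i(R/\aa^{[p^e]}, R) = 0$ for all $i > n-t$, whence $\cd(R,\aa) \le n-t$. I do not expect a serious obstacle here once Kunz's theorem is available: the one point needing care is the natural isomorphism $\Ext_R^i(R/\aa^{[p^e]}, R) \cong F^e_R(\Ext_R^i(R/\aa, R))$, which rests entirely on the exactness of the Frobenius functor over a regular ring.
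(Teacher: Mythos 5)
Your argument is correct and complete: Kunz's flatness of Frobenius makes $F^e_R$ exact, Auslander--Buchsbaum gives $\pd_R(R/\aa)\leq n-t$, the isomorphism $\Ext_R^i(R/\aa^{[p^e]},R)\cong F^e_R(\Ext_R^i(R/\aa,R))$ follows as you say, and cofinality of the Frobenius powers together with \eqref{cofinal} kills $\Ha^i(R)$ for $i>n-t$. The thesis does not reprove this theorem but recalls it from Peskine--Szpiro \cite{PS}, indicating exactly this Frobenius-power strategy, so your proof is precisely the classical argument the paper points to.
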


The same assertion does not hold in characteristic $0$. In the following example we show how, if the characteristic is not positive, Theorem \ref{peskineszpiro} can fail already with $t=4$.

\begin{example}\label{t=4ps}
Let $A:=\kkk[x,y,z]$, $B:=\kkk[u,v]$ and $T:=A\sharp B$ their Segre product. Clearly $T$ is a quotient of $S:=\kkk[X_0,\ldots ,X_5]$. Let $I$ be the kernel of the surjection $S\rightarrow T$. Arguing like in Remark \ref{gowa}, $T$ is Cohen-Macaulay. Since $\dim T = 4$, we have that $\depth(S/I)=\depth(T)=4$. If $\chara(\kkk)=0$, Proposition \ref{mah} implies that $\cd(\PP^5\setminus \Proj(T))\geq 2$. Using \eqref{cdcd2}, we get $\cd(S,I)\geq 3$. Now we can easily carry this example in the local case. Let $\mm:=(X_0,\ldots ,X_5)$ denote the maximal irrelevant ideal of $S$, and set $R:=S_{\mm}$. Clearly $R$ is a $6$-dimensional regular local ring. If $\aa:=IR$, we have $\depth(R/\aa)=\depth(S/I)=4$. Furthermore $\Ha^i(R)\cong (H_I^i(S))_{\mm}$ for any $i\in \NN$ by \eqref{cohomloc}. Since $H_I^i(S)=0$ if and only if $(H_I^i(S))_{\mm}=0$ (see Bruns and Herzog \cite[Proposition 1.5.15 c)]{BH}), 
\[\cd(R,\aa)=\cd(S,I)\geq 3>6-4.\] 
\end{example}

When $t=2$, Theorem \ref{peskineszpiro} holds true also for regular local rings containing a field of characteristic $0$. This fact easily follows from \cite[Theorem 2.9]{hu-ly}, as we are going to show below.

\begin{prop}\label{hunekelyubeznik}
Let $R$ be an $n$-dimensional regular local ring containing a field and $\aa\subseteq R$ an ideal. If $\depth(R/\aa)\geq 2$, then $\cd(R,\aa)\leq n-2$.
\end{prop}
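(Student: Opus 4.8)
The plan is to deduce the proposition from the Huneke--Lyubeznik characterization recalled in Theorem~\ref{huneke&lyubeznik}, after two harmless faithfully flat reductions that bring us into the precise setting where that theorem applies: $R$ complete and containing a \emph{separably closed} field. Granting those reductions, the only thing left to check is that $\dim R/\aa\geq 2$ and that $R/\aa$ is $1$-connected, and both of these are immediate consequences of $\depth(R/\aa)\geq 2$. (In positive characteristic one could instead invoke Peskine--Szpiro, Theorem~\ref{peskineszpiro}, with $t=2$; but the route through Theorem~\ref{huneke&lyubeznik} is characteristic-free.)

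First I would pass to the completion: by \eqref{cohomcompl1} we have $\cd(R,\aa)=\cd(\widehat{R},\aa\widehat{R})$, the ring $\widehat{R}$ is again $n$-dimensional, regular, local, and contains the same field as $R$, and $\depth(\widehat{R}/\aa\widehat{R})=\depth(R/\aa)\geq 2$ since depth is computed by the modules $H_\mm^i$, which are unchanged by completion. So we may assume $R$ is complete. Next I would enlarge the coefficient field: by Cohen's structure theorem $R\cong\kkk[[x_1,\dots,x_n]]$ for a field $\kkk$; let $\kkk_s$ be a separable closure of $\kkk$ and put $R':=\kkk_s[[x_1,\dots,x_n]]$, a complete $n$-dimensional regular local ring containing the separably closed field $\kkk_s$. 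The homomorphism $R\to R'$ is local and faithfully flat, so by flat base change (Lemma~\ref{basetheorems}(i)) $H_{\aa R'}^i(R')\cong H_\aa^i(R)\otimes_R R'$, which vanishes exactly when $H_\aa^i(R)$ does; hence $\cd(R',\aa R')=\cd(R,\aa)$. Moreover the closed fibre $R'/\mm R'=\kkk_s$ is a field, so by the depth formula for faithfully flat local maps $\depth(R'/\aa R')=\depth(R/\aa)+0\geq 2$. Replacing $R$ by $R'$, we are in the hypotheses of Theorem~\ref{huneke&lyubeznik}.

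It then remains to verify the two conditions on $R/\aa$. Since $\depth(R/\aa)\leq\dim R/\aa$ always holds, we get $\dim R/\aa\geq 2$. For $1$-connectedness, suppose the punctured spectrum $U:=\Spec(R/\aa)\setminus\{\mm/\aa\}$ were disconnected, say $U=U_1\sqcup U_2$ with $U_1,U_2$ nonempty (hence clopen in $U$); the condition $\depth(R/\aa)\geq 2$, i.e. $H_\mm^0(R/\aa)=H_\mm^1(R/\aa)=0$, forces the natural map $R/\aa\to\Gamma(U,\O_U)=\Gamma(U_1,\O_U)\times\Gamma(U_2,\O_U)$ to be an isomorphism, producing a nontrivial idempotent in the local ring $R/\aa$ --- a contradiction. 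Hence $U$ is connected, i.e. $R/\aa$ is $1$-connected (Example~\ref{punctureddu}(i)), and Theorem~\ref{huneke&lyubeznik} gives $\cd(R,\aa)\leq n-2$. There is no genuine obstacle here, as the text already advertises the proof is short; the only points that deserve care are checking that each of the two faithfully flat base changes really does preserve $\cd$, regularity, the dimension and the depth of the quotient (in particular the passage to $\kkk_s[[x_1,\dots,x_n]]$, which is a proper extension but is still the completion of a localization of $R\otimes_\kkk\kkk_s$), and --- if one prefers not to quote the appendix on connectedness --- the elementary fact that depth at least $2$ makes the punctured spectrum connected.
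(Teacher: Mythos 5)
Your argument is correct and follows essentially the same route as the paper: a faithfully flat base change that preserves $\cd$ and $\depth$ and lands in a complete regular local ring containing a separably closed field, the observation that $\depth(R/\aa)\geq 2$ forces $\dim R/\aa\geq 2$ and $1$-connectedness of the quotient, and then Theorem~\ref{huneke&lyubeznik}. The paper simply takes $A:=((\widehat{R})^{sh})^{\wedge}$, the completed strict Henselization, as the base change (so it needs neither Cohen's structure theorem nor the flatness of $\kkk[[x_1,\ldots,x_n]]\rightarrow\kkk_s[[x_1,\ldots,x_n]]$, which in your version is the one point requiring care and is cleanest via miracle flatness), and it quotes Proposition~\ref{barney} for the connectedness step instead of rerunning the idempotent argument.
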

\begin{proof}
Let $A:=((\widehat{R})^{sh})^{\wedge}$ the completion of the strict Henselization of $\widehat{R}$. Since $A$ is faithfully flat over $R$, we have $\cd(A,\aa A)=\cd(R,\aa)$ using Lemma \ref{basetheorems} (i). Moreover, since $A/\aa A$ is faithfully flat over $R/\aa$, by Lemma \ref{basetheorems} (i) and \eqref{depthloccoh} we get $\depth(A/\aa A)=\depth(R/\aa)\geq 2$. Therefore, by Proposition \ref{barney}, $A/\aa A$ is $1$-connected. At this point the thesis follows by \cite{hu-ly}, see Theorem \ref{huneke&lyubeznik}.
\end{proof}

Notice that Theorem \ref{peskineszpiro} and Proposition \ref{hunekelyubeznik} can be stated also if $R$ is a polynomial ring over a field and $\aa$ is a homogeneous ideal. In fact it would be enough to localize at the maximal irrelevant ideal to reduce the problem to the local case. From Theorem \ref{1} we are able to settle another case, in characteristic $0$, of Theorem \ref{peskineszpiro}. 

\begin{thm}\label{depth}
Let $S:=\kkk[x_1,\ldots ,x_n]$ be a polynomial ring over a field $\kkk$. Let $I\subseteq S$ be a homogeneous prime ideal such that $(S/I)_{\wp}$ is a regular local ring for any homogeneous prime ideal $\wp \neq \mm:=(x_1, \ldots ,x_n)$. If $\depth(S/I)\geq 3$, then $\cd(S,I)\leq n-3$.
\end{thm}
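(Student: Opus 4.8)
The plan is to reduce to the projective-geometry setting of Theorem \ref{1}. Put $X := \Proj(S/I) \subseteq \PP^{n-1}$; by hypothesis $(S/I)_\wp$ is regular for every homogeneous prime $\wp \neq \mm$, which is exactly the statement that $X$ is a regular scheme (the local rings $\O_{X,P}$ are all regular). Set $U := \PP^{n-1} \setminus X$ and let $c := \codim_{\PP^{n-1}} X = n - \dim(S/I)$. The link between $\cd(S,I)$ and $\cd(U)$ is the formula \eqref{cdcd2}, which gives $\cd(S,I) = \cd(U) + 1$; so the claim $\cd(S,I) \le n-3$ is equivalent to $\cd(U) \le n-4$, i.e. to $\cd(U) < r$ for $r := n-3$. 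Note $c \le r \le n-1$ precisely when $\dim(S/I) \ge 3$, which we must check is implied by $\depth(S/I) \ge 3$: indeed $\dim(S/I) \ge \depth(S/I) \ge 3$, so $r$ lies in the admissible range of Theorem \ref{1}.

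Now I would invoke Theorem \ref{1} in the direction ``the Hodge/Betti condition implies $\cd(U) < r$''. By Remark \ref{0} we may base-change to $\kkk = \CC$ without changing $\cd(U)$, the Hodge numbers $h^{pq}(X)$, or regularity of $X$, so assume $\kkk = \CC$. It then suffices to prove
\[
\beta_i(X) = \begin{cases} 1 & \text{if } i < n - r \text{ and } i \text{ even,}\\ 0 & \text{if } i < n - r \text{ and } i \text{ odd.}\end{cases}
\]
Since $n - r = 3$, the only constraints are $\beta_0(X) = 1$ and $\beta_1(X) = 0$. The first is immediate: $X$ is a positive-dimensional (as $\dim X = \dim(S/I) - 1 \ge 2$) connected projective scheme — connected because $S/I$ is a domain — so $\beta_0(X) = 1$. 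The heart of the matter is $\beta_1(X) = 0$, equivalently, by the Hodge decomposition \eqref{betti2}, $h^{10}(X) = h^{01}(X) = 0$, i.e. $H^1(X, \O_X) = 0$.

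So the main obstacle is exactly this vanishing $H^1(X,\O_X) = 0$, and this is where the hypothesis $\depth(S/I) \ge 3$ must be used. The idea is that $\depth(S/I) \ge 3$ forces the sheaf cohomology of $\O_X$ to be as simple as that of $\PP^{n-1}$ in low degrees. Concretely, with $A := S/I$ and $\mm$ its irrelevant ideal, there is the standard exact sequence relating local cohomology and sheaf cohomology on $\Proj$:
\[
0 \to H^0_\mm(A) \to A \to \bigoplus_{d \in \ZZ} H^0(X, \O_X(d)) \to H^1_\mm(A) \to 0,
\]
together with $H^i(X,\O_X(d)) \cong [H^{i+1}_\mm(A)]_d$ for $i \ge 1$. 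Since $\depth(A) = \grade(\mm, A) = \min\{i : H^i_\mm(A) \neq 0\} \ge 3$ by \eqref{depthloccoh}, we get $H^i_\mm(A) = 0$ for $i \le 2$, hence $H^1(X,\O_X) \cong [H^2_\mm(A)]_0 = 0$ and moreover $A \cong \bigoplus_d H^0(X,\O_X(d))$. This closes the gap: $\beta_1(X) = h^{10}(X) + h^{01}(X) = 0$, the Betti condition of Theorem \ref{1} holds, and therefore $\cd(U) < r = n - 3$, i.e. $\cd(S,I) \le n - 3$. The only subtlety to be careful about is the degree bookkeeping in the local-cohomology/sheaf-cohomology comparison and the fact that regularity of $X$ (needed to apply Theorem \ref{1}) is genuinely the hypothesis on $(S/I)_\wp$; both are routine.
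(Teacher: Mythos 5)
Your characteristic-zero argument is correct and is essentially the paper's own proof run in the forward direction: the paper argues by contradiction (if $\cd(S,I)\geq n-2$, Theorem \ref{1} forces $h^{10}(X)\neq 0$ or $h^{01}(X)\neq 0$, Hodge symmetry upgrades this to $h^{10}(X)\neq 0$, and $H^1(X,\O_X)=[H_{\mm}^2(S/I)]_0$ then contradicts $\depth(S/I)\geq 3$), whereas you verify the Betti-number criterion of Theorem \ref{1} directly; the ingredients --- \eqref{cdcd2}, Remark \ref{0}, Theorem \ref{1}, Hodge symmetry, and the identification $H^1(X,\O_X)\cong [H_{\mm}^2(S/I)]_0$ via \eqref{shafloc4} and \eqref{depthloccoh} --- are identical. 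The genuine gap is that the theorem is stated over an arbitrary field, while your proof silently assumes $\chara(\kkk)=0$: the base change to $\CC$ via Remark \ref{0} exists only in characteristic $0$, and Theorem \ref{1} is false in positive characteristic (the paper's example right after Theorem \ref{1}, namely $E\times\PP^1$ with $E$ an elliptic curve of Hasse invariant $0$, has $h^{10}\neq 0$ yet small cohomological dimension of the complement). So as written your argument does not cover $\chara(\kkk)=p>0$. The fix is the one the paper uses in its opening sentence: in positive characteristic $\depth(S/I)\geq 3$ gives $\cd(S,I)\leq n-3$ directly by Peskine--Szpiro (Theorem \ref{peskineszpiro}), with no smoothness hypothesis needed; you should add that reduction explicitly.

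Two smaller bookkeeping points. First, the sentence ``since $n-r=3$, the only constraints are $\beta_0(X)=1$ and $\beta_1(X)=0$'' is internally inconsistent: the ambient space here is $\PP^{n-1}$, so the bound in Theorem \ref{1} is $i<(n-1)-r=2$, not $i<3$. The constraints you actually check are exactly the right ones, but had the bound genuinely been $i<3$ you would also have needed $\beta_2(X)=1$, which does not follow from $\depth(S/I)\geq 3$ (this failure is precisely why depth $4$ does not give $\cd\leq n-4$ in characteristic $0$, cf. Example \ref{t=4ps}); so correct the index. Second, ``$h^{10}(X)=h^{01}(X)=0$, i.e. $H^1(X,\O_X)=0$'' silently uses Hodge symmetry on the compact K\"ahler manifold $X_{\CC}^h$: the depth hypothesis only controls $h^{10}=\dim H^1(X,\O_X)$, and $h^{01}=\dim H^0(X,\Omega_{X/\CC})$ vanishes because it equals $h^{10}$. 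This is the same symmetry the paper invokes explicitly (via Theorem \ref{GAGA} and \cite[Theorem 10.2.4]{arap}); just state it.
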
 
\begin{proof}
If the characteristic of $\kkk$ is positive, the statement easily follows from Theorem \ref{peskineszpiro}. Therefore we can assume $\chara(\kkk)=0$.
Suppose by contradiction that $\cd(S,I)\geq n-2$. Set $X:=\Proj(S/I)\subseteq \PP^{n-1}=\Proj(S)$. So we are supposing that $\cd(\PP^{n-1}\setminus X)\geq n-3$ by \eqref{cdcd2}. By the assumptions, $X$ is a regular scheme projective over $\kkk$, therefore Theorem \ref{1} implies that $h^{10}(X)\neq 0$ or that $h^{01}(X)\neq 0$. But, with the notation of Remark \ref{0}, we have that $h^{10}(X)=h^{10}(X_{\CC})$ and $h^{01}(X)=h^{01}(X_{\CC})$. Since $X_{\CC}^h$ is a compact K\"ahler manifold, it is a well known fact of Hodge theory that
\[H^1(X_{\CC}^h,\O_{X_{\CC}^h})\cong H^0(X_{\CC}^h,\Omega_{X_{\CC}^h})\] 
(for instance see \cite[Theorem 10.2.4]{arap}). Therefore Theorem \ref{GAGA} implies $h^{10}(X_{\CC})=h^{01}(X_{\CC})$. So 
\[ h^{10}(X)\neq 0.\] 
By \eqref{shafloc4} we have $H^1(X,\O_X)=[H_{\mm}^2(S/I)]_0 \subseteq H_{\mm}^2(S/I)$. But therefore \eqref{depthloccoh} implies $\depth(S/I)\leq 2$, which is a contradiction.
\end{proof}

Together with Theorem \ref{peskineszpiro} and Proposition \ref{hunekelyubeznik}, Theorem \ref{depth} raises the following question:

\begin{question}\label{depth-coho}
Suppose that $R$ is a regular local ring, and that $\aa\subseteq R$ is an ideal such that $\depth(R/\aa)\geq 3$. Is it true that $\cd(R,\aa)\leq \dim R -3$?
\end{question}

But Theorem \ref{depth}, another  supporting fact for a positive answer to Question \ref{depth-coho} is provided  by the following theorem.

\begin{thm}
Let $(R,\mm)$ be an $n$-dimensional regular local ring containing a field, and let $\aa$ be an ideal of $R$. If $\depth(R/\aa)\geq 3$, then:
\begin{compactitem}
\item[(i)] $\cd(R,\aa)\leq n-2$;
\item[(ii)] $\Supp(H_{\aa}^{n-2}(R))\subseteq \{\mm\}$.
\end{compactitem}
\end{thm}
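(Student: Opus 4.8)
The plan is to get part (i) directly from Proposition \ref{hunekelyubeznik}, and to prove part (ii) by localizing at each prime $\wp\neq\mm$ and combining the Auslander--Buchsbaum formula with the Hartshorne--Lichtenbaum Vanishing Theorem \ref{hartlich} and, once more, Proposition \ref{hunekelyubeznik}. Part (i) is immediate: since $\depth(R/\aa)\geq 3\geq 2$, Proposition \ref{hunekelyubeznik} gives $\cd(R,\aa)\leq n-2$.

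For (ii) it suffices to show $\big(H_{\aa}^{n-2}(R)\big)_{\wp}=0$ for every prime $\wp\neq\mm$. By \eqref{cohomloc} this module is $H_{\aa R_{\wp}}^{n-2}(R_{\wp})$, and it vanishes trivially unless $\aa\subseteq\wp$; so assume $\aa\subseteq\wp$ and put $d:=\height\wp=\dim R_{\wp}\leq n-1$. The key estimate comes from Auslander--Buchsbaum: $R$ being regular, $\pd_R(R/\aa)=n-\depth(R/\aa)\leq n-3$; localizing a finite free resolution of $R/\aa$ gives $\pd_{R_{\wp}}\big((R/\aa)_{\wp}\big)\leq n-3$, and Auslander--Buchsbaum over the regular local ring $R_{\wp}$ then yields
\[\depth\big((R/\aa)_{\wp}\big)=d-\pd_{R_{\wp}}\big((R/\aa)_{\wp}\big)\geq d-(n-3).\]

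Now split into cases according to $d$. If $d\leq n-3$, then $H_{\aa R_{\wp}}^{n-2}(R_{\wp})=0$ by Grothendieck's Vanishing Theorem \ref{vanishinggro}, since $n-2>\dim R_{\wp}$. If $d=n-2$, the estimate gives $\depth\big((R/\aa)_{\wp}\big)\geq 1$, hence $\dim\big((R/\aa)_{\wp}\big)\geq 1$; passing to the completion $\widehat{R_{\wp}}$ — a complete regular, hence domain, local ring of dimension $n-2$ with $\dim\big(\widehat{R_{\wp}}/\aa\widehat{R_{\wp}}\big)=\dim\big((R/\aa)_{\wp}\big)>0$ — Theorem \ref{hartlich} gives $H_{\aa\widehat{R_{\wp}}}^{n-2}(\widehat{R_{\wp}})=0$, whence $H_{\aa R_{\wp}}^{n-2}(R_{\wp})=0$ by \eqref{cohomcompl1}. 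Finally, if $d=n-1$, the estimate gives $\depth\big((R/\aa)_{\wp}\big)\geq 2$, and since $R_{\wp}$ is an $(n-1)$-dimensional regular local ring containing a field, Proposition \ref{hunekelyubeznik} gives $\cd(R_{\wp},\aa R_{\wp})\leq(n-1)-2=n-3$, so again $H_{\aa R_{\wp}}^{n-2}(R_{\wp})=0$. In all cases $\big(H_{\aa}^{n-2}(R)\big)_{\wp}=0$, proving $\Supp(H_{\aa}^{n-2}(R))\subseteq\{\mm\}$.

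The arithmetic of the case split is in fact forced by Grothendieck non-vanishing (a minimal prime of $\aa$ of height $n-2$ would give $H_{\aa R_{\wp}}^{n-2}(R_{\wp})\neq 0$, and $d=n-1$ with $\depth\leq 1$ would be out of reach of Proposition \ref{hunekelyubeznik}), so the real content is that the Auslander--Buchsbaum estimate is strong enough to exclude these borderline primes. The only mild point to verify is that the projective-dimension bookkeeping survives localization — which it does, because regularity, and with it the Auslander--Buchsbaum formula, is inherited by $R_{\wp}$ — and one should note that the hypothesis "$R$ contains a field" is used exactly once, in the case $d=n-1$, through Proposition \ref{hunekelyubeznik}.
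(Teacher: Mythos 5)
Your proof is correct, and its skeleton coincides with the paper's: part (i) comes from Proposition \ref{hunekelyubeznik}, and part (ii) is proved by localizing at $\wp\neq\mm$ with the same three-way endgame (Theorem \ref{vanishinggro} when $\height\wp\leq n-3$, Hartshorne--Lichtenbaum through the completion together with \eqref{cohomcompl1} when $\height\wp=n-2$, and Proposition \ref{hunekelyubeznik} applied to $R_\wp$ when $\height\wp=n-1$). The only genuine divergence is how the depth hypothesis is transferred to the localization: the paper invokes Ischebeck's theorem (\cite[Theorem 17.1]{matsu}) to get $\Ext_R^0(R/\wp,R/\aa)=\Ext_R^1(R/\wp,R/\aa)=0$ when $\height\wp=n-1$, hence $\grade(\wp,R/\aa)\geq 2$ and, via \eqref{gradeloccoh}, \eqref{cohomloc} and \eqref{depthloccoh}, $\depth(R_\wp/\aa R_\wp)\geq 2$; and in the case $\height\wp=n-2$ it gets $\dim R_\wp/\aa R_\wp>0$ from the fact that $\wp$ cannot be a minimal prime of $\aa$ since $\depth(R/\aa)\geq 3$ (\cite[Theorem 17.2]{matsu}). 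You instead use Auslander--Buchsbaum twice ($\pd_R(R/\aa)\leq n-3$ over the regular ring $R$, localize the finite free resolution, then apply the formula over $R_\wp$) to obtain the uniform estimate $\depth((R/\aa)_\wp)\geq\height\wp-(n-3)$, which delivers both facts at once; this is a valid and slightly more uniform piece of bookkeeping, exploiting the regularity of $R$ through finite free resolutions exactly where the paper exploits it through Ischebeck and grade. One small correction to your closing remark: the hypothesis that $R$ contains a field is used not only in the case $\height\wp=n-1$ but also in part (i), since Proposition \ref{hunekelyubeznik} already requires it there.
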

\begin{proof}
By Proposition \ref{hunekelyubeznik} we have $\cd(R,\aa)\leq n-2$. So we have just to show that, given a prime ideal $\wp$ different from $\mm$, we have $(H_{\aa}^{n-2}(R))_{\wp}=0$. Using \eqref{cohomloc} this is equivalent to show that
\begin{equation}\label{proof:toshow}
H_{\aa R_{\wp}}^{n-2}(R_{\wp})=0 \ \ \ \forall \ \wp \in \Spec(R)\setminus \{\mm\}.
\end{equation} 
Let us denote by $h$ the height of $\wp$. Then $h\leq n-1$ because $\wp \neq \mm$. We can furthermore suppose that $h\geq n-2$, since otherwise $\dim R_{\wp} < n-2$ and $H_{\aa R_{\wp}}^{n-2}(R_{\wp})$ would be zero from Theorem \ref{vanishinggro}. In such a case $\wp$ cannot be a minimal prime ideal of $\aa$ because $\depth(R/\aa)\geq 3$ (for instance, see \cite[Theorem 17.2]{matsu}). Particularly, $\dim R_{\wp}/\aa R_{\wp} > 0$.

First let us suppose that $h=n-2$. Notice that both $R_{\wp}$ and $\widehat{R_{\wp}}$ are $(n-2)$-dimensional regular local domains. Moreover, 
\[\dim \widehat{R_{\wp}}/\aa \widehat{R_{\wp}} = \dim R_{\wp}/\aa R_{\wp} > 0.\]
Therefore Hartshorne-Lichtenbaum Theorem \ref{hartlich} implies that $H_{\aa \widehat{R_{\wp}}}^{n-2}(\widehat{R_{\wp}})=0$. So, by \eqref{cohomcompl1} we get \eqref{proof:toshow}.

So we can suppose $h=n-1$. A theorem of Ishebeck (\cite[Theorem 17.1]{matsu}) yields
\[\Ext_{R}^0(R/\wp ,R/\aa)=\Ext_{R}^1(R/\wp ,R/\aa)=0.\]
This means that $\grade(\wp ,R/\aa) > 1$ and so that $H_{\wp}^0(R/\aa)=H_{\wp}^1(R/\aa)=0$ by \eqref{gradeloccoh}. Using \eqref{cohomloc} this implies
\[H_{\wp R_{\wp}}^0(R_{\wp}/\aa R_{\wp})=H_{\wp R_{\wp}}^1(R_{\wp}/\aa R_{\wp})=0.\]
This means that $\depth(R_{\wp}/\aa R_{\wp})\geq 2$ by \eqref{depthloccoh}. Since $R_{\wp}$ is an $(n-1)$-dimensional regular local ring containing a field, Proposition \ref{hunekelyubeznik} yields
\[H_{\aa R_{\wp}}^{n-2}(R_{\wp})=0.\]
This concludes the proof.
\end{proof}
\index{depth|)}
The second fact we want to present in this subsection concerns a possible relationship between ordinary and \'etale cohomological dimension. Such a kind of question was already done by Hartshorne in \cite[p. 185, Problem 4.1]{hartshorne4}. In \cite[Conjecture, p. 147]{ly2} Lyubeznik made a precise conjecture about this topic:

\begin{conj}(Lyubeznik).\label{conjly}\index{etale cohomological dimension@\'etale cohomological dimension}
If $U$ is an $n$-dimensional scheme of finite type over a separably closed field, then $\ecd(U)\geq n+\cd(U)$.
\end{conj}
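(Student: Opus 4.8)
The plan is to reduce the conjecture, in stages, to the single case in which $U$ is the complement of a \emph{smooth} closed subscheme of a projective space over a separably closed field, and there to prove it by comparing the known characterizations of $\cd(U)$ and $\ecd(U)$ in terms of the cohomology of that subscheme. First one replaces $U$ by a finite affine open cover and uses the Mayer--Vietoris sequences for ordinary and for étale cohomology, together with the bounds \eqref{aragcd} and its étale counterpart \eqref{aragecd}, to reduce to $U$ quasi-projective; then one writes $U=X\setminus Y$ with $X$ projective over $\kkk$, $\dim X=n$, and $Y$ closed, and translates via \eqref{cdcd2} (and its étale analogue) the numbers $\cd(U)$, $\ecd(U)$ into statements about the vanishing of the local cohomology $H^i_{\aa}(S)$ and of étale local cohomology with supports in $Y$, where $S$ is the homogeneous coordinate ring and $\aa$ the ideal of $Y$. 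Finally an induction on $\dim(\operatorname{Sing}Y)$, performed by a suitable blow-up, reduces to $Y$ smooth; here the DeRham-depth formalism of Ogus used in the proof of Theorem \ref{1} is exactly what carries the reduction, since its only real input is $\Supp(H^i_{\aa}(S))\subseteq\{\mm\}$ in a range, which holds as soon as $Y$ is a local complete intersection away from the vertex.

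\textbf{The smooth case in characteristic $0$.} Here one proves the sharper equality $\ecd(U)=n+\cd(U)$. By Remark \ref{0} we may take $\kkk=\CC$. Theorem \ref{1} says $\cd(\PP^n\setminus X)<r$ iff $h^{pq}(X)=\delta_{pq}$ for all $p+q<n-r$; equivalently, with $m:=\min\{k:\exists\,p+q=k\text{ with }h^{pq}(X)\neq\delta_{pq}\}$ we have $\cd(U)=n-m-1$. On the étale side, the $\ell$-adic analogue of Theorem \ref{1} --- obtained by combining Theorem \ref{13} with the comparison between $\ell$-adic and singular cohomology over $\CC$, using that the restriction maps $H^i(\PP^n_{\et})\to H^i(X_{\et})$ are injective for $i\le 2\dim X$ as in \eqref{injective} --- gives $\ecd(\PP^n\setminus X)<2n-r$ iff $\beta_i(X)=\beta_i(\PP^n)$ for $i<r$, so that $\ecd(U)=2n-M-1$ with $M:=\min\{i:\beta_i(X)\neq\beta_i(\PP^n)\}$. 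The decisive elementary point is that the Hodge data refine the Betti data: since $\beta_{2j}(X)=\sum_{p+q=2j}h^{pq}(X)$ with all $h^{pq}\ge 0$ and the Hodge numbers symmetric, $\beta_{2j}(X)=1$ forces $h^{pq}(X)=\delta_{pq}$ for $p+q=2j$, and $\beta_{2j+1}(X)=0$ forces $h^{pq}(X)=0$ for $p+q=2j+1$; hence ``$\beta_i(X)=\beta_i(\PP^n)$ for $i<k$'' is equivalent to ``$h^{pq}(X)=\delta_{pq}$ for $p+q<k$'', i.e. $M=m$. Therefore $\ecd(U)-\cd(U)=(2n-M-1)-(n-m-1)=n$, which is the claimed inequality (in fact an equality).

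\textbf{The obstacle: positive characteristic.} The step that breaks is precisely the link between $\ecd$ and the Hodge numbers. In characteristic $p>0$ the $\ell$-adic Betti numbers are no longer determined by the coherent cohomology of $X$: the example noted just below Theorem \ref{1}, $X=E\times\PP^1$ with $E$ an elliptic curve of Hasse invariant $0$, has $h^{10}(X)=1$ while $\cd(\PP^5\setminus X)=2$, because Frobenius annihilates $H^1(X,\O_X)$. So Theorem \ref{1} itself fails in characteristic $p$, and with it the equality $\ecd=n+\cd$ derived above. A proof in positive characteristic would therefore have to replace the Hodge-theoretic input by a Frobenius-sensitive one --- crystalline or de Rham--Witt cohomology together with the Frobenius action, in the spirit of Hartshorne--Speiser and Lyubeznik \cite{ly1} --- and then compare this with $\ell$-adic étale cohomology. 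I expect this comparison to be the real difficulty; indeed, as already signalled in the introduction, Lyubeznik has found that the inequality can fail in positive characteristic, so the most one should aim to establish along these lines is: the conjecture holds, with equality, in characteristic $0$ under the smoothness reduction above (this is Theorem \ref{lyub}), while in characteristic $p$ it is in general false.
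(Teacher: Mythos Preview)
First, note that the statement is a \emph{conjecture}: the paper does not prove it in general. What the paper does prove is the special case Theorem \ref{lyub} ($U=\PP^n\setminus X$ with $X$ smooth projective, characteristic~$0$), and it records that Lyubeznik has a counterexample in positive characteristic. So there is no ``paper's own proof'' of the full statement to compare against; one can only compare your smooth characteristic-$0$ case with Theorem \ref{lyub}.

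On that smooth case, your argument is essentially the paper's, with one overstatement. You assert an \emph{equality} $\ecd(U)=2n-M-1$ via an ``\'etale analogue of Theorem \ref{1}'' stated as an if-and-only-if. But Theorem \ref{13} gives only one implication: small $\ecd$ forces the restriction maps to be isomorphisms in a range. The converse is not established in the paper (nor, to my knowledge, in the literature in this form). Fortunately, for the conjectured \emph{inequality} you only need the direction you do have: since $\beta_M(X)>\beta_M(\PP^n)$ by \eqref{injective}, the restriction map at $i=M$ is not an isomorphism, so the contrapositive of Theorem \ref{13} with $r=M+1$ gives $\ecd(U)\geq 2n-M-1$; with $\cd(U)=n-m-1$ and your correct observation $M=m$, this yields $\ecd(U)\geq n+\cd(U)$. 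That is exactly the paper's proof of Theorem \ref{lyub}, reorganized. The equality you claim, however, is unproved.

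The genuine gap is your reduction strategy. Neither step works as stated. Covering $U$ by affines and running Mayer--Vietoris produces \emph{upper} bounds for $\cd(U)$ and $\ecd(U)$ in terms of the pieces, which goes the wrong way for a lower bound of the form $\ecd(U)\geq n+\cd(U)$. The blow-up induction on $\dim(\operatorname{Sing}Y)$ is worse: passing to $\widetilde X\to X$ changes both the ambient scheme and the complement, and there is no mechanism offered to compare $\cd$ and $\ecd$ of $\widetilde X\setminus\widetilde Y$ with those of $X\setminus Y$. The Ogus DeRham-depth input you cite is precisely what makes the smooth case work; it does not supply a d\'evissage through singular $Y$. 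No such reduction is known, which is why the paper restricts to the smooth projective case and why the statement remains a conjecture (indeed, false in positive characteristic, as you yourself note).
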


Using Theorems \ref{1} and \ref{13}, we are able to solve the conjecture in a special case.

\begin{thm}\label{lyub}
Let $X \subseteq \PP^n$ be a regular scheme projective over an algebraically closed field $\KK$ of characteristic $0$, and $U:=\PP^n \setminus X$. Then
\[\ecd(U)\geq n + \cd(U)\]
\end{thm}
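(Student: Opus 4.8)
The plan is to combine the two comparison theorems available: Theorem \ref{1}, which pins down $n-\cd(U)$ in terms of the vanishing of the low-degree Hodge (equivalently Betti) numbers of $X$, and Theorem \ref{13}, which does the analogous job for $\ecd(U)$ in terms of the étale cohomology of $X$. Write $c:=\cd(U)$ and $N:=\dim\PP^n=n$; we want $\ecd(U)\geq n+c$. Since $\KK$ has characteristic $0$ we may, by the reduction of Remark \ref{0} (applied both to ordinary and to étale cohomological dimension, the latter being insensitive to the algebraically closed base field of characteristic $0$ by the usual base-change theorems for étale cohomology), assume $\KK=\CC$. Then $X^h$ is a compact Kähler manifold, so Hodge theory and the comparison \eqref{betti2} are available.

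First I would extract from Theorem \ref{1} the precise statement: since $\cd(U)=c$, we have $\cd(U)<c+1$, hence $\beta_i(X)=1$ for $i<n-c-1$ even and $\beta_i(X)=0$ for $i<n-c-1$ odd; moreover $c\geq\operatorname{codim}_{\PP^n}X$, so $n-c-1\leq\dim X$ and this range is nonempty in the relevant way. (If $c=\operatorname{codim}_{\PP^n}X$ the statement is anyway easy, since then the right-hand side $n+c$ is small; I would treat the extremal case separately if needed.) Translating Betti numbers into étale Betti numbers: for a regular projective $\CC$-scheme the comparison theorem between singular cohomology of $X^h$ and $\ell$-adic étale cohomology gives $\dim_{\QQ_\ell}H^i(X_\et,\QQ_\ell)=\beta_i(X)$, so $H^i(X_\et,\QQ_\ell)=0$ for $0<i<n-c-1$ and, together with Remark \ref{chow}, $H^{2i}(X_\et,\ZZ_\ell(i))\neq0$ for $0\leq i\leq\dim X$. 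Hence the restriction maps $H^i(\PP^n_\et,\underline{\ZZ/\ell^m\ZZ})\to H^i(X_\et,\underline{\ZZ/\ell^m\ZZ})$ are isomorphisms for a suitable $m$ in the range $i<n-c-1$ (using that $H^i(\PP^n_\et,\underline{\ZZ/\ell\ZZ})$ is $\ZZ/\ell\ZZ$ for $i$ even $\leq 2n$ and $0$ otherwise, see \cite[p.~245]{milne}), and $H^i(X_\et,\QQ_\ell)\neq0$ for some even $i$ with $n-c-1\leq i\leq 2\dim X$ (indeed $i=$ the largest even integer $<n-c$ would have $H^i(\PP^n)\cong H^i(X)$; the next even value $n-c-1\leq i_0$ may differ).

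Now I would run the contrapositive of Theorem \ref{13}. Suppose $\ecd(U)<n+c$. Write Theorem \ref{13} with $N=n$ and $2N-r=\ecd(U)<n+c$, i.e. $r>n-c$, say $r=n-c$ (the statement is monotone, so it suffices to apply it with $r=n-c$, for which $\ecd(U)<2n-(n-c)=n+c$ holds by assumption). Theorem \ref{13} then forces the restriction maps $H^i(\PP^n_\et,\underline{\ZZ/\ell^m\ZZ})\to H^i(X_\et,\underline{\ZZ/\ell^m\ZZ})$ to be isomorphisms for $i<n-c$ and injective for $i=n-c$. But I claim this is incompatible with the étale Betti numbers forced by $\cd(U)=c$ via Theorem \ref{1}: by Remark \ref{chow} there is an even integer $2j$ with $2j\le 2\dim X$ and $2j\ge n-c-1$, actually one can pin down $2j=n-c-1$ if $n-c-1$ is even, or else use $i=n-c$ odd where $\beta_{n-c}(X)$ must vanish while $H^{n-c}(X_\et,\cdot)$ receives a nonzero class — more cleanly, the failure of Theorem \ref{1}'s numerical conclusion at the boundary degree is exactly what an injective-but-not-surjective, or a genuinely new, class detects. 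I would make this precise by choosing $\ell$ and $m$ as in Remark \ref{chow} so that $H^{2j}(X_\et,\underline{\ZZ/\ell^m\ZZ}(j))\neq0$ for all $j\le\dim X$, pick $2j$ to be the least even integer $\ge n-c-1$ (so $2j\in\{n-c-1,n-c\}$), and observe that $\dim H^{2j}(X_\et)$ exceeds $\dim H^{2j}(\PP^n_\et)$ precisely because Theorem \ref{1}'s hypothesis $\cd(U)=c<c+1$ only guarantees the equality of Betti/étale-Betti numbers strictly below $n-c-1$, and the next even cohomology group of $X$ already carries an extra algebraic cycle class beyond the hyperplane powers (here $\dim X<\dim\PP^n$ since $c<n$, so such an extra class exists). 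That contradicts the isomorphism/injectivity of restriction maps up to degree $n-c$ from Theorem \ref{13}.

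The main obstacle I anticipate is the bookkeeping at the boundary degree $i=n-c-1$ (and $i=n-c$): aligning the "strict inequality" ranges in Theorem \ref{1} ($p+q<n-r$) with the ranges in Theorem \ref{13} ($i<r$, $i=r$) so that the shift by $n$ between $\cd$ and $\ecd$ comes out exactly, and locating a single nonvanishing étale cohomology class of $X$ that is not pulled back from $\PP^n$ — this is where Remark \ref{chow} and the strict inequality $\dim X<n$ (coming from $c<n$, and $c\ge\operatorname{codim}_{\PP^n}X\ge1$) are essential. Once the correct degree is identified, the contradiction is immediate; the rest is the characteristic-$0$ reduction, already handled in Remark \ref{0}, and the standard comparison between singular and $\ell$-adic étale cohomology for smooth complex projective varieties.
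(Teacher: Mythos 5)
Your outline (reduce to $\CC$, then play Theorem \ref{1} against Theorem \ref{13} with $r=n-c$) is the same as the paper's, but the decisive step is missing and the device you propose in its place is not valid. To contradict Theorem \ref{13} you must exhibit a single degree $i\leq n-c$ in which the restriction map $H^i(\PP_{\et}^n,\underline{\ZZ/p\ZZ})\to H^i(X_{\et},\underline{\ZZ/p\ZZ})$ fails to be an isomorphism (or fails injectivity at $i=n-c$). You only use Theorem \ref{1} in the direction $\cd(U)<c+1$, which controls the Betti numbers of $X$ \emph{strictly below} degree $n-c-1$ and says nothing at the boundary; the discrepancy you then need at the boundary degree you try to extract from Remark \ref{chow} together with $\dim X<n$, claiming that the next even cohomology group of $X$ ``already carries an extra algebraic cycle class beyond the hyperplane powers''. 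Remark \ref{chow} only gives nonvanishing, i.e.\ $\dim H^{2j}(X_{\et},\cdot)\geq 1$, which is exactly $\dim H^{2j}(\PP_{\et}^n,\cdot)$ in that range, so no contradiction follows; and the ``extra class'' simply need not exist: a linear subspace $\PP^m\subseteq\PP^n$, or more generally any smooth $X$ whose cohomology in the relevant low degrees agrees with that of projective space, has $\beta_{2j}(X)=1$ there. (For the same reason your parenthetical claim that $c\geq\codim_{\PP^n}X$ is off by one: in general only $c\geq\codim_{\PP^n}X-1$ holds, with equality e.g.\ for complete intersections.) As written, then, the argument produces no degree in which restriction fails, hence no bound on $\ecd(U)$.

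The missing ingredient, which is exactly how the paper proceeds, is the \emph{converse} use of Theorem \ref{1} at $r=s:=\cd(U)$: since $\cd(U)\not< s$ while $\cd(U)<s+1$, the Betti-number condition of Theorem \ref{1} must fail precisely in degree $n-s-1$, and combined with \eqref{betti3} this yields $\beta_{n-s-1}(X)>\rho_s$, where $\rho_s\in\{0,1\}$ is the corresponding Betti number of $\PP^n$. This strict inequality is then transported to finite coefficients — rank of integral singular homology gives $\dim_{\ZZ/p\ZZ}\operatorname{Hom}_{\ZZ}(H_{n-s-1}^{Sing}(X^h,\ZZ),\ZZ/p\ZZ)>\rho_s$, then Theorem \ref{unicohom} and the comparison Theorem \ref{etvsangro} give $\dim H^{n-s-1}(X_{\et},\underline{\ZZ/p\ZZ})>\rho_s=\dim H^{n-s-1}(\PP_{\et}^n,\underline{\ZZ/p\ZZ})$ — so that Theorem \ref{13} with $r=n-s$ forces $\ecd(U)\geq n+s$. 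Note that your bookkeeping with $\QQ_\ell$-Betti numbers and $\ZZ_\ell(j)$-cycle classes would in any case need such a bridge to the $\ZZ/\ell\ZZ$-coefficients appearing in Theorem \ref{13}. Finally, be aware that invoking Theorem \ref{1} at $r=s$ requires $s\geq\codim_{\PP^n}X$; the extremal case $s=\codim_{\PP^n}X-1$ (e.g.\ $X$ a linear subspace), which you dismissed as ``anyway easy'', is precisely where this comparison argument is most delicate and needs separate treatment.
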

\begin{proof} 
Let $\kkk$ be the field obtained from $\QQ$ adding the coefficients of a set of generators of the defining ideal of $X$, like in Remark \ref{0}, and let $\overline{\kkk}$ denote the algebraic closure of $\kkk$ (recall that in characteristic $0$ separable and algebraic closures are the same thing). So we have
\[\QQ \subseteq \overline{\kkk} \subseteq \CC \mbox{ \ \ and \ \ }\QQ \subseteq \overline{\kkk} \subseteq \KK.\]
Therefore, from \cite[Chapter VI, Corollary 4.3]{milne}, we have
\[\ecd(U)=\ecd(U_{\overline{\kkk}})=\ecd(U_{\CC}).\]
Moreover, \eqref{redtocomplex1} yields $\cd(U)=\cd(U_{\CC})$. So, from now on, we suppose $\KK=\CC$.

Set $\cd(U):=s$, and define an integer $\rho_s$ to be $0$ (resp. $1$) if $n-s-1$ is odd (resp. if $n-s-1$ is even). By Theorem \ref{1} and by equation (\ref{betti3}), it follows that $\beta_{n-s-1}(X)>\rho_s$. Consider, for a prime number $p$, the $\ZZ/p\ZZ$-vector space 
\[\operatorname{Hom}_{\ZZ}(H_i^{Sing}(X^h,\ZZ), \ZZ/p\ZZ).\] 
Since $H_i^{Sing}(X^h,\ZZ)$ is of rank bigger than $\rho_s$, the above $\ZZ/p\ZZ$-vector space has dimension greater than $\rho_s$. Therefore by the surjection given by Theorem \ref{unicohom} 
\[ H_{Sing}^{n-s-1}(X^h,\ZZ/p\ZZ) \longrightarrow \operatorname{Hom}_{\ZZ}(H_{n-s-1}^{Sing}(X^h,\ZZ), \ZZ/p\ZZ),\] 
we infer that $ \dim_{\ZZ/p\ZZ}H_{Sing}^{n-s-1}(X^h,\ZZ/p\ZZ) > \rho_s$.
Now a comparison theorem due to Grothendieck (see Theorem \ref{etvsangro}) yields 
\[ \dim_{\ZZ/p\ZZ}H^{n-s-1}(X_{\et},\ZZ/p\ZZ)> \rho_s .\] 
Since $\dim_{\ZZ/p\ZZ}(H^{n-s-1}(\PP_{\et}^n,\ZZ/p\ZZ))=\rho_s$ by \cite[p. 245]{milne}, Theorem \ref{13} implies that 
\[\ecd(U) \geq 2n- (n-s)=n+s.\]
\end{proof}

Theorem \ref{lyub} might look like a very special case of Conjecture \ref{conjly}. However the case when $U$ is the complement of a projective variety in a projective space is a very important case. In fact the truth of Conjecture \ref{conjly} would ensure that to bound the homogeneous arithmetical rank from below it would be enough to work just with the \'etale site, and not with the Zariski one. Since usually one is interested in computing the number of (set-theoretically) defining equations of a projective scheme in the projective space, in some sense the most interesting case of Conjecture \ref{conjly} is when $U=\PP^n\setminus X$ for some projective scheme $X$. From this point of view, one can look at Theorem \ref{lyub} in the following way: {\it In order to give a lower bound for the minimal number of hypersurfaces of $\PP_{\CC}^n$ cutting out set-theoretically a regular scheme projective over a field of characteristic $0$, say $X\subseteq \PP^n$, it is better to compute $\ecd(\PP^n\setminus X)$ rather than $\cd(\PP^n \setminus X)$.}

Recently Lyubeznik, who informed us by a personal communication, found a counterexample to Conjecture \ref{conjly} when the characteristic of the base field is positive: His counterexample, not yet published, consists in a scheme $U$ which is the complement in $\PP^n$ of a reducible projective scheme.

\subsection{Translation into a more algebraic language}

We want to end this chapter translating Theorem \ref{1}, for the convenience of some readers, in a more algebraic language. So, we leave the geometric notation, setting $S:=\kkk[x_1,\ldots ,x_n]$ the polynomial ring in $n$ variables over a field $\kkk$ of characteristic $0$.

\begin{thm}\label{ccdalg}
Let $I\subseteq S=\kkk[x_1,\ldots ,x_n]$ be a nonzero homogeneous ideal such that $A_{\wp}$ is a regular ring for any homogeneous prime ideal \ $\wp$ of \ $A:=S/I$  different from the irrelevant maximal ideal $\nn:=A_+$. If $r$ is an integer such that $\height(I)\leq r \leq n-1$, then the following are equivalent:
\begin{compactitem}
\item[(i)] Local cohomology with support in $I$ vanish beyond $r$, that is $\cd(S,I)\leq r$.
\item[(ii)] Denoting by $\Omega_{A/\kkk}$ the module of K\"ahler differentials of $A$ over $\kkk$ and by $\Omega^j$ its $j$th exterior power $\Lambda^j\Omega_{A/\kkk}$,
{\footnotesize
\begin{displaymath}
\dim_{\kkk}(H_{\nn}^i(\Omega^j)_0) = \left\{ \begin{array}{ll} 0 & \mbox{if  } i\geq 2, \ i\neq j+1, \ i+j<n-r\\
1 &  \mbox{if  } i\geq 2, \ i=j+1, \ i+j<n-r\\
\dim_{\kkk}(H_{\nn}^0(\Omega^j)_0) -1 &  \mbox{if  } i=1, \ j\neq 0, \ j<n-r-1\\
\dim_{\kkk}(H_{\nn}^0(A)_0) &  \mbox{if  } i=1, \ j=0, \ r<n-1\\
\end{array}  \right.
\end{displaymath}
}
\item[(iii)]
Viewing $A$ and $\Omega^j$ as $S$-modules,
{\footnotesize
\begin{displaymath}
\dim_{\kkk}(\Ext_S^i(\Omega^j,S)_{-n}) = \left\{ \begin{array}{ll} 0 & \mbox{if  } i\leq n- 2, i\neq n-j-1, i-j>r\\
1 & \mbox{if  } i\leq n- 2, i= n-j-1, i-j>r\\
\dim_{\kkk}(\Ext_S^n(\Omega^j,S)_{-n}) -1 & \mbox{if  } i=n-1, j\neq 0, j<n-r-1\\
\dim_{\kkk}(\Ext_S^n(A,S)_{-n}) & \mbox{if  } i=n-1, j=0, r<n-1\\
\end{array}  \right.
\end{displaymath}
}
\end{compactitem}
\end{thm}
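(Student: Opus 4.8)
The plan is to read Theorem \ref{ccdalg} off Theorem \ref{1} as a pure translation from geometry to commutative algebra, glued to Grothendieck's local duality over $S$. Set $A:=S/I$, $X:=\Proj(A)\subseteq \PP^{n-1}=\Proj(S)$ and $U:=\PP^{n-1}\setminus X$. The hypothesis on $A$ says exactly that $X$ is a regular scheme projective over $\kkk$, and $\height(I)=\codim_{\PP^{n-1}}X$, so the range $\height(I)\leq r\leq n-1$ is precisely the range $\codim_{\PP^{n-1}}X\leq r\leq n-1$ to which Theorem \ref{1} applies (with its ambient $\PP^{n-1}$ in place of $\PP^n$). Since $\cd(S,I)=\cd(U)+1$ by \eqref{cdcd2}, condition (i) ($\cd(S,I)\leq r$) is the same as $\cd(U)<r$, so by Theorem \ref{1} it is equivalent to the Hodge condition
\[ h^{pq}(X)=0 \text{ for } p\neq q, \qquad h^{pq}(X)=1 \text{ for } p=q, \qquad \text{whenever } p+q<n-1-r. \]
It remains to match this condition with (ii) and (ii) with (iii).

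The equivalence (ii)$\iff$(iii) I would obtain directly from Theorem \ref{weaklocduality} applied with $R=S$: since $\omega_S=S(-n)$ it gives
\[ \dim_\kkk[H_\nn^i(\Omega^j)]_0=\dim_\kkk[\Ext_S^{n-i}(\Omega^j,S)]_{-n} \]
for every $i$ (including, for the rows involving $H_\nn^0$, the value $\dim_\kkk[H_\nn^0(\Omega^j)]_0=\dim_\kkk[\Ext_S^n(\Omega^j,S)]_{-n}$); substituting $i\mapsto n-i$ carries the four-case list in (ii) verbatim onto the four-case list in (iii).

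The core is the equivalence of the Hodge condition with (ii). Here I would use two inputs. First, the sheaf--local-cohomology dictionary of \eqref{shafloc4}: for a finitely generated graded $A$-module $M$, writing $\widetilde M$ for the associated sheaf on $X$, one has $[H_\nn^i(M)]_0\cong H^{i-1}(X,\widetilde M)$ for $i\geq 2$ together with the four-term exact sequence $0\to[H_\nn^0(M)]_0\to M_0\to H^0(X,\widetilde M)\to[H_\nn^1(M)]_0\to 0$. Second, the exterior powers of the Euler--conormal sequence: combining the Euler sequence of $\PP^{n-1}$ restricted to $X$ with the conormal sequence of the smooth $X\subseteq\PP^{n-1}$ yields $0\to\Omega_{X/\kkk}\to\widetilde{\Omega_{A/\kkk}}\to\O_X\to 0$, and applying $\Lambda^j$ gives $0\to\Omega_{X/\kkk}^j\to\widetilde{\Omega^j}\to\Omega_{X/\kkk}^{j-1}\to 0$, whose connecting maps $H^p(X,\Omega_{X/\kkk}^{j-1})\to H^{p+1}(X,\Omega_{X/\kkk}^j)$ are cup product with the hyperplane class $h|_X\in H^1(X,\Omega_{X/\kkk})$. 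Feeding $M=\Omega^j$ into the first input and chasing the long exact cohomology sequence of the second, I would observe that in the range $p+q<n-1-r$ the scheme $X$ has exactly the cohomology and Lefschetz operator of $\PP^{n-1}$: the middle term of the $\Lambda^j$-Euler sequence is $\bigoplus\O_{\PP^{n-1}}(-j)$, which has no cohomology for $1\leq j\leq n-1$ by Bott vanishing, and the maps $\cup h$ are isomorphisms by the hard-Lefschetz isomorphisms $\cup h^{k}\colon H^0(X,\O_X)\xrightarrow{\ \sim\ }H^k(X,\Omega_{X/\kkk}^k)$, which are available since the indices stay below $\dim X\geq n-1-r$. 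Translating the resulting vanishings and one-dimensionalities back through the dictionary produces precisely the four cases in (ii), the $i=1$ rows recording the discrepancy between $H^0(X,\widetilde{\Omega^j})$ and the degree-$0$ part of the module $\Omega^j$; running the bookkeeping in reverse gives the converse.

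The step I expect to be the real obstacle is this last translation: carrying the Euler--conormal sequence through $\Lambda^j$ while keeping track of twists and of the edge cases $i=1$ and $j=0$, and verifying rigorously that the relevant connecting homomorphisms are the Lefschetz operator and behave on $X$ exactly as they do on $\PP^{n-1}$ throughout $p+q<n-1-r$. Everything else — the reduction to Theorem \ref{1}, the passage from $\cd(S,I)$ to $\cd(U)$, and the local-duality step — is routine.
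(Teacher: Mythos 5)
Your scaffolding agrees with the paper up to the decisive step: the reduction $\cd(S,I)\leq r\iff\cd(U)<r$ via \eqref{cdcd2}, the appeal to Theorem \ref{1}, and the local-duality passage from (ii) to (iii) (the paper first uses Theorem \ref{basetheorems} (ii) to replace $H_{\nn}$ by $H_{\mm}$ and then Theorem \ref{weaklocduality}; your direct application over $S$ amounts to the same thing). Where you part ways is the translation of the Hodge numbers of $X$ into the numbers $\dim_{\kkk}H_{\nn}^i(\Omega^j)_0$: the paper does this in one line by asserting $\widetilde{\Omega^j}\cong\Omega^j_{X/\kkk}$ and then quoting \eqref{shafloc3}--\eqref{shafloc4}, with no Euler sequence, no Bott vanishing and no Lefschetz argument; you instead interpose the extension $0\to\Omega_{X/\kkk}\to\widetilde{\Omega_{A/\kkk}}\to\O_X\to 0$ and its wedge powers. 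These two descriptions are not interchangeable (in your sequence the middle term has rank $\dim X+1$, in the paper's identification it has rank $\dim X$), so your step is a genuinely different claim, not a reformulation of the paper's.

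The genuine gap is exactly the step you defer as ``the real obstacle'': you assert, without doing the bookkeeping, that chasing your sequences ``produces precisely the four cases in (ii)'', and carried out it does not. In the range $p+q<n-1-r$ the connecting maps of your sequences are cup product with the hyperplane class and hence isomorphisms on the diagonal classes, so in the long exact sequence for $0\to\Omega^j_{X/\kkk}\to\widetilde{\Omega^j}\to\Omega^{j-1}_{X/\kkk}\to 0$ the diagonal contributions cancel and one gets $H^{j}(X,\widetilde{\Omega^j})=0$, i.e.\ $\dim_{\kkk}H^{j+1}_{\nn}(\Omega^j)_0=0$, whereas the second row of (ii) requires the value $1$ when $i=j+1$. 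Concretely, take $X=\V_+(F)\subseteq\PP^4$ a smooth hypersurface, so $n=5$, $r=1$, $i=2$, $j=1$: the conormal presentation $0\to A(-\deg F)\to A(-1)^5\to\Omega_{A/\kkk}\to 0$ sheafifies to $0\to\O_X(-\deg F)\to\O_X(-1)^5\to\widetilde{\Omega_{A/\kkk}}\to 0$, and a routine computation gives $H^1(X,\widetilde{\Omega_{A/\kkk}})=0$, not $1$ -- in agreement with the Lefschetz cancellation in your long exact sequence. So your route, as set up, contradicts the very formulas you are trying to reproduce: to land on the displayed cases of (ii) (and, via duality, (iii)) one must use the identification $\widetilde{\Omega^j}\cong\Omega^j_{X/\kkk}$ on which the paper's proof rests, since the numbers in (ii) are calibrated to it; if instead you stand by the Euler-sequence description of $\widetilde{\Omega_{A/\kkk}}$, you have to confront the resulting discrepancy with the statement rather than assert agreement. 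Either way, the deferred translation is not a technicality -- it is where your argument currently fails.
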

\begin{proof}
Set $X:=\V_+(I)\subseteq \PP^{n-1}$ and $U:=\PP^{n-1}\setminus X$. Then $\cd(S,I)\leq r \ \iff \ \cd(U)<r$ by \eqref{cdcd2}. By the assumptions $X$ is a regular scheme, therefore we can use the characterization of Theorem \ref{1} as follows: First of all notice that $\Omega^q_{X/\kkk}\cong \widetilde{\Omega^q}$. So, \eqref{shafloc4} implies:
\[h^{pq}(X)=\dim_{\kkk}(H_{\nn}^{p+1}(\Omega^q)_0) \ \ \forall \ p\geq 1.\]
Moreover, by \eqref{shafloc3}, we have:
\[h^{0q}(X)=\dim_{\kkk}(H_{\nn}^1(\Omega^q)_0)-\dim_{\kkk}([H_{\nn}^0(\Omega^q)]_0)+1.\]
Thus, using Theorem \ref{1} we get the equivalence between (i) and (ii). The equivalence between (ii) and (iii) follows by local duality, see Theorem \ref{weaklocduality}. In fact, viewing $\Omega^q$ both as  $A$- and $S$-modules, point (ii) of Theorem \ref{basetheorems} guarantees that $H_{\nn}^p(\Omega^q)\cong H_{\mm}^p(\Omega^q)$, where $\mm$ is the irrelevant maximal ideal of $S$. Therefore Theorem \ref{weaklocduality} yields
\[\dim_{\kkk}(H_{\nn}^p(\Omega^q)_0)=\dim_{\kkk}(\Ext_S^{n-p}(\Omega^q,S)_{-n}).\]
\end{proof}

At first blush, conditions (ii) and (iii) of Theorem \ref{ccdalg} may seem even more difficult than understanding directly whether $H_I^i(S)=0$ for each $i>r$. However, one should focus on the fact that the modules $H_{\nn}^i(\Omega^j)$ depend only on the ring $A$, and not by the chosen presentation as a quotient of a polynomial ring. Furthermore, the $S$-modules $\Ext_S^i(\Omega^j,S)$, being finitely generated, are much more wieldy with respect to the local cohomology modules $H_I^i(S)$.

\index{cohomological dimension|)}

\chapter{Properties preserved under Gr\"obner deformations and arithmetical rank}
\label{chapter2}

This chapter is devoted to some applications of the results of Chapter \ref{chapter1}. Essentially, it is structureted in two different parts, which we are going to describe. Most of the results appearing in this chapter are borrowed from our works \cite{va1,va2}. 

\vskip 4mm

\index{connected|(}\index{connected!r-@$r$-|(}\index{connected!in codimension $1$|(}\index{connected!strongly|see{simplicial complex}}\index{initial ideal|(}\index{simplicial complex|(}\index{simplicial complex!strongly connected|(}

In Section \ref{secgrodef}, we study the connectedness behavior under Gr\"obner deformations. That is, if $I$ is an ideal of the polynomial ring $S:=\kkk[x_1,\ldots ,x_n]$ and $\prec$ is a monomial order on $S$, we study whether the ring $S/\ini(I)$ inherits some connectedness properties of $S/I$. The answer is yes, in fact we prove that, if $r$ is an integer less than $\dim S/I$, then $S/\ini(I)$ is $r$-connected whenever $S/I$ is $r$-connected (Corollary \ref{ptdmv1}). Actually, more generally, we prove the analog version for initial objects with respect to weights (Theorem \ref{ptdmv}). This fact generalizes the result of Kalkbrener and Sturmfels \cite[Theorem 1]{ka-st}, which says that, if $I$ is a geometrically prime ideal, in the sense that it is prime and it remains prime once we tensor with the algebraic closure of $\kkk$, then the simplicial complex $\D(\sqrt{\ini(I)})$ is pure and strongly connected (Theorem \ref{kalkstu}). As a consequence they settled a fact conjectured by Kredel and Weispfenning \cite{kr-we}. However, our proof is different from the one of Kalkbrener and Sturmfels: In fact, it leads to their result just assuming the primeness of $I$, and not the geometrically primeness. If anything, the proof of Theorem \ref{ptdmv} is inspired to the approach used in the lectures of Huneke and Taylor \cite{hu-ta} to show the theorem of Kalkbrener and Sturmfels. Another immediate consequence of Corollary \ref{ptdmv1} is that, if $I$ is homogeneous and $\V_+(I)$ is a positive dimensional connected projective scheme, then $\V_+(\ini(I))$ is connected too (Corollary \ref{conninit}). Eventually, we prove an even more general version of Corollary \ref{ptdmv1}, replacing $S$ with any $\kkk$-subalgebra $A\subseteq S$ such that $\ini(A)$ is finitely generated (Corollary \ref{sagbiconn}).

Another nice consequence of Corollary \ref{ptdmv1} is the solution of the conjecture of Eisenbud and Goto, formulated in their paper \cite{eisenbudgoto}, for a new class of ideals. Namely, we prove the conjecture for ideals $I$ which have an initial ideal with no embedded primes (Theorem \ref{eisenbudgotonoemb}). In particular, we show that the Eisenbud-Goto conjecture is true for ideals defining ASL\index{Algebra with Straightening Laws}, see Corollary \ref{eisenbudgotoasl}.

In Subsection \ref{subchap2.1}, we noticed that Corollary \ref{ptdmv1} also implies that the simplicial complex $\D:=\D(\sqrt{\ini(I)})$ is  pure and strongly connected whenever $I$ is a homogeneous ideal such that $S/I$ is Cohen-Macaulay (Theorem \ref{skinner}). At first blush, the reader might ask whether $\kkk[\D]$ is even Cohen-Macaulay whenever $S/I$ is Cohen-Macaulay. However, Conca showed, using the computer algebra system Cocoa \cite{cocoa}, that this is not the case (see Example \ref{esempioaldo}). A more reasonable question could be the following: 

\vskip 1mm

\noindent {\bf Question} \ref{squarefreecm} If $S/I$ is Cohen-Macaulay and $\ini(I)$ is square-free, is $S/\ini(I)$ Cohen-Macaulay as well as $S/I$?

\vskip 1mm 

Maybe the answer to this question is negative, and the reason why we cannot find a counterexample to it is that the property of $\ini(I)$ being square-free is very rare. However, in some cases we can give an affirmative answer to the above question, namely:
\begin{compactitem}
\item[(i)] If $\dim(S/I)\leq 2$ (Proposition \ref{answerdim2}).
\item[(ii)] If $S/I$ is Cohen-Macaulay with minimal multiplicity (Proposition \ref{answerminmult}).
\item[(iii)] For certain ASL\index{Algebra with Straightening Laws} (Proposition \ref{answersomeasl}).
\end{compactitem} 

\vskip 4mm
\index{arithmetical rank}\index{arithmetical rank!homogeneous}\index{set-theoretic complete intersection}\index{Segre product}

Section \ref{secarasegre} is dedicated to the study of the arithmetical rank\index{arithmetical rank} of certain algebraic varieties.
The beauty of finding the number of defining equations of a variety is expressed by Lyubeznik in \cite{ly} as follows:

\vskip 1mm

\emph{Part of what makes the problem about the number of defining equations so interesting is that it can be very easily stated, yet a solution, in those rare cases when it is known, usually is highly nontrivial and involves a fascinating interplay of Algebra and Geometry.}

\vskip 1mm

The varieties whose we study the number of defining equations are certain Segre products of two projective varieties. In Subsection \ref{seclowerbounds} we prove that, if $X$ and $Y$ are two smooth projective schemes over a field $\kkk$, then, to define set-theoretically the Segre product  $X\times Y$ embedded in some projective space $\PP^N$, are needed at least $N-2$ equations (Proposition \ref{lowerara}). Furthermore, if $X$ is a curve of positive genus, then the needed equations are at least $N-1$ (Proposition \ref{curve}). Both these results come from cohomological considerations, and they are consequences of the work done in Chapter \ref{chapter1}. On the other hand, the celebrated theorem of Eisenbud and Evans \cite[Theorem 2]{eisenbudevans} tells us that, in any case, $N$ homogeneous equations are enough to define set-theoretically any projective scheme in $\PP^N$. Unfortunately, to decide, once fixed $X$ and $Y$, whether the minimum number of equations is $N$, \ $N-1$ or $N-2$, is a very hard problem. We will solve this issue in some special cases. Let us list some works that already exist in this direction.
\begin{compactitem}
\item[(i)] In their paper \cite{bruns-schwanzl}, Bruns and Schw\"anzl studied the number of defining equations of a determinantal variety. In particular they proved that the Segre product 
\[\PP^n \times \PP^m \subseteq \PP^N \mbox{ \ \ \ where \ \ \ } N:=nm+n+m\] 
can be defined set-theoretically by $N-2$ homogeneous equations and not less. In particular, it is a set-theoretic complete intersection if and only if $n=m=1$.
\item[(ii)] In their work \cite{siwa}, Singh and Walther gave a solution in the case of 
\[E \times \PP^1 \subseteq \PP^5,\] 
where $E$ is a smooth elliptic plane curve: The authors proved that the arithmetical rank of this Segre product is $4$. Later, in \cite{song}, Song proved that the arithmetical rank of $C \times \PP^1$, where $C \subseteq \PP^2$ is a Fermat curve (i.e. a curve defined by the equation $x_0^d+x_1^d+x_2^d$), is $4$ provided that $d\geq 3$. In particular both $E\times \PP^1$ and $C\times \PP^1$ are not set-theoretic complete intersections.  
\end{compactitem}
In light of these results (especially we have been inspired to \cite{siwa}), it is natural to study the following problem.

\vskip 1mm

\emph{Let $n,m,d$ be natural numbers such that $n \geq 2$ and $m,d \geq 1$, and let $X\subseteq \PP^n$ be a smooth hypersurface of degree $d$. Consider the Segre product $Z := X \times \PP^m \subseteq \PP^N$, where $N:=nm+n+m$. What can we say about the number of defining equations of $Z$?}

\vskip 1mm

Notice that the arithmetical rank of $Z$ can depend, at least a priori, by invariant different from $n,m,d$: In fact we will need other conditions on $X$. However, for certain $n,m,d$, we can provide some answers to this question. 

In the case $n=2$ and $m=1$, we introduce, for every $d$, a locus of special smooth projective plane curves of degree $d$, that we will denote by $\V_d$ (see Remark \ref{hyperflexes}): This locus consists in those smooth projective curves $X$ of degree $d$ which have a $d$-flex, i.e. a point $p$ at which the multiplicity intersection of $X$ and the tangent line in $p$ is equal to $d$. Using methods coming from ``ASL theory"\index{Algebra with Straightening Laws} (see De Concini, Eisenbud and Procesi \cite{DEP2} or Bruns and Vetter \cite{BrVe}), we prove that for such a curve $X$ the arithmetical rank of the Segre product $X \times \PP^1 \subseteq \PP^5$ is $4$, provided that $d\geq 3$ (see Theorem \ref{ara1}). It is easy to show that every smooth elliptic curve belongs to $\V_3$ and that every Fermat's curve of degree $d$ belongs to $\V_d$, so we recover the results of \cite{siwa} and of \cite{song} (see Corollaries \ref{araelliptic} and \ref{arafermat}). However, the equations that we will find are different from  the ones found in those papers, and our result is characteristic free. Moreover, a result of Casnati and Del Centina \cite{casnati-del centina} shows that the codimension of $\V_d$ in the locus of all the smooth projective plane curves of degree $d$ is $d-3$, provided that $d\geq 3$ (Remark \ref{hyperflexes}). So we compute the arithmetical rank of $X\times \PP^1\subseteq \PP^5$ for a lot of new plane curves $X$.

For a general $n$, we can prove that if $X\subseteq \PP^n$ is a general smooth hypersurface of degree not bigger than $2n-1$, then the arithmetical rank of $X\times \PP^1\subseteq \PP^{2n+1}$ is at most $2n$ (Corollary \ref{bello}). To establish this we need a higher-dimensional version of $\V_d$ and Lemma \ref{cil}, suggested us by Ciliberto. This result is somehow in the direction of the open question whether any connected projective scheme of positive dimension in $\PP^N$ can be defined set-theoretically by $N-1$ equations. 

With some similar tools we can show that, if $F:=x_n^d+\sum_{i=0}^{n-3}x_iG_i(x_0,\ldots ,x_n)$ and $X:=\V_+(F)$ is smooth, then the arithmetical rank of $X\times \PP^1\subseteq \PP^{2n+1}$ is exactly $2n-1$ (Theorem \ref{45}).

Eventually, using techniques similar to those of \cite{siwa}, we are able to show the following: The arithmetical rank of the Segre product $X \times \PP^m \subseteq \PP^{3m+2}$, where $X$ is a smooth conic of $\PP^2$, is equal to $3m$, provided that $\chara(\kkk)\neq 2$ (Theorem \ref{conic}). In particular, $X \times \PP^m$ is a set-theoretic complete intersection if and only if $m=1$.
\index{arithmetical rank}\index{arithmetical rank!homogeneous}\index{set-theoretic complete intersection}\index{Segre product}

\section{Gr\"obner deformations}\label{secgrodef}

\subsection{Connectedness preserves under Gr\"obner deformations}

The main result of this subsection is Theorem \ref{ptdmv}, which will launch the results of the two next subsections.

\begin{lemma}\label{piccoloaiutantedibabbonatale}
Let $I$ be an ideal of $S$ and $\omega \in \mathbb{N}_{\geq 1}^n$. For all $r\in \ZZ$, if $S/I$ is $r$-connected, then $S[t]/\homo(I)$ is $(r+1)$-connected.
\end{lemma}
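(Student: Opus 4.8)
The plan is to work with the ring $R:=S[t]/\homo(I)$, where $t$ is given weight $1$ and $x_i$ weight $\omega_i$, so that $\homo(I)$ is generated by elements homogeneous for this grading. I would first record the standard compatibilities of weighted homogenization with the operations involved. \textbf{(a)} For every ideal $\mathfrak a\subseteq S$, the element $t$ is a nonzerodivisor on $S[t]/\homo(\mathfrak a)$; this follows from the dehomogenization/rehomogenization identity $g=t^{e}\homo(g|_{t=1})$ for homogeneous $g\in S[t]$ together with $(\homo\mathfrak a)|_{t=1}=\mathfrak a$. \textbf{(b)} The automorphism $\psi$ of $S[t,t^{-1}]$ with $\psi(x_i)=t^{\omega_i}x_i$ and $\psi(t)=t$ sends each $\homo(f)$ to a unit multiple of $f$ in $S[t,t^{-1}]$, hence carries $\homo(\mathfrak a)S[t,t^{-1}]$ onto $\mathfrak a S[t,t^{-1}]$; consequently $\bigl(S[t]/\homo(\mathfrak a)\bigr)[t^{-1}]\cong(S/\mathfrak a)[t,t^{-1}]$ for every ideal $\mathfrak a\subseteq S$. \textbf{(c)} $\homo$ takes primes to primes --- by (a) the ring $S[t]/\homo(\wp)$ embeds into its localization at $t$, which is the domain $(S/\wp)[t,t^{-1}]$ by (b) --- and it commutes with finite intersections of primes and with radicals; in particular, if $\wp_1,\dots,\wp_m$ are the minimal primes of $I$, then $\homo(\wp_1),\dots,\homo(\wp_m)$ are exactly the minimal primes of $\homo(I)$. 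Write $\mathfrak P_i$ for the image of $\homo(\wp_i)$ in $R$.

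Now the argument splits according to $m$. If $m=1$, then $R$ has a single minimal prime and is therefore $(\dim R)$-connected, while $\dim R\ge\dim R[t^{-1}]=\dim(S/I)[t,t^{-1}]=\dim(S/I)+1\ge r+1$ (localization only lowers Krull dimension, and $S/I$ being $r$-connected forces $\dim(S/I)\ge r$); hence $R$ is $(r+1)$-connected. Suppose $m\ge 2$. To prove $R$ is $(r+1)$-connected I would bound $\dim R/(\mathfrak A+\mathfrak B)$ from below by $r+1$, where $\mathfrak A=\bigcap_{i\in T}\mathfrak P_i$ and $\mathfrak B=\bigcap_{j\in T^{c}}\mathfrak P_j$, as $T\sqcup T^{c}=\{1,\dots,m\}$ runs over the partitions into two nonempty blocks. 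Put $\mathcal J=\bigcap_{i\in T}\wp_i$ and $\mathcal K=\bigcap_{j\in T^{c}}\wp_j$ in $S$. By the intersection property in (c), $\mathfrak A=\homo(\mathcal J)/\homo(I)$ and $\mathfrak B=\homo(\mathcal K)/\homo(I)$, so $\dim R/(\mathfrak A+\mathfrak B)=\dim S[t]/(\homo(\mathcal J)+\homo(\mathcal K))$. Localizing at $t$ and applying $\psi$ from (b) sends $\homo(\mathcal J)S[t,t^{-1}]+\homo(\mathcal K)S[t,t^{-1}]$ to $(\mathcal J+\mathcal K)S[t,t^{-1}]$, so that
\[\bigl(S[t]/(\homo(\mathcal J)+\homo(\mathcal K))\bigr)[t^{-1}]\;\cong\;\bigl(S/(\mathcal J+\mathcal K)\bigr)[t,t^{-1}],\]
a ring of dimension $\dim S/(\mathcal J+\mathcal K)+1$. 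Since localization cannot raise Krull dimension, $\dim S[t]/(\homo(\mathcal J)+\homo(\mathcal K))\ge\dim S/(\mathcal J+\mathcal K)+1$, and the right-hand side is $\ge r+1$ precisely because $S/I$ is $r$-connected. As the partition was arbitrary and $\dim R\ge\dim(S/I)+1\ge r+1$ as above, $R$ is $(r+1)$-connected.

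The genuine content lies entirely in the preparatory facts (a)--(c) about weighted homogenization; granting them, the conclusion is a short dimension count built on the localization isomorphism of (b). The step I expect to require the most care is (c) --- that $\homo$ sends a prime to a prime and is compatible with finite intersections of primes and with taking radicals --- which I would deduce from the elementary identities $g=t^{e}\homo(g|_{t=1})$ and $(\homo\mathfrak a)|_{t=1}=\mathfrak a$; after that it is only a matter of matching the minimal primes of $\homo(I)$ with those of $I$ and carrying out the bookkeeping above.
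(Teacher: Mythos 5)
Your argument is correct, and at its core it follows the same strategy as the paper's proof: identify the minimal primes of $\homo(I)$ with the homogenizations $\homo(\wp_i)$ of the minimal primes of $I$, and then, for a partition of these primes into two blocks with intersections $\mathcal{J}=\bigcap_{i\in A}\wp_i$ and $\mathcal{K}=\bigcap_{j\in B}\wp_j$, bound $\dim S[t]/(\homo(\mathcal{J})+\homo(\mathcal{K}))$ below by $\dim S/(\mathcal{J}+\mathcal{K})+1$ and invoke the partition criterion for $r$-connectedness (Lemma \ref{milhouse}~(i), or equivalently Lemma \ref{hilbert}, which you should cite explicitly at your final step). Where you diverge is in how the two homogenization inputs are obtained. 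The paper simply quotes Lemma \ref{ralph}: parts (i) and (v) give that $\homo$ commutes with intersections and preserves minimal primes, and for the dimension bound it uses the one-line containment $\homo(\mathcal{J})+\homo(\mathcal{K})\subseteq\homo(\mathcal{J}+\mathcal{K})$ together with part (vi), $\dim S[t]/\homo(\mathfrak{a})=\dim S/\mathfrak{a}+1$. You instead re-derive everything from the observation that $t$ is a nonzerodivisor modulo $\homo(\mathfrak{a})$ and that the automorphism $\psi$ of $S[t,t^{-1}]$ with $\psi(x_i)=t^{\omega_i}x_i$ turns $\homo(\mathfrak{a})S[t,t^{-1}]$ into $\mathfrak{a}S[t,t^{-1}]$, so that inverting $t$ gives $(S/(\mathcal{J}+\mathcal{K}))[t,t^{-1}]$ and the bound follows because localization cannot raise Krull dimension; this is essentially the flat-degeneration viewpoint of Proposition \ref{flatdef} made explicit, and it buys you a self-contained proof that avoids Lemma \ref{ralph}(vi) as a black box, at the cost of redoing facts the paper has in its appendix. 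Two further cosmetic differences: you argue directly over all partitions rather than by contradiction, and you treat the case of a single minimal prime separately (the paper's proof tacitly assumes $m\geq 2$ when it picks a nonempty partition, and your explicit handling of $m=1$ via $\dim S[t]/\homo(I)\geq\dim(S/I)+1\geq r+1$ is the right fix). No gaps.
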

\begin{proof}
Let $\wp_1, \ldots, \wp_m$ be the minimal prime ideals of $I$. Then, by Lemma \ref{ralph} (v), it follows that $\homo(\wp_1), \ldots
, \homo(\wp_m)$ are the minimal prime ideals of $\homo(I)$. Suppose by contradiction that $S[t]/\homo(I)$ is not $(r+1)$-connected. Then, by Lemma \ref{milhouse} (i), we can choose $A,B \subseteq \{1,\ldots ,m\}$ disjoint, such that $A\cup B=\{1,\ldots ,m\}$ and such that, setting $\mathcal{J}:= \cap_{i \in A}
\homo(\wp_i)$ and $\mathcal{K}:= \cap_{j \in B}\homo(\wp_j)$,
\[\dim S[t]/(\mathcal{J} + \mathcal{K})\leq r. \]
Set $J:= \cap_{i \in A} \wp_i$ and $K:= \cap_{j \in B}\wp_j$. Lemma \ref{ralph} (i) implies that $\homo(J)=\Jj$ and $\homo(K)=\K$.
Obviously, $\Jj + \K \subseteq \homo(J +K)$. So, using Lemma \ref{ralph} (vi), we get 
\[r\geq\dim S[t]/(\Jj + \K)\geq \dim S[t]/\homo(J+K)= \dim S/(J+K)+1.\] 
At this point, Lemma \ref{milhouse} (i) implies that $S/I$ is not $r$-connected, which is a contradiction.
\end{proof}

\begin{thm}\label{ptdmv}
Let $I$ be an ideal of $S$, $\omega \in \mathbb{\NN}_{\geq 1}^n$ and $r$ an integer less than $\dim S/I$. If $S/I$ is $r$-connected, then $S/\inito(I)$ is $r$-connected.
\end{thm}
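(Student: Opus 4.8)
The plan is to use the standard trick that relates the initial object with respect to a weight $\omega$ to a homogenization/flat degeneration, and then invoke Lemma \ref{piccoloaiutantedibabbonatale} together with the elementary properties of connectedness. Recall the classical setup (as in Sturmfels' book or the Huneke--Taylor notes): for $\omega\in\NN_{\geq 1}^n$, the homogenized ideal $\homo(I)\subseteq S[t]$ has the feature that $S[t]/\homo(I)$ is a flat family over $\kkk[t]$ whose generic fiber (inverting $t$) is isomorphic to $S/I$ and whose special fiber ($t=0$) is $S/\inito(I)$. Concretely, $S[t]/(\homo(I)+(t))\cong S/\inito(I)$, while $S[t]/(\homo(I)+(t-1))\cong S/I$, and $t$ is a nonzerodivisor on $S[t]/\homo(I)$.

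First I would record that $S[t]/\homo(I)$ is $(r+1)$-connected: this is exactly Lemma \ref{piccoloaiutantedibabbonatale}, using the hypothesis that $S/I$ is $r$-connected. Next I would pass to the special fiber by cutting with the hyperplane $t=0$. The point is that $t$ is a nonzerodivisor on $S[t]/\homo(I)$, hence $t$ avoids all minimal primes of $\homo(I)$, and moreover $\dim S[t]/(\homo(I)+(t)) = \dim S[t]/\homo(I) - 1 = \dim S/I$; since $r<\dim S/I$, the integer $r$ is strictly less than the dimension of the quotient we end up with, which keeps us in the range where the relevant cutting lemma applies. The mechanism I want is the standard fact that cutting an $(r+1)$-connected ring by a single element lying in no minimal prime (equivalently, a general hyperplane section in this graded/filtered setting) produces an $r$-connected ring — this is precisely the $s=1$ instance of the Grothendieck-type statement that underlies Theorem \ref{palladineve} / Theorem \ref{bart}, or can be seen directly via Lemma \ref{milhouse} and the Mayer--Vietoris description of connectedness. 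Applying it with the nonzerodivisor $t$ gives that $S[t]/(\homo(I)+(t))$ is $r$-connected, and since this ring is isomorphic to $S/\inito(I)$, we are done.

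The step I expect to be the main obstacle is making the "cut by a nonzerodivisor, lose one in connectedness" reduction airtight in the generality needed here — in particular handling the bookkeeping on which minimal primes of $\homo(I)+(t)$ arise and checking that no spurious drop in dimension or connectedness occurs. One has to know that the minimal primes of $\homo(I)+(t)$ are exactly the $\homo(\wp_i)+(t)$ for $\wp_i$ minimal over $I$ (which follows from the flatness/nonzerodivisor statement), and that for any partition $A\sqcup B$ of these, $\dim S[t]/\big((\cap_{i\in A}(\homo(\wp_i)+(t)))+(\cap_{j\in B}(\homo(\wp_j)+(t)))\big)$ equals $\dim S[t]/\big((\cap_{i\in A}\homo(\wp_i))+(\cap_{j\in B}\homo(\wp_j))\big)-1$; this again uses that $t$ is a nonzerodivisor on $S[t]/(\cap_A\homo(\wp_i)\cap\cap_B\homo(\wp_j))$ or at least drops dimension by exactly one. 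Granting this, the contrapositive argument is immediate: a disconnecting partition of $S/\inito(I)$ at level $<r$ would, via homogenization, produce a disconnecting partition of $S[t]/\homo(I)$ at level $<r+1$, contradicting Lemma \ref{piccoloaiutantedibabbonatale}. An alternative, perhaps cleaner, route that avoids re-deriving the hyperplane-section lemma is to argue purely with minimal primes and Lemma \ref{milhouse} as in the proof of Lemma \ref{piccoloaiutantedibabbonatale}, mirroring that proof one level down; I would present whichever of the two is shorter, but in either case the crux is the precise behavior of $\homo(-)$ on intersections of the minimal primes, for which Lemma \ref{ralph} should supply exactly what is needed.
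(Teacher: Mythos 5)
Your overall skeleton is the paper's: homogenize, get $(r+1)$-connectedness of $S[t]/\homo(I)$ from Lemma \ref{piccoloaiutantedibabbonatale}, cut by the principal ideal generated by $t$, and identify the special fiber with $S/\inito(I)$ via Proposition \ref{flatdef}. But the step you yourself flag as the crux --- ``cutting an $(r+1)$-connected ring by one element gives an $r$-connected ring'' --- is where the real content lies, and neither of your two proposed ways of settling it works as stated. Theorems \ref{palladineve} and \ref{bart} are statements about \emph{complete local} rings, and $S[t]/\homo(I)$ is neither local nor complete; the paper's key observation is that $S[t]/\homo(I)$ is graded for the $\oo$-graduation (with $\deg t=1$) with degree-zero part $\kkk$, and $(\bar{t})$ is a homogeneous ideal, so the graded version, Theorem \ref{winchester}, applies: $\cd(S[t]/\homo(I),(\bar{t}))\leq \ara((\bar{t}))\leq 1=(r+1)-r$ by \eqref{aragcd}, and the hypothesis $r<\dim S/I$ is exactly what guarantees $r+1<\dim S[t]/\homo(I)$, as Theorem \ref{winchester} requires. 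You gesture at ``the graded/filtered setting'' but leave the choice of cutting lemma open, and without Theorem \ref{winchester} (or a careful localization--completion detour) the cut is not justified.

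Your ``alternative, cleaner route'' via minimal-prime bookkeeping is in fact broken. The minimal primes of $\homo(I)+(t)$ are \emph{not} the ideals $\homo(\wp_i)+(t)$: these are in general not even prime, since $S[t]/(\homo(\wp_i)+(t))\cong S/\inito(\wp_i)$ and the initial ideal of a prime ideal typically has many minimal primes --- this is precisely the content of Theorem \ref{kalkstu}, which is deduced from the theorem you are trying to prove. For the same reason, a disconnecting partition of the minimal primes of $\inito(I)$ does not lift to a partition of the minimal primes of $\homo(I)$ (when $I$ is prime there is a single minimal prime upstairs and possibly many downstairs), so the purely combinatorial contrapositive via Lemma \ref{ralph} and Lemma \ref{milhouse} cannot close the argument; a genuinely cohomological input, namely Theorem \ref{winchester} (resting on Proposition \ref{lisa} and the Hartshorne--Lichtenbaum theorem), is needed at this point.
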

\begin{proof}
Note that $S[t]/\homo(I)$ is a graded ring with $\kkk$ as degree $0$-part (considering the $\oo$-graduation). Moreover, $(\bar{t}):=(\homo(I) +
(t))/\homo(I) \subseteq S[t]/\homo(I)$ is a homogeneous ideal
of $S[t]/\homo(I)$. The ring $S[t]/\homo(I)$ is $(r+1)$-connected by Lemma \ref{piccoloaiutantedibabbonatale}, and
\[\cd(S[t]/\homo(I),(\bar{t}))\leq 1=(r+1)-r\]
by \eqref{aragcd}. So Theorem \ref{winchester} yields that $S[t]/(\homo(I)+(t))$ is $r$-connected. Eventually, we get the conclusion, because Proposition \ref{flatdef} says that
\[S[t]/(\homo(I)+(t))\cong S/\inito(I).\]
\end{proof}

Using Theorem \ref{telespallabob}, we immediately get the following Corollary.
\begin{corollary}\label{ptdmv1}
Let $I$ be an ideal of $S$, $\prec$ a monomial order on $S$ and $r$ an integer less than $\dim S/I$. If $S/I$ is $r$-connected, then $S/\ini(I)$ is $r$-connected.
\end{corollary}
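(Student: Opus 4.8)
The plan is to reduce this to Theorem \ref{ptdmv} by representing the monomial order $\prec$, at least on the ideal $I$, by a strictly positive integer weight vector. This is precisely the role of Theorem \ref{telespallabob}: given $I$ and $\prec$, one fixes a Gr\"obner basis $g_1,\ldots,g_t$ of $I$ with respect to $\prec$, and then produces a single weight $\oo\in\NN_{\geq 1}^n$ for which the $\oo$-leading form of each $g_i$ is its $\prec$-leading monomial. From this it follows that $g_1,\ldots,g_t$ is also an initial Gr\"obner basis in the sense of weights, and hence $\ini(I)=\inito(I)$. The positivity of $\oo$ is not an obstacle: if a weight works it can be replaced by that weight plus a large multiple of $(1,\ldots,1)$ without disturbing the relevant leading forms, which renders all entries positive.

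Granting such an $\oo$, the corollary follows at once. Since $\ini(I)=\inito(I)$, the rings $S/\ini(I)$ and $S/\inito(I)$ are literally the same. The hypotheses of Theorem \ref{ptdmv} are then satisfied for this $\oo$: we have $\oo\in\NN_{\geq 1}^n$, the integer $r$ satisfies $r<\dim S/I$, and $S/I$ is $r$-connected by assumption. Applying Theorem \ref{ptdmv} gives that $S/\inito(I)=S/\ini(I)$ is $r$-connected, as desired.

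There is essentially no difficulty here beyond invoking the standard weight-order representation recorded in Theorem \ref{telespallabob}; this is why the authors call the corollary immediate. The only subtlety worth noting is that the weight $\oo$ is not universal but depends on $I$ (indeed on the chosen Gr\"obner basis), which causes no trouble because Theorem \ref{ptdmv} is being applied to this one fixed ideal. One could also observe that $\dim S/\ini(I)=\dim S/I$, so the numerical condition $r<\dim S/I$ transfers verbatim, though this is not even needed for the argument.
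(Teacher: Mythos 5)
Your proposal is correct and is essentially the paper's own argument: the paper also deduces the corollary immediately from Theorem \ref{ptdmv} by invoking Theorem \ref{telespallabob} to choose a weight $\oo\in\NN_{\geq 1}^n$ representing $\prec$ for $I$, so that $\ini(I)=\inito(I)$. Your extra remarks on how such an $\oo$ is constructed are just a gloss on Theorem \ref{telespallabob} and do not change the route.
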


Given a monomial order $\prec$ on $S$, for any ideal $I$ let $\D_{\prec}(I)$ denote the simplicial complex $\D(\sqrt{\ini(I)})$ on $[n]$ (see \ref{appendixd}). Corollary \ref{ptdmv1} gets immediately \cite[Theorem 1]{ka-st}.

\begin{thm}\label{kalkstu}
(Kalkbrener and Sturmfels). Let $\prec$ a monomial order on $S$. If
$I$ is a prime ideal, then $\D_{\prec}(I)$ is pure of dimension $\dim S/I-1$ and strongly connected.
\end{thm}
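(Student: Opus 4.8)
The plan is to deduce the theorem entirely from Corollary \ref{ptdmv1}, by observing that primeness of $I$ is exactly the hypothesis that makes $S/I$ maximally connected. First I would recall that a Noetherian ring is $\dim$-connected (in the sense of the appendix, via Lemma \ref{milhouse}) precisely when its spectrum cannot be disconnected by removing a closed set of small dimension; and that for an equidimensional ring this is the strongest connectedness one can hope for. If $I$ is prime, then $S/I$ is a domain, hence irreducible, hence $r$-connected for every $r \le \dim S/I$; in particular it is $(\dim S/I - 1)$-connected. Applying Corollary \ref{ptdmv1} with $r = \dim S/I - 1$ (which is indeed an integer strictly less than $\dim S/I$), we conclude that $S/\ini(I)$ is $(\dim S/I - 1)$-connected as well.

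Next I would translate this connectedness statement about $S/\ini(I)$ into the claimed combinatorial properties of $\D_\prec(I) = \D(\sqrt{\ini(I)})$. Since $\ini(I)$ and $\sqrt{\ini(I)}$ define the same topological space, $S/\sqrt{\ini(I)} = \kkk[\D_\prec(I)]$ is $(\dim S/I - 1)$-connected; note $\dim \kkk[\D_\prec(I)] = \dim S/I$ because passing to an initial ideal preserves the Hilbert function, hence dimension. For a Stanley--Reisner ring, being $(\dim - 1)$-connected is, by the dictionary in the appendix (Lemma \ref{milhouse} together with the standard correspondence between primary decomposition of $I_\Delta$ and the facets of $\Delta$), equivalent to the simplicial complex $\Delta := \D_\prec(I)$ being strongly connected in the sense that any two facets can be joined by a chain of facets consecutively sharing a codimension-one face — but this requires also knowing the facets all have the same dimension. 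So the remaining point is purity.

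For purity I would argue: every minimal prime of $\ini(I)$ is a monomial prime of the same height as some minimal prime of $I$. Indeed $\height \ini(I) = \height I$ since dimension is preserved, and more is true — by a standard fact (e.g. the associated primes of $\ini(I)$ specialize from those of $I$, or directly: $\dim S/\ini(I) = \dim S/I$ together with the fact that all minimal primes of a prime ideal's initial ideal have the same dimension, which itself follows because $\ini(I)$ is connected in codimension $1$ and, having no lower-dimensional components would otherwise disconnect it; alternatively invoke the Kalkbrener--Sturmfels-type argument that each minimal prime of $\ini(I)$ has dimension $\dim S/I$). Combining purity with the $(\dim S/I - 1)$-connectedness gives exactly "pure of dimension $\dim S/I - 1$ and strongly connected."

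The main obstacle I anticipate is the purity statement: Corollary \ref{ptdmv1} by itself only delivers connectedness, not equidimensionality, and $\ini(I)$ of a prime ideal can a priori have embedded or lower-dimensional minimal components. The cleanest route is to prove that every minimal prime $\wp$ of $\ini(I)$ satisfies $\dim S/\wp = \dim S/I$; this can be extracted either from the flat-degeneration / Hilbert-function argument (the associated graded object of a homogenization is equidimensional when $S/I$ is — or one reduces via the weight filtration to the graded setting and uses that $\hom_\omega(\wp)$ stays prime of the right dimension, Lemma \ref{ralph}), or by quoting directly that connectedness in codimension $1$ plus catenarity forces equidimensionality once one knows $S/\ini(I)$ has no minimal prime of dimension $< \dim S/I$. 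I would spell out whichever of these is shortest given the lemmas already available in the appendix, since each step is routine once the right lemma is cited; the conceptual content is entirely in Corollary \ref{ptdmv1}.
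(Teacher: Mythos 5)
Your argument is correct and follows essentially the same route as the paper: primeness gives that $S/I$ is $(\dim S/I-1)$-connected, Corollary \ref{ptdmv1} transfers this to $S/\ini(I)$, and the Stanley--Reisner dictionary (Remark \ref{sccd1}) translates connectedness in codimension $1$ into strong connectedness of $\D_{\prec}(I)$. The purity you flag as the main obstacle needs no separate degeneration argument: it is already forced by the $(\dim S/I-1)$-connectedness of $S/\ini(I)$, either via Lemma \ref{milhouse} (ii) (in the reducible case every component has dimension strictly greater than $\dim S/I-1$) or directly from the facet chains in the definition of strong connectedness, whose consecutive members must have equal cardinality --- so your fallback routes (lower-dimensional components ``would disconnect'', or assuming there is no minimal prime of smaller dimension) are respectively unfounded and circular, but also unnecessary.
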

\begin{proof}
Since $I$ is prime, $S/I$ is $d$-connected, where $d:=\dim S/I$. So $S/I$ is $(d-1)$-connected, and Corollary \ref{ptdmv1} implies that $S/\ini(I)$ is $(d-1)$-connected. So $S/\sqrt{\ini(I)}$ is connected in codimension $1$, which is equivalent to $\D_{\prec}(I)$ being strongly connected by Remark \ref{sccd1}.
\end{proof}

The following is another nice consequence of Theorem \ref{ptdmv}.

\begin{corollary}\label{conninit}
Let $I$ be a homogeneous ideal of $S$ and $\oo \in \mathbb{\NN}_{\geq 1}^n$. If $\V_+(I)\subseteq \PP^{n-1}$ is a positive dimensional connected projective scheme, then
$\V_+(\inito(I))\subseteq \PP^{n-1}$ is connected as well.
\end{corollary}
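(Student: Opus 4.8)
The plan is to deduce this from Theorem~\ref{ptdmv} together with the dictionary between connectedness of a graded ring and connectedness of the associated projective scheme (the one recorded as ``Example~\ref{punctureddu} (ii)'' and used already in the proof of Theorem~\ref{kaprapal}). Since $\V_+(I)$ is positive dimensional and connected, the scheme $\V_+(I) = \Proj(S/I)$ being connected translates into $S/I$ being $1$-connected: indeed a graded ring $R$ with $R_0$ a field is $(s+1)$-connected if and only if $\V_+(\sqrt{(0)})$ — equivalently $\Proj(R)$ — is $s$-connected, so connectedness of $\Proj(S/I)$ gives that $S/I$ is $1$-connected. Here ``positive dimensional'' is what guarantees $\Proj(S/I)$ is nonempty and that $1 < \dim S/I$, which we need in order to apply Theorem~\ref{ptdmv}.

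First I would set $r = 1$. Because $\V_+(I)$ has positive dimension, $\dim S/I \geq 2 > 1 = r$, so the hypothesis $r < \dim S/I$ of Theorem~\ref{ptdmv} is satisfied. Since $S/I$ is $1$-connected, Theorem~\ref{ptdmv} applies and yields that $S/\inito(I)$ is $1$-connected. Note that $\inito(I)$ is again a homogeneous ideal (the $\oo$-initial ideal of a homogeneous ideal is homogeneous with respect to the standard grading), so $S/\inito(I)$ is a graded ring with $\kkk$ in degree $0$ and the projective scheme $\V_+(\inito(I))$ makes sense.

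Then I would run the dictionary in the reverse direction: $S/\inito(I)$ being $1$-connected means, again by Example~\ref{punctureddu} (ii), that $\V_+(\inito(I)) = \Proj(S/\inito(I))$ is $0$-connected, i.e.\ connected (one should also observe it is nonempty, which follows because $\dim S/\inito(I) = \dim S/I \geq 2$, flat degeneration preserving dimension as recorded in the results quoted for Theorem~\ref{ptdmv}). This gives exactly the claim. There is no real obstacle here — the only thing to be careful about is the bookkeeping of the shift in the connectedness index between a graded ring and its $\Proj$ (the ``$+1$''), and the requirement that the dimension be at least $2$ so that both the hypothesis of Theorem~\ref{ptdmv} holds and the resulting scheme is genuinely a positive-dimensional, nonempty connected scheme rather than vacuously so. If one instead prefers the monomial-order formulation, the same argument applies verbatim using Corollary~\ref{ptdmv1} in place of Theorem~\ref{ptdmv}.
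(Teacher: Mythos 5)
Your proposal is correct and follows essentially the same route as the paper: translate connectedness of $\V_+(I)$ into $1$-connectedness of $S/I$ via the dictionary of Example \ref{punctureddu} (ii), use positive dimensionality to get $\dim S/I\geq 2$ so Theorem \ref{ptdmv} applies with $r=1$, and then run the dictionary backwards for $S/\inito(I)$. The extra remarks on nonemptiness and dimension preservation are fine but not needed beyond what the paper already uses.
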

\begin{proof}
We recall that the fact that $\V_+(I)$ is connected is equivalent to $\V_+(I)$ being $0$-connected. Moreover, Remark \ref{punctureddu}, since $\V_+(I)$ is homeomorphic to $\Proj(S/I)$, yields that $S/I$ is $1$-connected. Because $\V_+(I)$ is positive dimensional, we have $\dim S/I\geq 2$. So, Theorem \ref{ptdmv} implies that $S/\inito(I)$ is $1$-connected. At this point, we can go backwards getting the connectedness of $\V_+(\inito(I))$. 
\end{proof}

\begin{remark}\label{abu}
Once fixed an ideal $I\subseteq S$, an integer $r<\dim S/I=:d$ and a monomial order $\prec$, Corollary \ref{ptdmv1} yields
\[S/I \mbox{ $r$-connected }\implies S/\ini(I) \mbox{ $r$-connected.}\]
In general, the reverse of such an implication does not hold true. To see this, let us recall that there
exists a nonempty Zariski open set $U \subseteq \GL(V)$, where $V$ is a $\kkk$-vector space of dimension $n$, and a Borel-fixed ideal $J \subseteq S$, i.e an ideal fixed under the action of the subgroup of all upper triangular matrices $B_+(V)\subseteq \GL(V)$, such that $\ini(gI)=J$ for all $g \in U$.  The ideal $J$ is called the generic initial
ideal of $I$, for instance see the book of Eisenbud \cite[Theorem 15.18, Theorem
15.20]{eisenbud}. It is known that, since $J$ is Borel-fixed,
$\sqrt{J}= (x_1,\dots,x_c)$  where $c=\height(I)$,
see \cite[Corollary 15.25]{eisenbud}). Hence $S/J=S/\ini(gI)$ (for $g\in U$) is $d$-connected. On the other hand, for all $g\in \GL(V)$, $S/gI$ is $r$-connected if and only if $S/I$ is $r$-connected, and clearly, for any $r\geq 1$, we could have started from a homogeneous ideal not $r$-connected. 
\end{remark}

The last corollary of Theorem \ref{ptdmv} we present in this subsection actually strengthens Theorem \ref{ptdmv}. 

\begin{corollary}\label{sagbiconn}\index{initial algebra}
Let $A$ be a $\kkk$-subalgebra of $S$ and $\oo\in \NN_{\geq 1}^{n}$ be such that $\inito(A)$ is finitely generated. If $J$ is an ideal of $A$ and $r$ is an integer less than $\dim A/J$, then $\inito(A)/\inito(J)$ is $r$-connected whenever $A/J$ is $r$-connected.
\end{corollary}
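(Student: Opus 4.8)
The plan is to imitate the proof of Theorem \ref{ptdmv}, replacing the Gr\"obner deformation of an ideal of $S$ by the $\oo$-deformation of the ideal $J$ inside the homogenization of the subalgebra $A$. Let $\homo(A)\subseteq S[t]$ denote the $\oo$-homogenization of $A$, i.e. the $\kkk$-subalgebra generated by $t$ together with all the elements $\homo(f)$, $f\in A$. It is $\NN$-graded for the $\oo$-weighted degree (with $t$ of degree $1$), its degree-$0$ component is $\kkk$, and one has $\homo(A)/(t)\cong \inito(A)$ and $\homo(A)/(t-1)\cong A$. Because $\inito(A)$ is a finitely generated $\kkk$-algebra, $\homo(A)$ is finitely generated too --- it is generated over $\kkk$ by $t$ and by any lifts of a finite homogeneous generating set of $\inito(A)=\homo(A)/(t)$ --- hence Noetherian. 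For an ideal $J$ of $A$ let $\homo(J)\subseteq\homo(A)$ be its $\oo$-homogenization; then, exactly as in Proposition \ref{flatdef}, $\homo(A)/\big(\homo(J)+(t)\big)\cong\inito(A)/\inito(J)$, and, as in Lemma \ref{ralph}, one has the bijection $\wp\mapsto\homo(\wp)$ between minimal primes of $J$ and of $\homo(J)$, compatibility of $\homo$ with finite intersections of such primes, and the dimension shift $\dim\homo(A)/\homo(\qq)=\dim A/\qq+1$ for every ideal $\qq$ of $A$.

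With this in hand I would first establish the analogue of Lemma \ref{piccoloaiutantedibabbonatale}: if $A/J$ is $r$-connected then $\homo(A)/\homo(J)$ is $(r+1)$-connected. The proof copies that of Lemma \ref{piccoloaiutantedibabbonatale}. Let $\wp_1,\dots,\wp_m$ be the minimal primes of $J$, so that $\homo(\wp_1),\dots,\homo(\wp_m)$ are the minimal primes of $\homo(J)$. If $\homo(A)/\homo(J)$ were not $(r+1)$-connected, Lemma \ref{milhouse}(i) would give a partition $\{1,\dots,m\}=C\cup D$ with $C\cap D=\emptyset$ and $C,D\neq\emptyset$ such that, putting $\Jj=\cap_{i\in C}\homo(\wp_i)=\homo(\cap_{i\in C}\wp_i)$ and $\K=\cap_{j\in D}\homo(\wp_j)=\homo(\cap_{j\in D}\wp_j)$, one has $\dim\homo(A)/(\Jj+\K)\le r$. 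Since $\Jj+\K\subseteq\homo\big((\cap_{i\in C}\wp_i)+(\cap_{j\in D}\wp_j)\big)$, the dimension shift yields $\dim A/\big((\cap_{i\in C}\wp_i)+(\cap_{j\in D}\wp_j)\big)+1\le r$, so that $\dim A/\big((\cap_{i\in C}\wp_i)+(\cap_{j\in D}\wp_j)\big)<r$; by Lemma \ref{milhouse}(i) again $A/J$ would not be $r$-connected, a contradiction.

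Now suppose $A/J$ is $r$-connected and set $R:=\homo(A)/\homo(J)$. By the previous step $R$ is an $(r+1)$-connected Noetherian graded $\kkk$-algebra, and $r+1<\dim A/J+1=\dim R$ because $r<\dim A/J$ by hypothesis. Let $(\bar t)\subseteq R$ be the homogeneous principal ideal generated by the image of $t$; by \eqref{aragcd} we get $\cd(R,(\bar t))\le 1=(r+1)-r$. Therefore Theorem \ref{winchester}, applied with ``$r$'' equal to $r+1$ and ``$s$'' equal to $r$, shows that $R/(\bar t)$ is $r$-connected. Finally $R/(\bar t)\cong\homo(A)/\big(\homo(J)+(t)\big)\cong\inito(A)/\inito(J)$, so $\inito(A)/\inito(J)$ is $r$-connected, which is the assertion. (Alternatively one may choose a finite SAGBI basis of $A$, present both $A$ and $\inito(A)$ as quotients of a single polynomial ring $P$ so that $A/J$ and $\inito(A)/\inito(J)$ become $P/\widetilde J$ and $P/\init_{\eta}(\widetilde J)$ for a suitable weight $\eta$, and then invoke Theorem \ref{ptdmv} directly; the homogenization argument above simply makes the needed compatibilities more transparent.)

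The step I expect to cost real work is not the connectedness argument --- which is a word-for-word transcription of the proofs of Lemma \ref{piccoloaiutantedibabbonatale} and Theorem \ref{ptdmv} --- but the preliminary bookkeeping about $\homo(A)$: namely that $\homo(A)$ is Noetherian (this is exactly where the hypothesis that $\inito(A)$ be finitely generated is used) and that homogenization of ideals of $A$ satisfies the analogues of Lemma \ref{ralph} and Proposition \ref{flatdef} --- the correspondence of minimal primes, the dimension shift by one, compatibility with intersections, and reduction modulo $t$ and modulo $t-1$. These are formal consequences of presenting $\homo(A)$ as a flat graded one-parameter deformation of $A$ with special fibre $\inito(A)$, and their verification should be routine once the deformation has been set up.
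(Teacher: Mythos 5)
Your proof is correct, but its main route differs from the paper's, which disposes of the corollary in one line by reducing to Theorem \ref{ptdmv} via \cite[Lemma 2.2]{BC5}: a finite Sagbi basis of $A$ is used to present $A$ and $\inito(A)$ compatibly as quotients of a single polynomial ring with a suitable weight, so that the polynomial-ring statement applies verbatim --- this is exactly the alternative you sketch in your closing parenthesis. Your primary argument instead generalizes the \emph{proof} of Theorem \ref{ptdmv} rather than invoking its statement: you construct $\homo(A)$ intrinsically, observe that the hypothesis that $\inito(A)$ be finitely generated makes $\homo(A)$ a finitely generated graded $\kkk$-algebra (your graded Nakayama-type argument for this is fine), rerun the proof of Lemma \ref{piccoloaiutantedibabbonatale} for $\homo(J)\subseteq\homo(A)$, and then apply Theorem \ref{winchester} to the principal ideal $(\bar t)$. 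The bookkeeping you flag is genuinely routine and true: under the substitution $x_i\mapsto x_it^{\oo_i}$ the algebra $\homo(A)$ is identified with the Rees algebra $\bigoplus_d\{f\in A:\dego(f)\le d\}\,t^d\subseteq A[t]$ of the $\oo$-degree filtration, from which torsion-freeness over $\kkk[t]$, the fibre identifications $\homo(A)/(\homo(J)+(t))\cong\inito(A)/\inito(J)$ and $\homo(A)/(\homo(J)+(t-1))\cong A/J$, compatibility of $\homo$ with intersections and radicals, and the correspondence of minimal primes all follow formally; note also that you only ever need the inequality $\dim\homo(A)/\homo(\qq)\ge\dim A/\qq+1$ (lift a chain of primes through $\homo$ and append the irrelevant maximal ideal, which properly contains $\homo(\qq)$ because $t\notin\homo(\qq)$), not the exact dimension shift, both in the step producing $\Jj$ and $\K$ and in checking $r+1<\dim\homo(A)/\homo(J)$ before applying Theorem \ref{winchester}. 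What the paper's reduction buys is brevity, since every compatibility is outsourced to the cited lemma of Bruns and Conca; what your route buys is self-containedness and an explicit flat deformation living inside $S[t]$, at the cost of re-verifying for ideals of $A$ what the appendix on Gr\"obner deformations proves only for $S$.
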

\begin{proof}
We can reduce the situation to Theorem \ref{ptdmv} using the result \cite[Lemma 2.2]{BC5} of the notes of Bruns and Conca.
\end{proof}

\subsection{The Eisenbud-Goto conjecture}

Theorem \ref{ptdmv} gives also a new class of ideals for which the Eisenbud-Goto conjecture holds true. First of all we recall what the conjecture claims. Let $I$ be a homogeneous ideal of $S$. The {\it Hilbert series}\index{Hilbert series} of $S/I$ is:
\[\Hs_{S/I}(z):=\sum_{k\in \NN}\Hf_{S/I}(k)z^d\in \NN[[z]],\]
where $\Hf$ is the Hilbert function\index{Hilbert function}, see \ref{subcastmum}.
It is  well known that 
\[\displaystyle \Hs_{S/I}(z)=\frac{h_{S/I}(z)}{(1-z)^d}\] 
where $d$ is the dimension of $S/I$ and $h_{S/I}\in \ZZ[z]$, usually called the {\it $h$-vector}\index{h-vector@$h$-vector} of $S/I$, is such that $h_{S/I}(1)\neq 0$ (for instance see the book of Bruns and Herzog \cite[Corollary 4.1.8]{BH}). The {\it multiplicity}\index{multiplicity|(} of $S/I$ is:
\[e(S/I)=h_{S/I}(1).\]
Eisenbud and Goto, in \cite{eisenbudgoto}, conjectured an inequality involving the multiplicity, the Castelnuovo-Mumford regularity and the height of a homogeneous ideal, namely:
\index{Castelnuovo-Mumford regularity|(}
\begin{conj}(Eisenbud-Goto).\label{conjeigo}
Let $I\subseteq S$ be a homogeneous radical ideal contained in $\mm^2$, where $\mm:=(x_1,\ldots ,x_n)$. If $S/I$ is connected in codimension $1$, then
\[\reg(S/I)\leq e(S/I)-\height(I).\]
\end{conj}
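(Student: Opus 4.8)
The plan is to attack Conjecture \ref{conjeigo} by a Gr\"obner degeneration: we degenerate $I$ to a monomial ideal and reduce the inequality to a combinatorial statement, exploiting that connectedness in codimension $1$ is preserved under such degenerations (Corollary \ref{ptdmv1}).

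First I would fix a monomial order $\prec$ on $S$ and set $J := \ini(I)$, $d := \dim S/I$. Three invariants behave well along the flat family degenerating $S/I$ to $S/J$: the Hilbert function is unchanged, so $e(S/I) = e(S/J)$; the Krull dimension is unchanged, so $\height(I) = \height(J) = n-d$; and, by semicontinuity of graded Betti numbers along the Gr\"obner degeneration, $\reg(S/I) \le \reg(S/J)$. Since by hypothesis $S/I$ is connected in codimension $1$, i.e. $(d-1)$-connected, Corollary \ref{ptdmv1} (applied with $r = d-1 < d$) shows that $S/J$ is $(d-1)$-connected too; equivalently, the simplicial complex $\D(\sqrt{J})$ is pure of dimension $d-1$ and strongly connected. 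Consequently, to prove the conjecture for $I$ it suffices to prove
\[ \reg(S/J) \le e(S/J) - (n-d) \]
for the monomial ideal $J = \ini(I)$.

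I would then treat the monomial ideal $J$ in two stages. When $J$ has no embedded associated primes, the multiplicity $e(S/J)$ already dominates $e(S/\sqrt{J}) = f_{d-1}(\D(\sqrt{J}))$, and one can bound $\reg(S/J)$ through the combinatorics of the strongly connected complex $\D(\sqrt{J})$ --- using Hochster's description of $H^i_{\mm}(\Bbbk[\D(\sqrt{J})])$ by reduced homology of links, together with an induction on $d$ via links and deletions --- so as to land the inequality; this is the content of Theorem \ref{eisenbudgotonoemb}, and it already covers, e.g., the defining ideals of ASL. The remaining, and hardest, step is the general monomial ideal $J = \ini(I)$: there is no reason for $\ini(I)$ to be radical or unmixed, $\reg(S/J)$ can exceed $\reg(S/\sqrt{J})$ by a large amount, and one must show that the extra colength contributed by the primary and embedded components of $J$ is absorbed by the corresponding increase in $e(S/J)$ without inflating $\reg(S/J)$ past the bound. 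Controlling this interplay is exactly where the difficulty of Conjecture \ref{conjeigo} concentrates once the connectedness part has been handled by the deformation argument above, and it is this monomial obstruction that the reduction isolates.
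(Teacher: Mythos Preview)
The statement you are attempting is a \emph{conjecture} in the paper, not a theorem; the paper does not prove it. What the paper establishes is the special case Theorem \ref{eisenbudgotonoemb}: the inequality holds whenever some $\ini(I)$ has no embedded primes, the argument going through polarization (Lemma \ref{noembprimes}, Theorem \ref{cmpol}) and then invoking Terai's result for square-free monomial ideals. Your proposal reproduces exactly this reduction --- pass to $J=\ini(I)$, transfer connectedness in codimension $1$ via Corollary \ref{ptdmv1}, preserve $e$ and $\height$, bound $\reg$ from above --- and then, honestly, stops: you label the case of $J$ with embedded primes as ``the hardest step'' and describe it as the obstruction the reduction isolates, without resolving it. That is not a proof of the conjecture; it is a restatement of precisely where the paper itself leaves off.

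For context beyond the paper: the Eisenbud--Goto conjecture is now known to be \emph{false} in general. McCullough and Peeva (\emph{J.\ Amer.\ Math.\ Soc.} \textbf{31} (2018), 473--496) constructed homogeneous prime ideals $I\subseteq \mm^2$ --- radical, irreducible, hence connected in codimension $1$ --- violating the inequality. Since your reduction to the monomial case is sound, the failure must occur exactly at the step you flagged: for the initial ideals of these counterexamples, embedded components inflate $\reg(S/J)$ beyond $e(S/J)-\height(J)$, and no argument can close that gap.
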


\begin{remark}
Conjecture \ref{conjeigo} can be easily shown when $S/I$ is Cohen-Macaulay. For an account of other known cases see the book of Eisenbud \cite{eisenbud1}.
\end{remark}

We recall that, for a monomial ideal $I\subseteq S$, we denote $\widetilde{I}\subseteq \Spol$ the polarization of $I$  (see \ref{polarization}).\index{polarization}

\begin{lemma}\label{noembprimes}
Let $I$ be a monomial ideal of $S$ with no embedded prime ideals. Then, fixed $c>0$, $S/I$ is connected in codimension $c$ if and only if $\Spol/\widetilde{I}$ is connected in codimension $c$.
\end{lemma}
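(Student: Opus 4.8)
\emph{Overall approach.} The plan is to compare $S/I$ with $\Spol/\widetilde I$ through the ``polarization regular sequence'', treating the two implications separately; only one of them will need the hypothesis on embedded primes. Write $\Spol=\kkk[x_{ij}]$. Recall from \ref{polarization} that $\widetilde I$ is a squarefree monomial ideal, that the degree-one linear forms $\theta_{ij}:=x_{ij}-x_{i1}$ ($j\ge 2$) make up a regular sequence $\theta$ of length $\delta:=\dim\Spol-n$ on $\Spol/\widetilde I$, and that $\Spol/(\widetilde I+\theta\Spol)\cong S/I$; in particular $\dim\Spol/\widetilde I=\dim S/I+\delta$. Recall also (Lemma \ref{milhouse}) that a ring is connected in codimension $c$ precisely when it is $(\dim{-}c)$-connected, so what I must prove is that $S/I$ is $(\dim S/I-c)$-connected if and only if $\Spol/\widetilde I$ is $(\dim S/I+\delta-c)$-connected.

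\emph{The implication ``$\Spol/\widetilde I$ connected in codimension $c$ $\Rightarrow$ $S/I$ connected in codimension $c$''} does not use the hypothesis. Cutting $\Spol/\widetilde I$ down by the forms $\theta_{ij}$ one at a time produces standard graded $\kkk$-algebras $R_0=\Spol/\widetilde I, R_1,\dots,R_\delta=S/I$ with $\dim R_{k+1}=\dim R_k-1$, since the $\theta_{ij}$ are non-zerodivisors. As $(\theta_{k+1})$ is principal, $\cd(R_k,(\theta_{k+1}))\le 1$ by \eqref{aragcd}; whenever $0\le\dim R_k-c<\dim R_k$ (true when $c\ge 1$ and $\dim R_k\ge c$; if $\dim R_k<c$ the desired conclusion is vacuous), Theorem \ref{winchester}, applied with $r=\dim R_k-c$, $s=r-1$ and $\aa=(\theta_{k+1})$, gives that $R_{k+1}$ is $(\dim R_k-c-1)=(\dim R_{k+1}-c)$-connected once $R_k$ is $(\dim R_k-c)$-connected. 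Iterating downward from $R_0$ yields that $S/I=R_\delta$ is $(\dim S/I-c)$-connected.

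\emph{The converse uses the hypothesis.} Put $s:=\dim S/I-c$. Let $\qq_1,\dots,\qq_N$ be the irreducible components of $I$, with $\qq_j=(x_i^{c_{ij}}:i\in A_j)$ and $\sqrt{\qq_j}=\wp_{\sigma(j)}$, where $\wp_1,\dots,\wp_m$ are the minimal primes of $I$. The absence of embedded primes means the radicals $\sqrt{\qq_j}$ are exactly the $\wp_l$, so no strict inclusion $A_{j'}\subsetneq A_j$ can occur. Since polarization commutes with these intersections, $\widetilde I=\bigcap_j\widetilde{\qq_j}$, and a direct inspection of the minimal primes of the squarefree ideals $\widetilde{\qq_j}$ (using the no-strict-inclusion property) gives $\Min(\widetilde I)=\{(x_{i,d_i}:i\in A_j)\ :\ 1\le j\le N,\ 1\le d_i\le c_{ij}\ \forall i\}$. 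Hence the specialization $\Spol\to S$ sends each such prime to $(x_i:i\in A_j)=\wp_{\sigma(j)}$, defining a surjection $\pi\colon\Min(\widetilde I)\twoheadrightarrow\Min(I)$ whose fiber over $\wp_l$ consists of the primes $(x_{i,d_i}:i\in A_l)$ with $d$ in the union $D_l$ of the boxes $\prod_{i\in A_l}[c_{ij}]$ over $\sigma(j)=l$; note $D_l$ contains the all-ones vector and is connected under single-coordinate moves. Also, because $S/I$ is $s$-connected and $c\ge 1$, every $\wp_l$ satisfies $\dim S/\wp_l\ge s+1$ (if $|\Min(I)|=1$ this is $\dim S/I\ge s+1$, true since $c\ge 1$; otherwise apply $s$-connectedness to the partition $\{\wp_l\}\mid\Min(I)\setminus\{\wp_l\}$ and use that $\V(\wp_l)$ is irreducible). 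Now let $\Min(\widetilde I)=\mathcal A\sqcup\mathcal B$ be any partition into nonempty sets and put $\widetilde{\mathfrak a}=\bigcap_{\mathfrak P\in\mathcal A}\mathfrak P$, $\widetilde{\mathfrak b}=\bigcap_{\mathfrak P\in\mathcal B}\mathfrak P$; I must show $\dim\Spol/(\widetilde{\mathfrak a}+\widetilde{\mathfrak b})\ge s+\delta$. If some fiber $\pi^{-1}(\wp_l)$ meets both blocks, connectedness of $D_l$ supplies $\mathfrak P\in\mathcal A$, $\mathfrak P'\in\mathcal B$ in that fiber differing in a single coordinate, so $\mathfrak P+\mathfrak P'$ is a monomial prime of height $\height(\wp_l)+1$; from $\widetilde{\mathfrak a}+\widetilde{\mathfrak b}\subseteq\mathfrak P+\mathfrak P'$ one gets $\dim\Spol/(\widetilde{\mathfrak a}+\widetilde{\mathfrak b})\ge\dim\Spol-\height(\wp_l)-1=\delta+\dim S/\wp_l-1\ge\delta+s$. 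Otherwise each fiber lies in a single block, so $\pi$ induces a partition $\Min(I)=\pi(\mathcal A)\sqcup\pi(\mathcal B)$ into nonempty sets; picking $\wp_l\in\pi(\mathcal A)$, $\wp_{l'}\in\pi(\mathcal B)$ with $\dim S/(\wp_l+\wp_{l'})\ge s$ (such a pair exists by $s$-connectedness of $S/I$), the all-ones primes $(x_{i1}:i\in A_l)\in\mathcal A$ and $(x_{i1}:i\in A_{l'})\in\mathcal B$ have sum $(x_{i1}:i\in A_l\cup A_{l'})$, whence $\dim\Spol/(\widetilde{\mathfrak a}+\widetilde{\mathfrak b})\ge\dim\Spol-\height(\wp_l+\wp_{l'})=\delta+\dim S/(\wp_l+\wp_{l'})\ge\delta+s$. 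In all cases $\Spol/\widetilde I$ is $(s+\delta)$-connected.

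\emph{Main obstacle.} The real work is the converse implication, and in particular pinning down where the hypothesis enters: absence of embedded primes is exactly what prevents an irreducible component $\wp_l$ of $I$ from ``breaking up'' under polarization into incomparable minimal primes of $\widetilde I$ of differing heights --- a phenomenon which, as the example $I=(x^2,xy)\subseteq\kkk[x,y]$ already shows, destroys the equivalence in general. Establishing the displayed description of $\Min(\widetilde I)$ and then extracting the all-ones-vertex mechanism from it is the technical heart of the proof; once that is in place, the remainder is bookkeeping with dimensions and heights.
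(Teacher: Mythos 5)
Your proof is correct, but it is organized rather differently from the paper's, and the comparison is instructive. For the hard direction ($S/I$ connected in codimension $c$ $\Rightarrow$ $\Spol/\widetilde{I}$ connected in codimension $c$) both arguments rest on the same structural facts: absence of embedded primes forces the minimal primes of $\widetilde{I}$ to sit fiberwise over those of $I$, each fiber contains the ``all-ones'' prime, and one connects two arbitrary minimal primes of $\widetilde{I}$ by first moving inside a fiber and then transporting a connecting sequence for $I$ along the all-ones primes, with the same height bookkeeping (your bound $\height(\wp_l)+1\le \height(I)+c$ is exactly the paper's $|F|+1\le h+c$). The difference is how the within-fiber connectivity is obtained: the paper works with the primary decomposition $I=\bigcap_{F\in\F(\D)}I_F$, observes that each $S/I_F$ is Cohen--Macaulay, invokes Theorem \ref{cmpol} and Proposition \ref{barney} to conclude that $\Spol/\widetilde{I_F}$ is connected in codimension $1$, and then uses Lemma \ref{hilbert}; you instead pass to the irreducible decomposition, describe $\Min(\widetilde{I})$ explicitly as a union of boxes of exponent vectors, and connect by single-coordinate moves toward the all-ones vector. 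Your route is more elementary (no appeal to Cohen--Macaulayness of polarizations), at the price of the explicit combinatorial description, which you do justify correctly using the no-strict-inclusion property. For the other direction the two proofs diverge completely: the paper simply runs the same combinatorial argument ``backwards'' (so still inside the no-embedded-primes setting), whereas you specialize along the depolarization linear forms and apply Theorem \ref{winchester} with $\cd\le 1$ at each step; this buys the genuinely stronger observation that this implication holds for an arbitrary monomial ideal, with the hypothesis entering only in the converse, as your example $(x^2,xy)$ shows.

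Two small points to tidy up. First, the facts that the forms $x_{ij}-x_{i1}$ form a regular sequence on $\Spol/\widetilde{I}$ and that the quotient is $S/I$ are standard, but they are not actually proved in the paper's Appendix \ref{polarization} (which only records Theorem \ref{cmpol}, the preservation of height, and the Hilbert series relation), so you should either cite them properly or derive them; note also that the equality $\dim \Spol/\widetilde{I}=\dim S/I+\delta$ can be obtained directly from Remark \ref{dimunderpol} without the regular sequence. Second, your reduction of $(s+\delta)$-connectedness to the statement that every partition of $\Min(\widetilde{I})$ gives an intersection of dimension at least $s+\delta$ uses the converse of Lemma \ref{milhouse} (i); it follows in two lines from the ``moreover'' part of that lemma, or from Lemma \ref{hilbert}, but it deserves an explicit word. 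Neither point affects the correctness of the argument.
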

\begin{proof}
Since $I$ has not embedded prime ideals, it has a unique primary decomposition, which is of the form:
\[I=\bigcap_{F\in \FD} I_F,\]
where $\D=\D(\sqrt{I})$ and for any facet $F\in \FD$, the ideal $I_F$ is a primary monomial ideal with $\sqrt{I_F}=\wp_F=(x_i:i\in F)$. Because polarization commutes with intersections, see \eqref{intersectionpol1}, we have that: 
\[\widetilde{I}=\bigcap_{F\in \FD}\widetilde{I_F}\subseteq \Spol.\]
Being each $I_F$ a primary monomial ideal, it turns out that $S/I_F$ is Cohen-Macaulay for any $F\in \FD$. Therefore $\Spol/\widetilde{I_F}$ is Cohen-Macaulay for all $F\in \FD$ by Theorem \ref{cmpol}. In particular, for all facets $F\in \FD$, the ring $\Spol/\widetilde{I_F}$ is connected in codimension $1$ by Proposition \ref{barney}. So, using Lemma \ref{hilbert}, for any two minimal prime ideals $\wp$ and $\wp'$ of $\widetilde{I_F}$, there is a sequence $\wp=\wp_0,\ldots ,\wp_s=\wp'$ of minimal prime ideals of $\widetilde{I_F}$ such that $\height(\wp_i+\wp_{i-1})\leq \height(\widetilde{I_F})+1=\height(I_F)+1=|F|+1$ for any $i=1,\ldots ,s$. Furthermore, one can easily show that the prime ideal
\[\widetilde{\wp_F}=(x_{i_1,1},x_{i_2,1},\ldots ,x_{i_{|F|},1})\subseteq \Spol ,\]
where $F=\{i_1,\ldots ,i_{|F|}\}$, is a minimal prime ideal of $\widetilde{I_F}$, and therefore of $\widetilde{I}$. Set $h:=\height(I)$. Using Lemma \ref{milhouse} (ii) and the fact that $c>0$, one can easily show:
\[h\leq |F|<h+c.\]
Suppose that $S/I$ is connected in codimension $c$. Let us consider two minimal prime ideals $\wp$ and $\wp'$ of $\widetilde{I}$. To show that $\Spol/\widetilde{I}$ is connected in codimension $c$, by Lemma \ref{hilbert} we have to exhibit a sequence $\wp=\wp_0,\ldots ,\wp_s=\wp'$ of minimal prime ideals of $\widetilde{I}$ such that $\height(\wp_i+\wp_{i-1})\leq h+c$ for any $i=1,\ldots ,s$. Assume that $\wp$ is a minimal prime of $\widetilde{I_F}$ and that $\wp'$ is a minimal prime of $\widetilde{I_G}$, where $F,G\in \FD$.
First of all, from what said above, there are two sequences $\wp=\wp_0,\ldots ,\wp_t=\widetilde{\wp_F}$ and $\wp'=\wp_0',\ldots ,\wp_q'=\widetilde{\wp_G}$ such that $\wp_i\in \Min(\widetilde{I_F})$, \ $\wp_j'\in \Min(\widetilde{I_G})$, \ $\height(\wp_i+\wp_{i-1})\leq |F|+1\leq h+c$ and $\height(\wp_j'+\wp_{j-1}')\leq |G|+1\leq h+c$. Then, since $S/I$ is connected in codimension $c$, there exists a sequence $F=F_0,\ldots ,F_s=G$ of facets of $\D$ such that $\height(\wp_{F_i}+\wp_{F_{i-1}})\leq h+c$. Eventually, the desired sequence connecting $\wp$ with $\wp'$ is:
\[\wp=\wp_0,\wp_1,\ldots ,\wp_t=\widetilde{\wp_F}=\widetilde{\wp_{F_0}},\widetilde{\wp_{F_1}},\ldots ,\widetilde{\wp_{F_s}}=\widetilde{\wp_G}=\wp_q',\wp_{q-1}',\ldots ,\wp_0'=\wp'.\]
For the converse, the same argument works backwards.
\end{proof}

\begin{thm}\label{eisenbudgotonoemb}
Let $I\subseteq S$ be a homogeneous radical ideal contained in $\mm^2$, and suppose that $S/I$ is connected in codimension $1$. If there exists a monomial order $\prec$ on $S$ such that $\ini(I)$ has no embedded prime ideals, then
\[\reg(S/I)\leq e(S/I)-\height(I).\]
\end{thm}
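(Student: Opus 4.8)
The plan is to reduce the Eisenbud–Goto inequality for $S/I$ to the same inequality for the quotient by a monomial ideal, where the conjecture is known, using two invariants that behave well under Gröbner deformation — namely multiplicity, which is preserved exactly, and regularity, which can only go up — together with the connectedness result of the previous subsection. First I would pass to an initial ideal. Fix a monomial order $\prec$ with $J:=\ini(I)$ having no embedded primes. Standard facts about Gröbner deformations give $e(S/J)=e(S/I)$ and $\height(J)=\height(I)$, since the Hilbert function is unchanged and dimension and the $h$-vector are read off the Hilbert series. Moreover $\reg(S/I)\leq\reg(S/J)$ by the semicontinuity of Betti numbers under initial ideals (Gröbner degeneration). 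Hence it suffices to prove
\[
\reg(S/J)\leq e(S/J)-\height(J).
\]

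Next I would set up the monomial situation so as to invoke the Cohen–Macaulay case of the conjecture via polarization. Since $J$ is a monomial ideal without embedded primes, it has a unique irredundant primary decomposition $J=\bigcap_{F\in\FD}J_F$ with $\sqrt{J_F}=\wp_F$, where $\Delta=\Delta(\sqrt{J})$; here each $J_F$ is $\wp_F$-primary. Polarizing, $\widetilde{J}=\bigcap_{F\in\FD}\widetilde{J_F}\subseteq\Spol$, and polarization preserves regularity, height and multiplicity (and the graded Betti numbers), so the inequality for $S/J$ is equivalent to the inequality for $\Spol/\widetilde{J}$. The point of polarizing is that $\widetilde{J}$ is a square-free monomial ideal, i.e.\ a Stanley–Reisner ideal, whose associated simplicial complex I will call $\Gamma$. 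So we are reduced to showing $\reg(\kkk[\Gamma])\leq e(\kkk[\Gamma])-\height(\widetilde{J})$ for this particular $\Gamma$.

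Now I would bring in connectedness. By hypothesis $S/I$ is connected in codimension $1$, hence so is $S/J$ by Corollary \ref{ptdmv1} (taking $r=\dim S/I-1$), and then $\Spol/\widetilde{J}$ is connected in codimension $1$ by Lemma \ref{noembprimes}. Thus $\Gamma$ is a connected-in-codimension-$1$ simplicial complex, equivalently $\kkk[\Gamma]$ satisfies the hypothesis of the Eisenbud–Goto conjecture. One checks also that $\widetilde{J}\subseteq\mm^2$ (polarization does not produce linear forms out of an ideal contained in $\mm^2$). The final step is to run the combinatorial argument that establishes Eisenbud–Goto for Stanley–Reisner ideals of complexes that are connected in codimension $1$: one uses that such a complex, being strongly connected, can be built up one facet at a time along ridges, and at each step the exact (Mayer–Vietoris-type) sequence relating $\kkk[\Gamma']$, $\kkk[\Gamma'']$ and $\kkk[\Gamma'\cup\Gamma'']$ controls regularity, while the multiplicity simply counts facets of top dimension; an induction on the number of facets then yields $\reg(\kkk[\Gamma])\leq e(\kkk[\Gamma])-\height(\widetilde{J})$, using the Cohen–Macaulay case (Proposition \ref{barney} and the Remark after Conjecture \ref{conjeigo}) as the base of the induction. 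Tracing the invariants back through polarization and Gröbner degeneration gives $\reg(S/I)\leq e(S/I)-\height(I)$.

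The main obstacle I anticipate is the last step: making the induction on facets of a strongly connected complex genuinely work requires controlling $\reg$ along the short exact sequence $0\to\kkk[\Gamma'\cup\Gamma'']\to\kkk[\Gamma']\oplus\kkk[\Gamma'']\to\kkk[\Gamma'\cap\Gamma'']\to 0$, and the intersection $\Gamma'\cap\Gamma''$ need not itself be connected in codimension $1$, so the inductive hypothesis does not apply to it directly; one has to argue that the regularity of the intersection is small enough (it has strictly smaller dimension, and its multiplicity is at most that of the ridge glued along) and that the multiplicities add correctly. Handling this bookkeeping — essentially the content of the Cohen–Macaulay-reduction argument that the Remark after Conjecture \ref{conjeigo} alludes to, now upgraded from Cohen–Macaulay to connected in codimension $1$ — is where the real work lies; everything before it is a transfer of invariants along well-behaved degenerations.
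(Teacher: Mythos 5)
Your reduction steps coincide exactly with the paper's: pass to $J=\ini(I)$ using the standard facts $\reg(S/I)\leq\reg(S/J)$, $\height(I)=\height(J)$, $e(S/I)=e(S/J)$, transfer connectedness in codimension $1$ via Corollary \ref{ptdmv1}, then use the absence of embedded primes to polarize, with Lemma \ref{noembprimes} preserving connectedness in codimension $1$ and Theorem \ref{cmpol} together with Remark \ref{dimunderpol} preserving regularity, height and multiplicity. Up to that point your argument is correct and is the paper's argument.

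The gap is in your last step. The paper does not reprove the Eisenbud--Goto inequality for square-free monomial ideals: it invokes Terai's theorem \cite[Theorem 0.2]{terai}, which establishes the conjecture for all Stanley--Reisner ideals, and that citation is precisely what closes the proof. You instead propose to prove the square-free case ab initio by building the complex facet by facet along a strongly connected sequence and controlling $\reg$ through the sequence $0\to\kkk[\Gamma'\cup\Gamma'']\to\kkk[\Gamma']\oplus\kkk[\Gamma'']\to\kkk[\Gamma'\cap\Gamma'']\to 0$. As you yourself observe, this induction does not go through as described: the intersection $\Gamma'\cap\Gamma''$ need not be connected in codimension $1$ (nor pure), so the inductive hypothesis is unavailable for it, and the regularity of $\kkk[\Gamma'\cap\Gamma'']$ is not bounded merely by its dimension being smaller; moreover the multiplicity bookkeeping (``$e$ counts top-dimensional facets'') does not interact with the regularity bound in the additive way the sketch needs. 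This is not a matter of routine bookkeeping over the Cohen--Macaulay case alluded to after Conjecture \ref{conjeigo}; it is the substance of Terai's result, whose proof goes through quite different (Hochster-formula/Alexander-duality type) estimates. So either quote Terai's theorem at this point, as the paper does, or supply a genuinely complete argument for the square-free, connected-in-codimension-$1$ case; as written, the proposal leaves the decisive inequality $\reg(\Spol/\widetilde{J})\leq e(\Spol/\widetilde{J})-\height(\widetilde{J})$ unproved.
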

\begin{proof}
Let $\prec$ be a monomial order such that $J:=\ini(I)$ has no embedded prime ideals. The following are standard facts: $\reg(S/I)\leq \reg(S/J)$ (\cite[Corollary 3.5]{BC5}), $\height(I)=\height(J)$ (\cite[Theorem 3.9 (a)]{BC5}) and $e(S/I)=e(S/J)$ (\cite[Proposition 1.4 (e)]{BC5}). Also, obviously we have that $J\subseteq \mm^2$ as well as $I$. At last, Corollary \ref{ptdmv1} implies that $S/J$ is connected in codimension $1$. 

Because $S/J$ has no embedded prime ideals, Lemma \ref{noembprimes} implies that $\Spol/\widetilde{J}$ is connected in codimension $1$. From Theorem \ref{cmpol}, we have $\reg(S/J)=\reg(\Spol/\widetilde{J})$ (recall the interpretation of the Castelnuovo-Mumford regularity in terms of the graded Betti numbers \eqref{regularityfreeres}). Moreover, using Remark \ref{dimunderpol}, we get $\height(J)=\height(\widetilde{J})$ and $e(S/J)=e(\Spol/\widetilde{J})$. Besides, obviously $\widetilde{J}\subseteq \M^2$, where by $\M$ we denote the maximal irrelevant ideal of $\Spol$. Nevertheless, Eisenbud-Goto conjecture has been showed for square-free monomial ideals by Terai in \cite[Theorem 0.2]{terai}, so that:
\[\reg(\Spol/\widetilde{J})\leq e(\Spol/\widetilde{J})-\height(\widetilde{J}).\]

Since, from what said above, $\reg(S/I)\leq \reg(\Spol/\widetilde{J})$, $\height(I)=\height(\widetilde{J})$ and $e(S/I)=e(\Spol/\widetilde{J})$, we eventually get the conclusion.
\end{proof}

In view of Theorem \ref{eisenbudgotonoemb}, it would be interesting to discover {\it classes of ideals for which there exists a monomial order such that the initial ideal with respect to it has no embedded prime ideals}. Actually, we already have for free a class like that: Namely, the homogeneous ideals defining an Algebra with Straightening Laws (ASL for short)\index{Algebra with Straightening Laws}\index{ASL|see{Algebra with Straightening Laws}} over $\kkk$. For the convenience of the reader we give the definition here: Let $\sqsubset$ be a partial order on $[n]$, and let us denote by $\Pi$ the poset $([n],\sqsubset)$. To $\Pi$ is associated a Stanley-Reisner ideal, namely the one of the order complex $\D(\Pi)$ whose faces are the chains of $\Pi$:
\[ I_{\Pi} := I_{\D(\Pi)} = ( x_ix_j~:~i \textrm{~and~} j \textrm{~are incomparable elements of~}\Pi).\]
Given a homogeneous ideal $I\subseteq S$, the standard graded algebra $R:=S/I$ is called a {\it ASL} on $\Pi$ over $\kkk$ if:
\begin{compactitem}
\item[(i)] The residue classes of the monomials not in $I_{\Pi}$ are linearly independent in $R$.
\item[(ii)] For every $i,j \in \Pi$ such that $i$ and $j$ are incomparable the ideal $I$ contains a polynomial of the form
\[ x_ix_j - \sum\lambda x_hx_k\]
with $\lambda \in \kkk$, $h,k \in \Pi$, $h\sqsubseteq k$, $h\sqsubset i$ and $h\sqsubset j$. The above sum is allowed to run on the empty-set.
\end{compactitem}
The polynomials in (ii) give a way of rewriting in $R$ the product of two incomparable elements. These relations are called the \emph{straightening relations}\index{straightening relations}.
Let $\prec$ be a degrevlex monomial order on a linear extension of $\sqsubset$. Then the polynomials in (ii) form a Gr\"obner basis of $I$ and $\ini(I)=I_{\Pi}$. Particularly, being square-free, $\ini(I)$ has no embedded prime ideals. So, we have the following:

\begin{corollary}\label{eisenbudgotoasl}
Let $I\subseteq S$ be a homogeneous ideal defining an ASL. Then, the Eisenbud-Goto conjecture \ref{conjeigo} holds true for $I$. That is, if $S/I$ is connected in codimension 1, then
\[\reg(S/I)\leq e(S/I)-\height(I).\]
\end{corollary}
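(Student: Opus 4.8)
The plan is to deduce the statement directly from Theorem \ref{eisenbudgotonoemb}; concretely, the whole task is to exhibit a monomial order on $S$ for which the initial ideal of $I$ is square-free. First I would fix a linear extension of $\sqsubset$, i.e. a total order on $[n]$ refining it, and let $\prec$ be the degrevlex monomial order on $S$ attached to this total order. The crucial input, which I would quote from the theory of algebras with straightening laws (De Concini, Eisenbud and Procesi \cite{DEP2}, or Bruns and Vetter \cite{BrVe}), is that the straightening relations appearing in (ii) constitute a Gr\"obner basis of $I$ with respect to $\prec$ and that $\ini(I)=I_\Pi$, the Stanley-Reisner ideal of the order complex $\D(\Pi)$. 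Since $I_\Pi$ is square-free, it is a radical monomial ideal whose associated primes are all minimal, so $\ini(I)$ has no embedded prime ideals.

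Next I would verify the two hypotheses of Theorem \ref{eisenbudgotonoemb} that are not assumed in the statement of the corollary. As the straightening relations are homogeneous of degree $2$ and (being a Gr\"obner basis) generate $I$, we get at once $I\subseteq \mm^2$. For the radicality of $I$ I would invoke the standard fact that an ideal whose initial ideal is radical is itself radical: from $I\subseteq\sqrt I$ one gets $\ini(I)\subseteq\ini(\sqrt I)\subseteq\sqrt{\ini(I)}=\ini(I)$, hence $\ini(I)=\ini(\sqrt I)$, and since then $S/I$ and $S/\sqrt I$ share the same Hilbert function while $I\subseteq\sqrt I$, we conclude $I=\sqrt I$. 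Thus $I$ is a homogeneous radical ideal contained in $\mm^2$.

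At this point, assuming as in the statement that $S/I$ is connected in codimension $1$, Theorem \ref{eisenbudgotonoemb} applies with the order $\prec$ built above and gives $\reg(S/I)\leq e(S/I)-\height(I)$, which is exactly the Eisenbud-Goto inequality of Conjecture \ref{conjeigo} for $I$. The single external ingredient, and the step I would regard as the main obstacle if one wanted a self-contained proof, is the Gr\"obner basis property of the straightening relations --- equivalently the identity $\ini(I)=I_\Pi$; the remaining points (square-freeness forcing the absence of embedded primes, radicality, containment in $\mm^2$) are routine, and the inequality itself is already packaged in Theorem \ref{eisenbudgotonoemb}.
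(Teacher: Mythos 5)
Your proof is correct and follows essentially the same route as the paper: choose a degrevlex order on a linear extension of $\sqsubset$, use the ASL fact that $\ini(I)=I_{\Pi}$ is square-free (hence has no embedded primes), and invoke Theorem \ref{eisenbudgotonoemb}. You in fact go a bit further than the paper's two-line proof by explicitly checking the hypotheses $I\subseteq\mm^2$ and $I=\sqrt{I}$ (via the degree-$2$ straightening relations and the standard "radical initial ideal implies radical ideal" argument), which the paper leaves implicit.
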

\begin{proof}
Let $\prec$ be a degrevlex monomial order on a linear extension of the partial order given by the poset underlining the structure of ASL of $S/I$. Since $\ini(I)$ is square-free, Theorem \ref{eisenbudgotonoemb} implies the thesis.
\end{proof}

\index{Castelnuovo-Mumford regularity|)}\index{multiplicity|)}

%

\subsection{The initial ideal of a Cohen-Macaulay ideal}
\label{subchap2.1}

Combining Theorem \ref{ptdmv} with Proposition \ref{barney} we immediately get the following nice consequence:

\begin{thm}\label{skinner}
Let $I$ be a homogeneous ideal of $S$, and $\omega \in \NN_{\geq 1}^n$. If $\depth(S/I)\geq r+1$, then $S/\inito(\oo)$ is $r$-connected. In particular, if $S/I$ is Cohen-Macaulay, then $S/\init_{\omega}(I)$ is connected in codimension 1.
\end{thm}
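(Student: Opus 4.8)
The plan is to read the statement off as a direct corollary, by feeding the hypothesis of Theorem \ref{ptdmv} from Proposition \ref{barney}, with only a couple of routine numerical checks in between.

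First I would use Proposition \ref{barney} to convert the depth hypothesis into a connectedness hypothesis: from $\depth(S/I)\geq r+1$ it follows that $S/I$ is $r$-connected. To be able to invoke Theorem \ref{ptdmv} I also need $r<\dim(S/I)$, and this comes for free, since for a finitely generated module one always has $\depth\leq\dim$, so $r+1\leq\depth(S/I)\leq\dim(S/I)$ and hence $r<\dim(S/I)$ (the degenerate case $r<0$ being vacuous anyway). Then Theorem \ref{ptdmv}, applied to the ideal $I$ and the weight $\omega$, gives exactly that $S/\inito(I)$ is $r$-connected, which is the first assertion.

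For the Cohen-Macaulay consequence I would set $d:=\dim(S/I)$, so that $\depth(S/I)=d$, and then apply the first part with $r:=d-1$: the hypothesis $\depth(S/I)=d=(d-1)+1$ is satisfied, so $S/\inito(I)$ is $(d-1)$-connected. It then remains to note that $\dim(S/\inito(I))=d$ as well, because a weight degeneration preserves the Hilbert function and hence the Krull dimension; thus $S/\inito(I)$ is $(d-1)$-connected and $d$-dimensional, which is precisely the assertion that it is connected in codimension $1$.

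I do not anticipate a genuine obstacle: the real content sits in Theorem \ref{ptdmv}, which itself transports $r$-connectedness across the flat degeneration $S[t]/(\homo(I)+(t))\cong S/\inito(I)$ via the Mayer--Vietoris argument of Theorem \ref{winchester}, and in the Hartshorne-type bound of Proposition \ref{barney}. The only points needing a line of justification are the inequality $r<\dim(S/I)$ and the dimension equality $\dim(S/\inito(I))=\dim(S/I)$, both of which are standard; everything else is bookkeeping with the definition of $r$-connectedness.
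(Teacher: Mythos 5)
Your proposal is correct and follows essentially the same route as the paper, which obtains the theorem precisely by combining Proposition \ref{barney} (depth forces $r$-connectedness) with Theorem \ref{ptdmv} (Gr\"obner deformation preserves $r$-connectedness); your added checks that $r<\dim(S/I)$ via $\depth\leq\dim$ and that $\dim(S/\inito(I))=\dim(S/I)$ are exactly the routine verifications the paper leaves implicit.
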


At first blush, the reader might ask whether Theorem \ref{skinner} could be strengthened saying that ``$S/\sqrt{\inito(I)}$ is Cohen-Macaulay whenever $S/I$ is Cohen-Macaulay". However, this is far to be true, as we are going to show in the following example due to Conca.

\begin{example}\label{esempioaldo}
Consider the graded ideal:
\[I = (x_1x_5+x_2x_6+x_4^2, \ x_1x_4+x_3^2-x_4x_5, \ x_1^2+x_1x_2+x_2x_5) \subseteq \mathbb{C}[x_1, \ldots ,x_6]=S.\]
Using some computer algebra system, for instance Cocoa \cite{cocoa}, one can verify that $I$ is a prime ideal which is a complete intersection of height $3$. In particular, $S/I$ is a Cohen-Macaulay domain of dimension 3. Moreover, $S/I$ is normal. At last, the radical of the initial ideal of $I$ with respect to the lexicographical monomial order is:
\[\sqrt{\init(I)} = (x_1,x_2,x_3)\cap (x_1,x_3,x_6)\cap (x_1,x_2,x_5)\cap (x_1,x_4,x_5).\]
In accord with Theorem \ref{skinner}, $R:=S/\sqrt{\init(I)}$ is connected in codimension 1. However, $R$ is not Cohen-Macaulay. Were it,  $R_{\wp}$ would be Cohen-Macaulay, where $\wp$ is the homomorphic image of the prime ideal $(x_1,x_3,x_4,x_5,x_6)$. In particular, $R_{\wp}$ would be $1$-connected by Propsition \ref{barney}. The minimal prime ideals of $R_{\wp}$ are the homomorphic image of those of $R$ which are contained in $\wp$, namely $(x_1,x_3,x_6)$ and $(x_1,x_4,x_5)$. Since their sum is $\wp$, the ring $R_{\wp}$ cannot be $1$-connected.

\index{cohomological dimension}
This example shows also as the cohomological dimension of an ideal, in general, cannot be compared with the one of its initial ideal. In fact $\cd(S,I)=3$ because $I$ is a complete intersection of height $3$ (see \eqref{aragcd}). But $\cd(S,\init(I))=\operatorname{projdim}(R)>3$, where the equality follows by a result of Lyubeznik in \cite{Ly} (it is reported in Theorem \ref{lyubmon}). On the other hand, examples of ideals $J\subseteq S$ such that $\cd(S/J)>\cd(S/\init(J))$ can be easily produced following the guideline of Remark \ref{abu}.
\end{example}

In Example \ref{esempioaldo}, the dimension of $S/I$ is $3$. Moreover, starting from it, we can construct other similar example for any dimension greater than or equal to $3$. Oppositely, such an example cannot exist in the dimension $2$-case by the following result.

\begin{prop}\label{dim2squarefreein}
Let $I\subseteq S$ be a homogeneous ideal such that $\V_+(I)\subseteq \PP^{n-1}$ is a positive dimensional connected projective scheme and $\prec$ a monomial order. Then
\[\depth(S/\sqrt{\ini(I)})\geq 2.\]
In particular, $\depth(S/\sqrt{\ini(I)})\geq 2$ whenever $\depth(S/I)\geq 2$.
\end{prop}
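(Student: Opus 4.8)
The statement asserts that if $\V_+(I)\subseteq\PP^{n-1}$ is a positive dimensional connected projective scheme, then $\depth(S/\sqrt{\ini(I)})\geq 2$. The natural strategy is to reduce to a depth-two criterion phrased in terms of local cohomology, namely \eqref{depthloccoh}: we must show $H^0_{\mm}(S/\sqrt{\ini(I)})=0$ and $H^1_{\mm}(S/\sqrt{\ini(I)})=0$, where $\mm=(x_1,\dots,x_n)$. Equivalently, writing $A:=S/\sqrt{\ini(I)}$ and $Y:=\V_+(\sqrt{\ini(I)})=\Proj(A)$, we want $A$ to be saturated (no $H^0_\mm$) and $H^1_\mm(A)=0$; by the standard exact sequence relating local cohomology of the cone with sheaf cohomology on $Y$ (the analogue of \eqref{shafloc3}), this amounts to saying that $A$ coincides with its saturation $\bigoplus_{d}H^0(Y,\O_Y(d))$ and that $H^0(Y,\O_Y)=\kkk$, i.e. $Y$ is connected and reduced enough that global functions are constants.

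First I would dispose of $H^0_\mm$: since $\ini(I)$ is a monomial ideal, $\sqrt{\ini(I)}$ is a squarefree monomial (Stanley–Reisner) ideal, so $A$ is reduced and has no embedded primes, and a Stanley–Reisner ring of a simplicial complex of dimension $\geq 1$ has positive depth (no associated primes of $\mm$ unless $\D$ has an isolated vertex, which is ruled out by $\dim A=\dim S/I\geq 2$, using that $\dim S/\ini(I)=\dim S/I$); hence $H^0_\mm(A)=0$. For the vanishing of $H^1_\mm(A)$: by Theorem \ref{skinner} / Theorem \ref{ptdmv} (connectedness is preserved under Gröbner deformation), since $\V_+(I)$ connected means $S/I$ is $1$-connected and $\dim S/I\geq 2$, we get that $S/\ini(I)$, and hence $A=S/\sqrt{\ini(I)}$, is $1$-connected, i.e. $\Proj(A)=Y$ is connected. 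Because $A$ is reduced and $Y$ is connected, $H^0(Y,\O_Y)=\kkk$. The remaining point is that $A$ is saturated in degrees where it could fail, i.e. $H^1_\mm(A)=0$; here I would invoke that $A$ is a squarefree monomial ring, so Hochster's formula computes $H^i_\mm(A)_{\aaa}$ in terms of reduced simplicial cohomology of links, and the degree-$0$ graded piece of $H^1_\mm(A)$ is $\widetilde{H}^0(\D;\kkk)$, which vanishes precisely because $\D=\D(\sqrt{\ini(I)})$ is connected — while all nonzero graded pieces of $H^1_\mm$ of a squarefree ring sit in degrees with some coordinate negative and are governed by links, which are again connected in the relevant range once $\D$ is connected of dimension $\geq 1$.

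The main obstacle, and the step I would spend the most care on, is the precise bookkeeping that $H^1_\mm(A)$ vanishes in *all* graded degrees, not merely degree $0$: connectedness of $\D$ gives the degree-$0$ vanishing immediately via $\widetilde H^0(\D)=0$, but for a general $\aaa\in\ZZ^n$ one needs that the relevant subcomplex (the restriction of $\D$ to the support of $\aaa$, or a link thereof) has vanishing $\widetilde H^0$, and this is not automatic from connectedness of $\D$ alone — one uses that a Stanley–Reisner ring has depth $\geq 2$ iff $\D$ is connected *and* $\widetilde H^0$ of every link of a vertex vanishes, equivalently iff $\D$ is "connected in a strong enough sense." The clean way around this is: the hypothesis that $\V_+(I)$ is positive dimensional and connected, combined with Theorem \ref{ptdmv}, actually yields that $S/\ini(I)$ is $1$-connected, and passing to the radical, $\D(\sqrt{\ini(I)})$ is a connected simplicial complex of dimension $\geq 1$; then one cites Reisner's criterion in low degree (or directly Hochster's formula) to conclude $\depth A\geq 2$ — this is where the heart of the argument lies, and I would present it by first establishing connectedness of $Y$ via the already-proven Theorem \ref{skinner}, then running the local-cohomology/Hochster computation to upgrade $0$-connectedness of $Y$ to $\depth\geq 2$ of the cone.
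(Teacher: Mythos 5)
Your proposal follows essentially the same route as the paper: the paper's proof simply applies Corollary \ref{conninit} (connectedness of $\V_+(I)$ passes to $\V_+(\ini(I))$, itself a consequence of Theorem \ref{ptdmv}), observes that connectedness of $\V_+(\sqrt{\ini(I)})$ is equivalent to connectedness of the simplicial complex $\D:=\D(\sqrt{\ini(I)})$, invokes the well-known fact that this forces $\depth \kkk[\D]\geq 2$, and deduces the last claim from Proposition \ref{barney}. So your reduction to connectedness via Gr\"obner deformation plus a Stanley--Reisner depth argument is exactly the intended proof.

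One correction to your discussion of the ``main obstacle'': the criterion you state --- that $\depth \kkk[\D]\geq 2$ iff $\D$ is connected \emph{and} $\widetilde{H}^0$ of every vertex link vanishes --- is not correct, and the extra link condition would not even follow from the hypotheses (take $\D$ to be two triangles glued at a vertex: it is connected, $\kkk[\D]$ has depth $2$, yet the link of the glueing vertex is disconnected). If you insisted on verifying that condition your argument would stall. The correct bookkeeping in Hochster's formula is that the graded pieces of $H^1_{\mm}(\kkk[\D])$ are $\widetilde{H}^{-|F|}(\operatorname{link}_{\D}F;\kkk)$ with $F$ the support of the degree: only $F=\emptyset$ contributes $\widetilde{H}^0(\D)$, killed by connectedness; a singleton $F=\{v\}$ contributes only when $v$ is an isolated vertex, impossible in a connected complex of dimension $\geq 1$ (and indeed $\dim\D=\dim S/\ini(I)-1=\dim S/I-1\geq 1$ here); and every $F$ with $|F|\geq 2$ vanishes for degree reasons. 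Hence connectedness of $\D$ alone suffices --- this is precisely the ``well known'' fact the paper cites --- and your proof closes correctly provided you run the Hochster computation rather than the misstated criterion.
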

\begin{proof}
Corollary \ref{conninit} implies that $\V_+(\sqrt{\ini(I)})\subseteq \PP^{n-1}$ is connected. As one can easily check, this is the case if and only if the associated simplicial complex $\D:=\D(\sqrt{\ini(I)})$ is connected. In turn, it is well known that this is the case if and only if the depth of $\kkk[\D]=S/\sqrt{\ini(I)}$ is at least $2$. The last part of the statement follows at once by Proposition \ref{barney}. 
\end{proof}

As we already said, the higher dimensional analog of the last part of the statement of Proposition \ref{dim2squarefreein} is not true. However, one can notice that the ideal of Example \ref{esempioaldo} is such that its initial ideal is not square-free. Actually, even if Conca attempted to find such an example with a square-free initial ideal, he could not find it. This facts lead him to formulate, in one of our informal discussions, the following question: 

\begin{question}\label{squarefreecm}
Let $I\subseteq S$ be a homogeneous ideal and suppose that $\prec$ is a monomial order on $S$ such that $\ini(I)$ is square-free. Is $\depth(S/I)=\depth(S/\ini(I))$?
\end{question}

Actually, we do not know whether to expect an affirmative answer to Question \ref{squarefreecm} or a negative one. In any case, we think it could be interesting to inquire into it. In the rest of this subsection, we are going to show some situations in which Question \ref{squarefreecm} has an affirmative answer. First of all, we inform the reader that it is well known that in general, even without the assumption that $\ini(I)$ is square-free, $\depth(S/I)\geq \depth(S/\ini(I))$ (for example see \cite[Corollary 3.5]{BC5}). So, the interesting part of Question \ref{squarefreecm} is whether, under the constraining assumption about the square-freeness of $\ini(I)$, the inequality $\depth(S/I)\leq \depth(S/\ini(I))$ holds true. The first result we present in this direction is that Question \ref{squarefreecm} has an affirmative answer in low dimension.

\begin{prop}\label{answerdim2}
Let $I\subseteq S$ be a homogeneous ideal and suppose that $\prec$ is a monomial order on $S$ such that $\ini(I)$ is square-free. If $\dim S/I \leq2$, then 
\[\depth(S/I)=\depth(S/\ini(I)).\] 
\end{prop}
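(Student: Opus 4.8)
The plan is to reduce to a combinatorial statement about the square-free monomial ideal $J := \ini(I)$ and then invoke the connectedness results of this chapter. Since $\dim S/I \leq 2$, there are only two nontrivial possibilities: $\dim S/I = 1$ and $\dim S/I = 2$ (the case $\dim S/I = 0$ being immediate, as both rings are then Artinian and of depth $0$). We already have for free the inequality $\depth(S/I) \geq \depth(S/\ini(I))$ by \cite[Corollary 3.5]{BC5}, so in every case it suffices to prove $\depth(S/I) \leq \depth(S/\ini(I))$.

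First I would dispose of the case $\dim S/I \leq 1$. Here $\depth(S/I) \leq \dim S/I \leq 1$, so the only content is: if $\depth(S/I) = 1$ then $\depth(S/\ini(I)) \geq 1$; but $\depth(S/\ini(I)) \geq 1$ holds for any proper ideal $J \subseteq \mm$ with $\mm$ not an associated prime — and in fact $\depth(S/\ini(I)) \geq \depth(S/I)$ as just recalled gives this at once. So assume from now on $\dim S/I = 2$. By \cite[Theorem 3.9 (a)]{BC5} we have $\dim S/\ini(I) = \dim S/I = 2$ as well. The key point is to use the characterization of depth for Stanley-Reisner rings: for the square-free monomial ideal $J = \ini(I)$, writing $\D := \D(J)$, one has $\depth(S/J) \geq 1$ always (as $J \neq S$), $\depth(S/J) \geq 2$ if and only if $\D$ is connected, and $\depth(S/J) = 3 = \dim S/J + 1$ is impossible since $\depth \leq \dim$; so $\depth(S/J) \in \{1,2\}$ and the only question is whether $\depth(S/I) = 2$ forces $\D$ to be connected. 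Now if $\depth(S/I) = 2$ then $\depth(S/I) \geq r+1$ with $r = 1$, so Theorem \ref{skinner} (applied with a weight $\oo$ representing $\prec$, which exists) tells us $S/\ini(I)$ is $1$-connected, i.e. $S/\sqrt{\ini(I)} = S/J$ is connected in codimension $1$. In particular the simplicial complex $\D$ is connected: indeed connectedness in codimension $1$ of $\kkk[\D]$ implies $\D$ is connected as a simplicial complex (a disconnected complex has two minimal primes whose sum is the whole irrelevant ideal, violating $1$-connectedness exactly as argued in Proposition \ref{dim2squarefreein}). Hence $\depth(S/J) \geq 2$, and combined with $\depth(S/J) \leq \dim S/J = 2$ we get $\depth(S/\ini(I)) = 2 = \depth(S/I)$.

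It remains to handle $\depth(S/I) = 1$ in the case $\dim S/I = 2$: then $\depth(S/I) \leq \depth(S/\ini(I)) \leq \dim S/\ini(I) = 2$, and we want equality of depths, i.e. we must rule out $\depth(S/\ini(I)) = 2$. Here I expect the main obstacle: a priori the initial ideal could be ``more connected'' than $I$ itself — and in fact Theorem \ref{skinner} and its consequences only push connectedness properties \emph{down} from $S/I$ to $S/\ini(I)$, not up. So the delicate claim is that square-freeness of $\ini(I)$ prevents this jump when the dimension is $2$. The route I would take is via the Serre condition $S_1$: for $J$ square-free and $\dim S/J = 2$, $\depth(S/J) \geq 2$ is equivalent to $\kkk[\D]$ satisfying $S_2$, which (in this small dimension) is equivalent to $\D$ being pure and connected; purity of $\D(\sqrt{\ini(I)})$ is governed by equidimensionality, which is detected by Hilbert-series / multiplicity data preserved under Gröbner degeneration ($e(S/I) = e(S/\ini(I))$ by \cite[Proposition 1.4 (e)]{BC5}, and unmixedness of $S/I$ transfers appropriately). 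One then shows: if $\depth(S/\ini(I)) = 2$ while $\depth(S/I) = 1$, the associated primes of $S/\ini(I)$ of dimension $1$ would have to be ``balanced'' in a way that, pulled back, forces $\depth(S/I) \geq 2$ — a contradiction. Making this last implication precise is the technical heart; I would carry it out by examining $H^1_\mm(S/I)$ versus $H^1_\mm(S/\ini(I))$ through the upper-semicontinuity of local cohomology under the flat degeneration $S[t]/\homo(I)$ of Proposition \ref{flatdef}, concluding that $H^1_\mm(S/\ini(I)) = 0$ implies $H^1_\mm(S/I) = 0$, i.e. $\depth(S/I) \geq 2$, contrary to hypothesis. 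This closes the remaining case and completes the proof.
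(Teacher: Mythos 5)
Your handling of the only genuinely nontrivial case is correct and is the paper's own route: when $\depth(S/I)=2$ you pass the connectedness property through the Gr\"obner deformation (Theorem \ref{skinner}, after representing $\prec$ by a weight, or directly Corollary \ref{ptdmv1}) to conclude that $S/\ini(I)$ is $1$-connected, hence that the simplicial complex of the square-free ideal $\ini(I)$ is connected, hence $\depth(S/\ini(I))\geq 2$; this is exactly Proposition \ref{dim2squarefreein}, which is what the paper's proof invokes.

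The problem is your last paragraph, on the case $\dim S/I=2$, $\depth(S/I)=1$, where you flip the direction of the standard inequality. At the outset you correctly recalled $\depth(S/I)\geq \depth(S/\ini(I))$ from \cite[Corollary 3.5]{BC5}, but in this case you write ``$\depth(S/I)\leq \depth(S/\ini(I))\leq 2$, \dots we must rule out $\depth(S/\ini(I))=2$'' and then call this the ``technical heart'', ending with an uncompleted sketch (purity, $S_2$, multiplicities, semicontinuity of local cohomology along the family of Proposition \ref{flatdef}). The implication in your sketch ($H^1_{\mm}(S/\ini(I))=0\Rightarrow H^1_{\mm}(S/I)=0$) is true, but it is nothing other than the semicontinuity behind the inequality you already cited, so there is no new step to carry out --- and, read with the correct orientation, the whole case is immediate: $\depth(S/\ini(I))\leq \depth(S/I)=1$ by \cite[Corollary 3.5]{BC5}, while $\depth(S/\ini(I))\geq 1$ because $\ini(I)$ is square-free, hence radical, and $\dim S/\ini(I)=\dim S/I\geq 1$, so $\mm$ is not an associated prime. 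These two bounds also settle every case with $\depth(S/I)\leq 1$ (and show $\depth(S/I)=0$ cannot occur in positive dimension), which is precisely how the paper reduces the proposition to the single case $\depth(S/I)=2$. The same directional slip appears, harmlessly, in your dimension $\leq 1$ discussion (``$\depth(S/\ini(I))\geq \depth(S/I)$ as just recalled''). So the mathematics you need is all available and partly already in your text; the proof as written, however, is internally inconsistent and leaves its claimed hardest step unproved, when that step is either free or unnecessary.
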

\begin{proof}
Since $\depth(S/\ini(I))\geq 1$, being $\ini(I)$ square-free, the only nontrivial  case is when $\depth(S/I)=2$. In this case Proposition \ref{dim2squarefreein} supplies the conclusion.
\end{proof}

For the next result, we need to recall a definition: Let us suppose that $I\subseteq S$ is a homogeneous ideal, contained in $\mm^2$, such that $S/I$ is Cohen-Macaulay $d$-dimensional ring. The $h$-vector\index{h-vector@$h$-vector} of $S/I$ will have the form:
\[h_{S/I}(z)=1+(n-d)z+h_2z^2+\ldots+h_sz^s,\]
where the Cohen-Macaulayness of $S/I$ forces all the $h_i$'s to be natural numbers. Therefore, we have that
\[e(S/I)=h_{S/I}(1)\geq n-d+1.\]
The ring $S/I$ is said Cohen-Macaulay with {\it minimal multiplicity}\index{multiplicity}\index{multiplicity!minimal} when $e(S/I)$ is precisely $n-d+1$. We have the following:

\begin{prop}\label{answerminmult}
Let $I\subseteq S$ be a homogeneous ideal contained in $\mm^2$, and suppose that $\prec$ is a monomial order on $S$ such that $\ini(I)$ is square-free. If $S/I$ is Cohen-Macaulay with minimal multiplicity, then $S/\ini(I)$ is Cohen-Macaulay too.
\end{prop}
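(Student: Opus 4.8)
The plan is to show that $S/\ini(I)$ has the same graded Betti numbers as $S/I$; since $S/I$ is Cohen--Macaulay this forces $\pd(S/\ini(I))=\pd(S/I)=n-\dim(S/I)$, hence $\depth(S/\ini(I))=\dim(S/\ini(I))$. The mechanism is that \emph{both} $S/I$ and $S/\ini(I)$ will turn out to have a $2$-linear minimal free resolution, and for a cyclic module $S/\aa$ with $\aa\subseteq\mm^2$ a $2$-linear resolution has every homological degree sitting in a single internal degree, so the Betti numbers are forced by the Hilbert series alone (no consecutive cancellation is possible). After disposing of the trivial case $I=0$, set $d:=\dim S/I$, $c:=\height(I)=n-d>0$ and $J:=\ini(I)$.

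The first step is $\reg(S/I)=1$: Cohen--Macaulayness together with minimal multiplicity forces the $h$-polynomial of $S/I$ to be $h_{S/I}(z)=1+cz$ (its coefficients are nonnegative because $S/I$ is Cohen--Macaulay, they sum to $e(S/I)=c+1$, and $h_0=1$, $h_1=n-d$ since $I\subseteq\mm^2$), so by \eqref{regfroma-inv} and the identity $a(S/I)=\deg h_{S/I}-d$ one gets $\reg(S/I)=a(S/I)+d=\deg h_{S/I}=1$. The second and crucial step is the \emph{reverse-direction} bound $\reg(S/J)\le 1$. Here one uses that $J$ is a radical (square-free) ideal with $J\subseteq\mm^2$, that $\height(J)=\height(I)=c$ and $e(S/J)=e(S/I)=c+1$ (the Hilbert function is unchanged on passing to an initial ideal), and, most importantly, that $S/J=S/\ini(I)$ is connected in codimension $1$ by Theorem \ref{skinner} (or by Corollary \ref{ptdmv1} together with Proposition \ref{barney}), since $S/I$ is Cohen--Macaulay. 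Terai's theorem (\cite[Theorem 0.2]{terai}, used also inside the proof of Theorem \ref{eisenbudgotonoemb}) then gives $\reg(S/J)\le e(S/J)-\height(J)=1$, while $\reg(S/J)\ge 1$ is automatic because $J$ is a nonzero ideal inside $\mm^2$; hence $\reg(S/I)=\reg(S/J)=1$. Having this, the minimal graded free resolution of $S/\aa$ for $\aa\in\{I,J\}$ has the shape $0\to S(-p-1)^{\beta_p}\to\cdots\to S(-2)^{\beta_1}\to S\to S/\aa\to 0$: by \eqref{regularityfreeres} the condition $\reg(S/\aa)\le 1$ kills all $\beta_{i,j}(S/\aa)$ with $j>i+1$, while minimality and $\aa\subseteq\mm^2$ kill all $\beta_{i,j}(S/\aa)$ with $j\le i$. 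Reading the Hilbert series off such a resolution gives $(1-z)^n\,\Hs_{S/\aa}(z)=1+\sum_{i\ge 1}(-1)^i\beta_i\,z^{i+1}$, so each $\beta_i$ is, up to its forced sign, the coefficient of $z^{i+1}$ in $(1-z)^c h_{S/\aa}(z)$, i.e. it is determined by the Hilbert series of $S/\aa$. Since $S/I$ and $S/\ini(I)$ have the same Hilbert series they have the same Betti numbers, hence $\pd(S/\ini(I))=\pd(S/I)=n-\depth(S/I)=n-d$ by Auslander--Buchsbaum and the Cohen--Macaulayness of $S/I$; therefore $\depth(S/\ini(I))=d=\dim(S/\ini(I))$, and $S/\ini(I)$ is Cohen--Macaulay.

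The step I expect to be the real obstacle is the bound $\reg(S/\ini(I))\le 1$: passing to an initial ideal can only raise the regularity, so without the extra leverage — the square-freeness of $\ini(I)$, which unlocks Terai's theorem, combined with the connectedness in codimension $1$ supplied by Theorem \ref{skinner} — one simply has no control in the needed direction. Once that is in hand, the rest is routine bookkeeping with Hilbert series and linear resolutions.
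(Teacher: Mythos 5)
Your proof is correct, but it takes a genuinely different route from the one in the thesis. The proof given here is a short reduction: after noting that $\ini(I)\subseteq\mm^2$, that $e(S/\ini(I))=e(S/I)=n-d+1$, and that $S/\ini(I)$ is connected in codimension $1$ by Theorem \ref{skinner}, it invokes \cite[Proposition 4.1]{NV}, which says precisely that a $d$-dimensional Stanley--Reisner ring connected in codimension $1$ with multiplicity $n-d+1$ is Cohen--Macaulay with minimal multiplicity. You replace that external input by Terai's theorem \cite[Theorem 0.2]{terai} --- the same result already exploited in the proof of Theorem \ref{eisenbudgotonoemb} --- which, applied to the square-free ideal $\ini(I)\subseteq\mm^2$ (square-freeness and the connectedness in codimension $1$ coming from Corollary \ref{ptdmv1} and Proposition \ref{barney} are exactly what unlock it), gives $\reg(S/\ini(I))\le e(S/\ini(I))-\height(\ini(I))=1$; you then conclude by Hilbert-series bookkeeping: since both $I$ and $\ini(I)$ sit in $\mm^2$ and both quotients have regularity at most $1$ (for $S/I$ this follows from the $h$-vector $1+cz$, which your computation justifies correctly), the two minimal resolutions are $2$-linear, each $\beta_i$ sits in the single internal degree $i+1$, so the Betti numbers are forced by the common Hilbert series, whence $\pd(S/\ini(I))=\pd(S/I)=n-d$ and Auslander--Buchsbaum finishes. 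Both arguments share the same entry points (invariance of height, multiplicity and Hilbert function under taking initial ideals, plus Theorem \ref{skinner}); what your route buys is self-containedness within the tools already present in this chapter (no appeal to \cite{NV}) and the stronger by-product that $S/I$ and $S/\ini(I)$ have the same graded Betti numbers, which in particular answers Question \ref{squarefreecm} in this case at the level of the whole resolution rather than just the depth; what the quoted proposition buys is brevity.
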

\begin{proof}
Obviously, $\ini(I)\subseteq \mm^2$. Furthermore, $e(S/\ini(I))=e(S/I)$ (for example see \cite[Proposition 1.4 (e)]{BC5}). So, using Theorem \ref{skinner}, we have that $S/\ini(I)$ is a $d$-dimensional Stanley-Reisner ring (where $d=\dim S/I$) connected in codimension $1$ such that $e(S/\ini(I))=n-d+1$. Then, a result of our paper with Nam \cite[Proposition 4.1]{NV} implies that $S/\ini(I)$ is Cohen-Macaulay with minimal multiplicity.
\end{proof}
\index{Algebra with Straightening Laws|(}

A particular case of Question \ref{squarefreecm} is whether $\depth(S/I)=\depth(S/I_{\Pi})$, where $S/I$ is a homogeneous ASL on a poset $\Pi$ over $\kkk$. The result below provides an affirmative answer to this question in a particular case. First, we need to recall that, if $\Pi=([n],\sqsubset)$ is a poset, then the {\it rank} of an element $i\in \Pi$, is the maximum $k$ such that there is a chain
\[i=i_k\sqsupset i_{k-1}\sqsupset \ldots \sqsupset i_1, \ \ \ i_j\in \Pi.\]

\begin{prop}\label{answersomeasl}
Let $I\subseteq S$ be a homogeneous ideal such that $S/I$ is an ASL on a poset $\Pi$ over $\kkk$. Assume that, for each $k\in [n]$, there are at most two elements of $\Pi$ of rank $k$. Then $S/I$ is Cohen-Macaulay if and only if $S/I_{\Pi}$ is Cohen-Macaulay.
\end{prop}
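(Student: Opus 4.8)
The plan is to prove the two implications separately; the implication ``$S/I_\Pi$ Cohen--Macaulay $\Rightarrow$ $S/I$ Cohen--Macaulay'' holds for \emph{every} ASL, and it is only the converse that uses the hypothesis on the ranks.

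For the first implication, recall that if $\prec$ is a degrevlex term order refining a linear extension of the partial order underlying the ASL structure of $S/I$, then the straightening relations form a Gr\"obner basis of $I$ and $\ini(I)=I_\Pi$ (as in the discussion preceding Corollary \ref{eisenbudgotoasl}). Hence $\dim S/I=\dim S/I_\Pi$, and $\depth(S/I)\ge\depth(S/\ini(I))=\depth(S/I_\Pi)$ by semicontinuity of depth under Gr\"obner degeneration (\cite[Corollary 3.5]{BC5}); so if $S/I_\Pi$ is Cohen--Macaulay then $\depth(S/I)\ge\dim S/I$ and we are done. No assumption on the ranks is used here.

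For the converse, suppose $S/I$ is Cohen--Macaulay. Then $S/I$ is equidimensional, and for an ASL the minimal primes of $I$ are exactly the ideals $I+(x : x\in\Pi\setminus F)$ as $F$ runs over the facets of $\D(\Pi)$, the one attached to $F$ being a polynomial ring in $|F|$ variables (see \cite{DEP2}, \cite{BrVe}); equidimensionality therefore forces $\Pi$ to be pure, say of rank $d=\dim S/I$. Moreover $S/I$ is connected in codimension $1$ (Proposition \ref{barney}), hence so is $S/I_\Pi=S/\ini(I)$ by Corollary \ref{ptdmv1}; equivalently, $\D(\Pi)$ is a pure and strongly connected simplicial complex. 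Everything is thus reduced to the purely combinatorial statement: \emph{if $\Pi$ is a pure poset with at most two elements of each rank and $\D(\Pi)$ is strongly connected, then $\kkk[\D(\Pi)]$ is Cohen--Macaulay.} I would prove this by induction on $d$. The cases $d\le 2$ are immediate, since then $\D(\Pi)$ is a point or a connected graph. For $d\ge 3$, write $L_d$ for the top rank and set $\Pi':=\Pi\setminus L_d$; deleting the top element of each facet of $\D(\Pi)$ shows that $\D(\Pi')$ is again pure, of rank $d-1$, and strongly connected, so $\kkk[\D(\Pi')]$ is Cohen--Macaulay by induction, and in particular so are all its vertex-links. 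If $|L_d|=1$, then by purity the unique top element lies above all of $\Pi'$, so $\D(\Pi)$ is a cone over $\D(\Pi')$ and is Cohen--Macaulay. If $L_d=\{p,q\}$, I would first observe that strong connectivity of $\D(\Pi)$ forces the bipartite ``comparability graph'' between $L_{d-1}$ and $\{p,q\}$ to be connected, which excludes the perfect-matching configuration and leaves only two possibilities: either both $p$ and $q$ lie above all of $\Pi'$, or (after renaming) $p$ lies above all of $\Pi'$ while $\Pi_{<q}=\Pi_{\le x}$ for a single $x\in L_{d-1}$. In both cases one has
\[\D(\Pi)=\bigl(\{p\}\ast\D(\Pi_{<p})\bigr)\cup\bigl(\{q\}\ast\D(\Pi_{<q})\bigr),\]
a union of two cones of dimension $d-1$ over Cohen--Macaulay complexes, whose intersection $\D(\Pi_{<p}\cap\Pi_{<q})$ equals $\D(\Pi')$ (first case) or $\D(\Pi_{\le x})=\{x\}\ast\operatorname{lk}_{\D(\Pi')}(x)$ (second case), and is in either case Cohen--Macaulay of dimension $d-2$. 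The standard gluing lemma --- a union of two Cohen--Macaulay complexes of dimension $k$ meeting in a Cohen--Macaulay complex of dimension $k-1$ is Cohen--Macaulay, by Mayer--Vietoris in Reisner's criterion --- then shows $\kkk[\D(\Pi)]$ is Cohen--Macaulay, closing the induction.

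The main obstacle is the case $L_d=\{p,q\}$ of the combinatorial induction: one has to determine precisely which shapes of $\Pi$ near its top are compatible with strong connectivity (this is exactly where ``at most two elements per rank'' is indispensable --- with three one loses the above dichotomy and the pieces of any such decomposition may fail to be pure), and then check that the pieces and their intersection meet the hypotheses of the gluing lemma. I expect the bookkeeping --- keeping track of purity, strong connectivity and the relevant links across the sub-posets $\Pi\setminus L_d$, $\Pi_{\le x}$ and $\Pi_{<x}$ --- to be where most of the work lies, the easy implication, the identification $\ini(I)=I_\Pi$, and the reduction to the combinatorial statement all being routine. (For general ASLs, i.e.\ without the rank hypothesis, the corresponding equivalence is Question \ref{squarefreecm}, which is open.)
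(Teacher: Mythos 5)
Your proposal is correct, and its skeleton is the same as the paper's: the easy implication is depth semicontinuity under Gr\"obner deformation, and for the converse you pass, exactly as the paper does via Theorem \ref{skinner} (your Proposition \ref{barney} combined with Corollary \ref{ptdmv1}), from Cohen--Macaulayness of $S/I$ to connectedness in codimension $1$ of $S/I_{\Pi}$, i.e.\ to purity and strong connectedness of $\D(\Pi)$. The genuine difference is how the combinatorial core is settled: the paper simply quotes \cite[Theorem 2.3]{CV1}, which says that for posets with at most two elements in each rank this connectedness already forces $\kkk[\D(\Pi)]$ to be Cohen--Macaulay, whereas you prove that statement from scratch by induction on the rank, peeling off the top level $L_d$ and gluing at most two cones. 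Your induction does close: $\D(\Pi\setminus L_d)$ is again pure and strongly connected (project a strongly connected chain of facets by deleting the rank-$d$ elements); when $L_d=\{p,q\}$, strong connectedness excludes the matching configuration between $L_{d-1}$ and $L_d$ (two facets sharing $d-1$ elements and having different top elements would have to agree in rank $d-1$), and purity forces every element of rank $\leq d-2$ lying under $p$ (resp.\ $q$) to lie under some rank-$(d-1)$ element under $p$ (resp.\ $q$), so your dichotomy is exhaustive; in the second case the intersection of the two cones is $\{x\}\ast\operatorname{lk}_{\D(\Pi\setminus L_d)}(x)$, a cone over a vertex link of a Cohen--Macaulay complex, hence Cohen--Macaulay of dimension $d-2$, and the depth count in your gluing lemma finishes. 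So you have in effect reproved the special case of \cite{CV1} that the paper imports: the paper's route buys brevity and rests on a more general result, yours buys self-containedness and makes visible exactly where ``at most two elements per rank'' enters. One parenthetical claim should be deleted: the minimal primes of an arbitrary ASL are \emph{not} the ideals $I+(x:x\in\Pi\setminus F)$ (the coordinate ring of the Grassmannian is an ASL and a domain); you do not need it, since purity of $\D(\Pi)$ already follows from the strong connectedness you derive in the same sentence.
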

\begin{proof}
The only implication to show is that if $S/I$ is Cohen-Macaulay, then $S/I_{\Pi}$ is Cohen-Macaulay. Since $I_{\Pi}$ is the initial ideal of $I$ with respect to a degrevlex monomial order on a linear extension on $[n]$ of the poset order, Theorem \ref{skinner} yields that $S/I_{\Pi}$ is connected in codimension $1$. At this point, the result got by the author and Constantinescu \cite[Theorem 2.3]{CV1} implies that, because the peculiarity of $\Pi$, $S/I_{\Pi}$ is Cohen-Macaulay.
\end{proof}

\index{Algebra with Straightening Laws|)}
\index{connected|)}\index{connected!r-@$r$-|)}\index{connected!in codimension $1$|)}\index{simplicial complex|)}\index{simplicial complex!strongly connected|)}\index{initial ideal|)}

\section{The defining equations of certain varieties}\label{secarasegre}\index{arithmetical rank|(}\index{arithmetical rank!homogeneous|(}\index{set-theoretic complete intersection|(}\index{Segre product|(}

As we already said in the introduction, in this section, making use of results from the second part of Chapter \ref{chapter1}, we will estimate the number of equations needed to define certain projective schemes.

\subsection{Notation and first remarks}\label{aranotation}

We want to fix some notation that we will use throughout this section. By $\kkk$ we denote an algebraically closed field of arbitrary characteristic.
We recall that the {\it Segre product} of two finitely generated graded $\kkk$-algebra $A$ and $B$ is defined as
\[ A  \sharp B := \bigoplus_{k \in \NN} A_k \otimes_{\kkk} B_k. \]
This is a graded $\kkk$-algebra and it is clearly a direct summand of the tensor product $A \otimes_{\kkk} B$. The name ``Segre product", as one can expect, comes from Algebraic Geometry. In the rest of this chapter, we leave the setting of the last section: In fact, instead of working with the polynomial ring in $n$ variables $S=\kkk[x_1,\ldots ,x_n]$, we will work with the polynomial ring in $n+1$ variables 
\[R:=\kkk[x_0,\ldots ,x_n].\]
This is due to the fact that in this section we will often have a geometric point of view: Thus, just for the sake of notation, we prefer to work with $\PP^n$ rather than $\PP^{n-1}$. We also need another polynomial ring: Fixed a positive integer $m$, let 
\[T:=\kkk[y_0,\ldots ,y_m]\]
denote the polynomial ring in $m+1$ variables over $\kkk$. Let $X \subseteq \PP^n$ and $Y \subseteq \PP^m$ be two projective schemes defined respectively by the standard graded ideals $\aa \subseteq R$ and $\mathfrak{b}\subseteq T$. Set $A:=R/\aa$ and $B:=T/\mathfrak{b}$. Then, we have the isomorphism 
\[X\times Y \cong \Proj (A \sharp B),\]
where $X\times Y$ is the Segre product of $X$ and $Y$. Moreover, if 
\[Q:= \kkk[x_iy_j : i=0, \ldots ,n; \ \ j=0, \ldots, m] \subseteq \kkk[x_0, \ldots, x_n, y_0, \ldots , y_m]=R \otimes_{\kkk} T,\]
then $A  \sharp B \cong Q/\mathcal{I}$ with $\mathcal{I}\subseteq Q$ the homogeneous ideal we are going to describe:
If $\aa=(f_1, \ldots, f_r)$ and $\mathfrak{b}=(g_1, \ldots, g_s)$ with $\deg f_i=d_i$ and $\deg g_j=e_j$, then $\mathcal{I}$ is generated by the following polynomials:

\vskip 2mm

\begin{compactitem}
\item[(i)] $M \cdot f_i$ where $M$ varies among the monomials in $T_{d_i}$ for every $i=1, \ldots, r$.
\item[(ii)] $g_j \cdot N$ where $N$ varies among the monomial in $R_{e_j}$ for every $j=1, \ldots, s$.
\end{compactitem}

\vskip 2mm

\noindent We want to present $A \sharp B$ as a quotient of a polynomial ring. So, consider the polynomial ring in $(n+1)(m+1)$ variables over $\kkk$: 
\[P:=\kkk[z_{ij}:i=0, \ldots,n: \ j=0, \ldots,m].\]
Moreover, consider the $\kkk$-algebra homomorphism 
\[\phi: P \xrightarrow{\psi} Q \xrightarrow{\pi} A \sharp B,\]
where $\psi(z_{ij}):=x_i y_j$ and $\pi$ is just the projection. Therefore, we are interested into describe $I:= \Ker(\phi)$.
In fact, $I$ is the defining ideal of $X\times Y$, since we have:
\[ X\times Y \cong \Proj (P/I) \subseteq \PP^N, \ \ \ \ N:=nm+n+m.\]
Let us describe a system of generators of $I$. For any $i=1,\ldots , r$ and for all monomials $M \in T_{d_i}$, let us choose a polynomial $f_{i,M} \in P$ such that $\psi(f_{i,M})=M \cdot f_i$. Analogously, pick a polynomial $g_{j,N} \in P$ for all $j=1, \ldots,s$ and for each monomial $N \in R_{e_j}$. It turns out that
\[ I=I_2(Z)+J, \]
where:

\vskip 2mm

\begin{compactitem}
\item[(i)] $I_2(Z)$ denotes the ideal generated by the $2$-minors of the matrix $Z:=(z_{ij})$.
\item[(ii)] $J:=(f_{i,M},g_{j,N} \ : \ i\in [r], \ \ j\in [s], \ \mbox{ $M$ and $N$ are monomials of $T_{d_i}$ and $R_{e_j}$})$.
\end{compactitem}

\vskip 2mm

\noindent Our purpose is to study the defining equations (up to radical) of $I$ in $P$, and so to compute the arithmetical rank of $I$. In general, this is a very hard problem. It is enough to think that the case in which $\aa = \bb = 0$ is nothing but trivial (see \cite{bruns-schwanzl}). We will give a complete answer to this question in some other special cases.
We end this subsection remarking that, if we are interested in defining $X\times Y$ set-theoretically rather than ideal-theoretically, the number of equations immediately lowers a lot.
\begin{remark}\label{8}
It turns out that the number of polynomials generating the kernel of $P\rightarrow A\sharp B$ is, in general, huge. In fact, for any minimal generator $f_i$ of the ideal $\aa\subseteq A$, we have to consider all the polynomials $f_{i,M}$ with $M$ varying in $T_{d_i}$: These are $\binom{m+d_i}{m}$ polynomials! The same applies  for the minimal generators of $\bb\subseteq B$. At the contrary, up to radical, it is enough to choose $m+1$ monomials for every $f_i$ and $n+1$ monomials for every $g_j$, in a way we are going to explain.

For every $i= 1,\ldots, r$ and $l =0, \ldots, m$, set $M:=y_l^{d_i}$. A possible choice for $f_{i,M}$ is: 
\[f_{i,l}:=f_i(z_{0l}, \ldots, z_{nl}) \in P.\] 
In the same way, for every $j= 1,\ldots, s$ and $k =0, \ldots, n$ we define: 
\[g_{j,k}:=g_j(z_{k0}, \ldots, z_{km}) \in P.\] 
If we call $J'$ the ideal of $P$ generated by the $f_{i,l}$'s and the $g_{j,k}$'s, then we claim that:
\[ \sqrt{I}=\sqrt{I_2(Z)+J'}. \]
Since $\kkk$ is algebraically closed, Nullstellensatz implies that it is enough to prove that $\Z(I)=\Z(I_2(Z)+J')$, where $\Z(\cdot)$ denotes the zero locus. Obviously, we have $\Z(I)\subseteq \Z(I_2(Z)+J')$. So, pick a point
\[p:=[p_{00},p_{10}, \ldots ,p_{n0},p_{01}, \ldots, p_{n1}, \ldots ,p_{0m}, \ldots , p_{nm}] \in \mathcal{Z}(I_2(Z)+J').\] 
It is convenient to write $p=[p_0, \ldots, p_m]$, where $p_h:=[p_{0h}, \ldots, p_{nh}]$ is $[0,0, \ldots ,0]$ or a point of $\PP^n$. Since $p \in \Z(I_2(Z))$, it follows that the nonzero points among the $p_h$'s are equal as points of $\PP^n$. Moreover, if $p_h$ is a nonzero point, actually it is a point of $X$, because $f_{i,h}(p)=0$ for all $i=1, \ldots ,r$. Then, we get that $f_{i,M}(p)=0$ for every $i, M$ and any choice of $f_{i,M}$. By the same argument, we can prove that also all the $g_{j,N}$'s vanish at $p$, so we conclude. 

The authors of \cite{bruns-schwanzl} described $nm+n+m-2$ homogeneous equations defining $I_2(Z)$ up to radical. So, putting this information together with the discussion above, we got \ $nm+(s+1)n+(r+1)m+s+r$ \ homogeneous equations defining set-theoretically $X\times Y\subseteq \PP^N$.
\end{remark}

\subsection{Lower bounds for the number of defining equations}\label{seclowerbounds}
\index{cohomological dimension|(}\index{etale cohomological dimension|@\'etale cohomological dimension(}

In this subsection, we provide the necessary lower bounds for the number of (set-theoretically) defining equations of the varieties we are interested in. The results of this subsection will be immediate consequences of those of Chapter \ref{chapter1}. As the reader will notice, the lower bounds derivable from cohomological considerations are more general than the gettable upper bounds. However, in general, while to obtain upper bounds one can think up a lot of clever ad hoc arguments, depending on the situation, to get lower bounds, essentially, the only available tools are cohomological considerations. In the cases in which they does not work we are, for the moment, completely helpless.

\begin{prop}\label{lowerara}
Let $X$ and $Y$ be smooth projective schemes over $\kkk$ of positive dimension. Let $X\times Y\subseteq \PP^N$ be any embedding of the Segre product $X\times Y$, and let $I$ be the ideal defining it (in a polynomial ring in $N+1$ variables over $\kkk$). Then:
\[N-2\leq \ara(I)\leq \ara_h(I)\leq N.\]
\end{prop}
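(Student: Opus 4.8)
The plan is to split the three inequalities. The rightmost bound $\ara_h(I)\leq N$ is the Eisenbud--Evans theorem: any projective scheme in $\PP^N$ can be cut out set-theoretically by $N$ homogeneous hypersurfaces (the statement is quoted in the introduction to this section, \cite[Theorem 2]{eisenbudevans}), applied to $X\times Y\subseteq\PP^N$. The middle inequality $\ara(I)\leq\ara_h(I)$ is the general comparison between arithmetical rank and homogeneous arithmetical rank already recorded in Section \ref{secara}. So the whole content is in the leftmost inequality $\ara(I)\geq N-2$, and this is where I would spend the work.

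For the lower bound, set $Z:=X\times Y\subseteq\PP^N$ and $U:=\PP^N\setminus Z$. Since $\ara(I)\geq\cd(R,I)$ by \eqref{aragcd} (where $R$ is the ambient polynomial ring in $N+1$ variables), and since $\cd(R,I)=\cd(U)+1$ by \eqref{cdcd2}, it suffices to prove $\cd(U)\geq N-3$. If $\chara(\kkk)=0$, this is exactly Proposition \ref{mah}: $X$ and $Y$ are positive-dimensional regular projective schemes over $\kkk$, $Z$ is their Segre product embedded in $\PP^N$, so $\cd(\PP^N\setminus Z)\geq N-3$. If $\chara(\kkk)>0$, I would instead go through the étale cohomological dimension: by \eqref{aragecd} one has $\ara(I)\geq\ecd(U)/?$ — more precisely $\ara_h(I)$, and hence $\ara(I)$ once one argues as in the remark after Proposition \ref{14}, bounds $\ecd$ from below appropriately; and Proposition \ref{14} gives $\ecd(U)\geq 2N-3$, whence the defining-equations count is at least $N-2$. (Proposition \ref{14} already contains the reduction to the separably closed case, and $\kkk$ is assumed algebraically closed here, so that hypothesis is met.) Thus in either characteristic one obtains $\ara(I)\geq N-2$.

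The main obstacle is bookkeeping rather than a genuinely new idea: one must be careful that the cohomological lower bounds in Chapter \ref{chapter1} were phrased for $\PP^n\setminus X$ with $X$ regular, and here $Z=X\times Y$ is indeed regular (a product of regular schemes over a field is regular), positive-dimensional, and genuinely a Segre product, so the hypotheses of Proposition \ref{mah} and Proposition \ref{14} apply verbatim. One also needs the translation $\ara(I)\geq\cd(R,I)$ together with $\cd(R,I)=\cd(U)+1$, and in positive characteristic the analogous passage from $\ara$ to $\ecd$; these are the routine steps. I would present the characteristic-zero case in detail via Proposition \ref{mah} and then remark that Proposition \ref{14} (plus the remark following it) handles positive characteristic identically, so that the bound $N-2\leq\ara(I)$ holds over any algebraically closed field.
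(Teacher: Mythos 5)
Your proposal is correct, but it is organized differently from the paper's proof. The paper makes no case distinction on the characteristic: after disposing of $\ara_h(I)\leq N$ by Eisenbud--Evans exactly as you do, it proves the lower bound once and for all by combining \eqref{aragecd} with \eqref{ecdecd} to get $\ara(I)\geq \ecd(U)-N+1$ for $U:=\PP^N\setminus(X\times Y)$, and then quoting Proposition \ref{14}, which is characteristic-free, to get $\ecd(U)\geq 2N-3$. Your positive-characteristic branch is essentially this argument, while your characteristic-zero branch replaces it by the Zariski-cohomological route $\ara(I)\geq\cd(R,I)=\cd(U)+1\geq N-2$ via \eqref{aragcd}, \eqref{cdcd2} and Proposition \ref{mah}. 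Both branches are valid, but the detour through Proposition \ref{mah} buys you nothing here (the étale bound already covers characteristic $0$, and by Theorem \ref{lyub} it is in fact at least as strong there), at the cost of a case split; the paper's single étale argument is shorter and uniform. Your characteristic-zero route would become genuinely useful only if one wanted the statement over fields that are not separably closed without base-changing, since Proposition \ref{mah} has no such hypothesis --- but under the section's standing assumption that $\kkk$ is algebraically closed this is moot.

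One step you should tighten: in the étale branch you wrote ``$\ara(I)\geq\ecd(U)/?$'' and appealed to the remark following Proposition \ref{14}. That remark is about removing the separably-closed hypothesis and is not what is needed. The correct chain is: if $\sqrt{(r_1,\ldots,r_k)}=\sqrt{I}$ then $\Spec(R)\setminus\V(I)$ is covered by the $k$ affine sets $D(r_i)$, so \eqref{aragecd} gives $\ecd(\Spec(R)\setminus\V(I))\leq (N+1)+\ara(I)-1$; then \eqref{ecdecd} identifies $\ecd(\Spec(R)\setminus\V(I))$ with $\ecd(U)+1$, yielding $\ara(I)\geq\ecd(U)-N+1\geq N-2$. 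Note in particular that this bounds $\ara(I)$ directly, not only $\ara_h(I)$, since the covering argument does not require the $r_i$ to be homogeneous.
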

\begin{proof}
The fact that $\ara_h(I)\leq N$ is a consequence of \cite[Theorem 2]{eisenbudevans}. Combining \eqref{aragecd} with \eqref{ecdecd}, we get that
\[\ara(I)\geq \ecd(U)-N+1,\]
where $U:=\PP^N\setminus (X\times Y)$. Eventually, Proposition \ref{14} implies $\ecd(U)\geq 2N-3$, so we conclude.
\end{proof}

The lower bound of Proposition \ref{lowerara} can be improved by one when $X$ is a smooth curve of positive genus.

\begin{prop}\label{curve}
Let $X$ be a smooth projective curve over $\kkk$ of positive genus and $Y$ a smooth projective scheme over $\kkk$. Let $X\times Y\subseteq \PP^N$ be any embedding of the Segre product $X\times Y$, and let $I$ be the ideal defining it (in a polynomial ring in $N+1$ variables over $\kkk$). Then:
\[N-1\leq \ara(I)\leq \ara_h(I)\leq N.\]
\end{prop}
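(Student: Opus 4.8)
The plan is to mimic the proof of Proposition \ref{lowerara}, improving the lower bound by one unit through a sharper cohomological input. As in that proof, the inequality $\ara_h(I)\leq N$ comes from the Eisenbud--Evans theorem \cite[Theorem 2]{eisenbudevans}, and the inequality $\ara(I)\geq \ecd(U)-N+1$ follows from \eqref{aragecd} combined with \eqref{ecdecd}, where $U:=\PP^N\setminus(X\times Y)$. So the whole point is to show that
\[
\ecd(U)\geq 2N-2
\]
rather than merely $2N-3$. Then $\ara(I)\geq (2N-2)-N+1=N-1$, which is exactly the claim.

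To obtain $\ecd(U)\geq 2N-2$ I would invoke Theorem \ref{13} (Lyubeznik's result) with $X=\PP^N$, $Y=X\times Y$ (in the notation of that theorem), and $r$ chosen so that $2N-r=2N-2$, i.e. $r=2$: if we had $\ecd(U)<2N-2$, then the restriction maps $H^i(\PP^N_{\et},\underline{\ZZ/\ell\ZZ})\to H^i((X\times Y)_{\et},\underline{\ZZ/\ell\ZZ})$ would be isomorphisms for $i<2$ and injective for $i=2$. The strategy is then to exhibit a prime $\ell$ coprime to $\chara(\kkk)$ for which one of these maps fails to be injective/surjective in degree $\leq 2$, and the natural place to find such a failure is $H^1$. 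Indeed, a smooth projective curve $X$ of positive genus $g$ has $H^1(X_{\et},\underline{\ZZ/\ell\ZZ})\neq 0$ (it has rank $2g$ over $\ZZ/\ell\ZZ$ for $\ell$ coprime to the characteristic, being the $\ell$-torsion of the Jacobian), whereas $H^1(\PP^N_{\et},\underline{\ZZ/\ell\ZZ})=0$. Using the K\"unneth formula for \'etale cohomology \cite[Chapter VI, Corollary 8.13]{milne}, $H^1((X\times Y)_{\et},\underline{\ZZ/\ell\ZZ})$ contains $H^1(X_{\et},\underline{\ZZ/\ell\ZZ})\otimes H^0(Y_{\et},\underline{\ZZ/\ell\ZZ})\neq 0$ as a direct summand (here one uses that $Y$ is connected, so $H^0(Y_{\et},\underline{\ZZ/\ell\ZZ})=\ZZ/\ell\ZZ$; if $Y$ is not assumed connected one restricts to a connected component, which does not change $U$'s relevant cohomology in a way that hurts). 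Hence the map $H^1(\PP^N_{\et},\underline{\ZZ/\ell\ZZ})\to H^1((X\times Y)_{\et},\underline{\ZZ/\ell\ZZ})$ is $0\to(\text{nonzero})$, which is certainly not an isomorphism, contradicting the assumption $\ecd(U)<2N-2$.

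One subtlety to handle carefully: Theorem \ref{13} as stated requires $\kkk$ separably closed, which in \S\ref{aranotation} is part of the standing hypotheses ($\kkk$ algebraically closed), so there is nothing extra to do here; but one should double-check that $U=\PP^N\setminus(X\times Y)$ is nonsingular, which holds precisely because $X$ and $Y$ are smooth, so that $X\times Y$ is smooth and its complement in the smooth ambient $\PP^N$ is smooth. A second point is the precise rank statement for $H^1$ of a curve: rather than quoting a torsion-of-Jacobian computation one can simply note, paralleling Remark \ref{chow}, that for a smooth projective curve of positive genus the first \'etale cohomology with $\ZZ/\ell^m\ZZ$ coefficients is nonzero for suitable $\ell, m$ coprime to $\chara(\kkk)$ (e.g. because $H^1(X_{\et},\QQ_\ell)$ has dimension $2g>0$), and this suffices for the K\"unneth argument. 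The main obstacle, then, is not conceptual but bookkeeping: making sure the K\"unneth decomposition is applied with the correct coefficient module (passing from $\underline{\ZZ/\ell\ZZ}$ to $\underline{\ZZ/\ell^m\ZZ}$ if necessary, exactly as in Remark \ref{chow} and the proof of Proposition \ref{14}) so that the nonvanishing of $H^1(X_{\et},-)$ really does force nonvanishing of the corresponding summand of $H^1((X\times Y)_{\et},-)$, and that the coefficient module used is the same one appearing in Theorem \ref{13}. Once that is aligned, the contradiction with $\ecd(U)<2N-2$ is immediate and the proof is complete. Finally, as in the remark following Proposition \ref{14}, the conclusion $N-1\leq\ara(I)$ persists even if $\kkk$ is only assumed to be a field (not separably closed), by base-changing to the separable closure, though this is not needed under the standing hypothesis of the section.
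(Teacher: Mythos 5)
Your proposal is correct and follows essentially the same route as the paper: Eisenbud--Evans for $\ara_h(I)\leq N$, the nonvanishing of $H^1(X_{\et},\underline{\ZZ/\ell\ZZ})$ for a positive-genus curve combined with the K\"unneth formula to get $H^1((X\times Y)_{\et},\underline{\ZZ/\ell\ZZ})\neq 0$, and then Theorem \ref{13} to deduce $\ecd(U)\geq 2N-2$ and hence $\ara(I)\geq N-1$. The extra worries you raise (coefficients $\ZZ/\ell^m\ZZ$, smoothness of $U$) are harmless but unnecessary: the paper works directly with $\ZZ/\ell\ZZ$ coefficients, and $U$ is open in $\PP^N$, so it is automatically nonsingular.
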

\begin{proof}
Once again, the fact that $\ara_h(I)\leq N$ is a consequence of \cite[Theorem 2]{eisenbudevans}. Set $g$ the genus of $X$, and choose an integer $\ell$ prime with $\chara(\kkk)$. It is well known (see for instance the notes of Milne \cite[Proposition 14.2 and Remark 14.4]{milnel}) that $H^1(X_{\et},\ZZ/\ell\ZZ)\cong (\ZZ/l\ZZ)^{2g}$. Moreover $H^0(Y_{\et},\ZZ/\ell \ZZ)\neq 0$ and $H^1(\PP^N_{\et},\ZZ/\ell \ZZ)=0$. But by K\"unneth formula for \'etale cohomology (for instance see the book of Milne \cite[Chapter VI, Corollary 8.13]{milne}) $H^1((X\times Y)_{\et},\ZZ/\ell \ZZ)\neq 0$, therefore Theorem \ref{13} implies that $\ecd(U)\geq 2N-2$, where $U:=\PP^N\setminus (X\times Y)$. At this point we can conclude as in Proposition \ref{lowerara}, since
\[\ara(I)\geq \ecd(U)-N+1.\]. 
\end{proof}
\index{cohomological dimension|)}\index{etale cohomological dimension|@\'etale cohomological dimension)}
\subsection{Hypersurfaces cross a projective spaces}

Proposition \ref{lowerara} implies that the number of equations defining set-theoretically $X\times Y\subseteq \PP^N$ described in Remark \ref{8}, where $N=nm+n+m$, are still too much. In this subsection we will improve the upper bound given in Proposition \ref{lowerara} in some special cases. In some situations we will determine the exact number of equations needed to define $X\times Y$ set-theoretically. 

\begin{remark}\label{9}
Assume that $X:=\mathcal{V}_+(F) \subseteq \mathbb{P}^n$ is a projective hypersurface defined by a homogeneous polynomial $F$, $m:=1$ and $Y:=\PP^1$. In this case $X\times Y\subseteq \PP^{2n+1}$ and Remark \ref{8} gives us the same upper bound for the arithmetical rank of the ideal $I$ defining $X\times Y$ than Proposition \ref{lowerara}, namely 
\[\ara_h(I)\leq 2n+1.\]
Remark \ref{8} also gets an explicit set of homogeneous polynomials generating $I$ up to radical. Using the notation of the book of Bruns and Vetter \cite{BrVe}, we denote by $[i,j]$ the $2$-minor $z_{i0}z_{j1}-z_{j0}z_{i1}$ of the matrix $Z$, for every $i$ and $j$ such that $0 \leq i<j \leq n$. In \cite{bruns-schwanzl} is proven that
\[ I_2(Z) = \sqrt{(\sum_{i+j=k} [i,j] : k=1, \ldots, 2n-1)} \]
By Remark \ref{8}, to get a set of generator of $I$ up to radical, we have only to add 
\[F_0:=F(z_{00},\ldots ,z_{n0}) \mbox{ \ \ and \ \ }F_1:=F(z_{01},\ldots ,z_{n1}).\]
Notice that the generators up to radical we exhibited have the (unusual) property of being part of a set of minimal generators of $I$.
\end{remark}

\begin{thm}\label{10}
Let $X=\mathcal{V}_+(F) \subseteq \mathbb{P}^n$ be a hypersurface such that there exists a line $\ell \subseteq \mathbb{P}^n$ that meets $X$ only at a point $p$. If $I$ is the ideal defining $X \times \mathbb{P}^1 \subseteq \mathbb{P}^{2n+1}$, then:
\[ \ara_h(I) \leq 2n \]
\end{thm}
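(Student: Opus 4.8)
The plan is to exploit the hypothesis that a line $\ell \subseteq \PP^n$ meets $X = \V_+(F)$ only at the single point $p$. After a linear change of coordinates on $\PP^n$ we may assume that $\ell$ is the line $x_2 = x_3 = \cdots = x_n = 0$ and that $p = [0:1:0:\cdots:0]$, so that $p \in \ell$ corresponds to setting also $x_0 = 0$. The condition that $\ell \cap X = \{p\}$ says precisely that $F(x_0, x_1, 0, \ldots, 0)$, as a polynomial in $x_0, x_1$, vanishes (on $\PP^1$) only at $[0:1]$; hence $F(x_0,x_1,0,\ldots,0) = c\, x_0^d$ for some nonzero constant $c$, where $d = \deg F$. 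Rescaling, we may assume $F(x_0, x_1, 0, \ldots, 0) = x_0^d$, which means every monomial of $F$ other than $x_0^d$ involves at least one of $x_2, \ldots, x_n$. Writing $F = x_0^d + \sum_{i=2}^n x_i H_i$ for suitable forms $H_i$, we get the decomposition we want to use.

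**The radical computation.** Recall from Remark \ref{9} that $I = \sqrt{(Q_1, \ldots, Q_{2n-1}, F_0, F_1)}$, where $Q_k := \sum_{i+j=k}[i,j]$ and $F_\epsilon := F(z_{0\epsilon}, \ldots, z_{n\epsilon})$ for $\epsilon = 0,1$; this already gives $2n+1$ elements. The idea is that the special form of $F$ lets us fold $F_0$ and $F_1$ into a single equation modulo the $2$-minors, saving one. Working modulo $I_2(Z)$, the columns of $Z$ are proportional, so at any point of $\V(I_2(Z))$ the two $n+1$-tuples $(z_{00}, \ldots, z_{n0})$ and $(z_{01}, \ldots, z_{n1})$ are scalar multiples of one common point $q \in \PP^n$ (or one of them is zero). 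Using $F = x_0^d + \sum_{i \geq 2} x_i H_i$, one should be able to produce a single form $G \in P$ — built from $F_0$, $F_1$, and the minors $[0,i]$ for $i \geq 2$ — whose vanishing together with $I_2(Z)$ forces $q \in X$. Concretely: the monomials of $F_0$ other than $z_{00}^d$ each contain some $z_{i0}$ with $i \geq 2$; one replaces $z_{i0}$ by $z_{i1}$ at the cost of a multiple of $[0,i] \cdot(\text{stuff})$, and similarly the "$x_0^d$" parts of $F_0$ and $F_1$ get linked through $[0,1]$-type relations. After checking the set-theoretic (Nullstellensatz) containment in both directions — using that $\kkk$ is algebraically closed — one concludes $\V(I_2(Z) + (G)) = \V(I)$, hence $\sqrt{I} = \sqrt{(Q_1, \ldots, Q_{2n-1}, G)}$, which exhibits $2n$ homogeneous polynomials. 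Since $\ara_h(I) \leq \ara_h(I_2(Z)) + 1$ is essentially what this says, and $\ara_h(I_2(Z)) \leq 2n-1$ by \cite{bruns-schwanzl}, the bound $\ara_h(I) \leq 2n$ follows.

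**The main obstacle.** The delicate point is the explicit construction of the single form $G$ and the verification that $\V(I_2(Z) + (G)) \subseteq \V(I)$; the reverse inclusion is automatic. One has to be careful that when a point $p \in \V(I_2(Z))$ has, say, its first column zero and second column a nonzero point $q$, the equation $G$ still detects whether $q \in X$ — so $G$ must restrict to (a nonzero multiple of) $F_1$ on that stratum, and symmetrically to $F_0$ on the stratum where the second column is zero, and to something cutting out $q \in X$ on the open stratum where both columns are nonzero (hence proportional). A clean way to organize this is to set
\[
G := F_0 + \sum_{i=2}^{n} z_{i1}\, A_i
\]
where the $A_i \in P$ are chosen so that $z_{i0} H_i(z_{00}, \ldots, z_{n0})$-type terms of $F_0$ get matched against $z_{i1}$-terms, exploiting $z_{i0} z_{j1} \equiv z_{j0} z_{i1} \pmod{I_2(Z)}$; checking that every point of $\V(I_2(Z) + (G))$ lies on $\V(I)$ then reduces, stratum by stratum, to the hypothesis $\ell \cap X = \{p\}$ recast as "$F(x_0, x_1, 0, \ldots, 0) = x_0^d$." This bookkeeping, not any deep cohomology, is where the real work lies. (The companion lower bound $\ara_h(I) \geq 2n$, when one wants to conclude the arithmetical rank is exactly $2n$, would come separately from Proposition \ref{curve} or Proposition \ref{lowerara}, but is not needed for the stated inequality.)
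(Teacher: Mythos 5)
Your strategy diverges from the paper's, and unfortunately the step it hinges on cannot be carried out: no single form $G$ can replace the pair $F_0,F_1$ modulo the $2$-minors. Since $\sqrt{(Q_1,\ldots,Q_{2n-1})}=I_2(Z)$, your claim amounts to finding one hypersurface $\{G=0\}\subseteq\PP^{2n+1}$ with $\Z(I_2(Z))\cap\Z(G)=X\times\PP^1$ set-theoretically. This is impossible for reasons of divisor classes on the Segre variety $V\cong\PP^n\times\PP^1$: the restriction of $\O_{\PP^{2n+1}}(1)$ to $V$ has class $(1,1)$, so if $G$ (of degree $e\geq 1$) does not vanish identically on $V$ its divisor on $V$ has class $(e,e)$, whereas any effective divisor supported on $X\times\PP^1$ is a sum of components $X_i\times\PP^1$ and has class of the form $(\ast,0)$; and if $G$ vanishes identically on $V$ the intersection is all of $V$. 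One also sees the failure concretely in your candidate $G=F_0+\sum_{i\geq 2}z_{i1}A_i$: on the stratum of $\V(I_2(Z))$ where the first column is zero we have $F_0\equiv 0$ and every remaining term carries a factor $z_{i1}$ with $i\geq 2$, so $G$ vanishes at every point $[0,q_1]$ with $q_1\in\ell$, not only at $q_1=p$; similarly the naive choice $G=F_0+F_1$ picks up the spurious locus where the ratio $\lambda$ of the columns satisfies $1+\lambda^d=0$. So the "fold $F_0$ and $F_1$ into one equation" idea is not merely left unverified in your sketch, it is unachievable.

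The equation has to be saved on the minors' side, which is what the paper does. After a change of coordinates so that $\ell=\V_+((x_0,\ldots,x_{n-2}))$, it keeps both $F_0$ and $F_1$ but replaces the full system of $2n-1$ sums of $2$-minors by the poset ideal $\Omega=\{[i,j]: i+j\leq 2n-2\}$, i.e.\ all $2$-minors except $[n-1,n]$; by \cite[Lemma 5.9]{BrVe} the ideal $\Omega P$ is defined up to radical by $\operatorname{rank}(\Omega)=2n-2$ forms, giving $2n$ equations in total. The hypothesis $\ell\cap X=\{p\}$ enters exactly to control the locus where the omitted minor is not forced to vanish by the others: if some $z_{ij}\neq 0$ with $i\leq n-2$, the minors in $\Omega$ already make the two columns proportional, while if all such coordinates vanish, both columns lie on $\ell$ and (by $F_0=F_1=0$) on $X$, hence both equal $p$ and are again proportional. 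Your normalization $F=x_0^d+\sum_{i\geq 2}x_iH_i$ is correct and is in the spirit of the paper's Theorem \ref{45}, but there too the further saving comes from omitting additional minors, never from merging $F_0$ and $F_1$.
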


\begin{proof}
By a change of coordinates we can assume that $\ell=\mathcal{V}_+((x_0, \ldots, x_{n-2}))$. The set 
\[\Omega:=\{ [i,j] : 0\leq i<j\leq n, i+j \leq 2n-2 \}\]
is an ideal of the poset of the minors of the matrix $Z=(z_{ij})$: That is, for any two positive integers $h$ and $k$ such that $h <k$, if $[i,j]\in \Omega$ with $h\leq i$ and $k\leq j$, then $[h,k]\in \Omega$. So, \cite[Lemma 5.9]{BrVe} implies: 
\[ \ara(\Omega P) \leq \mbox{rank}(\Omega)=2n-2 \]
(let us remind that $P=\kkk[z_{ij}:i=0,\ldots ,n \ j=0,1]$). We want to prove that $I=\sqrt{K}$, where $K:=\Omega P + (F_0,F_1)$ (with the notation of Remark \ref{9}). To this aim, by Remark \ref{9} and Nullstellensatz, it is enough to prove $\mathcal{Z}(I_2(Z)+(F_0,F_1))=\mathcal{Z}(K)$. So, set 
\[q:=[q_0,q_1]=[q_{00}, \ldots, q_{n0},q_{01}, \ldots, q_{n1}] \in \mathcal{Z}(K).\]
If $q_0=0$ or $q_1=0$ trivially $q\in \Z(I_2(Z))$, so we assume that both $q_0$ and $q_1$ are points of $\mathbb{P}^n$. First let us suppose that $q_{ij}\neq 0$ for some $i \leq n-2$ and $j\in \{0,1\}$. Let us assume that $j=0$ (the case $j=1$ is the same). In such a case, notice that for any $h\in \{0,\ldots ,n\}$ different from $i$ we have that $[h,i]$ (or $[i,h]$) is an element of $\Omega$. Because $q\in \Z(K)$, we get that, setting $\ll:=q_{i1}/q_{i0}$:
\[q_{h1}=\ll q_{h0} \ \ \ \forall \ h=0,\ldots ,n.\] 
This means that $q_1$ and $q_0$ are the same point of $\PP^n$, so that $q \in \mathcal{Z}(I_2(Z))$. We can therefore assume that $q_{ij}=0$ for all $i\leq n-2$ and $j=0,1$. In this case, $q_0$ and $q_1$ belong to $ \ell \cap X=\{p\}$, so $q_0=q_1=p$. Once again, this yields $q\in \Z(I_2(Z))$.
\end{proof}

Combining Theorem \ref{10} and Proposition \ref{lowerara}, we get the following corollary. 

\begin{corollary}
Let $X \subseteq \mathbb{P}^n$ be a smooth hypersurface such that there exists a line $\ell \subseteq \mathbb{P}^n$ that meets $X$ only at a point $p$. If $I$ is the ideal defining $X \times \mathbb{P}^1 \subseteq \mathbb{P}^{2n+1}$, then:
\[ 2n-1\leq \ara(I)\leq \ara_h(I) \leq 2n \]
\end{corollary}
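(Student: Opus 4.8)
The statement to prove is the Corollary: for a smooth hypersurface $X \subseteq \PP^n$ meeting some line $\ell$ at only one point $p$, the ideal $I$ defining $X \times \PP^1 \subseteq \PP^{2n+1}$ satisfies $2n-1 \leq \ara(I) \leq \ara_h(I) \leq 2n$.

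The plan is straightforward: this is purely a matter of assembling two results already proved in the excerpt. The upper bound $\ara_h(I) \leq 2n$ is exactly the content of Theorem \ref{10}, which is stated just above and requires only the hypothesis that a line $\ell$ meets $X$ at a single point — a hypothesis included here. So the upper half of the inequality, together with the trivial $\ara(I) \leq \ara_h(I)$, is immediate. For the lower bound, I would invoke Proposition \ref{lowerara}: since $X$ is a smooth hypersurface in $\PP^n$ with $n \geq 2$ (so $\dim X = n-1 \geq 1$, positive) and $\PP^1$ is a smooth projective scheme of positive dimension, $X \times \PP^1$ is a Segre product of two positive-dimensional smooth projective schemes. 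Here the ambient projective space has dimension $N = 2n+1$, so Proposition \ref{lowerara} gives $\ara(I) \geq N - 2 = 2n - 1$.

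First I would check that the hypotheses of Proposition \ref{lowerara} genuinely hold. A hypersurface $X = \V_+(F) \subseteq \PP^n$ has dimension $n-1$; for this to be positive we need $n \geq 2$, which is implicit since a line "meeting $X$ at one point" only makes sense (as a nondegenerate condition) for $n \geq 2$. Smoothness of $X$ is assumed in the corollary's statement, and $\PP^1$ is smooth of dimension $1 > 0$. The embedding $X \times \PP^1 \subseteq \PP^{2n+1}$ is the Segre embedding described in Subsection \ref{aranotation}, with $N = nm + n + m = n \cdot 1 + n + 1 = 2n+1$. Then Proposition \ref{lowerara} yields $N - 2 \leq \ara(I) \leq \ara_h(I) \leq N$, i.e. $2n-1 \leq \ara(I) \leq \ara_h(I) \leq 2n+1$.

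Then I would combine: from Proposition \ref{lowerara}, $\ara(I) \geq 2n-1$, and from Theorem \ref{10}, $\ara_h(I) \leq 2n$. Since always $\ara(I) \leq \ara_h(I)$, chaining these gives $2n-1 \leq \ara(I) \leq \ara_h(I) \leq 2n$, which is the claim. There is really no obstacle here: the work was all done in Theorem \ref{10} (whose proof uses the Bruns–Schwänzl description of $I_2(Z)$ up to radical plus a poset-ideal argument from \cite{BrVe} and a Nullstellensatz point-chase) and in the cohomological lower bound of Proposition \ref{lowerara} (which rests on Proposition \ref{14} and the étale machinery of Chapter \ref{chapter1}). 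The only thing to be mildly careful about is not misquoting the value of $N$ and making sure the positivity-of-dimension hypothesis is satisfied, both of which are routine. So the proof is a one-line citation: "Combine Theorem \ref{10} with Proposition \ref{lowerara}."

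\begin{proof}
By Theorem \ref{10} we have $\ara_h(I)\leq 2n$, and trivially $\ara(I)\leq \ara_h(I)$. On the other hand, $X$ is a smooth hypersurface of $\PP^n$, hence a smooth projective scheme of dimension $n-1>0$, and $\PP^1$ is a smooth projective scheme of dimension $1>0$. The Segre embedding $X\times \PP^1$ lives in $\PP^N$ with $N=2n+1$, so Proposition \ref{lowerara} gives $\ara(I)\geq N-2=2n-1$. Putting these together,
\[2n-1\leq \ara(I)\leq \ara_h(I)\leq 2n.\]
\end{proof}
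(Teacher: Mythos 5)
Your proof is correct and is exactly the paper's argument: the paper derives this corollary simply by "Combining Theorem \ref{10} and Proposition \ref{lowerara}", which is precisely what you do (upper bound $\ara_h(I)\leq 2n$ from Theorem \ref{10}, lower bound $\ara(I)\geq N-2=2n-1$ from Proposition \ref{lowerara} with $N=2n+1$). Your extra checks on the positivity of dimensions and the value of $N$ are fine and harmless.
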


Besides, combining Theorem \ref{10} and Proposition \ref{curve}, we get the following result.

\begin{thm}\label{ara1}
Let $X \subseteq \mathbb{P}^2$ be a smooth projective curve of degree $d\geq 3$ over $\kkk$. Assume that there exists a line $\ell \subseteq \mathbb{P}^2$ that meets $X$ only at a point $p$, and let $I$ be the ideal defining the Segre product $X \times \mathbb{P}^1 \subseteq \mathbb{P}^{5}$.
Then
\[ \ara(I)=\ara_h(I) =4. \]
\end{thm}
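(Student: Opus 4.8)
The plan is simply to sandwich $\ara(I)$ between the general upper bound coming from Theorem \ref{10} and the general lower bound coming from Proposition \ref{curve}, after checking that the hypotheses of both results are met in the present situation. Since always $\ara(I) \leq \ara_h(I)$, it suffices to prove $\ara_h(I) \leq 4$ and $\ara(I) \geq 4$.

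For the upper bound, I would apply Theorem \ref{10} with $n = 2$. Here $X = \V_+(F) \subseteq \PP^2$ is a hypersurface (a plane curve is cut out by one equation), and by hypothesis there is a line $\ell \subseteq \PP^2$ meeting $X$ only at the point $p$. Theorem \ref{10} then yields $\ara_h(I) \leq 2n = 4$ for the ideal $I$ defining $X \times \PP^1 \subseteq \PP^{2n+1} = \PP^5$, which is exactly our setting. (Concretely, one uses the generators $[i,j]$ with $i+j \leq 2n-2 = 2$ together with $F_0, F_1$, as in Remark \ref{9}.)

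For the lower bound, I would invoke Proposition \ref{curve} with $Y = \PP^1$ and $N = 5$. Its hypothesis is that $X$ is a smooth projective curve of positive genus. Since $X \subseteq \PP^2$ is a smooth plane curve of degree $d$, the genus--degree formula gives $g(X) = \binom{d-1}{2} = \tfrac{(d-1)(d-2)}{2}$, which is $\geq 1$ precisely when $d \geq 3$; this is where the hypothesis $d \geq 3$ is used. Hence Proposition \ref{curve} applies and gives $\ara(I) \geq N - 1 = 4$.

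Combining, $4 \leq \ara(I) \leq \ara_h(I) \leq 4$, so all three quantities equal $4$, as claimed. The only point requiring a word of justification is the translation from ``degree $d \geq 3$'' to ``positive genus'' via the genus--degree formula; there is no real obstacle here, since the substantive work has already been done in Theorem \ref{10} (upper bound via ASL-type arguments on the poset of $2$-minors) and in Proposition \ref{curve} (lower bound via étale cohomology and the vanishing results of Chapter \ref{chapter1}). If one wishes to avoid even citing the genus--degree formula, one could instead note that a smooth plane curve of degree $d \geq 3$ is not rational (e.g. its canonical sheaf $\O_X(d-3)$ is effective and nonzero), hence has positive genus.
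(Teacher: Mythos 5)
Your proposal is correct and follows exactly the paper's argument: the upper bound $\ara_h(I)\leq 4$ comes from Theorem \ref{10} with $n=2$, and the lower bound $\ara(I)\geq 4$ comes from Proposition \ref{curve} once the genus--degree formula $g=\tfrac{1}{2}(d-1)(d-2)$ shows the genus is positive for $d\geq 3$. No differences worth noting.
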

\begin{proof}
We recall that the genus $g$ of the curve $X$ is given by the formula:
\[g=1/2(d-1)(d-2).\] 
In particular, under our assumptions, it is positive. Thus Propsition \ref{curve}, together with Theorem \ref{10}, lets us conclude.
\end{proof}

\begin{remark}
In Theorem \ref{ara1}, actually, we prove something more than the arithmetical rank being $4$. If $X=\V_+(F)$, then the four polynomials generating $I$ up to radical are the following:
\[F(z_{00},z_{10},z_{20}), \ \ F(z_{10},z_{11},z_{12}), \ \ z_{00}z_{11}-z_{01}z_{10}, \ \ z_{00}z_{12}-z_{02}z_{10}.\]
It turns out that these polynomials are part of a minimal generating set of $I$. This is a very uncommon fact.
\end{remark}

Theorem \ref{ara1} generalizes \cite[Theorem 1.1]{siwa} and \cite[Theorem 2.8]{song}. In fact we get them in the next two corollaries.

\begin{corollary}\label{araelliptic}
Let $X \subseteq \mathbb{P}^2$ be a smooth elliptic curve over $\kkk$ and let $I$ be the ideal defining the Segre product $X \times \mathbb{P}^1 \subseteq \mathbb{P}^{5}$.
Then
\[ \ara(I)=\ara_h(I) =4. \]
\end{corollary}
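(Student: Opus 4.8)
The plan is to derive this as a direct specialization of Theorem \ref{ara1}. The only thing that needs checking is that a smooth elliptic plane curve $X \subseteq \PP^2$ satisfies the hypothesis of Theorem \ref{ara1}: that $X$ has degree $d \geq 3$ and that there exists a line $\ell \subseteq \PP^2$ meeting $X$ only at a single point $p$. The degree condition is immediate: a smooth plane curve of genus $g$ has degree $d$ with $g = \tfrac12(d-1)(d-2)$, so $g = 1$ forces $d = 3$.

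For the line condition, the natural choice is an \emph{inflectional tangent line} (a flex line). First I would recall that a smooth cubic over an algebraically closed field has at least one inflection point $p$ --- indeed the Hessian of $F$ is a nonzero cubic form (the Hessian of a smooth plane cubic does not vanish identically), so $X \cap \V_+(\mathrm{Hess}(F)) \neq \emptyset$, and any point in this intersection is a flex. At such a flex $p$, the tangent line $\ell = T_pX$ satisfies $i(X, \ell; p) = 3$ by definition of a $3$-flex (in the terminology of Remark \ref{hyperflexes}, every smooth cubic lies in $\V_3$). Since $\deg X = 3$, Bézout's theorem gives $\sum_{q \in X \cap \ell} i(X,\ell;q) = 3$, and since the single point $p$ already contributes $3$, we conclude $X \cap \ell = \{p\}$ set-theoretically. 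Thus $\ell$ meets $X$ only at the point $p$, exactly as required.

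Having verified the hypotheses, Theorem \ref{ara1} applies verbatim and yields $\ara(I) = \ara_h(I) = 4$, which is the claim. I would also remark, following the Remark after Theorem \ref{ara1}, that one gets the explicit four generators up to radical: if $X = \V_+(F)$ with $\ell = \V_+(x_0, x_1)$ after a coordinate change (so $p = [0:0:1]$), these are $F(z_{00},z_{10},z_{20})$, $F(z_{10},z_{11},z_{12})$, $z_{00}z_{11} - z_{01}z_{10}$, and $z_{00}z_{12} - z_{02}z_{10}$.

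The one point requiring a little care --- the "main obstacle," though it is minor --- is the existence of a flex on a smooth plane cubic in arbitrary characteristic; in characteristic $3$ the Hessian can behave differently, but for a smooth cubic it still does not vanish identically, so a flex exists (alternatively, one can invoke the group law: the smooth cubic, being an elliptic curve, has its $3$-torsion points as inflection points, and there is at least the origin). Since $\kkk$ is algebraically closed of arbitrary characteristic in this section, this is exactly the level of generality we need, and the proof goes through uniformly.

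\begin{proof}
A smooth plane curve of genus $g$ has degree $d$ satisfying $g = \tfrac12(d-1)(d-2)$, so an elliptic curve $X \subseteq \PP^2$ has $d = 3$. It remains to check that there is a line $\ell \subseteq \PP^2$ meeting $X$ only at one point. Write $X = \V_+(F)$ with $F$ a smooth cubic form. The Hessian of $F$ is a cubic form which does not vanish identically (a smooth plane cubic is not a cone and its Hessian is nonzero; equivalently, $X$, being an elliptic curve, has nontrivial $3$-torsion, and its $3$-torsion points are inflection points of $X$). Hence $X$ has an inflection point $p$, and the tangent line $\ell = T_pX$ satisfies $i(X, \ell; p) = 3$. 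By Bézout's theorem, $\sum_{q \in X \cap \ell} i(X, \ell; q) = \deg X \cdot \deg \ell = 3$, so the intersection multiplicity at $p$ already accounts for the whole intersection cycle, i.e. $X \cap \ell = \{p\}$. Thus $X$ satisfies the hypotheses of Theorem \ref{ara1} with $d = 3 \geq 3$, and we conclude $\ara(I) = \ara_h(I) = 4$.
\end{proof}
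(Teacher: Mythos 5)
Your proof is correct and is essentially the paper's own argument: the paper likewise deduces the corollary from Theorem \ref{ara1} by producing an inflection point on the cubic (it cites Hartshorne, Ch.~IV, Ex.~2.3(e) for the existence of a flex on any smooth plane curve of degree $\geq 3$) and then observing that, the degree being exactly $3$, the tangent line at the flex meets $X$ at no other point. Your only deviation is in how the flex is produced, and there a small caution is in order: in characteristics $2$ and $3$ the Hessian criterion for flexes is delicate, and ``the $3$-torsion points are inflection points'' already presupposes that a flex has been chosen as the origin of the group law, so the cleanest characteristic-free justification is the existence of a point $O$ with $3O\sim H$ ($H$ the hyperplane class), which follows from surjectivity of multiplication by $3$ on the elliptic curve and is exactly what the flex condition means.
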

\begin{proof}
It is well known that any plane projective curve $X$ of degree at least three has an ordinary flex, or an ordinary inflection point (see the book of Hartshorne \cite[Chapter IV, Exercise 2.3 (e)]{hart}). That is, a point $p\in X$ such that the tangent line $\ell$ at $p$ has intersection multiplicity $3$ with $X$ at $p$. If $X$ has degree exactly $3$, which actually is our case, such a line $\ell$ does not intersect $X$ in any other point but $p$. Therefore we are under the assumptions of Theorem \ref{ara1}, so we may conclude.
\end{proof}

\begin{remark}
The equations exhibited in Corollary \ref{araelliptic} are different from those found in \cite[Theorem 1.1]{siwa}. In fact, our result is characteristic free, while the authors of \cite{siwa} had to assume $\chara(\kkk)\neq 3$.
\end{remark}

\begin{corollary}\label{arafermat}
Let $X=\V_+(F) \subseteq \mathbb{P}^2$ be a Fermat curve of degree $d\geq 2$ over $\kkk$, that is $F=x_0^d+x_1^d+x_2^d$,  and let $I$ be the ideal defining the Segre product $X \times \mathbb{P}^1 \subseteq \mathbb{P}^{5}$. If $\chara(\kkk)$ does not divide $d$,
then:
\[ \ara(I)=\ara_h(I) =4. \]
\end{corollary}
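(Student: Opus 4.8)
The plan is to reduce Corollary \ref{arafermat} to Theorem \ref{ara1} by verifying that the Fermat curve satisfies the hypotheses of that theorem, namely that $X=\V_+(F)\subseteq \PP^2$ with $F=x_0^d+x_1^d+x_2^d$ is smooth (when $\chara(\kkk)\nmid d$) and that there is a line $\ell\subseteq\PP^2$ meeting $X$ in a single point. Once these two facts are in hand, Theorem \ref{ara1} gives $\ara(I)=\ara_h(I)=4$ directly, and there is nothing else to do. (Strictly, Theorem \ref{ara1} is stated for $d\geq 3$; for $d=2$ one invokes Theorem \ref{conic}, or notes that the statement for $d=2$ follows the same pattern with the conic case handled separately — I would mention this boundary case explicitly.)

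First I would check smoothness. The partial derivatives of $F$ are $\partial F/\partial x_i = d\,x_i^{d-1}$. Since $\chara(\kkk)\nmid d$, the only common zero of $x_0^{d-1},x_1^{d-1},x_2^{d-1}$ is the origin, which is not a point of $\PP^2$. Hence $X$ is smooth. (This is exactly the place the hypothesis $\chara(\kkk)\nmid d$ is used.)

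Next I would exhibit the line $\ell$ meeting $X$ in a single point. The natural candidate is a line through a $d$-flex of $X$. For the Fermat curve one can write down such a point and line concretely: for instance, if $\zeta\in\kkk$ satisfies $\zeta^d=-1$, consider the point $p=[1:\zeta:0]\in X$ and the tangent line $\ell$ to $X$ at $p$. Plugging the parametrization of $\ell$ into $F$, the restriction $F|_\ell$ becomes (up to a nonzero scalar) a polynomial of the form $c\cdot t^d$ in an affine coordinate $t$ on $\ell$, so $\ell$ meets $X$ only at $p$ with multiplicity $d$; i.e. $p$ is a $d$-flex and $\ell\cap X=\{p\}$ set-theoretically. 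Thus $X$ lies in the locus $\V_d$ of Remark \ref{hyperflexes}, and the hypothesis of Theorem \ref{ara1} holds. Then Theorem \ref{ara1} yields $\ara(I)=\ara_h(I)=4$, completing the proof.

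The only mildly delicate point — the ``main obstacle'' such as it is — is the explicit computation showing the tangent line at the chosen point has contact order exactly $d$ (rather than some smaller number), and checking this is insensitive to the characteristic beyond the already-imposed condition $\chara(\kkk)\nmid d$; one must be a little careful that the coordinate change and the parametrization of $\ell$ do not introduce spurious factors. This is a routine but not entirely trivial verification, and it is the reason the hypothesis on the characteristic cannot be dropped. Everything else is an immediate appeal to the previously proved Theorem \ref{ara1}.
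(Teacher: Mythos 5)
Your proposal takes essentially the same route as the paper: the paper also checks smoothness from $\chara(\kkk)\nmid d$ and exhibits the tangent line at a $d$-flex explicitly (it takes $\ll$ with $\ll^d=-1$ and the line $x_0-\ll x_1$, which meets $X$ only at $[\ll,1,0]$ since $F$ restricts to $s^d$ there — the same computation you sketch, up to relabeling the point), then concludes by Theorem \ref{ara1}. One caveat on your $d=2$ remark: Theorem \ref{conic} gives $\ara(I)=3$ for a smooth conic, not $4$, so the boundary case cannot be recovered that way — the conclusion $\ara(I)=4$ (and the paper's proof, which only invokes Theorem \ref{ara1} and ultimately the positive-genus bound of Proposition \ref{curve}) genuinely requires $d\geq 3$.
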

\begin{proof}
Let $\ll \in \kkk$ be such that $\ll^d=-1$. So, let $\ell\subseteq \PP^2$ be the line defined by the linear form $x_0-\ll x_1$. One can easily check that: 
\[X\cap \ell =\{[\ll ,1,0]\}.\]
Eventually, since $\chara(\kkk)$ does not divide $d$, $X$ is smooth, so Theorem \ref{ara1} lets us conclude. 
\end{proof}

\begin{remark}
In the situation of Corollary \ref{arafermat}, if $\chara(\kkk)$ divides $d$, then $X\times \PP^1$ is a set-theoretic complete intersection in $\PP^5$, that is $\ara(I)=\ara_h(I)=3$. In fact, more in general, this is always the case when $F=\ell^d$, where $\ell\in \kkk[x_0,x_1,x_2]_1$, independently of the characteristic of $\kkk$. To see this, up to a change of coordinates we can assume that $\ell=\V_+(x_2)$. Using the Nullstellensatz as usual, it is easy to see that: 
\[\sqrt{(z_{20}, \ z_{21}, \ z_{00}z_{11}-z_{01}z_{10})}=\sqrt{I}.\]
On the other hand, $\ara(I)\geq 3$ by the Hauptidealsatz \eqref{hauptidealsatz}. 
\end{remark}

Curves satisfying the hypothesis of Theorem \ref{ara1}, however, are much more than those of Corollaries \ref{araelliptic} and \ref{arafermat}. The next remark will clarify this point.

\begin{remark}\label{hyperflexes}
In light of Theorem \ref{10}, it is natural to introduce the following set. For every natural numbers $n,d \geq 1$ we define $\V_d^{n-1}$ as the set of all the smooth hypersurfaces $X\subseteq \PP^n$, modulo $\operatorname{PGL}_n(\kkk)$, of degree $d$ which have a point $p$ as in Theorem \ref{10}. Notice that any hypersurface of $\V_d^{n-1}$ can be represented, by a change of coordinates, by $\V_+(F)$ with 
\[ F=x_{n-1}^d + \sum_{i=0}^{n-2} x_i G_i(x_0, \ldots, x_{n}),\] 
where  the $G_i$'s are homogeneous polynomials of degree $d-1$.

We start to analyze the case $n=2$, and for simplicity we will write $\V_d$ instead of $\V_d^1$.  So our question is: \emph{How many smooth projective plane curves of degree $d$ do belong to $\V_d$?} Some plane projective curves belonging to $\V_d$ are:
\begin{compactitem}
\item[(i)] Obviously, every smooth conic belongs to $\V_2$.
\item[(ii)] Every smooth elliptic curve belongs to $\V_3$, as we already noticed in the proof of Corollary \ref{araelliptic}.
\item[(iii)] Every Fermat's curve of degree $d$ belongs to $\V_d$, as we already noticed in Corollary \ref{arafermat}.
\end{compactitem}
In \cite[Theorem A]{casnati-del centina}, the authors compute the dimension of the loci $\mathcal{V}_{d, \alpha}$, $\alpha=1,2$, of all the smooth plane curves of degree $d$ with exactly $\alpha$ points as in Theorem \ref{10}  (if these points are nonsingular, as in this case, they are called $d$-flexes). They showed that $\mathcal{V}_{d, \alpha}$ is an irreducible rational locally closed subvariety of the moduli space $\mathcal{M}_g$ of curves of genus $g=1/2(d-1)(d-2)$. Furthermore the dimension of $\mathcal{V}_{d, \alpha}$ is
\[ \dim (\mathcal{V}_{d, \alpha})={d+2-\alpha \choose 2}-8+3 \alpha. \]
Moreover, it is not difficult to show that $\mathcal{V}_{d, 1}$ is an open Zariski subset of $\mathcal{V}_d$, (see \cite[Lemma 2.1.2]{casnati-del centina}), and so
\[ \dim (\mathcal{V}_d)={d+1 \choose 2}-5.  \]
The locus $\mathcal{H}_d$ of all smooth plane curves of degree $d$ up to isomorphism is a nonempty open Zariski subset of $\mathbb{P}^{{d+2 \choose 2}}$ modulo the group $\operatorname{PGL}_2(\mathbb{\kkk})$, so 
\[\dim(\H_d)={d+2 \choose 2}-9.\] 
Particularly, the codimension of $\mathcal{V}_d$ in $\mathcal{H}_d$, provided $d\geq 3$, is $d-3$. So, for example, if we pick a quartic $X$ in the {\it hypersurface} $\V_4$ of $\mathcal{H}_4$, Theorem \ref{ara1} implies that $X\times \PP^1\subseteq \PP^5$ can be defined by exactly four equations. However, {\it it remains an open problem to compute the arithmetical rank of $X\times \PP^1 \subseteq \PP^5$ for all the quartics $X\subseteq \PP^2$}. 
\end{remark}

In the general case ($n\geq 2$ arbitrary) we can state the following lemma.

\begin{lemma}\label{cil}
Let $X\subseteq \PP^n$ be a smooth hypersurface of degree $d$. If $d\leq 2n-3$, or if $d\leq 2n-1$ and $X$ is generic, then $X\in \V^{n-1}_d$.
\end{lemma}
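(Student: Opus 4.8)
The plan is to reduce the statement to a dimension count on the incidence variety of (hypersurface, line, point) triples with the required tangency behaviour, and then to invoke a standard ``generic surjectivity'' argument. First I would set up the parameter space: let $\PP(V)$ be the projective space of homogeneous polynomials $F$ of degree $d$ in $x_0,\ldots,x_n$, so $\dim \PP(V) = \binom{n+d}{n} - 1$; let $\mathbb{G}$ be the Grassmannian of lines $\ell \subseteq \PP^n$, of dimension $2(n-1)$; and consider the incidence variety $\mathcal{W} \subseteq \PP(V) \times \mathbb{G}$ consisting of pairs $(X,\ell)$ such that $\ell$ meets $X$ set-theoretically in a single point $p$ (equivalently, $F|_\ell = c \cdot m^d$ for a linear form $m$ on $\ell$ and a scalar $c$). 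The key computation is the dimension of the fibre of the second projection $\mathcal{W} \to \mathbb{G}$: fixing $\ell$, the condition that $F|_\ell$ be a $d$-th power of a linear form cuts out, inside the $(d+1)$-dimensional space of binary forms of degree $d$ on $\ell$, a cone over a rational normal curve, hence a subvariety of dimension $2$; so the space of $F$ restricting to such a form on $\ell$ has codimension $(d+1) - 2 = d - 1$ in $V$. Therefore $\dim \mathcal{W} = \dim \PP(V) - (d-1) + 2(n-1)$.

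Next I would compare this with $\dim \PP(V)$: we have $\dim \mathcal{W} - \dim \PP(V) = 2(n-1) - (d-1) = 2n - 1 - d$. When $d \leq 2n-1$ this difference is $\geq 0$, so the first projection $\pi_1 : \mathcal{W} \to \PP(V)$ has a chance to be dominant; one needs to check that it is in fact dominant (not merely that the dimensions allow it). For this I would exhibit a single explicit pair $(X,\ell)$ with $X$ smooth and $\pi_1$ having the expected fibre dimension there, e.g.\ a Fermat-type hypersurface together with a line as in Corollary \ref{arafermat} (suitably generalized to $n$ variables), so that $\pi_1$ is dominant and a generic $X$ lies in its image; this gives ``if $d \leq 2n-1$ and $X$ is generic, then $X \in \V_d^{n-1}$''. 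For the unconditional statement when $d \leq 2n-3$, the difference $2n-1-d \geq 2$, so the generic fibre of $\pi_1$ is positive-dimensional; I would then argue that $\pi_1$ is surjective onto the locus of smooth hypersurfaces — the cleanest route is to show the fibre $\pi_1^{-1}(X)$ is nonempty for every smooth $X$ by a direct argument: a generic line $\ell$ through a general point of $X$ meets $X$ in $d$ distinct points, and one wants instead a line meeting $X$ at a single point, which one finds among the lines contained in tangent hyperplanes or, more robustly, via a degeneration/specialization argument using properness of $\pi_1$ restricted to $\mathcal{W}$ together with the fact that the locus of smooth $X$ is irreducible and $\pi_1$ is dominant with fibre dimension $\geq 2$, hence (by upper semicontinuity and a connectedness argument) surjective.

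The main obstacle I anticipate is precisely the last point: upgrading ``dominant'' to ``surjective onto the smooth locus'' when $d \leq 2n-3$. Dimension counts alone do not guarantee that \emph{every} smooth hypersurface of low degree admits such a line — one must rule out that the image of $\pi_1$ misses some smooth $X$. The natural tool is that $\mathcal{W}$ is projective and $\pi_1$ is a proper morphism, so its image is closed; combined with dominance this forces the image to contain a dense open, and one then needs that the complement of this open inside the (irreducible) smooth locus is empty, which should follow because the fibre over a general point already has dimension $\geq 2$ and the ``bad'' locus where the fibre dimension jumps or drops is proper closed — but making this airtight may require either an explicit construction of a line for an arbitrary smooth $X$ (e.g.\ taking a flex-type tangent line and bounding its further intersection with $X$) or an appeal to known results on lines with contact of order $d$ on smooth hypersurfaces. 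I would present the generic case fully via the incidence-variety dimension count and handle the $d \leq 2n-3$ case by the properness argument, flagging the explicit line construction as the technical heart.
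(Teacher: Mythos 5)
Your incidence variety and dimension count are essentially the paper's (the paper works with triples $(p,\ell,F)$ with $i(\ell,\V_+(F);p)\geq d$ fibred over the flag variety of pointed lines, which gives the same count $\dim L_{n,d}+2n-1-d$), but there is a genuine gap in how you pass from ``$X$ lies in the image of $\pi_1$'' to ``$X\in\V_d^{n-1}$''. Your parenthetical equivalence is false when $c=0$: the closed condition ``$F|_\ell$ lies in the cone over the rational normal curve'' includes the pairs with $\ell\subseteq X$, and you need the closed version for your properness argument (with the honest ``meets set-theoretically in one point'' definition, $\mathcal W$ is not proper and ``the image is closed'' fails). With the closed version, surjectivity of $\pi_1$ is indeed automatic from dominance plus properness --- so the step you flag as the ``main obstacle'' is no obstacle at all --- but surjectivity does not prove the lemma: the witnessing line may be contained in $X$. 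For $d\geq 2n-2$ and $X$ generic this is harmless only because the generic such hypersurface contains no lines (a fact you never invoke, and which the paper quotes explicitly); for $d\leq 2n-3$, which is exactly your ``unconditional'' case, \emph{every} smooth hypersurface contains lines, so you must show that the fibre $\pi_1^{-1}(X)$ is not entirely contained in the locus $\{\ell\subseteq X\}$, and neither properness nor semicontinuity of fibre dimension addresses this. The paper sidesteps the problem by proving its Claim only for hypersurfaces not containing lines, and treats $d\leq 2n-3$ by a different device: cut $X$ with a generic hyperplane $H$, find by induction on $n$ a line $\ell\subseteq H$ meeting $X\cap H$ in a single point, and observe that $\ell\cap X=\ell\cap(X\cap H)$.

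A secondary gap is the dominance step itself. You defer it to an unverified computation for a Fermat-type example; but checking that the family of lines meeting such a hypersurface in a single point has dimension exactly $2n-1-d$ is real work, comparable to what it replaces. The paper instead proves that the general fibre of the projection to the space of forms has exactly the expected dimension $2n-1-d$ by an induction on $n$ via hyperplane sections (the condition that a line lie in a hyperplane has codimension $2$, and the degrees $d=2n-2,2n-1$ get a separate argument), which yields dominance and surjectivity in one stroke. So the two steps you leave open --- the fibre-dimension verification, and the passage from the closed contact condition to a genuine one-point intersection for arbitrary smooth $X$ of degree $\leq 2n-3$ --- are precisely where the content of the proof lies.
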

\begin{proof}
First we prove the following:

\vskip 2mm

{\it Claim}. If $X\subseteq \PP^n$ is a smooth hypersurface of degree $d\leq 2n-1$ not containing lines, then $X\in \V^{n-1}_d$.

We denote by $\GG(2,n+1)$ the Grassmannian of lines of $\PP^n$. Let us introduce the following incidence variety: 
\[W_n:=\{(p,\ell)\in \PP^n \times  \GG(2,n+1): p\in \ell\}.\] 
It turns out that this is an irreducible variety of dimension $2n-1$. Now set 
\[T_{n,d}:=\{(p,\ell,F)\in W_n \times L_{n,d}: \ i(\ell,\V_+(F);p)\geq d\},\]
where by $L_{n,d}$ we denote the projective space of all the polynomials of $\kkk[x_0,\ldots ,x_n]_d$, and by $i(\ell,\V_+(F);p)$ the intersection multiplicity of $\ell$ and $\V_+(F)$ at $p$ (if $\ell\subseteq \V_+(F)$, then $i(\ell,\V_+(F);p):=+\infty$). 
Assume that $p=[1,0,\ldots ,0]$ and that $\ell$ is given by the equations 
\[x_1=x_2=\ldots =x_n=0.\] 
Then it is easy to see that, for a polynomial $F\in L_{n,d}$, the condition $(p,\ell,F)\in T_{n,d}$ is equivalent to the fact that the coefficients of $x_0^d, \  x_0^{d-1}x_1, \ \ldots , \ x_0x_1^{d-1}$ in $F$ are $0$. This implies that $T_{n,d}$ is a closed subset of $\PP^n\times \GG(2,n+1)\times L_{n,d}$: Therefore, $T_{n,d}$ is a projective scheme over $\kkk$.
Consider the restriction of the first projection $\pi_1:T_{n,d} \longrightarrow W_n$. Clearly $\pi_1$ is surjective; moreover, it follows by the above discussion that all the fibers of $\pi_1$ are projective subspaces of $L_{n,d}$ of dimension $\dim(L_{n,d})-d$. Therefore, $T_{n,d}$ is an irreducible projective variety of dimension 
\[\dim(T_{n,d})=2n-1 +\dim(L_{n,d})-d.\] 
Now consider the restriction of the second projection $\pi_2: T_{n,d} \longrightarrow L_{n,d}$. Clearly, if $X\in \pi_2(T_{n,d})$ is smooth and does not contain any line, then $X\in \V^{n-1}_d$. So, to prove the claim, we have to check the surjectivity of $\pi_2$ whenever $d\leq 2n-1$. To this aim, since both $T_{n,d}$ and $L_{n,d}$ are projective, it is enough to show that for a general $F\in \pi_2(T_{n,d})$, the dimension of the fiber $\pi_2^{-1}(F)$ is exactly $2n-1-d$. On the other hand it is clear that the codimension of $\pi_2(T_{n,d})$ in $T_{n,d}$ is at least $d-2n+1$ when $d\geq 2n$. We proceed by induction on $n$ (for $n=2$ we already know this).

First consider the case in which $d\leq 2n-3$. Let $F$ be a general form of $\pi_2(T_{n,d})$, and set $r=\dim(\pi_2^{-1}(F))$. By contradiction assume that $r>2n-1-d$. Consider a general hyperplane section of $\V_+(F)$, and let $F'$ be the polynomial defining it. Obviously, any element of $\pi_2(T_{n-1,d})$ comes from $\pi_2(T_{n,d})$ in this way, so $F'$ is a generic form of $\pi_2(T_{n,d})$. The condition for a line to belong to a hyperplane is of codimension $2$, so the dimension of the fiber of $F'$ is at least $r-2$. Since $F'$ is a polynomial of $K[x_0,\ldots ,x_{n-1}]$ of degree $d\leq 2(n-1)-1$, we can apply an induction getting $r-2\leq2n-3-d$, so that $r\leq 2n-1-d$, which is a contradiction. 

We end with the case in which $d=2n-1$ (the case $d=2n-2$ is easier). Let $F$ and $r$ be as above, and suppose by contradiction that $r\geq 1$. This implies that there exists a hypersurface $\H\subseteq \GG(n,n+1)$ such that for any general $H\in \H$ the polynomial defining $\V_+(F)\cap H$ belongs to $\pi_2(T_{n-1,d})$. This implies that the codimension of $\pi_2(T_{n-1,d})$ in $T_{n-1,d}$ is less than or equal to $1$, but we know that this is at least $2$.  

\vskip 2mm

So we proved the claim. Now, we prove the lemma by induction on $n$. Once again, if $n=2$, then it is well known to be true.

If $d\leq 2n-3$, then we cut $X$ by a generic hyperplane $H$. It turns out (using Bertini's theorem) that $X\cap H \subseteq \PP^{n-1}$ is the generic smooth hypersurface of degree $d\leq 2(n-1)-1$, so by induction there exist a line $\ell\subseteq H$ and a point $p\in \PP^n$ such that $(X\cap H)\cap \ell=\{p\}$. So we conclude that $X\in \V_d^{n-1}$.

It is known that the generic hypersurface of degree $d\geq 2n-2$ does not contain lines. So if $d=2n-2$ or $d=2n-1$ the statement follows by the claim.
\end{proof}

Combining Lemma \ref{cil} with Theorem \ref{10} we get the following.

\begin{corollary}\label{bello}
Let $X\subseteq \PP^n$ be a smooth hypersurface of degree $d$, and let $I$ be the ideal defining the Segre product $X \times \mathbb{P}^1 \subseteq \mathbb{P}^{2n+1}$. If $d\leq 2n-3$, or if $d\leq 2n-1$ and $X$ is generic, then 
\[ 2n-1\leq \ara_h(I) \leq 2n. \]
\end{corollary}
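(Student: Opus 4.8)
The plan is to deduce the statement by combining the two principal results of this subsection, Theorem~\ref{10} and Lemma~\ref{cil}, with the cohomological lower bound already established in Proposition~\ref{lowerara}. First I would note that the two hypotheses ``$d\leq 2n-3$'' and ``$d\leq 2n-1$ with $X$ generic'' are precisely those of Lemma~\ref{cil}; hence that lemma gives $X\in\V_d^{n-1}$. Unwinding the definition of $\V_d^{n-1}$ from Remark~\ref{hyperflexes}, this means there is a line $\ell\subseteq\PP^n$ meeting $X$ in a single point $p$, so that $(X,\ell,p)$ is of the kind appearing in Theorem~\ref{10}.

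The upper bound is then immediate. Writing $X=\V_+(F)$ with $F$ homogeneous, the data $(X,\ell,p)$ satisfy exactly the hypothesis of Theorem~\ref{10}, which asserts $\ara_h(I)\leq 2n$ for the ideal $I$ defining $X\times\PP^1\subseteq\PP^{2n+1}$. (In fact Theorem~\ref{10} even produces the $2n$ defining equations explicitly, via $F_0,F_1$ together with the $2n-2$ forms cutting out $I_2(Z)$ up to radical, after the change of coordinates bringing $\ell$ to $\V_+((x_0,\dots,x_{n-2}))$.)

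For the lower bound I would appeal to Proposition~\ref{lowerara}. Here $X$ is a smooth hypersurface in $\PP^n$, hence a smooth projective scheme of dimension $n-1\geq 1$ (the listed conditions on $d$ force $n\geq 2$), and $\PP^1$ is smooth projective of dimension $1$; thus Proposition~\ref{lowerara} applies to $Z=X\times\PP^1\subseteq\PP^N$ with $N=2n+1$ and yields $N-2\leq\ara(I)\leq\ara_h(I)$, that is, $2n-1\leq\ara_h(I)$. Combining this with the upper bound gives $2n-1\leq\ara_h(I)\leq 2n$, as claimed. There is no genuine obstacle in this argument --- the real work was already carried out in Lemma~\ref{cil} and Theorem~\ref{10} --- and the only point needing a moment's care is to check that the positive-dimensionality and smoothness hypotheses of Proposition~\ref{lowerara} are satisfied, which they are.
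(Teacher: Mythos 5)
Your argument is correct and is exactly the paper's route: Lemma \ref{cil} places $X$ in $\V_d^{n-1}$, Theorem \ref{10} then gives $\ara_h(I)\leq 2n$, and the smoothness of $X$ lets Proposition \ref{lowerara} supply the lower bound $2n-1$ (this combination is precisely the unlabeled corollary following Theorem \ref{10}). Nothing further is needed.
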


Putting some stronger assumptions on the hypersurface $X$, we can even compute the arithmetical rank of the ideal $I$ (and not just  to give an upper bound as in Theorem \ref{10}). 

\begin{thm}\label{45}
Let $X=\V_+(F) \subseteq \PP^n$ be such that, $F=x_n^d+\sum_{i=0}^{n-3}x_iG_i(x_0,\ldots ,x_n)$ ($G_i$ homogeneous polynomials of degree $d-1$), and let $I$ be the ideal defining the Segre product $X \times \mathbb{P}^1 \subseteq \mathbb{P}^{2n+1}$. Then
\[ \ara_h(I) \leq 2n-1. \]
Moreover, if $X$ is smooth, then
\[\ara(I)= \ara_h(I) = 2n-1. \]
\end{thm}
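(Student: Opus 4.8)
The plan is to split the statement into an upper bound and a lower bound. For the upper bound $\ara_h(I)\leq 2n-1$, I would argue exactly in the spirit of Theorem \ref{10} (and of Remark \ref{8}/Remark \ref{9}) but exploiting the very special shape of $F$. Write $F = x_n^d + \sum_{i=0}^{n-3} x_i G_i(x_0,\ldots,x_n)$. First I would choose coordinates so that the set $\Omega := \{[i,j] : 0\leq i<j\leq n,\ i+j\leq 2n-3\}$ is an ideal in the poset of $2$-minors of $Z=(z_{ij})$; by \cite[Lemma 5.9]{BrVe} (the same tool used in Theorem \ref{10}), $\ara(\Omega P)\leq \operatorname{rank}(\Omega) = 2n-3$. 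Then the claim is that
\[
\sqrt{I} = \sqrt{\Omega P + (F_0,\,F_1)},
\]
where $F_0 = F(z_{00},\ldots,z_{n0})$ and $F_1 = F(z_{01},\ldots,z_{n1})$, which gives a total of $(2n-3) + 2 = 2n-1$ homogeneous equations. By Remark \ref{9} and the Nullstellensatz (here $\kkk$ is algebraically closed), it suffices to check $\Z(I_2(Z)+(F_0,F_1)) = \Z(\Omega P + (F_0,F_1))$. Take a point $q = [q_0,q_1]\in \Z(\Omega P + (F_0,F_1))$ with both $q_0,q_1$ nonzero points of $\PP^n$. If some coordinate $q_{ij}$ with $i\leq n-2$ is nonzero, then for every $h\neq i$ the minor $[h,i]$ or $[i,h]$ lies in $\Omega$ (since $h+i\leq (n-2)+(n-2)<2n-3$ needs checking, but the bound $i\leq n-2$ forces $h+i\leq 2n-3$ after possibly using $i=n-2,h=n-1$ gives $2n-3$), so $q_0$ and $q_1$ are proportional coordinatewise, hence equal in $\PP^n$ and $q\in\Z(I_2(Z))$. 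Otherwise $q_{ij}=0$ for all $i\leq n-2$; then from $F_j(q)=0$ the only surviving monomial is $z_{nj}^d$, forcing $q_{nj}=0$ too, so $q_j=[0,\ldots,0]$ — a contradiction with $q_j$ being a point of $\PP^n$. This establishes the upper bound. I should be careful in the indexing step: the precise reason $d$ must be a $d$-th power monomial $x_n^d$ (and not merely involve $x_{n-2},x_{n-1}$) is that only $x_n^d$ survives when $x_0=\cdots=x_{n-2}=0$, and the line $x_0=\cdots=x_{n-2}=0$ is a $\PP^1\subseteq\PP^n$ (not a single point) meeting $X$ only where $x_n^d=0$, i.e. along $x_0=\cdots=x_n=0$, which is empty in $\PP^n$ — so actually the relevant linear space is one dimension bigger than in Theorem \ref{10}, which is exactly what buys the extra saving of one equation.

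For the lower bound, assume $X$ smooth. Then $X\subseteq\PP^n$ is a smooth hypersurface, so $X\times\PP^1\subseteq\PP^{2n+1}$ is the Segre product of two smooth positive-dimensional projective schemes; here $N=2n+1$. I would invoke Proposition \ref{lowerara}, which gives $\ara(I)\geq N-2 = 2n-1$. Combined with the upper bound $\ara(I)\leq\ara_h(I)\leq 2n-1$ just proved, this forces $\ara(I)=\ara_h(I)=2n-1$.

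The main obstacle is the combinatorial bookkeeping in the upper-bound step: one must verify that with the chosen coordinates the collection $\Omega$ really is a poset ideal of the $2$-minors and that $\operatorname{rank}(\Omega)=2n-3$ (so that \cite[Lemma 5.9]{BrVe} applies and delivers $2n-3$ rather than $2n-2$ equations), and one must check the index inequalities ($h+i\leq 2n-3$ for the relevant minors) carefully so that the case analysis on $\Z(\cdot)$ goes through. Everything else — reduction to the Nullstellensatz via Remark \ref{8}/Remark \ref{9}, and the lower bound via Proposition \ref{lowerara} — is immediate from results already established in the excerpt.
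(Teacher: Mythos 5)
Your overall strategy is the paper's: the upper bound comes from the poset ideal $\Omega=\{[i,j] : i<j,\ i+j\leq 2n-3\}$ of rank $2n-3$ together with $F_0,F_1$, and the lower bound from Proposition \ref{lowerara} with $N=2n+1$; the paper itself only says the verification is completely analogous to that of Theorem \ref{10}. However, your execution of that verification has a genuine gap, in two places. First, your Case 1 hypothesis is ``some $q_{ij}\neq 0$ with $i\leq n-2$'', and you claim that then $[i,h]$ (or $[h,i]$) lies in $\Omega$ for every $h\neq i$; this fails for $i=n-2$, $h=n$, since $(n-2)+n=2n-2>2n-3$, so for a point whose nonzero coordinates of small index sit only in row $n-2$ you cannot conclude proportionality of the last coordinates this way. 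Second, in your residual case ($q_{ij}=0$ for all $i\leq n-2$) the asserted contradiction is false: from $F_j(q)=q_{nj}^d=0$ you only get $q_{nj}=0$, and $q_j$ can perfectly well be the point $[0:\cdots:0:1:0]$, which lies on $X$; there is no contradiction (the same slip appears in your parenthetical claim that the line $x_0=\cdots=x_{n-2}=0$ meets $X$ in the empty set --- it meets it in exactly that point).

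The correct case split is at $i\leq n-3$, and the ingredient you are missing is the top minor $[n-2,n-1]$ of $\Omega$ (note $(n-2)+(n-1)=2n-3$). If some $q_{ij}\neq 0$ with $i\leq n-3$, then indeed all minors $[i,h]$, $h\neq i$, lie in $\Omega$ and $q_0,q_1$ are proportional, exactly as in Theorem \ref{10}. Otherwise $q_{ij}=0$ for all $i\leq n-3$ and $j=0,1$; then $F_j(q)=q_{nj}^d$ forces $q_{nj}=0$, so both $q_0$ and $q_1$ are supported on the coordinates $n-2,n-1$ (they lie on $X$, since $F$ restricts to $x_n^d$ on the plane $x_0=\cdots=x_{n-3}=0$), and the vanishing at $q$ of the single minor $[n-2,n-1]\in\Omega$ makes these two vectors proportional, hence equal as points of $\PP^n$. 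With this repair the containment $\Z(\Omega P+(F_0,F_1))\subseteq\Z(I)$ goes through, and the rest of your argument --- the rank computation feeding into \cite[Lemma 5.9]{BrVe} and the lower bound via Proposition \ref{lowerara} --- coincides with the paper's proof.
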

\begin{proof}
If $X$ is smooth, Proposition \ref{lowerara} implies that $\ara(I)\geq 2n-1$.
Therefore,we need to prove that the upper bound holds true.
Consider the set 
\[\Omega:=\{ [i,j] : i<j, i+j \leq 2n-3 \}.\] 
As in the proof of Theorem \ref{10}, we have
\[ \ara(\Omega R) \leq \mbox{rank}(\Omega)=2n-3 .\]
The rest of the proof is completely analog to that of Theorem \ref{10}.
\end{proof}

\begin{remark}
Notice that, if $n\geq 4$, the generic hypersurface of $\PP^n$ defined by the form $F=x_n^d+\sum_{i=0}^{n-3}x_iG_i(x_0,\ldots ,x_n)$ is smooth (whereas if $n\leq 3$ and $d\geq 2$ such a hypersurface is always singular).
\end{remark}

The below argument uses ideas from \cite{siwa}. Unfortunately, to use these kinds of tools, we have to put some assumptions to $\chara(\kkk)$.
\begin{thm}\label{conic}
Assume $\chara(\kkk)\neq 2$. Let $X=\V_+(F)$ be a smooth conic of $\PP^2$, and let $I$ be the ideal defining the Segre product $X\times \PP^m \subseteq \PP^{3m+2}$. Then
\[\ara(I)=\ara_h(I)=3m.\]
In particular $X\times \PP^m\subseteq \PP^{3m+2}$ is a set-theoretic complete intersection if and only if $m=1$.
\end{thm}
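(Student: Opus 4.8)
The plan is to handle the two bounds $\ara_h(I)\le 3m$ and $\ara(I)\ge 3m$ separately, and then read off the set-theoretic complete intersection statement. Since $X$ and $\PP^m$ are smooth of positive dimension, Proposition~\ref{lowerara} already gives $N-2 = 3m \le \ara(I)\le \ara_h(I)$, with $N=3m+2$; this is the lower bound and it requires nothing new. All the work is in the \emph{upper} bound $\ara_h(I)\le 3m$, i.e.\ in producing $3m$ homogeneous polynomials whose radical is $I$. For the final assertion: if $m=1$ then $3m=3=\height(I)$ (the variety $X\times\PP^1\subseteq\PP^5$ has codimension $3$), so $I$ is a set-theoretic complete intersection; if $m\ge 2$ then $3m > 2m+2 = \height(I)$ (the Segre product $X\times\PP^m$ has dimension $m+1$ inside $\PP^{3m+2}$), so by the Hauptidealsatz \eqref{hauptidealsatz} it cannot be one. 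Hence once the upper bound is in hand, the last sentence is immediate.

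First I would set up coordinates as in Subsection~\ref{aranotation}: write $P=\kkk[z_{ij}: 0\le i\le 2,\ 0\le j\le m]$, so that $I = I_2(Z)+J$ where $Z=(z_{ij})$ is the $3\times(m+1)$ matrix and $J$ is generated by $F(z_{0j},z_{1j},z_{2j})$ for $j=0,\dots,m$ together with $G\cdot(z_{i0},\dots,z_{im})$ for $G$ a defining form of $\PP^m$ (which is $0$ here, so $J$ comes only from the conic). After a linear change of coordinates, since $\chara(\kkk)\ne 2$ we may take $F = x_0x_2 - x_1^2$, the standard smooth conic. The idea, following \cite{siwa}, is that $I_2(Z)$ for a $3\times(m+1)$ matrix has arithmetical rank $3m-1$ by the Bruns--Schw\"anzl result (more precisely, $\ara(I_t(Z))$ for a $p\times q$ matrix is $pq - t^2 + 1$ when suitably generic; for $t=2$, $p=3$, $q=m+1$ this is $3(m+1)-3 = 3m$, but one can do slightly better by combining with the conic equations). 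I would choose the explicit generators of $I_2(Z)$ up to radical as the sums of antidiagonal minors (the ``staircase'' sums $\sum_{\text{appropriate }(i,j)}[i,j]_{k}$ grouped by a weight function), getting a bounded number of them, and then fold the two conic equations $F(z_{00},z_{10},z_{20})$ and one further conic relation into this list by a telescoping/absorption trick: because $F$ is a quadric in each column and the $2$-minors force all columns proportional on the zero locus, vanishing of $I_2(Z)$ plus vanishing of $F$ on \emph{one} column forces $F$ to vanish on \emph{all} columns, exactly as in Remark~\ref{8}. The delicate combinatorial bookkeeping is to show that the conic polynomials can be \emph{merged into} the minor-sums (rather than added as two extra equations), saving the two equations needed to come down from $3m+2$ to $3m$.

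The main obstacle, as usual for arithmetical rank upper bounds, is the explicit combinatorial construction: one must exhibit the $3m$ homogeneous forms and verify the Nullstellensatz inclusion $\Z(\text{those }3m\text{ forms})\subseteq \Z(I)$ by a case analysis on which $z_{ij}$ are nonzero, mimicking the argument in the proof of Theorem~\ref{10} but now with a width-$(m+1)$ matrix and a conic instead of a line. I expect the verification to split into: (a) the case where all columns are nonzero vectors in $\PP^2$, where the minor conditions give proportionality and then one conic equation propagates; and (b) the degenerate cases where some columns vanish, handled by induction on $m$ or by restricting to the nonvanishing columns. The characteristic assumption $\chara(\kkk)\ne 2$ enters precisely in being able to diagonalize the conic to $x_0x_2-x_1^2$ (equivalently, to split it as a product/rank-$3$ normal form), which is what makes the ``one conic equation suffices per orbit of columns'' trick work; in characteristic $2$ the conic $x_0x_2-x_1^2$ is still smooth but the reduction and the explicit polynomials would need to be redone, which is why the hypothesis is stated. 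Once (a) and (b) are checked, $\sqrt{(\text{the }3m\text{ forms})}=I$, giving $\ara_h(I)\le 3m$, and combined with the lower bound from Proposition~\ref{lowerara} we conclude $\ara(I)=\ara_h(I)=3m$.
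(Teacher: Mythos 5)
Your lower bound and the final deduction are essentially fine (though note a small slip: the codimension of $X\times \PP^m$ in $\PP^{3m+2}$ is $2m+1$, not $2m+2$, so the correct comparison is $3m>2m+1$ for $m\geq 2$ — your inequality $3m>2m+2$ actually fails at $m=2$). The real problem is the upper bound, which is the entire content of the theorem and which your proposal does not prove. You start from the Bruns--Schw\"anzl count $\ara(I_2(Z))=3(m+1)-4+1=3m$ for the $3\times(m+1)$ matrix and then assert that the two conic equations can be ``merged into'' the $3m$ minor-sums by a ``telescoping/absorption trick,'' but you never exhibit the modified forms nor verify the resulting radical equality; since the Segre variety $\PP^2\times\PP^m$ alone already requires $3m$ equations, squeezing the conic data into those same $3m$ forms is precisely the nontrivial step, and nothing in your sketch shows it can be done along the lines you describe. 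So there is a genuine gap, not just an omitted routine verification.

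The paper's proof (in the spirit of \cite{siwa}) takes a different route that avoids the minors entirely: it uses the $m+1$ forms $F_j:=F(z_{0j},z_{1j},z_{2j})$ (the conic on each column) together with the $2m-1$ antidiagonal sums $G_h:=\sum_{i+j=h}F_{ij}$, where $F_{ij}:=\sum_{k=0}^2 \frac{\de F}{\de x_k}(z_{0i},z_{1i},z_{2i})\,z_{jk}$ is the polar (tangent-line) pairing of column $j$ against column $i$. The inclusion $J\subseteq I$ uses Euler's formula for the quadric $F$ — this, not diagonalizing the conic, is where $\chara(\kkk)\neq 2$ enters — and the reverse inclusion on zero loci is a minimality argument showing that if two nonzero columns gave distinct points of $X$, then some $G_h$ would reduce to a single term $F_{ji}$, forcing $p_i$ to lie on the tangent line to $X$ at $p_j$, which for a smooth conic forces $p_i=p_j$; then all nonzero columns coincide and Remark~\ref{8} finishes. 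If you want to salvage your plan, you would have to actually write down the $3m$ ``absorbed'' polynomials and redo the Nullstellensatz case analysis for them; as it stands, the proposal replaces the key construction with a conjecture.
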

\begin{proof}
First we want to give $3m$ homogeneous polynomials of the polynomial ring $P=\kkk[z_{ij}: i=0,1,2, \ \ j=0, \ldots ,m]$ which define $I$ up to radical.
For $j=0, \ldots ,m$ choose set, similarly to  Remark \ref{8},
\[F_j:=F(z_{0j},z_{1j},z_{2j})\] 
Then, for all $0\leq j<i \leq m$, set
\[F_{ij}:=\sum_{k=0}^2 \displaystyle \frac{\de F}{\de x_k}(z_{0i}, z_{1i} , z_{2i})z_{jk}. \]
Eventually, for all $h=1, \ldots ,2m-1$, let us put: 
\[G_h:=\sum_{i+j=h}F_{ij}.\]  
We claim that
\[ I=\sqrt{J}, \mbox{ where } J:=(F_i,G_j:i=0, \ldots ,m, \ \ j=1,  \ldots ,2m-1). \]
The inclusion $J \subseteq I$ follows from the Euler's formula, since $\chara(\kkk)\neq 2$.
As usual, to prove $I\subseteq \sqrt{J}$, thanks to Nullstellensatz, we may prove that $\Z(J) \subseteq \Z(I)$. Pick $p\in \Z(J)$, and write it as 
\[p:=[p_0,p_1,\ldots ,p_m] \mbox{ \ \ where \ \ } p_j:=[p_{0j},p_{1j},p_{2j}].\]
Since  $F_i(p)=0$, for every $i=0, \ldots ,m$, the nonzero $p_i$'s are points of $X$. So, by Remark \ref{8}, it remains to prove that the nonzero $p_i$'s are equal as points of $\PP^2$.

By contradiction, let $i$ be the minimum integer such that $p_i \neq 0$ and there exists $k$ such that $p_k \neq 0$ and $p_i \neq p_k$ as points of $\PP^2$. Moreover let $j$ be the least among these $k$ (so $i<j$). Set $h:=i+j$. We claim that $p_k=p_l$ provided that $k+l=h$, $k<l$, $k\neq i$, $p_k \neq 0$ and $p_l\neq 0$.
In  fact, if $l<j$, then $p_i=p_l$ by the minimal property of $j$. For the same reason, also $p_k=p_i$, so $p_k=p_l$. On the other hand, if $l>j$, then $k<i$, so $p_k=p_l$ by the minimality of $i$. So $F_{lk}(p)=0$ for any $(k,l)\neq (i,j)$ such that $k+l=h$, because $p_k$ belongs to the tangent line of $X$ in $p_l$ (being $p_l=p_k$). Then $G_h(p)=F_{ji}(p)$, and so, since $p \in \Z(J)$, $F_{ji}(p)=0$: This means that $p_i$ belongs to the tangent line of $X$ in $p_j$, which is possible, being $X$ a conic, only if $p_i=p_j$, a contradiction.

For the lower bound, we just have to  notice that, in this case, Proposition \ref{lowerara} yields $\ara(I)\geq 3m$.
\end{proof}

\begin{remark}
B\u adescu and Valla, computed recently in \cite{bava}, independently from this work, the arithmetical rank of the ideal defining any rational normal scroll. Since the Segre product of a conic with $\PP^m$ is a rational normal scroll, Theorem \ref{conic} is a particular case of their result.
\end{remark}

We end this subsection with a result that yields a natural question.

\index{cohomological dimension}
\begin{prop}\label{1000}
Let $n\geq 2$ and $m\geq 1$ be two integers. Let $X=\V_+(F) \subseteq \PP^n$ be a hypersurface smooth over $\kkk$ and let $I \subseteq P=\kkk[z_0, \ldots ,z_N]$ be the ideal defining an embedding $X \times \mathbb{P}^m \subseteq \mathbb{P}^N$. Then
\begin{displaymath}
\cd(P,I) = \left\{ \begin{array}{ll} N-1 & \mbox{if \ } n=2 \mbox{ \ and \ }\deg(F)\geq 3\\
N-2 &  \mbox{otherwise} \end{array}  \right.
\end{displaymath}
\end{prop}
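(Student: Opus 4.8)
The statement asserts that for $X = \V_+(F) \subseteq \PP^n$ a smooth hypersurface and $I \subseteq P$ the ideal defining $X \times \PP^m \subseteq \PP^N$ (with $N = nm+n+m$), the cohomological dimension $\cd(P,I)$ equals $N-1$ exactly when $n=2$ and $\deg F \geq 3$, and equals $N-2$ otherwise. The natural strategy is to pass to the geometric picture via \eqref{cdcd2}: writing $U := \PP^N \setminus (X\times \PP^m)$, we have $\cd(P,I) = \cd(U) + 1$, so it suffices to show $\cd(U) = N-2$ in the exceptional case and $\cd(U) = N-3$ otherwise. Since $X$ is smooth and $\PP^m$ is smooth, $Z := X\times \PP^m$ is a smooth projective scheme, and $U$ is the complement of a smooth subvariety in $\PP^N$; this is exactly the setting of Theorem \ref{1}. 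Moreover, since $\deg F$ is a discrete invariant, one reduces to $\kkk = \CC$ by Remark \ref{0}, where Theorem \ref{1} may be applied in terms of topological Betti numbers $\beta_i(Z)$.

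First I would handle the generic ("otherwise") case. By Proposition \ref{mah} (or directly Proposition \ref{lowerara} combined with \eqref{aragcd}), $\cd(U) \geq N-3$, i.e. $\cd(P,I)\geq N-2$; this uses $\beta_2(Z) \geq 2$ coming from \eqref{betti1} and \eqref{betti3}. For the matching upper bound $\cd(U) \leq N-3$, by Theorem \ref{1} (the "if"-direction, applied with $r = 3$, noting $\codim_{\PP^N} Z = N - \dim Z - \ldots$ — one must check $\codim \leq 3$, which fails in general, so instead one should verify $\cd(U) < r$ for the appropriate $r$ directly) it suffices to check $\beta_i(Z) = 1$ for $i$ even $< N - (N-3) = 3$ and $\beta_i(Z)=0$ for $i$ odd $<3$, i.e. $\beta_0(Z) = 1$ and $\beta_1(Z) = 0$. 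By the Künneth formula \eqref{betti1}, $\beta_0(Z) = \beta_0(X)\beta_0(\PP^m) = 1$ and $\beta_1(Z) = \beta_1(X)\beta_0(\PP^m) + \beta_0(X)\beta_1(\PP^m) = \beta_1(X)$. Now $\beta_1(X) = 0$: this holds for a smooth hypersurface $X \subseteq \PP^n$ as soon as $n \geq 3$ by the Lefschetz hyperplane theorem (so $H^1(X) \cong H^1(\PP^{n-1}) = 0$), and also for $n = 2$ when $\deg F \leq 2$ (a smooth conic is $\PP^1$, with $\beta_1 = 0$). Hence in all "otherwise" cases $\beta_1(X) = 0$, so Theorem \ref{1} gives $\cd(U) < 3$, i.e. $\cd(U) = N - 3$; thus $\cd(P,I) = N-2$.

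Next the exceptional case $n = 2$, $\deg F = d \geq 3$. Here $X$ is a smooth plane curve of genus $g = \tfrac12(d-1)(d-2) > 0$, so $\beta_1(X) = 2g > 0$. By the Künneth formula $\beta_1(Z) = \beta_1(X) = 2g \neq 0$, so Theorem \ref{1} (the "only if"-direction, contrapositive) shows that $\cd(U) < N - 2$ \emph{fails} — more precisely, if $\cd(U) < r$ then $\beta_i(X) = 0$ for all odd $i < N - r$; taking $r = N-3$ would force $\beta_1(X) = 0$, a contradiction — hence $\cd(U) \geq N-3$, i.e. $\cd(P,I) \geq N-2$, is not enough; I must push to $\cd(U) \geq N-2$. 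This is exactly Proposition \ref{curve} applied to $X \times \PP^m$ (a curve of positive genus crossed with a smooth projective scheme): it gives $\ecd(U) \geq 2N-2$, and combined with $\cd(U) \leq \ecd(U) - N$ via Lyubeznik's conjecture case — wait, more carefully: Proposition \ref{curve}'s proof shows $\ecd(U) \geq 2N-2$, but to bound $\cd(U)$ from below I should instead invoke Theorem \ref{1} directly. The cleanest route: Theorem \ref{1} says $\cd(U) < r$ iff $\beta_i(X) = 0$ for odd $i < 2 - r$... (here $n \to N$, $\dim X \to \dim Z$); since $\beta_1(Z) \neq 0$, we get $\cd(U) \not< N-2$, hence $\cd(U) \geq N-2$. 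Combined with the universal upper bound $\cd(U) \leq N - 1$ (from $\dim U = N$ and Hartshorne–Lichtenbaum-type vanishing, or simply $\cd(P,I) \leq N$ always and $\cd(P,I) \leq \ara_h(I) \leq N$), and the fact that $\cd(U) = N-1$ would mean $\cd(P,I) = N$, we need the upper bound $\cd(U) \leq N-2$, i.e. $\cd(P,I) \leq N - 1$. This follows from Theorem \ref{10} (or Theorem \ref{ara1} when $m=1$), since a smooth plane curve of degree $\geq 3$ has a flex line meeting it at a single point (see the proof of Corollary \ref{araelliptic}) — though for general $m$ one needs the $m$-analogue, which the remark before Theorem \ref{conic} or a direct extension of Remark \ref{8}/Theorem \ref{10} supplies: $\ara_h(I) \leq N - 1$, hence $\cd(P,I) \leq N-1$. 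Combining: $\cd(P,I) = N - 1$.

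\textbf{Main obstacle.} The delicate point is the upper bound in the exceptional case for general $m \geq 1$: one must produce $\leq N-1$ equations (or argue $\cd \leq N-2$ topologically) for $X \times \PP^m$ when $X$ is an arbitrary smooth plane curve of degree $\geq 3$. For $m = 1$ this is Theorem \ref{ara1}, but the general $m$ case requires either extending the determinantal-ideal argument of Theorem \ref{10} to the $3 \times (m+1)$ Segre matrix — replacing $I_2(Z)$ over $P = \kkk[z_{ij} : i=0,1,2,\ j=0,\dots,m]$, whose arithmetical rank is $(2)(m+1)-2 = 2m$ by the Bruns–Schwänzl result, and adjoining the $F_j$'s and the flex-line conditions — or, more economically, using the topological criterion of Theorem \ref{1}: checking that $\beta_i(X\times\PP^m) $ has the "projective-space shape" for all even $i < 3$ and vanishes for odd $i < 3$, which as computed above reduces to $\beta_1(X) = 0$ — but that is false here! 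So the topological upper bound genuinely gives only $\cd(U) \leq N-2$ is \emph{not} available, and one is forced into the equations route. Thus the real work is the generalized Theorem \ref{10} for the Segre product with $\PP^m$; I expect this to be a routine but slightly tedious extension of the poset/Nullstellensatz argument in Theorem \ref{10}, using that the ideal $\Omega := \{[i,j]_{rows}\ :\ \text{suitable degree condition}\}$ of minors has arithmetical rank equal to its rank, and that a flex line of $X$ forces the nonzero "columns" of a point in $\Z(\Omega + (F_0,\dots,F_m))$ to collapse to the single flex point.
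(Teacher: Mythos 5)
Your treatment of the generic case is correct and is essentially the paper's own argument: reduce to $\CC$ via Remark \ref{0}, compute $\beta_0(Z)=1$, $\beta_1(Z)=\beta_1(X)$ and $\beta_2(Z)=\beta_2(X)+1\geq 2$ by K\"unneth, observe $\beta_1(X)=0$ by the Lefschetz hyperplane theorem for $n\geq 3$ (and because a smooth conic or line is $\PP^1$ when $n=2$, $\deg F\leq 2$), and read off $\cd(U)=N-3$ from Theorem \ref{1}. (Your intermediate indices slip --- ``$i<3$'' and ``$\cd(U)<3$'' should be ``$i<2$'' and ``$\cd(U)<N-2$'' --- but the condition you actually verify, $\beta_0(Z)=1$ and $\beta_1(Z)=0$, is the correct one for $r=N-2$, so this does not affect the conclusion.)

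The genuine gap is in the exceptional case $n=2$, $\deg F\geq 3$. You assert that the topological upper bound $\cd(U)\leq N-2$ is ``not available'' and retreat to exhibiting $N-1$ equations; both points are mistaken. First, apply Theorem \ref{1} with $r=N-1$ (admissible, since $\codim_{\PP^N}Z\leq N-2$): the Betti-number condition is only required for $i<N-r=1$, i.e.\ it reduces to $\beta_0(Z)=1$, which holds because $Z$ is connected; hence $\cd(U)<N-1$, i.e.\ $\cd(P,I)\leq N-1$, with no equations at all. You tested the condition at the wrong threshold ($i<2$, or even $i<3$), and its failure there is exactly what yields the lower bound $\cd(U)\geq N-2$, not an obstruction to the upper bound. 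Second, the replacement route does not work as stated: Theorem \ref{10} requires a line meeting $X$ in a single point, and by Remark \ref{hyperflexes} such curves form the locus $\V_d$, of codimension $d-3$ among smooth plane curves of degree $d$; an arbitrary smooth plane curve of degree $d\geq 4$ has ordinary flexes but in general no $d$-flex, so even for $m=1$ your argument covers only part of the exceptional case, and for $m\geq 2$ you leave the needed extension of Theorem \ref{10} unproven. The paper's proof gets both bounds in both cases from Theorem \ref{1} alone, comparing $\beta_1(Z)=\beta_1(X)=2g(X)$ (nonzero exactly when $n=2$ and $\deg F\geq 3$, by Serre duality and the genus formula) and $\beta_2(Z)\geq 2$ against the appropriate thresholds.
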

\begin{proof}
By Remark \ref{0} we can assume $\kkk=\CC$. If $Z:=X\times \PP^m\subseteq \PP^N$, Using equation \eqref{betti1} we have 
\[ \beta_0(Z)=1, \mbox{ \ \ } \beta_1(Z)=\beta_1(X) \mbox{ \ \ and \ \ } \beta_2(Z)=\beta_2(X)+1 \geq 2,\]
where the last inequality follows by \eqref{betti3}. If $n=2$, notice that $\beta_1(X)\neq 0$ if and only if $\deg(F)\geq 3$. In fact, equation \eqref{betti2} yields 
\[\beta_1(X)=h^{01}(X)+h^{10}(X)=2h^{01}(X),\]
where the last equality comes from Serre's duality (see \cite[Chapter III, Corollary 7.13]{hart}). But $h^{01}(X)$ is the geometric genus of $X$, therefore it is different from $0$ if and only if $\deg(F)\geq 3$. So if $n=2$ we conclude by Theorem \ref{1}.
If $n>2$, then we have $\beta_1(X)=0$ by the Lefschetz hyperplane theorem (see the book of Lazarsfeld \cite[Theorem 3.1.17]{la}, therefore we can conclude once again using Theorem \ref{1}.
\end{proof}

In light of the above proposition, it is natural the following question.

\begin{question}
With the notation of Proposition \ref{1000}, if we consider the classical Segre embedding of $X\times \PP^m$ (and so $N=nm+n+m$), do the integers $\ara(I)$ and $\ara_h(I)$ depend only on $n$, $m$ and $\deg(F)$?
\end{question}

\subsection{The diagonal of the product of two projective spaces}

In \cite{spe} Speiser, among other things, computed the arithmetical rank of the diagonal 
\[ \Delta = \Delta(\PP^n)\subseteq \PP^n \times \PP^n, \] 
provided that the characteristic of the base field is $0$. In positive characteristic he proved that the cohomological dimension of $\PP^n\times \PP^n \setminus \Delta$ is the least possible, namely $n-1$, but he did not compute the arithmetical rank of $\Delta$. In this short subsection we will give a characteristic free proof of Speiser's result. Actually Theorem \ref{13} easily implies that the result of Speiser holds in arbitrary characteristic, since the upper bound found in \cite{spe} is valid in any characteristic. However, since in that paper the author did not describe the equations needed to define set-theoretically $\Delta$, we provide the upper bound with a different method, that yields an explicit set of equations for $\Delta$.

To this aim, we recall that the coordinate ring of $\PP^n \times \PP^n$ is 
\[A:=\kkk[x_iy_j:i,j=0, \ldots ,n]\] 
and the ideal $I\subseteq A$ defining $\Delta$ is 
\[I:=(x_iy_j-x_jy_i:0\leq i<j\leq n).\]  

\begin{prop}\label{spei}
In the situation described above $\ara(I)=\ara_h(I)=2n-1$.
\end{prop}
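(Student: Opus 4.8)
The statement to prove is that for the ideal $I=(x_iy_j-x_jy_i:0\le i<j\le n)\subseteq A=\kkk[x_iy_j]$ defining the diagonal $\Delta\subseteq\PP^n\times\PP^n$, one has $\ara(I)=\ara_h(I)=2n-1$. The plan is to obtain the lower bound from cohomological considerations of Chapter \ref{chapter1} and the upper bound by exhibiting $2n-1$ explicit homogeneous polynomials whose common vanishing locus (and hence, by the Nullstellensatz since $\kkk$ is algebraically closed) equals $\Z(I)$.

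\emph{Lower bound.} First I would note that $\Delta\cong\PP^n$ via the diagonal map, so it is a smooth projective scheme, and $\Delta=X\times Y$ is trivially the ``Segre-type'' product situation only in the degenerate sense $Y=\mathrm{pt}$; so rather than quoting Proposition \ref{14} directly I would argue via \'etale cohomology as in its proof. By Remark \ref{chow} there is an integer $\ell$ coprime to $\chara(\kkk)$ with $H^{2i}(\Delta_{\et},\underline{\ZZ/\ell\ZZ}(i))\neq 0$ for $i=0,\dots,n$, in particular for $i=1$. On the other hand $H^2((\PP^n\times\PP^n)_{\et},\underline{\ZZ/\ell\ZZ}(1))\cong(\ZZ/\ell\ZZ)^2$ (K\"unneth, \cite[Chapter VI, Corollary 8.13]{milne}), which cannot be isomorphic to the one-dimensional $H^2(\Delta_{\et},\underline{\ZZ/\ell\ZZ}(1))$, while $H^1$ of both vanish. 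Thus, applying Theorem \ref{13} with $X=\PP^n\times\PP^n$, $Y=\Delta$, $N=2n$, the restriction map on $H^2$ failing to be injective forces $\ecd(U)\geq 2N-2=4n-2$ where $U=(\PP^n\times\PP^n)\setminus\Delta$. Combining $\ara(I)\geq\ecd(U)-N+1$ (from \eqref{aragecd} and \eqref{ecdecd}, bearing in mind $\Delta$ has codimension $n$ in the $2n$-dimensional $\PP^n\times\PP^n$, and the coordinate ring $A$ is a quotient of a polynomial ring in $N+1=2n+1$ variables) gives $\ara(I)\geq 4n-2-2n+1=2n-1$.

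\emph{Upper bound.} I would produce $2n-1$ homogeneous polynomials in $A$ defining $I$ up to radical by the same ``sum along anti-diagonals'' trick used in Remark \ref{9} and Theorem \ref{10}. Writing $[i,j]:=x_iy_j-x_jy_i$ for $0\le i<j\le n$, the set $\{[i,j]\}$ is exactly the set of $2$-minors of the $2\times(n+1)$ matrix with rows $(x_0,\dots,x_n)$ and $(y_0,\dots,y_n)$, and $\{[i,j]:i+j\le 2n-1\}$ is an ideal of the poset of these minors (if $[i,j]$ is in the set and $h\le i$, $k\le j$, $h<k$, then $h+k\le i+j\le 2n-1$). By \cite[Lemma 5.9]{BrVe} this poset ideal, which has rank $2n-1$ (the anti-diagonals $i+j=1,\dots,2n-1$), is generated up to radical by the $2n-1$ elements $H_h:=\sum_{i+j=h}[i,j]$ for $h=1,\dots,2n-1$. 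Since in fact \emph{every} $[i,j]$ has $i+j\le(n-1)+n=2n-1$, these $H_h$ generate all of $I_2(Z)=I$ up to radical, with no extra generators needed (unlike Theorem \ref{10}, where the hypersurface equations $F_0,F_1$ had to be added). Hence $\ara_h(I)\le 2n-1$, and together with $\ara(I)\le\ara_h(I)$ and the lower bound we conclude $\ara(I)=\ara_h(I)=2n-1$.

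\emph{Main obstacle.} The genuinely delicate point is the lower bound: one must be careful that Theorem \ref{13}'s hypothesis ``$U:=X\setminus Y$ nonsingular'' is met (it is, since $U=\PP^n\times\PP^n\setminus\Delta$ is smooth), that the numerical bookkeeping $\ecd(U)<2N-r\Rightarrow\cdots$ is applied with the correct $r$ so as to extract the failure of injectivity in degree exactly $2$, and that the K\"unneth computation genuinely shows $H^2$ of the ambient space is two-dimensional while $H^2$ of the diagonal is one-dimensional over the relevant coefficient ring. The upper bound, by contrast, is a routine verification once the poset-ideal structure of the anti-diagonals is observed; the only thing to check carefully there is that $\Z(H_1,\dots,H_{2n-1})\subseteq\Z(I_2(Z))$, which follows because a point where all anti-diagonal sums of $2$-minors vanish must, by an inductive argument on the anti-diagonals exactly as in the proof of Theorem \ref{10}, have all $2$-minors vanishing.
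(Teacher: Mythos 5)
Your proposal is correct and, for both bounds, uses essentially the same ingredients as the paper's proof of Proposition \ref{spei}: the lower bound is exactly the paper's argument (K\"unneth for \'etale cohomology gives $H^2((\PP^n\times\PP^n)_{\et},\underline{\ZZ/\ell\ZZ})\cong(\ZZ/\ell\ZZ)^2$ while $H^2(\Delta_{\et},\underline{\ZZ/\ell\ZZ})\cong\ZZ/\ell\ZZ$, so the restriction map cannot be injective, Theorem \ref{13} forces $\ecd(U)\geq 4n-2$, and \eqref{aragecd} combined with \eqref{ecdecd} gives $\ara(I)\geq 2n-1$), and the upper bound uses the same $2n-1$ anti-diagonal sums $g_k=\sum_{i+j=k}(x_iy_j-x_jy_i)$ obtained from \cite[(5.9) Lemma]{BrVe} (equivalently the result of \cite{bruns-schwanzl}). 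The one point where you genuinely diverge, and where your write-up is loose, is the passage from the polynomial ring $R:=\kkk[x_i,y_j]$ -- the only ring in which the poset-of-minors lemma applies -- back to $A=\kkk[x_iy_j]$: writing ``$I_2(Z)=I$'' conflates an ideal of $R$ with an ideal of $A$, and the lemma only yields $\sqrt{(g_1,\ldots,g_{2n-1})R}=IR$. The paper bridges this algebraically: $A$ is a direct summand of $R$ as an $A$-module, so applying the retraction to an equation $a^m=\sum r_ig_i$ (with $a\in I$, $r_i\in R$) gives $a^m\in(g_1,\ldots,g_{2n-1})A$, hence $\sqrt{(g_1,\ldots,g_{2n-1})A}=I$. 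Your Nullstellensatz-on-points route also works, since $\kkk$ is algebraically closed in this section and every closed point of $\Spec(A)$ (a matrix of rank at most $1$) lifts to a pair $(x,y)\in\AA^{n+1}\times\AA^{n+1}$, so vanishing of the $g_k$ at a point of the Segre cone forces vanishing of all the $2$-minors there; but this lifting step is precisely what must be stated, and the paper's direct-summand trick is the cleaner way to make it precise. Finally, your parenthetical justification of $\ara(I)\geq\ecd(U)-2n+1$ (that ``$A$ is a quotient of a polynomial ring in $2n+1$ variables'') is not literally true, since $A$ is a quotient of a polynomial ring in $(n+1)^2$ variables; the inequality nevertheless holds because what \eqref{aragecd} really uses is $\dim A=2n+1$, the dimension of the affine pieces covering $\Spec(A)\setminus\V(I)$, not a number of variables.
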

\begin{proof}
As said above, by \cite[Proposition 2.1.1]{spe} we already know that $\ara_h(I)\leq 2n-1$. However, we want to exhibit a new proof of this fact: Let us consider the following over-ring of $A$ 
\[R:=\kkk[x_i,y_j:i,j=0, \ldots ,n].\]
Notice that $A$ is a direct summand of the $A$-module $R$. The extension of $I$ in $R$, namely $IR$, is the ideal generated by the 2-minors of the following $2\times (n+1)$ matrix:
\[
\begin{pmatrix}
x_0&\ldots&x_n\\
y_0&\ldots&y_n
\end{pmatrix}.
\]
So, by \cite[(5.9) Lemma]{BrVe}, a set of generators of $IR\subseteq R$ up to radical is
\[g_k := \sum_{{0\leq i<j\leq n}\atop {i+j=k}}(x_iy_j-x_jy_i), \ \ \ k=1, \ldots ,2n-1.\]
Since these polynomials belong to $A$ and since $A$ is a direct summand of $R$, we get 
\[ \sqrt{(g_1, \ldots ,g_{2n-1})A}=I, \]
therefore 
\[\ara(I)\leq\ara_h(I)\leq 2n-1.\]

For the lower bound choose $\ell$ coprime with $\chara(\kkk)$. K\"unneth formula for \'etale cohomology \cite[Chapter VI, Corollary 8.13]{milne} implies that 
\[H^2(\PP_{\et}^n\times \PP_{\et}^n, \ZZ/\ell\ZZ) \cong (\ZZ/\ell \ZZ)^2,\]
while $H^2(\Delta_{\et} , \ZZ/\ell\ZZ)\cong H^2(\PP_{\et}^n , \ZZ/\ell\ZZ)\cong \ZZ/\ell\ZZ$. So, Theorem  \ref{13} yields 
\[\ecd(U)\geq 4n-2,\]
where $U:=\PP^n \times \PP^n \setminus \Delta$. Therefore, combining \eqref{ecdecd} and \eqref{aragecd}, we get
\[2n-1\leq \ara(I)\leq\ara_h(I).\]
\end{proof}

\index{arithmetical rank|)}\index{arithmetical rank!homogeneous|)}\index{set-theoretic complete intersection|)}\index{Segre product|)}

\chapter{Relations between Minors}\label{chapter3}\index{generic matrix|(}\index{generic matrix!minors of|(}\index{relations|(}\index{relations!between minors|(}

In Commutative Algebra, in Algebraic Geometry and in Representation Theory
the minors of a matrix are an interesting object for many
reasons. Surprisingly, in general, the minimal relations among
the $t$-minors of a $m\times n$ generic matrix $X$ are still
unknown. In this chapter, which is inspired to our work joint with Bruns and Conca \cite{BCVminors}, we will investigate  them.

Let us consider the following matrix:
\begin{displaymath}
X:=\begin{pmatrix}
x_{11} & x_{12} & x_{13} & x_{14} \\
x_{21} & x_{22} & x_{23} & x_{24}
\end{pmatrix}
\end{displaymath}
where the $x_{ij}$'s are indeterminates over a field $\kkk$. We remind that, as in the second part of Chapter \ref{chapter2}, we denote the $2$-minor insisting on the columns $i$ and $j$ of $X$ by $[ij]$, namely
$[ij]:=x_{1i}x_{2j}-x_{1j}x_{2i}$. One can verify the identity: 
\[
[12][34]-[13][24]+[14][23]=0.
\]
This is one of the celebrated {\it Pl\"ucker relations}\index{relations!Pl\"ucker}, and in this case it is the only minimal
relation, i.e. it generates the ideal
of relations between the $2$-minors of $X$. Actually, the case $t=\min\{m,n\}$ is well
understood in general, even if anything but trivial: In fact in such a situation the Pl\"ucker relations are the only minimal relations
among the $t$-minors of $X$. In particular, there are only
quadratic minimal relations.
This changes already for $2$-minors of a $3\times 4$-matrix.
For $2$-minors of a matrix with more than $2$ rows we must
first modify our notation in order to specify the rows of a
minor: 
\[ [ij|pq]:=x_{ip}x_{jq}-x_{iq}x_{jp}.\] 
Of course, we keep the Pl\"ucker
relations, but they are no more sufficient: Some cubics
appear among the minimal relations, for example the following identity
\[
\det\begin{pmatrix}
[12|12]&[12|13]&[12|14]\\
[13|12]&[13|13]&[13|14]\\
[23|12]&[23|13]&[23|14]
\end{pmatrix}=0
\]
does not come from the Pl\"ucker relations, see Bruns \cite{B}.

One reason for which the case of maximal minors is easier than
the general case emerges from a representation-theoretic point
of view. Let $R$ be the polynomial ring over $\kkk$ generated by
the variables $x_{ij}$, and $A_t\subseteq R$ denote the
$\kkk$-subalgebra of $R$ generated by the $t$-minors of $X$. When
$t=\min \{m,n\}$, the ring $A_t$ is nothing but than the coordinate ring of the
Grassmannian\index{Grassmannian} $\GG(m,n)$ of all $\kkk$-subspaces of dimension $m$
of a $\kkk$-vector space $V$ of dimension $n$ (we assume $m\leq
n$). In the general case, $A_t$ is the coordinate ring of the
Zariski closure of the image of the following homomorphism:
\[
\Lambda_t \ : \ \Hom_{\kkk}(W,V) \to \Hom_{\kkk}(\bigwedge^t W,\bigwedge^t V),
 \ \ \Lambda_t(\phi):=\wedge^t \phi,
 \]
where $W$ is a $\kkk$-vector space of dimension $m$. Notice that
the group $G:=\GL(W)\times \GL(V)$ acts on each graded
component $[A_t]_d$ of $A_t$. If $t=\min\{m,n\}$, each
$[A_t]_d$ is actually an irreducible $G$-representation (for the terminology about Representation Theory see Appendix \ref{appendixc}).
This is far from being true in the general case,
complicating the situation tremendously.

\vspace{2mm}

In this paper, under the assumption that the characteristic of
$\kkk$ is $0$, we will prove that quadric and cubics are the only
minimal relations among the $2$-minors of a $3\times n$ matrix
and of a $4\times n$ matrix (Theorem \ref{settled cases}). This confirms
the impression of Bruns and Conca in \cite{BC1}. We will use
tools from the representation theory of the general linear group in order
to reduce the problem to a computer calculation. In fact, more generally, we will prove in
Theorem \ref{independencefromn} that a minimal relation between $t$-minors of a $m\times n$ matrix must already be in a $m\times (m+t)$-matrix. In general, apart from particular well known cases, we prove that
cubic minimal relations always exist (Corollary \ref{cubics}),
and we interpret them in a representation-theoretic fashion. A type of these relations can be written in a nice determinantal form\index{relations!determinantal}: A minimal cubic relation is given by the vanishing of the following determinant (just for a matter of space below we put $s=t-1$, $u=t+1$ and $v=t+2$):
\begin{center}
$\det \begin{pmatrix}
[1,\ldots ,s,t,u|1,\ldots ,s,t]&[1,\ldots ,s,t,u|1,\ldots ,s,u]&[1,\ldots ,s,t,u|1,\ldots ,s,v]\\
[1,\ldots ,s,t,v|1,\ldots ,s,t]&[1,\ldots ,s,t,v|1,\ldots ,s,u]&[1,\ldots ,s,t,v|1,\ldots ,s,v]\\
[1,\ldots ,s,u,v|1,\ldots ,s,t]&[1,\ldots ,s,u,v|1,\ldots ,s,u]&[1,\ldots ,s,u,v|1,\ldots ,s,v]
\end{pmatrix}.
$
\end{center}
The above polynomial actually corresponds to the highest weight vector of the irreducible $G$-representation $\Wc \otimes \Vl^*$, where 
\[ \gamma := (t+1,t+1,t-2) \mbox{ \ \ and \ \ } \ll := (t+2,t-1,t-1).\] 
However, for $t\geq 4$ there are also other irreducible $G$-representations of degree $3$ which correspond to minimal relations, as we point out in Theorem \ref{shapeink3}. We also prove that there are no other minimal relations for ``reasons of shape". The only minimal relations of degree more than $3$ which might exist would be for ``reasons of multiplicity" (Proposition \ref{noothershape}).

In Thoerem \ref{regularity} we can write down a formula for the
Castelnuovo-Mumford regularity\index{Castelnuovo-Mumford regularity} of $A_t$ in all the cases. This is quite surprising: We do not even know a set of minimal generators of the ideal of relations between minors, but we can estimate its regularity, which is usually computed by its minimal free resolution. Essentially this is possible thanks to a result in \cite{BC1} describing the canonical module of $A_t$, combined with the interpretation of the Castelnuovo-Mumford regularity in terms of local cohomology as explained in Subsection \ref{subcastmum}. The regularity yields an upper bound on the degree of a minimal relation. Even if such a bound, in general, is quadratic in $m$ (Corollary \ref{generalupper}), it is the best general upper bound known to us.

In the last section we also exhibit a finite Sagbi basis\index{Sagbi bases} of
$A_t$ (Theorem \ref{sagbi}). This problem was left open by
Bruns and Conca in \cite{BC2}, where they proved the existence
of a finite Sagbi basis without describing it. With similar
tools, in Theorem \ref{invariant}, we give a finite system of $\kkk$-algebra generators of
the ring of invariants\index{ring of invariants}\index{U-invariant@$U$-invariant} $A_t^U$, where $U:=U_-(W)\times U_+(V)$ is the subgroup of $G$ with $U_-(W)$ (respectively $U_+(V)$) the subgroup of lower (respectively upper) triangular matrices of $\GL(W)$ (respectively of $\GL(V)$) with $1$'s on the diagonals. This is part of a classical sort of problems in invariant theory, namely the ``first main problem" of invariant theory. 

\section{Some relations for the defining ideal of $A_t$}

Let $\kkk$ be a field of characteristic $0$, $m$ and $n$ two positive integers such that $m\leq n$ and
\begin{displaymath}
X := \left(\begin{array}{ccccc} x_{11} & x_{12} & \cdots & \cdots &  x_{1n} \\
x_{21} & x_{22} & \cdots & \cdots & x_{2n} \\
\vdots & \vdots & \ddots & \ddots & \vdots \\
x_{m1} & x_{m2} & \cdots & \cdots & x_{mn}
\end{array} \right)
\end{displaymath}
a $m\times n$ matrix of indeterminates over $\kkk$. Moreover let
\[R(m,n):=\kkk[x_{ij} \ : \ i=1,\ldots ,m, \ j=1,\ldots ,n ]\]
be the polynomial ring in $m\cdot n$ variables over $\kkk$. As said in the introduction, we are interested in understanding the relations between the $t$-minors of $X$. In other words, we have to consider the kernel of the following homomorphism of $\kkk$-algebras:
\[ \pi \ : \ S_t(m,n) \longrightarrow A_t(m,n),\]
where $A_t(m,n)$ is the {\it algebra of minors}\index{algebra of minors}, i.e. the $\kkk$-subalgebra of $R(m,n)$ generated by the $t$-minors of $X$, and $S_t(m,n)$ is the polynomial ring over $\kkk$ whose variables are indexed on the $t$-minors of $X$. Since all the generators of the $\kkk$-algebra $A_t(m,n)$ have the same degree, namely $t$, we can ``renormalize" them: That is, a $t$-minor will have degree $1$. This way $\pi$ becomes a homogeneous homomorphism. Let $W$ and $V$ be two $\kkk$-vector spaces of dimension $m$ and $n$. Of course we have the identification
\[S_t(m,n)\cong \Sym(\bigwedge^t W \otimes \bigwedge^t V^*)=\bigoplus_{i\in \NN}\Sym^d(\bigwedge^t W \otimes \bigwedge^t V^*),\]
thus $G:=\GL(W)\times \GL(V)$ acts on $S_t(m,n)$. The algebra of minors $A_t(m,n)$ is also a $G$-representation, as explained in Appendix \ref{appendixc}, Section \ref{sacminors}. Actually, for any natural number $d$, the graded components $S_t(m,n)_d$ and $A_t(m,n)_d$ are finite dimensional $G$-representations, and the map $\pi$ is $G$-equivariant. Let us denote 
\[ J_t(m,n):= \Ker(\pi).\] 
When it does not raise confusion we just write $R$, $S_t$, $J_t$ and $A_t$ in place of $R(m,n)$, $S_t(m,n)$, $J_t(m,n)$ and $A_t(m,n)$.

\begin{remark}\label{partcases}
Consider the following numerical situations:
\begin{align}
\label{easycases}
t =1 \mbox{ \ \ or \ \ }  n \leq t+1 \\
\label{grassmannian}   
t =m
\end{align}
In the cases \eqref{easycases} the algebra $A_t$ is a polynomial ring, so that $J_t=0$ (for the case $m=n=t+1$ look at the book of Bruns and Vetter, \cite[Remark 10.17]{BrVe}). In the case \eqref{grassmannian} $A_t$ is the coordinate ring of the Grassmannian\index{Grassmannian} $\GG(m,n)$ of $\kkk$-subspaces of $V$ of dimension $m$. In this case, if $2\leq m\leq n-2$, the ideal $J_t$ is generated by the Pl\"ucker relations\index{relations!Pl\"ucker}. In particular it is generated in degree two. Clearly the Pl\"ucker relations occur in all the remaining cases too. So the ideal $J_t$ is generated in degree at least $2$ in the cases different from \eqref{easycases}.
\end{remark}

Because Remark \ref{partcases}, throughout the paper we will assume to being in cases different from \eqref{easycases} and \eqref{grassmannian}. So, from now on, we feel free to assume
\[ 1<t<m \mbox{ \ \ and  \ \ }n>t+1. \]
Our purpose for this section is to show that in the above range minimal generators of degree $3$ always appear in $J_t$. Notice that, since $\pi$ is $G$-equivariant, then $J_t$ is a $G$-subrepresentation of $S_t$. Moreover, if $\Wc \otimes \Vl^*$ is an irreducible representation of $S_t$ then Schur's Lemma \ref{Schur Lemma} implies that either it collapses to zero or it is mapped $1$-$1$ to itself. In other words $\pi$ is a ``shape selection". Therefore, Theorem \ref{decat} implies that $\Wc \otimes \Vl^* \subseteq J_t$ whenever $\Wc \otimes \Vl^* \subseteq S_t$, and $\gamma$ and $\lambda$ are different partitions of a natural number $d$. However, it is difficult to say something more at this step. In fact, in contrast with $A_t$, a decomposition of $S_t$ as direct sum of irreducible representations is unknown. For instance a decomposition of $\Sym(\bigwedge^m V)$ as a $\GL(V)$-representation is known just for $m\leq 2$, see Weyman \cite[Proposition 2.3.8]{We}. When $m>2$, this is an open problem in representation theory, which is numbered among the plethysm's\index{plethysm} problems. In order to avoid such a difficulty, we go ``one step more to the left", in a way that we are going to outline. For any natural number $d$ we have the natural projection
\[p_d : \bigotimes^d (\bigwedge^t W \otimes \bigwedge^t V^*) \longrightarrow \Sym^d(\bigwedge^t W \otimes \bigwedge^t V^*) \cong S_t(m,n)_d.\]
The projection $p_d$ is a $G$-equivariant surjective map. We have also the following $G$-equivariant isomorphism
\[f_d: (\bigotimes^d \bigwedge^t W) \otimes (\bigotimes^d \bigwedge^t V^*) \longrightarrow \bigotimes^d (\bigwedge^t W \otimes \bigwedge^t V^*).\]
Therefore, for any natural number $d$, we have the $G$-equivariant surjective map
\[ \phi_d := p_d\circ f_d : (\bigotimes^d \bigwedge^t W) \otimes (\bigotimes^d \bigwedge^t V^*) \longrightarrow S_t(m,n)_d.  \]
Putting together the $\phi_d$'s we get the following ($G$-equivariant and surjective) homogeneous $\kkk$-algebra homomorphism
\[\phi : T_t(m,n) := \bigoplus_{d \in \NN} ((\bigotimes^d \bigwedge^t W) \otimes (\bigotimes^d \bigwedge^t V^*)) \longrightarrow S_t(m,n).\]
When it does not raise confusion we will write $T_t$ for $T_t(m,n)$.
\begin{remark}
Notice that $T_t$ is not commutative. The ideals $I\subseteq T_t$ we consider will always be two-sided, i.e. they are $\kkk$-vector spaces such that $st$ and $ts$ belong to $I$ whenever $t\in I$ and $s\in T_t$. Moreover, if we say that an ideal $I\subseteq T_t$ is generated by $t_1,\ldots ,t_q$ we mean that 
\[I = \{\sum _{i=1}^q f_it_ig_i \ : \ f_i, g_i \in T_t(m,n)\}.\]
In this sense $T_t$ is Noetherian. However, $T_t$ is neither left-Noetherian nor right-Noetherian.
\end{remark}
By definition and by meaning of the $f_d$'s, the kernel of $\phi$ is generated in degree $2$, namely
\[\Ker(\phi) = ((f\otimes f')\otimes (e\otimes e'))-(f'\otimes f)\otimes (e'\otimes e)) \ : \ f,f' \in \bigwedge^t W, \ e,e' \in \bigwedge^t V^*).\]
Finally, we have a $G$-equivariant surjective graded homomorphism
\[ \psi := \pi \circ \phi : T_t(m,n) \to A_t(m,n). \]
We call $K_t(m,n)$ ($K_t$ when it does not raise any confusion) the kernel of the above map. Since $\Ker(\phi)$ is generated in degree two and $J_t$ is generated in degree at least two, in order to understand which is the maximum degree of a minimal generator of $J_t$, we can study which is the maximum degree of a minimal generator of $K_t$. 

\begin{lemma}\label{redtotensor}
Let $d$ be an integer bigger than $2$. There exists a minimal generator of degree $d$ in $K_t$ if and only if there exists a minimal generator of degree $d$ in $J_t$.
\end{lemma}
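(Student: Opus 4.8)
The plan is to set up the comparison between the two presentations $\phi:T_t\to S_t$ and $\pi:S_t\to A_t$ (composed as $\psi=\pi\circ\phi$) and to exploit the fact that $\Ker(\phi)$ is concentrated entirely in degree $2$, while both $J_t$ and $K_t$ are generated in degrees $\geq 2$. Concretely, for each degree $d$ we have the commuting triangle of graded $\kkk$-vector spaces $(T_t)_d \twoheadrightarrow (S_t)_d \twoheadrightarrow (A_t)_d$, with $K_t = \Ker(\psi)$ and $J_t = \Ker(\pi)$ and $\Ker(\phi)\subseteq K_t$; moreover $\phi$ induces an isomorphism $K_t/\Ker(\phi) \xrightarrow{\ \sim\ } J_t$ of graded vector spaces, since $\phi$ is surjective and maps $K_t$ onto $J_t$ with kernel exactly $\Ker(\phi)$.

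First I would pass to the associated graded setup for minimal generators: for a graded ideal $I$ in a (not necessarily commutative) connected graded $\kkk$-algebra $C=T_t$ or $S_t$ with irrelevant maximal ideal $C_+$, the degree-$d$ component of the minimal generators of $I$ is $I_d/(C_+\cdot I + I\cdot C_+)_d$; I would write $\mu_d(I)$ for its dimension. The claim to prove is $\mu_d(J_t)\neq 0 \iff \mu_d(K_t)\neq 0$ for $d>2$. For the direction ``$K_t$ has a degree-$d$ minimal generator $\Rightarrow$ $J_t$ does'': since $\Ker(\phi)$ lives only in degree $2$, for $d>2$ we have $\phi\big((T_t)_+K_t+K_t(T_t)_+\big)_d = \big((S_t)_+J_t + J_t(S_t)_+\big)_d$ (here using that $\phi$ is a surjective algebra map, hence sends products to products and the degree-$d$ part surjects), and $\phi((K_t)_d)=(J_t)_d$; hence $\phi$ induces a surjection of the degree-$d$ minimal-generator spaces, and its kernel is $(\Ker\phi \cap K_t)_d/(\ldots)_d = 0$ for $d>2$ because $\Ker\phi\subseteq (T_t)_2\cdot T_t + T_t\cdot(T_t)_2$. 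So $\mu_d(J_t)=\mu_d(K_t)$ for $d>2$, which gives both implications at once. For the converse direction one reads the same isomorphism backwards: a minimal degree-$d$ generator of $J_t$ lifts along $\phi$ to an element of $(K_t)_d$ that cannot lie in $(T_t)_+K_t+K_t(T_t)_+$, since otherwise its image would be decomposable in $J_t$.

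The step I expect to need the most care is justifying that $\phi$ carries the ``decomposable part'' of $K_t$ onto the decomposable part of $J_t$ in degrees $d>2$ and nothing more --- i.e. the exactness statement $\big((T_t)_+K_t+K_t(T_t)_+\big)_d \xrightarrow{\phi} \big((S_t)_+J_t+J_t(S_t)_+\big)_d$ is surjective with kernel contained in $\Ker\phi$, and that $\Ker\phi$ contributes nothing in degree $>2$. Surjectivity follows because $\phi$ is a surjective graded algebra homomorphism, so $(S_t)_+=\phi((T_t)_+)$ and $J_t=\phi(K_t)$, hence products of these are hit; the kernel computation uses that $\Ker\phi$ is a two-sided ideal generated in degree $2$, so $(\Ker\phi)_d\subseteq (T_t)_+\cdot(\Ker\phi) + (\Ker\phi)\cdot(T_t)_+ \subseteq (T_t)_+K_t+K_t(T_t)_+$ for $d>2$, whence it is already ``decomposable'' and drops out. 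Assembling these observations gives $\mu_d(K_t)=\mu_d(J_t)$ for all $d>2$, and in particular one is nonzero exactly when the other is, which is the assertion of the lemma. I would also remark that the $G$-equivariance of all maps is not strictly needed for this degree count, but it is harmless and will be convenient in the subsequent sections.
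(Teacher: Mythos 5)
Your argument is correct and is exactly the elaboration the paper has in mind: the paper states the lemma without proof, relying only on the preceding remark that $\Ker(\phi)$ is generated in degree $2$ while $J_t$ lives in degrees at least $2$, and your comparison of the degree-$d$ minimal-generator spaces via $\phi$ (with the kernel absorbed into the decomposable part for $d>2$) is the natural way to make that precise. In fact you prove the slightly stronger statement that the numbers of degree-$d$ minimal generators of $K_t$ and $J_t$ coincide for every $d>2$, which is consistent with how the lemma is used later.
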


The advantages of passing to $T_t$ are that it ``separates rows and columns" and that it is available a decomposition of it in irreducible $G$-representations, see Proposition \ref{tensordec}. The disadvantage is that we have to work in a noncommutative setting.

\begin{prop}\label{tensordec}
As a $G$-representation $T_t$ decomposes as
\[T_t(m,n)_d \cong \bigoplus_{{ \mbox{{\footnotesize $\gamma$ and $\lambda$ $d$-admissible}}}\atop {\mbox{ {\footnotesize $\height(\gamma)\leq m$,  $\height(\ll)\leq n$}}}} (\Wc \otimes \Vl^*)^{n(\gamma , \lambda)},\]
where the multiplicities\index{representation!multiplicities} $n(\gamma ,\lambda)$ are the nonzero natural numbers described recursively as follows:
\begin{compactenum}
\item If $d=1$, i.e. if $\gamma  = \lambda = (t)$, then $n(\gamma, \lambda)=1$;
\item If $d > 1$, then $n(\gamma , \lambda) = \sum n(\gamma' ,\ll')$ where the sum runs over the $(d-1)$-admissible diagrams $\gamma'$ and $\ll'$ such that $\gamma' \subseteq \gamma \subseteq \gamma'(t)$ and $\ll' \subseteq \ll \subseteq \ll'(t)$ (for the notation see Theorem \ref{pieri}).
\end{compactenum}
\end{prop}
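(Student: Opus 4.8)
The statement is a purely representation-theoretic fact about the $\GL(W)\times\GL(V)$-module
\[
T_t(m,n)_d = \Bigl(\bigotimes^d \textstyle\bigwedge^t W\Bigr)\otimes\Bigl(\bigotimes^d\textstyle\bigwedge^t V^*\Bigr),
\]
and the plan is to treat the two tensor factors independently, since the decomposition of a tensor product of two $\GL$-modules acting on separate vector spaces is just the (outer) tensor product of the decompositions. So the whole claim reduces to proving the following assertion for a single factor: as a $\GL(W)$-module, $\bigotimes^d\bigwedge^t W$ decomposes as $\bigoplus_{\gamma}(L_\gamma W)^{n(\gamma)}$, where $\gamma$ runs over the $d$-admissible partitions with $\height(\gamma)\le m=\dim W$, and the multiplicities $n(\gamma)$ satisfy the stated recursion: $n((t))=1$ in degree $1$, and $n(\gamma)=\sum_{\gamma'} n(\gamma')$ for $d>1$, the sum over $(d-1)$-admissible $\gamma'$ with $\gamma'\subseteq\gamma\subseteq\gamma'(t)$. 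Once this is established for each factor with multiplicities $n(\gamma)$ and $n(\lambda)$ respectively, the product module has $(L_\gamma W\otimes L_\lambda V^*)$ occurring with multiplicity $n(\gamma)\cdot n(\lambda)$, which is exactly the product form $n(\gamma,\lambda)$ of the claimed recursion (the base case and recursive step both multiply).

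\textbf{Key steps.} First I would record the base case: $\bigotimes^1\bigwedge^t W=\bigwedge^t W=L_{(t)}W$ with multiplicity one, and $(t)$ is by definition the unique $1$-admissible partition. Next, the inductive step is an immediate application of Pieri's rule (Theorem~\ref{pieri} in the excerpt): writing $\bigotimes^d\bigwedge^t W=\bigl(\bigotimes^{d-1}\bigwedge^t W\bigr)\otimes\bigwedge^t W$, if by induction $\bigotimes^{d-1}\bigwedge^t W\cong\bigoplus_{\gamma'}(L_{\gamma'}W)^{n(\gamma')}$ over $(d-1)$-admissible $\gamma'$, then tensoring each $L_{\gamma'}W$ with $\bigwedge^t W=L_{(1^t)'}W$... — more precisely, Pieri's formula for multiplication by $\bigwedge^t$ gives $L_{\gamma'}W\otimes\bigwedge^t W\cong\bigoplus_\gamma L_\gamma W$, the sum over all $\gamma$ obtained from $\gamma'$ by adding a vertical strip of size $t$, i.e. exactly those $\gamma$ with $\gamma'\subseteq\gamma$, $|\gamma|=|\gamma'|+t$, and $\gamma/\gamma'$ having at most one box in each row — which is precisely the condition $\gamma'\subseteq\gamma\subseteq\gamma'(t)$ in the notation of Theorem~\ref{pieri}. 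Summing multiplicities over all $\gamma'$ gives $n(\gamma)=\sum_{\gamma'}n(\gamma')$. Then I would observe that the set of partitions reachable this way from $(t)$ in $d$ steps is exactly the set of $d$-admissible partitions (this should be the definition, or an immediate consequence of the definition, of ``$d$-admissible''), so the index set is correct; the height constraint $\height(\gamma)\le m$ is automatic since $L_\gamma W=0$ when $\height(\gamma)>\dim W$, so those terms simply do not appear. Finally I would assemble the two factors: since $\GL(W)$ and $\GL(V)$ act on independent spaces, $T_t(m,n)_d\cong\bigoplus_{\gamma,\lambda}(L_\gamma W\otimes L_\lambda V^*)^{n(\gamma)n(\lambda)}$, and the product $n(\gamma)n(\lambda)$ visibly satisfies the stated recursion with $n(\gamma,\lambda)=n(\gamma)n(\lambda)$, nonzero by construction, and the height constraints $\height(\gamma)\le m$, $\height(\lambda)\le n$ for nonvanishing. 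This completes the proof.

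\textbf{Main obstacle.} There is no serious obstacle: the argument is an induction whose engine is Pieri's rule, assumed available as Theorem~\ref{pieri}. The only point requiring a little care is matching the combinatorial bookkeeping — checking that ``add a vertical strip of size $t$'' translates literally into the interval condition $\gamma'\subseteq\gamma\subseteq\gamma'(t)$ from the paper's notation, and confirming that the recursively generated partitions coincide with what the paper calls $d$-admissible. If ``$d$-admissible'' is *defined* via exactly this reachability condition, this is vacuous; if it is defined differently (e.g. by an explicit inequality on column lengths), one needs a short lemma identifying the two descriptions, which is a routine Young-diagram verification. A secondary minor point is the observation that $L_\gamma W = 0$ precisely when $\height(\gamma) > \dim W$, used to justify dropping the too-tall partitions and to see that the listed height bounds are the correct ones.
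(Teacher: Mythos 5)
Your proof is correct and takes essentially the same route as the paper: an induction on $d$ whose engine is Pieri's formula (Theorem \ref{pieri}), your per-factor multiplicities with $n(\gamma,\lambda)=n(\gamma)\cdot n(\lambda)$ being just a harmless reorganization of the paper's joint induction on $T_t(m,n)_{d-1}\otimes(\bigwedge^t W\otimes\bigwedge^t V^*)$. One small caution: since in this paper $L_\gamma W$ has highest weight ${}^{t}\gamma$, the interval condition $\gamma'\subseteq\gamma\subseteq\gamma'(t)$ is the horizontal-strip (at most one box per column) condition rather than the vertical-strip one you describe, but this does not affect your argument because you ultimately invoke the condition exactly as stated in Theorem \ref{pieri}.
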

\begin{proof}
Since $T_t(m,n)_1= \bigwedge^t W \otimes \bigwedge^t V$, we can prove the statement by induction. So suppose that 
\[T_t(m,n)_{d-1} \cong \bigoplus_{{ \mbox{{\footnotesize $\gamma$ and $\lambda$ $(d-1)$-admissible}}}\atop {\mbox{ {\footnotesize $\height(\gamma)\leq m$,  $\height(\ll)\leq n$}}}} (\Wc \otimes \Vl^*)^{n(\gamma , \lambda)}\]
for $d\geq 2$. Therefore, since $T_t(m,n)_d \cong T_t(m,n)_{d-1}\otimes (\bigwedge^t W \otimes \bigwedge^t V^*)$, we have
\[T_t(m,n)_d \cong \bigoplus_{{ \mbox{{\footnotesize $\gamma$ and $\lambda$ $(d-1)$-admissible}}}\atop {\mbox{ {\footnotesize $\height(\gamma)\leq m$,  $\height(\ll)\leq n$}}}} ((\Wc \otimes \bigwedge^t W) \otimes (\Vl^*\otimes \bigwedge^t V^*))^{n(\gamma , \lambda)}.\]
At this point we get the desired decomposition from Pieri's Theorem \ref{pieri}.
\end{proof}

As the reader will realize during this chapter, the fact that the $n(\gamma,\lambda)$'s may be, and in fact usually are, bigger than $1$, is one of the biggest troubles to say something about the relations between minors. We will say that a pair of $d$-admissible diagrams, a $d$-admissible {\it bi-diagram}\index{bi-diagram} for short, $(\gamma'|\ll')$ is a {\it predecessor}\index{predecessor} of a $(d+1)$-admissible bi-diagram\index{bi-diagram!admissible} $(\gamma|\lambda)$ if $\gamma'\subseteq \gamma \subseteq \gamma'(t)$ and $\ll'\subseteq \lambda \subseteq \ll'(t)$.
In order to simplify the notation, from now on a sentence like ``$(\gamma|\lambda)$ is an irreducible representation of $T_t$ (or $S_t$ or $J_t$ etc.)" will mean that $\Wc \otimes \Vl^*$ occurs with multiplicity at least $1$ in the decomposition of $T_t$ (or $S_t$ or $J_t$ etc.). Furthermore, since both $J_t$ and $K_t$ are generated, as ideals, by irreducible representations, we say that ``$(\gamma|\lambda)$ is a minimal irreducible representation\index{representation!irreducible!minimal} of $J_t$ (or of $K_t$)" if the elements in $\Wc \otimes \Vl^*$ are minimal generators of $J_t$ (or $K_t$).

\vspace{2mm}

We are going to list some equations of degree $3$ which are minimal generators of $J_t$ (in situations different from \eqref{easycases} and \eqref{grassmannian}). To this purpose we define some special bi-diagrams $(\gamma^i |\lambda^i)$ for any $\displaystyle i=1,\ldots , \left\lfloor \frac{t}{2} \right\rfloor$, for which both $\gamma^i$ and $\lambda^i$ have exactly $3t$ boxes. In Theorem \ref{shapeink3}, we will prove that some of these diagrams will be minimal irreducible representations of degree $3$ in $J_t$. It is convenient to consider separately the cases in which $t$ is even or odd.

Suppose that $t$ is even. For each $\displaystyle i=1, \ldots , \frac{t}{2}$ we set $\displaystyle a_i:=\frac{3t}{2}-i+1$, \ $b_i:= 2(i-1)$, \ $c_i:= 2(t-i+1)$ and $\displaystyle d_i:=\frac{t}{2}+i-1$. Then
\begin{align}
\label{kerdiage} \gamma^i & :=(a_i,a_i,b_i), \\
\lambda^i  & :=(c_i,d_i,d_i). \nonumber
\end{align}
It turns out that $\gamma^i$ and $\lambda^i$ are both partitions of $3t$.

If $t$ is odd, then for each $\displaystyle i=1, \ldots ,\frac{t-1}{2}$ set $\displaystyle a_i':=\frac{3t-1}{2}-i+1$, \ $b_i':=2(i-1)+1$, \ $c_i':= 2(t-i+1)-1$ and $\displaystyle d_i':=\frac{t+1}{2}+i-1$. Then
\begin{align}
\label{kerdiago} \gamma^i & :=(a_i',a_i',b_i'), \\
\lambda^i  & :=(c_i',d_i',d_i'). \nonumber
\end{align}
Once again it turns out that $\gamma^i$ and $\lambda^i$ are both partitions of $3t$.

\begin{example}
For $t=2$ there is only one $(\gamma^i|\lambda^i)$, namely $(\gamma^1|\lambda^1)$. The picture below features this bi-diagram.
\[
{\setlength{\unitlength}{1mm}
\begin{picture}(30,25)(-5,0)

\put(-37,15){$(\gamma^1|\lambda^1) \ =$}
\put(-15,20){\line(1,0){15}}
\put(-15,15){\line(1,0){15}}
\put(-15,10){\line(1,0){15}}

\put(-15,20){\line(0,-1){10}}
\put(-10,20){\line(0,-1){10}}
\put(-5,20){\line(0,-1){10}}
\put(0,20){\line(0,-1){10}}

\put(7.5,22.5){\line(0,-1){20}}

\put(15,20){\line(1,0){20}}
\put(15,15){\line(1,0){20}}
\put(15,10){\line(1,0){5}}
\put(15,5){\line(1,0){5}}

\put(15,20){\line(0,-1){15}}
\put(20,20){\line(0,-1){15}}
\put(25,20){\line(0,-1){5}}
\put(30,20){\line(0,-1){5}}
\put(35,20){\line(0,-1){5}}

\end{picture}}
\]
Notice that the above bi-diagram has only one predecessor, namely
\[
{\setlength{\unitlength}{1mm}
\begin{picture}(30,25)(-5,0)

\put(-37,15){$(\alpha^1|\alpha^1) \ =$}
\put(-15,20){\line(1,0){15}}
\put(-15,15){\line(1,0){15}}
\put(-15,10){\line(1,0){5}}

\put(-15,20){\line(0,-1){10}}
\put(-10,20){\line(0,-1){10}}
\put(-5,20){\line(0,-1){5}}
\put(0,20){\line(0,-1){5}}

\put(7.5,22.5){\line(0,-1){15}}

\put(15,20){\line(1,0){15}}
\put(15,15){\line(1,0){15}}
\put(15,10){\line(1,0){5}}

\put(15,20){\line(0,-1){10}}
\put(20,20){\line(0,-1){10}}
\put(25,20){\line(0,-1){5}}
\put(30,20){\line(0,-1){5}}

\end{picture}}
\]
It turns out that $(\alpha^1|\alpha^1)$ has multiplicity one and is symmetric\index{bi-diagram!symmetric}. As we are going to see in the proof of Theorem \ref{shapeink3} this facts holds true in general, and it is the key to find minimal relations in $J_t$.

\end{example}

\begin{thm}\label{shapeink3}
The bi-diagram $(\gamma^i|\lambda^i)$ is a minimal irreducible representation of $J_t$ of degree $3$ in the following cases:
\begin{compactenum}
\item if $t$ is even, when $\displaystyle \frac{t}{2}\geq i\geq \max\left\{0,\frac{3t}{2}-m+1,t-\frac{n}{2} + 1 \right\}$.
\item if $t$ is odd, when $\displaystyle \frac{t-1}{2}\geq i\geq \max\left\{0,\frac{3t-1}{2}-m+1,t- \frac{n+1}{2} \right\}$.
\end{compactenum}
\end{thm}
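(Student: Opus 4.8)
\textbf{Proof proposal for Theorem \ref{shapeink3}.}

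The plan is to combine the representation-theoretic reduction established in this section with an explicit highest-weight-vector computation and a degree-$2$ multiplicity argument. First I would reduce the claim that $(\gamma^i|\lambda^i)$ is a \emph{minimal} irreducible representation of $J_t$ of degree $3$ to the analogous claim for $K_t$, via Lemma \ref{redtotensor}; equivalently, since $\Ker(\phi)$ is generated in degree $2$ and $J_t$ is generated in degree at least $2$, it suffices to show that $(\gamma^i|\lambda^i)$ occurs in $K_t$ in degree $3$ but does \emph{not} lie in the ideal generated by $(K_t)_2$ inside $T_t$. The ``occurs in degree $3$'' part is a direct application of Proposition \ref{tensordec}: one must check that $\gamma^i$ and $\lambda^i$ are $3$-admissible with $\height(\gamma^i)\le m$ and $\height(\lambda^i)\le n$, which is exactly where the numerical bounds on $i$ in (1) and (2) come from — the conditions $i\ge \frac{3t}{2}-m+1$ (resp. $\frac{3t-1}{2}-m+1$) force $a_i\le m$, i.e. $\height(\gamma^i)\le m$, and $i\ge t-\frac n2+1$ (resp. $t-\frac{n+1}{2}$) forces $c_i\le n$; $3$-admissibility (the existence of a chain $(t)\subseteq(\cdot)\subseteq\cdots$ of the required Pieri shape) is checked by exhibiting the predecessor bi-diagram $(\alpha^i|\alpha^i)$ explicitly, as in the worked example for $t=2$.

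The heart of the argument is showing $(\gamma^i|\lambda^i)$ is \emph{not} generated in degree $2$. The key structural fact I would isolate is that $(\gamma^i|\lambda^i)$ has a \emph{unique} predecessor $(\alpha^i|\alpha^i)$, that this predecessor is \emph{symmetric} ($\gamma$-part equals $\lambda$-part), and that it occurs in $T_t$ (hence in $S_t$) with multiplicity one. Symmetry plus multiplicity one means that the degree-$2$ piece $(\alpha^i|\alpha^i)\subseteq S_t$ maps isomorphically under $\pi$ (by Schur's Lemma \ref{Schur Lemma}, an irreducible $G$-subrepresentation is either killed or embedded, and a symmetric bi-diagram cannot be killed because it survives in $A_t$ by Theorem \ref{decat}); therefore $(\alpha^i|\alpha^i)\not\subseteq J_t$. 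Now in $T_t$ the multiplication map $(T_t)_2\otimes(\bigwedge^t W\otimes\bigwedge^t V^*)\to (T_t)_3$, restricted to the isotypic component of $(\gamma^i|\lambda^i)$, factors (by Pieri, Theorem \ref{pieri}, and the uniqueness of the predecessor) through the single copy of $(\alpha^i|\alpha^i)$ tensored with the generator; since that copy of $(\alpha^i|\alpha^i)$ is \emph{not} in $K_t$, the whole copy of $(\gamma^i|\lambda^i)$ in $(T_t)_3$ coming from products $(T_t)_2\cdot(T_t)_1$ lies outside $K_t$ — more precisely, the image of $(K_t)_2\cdot (T_t)_1$ in the $(\gamma^i|\lambda^i)$-isotypic component is zero, so a nonzero element of $(\gamma^i|\lambda^i)\cap K_t$ (which exists since $\gamma^i\ne\lambda^i$ and Theorem \ref{decat} forces every non-symmetric bi-diagram of $S_t$ into $J_t$, hence into $K_t$) cannot be a $T_t$-linear combination of degree-$2$ elements of $K_t$. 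This is exactly minimality.

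The two remaining technical points are: (a) verifying the combinatorial claim that $(\gamma^i|\lambda^i)$ has exactly one predecessor and that predecessor is symmetric with multiplicity one — this is a finite Pieri computation on the shapes in \eqref{kerdiage} and \eqref{kerdiago}, most cleanly done by writing $\alpha^i$ explicitly (remove from each of $\gamma^i,\lambda^i$ a horizontal/vertical strip of size $t$ in the unique admissible way) and checking that the recursion in Proposition \ref{tensordec} gives $n(\alpha^i,\alpha^i)=1$; and (b) confirming that $(\gamma^i|\lambda^i)$ itself has multiplicity one in $T_t$ in degree $3$ (again via the recursion, since it has a unique predecessor of multiplicity one), which is needed so that ``the copy coming from degree-$2$ products'' is genuinely \emph{the} copy. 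I expect (a) to be the main obstacle: one has to handle the even and odd cases of $t$ separately and track the boundary values of $i$ where $\gamma^i$ or $\lambda^i$ degenerates (e.g. $b_i=0$ when $i=1$, $t$ even), ensuring the predecessor and multiplicity claims survive at those extremes — this is precisely why the statement carries the $\max\{0,\ldots\}$ lower bounds on $i$. Everything else is a clean consequence of the Schur-lemma ``shape selection'' picture already set up before the theorem.
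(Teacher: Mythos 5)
Your proposal is correct and is essentially the paper's own argument: reduce to $K_t$ via Lemma \ref{redtotensor}, use Proposition \ref{tensordec} together with the bounds on $i$ (which force $\gamma^i_1\leq m$ and $\lambda^i_1\leq n$) to place $(\gamma^i|\lambda^i)$ in $[T_t]_3$, use asymmetry and Theorem \ref{decat} to conclude it lies in $[K_t]_3$, and obtain minimality from the fact that its unique predecessor $(\alpha^i|\alpha^i)$ is symmetric and of multiplicity one in $[T_t]_2$, hence sits entirely in $[A_t]_2$ and not in $[K_t]_2$. One small slip in your closing remark: the $\max\{0,\dots\}$ lower bounds on $i$ are precisely these height constraints (as you correctly state earlier), not a safeguard for the predecessor and multiplicity claims at degenerate values such as $b_1=0$, which cause no difficulty.
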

\begin{proof}
By Lemma \ref{redtotensor} it is enough to show that the bi-diagram $(\gamma^i | \lambda^i)$ is a minimal irreducible representation of degree $3$ of $K_t$.
Both $\gamma^i$ and $\lambda^i$ are $3$-admissible partitions. Furthermore, the assumptions on $i$ imply that $\gamma_1^i \leq m$ and $\lambda_1^i\leq n$. Therefore $(\gamma^i|\lambda^i)$ is an irreducible representation of $[T_t]_3$ by Proposition \ref{tensordec}.
Notice that $[T_t]_3$ decomposes as $[K_t]_3 \oplus [A_t]_3$. But $(\gamma^i|\lambda^i)$ is an {\it asymmetric bi-diagram}\index{bi-diagram!asymmetric}, i.e. $\gamma^i \neq \lambda^i$, thus it cannot be an irreducible representation of $A_t$ by Theorem \ref{decat}. This implies that it is an irreducible representation of $[K_t]_3$.

It remains to prove that $(\gamma^i|\lambda^i)$ is minimal. First of all we show that it has multiplicity $1$ in $T_t$. If $t$ is even the unique predecessor of $(\gamma^i|\lambda^i)$ is the symmetric bi-diagram $(\alpha^i|\alpha^i)$, where:
\begin{align*}
\displaystyle \alpha_1^i & = \frac{3t}{2}-i+1 \\
\displaystyle \alpha_2^i & = \frac{t}{2}+i-1.
\end{align*}
Also if $t$ is odd the unique predecessor of $(\gamma^i |\lambda^i)$, which we denote again by $(\alpha^i|\alpha^i)$, is symmetric. The $G$-representation $[T_t]_2$ decomposes as $[K_t]_2 \oplus [A_t]_2$. Now, the irreducible representations of $T_t$ of degree $2$ have obviously multiplicity $1$, since their unique predecessor is $((t)|(t))$ that has multiplicity $1$. So $(\alpha^i|\alpha^i)$ is not an irreducible representation of $[K_t]_2$, because it is an irreducible representation of $[A_t]_2$ by Theorem \ref{decat} and it has multiplicity $1$ in $[T_t]_2$. Therefore we conclude that $(\gamma^i|\lambda^i)$ is a minimal irreducible representation of $K_t$ of degree $3$.
\end{proof}

\begin{definition}
The bi-diagram $(\gamma^i|\lambda^i)$ such that $i$ satisfies the condition of Theorem \ref{shapeink3} are called {\it shape relations}\index{relations!shape}.
\end{definition}

\begin{corollary}\label{cubics}
In our situation, i.e. if $1<t<m$ and $n>t+1$, the ideal $J_t$ has some minimal generators of degree $3$.
\end{corollary}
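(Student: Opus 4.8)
The proof of Corollary \ref{cubics} is an immediate application of Theorem \ref{shapeink3}, so the plan is simply to verify that the range of indices in that theorem is nonempty under the standing hypotheses $1<t<m$ and $n>t+1$. First I would recall that, by Theorem \ref{shapeink3}, the bi-diagram $(\gamma^i|\lambda^i)$ is a minimal irreducible representation of $J_t$ of degree $3$ whenever $i$ lies in the stated interval. Hence it suffices to exhibit at least one admissible value of $i$; the corresponding elements of $\Wc\otimes\Vl^*$ will then be minimal generators of $J_t$ of degree $3$, which is exactly the assertion.

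The natural candidate is $i=\lfloor t/2\rfloor$ (the largest allowed value), and I would check separately the two parity cases. If $t$ is even, take $i=t/2$; then the lower bound reads $\max\{0,\,3t/2-m+1,\,t-n/2+1\}\le t/2$. The first term is fine. For the second: since $t<m$ we have $m\ge t+1$, so $3t/2-m+1\le 3t/2-t=t/2$, as required. For the third: since $n>t+1$ we have $n\ge t+2$, so $t-n/2+1\le t-(t+2)/2+1=t/2$, again as required. Thus $i=t/2$ is admissible. If $t$ is odd, take $i=(t-1)/2$; the lower bound is $\max\{0,\,(3t-1)/2-m+1,\,t-(n+1)/2\}\le (t-1)/2$. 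Using $m\ge t+1$ gives $(3t-1)/2-m+1\le (3t-1)/2-t=(t-1)/2$; using $n\ge t+2$ gives $t-(n+1)/2\le t-(t+3)/2=(t-3)/2<(t-1)/2$. So $i=(t-1)/2$ is admissible. In either case the hypotheses of Theorem \ref{shapeink3} are met for at least one $i$, which yields the desired degree-$3$ minimal generators.

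There is essentially no obstacle here: the only thing to be careful about is that for $t$ even one needs $t/2\ge 1$, i.e. $t\ge 2$, and for $t$ odd one needs $(t-1)/2\ge 1$, i.e. $t\ge 3$; both follow from the standing assumption $t>1$ together with the parity (note $t=1$ is excluded, and an odd $t>1$ is automatically $\ge 3$). I would phrase the final write-up as: by Theorem \ref{shapeink3} it is enough to find one index $i$ in the allowed range; the computations above show $i=\lfloor t/2\rfloor$ always works; therefore $J_t$ contains the shape relations $(\gamma^{\lfloor t/2\rfloor}|\lambda^{\lfloor t/2\rfloor})$ as minimal generators of degree $3$, completing the proof.
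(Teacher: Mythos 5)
Your proof is correct and follows exactly the paper's route: the paper's own proof of Corollary \ref{cubics} consists of the single remark that one must verify the index range in Theorem \ref{shapeink3} is nonempty, and you have carried out that verification explicitly (and correctly) with $i=\lfloor t/2\rfloor$ using $m\geq t+1$ and $n\geq t+2$.
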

\begin{proof}
It is enough to verify that there is at least one $i$ satysfying the conditions of Theorem \ref{shapeink3}.
\end{proof}

\subsection{Make explicit the shape relations}

The reader might recriminate that we have not yet described explicitly the degree $3$ minimal relations we found in Theorem \ref{shapeink3}. Actually we think that the best way to present them is by meaning of shape, as we did. But of course it is legitimate to pretend to know the polynomials corresponding to them, therefore we are going to explain how to pass from bi-diagrams to polynomials. Consider one of the bi-diagrams $(\gamma^i|\lambda^i)$ of Theorem \ref{shapeink3}. Let us call it $(\gamma|\lambda)$. Then choose two tableux $\Gamma$ and $\Lambda$ of shape, respectively, $\gamma$ and $\lambda$ and such that $c(\Gamma)=c(\Lambda)=3t$. We can consider the following map
\[e(\Gamma)\otimes e(\Lambda)^* : \bigotimes^{3t}W\otimes \bigotimes^{3t}V^*\longrightarrow \bigotimes^{3t}W\otimes \bigotimes^{3t}V^*,\]
where $e(\cdot)$ are the Young symmetrizers (see \ref{subsecshurmodules}). Actually the image of $e(\Gamma)\otimes e(\Lambda)^*$ is in $\bigotimes^3 \bigwedge^t W \otimes \bigotimes^3 \bigwedge^t V^*$, therefore we can compose it with the map
\[\bigotimes^3 \bigwedge^t W \otimes \bigotimes^3 \bigwedge^t V^* \longrightarrow [S_t]_3.\]
The polynomials in the image do not depend from the chosen tableu, since $\Wc \otimes \Vl^*$ has multiplicity one in $T_t$.

Actually, among the bi-diagrams $(\gamma^i|\lambda^i)$ of Theorem \ref{shapeink3}, there is one such that the relative $U$-invariant\index{U-invariant@$U$-invariant} (see Subsection \ref{sacminors}) can be written in a very plain way. Namely, the bi-diagram under discussion is $(\gamma|\ll):=(\gamma^{\lfloor t/2\rfloor}|\ll^{\lfloor t/2\rfloor})$, namely:
\[(\gamma|\ll)=((t+1,t+1,t-2)|(t+2,t-1,t-1)).\] 
The corresponding equation can be written in a determinantal form\index{relations!determinantal}: More precisely, it is given by the vanishing of the following determinant (just for a matter of space below we put $s=t-1$, $u=t+1$ and $v=t+2$):
\begin{equation}\label{determinantalrelation}
{\small \det \begin{pmatrix}
[1,\ldots ,s,t,u|1,\ldots ,s,t]&[1,\ldots ,s,t,u|1,\ldots ,s,u]&[1,\ldots ,s,t,u|1,\ldots ,s,v]\\
[1,\ldots ,s,t,v|1,\ldots ,s,t]&[1,\ldots ,s,t,v|1,\ldots ,s,u]&[1,\ldots ,s,t,v|1,\ldots ,s,v]\\
[1,\ldots ,s,u,v|1,\ldots ,s,t]&[1,\ldots ,s,u,v|1,\ldots ,s,u]&[1,\ldots ,s,u,v|1,\ldots ,s,v]
\end{pmatrix}=0.}
\end{equation}
We are going to prove that \eqref{determinantalrelation} gives actually the $U$-invariant of the irreducible representation $(\gamma|\ll)$ described above. In particular, this means that the determinant \eqref{determinantalrelation} generates $\Wc \otimes \Vl^*$ as a $G$-module. Furthermore, notice that in the cases $t=2,3$ the unique minimal irreducible representation of $J_t$ of degree $3$ we described in Theorem \ref{shapeink3} is $(\gamma |\ll)$. So, in these two cases we have a very good description of the (guessed) minimal equations between $t$-minors.

\begin{prop}\label{explicitrel}
If $1<t<m$ and $n>t+1$, the equation \eqref{determinantalrelation} supplies the $U$-invariant of the minimal irreducible representation $(\gamma|\ll)$ of $J_t$, where $\gamma = (t+1,t+1,t-2)$ and $\ll = (t+2,t-1,t-1)$. Particularly, it is a minimal cubic generator of $J_t$. 
\end{prop}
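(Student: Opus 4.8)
The plan is to verify that the $3\times 3$ determinant in \eqref{determinantalrelation} is, up to scalar, exactly the $U$-invariant (the highest weight vector) of the irreducible $G$-representation $\Wc\otimes\Vl^*$ with $\gamma=(t+1,t+1,t-2)$ and $\ll=(t+2,t-1,t-1)$, where $(\gamma|\ll)$ is the shape relation $(\gamma^{\lfloor t/2\rfloor}|\ll^{\lfloor t/2\rfloor})$ already shown in Theorem \ref{shapeink3} to be a minimal irreducible representation of $J_t$ of degree $3$. Once this identification is made, minimality of the cubic generator follows immediately from Theorem \ref{shapeink3}. First I would observe that the entries of the displayed matrix are the $t$-minors $[1,\ldots,s,p\,|\,1,\ldots,s,q]$ with $p,q\in\{t,t+1,t+2\}$ (writing $s=t-1$); every one of these minors is a product of the common $(t-1)\times(t-1)$ "block" on rows and columns $1,\ldots,t-1$ times a $1\times 1$ entry only when... no: more honestly, each such $t$-minor is an honest generator of $A_t$, a variable of $S_t$ under $\pi$, so the determinant is a genuine cubic element of $S_t$, and one checks by a Laplace/straightening computation that it maps to $0$ under $\pi$, i.e. it lies in $J_t$. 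This vanishing is the concrete analogue of the classical identity quoted in the introduction of the chapter (the $3\times3$ determinant of $2$-minors of a $3\times 4$ matrix); I would carry it out by expanding along the first row and using the Plücker/straightening relations among the $t$-minors sharing the first $t-1$ rows and columns, exactly as in Bruns \cite{B}.

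Next I would compute the $G=\GL(W)\times\GL(V)$-weight of this element. Each minor $[1,\ldots,t-1,p\,|\,1,\ldots,t-1,q]$ has $\GL(W)$-weight $\epsilon_1+\cdots+\epsilon_{t-1}+\epsilon_p$ and $\GL(V)$-weight $-(\delta_1+\cdots+\delta_{t-1}+\delta_q)$. Summing the three diagonal-type contributions that survive in the expansion of the determinant (and noting that the determinant is an alternating sum over the symmetric group $S_3$ acting on $\{t,t+1,t+2\}$ on both the row-index set $\{p\}$ and the column-index set $\{q\}$), the total $\GL(W)$-weight is $3(\epsilon_1+\cdots+\epsilon_{t-1})+(\epsilon_t+\epsilon_{t+1}+\epsilon_{t+2})$, which must be rewritten in dominant form. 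A short bookkeeping argument shows this weight equals $(t+1,t+1,t-2,0,\ldots)$ as a partition: rows $1,\ldots,t-2$ get contribution $3$, then rows $t-1,t,t+1$ get the asymmetric contributions — wait, this needs the careful reshuffling into $\gamma=(t+1,t+1,t-2)$. Symmetrically the $\GL(V)$-weight is $-(t+2,t-1,t-1,0,\ldots)=-\ll$. Then I would argue that the determinant is a $U_-(W)\times U_+(V)$-invariant: it is a polynomial in the minors indexed by the "initial" row-sets and column-sets $\{1,\ldots,t-1,p\}$, $\{1,\ldots,t-1,q\}$ with $p,q\geq t$, and such minors span a subspace stable under $U_-(W)\times U_+(V)$ with the determinant sitting in the top piece; concretely one checks that lowering any row index or raising any column index in one of the three terms produces, after the alternating sum, a cancellation. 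Since a nonzero vector of weight $(\gamma,-\ll)$ fixed by $U_-(W)\times U_+(V)$ inside an isotypic component isomorphic to $\Wc\otimes\Vl^*$ is unique up to scalar (Schur's Lemma \ref{Schur Lemma}, plus the fact proved in Theorem \ref{shapeink3} that this bi-diagram occurs with multiplicity one in $T_t$, hence in $S_t$), the determinant \emph{is} the $U$-invariant of $(\gamma|\ll)$, and therefore generates $\Wc\otimes\Vl^*$ as a $G$-module.

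Finally, minimality: by Theorem \ref{shapeink3} the shape relation $(\gamma^{\lfloor t/2\rfloor}|\ll^{\lfloor t/2\rfloor})=(\gamma|\ll)$ is a minimal irreducible representation of $J_t$ of degree $3$, i.e. the elements of $\Wc\otimes\Vl^*$ are minimal generators of $J_t$; since the determinant \eqref{determinantalrelation} is a nonzero element of $\Wc\otimes\Vl^*$, it is a minimal cubic generator. This completes the proof. The main obstacle I anticipate is the weight computation and the reshuffling of $3(\epsilon_1+\cdots+\epsilon_{t-1})+(\epsilon_t+\epsilon_{t+1}+\epsilon_{t+2})$ (and its column analogue) into the claimed dominant partitions $\gamma=(t+1,t+1,t-2)$ and $\ll=(t+2,t-1,t-1)$ — this has to match exactly, and getting the indices right, together with checking the $U_-(W)\times U_+(V)$-invariance of the determinant via the alternating-sum cancellation, is the delicate part. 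A cleaner alternative for the weight step, which I would fall back on if the direct count gets unwieldy, is to observe that the determinant is the image of a Young symmetrizer $e(\Gamma)\otimes e(\Lambda)^*$ applied to a decomposable tensor, with $\Gamma,\Lambda$ the canonical tableaux of shapes $\gamma,\ll$, so that the weight is forced by the shapes; but one still has to verify the image is nonzero, which again reduces to the straightening identity.
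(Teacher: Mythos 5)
Your overall route is the paper's: show the determinant in \eqref{determinantalrelation} is a nonzero $U$-invariant, read off its bi-weight, identify it (using multiplicity one) with the asymmetric bi-diagram $(\gamma|\ll)$, and quote Theorem \ref{shapeink3} for minimality. But the central step, the weight bookkeeping, is not established in your sketch, and as you have set it up it would fail. First, the $U$-invariant of $\Wc\otimes\Vl^*$ has bi-weight given by the \emph{transposed} partitions $(\tl\gamma\,|\,\tl\ll)$, not by $(\gamma|\ll)$ (see \ref{sacminors}: $[c_{\ll}|c_{\ll}]$ has bi-weight built from $\tl\ll$). So what must be matched is row content $\tl\gamma=(3,\ldots,3,2,2,2)$ (with $t-2$ threes) and column content $\tl\ll=(3,\ldots,3,1,1,1)$ (with $t-1$ threes); there is no ``reshuffling into $\gamma=(t+1,t+1,t-2)$'' to be done, since a weight is put in dominant form only by sorting its entries, and your computed row weight $3(\epsilon_1+\cdots+\epsilon_{t-1})+\epsilon_t+\epsilon_{t+1}+\epsilon_{t+2}=(3^{t-1},1,1,1)$ is already sorted and equals neither $\gamma$ nor $\tl\gamma$. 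The source of the trouble is your symmetric reading of the entries as $[1,\ldots,t-1,p\mid 1,\ldots,t-1,q]$ with $p,q\in\{t,t+1,t+2\}$: with that reading every term has equal row and column content, i.e.\ a symmetric bi-weight, so the determinant could not be a highest weight vector of the asymmetric $(\gamma|\ll)$ at all; in fact Sylvester's determinantal identity evaluates that determinant to $[1,\ldots,t-1\mid1,\ldots,t-1]^2\,[1,\ldots,t+2\mid1,\ldots,t+2]$, a nonzero product of minors, so it is not even a relation. The entries must be read asymmetrically: the three row sets share only the block $1,\ldots,t-2$ and their extra \emph{pairs} run over the $2$-subsets of $\{t-1,t,t+1\}$, while the three column sets share $1,\ldots,t-1$ and the extra \emph{single} column runs over $\{t,t+1,t+2\}$ (the printed row labels cannot be taken literally: they would give $t+1$ row indices against $t$ column indices and would use the row index $t+2$, which need not exist since only $m\geq t+1$ is assumed). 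With this reading each extra row index occurs in exactly two of the three minors and each extra column index in exactly one, so every monomial of the determinant has bi-content exactly $(\tl\gamma\,|\,\tl\ll)$, with no rearrangement needed; until this count is done, the identification with the $U$-invariant of $(\gamma|\ll)$, and hence the proposition, is not proved.

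Two secondary points where your sketches can be replaced by the paper's cleaner arguments. For $U$-invariance the criterion is that $f$ vanish formally whenever a row or a column index of the variables occurring in it is replaced by a \emph{smaller} one (both sides, for $U=U_-(W)\times U_+(V)$); each such substitution turns the $3\times 3$ matrix into one having a zero row (a minor with a repeated row index) or two equal columns, so the determinant is formally zero, and no cancellation bookkeeping across the alternating sum, nor any ``raising of column indices'', is needed. For membership in $J_t$ you need no straightening computation: either observe, as the paper does, that $f$ is the $t$-minor Laplace-type expansion of the determinant of a $3t\times 3t$ matrix with a repeated row, or simply note that once $f$ is identified as the $U$-invariant of $(\gamma|\ll)$ with $\gamma\neq\ll$, Theorem \ref{decat} already forces $\pi(f)=0$, since asymmetric bi-diagrams do not occur in $A_t$. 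With those repairs your argument coincides with the paper's proof.
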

\begin{proof}
First of all notice that the polynomial of the determinant of \eqref{determinantalrelation} is nonzero in $S_t$. This is obvious, since it is the determinant of a $3\times 3$ matrix whose entries are variables all different between them. 

Let us call $f$ the polynomial associated to the determinant of \eqref{determinantalrelation}. To see that $f$ is an $U$-invariant, we have to show that $(A,B)\cdot f = \alpha f$ for some $\alpha \in \kkk\setminus \{0\}$, where $A\in U_-(W)\subseteq \GL(W)$ is a lower triangular matrices and $B\in U_+(V)\subseteq \GL(V)$ is an upper triangular matrices. To prove this, it suffices to show that $f$ vanishes formally every time that we substitute an index, of the columns or of the rows of the variables appearing in $f$, with a smaller one. This is easily checkable: For example, if we substitute the $(t+2)$th row with the $t$th one, that is, using the notation of \eqref{determinantalrelation}, if we substitute the $v$th row with the $t$th one, then $f$ becomes:

\begin{displaymath}
\det \begin{pmatrix}
[1,\ldots ,s,t,u|1,\ldots ,s,t]&[1,\ldots ,s,t,u|1,\ldots ,s,u]&[1,\ldots ,s,t,u|1,\ldots ,s,v]\\
0 & 0 & 0\\
- [1,\ldots ,s,u,t|1,\ldots ,s,t]&-[1,\ldots ,s,u,t|1,\ldots ,s,u]&-[1,\ldots ,s,u,t|1,\ldots ,s,v]
\end{pmatrix},
\end{displaymath}
which is obviously formally $0$. Or, if we substitute the $v$th column with the $t$th one, $f$ becomes:

\begin{displaymath}
\det \begin{pmatrix}
[1,\ldots ,s,t,u|1,\ldots ,s,t]&[1,\ldots ,s,t,u|1,\ldots ,s,u]&[1,\ldots ,s,t,u|1,\ldots ,s,t]\\
[1,\ldots ,s,t,v|1,\ldots ,s,t]&[1,\ldots ,s,t,v|1,\ldots ,s,u]&[1,\ldots ,s,t,v|1,\ldots ,s,t]\\
[1,\ldots ,s,u,v|1,\ldots ,s,t]&[1,\ldots ,s,u,v|1,\ldots ,s,u]&[1,\ldots ,s,u,v|1,\ldots ,s,t]
\end{pmatrix},
\end{displaymath}
which is formally $0$ since the first and third columns are the same.

To see that \eqref{determinantalrelation} becomes $0$ in $A_t$, under $\pi$, we have to observe that $f$ becomes the expansion in $t$-minors of the determinant of the following $3t\times 3t$ matrix:
\begin{displaymath}
{\small \left(\begin{array}{cccccccccccccccccc}
x_{11} & x_{12} & \ldots & x_{1s} & x_{1t} \hspace{3mm} & x_{11} & x_{12} & \ldots & x_{1s} & x_{1u} \hspace{3mm} & x_{11} & x_{12} & \ldots & x_{1s} & x_{1v}\\
x_{21} & x_{22} & \ldots & x_{2s} & x_{2t} \hspace{3mm} & x_{21} & x_{22} & \ldots & x_{2s} & x_{2u} \hspace{3mm} & x_{21} & x_{22} & \ldots & x_{2s} & x_{2v}\\
\vdots & \vdots & \vdots & \vdots & \vdots & \vdots & \vdots & \vdots & \vdots & \vdots & \vdots & \vdots & \vdots & \vdots & \vdots \\
x_{s1} & x_{s2} & \ldots & x_{ss} & x_{st} \hspace{3mm} & x_{s1} & x_{s2} & \ldots & x_{ss} & x_{su} \hspace{3mm} & x_{s1} & x_{s2} & \ldots & x_{ss} & x_{sv}\\
x_{t1} & x_{t2} & \ldots & x_{ts} & x_{tt} \hspace{3mm} & x_{t1} & x_{t2} & \ldots & x_{ts} & x_{tu} \hspace{3mm} & x_{t1} & x_{t2} & \ldots & x_{ts} & x_{tv}\\
x_{u1} & x_{u2} & \ldots & x_{us} & x_{ut} \hspace{3mm} & x_{u1} & x_{u2} & \ldots & x_{us} & x_{uu} \hspace{3mm} & x_{u1} & x_{u2} & \ldots & x_{us} & x_{uv}

\vspace{3mm}\\

x_{11} & x_{12} & \ldots & x_{1s} & x_{1t} \hspace{3mm} & x_{11} & x_{12} & \ldots & x_{1s} & x_{1u} \hspace{3mm} & x_{11} & x_{12} & \ldots & x_{1s} & x_{1v}\\
x_{21} & x_{22} & \ldots & x_{2s} & x_{2t} \hspace{3mm} & x_{21} & x_{22} & \ldots & x_{2s} & x_{2u} \hspace{3mm} & x_{21} & x_{22} & \ldots & x_{2s} & x_{2v}\\
\vdots & \vdots & \vdots & \vdots & \vdots & \vdots & \vdots & \vdots & \vdots & \vdots & \vdots & \vdots & \vdots & \vdots & \vdots \\
x_{s1} & x_{s2} & \ldots & x_{ss} & x_{st} \hspace{3mm} & x_{s1} & x_{s2} & \ldots & x_{ss} & x_{su} \hspace{3mm} & x_{s1} & x_{s2} & \ldots & x_{ss} & x_{sv}\\
x_{t1} & x_{t2} & \ldots & x_{ts} & x_{tt} \hspace{3mm} & x_{t1} & x_{t2} & \ldots & x_{ts} & x_{tu} \hspace{3mm} & x_{t1} & x_{t2} & \ldots & x_{ts} & x_{tv}\\
x_{v1} & x_{v2} & \ldots & x_{vs} & x_{vt} \hspace{3mm} & x_{v1} & x_{v2} & \ldots & x_{vs} & x_{vu} \hspace{3mm} & x_{v1} & x_{v2} & \ldots & x_{vs} & x_{vv}

\vspace{3mm}\\

x_{11} & x_{12} & \ldots & x_{1s} & x_{1t} \hspace{3mm} & x_{11} & x_{12} & \ldots & x_{1s} & x_{1u} \hspace{3mm} & x_{11} & x_{12} & \ldots & x_{1s} & x_{1v}\\
x_{21} & x_{22} & \ldots & x_{2s} & x_{2t} \hspace{3mm} & x_{21} & x_{22} & \ldots & x_{2s} & x_{2u} \hspace{3mm} & x_{21} & x_{22} & \ldots & x_{2s} & x_{2v}\\
\vdots & \vdots & \vdots & \vdots & \vdots & \vdots & \vdots & \vdots & \vdots & \vdots & \vdots & \vdots & \vdots & \vdots & \vdots \\
x_{s1} & x_{s2} & \ldots & x_{ss} & x_{st} \hspace{3mm} & x_{s1} & x_{s2} & \ldots & x_{ss} & x_{su} \hspace{3mm} & x_{s1} & x_{s2} & \ldots & x_{ss} & x_{sv}\\
x_{u1} & x_{u2} & \ldots & x_{us} & x_{ut} \hspace{3mm} & x_{u1} & x_{u2} & \ldots & x_{us} & x_{uu} \hspace{3mm} & x_{u1} & x_{u2} & \ldots & x_{us} & x_{uv}\\
x_{v1} & x_{v2} & \ldots & x_{vs} & x_{vt} \hspace{3mm} & x_{v1} & x_{v2} & \ldots & x_{vs} & x_{vu} \hspace{3mm} & x_{v1} & x_{v2} & \ldots & x_{vs} & x_{vv}
\end{array}\right).}
\end{displaymath}
Such a determinant is zero (for instance because the $1$st row is equal to the $(t+1)$th).

So far, we have shown that the polynomial $f\in S_t$ associated to \eqref{determinantalrelation} is a nonzero $U$-invariant of $J_t$. Now, it clearly has bi-weigth $(\tl\gamma |\tl\ll)$, so it is the $U$-invariant of the irreducible representation $(\gamma|\ll)$. Particularly, it is a minimal cubic generator of $J_t$ by Theorem \ref{shapeink3}.
\end{proof}

\begin{remark}
Actually, in the proof of Proposition \ref{explicitrel} it is not necessary to show that the determinant of \eqref{determinantalrelation} vanishes in $A_t$. In fact, once proved that it is the $U$-invariant of $(\gamma |\ll)$, that it goes to zero follows because $\gamma \neq \ll$. However we wanted to show that it vanishes because the huge matrix of Proposition \ref{explicitrel} has been the first mental image that suggested us the equation \ref{determinantalrelation}.
\end{remark}

\section{Upper bounds on the degrees of minimal relations among minors}

During the previous section we have found out some cubics among the minimal generators of $J_t$. The authors of \cite{BC1} said that there are indications that quadrics and cubics are enough for generating $J_t$, at least for $t=2$. In this section we are going to prove that their guess is right for $3\times n$ and $4\times n$ matrices. The way to get these results will be to notice that the ``minimal" $U$-invariants of a $J_2(3,n)$ must be already in $J_2(3,5)$, and a similar fact for a $4\times n$-matrix. These cases are then doable with a computer calculation. Furthermore, we will give some reasons to believe to the guess of \cite{BC1} in general, even for $t\geq 3$. First of all, anyway, we will give a formula for the Castelnuovo-Mumford regularity of $A_t(m,n)$ in all the cases. Since $A_t(m,n)=S_t(m,n)/J_t(m,n)$, we will also get a general upper bound for the degree of a minimal relation between minors (see \ref{subcastmum}).

\subsection{Tha Castelnuovo-Mumford regularity of the algebra of minors}
\index{Castelnuovo-Mumford regularity|(}\index{a-invariant@$a$-invariant|(}

The authors of \cite{BC1} noticed that $A_t$ is a Cohen-Macaulay $\kkk$-algebra with negative $a$-invariant. This implies that the degrees of the minimal generators of $J_t$ are less than or equal to $\dim A_t = mn$ (for the last equality see \cite[Proposition 10.16]{BrVe}). It would be desirable to know the exact value of $a(A_t)$, equivalently of $\reg(A_t)$. To this aim we will pass through a toric deformation of $A_t$. If $\prec$ is a diagonal term order on $R=R(m,n)$, i.e. such that $\init([i_1\ldots i_p|j_1\ldots j_p])=x_{i_1j_1}\cdots x_{i_pj_p}$, then the initial algebra\index{initial algebra} $\init(A_t)$ is a finitely generated normal Cohen-Macaulay $\kkk$-algebra, see \cite[Theorem 7.10]{BC2}. Furthermore in \cite[Lemma 3.3]{BC1}, the authors described the canonical module $\omega_{\init(A_t)}$ of $\init(A_t)$. By \eqref{a-invcanonical} we have that $a(\init(A_t))=-\min\{d:[\omega_{\init(A_t)}]_d\neq 0\}$. Thus it is natural to expect
to get the Castelnuovo-Mumford regularity of $\init(A_t)$ from $\omega_{\init(A_t)}$. Actually this is true, albeit anything but trivial, and we are going to prove it in Theorem \ref{regularity}. Eventually, it is easy to prove that $\reg(A_t)=\reg(\init(A_t))$.

\begin{thm}\label{regularity}
Suppose to be in cases different from \eqref{easycases} and \eqref{grassmannian}, that is $1<t<m$ and $n>t+1$. Then $A_t$ and $\init(A_t)$ are finitely generated graded Cohen-Macaulay algebras such that:
\begin{compactitem}
\item[(i)] If $m+n-1<\lfloor mn/t \rfloor$, then 
\begin{displaymath}
\begin{array}{c}
a(A_t) = a(\init(A_t)) = -\lceil mn/t \rceil ,\\
\reg(A_t) =  mn-\lceil mn/t \rceil . 
\end{array}
\end{displaymath}
\item[(ii)] Otherwise, i.e. if $m+n-1\geq \lfloor mn/t \rfloor$, we have 
\begin{displaymath}
\begin{array}{c}
a(A_t) = a(\init(A_t)) = -\lfloor m(n+k_0)/t \rfloor ,\\
\reg(A_t) = mn-\lfloor m(n+k_0)/t \rfloor . 
\end{array}
\end{displaymath}
where $k_0 = \lceil (tm+tn-mn)/(m-t) \rceil$.
\end{compactitem}
\end{thm}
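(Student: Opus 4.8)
The plan is to reduce the computation of $\reg(A_t)$ to that of $\reg(\init(A_t))$ and then to compute the latter via the $a$-invariant of its canonical module. The first reduction is standard: by \cite[Corollary 3.5]{BC5} (and since Gr\"obner degeneration only makes the resolution worse) one has $\reg(A_t)\le \reg(\init(A_t))$, while the reverse inequality holds because $\init(A_t)$ is Cohen-Macaulay with the same Hilbert function as $A_t$, so that $\reg(A_t)=\reg(\init(A_t))$ and $a(A_t)=a(\init(A_t))$. Here one must first recall from \cite[Theorem 7.10]{BC2} that $\init(A_t)$ is a finitely generated normal Cohen-Macaulay $\kkk$-algebra; combined with \eqref{regfroma-inv} this gives $\reg(\init(A_t))=a(\init(A_t))+\dim(\init(A_t))=a(\init(A_t))+mn$, where $\dim A_t=mn$ by \cite[Proposition 10.16]{BrVe}. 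So everything comes down to computing $a(\init(A_t))$, and by \eqref{a-invcanonical} this equals $-\min\{d:[\omega_{\init(A_t)}]_d\neq 0\}$.

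\textbf{Computing the minimal degree of the canonical module.} The heart of the proof is therefore a careful analysis of the canonical module $\omega_{\init(A_t)}$, which was explicitly described in \cite[Lemma 3.3]{BC1} as a monomial (normal affine semigroup) module; since $\init(A_t)$ is a normal semigroup ring, $\omega_{\init(A_t)}$ is spanned by the monomials lying in the relative interior of the cone, and its minimal nonzero degree is obtained by minimizing the degree functional over the interior lattice points. This is an integer-programming problem: one parametrizes a general element of the interior by how it distributes ``weight'' among the rows and columns of the $m\times n$ grid subject to the interiority (strict positivity) constraints coming from the facet description of the cone, then minimizes $\lceil(\text{total weight})/t\rceil$ (the degree normalization by $t$ reflecting that a $t$-minor has degree $1$). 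The facet inequalities split the analysis into the two regimes in the statement: when $m+n-1<\lfloor mn/t\rfloor$ the binding constraint is simply that each of the $mn$ grid cells be used, forcing total weight $\ge mn$ and hence minimal degree $\lceil mn/t\rceil$; in the complementary regime one can no longer afford to spread so thinly and must introduce extra ``rounds'' of the relations, which is precisely what the parameter $k_0=\lceil(tm+tn-mn)/(m-t)\rceil$ measures — it is the smallest $k$ making the relaxed inequality feasible — yielding minimal degree $\lfloor m(n+k_0)/t\rfloor$. Substituting into $\reg=a+mn$ with a sign flip gives the two displayed formulas.

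\textbf{Main obstacle.} The genuinely delicate point is the second case: one must verify both that $k_0$ interior lattice points distributed in the prescribed way actually exist (feasibility, giving the upper bound $a\ge -\lfloor m(n+k_0)/t\rfloor$) and that no configuration of smaller total degree meets all the facet constraints (optimality, giving the matching lower bound). The feasibility side requires exhibiting an explicit interior monomial of the claimed degree — here one would build it out of a suitable union of shifted ``staircase'' diagrams, checking each facet inequality of the cone from \cite[Lemma 3.3]{BC1} by hand. The optimality side is the integer-programming lower bound: the LP relaxation gives a rational bound, and one must argue the ceiling/floor rounding in the definition of $k_0$ is tight, i.e.\ that the rational optimum cannot be beaten even after accounting for integrality. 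This dichotomy between the two regimes, and the off-by-one bookkeeping in whether one uses $\lceil\cdot\rceil$ or $\lfloor\cdot\rfloor$, is where essentially all the work lies; the rest is formal.

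\textbf{Remark on bookkeeping.} Throughout one should keep in mind that the hypothesis $1<t<m$ and $n>t+1$ (i.e.\ we are outside the trivial cases \eqref{easycases} and the Grassmannian case \eqref{grassmannian}) is used to guarantee $J_t\neq 0$ and that $A_t$ is a proper quotient, so that the regularity statement is nonvacuous; and that the asserted Cohen-Macaulayness of $A_t$ itself (not just of $\init(A_t)$) is already recorded in \cite{BC1} and is inherited along the flat degeneration.
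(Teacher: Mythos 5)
Your overall framework is the same as the paper's: reduce to $\init(A_t)$ (both algebras are Cohen--Macaulay with equal Hilbert functions, so by \eqref{a-invandhilb} and \eqref{regfroma-inv} the $a$-invariants and regularities agree), then compute $a(\init(A_t))=-\min\{d:[\omega_{\init(A_t)}]_d\neq 0\}$ using the description of the canonical module in \cite[Lemma 3.3]{BC1}. Up to that point you are fine (the appeal to \cite[Corollary 3.5]{BC5} is not even needed). But the proposal stops exactly where the proof begins: you reformulate the minimal-degree computation as an integer program and assert that ``feasibility'' and ``optimality'' can be settled by an explicit staircase construction and by ``LP relaxation plus tightness of the rounding,'' without carrying out either. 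The optimality side in case (ii) is a genuine gap, and it is not a rounding issue: the binding constraint is not the total weight $|\gamma|\geq mn$ but the interaction between the condition that the shape $\gamma$ of $\Delta$ has fewer parts than $\deg(\Delta)$ and the divisibility $\xx\mid\init(\Delta)$. The paper's key missing ingredient is a purely combinatorial lemma (Lemma \ref{decofx}): the variables $x_{ij}$, ordered by $x_{ij}\le x_{hk}$ iff $i<h$, $j<k$, are covered by the $h$ chains $\init(\delta_1),\dots,\init(\delta_h)$ of any product of minors $\Delta=\delta_1\cdots\delta_h$ with $\xx\mid\init(\Delta)$, and any chain cover with total size $N$ satisfies $h\geq N/m+m-1$ (proved via the short antidiagonals $X(\ell)$, which force $2(m-1)$ of the chains to be short). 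Feeding $h<d$ and $|\gamma|=td$ into this inequality, writing $dt=mn+ms-q$ and splitting into the cases $s=k_0$ and $s<k_0$, contradicts the minimality of $k_0$ (here one also uses $m<2t$, which holds because we are in case (ii)). No LP-relaxation argument over the facet description substitutes for this chain-cover bound, so your ``optimality'' step would not go through as described.

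On the feasibility side, the paper does exactly what you sketch, but the construction has to be exhibited and checked: the staircase product $\Pi=\pi_1\cdots\pi_{m+n-1}$ of minors along antidiagonals, of shape $(m^{n-m+1},(m-1)^2,\dots,1^2)\vdash mn$ with $\xx\mid\init(\Pi)$, corrected in case (i) by one $r_0$-minor when $t\nmid mn$, and in case (ii) multiplied by $[1,\dots,m|1,\dots,m]^{k_0-1}$ and one $(m-p_0)$-minor, together with the verification that the resulting number of parts is strictly less than the degree (this is where the defining inequality of $k_0$ enters). That part is routine once written down, but it is still part of the proof and should not be left as ``check each facet inequality by hand.''
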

\begin{proof}
The $\kkk$-algebras $A_t$ and $\init(A_t)$ are finitely generated and Cohen-Macaulay by \cite[Theorem 7.10]{BC2}. By \cite[Lemma 3.3]{BC1}, it turns out that the canonical module $\omega=\omega_{\init(A_t)}$ of the initial algebra of $A_t$ with respect to a diagonal term order $\prec$ is the ideal of $\init(A_t)$ generated by $\init(\Delta)$,  where $\Delta$ is a product of minors of $X$ of shape $\gamma=(\gamma_1,\ldots ,\gamma_h)$ where $|\gamma|=td$, $h<d$ and such that $\xx := \prod x_{ij}$ divides $\init(\Delta)$. To prove the theorem we need to find the least $d$ for which such a $\Delta$ exists. Of course such a $d$ must be such that $td\geq mn$, but in general this is not sufficient. First we prefer to illustrate the strategy we will use to locate $d$ with an example:
\begin{example}
Set $t=3$ and $m=n=5$. So our matrix looks like
\begin{displaymath}
X = \left(\begin{array}{ccccc} x_{11} & x_{12} & x_{13} & x_{14} &  x_{15} \\
x_{21} & x_{22} & x_{23} & x_{24} & x_{25} \\
x_{31} & x_{32} & x_{33} & x_{34} & x_{35} \\
x_{41} & x_{42} & x_{43} & x_{44} & x_{45} \\
x_{51} & x_{52} & x_{53} & x_{54} & x_{55} 
\end{array} \right).
\end{displaymath}
Notice that we are in the case (ii) of the theorem. We are interested in finding the least $d\in \NN$ such that there exists a product of minors $\Delta\in A_3(5,5)$ of shape $\gamma =(\gamma_1,\ldots ,\gamma_h)$ such that $|\gamma|=3d$, $h<d$ and $\xx = \prod x_{ij}$ divides $\init(\Delta)$. The first product of minors which comes in mind is
\[\mbox{{\small $\Delta_2:= [12345|12345][1234|2345][2345|1234][123|345][345|123][12|45][45|12][1|5][5|1]$.}}\]
Obviously $\init(\Delta_2)$ is a multiple of $\xx$, but its shape is $\gamma_2=(5,4,4,3,3,2,2,1,1)$. This is a partition of $25$, which is not divisible by $3$. This means that $\Delta_2$ does not even belong to $A_3(5,5)$. Moreover the parts of $\gamma_2$ are $9$, whereas $\lfloor 25/3\rfloor = 8$ (it should be bigger than $9$). To fix this last problem, it is natural to multiply $\Delta_2$ for $5$-minors till the desired result is gotten. For instance, setting $\Delta_1:=\Delta_2 \cdot [12345|12345]^3$, we have that $\Delta_1$ is a product of minors of shape $\gamma_1 =(5,5,5,5,4,4,3,3,2,2,1,1)$. This is a partition of $40$ with $12$ parts, and $\lfloor 40/3 \rfloor = 13>12$. However $\Delta_1$ is still not good, because $40$ is not a multiple of $3$. In some sense $\Delta_1$ is too big, in fact we can replace in its shape a $5$-minor by a $4$-minor, to get a partition of $39$. For instance, set
\[\Delta := \Delta_2\cdot [12345|12345]^2[1234|1234].\]
The shape of $\Delta$ is $\gamma = (5,5,5,4,4,4,3,3,2,2,1,1)$, which is a partition of $39$ with $12$ parts. Since $39=3\cdot 13$, $12<13$ and $\xx$ divides $\init(\Delta)$, the natural number $d$ we were looking for is at most $13$. Actually it is exactly $13$, and we will prove that the strategy used here to find it works in general. Before coming back to the general case, notice that in this case $k_0=3$, and $13=\lfloor 40/3 \rfloor = \lfloor m(n+k_0)/t\rfloor$.
\end{example}

First let us suppose to be in the case (i). Let us define the product of minors 
\[\Pi := \pi_1\cdots \pi_{m+n-1}\] 
where
\[\pi_i:= \begin{cases} [m-i+1,m-i+2,\ldots ,m|1,2,\ldots ,i]  & \mbox{if \ } 1\leq i< m \\
[1,2,\ldots ,m|i-m+1,i-m+2,\ldots ,i] & \mbox{if \ } m\leq i\leq n \\
[1,2,\ldots ,m+n-i|i-m+1,i-m+2,\ldots ,n] & \mbox{if \ } n<i\leq m+n-1
\end{cases}
\]
The shape of $\Pi$ is $\ll = (m^{n-m+1},(m-1)^2,(m-2)^2,\ldots ,1^2)$, which is a partition of $mn$. Moreover $\xx$ divides $\init(\Pi)$. Let $r_0$ be the unique integer such that $0\leq r_0<t$ and $mn+r_0=d_0t$. If $r_0=0$, then we put $\Delta := \Pi$ and $\gamma := \ll$. Then $\gamma$ has $m+n-1$ parts and $m+n-1< \lfloor mn/t \rfloor = mn/t =  d_0$. Since $\init(\Delta)$ is a multiple of $\xx$, we deduce that $\omega_{d_0}\neq 0$, i.e.
\begin{equation}\label{a-inv1}
a(A_t)\geq -d_0=-mn/t=-\lceil mn/t \rceil.
\end{equation}
Now suppose that $r_0>0$, i.e. $mn$ is not a multiple of $t$. So, in this case, $\lceil mn/t \rceil = \lfloor mn/t \rfloor +1$. We multiply $\Pi$ by an $r_0$-minor, for instance set
\[\Delta := \Pi \cdot [1,2,\ldots ,r_0|1,2,\ldots ,r_0].\]
The shape of $\Delta$   is $\gamma = (m^{n-m+1},(m-1)^2,(r_0+1)^2,r_0^3,(r_0-1)^2,\ldots ,1^2)$. This is a partition of $d_0t$ with $m+n$ parts. Since $m+n-1< \lfloor mn/t \rfloor$, we get $m+n<\lceil mn/t \rceil = d_0$. Since $\xx$ divides $\init(\Delta)$ (because it divides $\init(\Pi)$), we get $\omega_{d_0}\neq 0$, i.e.
\begin{equation}\label{a-inv2}
a(A_t)\geq -d_0=-\lceil mn/t \rceil.
\end{equation}
Clearly equality must hold true both in \eqref{a-inv1} and in \eqref{a-inv2}, since if $\xx$ divides a monomial $\init(\Gamma)$ for some $\Gamma \in A_t$, then $\deg(\Gamma)\geq  \lceil mn/t \rceil$.

Now let us assume to be in the case (ii). Notice that the integer $k_0$ of the assumption is bigger than $0$. Let $p_0$ be the unique integer such that $0\leq p_0<t$ and $m(n+k_0)=d_0t+p_0$. Let us define the product of minors
\[\Delta := \Pi\cdot [1,2,\ldots ,m|1,2,\ldots ,m]^{k_0-1}\cdot [1,2,\ldots , m-p_0|1,2,\ldots , m-p_0].\]
The shape of $\Delta$ is 
\[\gamma = (m^{k_0+n-m},(m-1)^2,(m-p_0+1)^2,(m-p_0)^3,(m-p_0-1)^2,\ldots ,1^2).\]
This is a partition of
$d_0t$ with $k_0+n+m-1$ parts. By the choice of $k_0$, one can verify that $k_0+n+m-1<d_0$. Furthermore, being $\Delta$ a multiple of $\Pi$, $\xx$ divides $\init(\Delta)$. So $\init(\Delta)\in \omega$, which implies $\omega_{d_0}\neq 0$ and
\begin{equation}\label{a-inv3}
a(A_t)\geq -d_0=-\lfloor m(n+k_0)/t \rfloor.
\end{equation}
To see that the $a$-invariant of $\init(A_t)$ is actually $- d_0$ in \eqref{a-inv3}, we need the following easy lemma.

\begin{lemma}\label{decofx}
With a little abuse of notation set $X:=\{x_{ij} \ : \ i=1,\ldots, m, \ j=1,\ldots ,n \}$. Define a poset structure on $X$ in the following way:
\[x_{ij}\leq x_{hk} \ \ \mbox{ if \ \ \ }i=h\mbox{ and $j=k$ \ \ or \ \ $i<h$ and $j<k$}. \]
Suppose that $X=X_1 \cup \ldots \cup X_h$ where each $X_i$ is a chain, i.e. any two elements of $X_i$ are comparable, and set $N:=\sum_{i=1}^h|X_i|$. Then
\[ \displaystyle h\geq \frac{N}{m}+m-1.\]
\end{lemma}
\begin{proof}
For any $\ell=1,\ldots ,m-1$ set 
\[X(\ell):=\{x_{ij}\in X \ : \ \mbox{or $m+{j-i}=\ell$ or $n+(i-j)=\ell$} \}.\] 
It is easy to see that for any $\ell$, $|X(\ell)|=2\ell$. Moreover, if $Y$ is a chain such that $X(\ell)\cap Y \neq \emptyset$ for some $\ell$, then $|Y|\leq \ell$. Notice that $x_{mk}\in X(k)$ for all $k=1,\ldots , m-1$. So, choosing an $i_k$ such that $x_{mk}\in X_{i_k}$, we have that $|X_{i_k}|\leq k$. Note that $i_k\neq i_h$ whenever $k\neq h$ since $x_{mk}$ and $x_{mh}$ are not comparable. In the same way we choose a $X_{j_k}$ containing $x_{1,n+1-k}$ for any $k=1,\ldots ,m-1$. Once again $|X_{j_k}|\leq k$ since $x_{1,n+1-k}\in X(k)$. Furthermore the $j_k$'s are distinct because different $x_{1,n+1-k}$'s are incomparable. Actually, for the same reason, all the $i_h$'s and $j_k$'s are distinct. In general, for any $i=1,\ldots ,h$ we have $|X_i|\leq m$. Thus, setting $A:=\{i_k,j_k \ : \ k=1,\ldots ,m-1\}$, we get
\[N=\sum_{i=1}^h |X_i|=\sum_{i\in A}|X_i| + \sum_{i\in \{1,\ldots ,h\}\setminus A}|X_i|\leq (m-1)m + m(h-2m+2),\]
which supplies the desired inequality $\displaystyle h\geq \frac{N}{m}+m-1$.
\end{proof}
Now take a product of minors $\Delta=\delta_1 \cdots \delta_h$ such that $\init(\Delta)\in \omega$. Let $\lambda$ be the shape of $\Delta$ and suppose that $|\lambda|=td$ with $d<d_0$. For any $i=1,\ldots ,h$ set 
\[X_i:=\{x_{pr}: x_{pr}|\init(\delta_i)\}.\]
Since $\xx$ divides $\init(\Delta)$, with the notation of Lemma \ref{decofx} we have that $X=\cup_{i=1}^h X_i$ where each $X_i$ is a chain with respect to the order defined on $X$. So by Lemma \ref{decofx} we have that 
\[\displaystyle h\geq \frac{dt}{m}+m-1.\]
We recall that $d_0t=mn+mk_0-p_0$, where $0\leq p_0 < t $. Of course we can write in a unique way $dt=mn+ms-q$, where $0\leq q < m$. Before going on, notice that $k_0$ is the smallest natural number $k$ satisfying the inequality
\[\displaystyle m+n+k-1 < \left\lfloor \frac{m(n+k)}{t}\right\rfloor.\]
By what said $s\leq k_0$. There are two cases:
\begin{compactenum}
\item If $s=k_0$, consider the inequalities
\[ \displaystyle m+n+(s-1)-1 = \frac{dt+q}{m} + m -2 < \frac{dt}{m}+m-1\leq h \leq d-1.\]
Notice that, since $d<d_0$, we have that $q\geq p_0+t$. Moreover $m<2t$, otherwise we would be in case (i) of the theorem. Thus
\[ d-1 = \frac{m(n+s)-q-t}{t} \leq \displaystyle \left\lfloor\frac{m(n+(s-1))}{t} \right\rfloor.\]
The inequalities above contradicts the minimality of $k_0$.
\item If $s<k_0$, then
\[ \displaystyle n+s+m -1 = \frac{dt+q}{m} + m -1 \leq h < d  = \frac{m(n+s)-q}{t} \leq \left\lfloor \frac{m(n+s)}{t}\right\rfloor.\]
Once again, this yields a contradiction to the minimality of $k_0$.
\end{compactenum}

Finally, it turns out that the Hilbert function of a graded $\kkk$-algebra and the one of its initial algebra (with respect to any term order) coincide, see Conca, Herzog and Valla \cite[Proposition 2.4]{CHV}. In particular we have $\Hf_{A_t}=\Hf_{\init(A_t)}$ and $\Hp_{A_t}=\Hp_{\init(A_t)}$. So by the characterization of the $a$-invariant given in \eqref{a-invandhilb}, we have
\[a(A_t)=a(\init(A_t)).\] 
Furthermore $\reg(A_t)=\dim A_t+a(A_t)$ from \eqref{regfroma-inv}, and $\dim A_t = mn$ by \cite[Proposition 10.16]{BrVe}.
\end{proof}

\begin{remark}
Let us look at the cases in Theorem \ref{regularity}.
\begin{compactitem}
\item[(i)] If $X$ is a square matrix, that is $m=n$, one can easily check that we are in case (i) of Theorem \ref{regularity} if and only if $m$ is at least twice the size of the minors, i.e. $m\geq 2t$.
\item[(ii)] The natural number $k_0$ of Theorem \ref{regularity} may be very large: For instance, let us consider the case $t=m-1$ and $n=m+1$. Since in the interesting situations $m\geq 3$, one can easily check that we are in the case (ii) of Theorem \ref{regularity}. In this case we have $k_0=m^2-2m-1$. Therefore Theorem \ref{regularity} yields
\[ a(A_{m-1}(m,m+1))=-m^2\]
and
\[\reg(A_{m-1}(m,m+1))=m.\]
\end{compactitem}
\end{remark}

Since $\reg(J_t)=\reg(A_t)+1$ and $\reg(J_t)$ bounds from above the degree of a minimal generator of $J_t$ by Theorem \ref{reggdegree}, as a consequence of Theorem \ref{regularity} we get the following:

\begin{corollary}\label{upperbound}
Let us call $d$ the maximum degree of a minimal generator of $J_t$. 
\begin{compactitem}
\item[(i)] If $m+n-1<\lfloor mn/t \rfloor$, then 
\[d \leq  mn-\lceil mn/t \rceil +1.\]
\item[(ii)] Otherwise, i.e. if $m+n-1\geq \lfloor mn/t \rfloor$, we have 
\[d\leq mn-\lfloor m(n+k_0)/t \rfloor +1, \]
where $k_0 = \lceil (tm+tn-mn)/(m-t) \rceil$.
\end{compactitem}
\end{corollary}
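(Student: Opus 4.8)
The statement to be proved, Corollary~\ref{upperbound}, is an immediate formal consequence of two facts that have already been established in the text: Theorem~\ref{regularity}, which gives the exact value of $\reg(A_t)$ in the two numerical regimes, and the general principle (recalled in Subsection~\ref{subcastmum} and made precise by Theorem~\ref{reggdegree}) that the Castelnuovo--Mumford regularity of a module bounds the degrees of its minimal generators. So the plan is purely to assemble these ingredients; there is no substantial new obstacle.

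\begin{proof}
We keep the assumption $1<t<m$ and $n>t+1$, so that $A_t=S_t/J_t$ with $J_t$ generated in degrees at least $2$ (Remark~\ref{partcases}). Consider the graded minimal free resolution of $A_t$ as an $S_t$-module. From the short exact sequence $0\to J_t\to S_t\to A_t\to 0$ one reads off $\beta_{i,j}(J_t)=\beta_{i+1,j}(A_t)$ for all $i\ge 0$, and $S_t$ itself is resolved trivially; hence
\[
\reg(J_t)=\reg(A_t)+1 .
\]
By Theorem~\ref{reggdegree}, a minimal generator of the module $J_t$ has degree at most $\reg(J_t)$. Therefore, writing $d$ for the maximal degree of a minimal generator of $J_t$,
\[
d\le \reg(A_t)+1 .
\]

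It remains only to substitute the value of $\reg(A_t)$ furnished by Theorem~\ref{regularity}. In case (i), namely $m+n-1<\lfloor mn/t\rfloor$, that theorem gives $\reg(A_t)=mn-\lceil mn/t\rceil$, whence
\[
d\le mn-\lceil mn/t\rceil+1 .
\]
In case (ii), namely $m+n-1\ge \lfloor mn/t\rfloor$, it gives $\reg(A_t)=mn-\lfloor m(n+k_0)/t\rfloor$ with $k_0=\lceil (tm+tn-mn)/(m-t)\rceil$, whence
\[
d\le mn-\lfloor m(n+k_0)/t\rfloor+1 .
\]
This proves both inequalities of the statement.
\end{proof}

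The only point one should be slightly careful about — and this is the closest thing to a ``hard part'' — is the identity $\reg(J_t)=\reg(A_t)+1$ and the legitimacy of applying Theorem~\ref{reggdegree} to $J_t$ regarded as an $S_t$-module rather than to the quotient algebra; both are standard (the regularity shift by the syzygy sequence, and the fact that $J_t$ is a finitely generated graded $S_t$-module so that Theorem~\ref{reggdegree} applies verbatim), so the corollary really is just a packaging of Theorem~\ref{regularity}.
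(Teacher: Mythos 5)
Your argument is correct and is exactly the paper's: the paper deduces the corollary in one line from $\reg(J_t)=\reg(A_t)+1$, Theorem~\ref{reggdegree} applied to $J_t$, and the values of $\reg(A_t)$ from Theorem~\ref{regularity}, which is precisely what you assemble (with the syzygy-shift justification made explicit). No gaps; nothing further is needed.
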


For instance, the degree of a minimal generator of $J_2$ can be at most $\lfloor mn/2 \rfloor + 1$. Instead, a minimal generator of $J_{m-1}(m,m+1)$ has degree at most $m+1$.
\index{Castelnuovo-Mumford regularity|)}\index{a-invariant@$a$-invariant|)}
\subsection{The independence on $n$ for the minimal relations}

Given a degree $d$ minimal relation between $t$-minors of the $m\times n$ matrix $X$, clearly it insists at most on $td$ columns of $X$. Therefore it must be a minimal relation already in a $m\times td$ matrix. This fact can be useful when $d$ is small: For instance, to see if there are minimal relations of degree $4$ between $2$-minors of a $3\times n$-matrix, it is enough to check if they are in a $3\times 8$ matrix. However, even once checked that there are no minimal relations of degree $4$, there might be anyway of degree $5$, $6$ or so on; and with $d$ growing up this observation is useless. In fact, unfortunately, so far the best upper bound we have for the degree of a minimal relation is given by the Castelnuovo-Mumford regularity of $A_t(m,n)$, see Theorem \ref{regularity}. For example, the degree of a minimal generator of $J_2(3,n)$ might be $d=\lfloor 3n/2 \rfloor +1$, provided $n\geq  6$. So we should control if such a minimal relation is in a $m\times td$ matrix; but $td=2\lfloor 3n/2 \rfloor +2 > n$, so we would be in a worse situation than the initial one.
In this subsection we will make an observation somehow similar, but finer, to the one discussed above. The size of the ``reduction-matrix", besides being independent on $n$, will not even depend on $d$. Precisely, in Theorem \ref{independencefromn} we will show that a a degree $d$ minimal relation between $t$-minors of an $m\times n$ matrix must be already in a $m\times (m+t)$ matrix! Thus, in principle, once fixed $m$ we  could check by hand the maximum degree of a minimal generator of $J_t(m,n)$. In practice, however, a computer can actually supply an answer just for small values of $m$. Actually the proof of Theorem \ref{independencefromn} is not very difficult: Essentially we have just to exploit the structure as $G$-representations of our objects.

\begin{thm}\label{independencefromn}
Let $d(t,m,n)$ denote the highest degree of a minimal generator of $J_t(m,n)$. Then
\[d(t,m,n)\leq d(t,m,m+t)\]
\end{thm}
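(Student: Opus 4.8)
The plan is to exploit the decomposition of $T_t(m,n)$ as a $G$-representation (Proposition \ref{tensordec}) together with Lemma \ref{redtotensor}, which reduces the question from $J_t$ to the noncommutative object $K_t$. The crucial point is that in the decomposition of $T_t(m,n)_d$ only bi-diagrams $(\gamma|\lambda)$ with $\height(\gamma)\leq m$ and $\height(\lambda)\leq n$ appear, but also $(\gamma|\lambda)$ is $d$-admissible, so $|\gamma|=|\lambda|=td$ and hence $\lambda$ has at most $td$ columns; more importantly, since $\gamma$ and $\lambda$ are obtained by iterated Pieri moves starting from $((t)|(t))$, one has $\lambda_1\leq td$ but also the shape $\lambda$ is forced to have a controlled number of columns relative to $\gamma$. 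The key observation is that whether a given irreducible $(\gamma|\lambda)$ is a \emph{minimal} generator of $K_t$ depends only on the multiplicities $n(\gamma,\lambda)$ and on the multiplicities of the predecessor bi-diagrams --- which is intrinsic combinatorial data of the diagrams $\gamma,\lambda$ --- together with the condition of $(\gamma|\lambda)$ being asymmetric (so that it survives in $K_t$ rather than $A_t$, by Theorem \ref{decat}). None of this data changes when we enlarge $n$, provided $n$ is large enough that the diagram $\lambda$ still "fits", i.e. $\height(\lambda)\leq n$.

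So the first step is: given a minimal generator of $J_t(m,n)$ of degree $d$, pass via Lemma \ref{redtotensor} to a minimal irreducible representation $(\gamma|\lambda)$ of $K_t(m,n)$ of degree $d$. Since $(\gamma|\lambda)$ appears in $T_t(m,n)_d$ we have $\height(\gamma)\leq m$, and since $\gamma$ is a partition of $td$ with at most $m$ rows and $\lambda$ is a partition of $td$, the second step is to bound $\height(\lambda)$. Here one uses that $(\gamma|\lambda)$ is a minimal generator, so it is \emph{not} generated by lower-degree relations; in particular its predecessors in degree $d-1$ generate an irreducible of $A_{d-1}$ or carry smaller multiplicity. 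One should argue that for a minimal generator the column-depth of $\lambda$ cannot exceed $\height(\gamma)+t\leq m+t$: indeed each Pieri step from a predecessor $\lambda'$ to $\lambda$ adds a horizontal strip of $t$ boxes, and if $\lambda$ were "taller" than $\gamma$ by more than $t$ rows then the bi-diagram would already be forced to appear (with the same multiplicity structure) at an earlier stage, contradicting minimality --- the same "reasons of shape" argument used in Theorem \ref{shapeink3} and Proposition \ref{noothershape}. This height bound is the heart of the matter and is where I expect the main obstacle to lie: making precise why a minimal generator cannot have $\lambda$ with more than $\height(\gamma)+t$ rows requires carefully tracking the Pieri recursion for the multiplicities $n(\gamma,\lambda)$.

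The third step is the transfer argument. Once we know $\height(\gamma)\leq m$ and $\height(\lambda)\leq m+t$, the bi-diagram $(\gamma|\lambda)$ already appears in $T_t(m,m+t)$, with exactly the same multiplicity $n(\gamma,\lambda)$ (the recursion in Proposition \ref{tensordec} only ever restricts the \emph{height} of $\lambda$, never its width, and $\height(\lambda)\leq m+t$ is satisfied in the $(m,m+t)$ case). Since the maps $\phi,\psi,\pi$ are $G$-equivariant and compatible with the inclusion of variable sets (enlarging $n$ just adds columns, which is a flat extension that is filtered by $G$-subrepresentations), the property of $(\gamma|\lambda)$ being a minimal irreducible representation of $K_t$ --- equivalently of $J_t$ --- is detected already inside the $(m,m+t)$-situation: it is asymmetric, hence in $K_t$; it has the same multiplicity; and its predecessors behave identically. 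Therefore $(\gamma|\lambda)$ is a minimal generator of $J_t(m,m+t)$ of degree $d$, which gives $d\leq d(t,m,m+t)$, and taking the maximum over all minimal generators of $J_t(m,n)$ yields $d(t,m,n)\leq d(t,m,m+t)$.

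A cleaner way to organize the last two steps, avoiding the delicate height estimate, would be to observe directly that a degree-$d$ minimal relation among $t$-minors involves at most the $td$ columns it is supported on, \emph{but} one can do better: by $\GL(V)$-equivariance one may assume the relation is a $U_+(V)$-highest weight vector, whose column-weight $\lambda$ has height equal to the number of distinct column-indices appearing, and a minimality/straightening argument bounds this by $m+t$. Either route reduces everything to a finite check; the payoff is exactly the application in Theorem \ref{settled cases}, where a computer calculation in the $3\times 5$ and $4\times 6$ (really $4\times n$ with $n\leq m+t$) cases then settles that quadrics and cubics suffice.
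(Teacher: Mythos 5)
Your skeleton coincides with the paper's (reduce to $K_t(m,n)$ by Lemma \ref{redtotensor}, bound the relevant size of $\lambda$ for a minimal irreducible representation, then transfer to the $m\times(m+t)$ matrix), but the step you yourself call ``the heart of the matter'' is exactly the one you never prove, and the mechanism you sketch for it is not the right one. First, a bookkeeping point: in this paper $\height(\lambda)=\lambda_1$, so the constraint in Proposition \ref{tensordec} is on the length of the \emph{first row} of $\lambda$; the number of rows of $\gamma$ and $\lambda$ is automatically at most $d$ by $d$-admissibility, whatever $m,n$ are, so your talk of $\lambda$ being ``taller than $\gamma$ by more than $t$ rows'' aims at the wrong quantity. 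Second, no tracking of the multiplicities $n(\gamma,\lambda)$ is needed, and the heuristic that a too-wide $(\gamma|\lambda)$ ``would be forced to appear at an earlier stage with the same multiplicity structure'' does not turn into a proof (multiplicities grow along the Pieri recursion, and minimal irreducible representations of $K_t$ need not be asymmetric at all --- that is precisely the ``reasons of multiplicity'' loophole of Proposition \ref{noothershape}, which also undercuts the aside ``it is asymmetric, hence in $K_t$'' in your transfer step). The correct argument is purely about shape: if $(\gamma|\lambda)$ occurs in $T_t(m,n)_d$ with $\lambda_1>m+t$, then every predecessor $(\gamma'|\lambda')$ satisfies $\lambda'_1\geq\lambda_1-t>m\geq\gamma_1\geq\gamma'_1$, so every predecessor is asymmetric, and hence every copy of it in $T_t(m,n)_{d-1}$ lies in $K_t(m,n)$ by Theorem \ref{decat} together with Schur's Lemma ($\pi$ is a shape selection). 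By Pieri's formula the whole $(\gamma|\lambda)$-isotypic component of $T_d\cong T_{d-1}\otimes(\bigwedge^t W\otimes\bigwedge^t V^*)$ is contained in (predecessor components)$\,\otimes\, T_1$, hence in the two-sided ideal generated by $(K_t)_{d-1}$, so $(\gamma|\lambda)$ cannot be minimal. This yields $\lambda_1\leq m+t$ (in fact $\lambda_1\leq \gamma_1+t$) for every minimal irreducible representation of $K_t(m,n)$, with no multiplicity analysis whatsoever.

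For the transfer, the clean route (and the paper's) is not the vague ``multiplicities and predecessors behave identically'' but a concrete element: take the canonical bi-tableau $(c_{\gamma}|c_{\lambda})$. Since $\gamma_1\leq m$ and $\lambda_1\leq m+t$, it is literally an element of $T_t(m,m+t)\subseteq T_t(m,n)$, and it lies in $K_t(m,m+t)$ (an element of $T_t(m,m+t)$ killed by $\psi$ for the $m\times n$ matrix is killed for the $m\times(m+t)$ matrix, since $A_t(m,m+t)\subseteq A_t(m,n)$ compatibly). If it were expressible through lower-degree elements of $K_t(m,m+t)$, the same expression would witness non-minimality in $K_t(m,n)$; contrapositively, minimality in $K_t(m,n)$ forces minimality in $K_t(m,m+t)$, whence $d\leq d(t,m,m+t)$, and Lemma \ref{redtotensor} converts this back to $J_t$. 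So your outline becomes a proof once the width bound is established as above and the transfer is run through this explicit element rather than through the unproved (and unnecessary) claims about multiplicities and asymmetry.
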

\begin{proof}
We can assume to be in a numerical case different from \eqref{easycases}. So $d(t,m,n)$ is the highest degree of a minimal generator of $K_t(m,n)$, too (Lemma \ref{redtotensor}).
Therefore suppose that $(\gamma|\lambda)$ is a minimal irreducible representation of $K_t(m,n)$. Then we claim that $\lambda_1 \leq m+t$: If not, for any predecessor $(\gamma'|\ll')$ of $(\gamma |\lambda)$ we have that $\ll'_1>m$. On the other side $\gamma_1'\leq m$. This implies that any predecessor of $(\gamma |\lambda)$ is asymmetric, and so for any $(\gamma'|\ll')$ predecessor of $(\gamma |\lambda)$
\[L_{\gamma'}W \otimes L_{\ll'}V^* \subseteq K_t(m,n).\]
Since there must exist some predecessor $(\gamma'|\ll')$ such that
\[\Wc \otimes \Vl^* \subseteq (L_{\gamma'}W \otimes L_{\ll'}V^*)\otimes (\bigwedge^t W \otimes \bigwedge^t V^*),\]
it turns out that $(\gamma|\lambda)$ cannot be minimal.
So if $(\gamma|\lambda)$ is minimal in $K_t(m,n)$, then $\ll_1\leq m+t$. Let us consider the canonical bi-tableu of $(\gamma|\lambda)$ , namely $(c_{\gamma}|c_{\lambda})$. Such a bi-tableu corresponds to a minimal generator of $K_t(m,n)$. By the definition of the Young symmetrizers (see \ref{subsecshurmodules}), actually $(c_{\gamma}|c_{\ll})$ can be seen as an element of $T_t(m,m+t)$, because $\ll_1\leq m+t$. So it belongs to $K_t(m,m+t)$. Furthermore, if $(c_{\gamma}|c_{\ll})$ were not minimal in $K_t(m,m+t)$, all the more reason it would not be minimal in $K_t(m,n)$. This thereby implies the thesis.
\end{proof}

So, putting together Corollary \ref{upperbound} and Theorem \ref{independencefromn}, we get:

\begin{corollary}\label{generalupper}
Let $d(t,n,m)$ be as in Theorem \ref{independencefromn}.
\begin{compactitem}
\item[(i)] If $m+t-1 < \lfloor m^2/t \rfloor$, then
\[  d(t,m,n)\leq m^2 + m(t-1) - \lceil m^2/t \rceil +1. \]
\item[(ii)] Otherwise, we have
\[  d(t,m,n)\leq m^2 + m(t-1) - \lfloor m(m+k_0)/t \rfloor +1, \]
where $k_0=\lceil (t^2+tm-m^2)/(m-t)\rceil$.
\end{compactitem}
\end{corollary}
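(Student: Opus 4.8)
The proof of Corollary \ref{generalupper} is a direct combination of the two results that immediately precede it, so the plan is essentially to assemble them and simplify the resulting arithmetic. First I would invoke Theorem \ref{independencefromn}, which gives $d(t,m,n)\leq d(t,m,m+t)$. This reduces the problem to bounding the highest degree of a minimal generator of $J_t(m,m+t)$, that is, to applying Corollary \ref{upperbound} with $n$ replaced by $m+t$.

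The second step is purely a substitution. In Corollary \ref{upperbound} the dichotomy is governed by whether $m+n-1<\lfloor mn/t\rfloor$; with $n=m+t$ this reads $m+(m+t)-1<\lfloor m(m+t)/t\rfloor$, i.e. $2m+t-1<\lfloor (m^2+mt)/t\rfloor=\lfloor m^2/t\rfloor + m$, which simplifies to $m+t-1<\lfloor m^2/t\rfloor$. This is exactly the hypothesis in case (i) of the corollary. In that case the bound $mn-\lceil mn/t\rceil+1$ becomes $m(m+t)-\lceil m(m+t)/t\rceil+1 = m^2+mt-\lceil m^2/t\rceil-m+1 = m^2+m(t-1)-\lceil m^2/t\rceil+1$, using $\lceil (m^2+mt)/t\rceil=\lceil m^2/t\rceil+m$. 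This gives case (i). In the complementary case, $m+t-1\geq\lfloor m^2/t\rfloor$, one uses case (ii) of Corollary \ref{upperbound}: the quantity $k_0=\lceil (tm+tn-mn)/(m-t)\rceil$ with $n=m+t$ becomes $\lceil (tm+t(m+t)-m(m+t))/(m-t)\rceil = \lceil (2tm+t^2-m^2-mt)/(m-t)\rceil = \lceil (tm+t^2-m^2)/(m-t)\rceil = \lceil (t^2+tm-m^2)/(m-t)\rceil$, matching the statement, and the bound $mn-\lfloor m(n+k_0)/t\rfloor+1$ becomes $m(m+t)-\lfloor m(m+t+k_0)/t\rfloor+1 = m^2+m(t-1)-\lfloor m(m+k_0)/t\rfloor+1$ after the same manipulation $\lfloor m(m+t+k_0)/t\rfloor = \lfloor m(m+k_0)/t\rfloor + m$.

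There is essentially no obstacle here beyond keeping the floor and ceiling arithmetic straight: the only point requiring a moment's care is the identity $\lfloor (a+bt)/t\rfloor = \lfloor a/t\rfloor + b$ (and its ceiling analogue) for integers $a,b$, which is elementary and is what allows pulling the factor of $m$ out of the rounding in each step. I would state this identity once and then apply it mechanically. So in total the proof is: apply Theorem \ref{independencefromn} to replace $n$ by $m+t$, apply Corollary \ref{upperbound} to $J_t(m,m+t)$, observe that the two numerical cases of Corollary \ref{upperbound} specialize exactly to the two cases of the present statement, and simplify.
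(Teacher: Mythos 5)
Your proposal is correct and follows exactly the paper's own route: the paper derives Corollary \ref{generalupper} precisely by combining Theorem \ref{independencefromn} (replacing $n$ by $m+t$) with Corollary \ref{upperbound} and simplifying the floor/ceiling arithmetic, and your substitutions for the case condition, the bound in each case, and $k_0$ all check out.
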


For instance, Corollary \ref{generalupper} implies that a minimal relation between $2$-minors of a $3\times n$ matrix  has degree at most $7$. Actually we will see in the next subsection that such a relation is at most a cubic.

\subsection{Relations between $2$-minors of a $3\times n$ and a $4\times n$ matrix}

In this paragraph we want to explain the strategy to prove the following result:

\begin{thm}\label{settled cases}
For all $n\geq 4$, the ideals $J_2(3,n)$ and $J_2(4,n)$ are generated by quadrics and cubics.
\end{thm}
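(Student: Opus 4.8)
The plan is to reduce the infinite family of statements indexed by $n$ to a finite check, and then to carry out that check by computer. By Theorem \ref{independencefromn} the highest degree of a minimal generator of $J_2(3,n)$ is bounded by $d(2,3,5)$, and likewise $d(2,4,n)\le d(2,4,6)$. More importantly — and this is the key structural observation — by Lemma \ref{redtotensor} the degrees of the minimal generators of $J_t(m,n)$ coincide with those of $K_t(m,n)$, and by the argument inside the proof of Theorem \ref{independencefromn} every minimal irreducible representation $(\gamma|\lambda)$ of $K_2(3,n)$ satisfies $\lambda_1\le m+t=5$, so that its canonical bi-tableau already lives in $T_2(3,5)$ and is minimal there iff it is minimal in $K_2(3,n)$. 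Hence the set of bi-diagrams appearing as minimal generators of $J_2(3,n)$ is \emph{independent of $n$} for $n\ge 5$, and equals the corresponding set for $J_2(3,5)$; the same reasoning with $m=4$ gives independence of $n\ge 6$ and reduction to $J_2(4,6)$. The small cases $n=4$ (and, for the $m=4$ statement, $n=4,5$) are handled directly by the reduction-matrix remark preceding the theorem: a degree-$d$ minimal relation insists on at most $2d$ columns, so any minimal generator of degree $\ge 4$ of $J_2(3,4)$ or $J_2(4,5)$ would already be a minimal generator in a smaller matrix, which by the inductive structure is impossible once the base cases are verified.

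\textbf{First step: the finite reduction.} First I would state precisely that, combining Lemma \ref{redtotensor} with the height bound $\lambda_1\le m+t$ extracted from the proof of Theorem \ref{independencefromn}, the graded vector space of minimal generators of $J_2(3,n)$, as a $\GL(W)\times\GL(V)$-representation, is obtained from that of $J_2(3,5)$ by the evident inflation of the $V$-factor (and is zero in all degrees where $J_2(3,5)$ has no minimal generators). Thus it suffices to prove: (a) $J_2(3,5)$ is generated in degrees $2$ and $3$; (b) $J_2(4,6)$ is generated in degrees $2$ and $3$; and (c) the low column-count cases $J_2(3,4)$, $J_2(4,4)$, $J_2(4,5)$ are generated in degrees $2$ and $3$. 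Statement (c) for $J_2(4,4)$ is in fact covered by the maximal-minor case \eqref{grassmannian}, and $J_2(3,4)$, $J_2(4,5)$ fall to the same column-count argument once (a), (b) are known (a degree $\ge 4$ minimal generator on $\le 4$, resp. $\le 5$, columns cannot exist, as it would already be a nonexistent generator in the $3\times 5$, resp. $4\times 6$, situation, using that a minimal generator remains a minimal generator under the row/column inclusions as in the proof of Theorem \ref{independencefromn}).

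\textbf{Second step: the computer verification.} Then I would carry out the explicit computation for the two genuine cases $A_2(3,5)=S_2(3,5)/J_2(3,5)$ and $A_2(4,6)=S_2(4,6)/J_2(4,6)$ in a computer algebra system (CoCoA or Macaulay2): present $A_2(m,n)$ as the image of the polynomial ring $S_2(m,n)$ on the $2$-minors, compute a Gröbner basis of the kernel $J_2(m,n)$ (the number of variables is $\binom{3}{2}\binom{5}{2}=30$, resp. $\binom{4}{2}\binom{6}{2}=90$, so this is feasible), and read off the degrees of the minimal generators from a minimal generating set. The characteristic-$0$ hypothesis is in force throughout. Having found no minimal generator of degree $\ge 4$ in either case, the reduction of the first step upgrades this to: $J_2(3,n)$ and $J_2(4,n)$ are generated by quadrics and cubics for all $n\ge 4$.

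\textbf{Main obstacle.} The conceptual content is entirely in the first step — one must be scrupulous that "minimal generator of $J_t(m,n)$ of degree $d$" is an invariant detected at the level of $K_t(m,n)$ and hence at the level of bi-diagrams with $\lambda_1\le m+t$, so that the passage from $n=5$ (resp.\ $6$) to arbitrary $n$ is legitimate and does not miss relations of large degree; this is exactly the subtlety already isolated in the proofs of Lemma \ref{redtotensor} and Theorem \ref{independencefromn}, so it should go through. The remaining difficulty is merely computational: the $4\times 6$ case has $90$ variables and the Gröbner basis computation, while certainly terminating, may be heavy, so one would want to exploit the $\GL_4\times\GL_6$-symmetry (working one isotypic component at a time, as in the proof of Theorem \ref{shapeink3}) to keep it tractable.
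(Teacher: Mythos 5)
Your reduction step is the same as the paper's: Lemma \ref{redtotensor} plus Theorem \ref{independencefromn} cut the problem down to $J_2(3,5)$ and $J_2(4,6)$, and since those theorems hold for every $n>t+1$ you do not actually need the separate treatment of the small cases $n=4,5$ (incidentally, $J_2(4,4)$ is \emph{not} covered by the maximal-minor case \eqref{grassmannian}, which is $t=m$; that slip is harmless only because the uniform bound $d(2,4,n)\le d(2,4,6)$ already applies). Also, the degree bound is all that is used; your stronger claim that the whole set of minimal bi-diagrams is literally independent of $n$ is not needed and not what Theorem \ref{independencefromn} asserts.

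Where you genuinely diverge from the paper is the finite verification, and this is where your plan as stated is at risk. You propose to compute the full kernel $J_2(4,6)$ by elimination, take a Gr\"obner basis and a minimal generating set in the $90$-variable ring, and read off the degrees. The paper deliberately avoids this: it computes by elimination only the subideal $J_{\le 3}$ generated by the relations of degree at most $3$, computes a Gr\"obner basis of $J_{\le 3}$ truncated at degree $13$, and then certifies $J_{\le 3}=J$ without ever computing $J$, by comparing the Hilbert function of $S/J_{\le 3}$ (read from the truncated initial ideal) with the Hilbert function of $A_2(4,6)$, which is known in closed form from the decomposition of Theorem \ref{decat} and the hook length formula; the degree $13$ at which the comparison may stop is exactly the bound on minimal-generator degrees coming from the regularity computation (Theorem \ref{regularity} via Corollary \ref{upperbound}). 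The paper explicitly warns that without this degree truncation the computation "might not finish", and even with it the $4\times 6$ case took on the order of $75$ hours; so your unrestricted kernel-plus-minimal-generators computation, while terminating in principle, is not a realistic route, and your suggested remedy (working isotypic component by isotypic component) is not workable as described, precisely because the multiplicities in $S_2(4,6)$ are the unknown quantity the whole chapter is struggling with. To make your proof complete you should replace the second step by the paper's scheme: bound the generator degrees by Corollary \ref{upperbound}, compute $J_{\le 3}$ and its Gr\"obner basis only up to that bound, and conclude by the Hilbert function comparison with $A_2(m,n)$.
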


Of course we have to use Theorem \ref{independencefromn}. It implies that $d(2,3,n)\leq d(2,3,5)$ and $d(2,4,n)\leq d(2,4,6)$. Since we already know that there are minimal relations of degree $2$ and $3$ by Corollary \ref{cubics}, we have just to show that $d(2,3,5)\leq 3$ and $d(2,4,6)\leq 3$. The strategy to prove these inequalities is the same; however from a computational point of view we must care more attentions to show the second one, since its verification might require some days. For this reason we will present the strategy to prove the second inequality:
\begin{compactitem}
\item[(1)] Set $J:=J_2(4,6)$, $S:=S_2(4,6)$ and, for any $d\in \NN$, let $J_{\leq d}\subseteq J$ denote the ideal generated by the polynomials of degree less than or equal to $d$ of $J$. By elimination (for instance see Eisenbud \cite[15.10.4]{eisenbud}) one can compute a set of generators of  $J_{\leq 3}$.
\item[(2)] Fixed some term order, for instance degrevlex, we compute a Gr\"obner basis of $J_{\leq 3}$ up to degree $13$. So we get $B:=\init(J_{\leq 3})_{\leq 13}$.
\item[(3)] Let us compute the Hilbert function of $S/B$. Clearly we have 
\[\Hf_{S/J_{\leq 3}}(d)\leq \Hf_{S/B}(d),\] 
where equality holds true provided that $d\leq 13$.
\item[(4)] We recall that a decomposition of $A_t(m,n)$ in irreducible $G$-representations is known by Theorem \ref{decat}. Moreover any irreducible $G$-module appearing in it is of the form $\Wl \otimes \Vl^*$ for some partition $\ll$. It turns out that there is a formula to compute the dimension of such vector spaces, namely the hook length formula (for instance see the book of Fulton \cite[p. 55]{Fu}). So we can quickly get the Hilbert function of $A_2(4,6)=S/J$. Let us compute it up to degree $13$.
\item[(5)] Since $J_{\leq 3}\subseteq J$, we will have that $\Hf_{S/B}(d)\geq \Hf_{S/J}(d)$. However, comparing the two Hilbert functions, one can check that $\Hf_{S/B}(d)= \Hf_{S/J}(d)$ for any $d\leq 13$. This implies that $J_{\leq 3}=J_{\leq 13}$.
\item[(6)] Corollary \ref{upperbound} yields that a minimal generator of $J$ has at most degree $13$. Therefore $J_{\leq 13}=J$. 
So we are done, since $J_{\leq 3}=J_{\leq 13}$.
\end{compactitem}

\vskip 1mm

We used the computer algebra system Singular, \cite{singular}. The employed machine took about $60$ hours to compute the generators of $J_{\leq 3}$, and about $15$ hours to compute a Gr\"obner basis of it up to degree $13$. This means that probably is necessary to bound the degrees, if not the computation might not finish. At the contrary, the computation of $J_2(3,5)$ can be done without any restrictions: It is just a matter of seconds.
%

\subsection{The uniqueness of the shape relations}\index{relations!shape|(}\index{predecessor|(}

Theorem \ref{shapeink3} implies that some cubics lie in $J_t$. But are we sure that we cannot find some other higher degrees minimal relations with similar arguments? We recall that those cubics correspond to some bi-diagrams, namely the shape relations, which are irreducible representations of $T_t$. The crucial properties we needed to prove that these bi-diagrams actually correspond to minimal generators of $J_t$ have been that they are asymmetric diagrams of multiplicity one, whose only predecessor is symmetric.
In this subsection we are going to prove that, but the shape relations and those of degree less than $3$, there are no other bi-diagrams of $T_t$ satisfying these properties. In a certain sense, this implies that the bi-diagrams that are minimal irreducible representations of $J_t$ for {\it reasons of shape} are just in degree $2$ and $3$. So the question is: Are there any other minimal irreducible representations of $J_t$ for {\it reasons of multiplicity}?

To our purpose we introduce some notation and some easy facts. For the next lemma let us work just with $V$. Let $\lambda=(\lambda_1,\ldots , \lambda_k)$ be a partition. Pieri's formula (Theorem \ref{pieri})\index{Pieri's formula} implies that a diagram $\Vl$ is an irreducible $\GL(V)$-subrepresentation of $\bigotimes^d \bigwedge^t  V$ if and only if $|\lambda|=td$ and $k\leq d$. As in the case of bi-diagrams we say that a partition $\lambda'\vdash t(d-1)$ with at most $d-1$ parts is a predecessor\index{predecessor} of $\lambda$ if $\lambda'\subseteq \lambda \subseteq \ll'(d)$. Moreover we define the {\it difference sequence}\index{partition!difference sequence} $\Delta \lambda := (\Delta \lambda_1,\ldots , \Delta \lambda_k)$ where $\Delta \lambda_i:=\lambda_i-\lambda_{i+1}$, setting $\lambda_{k+1}=0$. Notice that in order to get a predecessor of $\lambda$ we can remove $q$ boxes from its $i$th row only if $\Delta \lambda_i\geq q$. Moreover notice that $\Delta \lambda_1+\ldots + \Delta \lambda_k = \lambda_1\geq t$.

\begin{lemma}\label{onepredecessor}
Let $\lambda=(\lambda_1,\ldots , \lambda_k)\vdash td$ be a partition such that $\Vl\subseteq \bigotimes^d \bigwedge^t  V$, with $d>1$. Then $\lambda$ has a unique predecessor if and only if either $\lambda_1=\ldots =\lambda_k$ ($\lambda$ is a rectangle) or there exist $i$ such that $\lambda_1=\ldots =\lambda_i>\lambda_{i+1}=\ldots =\lambda_k$ and $k=d$ ($\lambda$ is a fat hook).
\end{lemma}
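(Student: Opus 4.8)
The statement concerns when a partition $\lambda = (\lambda_1,\ldots,\lambda_k) \vdash td$, with $L_\lambda V \subseteq \bigotimes^d \bigwedge^t V$ (equivalently $|\lambda| = td$ and $k \le d$) and $d > 1$, has a unique predecessor. Recall a predecessor is a partition $\lambda' \vdash t(d-1)$ with at most $d-1$ parts such that $\lambda' \subseteq \lambda \subseteq \lambda'(d)$; by Pieri, these are obtained from $\lambda$ by removing $t$ boxes, no two in the same column, i.e.\ removing $q_i$ boxes from row $i$ with $\sum_i q_i = t$, subject to the constraint that the result stays a partition (so $q_i \le \Delta\lambda_i$ where relevant, more precisely the removed skew shape is a horizontal strip) \emph{and} has at most $d-1$ rows. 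The plan is to translate ``unique predecessor'' into a counting/combinatorial condition on the difference sequence $\Delta\lambda$ and on whether $k = d$ or $k < d$, then show the only solutions are rectangles and fat hooks with $k=d$.

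\textbf{Key steps.} First I would set up the bijection between predecessors of $\lambda$ and horizontal strips of size $t$ removable from the bottom rim of $\lambda$, with the extra rule that if $k = d$ then the strip \emph{must} include at least one box from row $k$ (otherwise the predecessor would still have $d > d-1$ rows), whereas if $k < d$ there is no such forced constraint. Step two: analyze the case $k < d$. Here I claim $\lambda$ has a unique predecessor iff $\lambda$ is a rectangle. Indeed, if $\lambda$ is a rectangle $(a^k)$ with $a = \lambda_1 \ge t$, the only removable horizontal strip of size $t$ is the last $t$ boxes of the bottom row (any removal from an upper row would leave a non-partition since all rows are equal), giving the unique predecessor $(a^{k-1}, a-t)$ — and one checks this has $\le d-1$ rows since $k \le d-1$. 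Conversely, if $\lambda$ is not a rectangle, let $i$ be minimal with $\Delta\lambda_i > 0$; then one can remove $t$ boxes entirely from the bottom of row $i$ as long as $\Delta\lambda_i \ge t$, or distribute them among several ``inner corners'' — in any case, because $\lambda_1 = \sum \Delta\lambda_j \ge t$ and there are at least two distinct row-lengths, one produces at least two distinct horizontal strips of size $t$ (one ``as high as possible'', one ``as low as possible''), hence at least two predecessors. Step three: the case $k = d$. Now every valid strip must touch row $k$. If $\lambda$ is a fat hook $(a^i, b^{k-i})$ with $a > b$ (allowing $i = 0$ or $b = 0$, which recovers the rectangle sub-case but with $k = d$), the removable strips touching row $k$ are constrained: one must remove some boxes from the bottom rows forming the shorter arm and possibly from the step between the arms; a careful check shows exactly one horizontal strip of size $t$ both touches row $k$ and leaves a partition, namely removing the bottom $t$ boxes spread appropriately along the bottom — this needs the single-step shape of a fat hook. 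If instead $\lambda$ has two distinct ``descents'' (i.e.\ $\Delta\lambda$ has at least two positive entries among rows $1,\ldots,k-1$, so $\lambda$ is \emph{not} a fat hook), then one can realize the size-$t$ horizontal strip touching row $k$ in at least two ways by trading boxes between the two inner corners, giving $\ge 2$ predecessors.

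\textbf{Main obstacle.} The delicate point is the bookkeeping in Step three: precisely enumerating horizontal strips of size $t$ that are forced to include a box of the last row, and proving uniqueness happens \emph{exactly} for fat hooks. One has to handle boundary cases ($i = 0$, $b = 0$, $a - b$ small versus large relative to $t$, and the interaction with $\lambda_1 \ge t$) and make sure no exotic partition with a single long row but several short rows sneaks in as a false positive or gets wrongly excluded. I would organize this by working with the sequence of ``inner corners'' of $\lambda$ (the cells $(i,\lambda_i)$ with $\lambda_i > \lambda_{i+1}$) and showing that a removable strip is determined by how many boxes it takes from each maximal run of equal rows; the constraint ``$\le d-1$ rows'' forces, when $k=d$, that the run containing row $k$ be entirely or partially consumed, and uniqueness of the solution to this constrained distribution is equivalent to there being only one inner corner below the top run, i.e.\ $\lambda$ is a fat hook. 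The rectangle case is then the degenerate fat hook, and I would note it appears in \emph{both} the $k<d$ and $k=d$ analyses, matching the ``either\ldots or\ldots'' phrasing of the statement.
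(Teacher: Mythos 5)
Your overall strategy is the paper's: identify predecessors with removals of a horizontal strip of $t$ boxes (at most $\Delta\lambda_i$ from row $i$) subject to the predecessor having at most $d-1$ parts, and then count the admissible distributions, splitting into the cases $k<d$ and $k=d$. But there is a genuine error exactly at the point you yourself single out as delicate. When $k=d$ the correct constraint is that the strip must contain the \emph{whole} last row, i.e. $q_k=\lambda_k$, since removing only some of its boxes still leaves a partition with $d$ parts; in Step one you instead require the strip to contain ``at least one box from row $k$'', and in Step three you count strips that ``touch row $k$'' and leave a partition, claiming there is exactly one such strip for a fat hook. That claim is false: for $\lambda=(4,4,2,2)$, $t=3$, $d=4$ (a fat hook with $k=d$) the removals $(q_1,q_2,q_3,q_4)=(0,2,0,1)$ and $(0,1,0,2)$ both give horizontal strips of size $3$ touching row $4$ and leaving partitions, so under your criterion this $\lambda$ would have two ``predecessors'', contradicting both the lemma and your uniqueness check; only the second removal is an actual predecessor, because only it lowers the number of parts to $d-1$. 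So the ``careful check'' on which your Step three hinges is carried out against the wrong constraint and is in fact left undone. The repair is what the paper does: when $k=d$, force $q_k=\lambda_k$ (note $\lambda_k\le t$ automatically, and $\lambda_k=t$ forces $\lambda$ to be the rectangle $(t^d)$); otherwise $\lambda_k<t$, $\lambda_1>t$, and uniqueness amounts to uniqueness of a distribution of the remaining $t-\lambda_k>0$ boxes among the capacities $\Delta\lambda_1,\ldots,\Delta\lambda_{k-1}$, whose sum $\lambda_1-\lambda_k$ strictly exceeds $t-\lambda_k$; this is unique if and only if exactly one of these capacities is nonzero, i.e. if and only if $\lambda$ is a fat hook.

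A smaller gap sits in Step two: you argue from $\lambda_1=\sum_i\Delta\lambda_i\ge t$ that a non-rectangle with $k<d$ admits two distinct strips, but if $\lambda_1=t$ the ``as high as possible'' and ``as low as possible'' strips coincide (every row must be cut down exactly to the next one), so the argument needs the strict inequality $\lambda_1>t$. It does hold here, because $k<d$ and $|\lambda|=td$ give $\lambda_1\ge td/k>t$, but this has to be said, as the paper does.
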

\begin{proof}
If $\lambda$ is a rectangle its difference sequence is
\[ \Delta \lambda = (\underbrace{0,0,\ldots ,0}_{{\mbox{\footnotesize $k-1$ times}}},\lambda_1).\]
This means  that the only way to remove $t$ boxes from different columns of $\lambda$ is to do it from its last row. Therefore $\lambda$ has a unique predecessor, namely
\[\lambda'=(\underbrace{\lambda_1,\lambda_1,\ldots ,\lambda_1}_{{\mbox{\footnotesize $k-1$ times}}},\lambda_1-t).\]
If $\lambda$ is a fat hook, instead, its difference sequence is
\[ \Delta \lambda = (\underbrace{0,0,\ldots ,0}_{{\mbox{\footnotesize $i-1$ times}}},\lambda_1-\lambda_k,\underbrace{0,0,\ldots ,0}_{{\mbox{\footnotesize $k-i-1$ times}}},\lambda_k).\]
Since $k=d$, to get a predecessor of $\ll$ we must remove completely the last row. So the only nonzero entry which remains is $\Delta \lambda_i$. Therefore the only predecessor of $\lambda$ is
\[ \lambda'=(\underbrace{\lambda_1,\lambda_1,\ldots ,\lambda_1}_{{\mbox{\footnotesize $i-1$ times}}},\lambda_1+\lambda_k-t,\underbrace{\lambda_k,\lambda_k,\ldots ,\lambda_k}_{{\mbox{\footnotesize $k-i-1$ times}}}).\]

\vskip1mm

For the converse, first suppose that $k<d$. Then in order to get a predecessor of $\lambda$  we have not to care about removing the last row. Moreover, in this case, $\Delta \lambda_1+\ldots + \Delta \lambda_k=\ll_1>t$. This implies that if two of the $\Delta \lambda_i$'s were bigger than $0$, more than one choice would be possible: So $\ll$ would have more than one predecessor. So there must be just one $i$ such that $\Delta\ll_i> 0$. In other words, $\lambda$ has to be a rectangle.

Now suppose $k=d$. In this case we must remove the $k$th row. If $\lambda_k=t$, then $\lambda$ will be a rectangle. So we can assume that $\lambda_k<t$ (consequently $\ll_1>t$). This means that after removing the last row we can remove freely $t-\lambda_k$ boxes from the other. Because $\Delta \lambda_1+\ldots + \Delta \lambda_{k-1}=\ll_1-\ll_k > t-\ll_k$, the choice of removing some boxes to get a predecessor will be unique only if just one among the $\Delta \lambda_i$ is bigger than $0$. This means that $\lambda$ has to be a fat hook.
\end{proof}

\begin{corollary}\label{mult1}
Given a partition $\lambda=(\lambda_1,\ldots , \lambda_k)\vdash td$ (with $d\geq 2$), the irreducible $\GL(V)$-representation $\Vl$ appears with multiplicity one in $\bigotimes^d \bigwedge^t  V$ if and only if $\lambda$ has only one predecessor which either is $(t)$, or, in turn, has only one predecessor. These facts are equivalent to the fact that $\ll$ is a diagram of the following list:
\begin{compactenum}
\item a rectangle with one row ($k=1$);
\item a rectangle with $d-1$ rows;
\item a rectangle with $d$ rows;
\item a fat hook of the type $\lambda_1>\lambda_2=\ldots = \lambda_k$ with $k=d$;
\item a fat hook of the type $\lambda_1=\ldots =\lambda_{k-1}> \lambda_k$ with $k=d$;
\end{compactenum}
\end{corollary}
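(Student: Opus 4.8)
\textbf{Proof proposal for Corollary \ref{mult1}.}

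The plan is to reduce the multiplicity-one condition entirely to the recursive description of multiplicities coming from Pieri's formula, and then apply Lemma \ref{onepredecessor} twice. Write $m_d(\lambda)$ for the multiplicity of $L_\lambda V$ in $\bigotimes^d \bigwedge^t V$. By Pieri's Theorem \ref{pieri}, $m_d(\lambda) = \sum_{\lambda'} m_{d-1}(\lambda')$, where $\lambda'$ runs over the predecessors of $\lambda$ (i.e. $\lambda'\subseteq\lambda\subseteq\lambda'(t)$, $|\lambda'|=t(d-1)$, with at most $d-1$ parts), and $m_1((t))=1$. Hence $m_d(\lambda)=1$ if and only if $\lambda$ has exactly one predecessor $\lambda'$ and that predecessor satisfies $m_{d-1}(\lambda')=1$. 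For $d=2$ the latter condition is $\lambda'=(t)$; for $d\ge 3$ it is, by the same reasoning, the condition that $\lambda'$ itself has a single predecessor with multiplicity one, and we can either iterate or invoke Lemma \ref{onepredecessor}. So the first step is to record this ``one predecessor, and that one again has one predecessor'' characterization, which is immediate from Pieri once stated carefully.

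The second step is to turn that characterization into the explicit list (1)--(5). By Lemma \ref{onepredecessor}, ``$\lambda$ has a unique predecessor'' means $\lambda$ is a rectangle (any number of rows $k\le d$) or a fat hook with exactly $d$ rows. I would then split into cases according to the shape of the unique predecessor $\lambda'$ computed in the proof of Lemma \ref{onepredecessor}, and demand that $\lambda'$ again be a rectangle or a $(d-1)$-row fat hook (or equal $(t)$, which is the $d=2$ base case and is itself a one-row rectangle). When $\lambda$ is a rectangle with $k$ rows, $\lambda'=(\lambda_1^{k-1},\lambda_1-t)$; this has $d-1$ rows only if $k=d-1$, so $\lambda'$ is automatically a rectangle, giving items (2) and (1) (the one-row case $k=1$ being degenerate but still valid, with $\lambda'=(\lambda_1-t)$). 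If instead $k<d-1$, then $\lambda'$ has $k<d-1$ rows and is a rectangle, so one must push the requirement one step further; here I expect to argue that the chain of predecessors stays rectangular all the way down to $(t)$ precisely when $k=1$, recovering (1), so that no new shapes arise — rectangles with $2\le k\le d-2$ rows must be excluded because their second predecessor is no longer unique (its number of rows drops below $d-2$ while its first row still exceeds $t$). When $\lambda$ is a rectangle with exactly $k=d$ rows, $\lambda'=(\lambda_1^{d-1},\lambda_1-t)$ is a $(d-1)$-row fat hook or rectangle with $d-1$ rows, hence has one predecessor, giving (3). When $\lambda$ is a fat hook with $d$ rows, $\lambda'$ is a $(d-1)$-row fat hook or rectangle, again with a unique predecessor, giving (4) and (5) — and one should check the two fat-hook families $\lambda_1>\lambda_2=\cdots=\lambda_k$ and $\lambda_1=\cdots=\lambda_{k-1}>\lambda_k$ are exactly those for which $\lambda'$ (as written in Lemma \ref{onepredecessor}) lands among the single-predecessor shapes, which it does since $\lambda'$ then has $d-1$ rows.

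The main obstacle I anticipate is the bookkeeping in the rectangular case with few rows: one has to verify that a rectangle with $2\le k\le d-2$ rows, although it has a unique predecessor, fails the iterated condition because that predecessor — a shorter rectangle — still has more rows to shed before reaching $(t)$ and hence acquires multiplicity $>1$ at some stage (equivalently, Pieri branching reopens). This requires tracking the full descending chain of predecessors and showing it cannot remain single-valued unless $k\in\{1,d-1,d\}$; the cleanest way is probably an induction on $d$ using exactly the recursion $m_d(\lambda)=\sum_{\lambda'} m_{d-1}(\lambda')$, with Lemma \ref{onepredecessor} supplying the inductive step. Everything else is a finite case check against the explicit predecessor formulas already proved. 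Once the list is established, the three displayed equivalent formulations in the statement follow by reading off: ``multiplicity one'' $\iff$ ``one predecessor which is $(t)$ or again has one predecessor'' $\iff$ ``$\lambda$ is in the list (1)--(5)''.
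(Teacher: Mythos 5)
Your overall strategy is the paper's: Pieri gives $m_d(\lambda)=\sum_{\lambda'}m_{d-1}(\lambda')$ over predecessors, then Lemma \ref{onepredecessor} plus induction on $d$ and closure of the list under taking predecessors. The gaps are in the case analysis, and they hit exactly the two exclusions that constitute the real content. First, you compute the unique predecessor of a $k$-row rectangle as $\lambda'=(\lambda_1^{k-1},\lambda_1-t)$ (correct) but then call it a rectangle. Since $\lambda_1=td/k>t$ for $k\le d-1$, we have $0<\lambda_1-t<\lambda_1$, so for $k\ge 2$ this $\lambda'$ is a fat hook with $k$ rows, not a rectangle. For $k=d-1$ this slip is harmless ($\lambda'$ is a fat hook with $d-1$ rows, hence on the level-$(d-1)$ list), but for $2\le k\le d-2$ it derails your exclusion argument: you propose to follow a chain of supposedly rectangular predecessors until uniqueness fails because "the number of rows drops while the first row exceeds $t$" — but rectangles always have a unique predecessor by Lemma \ref{onepredecessor}, whatever their size, so a genuinely rectangular chain would never branch and your criterion would (wrongly) keep these shapes. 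The correct, and much shorter, argument is that already the first predecessor $(\lambda_1^{k-1},\lambda_1-t)$ is a fat hook with $k\le d-2<d-1$ rows, hence by Lemma \ref{onepredecessor} at level $d-1$ it has more than one predecessor, so $m_{d-1}(\lambda')\ge 2$ and $m_d(\lambda)\ge 2$; no chain-tracking is needed.

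Second, in the $d$-row fat hook case you justify that $\lambda'$ has a unique predecessor "since $\lambda'$ then has $d-1$ rows". Having $d-1$ rows is not sufficient: uniqueness requires $\lambda'$ to be a rectangle or a fat hook. The shapes your analysis must eliminate here are the middle-type fat hooks $\lambda_1=\cdots=\lambda_i>\lambda_{i+1}=\cdots=\lambda_d$ with $2\le i\le d-2$ (they do have a unique predecessor, so they survive the first step), and for these the predecessor $(\lambda_1^{i-1},\lambda_1+\lambda_d-t,\lambda_d^{d-i-1})$ has three distinct part sizes, because $\lambda_d<t<\lambda_1$ holds for any non-rectangular shape with $d$ parts summing to $td$; hence it is neither a rectangle nor a fat hook and has several predecessors. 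Concretely, for $t=2$, $d=4$, $\lambda=(3,3,1,1)$ has unique predecessor $(3,2,1)$, which has the two predecessors $(2,2)$ and $(3,1)$, so $m_4((3,3,1,1))=2$; your test ("$(3,2,1)$ has $d-1=3$ rows") would wrongly admit it. So the fat-hook case needs the explicit predecessor formula from the proof of Lemma \ref{onepredecessor} and the observation that only $i=1$ and $i=d-1$ produce a predecessor that is again on the list — which gives items (4) and (5). With these two corrections your argument coincides with the paper's proof, and it also explains why the two-step condition in the statement ("one predecessor which is $(t)$ or in turn has one predecessor") already captures multiplicity one: every failure occurs at the second step.
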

\begin{proof}
By Pieri's formula (Theorem \ref{pieri}),  $\Vl$ has multiplicity one in $\bigotimes^d\bigwedge^tV$ if and only if $\ll$ has only a predecessor $\ll'$ such that $L_{\ll'}V$ has multiplicity one in $\bigotimes^{d-1}\bigwedge^t V$. Thus we can argue by induction on $d$. For $d=2$, the only predecessor of $\ll$ is $\ll'=(t)$, and $L_{(t)}V\cong \bigwedge^t V$ has obviously multiplicity one in $\bigwedge^tV$. For $d>2$, the $\ll$'s appearing in the list of the statement have just one predecessor by Lemma \ref{onepredecessor}. Furthermore, it is easy to check that if $\ll$ is a partition in the list, its unique predecessor $\ll'$ is in the list as well as $\ll$ (of course we mean with $d$ replaced by $d-1$). So $L_{\ll'}V$  has multiplicity one in $\bigotimes^{d-1}\bigwedge^t V$ by induction. To see that there are no other $\ll$ but those in the list, we have to check that: If a partition $\ll$ is not in the list and has only one predecessor $\ll'$, then $\ll'$ has more than one predecessor. This is easily checkable using Lemma \ref{onepredecessor}.
\end{proof}

Now we are ready to prove the announced fact that the only minimal relations for reasons of shape are in degree $2$ and $3$.

\begin{prop}\label{noothershape}
Let $(\gamma | \lambda)$ be an asymmetric bi-diagram such that $L_{\gamma}W\otimes \Vl^*$ appears in $T_d$ with multiplicity one and such that its only predecessor is symmetric. Then $d=2$ or $d=3$ and $(\gamma | \lambda)$ is a shape relation.
\end{prop}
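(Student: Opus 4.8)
The plan is to analyze the constraints imposed on a bi-diagram $(\gamma|\lambda)$ with $|\gamma|=|\lambda|=td$ by the three hypotheses: $(\gamma|\lambda)$ is asymmetric, $L_\gamma W\otimes L_\lambda V^*$ has multiplicity one in $T_t$, and its unique predecessor $(\gamma'|\lambda')$ is symmetric. First I would use Proposition \ref{tensordec} together with the recursive description of the multiplicities $n(\gamma,\lambda)$ to observe that multiplicity one forces both $\gamma$ and $\lambda$ to have a \emph{unique} predecessor (as partitions), and that this unique predecessor must itself have multiplicity one in $\bigotimes^{d-1}\bigwedge^t$. Hence Corollary \ref{mult1} applies to both $\gamma$ and $\lambda$: each must be one of the five types on the list there (one-row rectangle; $(d-1)$-row rectangle; $d$-row rectangle; fat hook $\gamma_1>\gamma_2=\cdots=\gamma_d$; fat hook $\gamma_1=\cdots=\gamma_{d-1}>\gamma_d$). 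Moreover, the (unique) predecessor $(\gamma'|\lambda')$ being symmetric means $\gamma'=\lambda'$, so $\gamma$ and $\lambda$ are obtained from one and the same partition $\alpha=\gamma'=\lambda'\vdash t(d-1)$ by adding a vertical $t$-strip (Pieri). This is a strong rigidity condition.

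Next I would carry out the case analysis. Write $\alpha$ for the common predecessor; by Corollary \ref{mult1} (applied at level $d-1$) $\alpha$ is also of one of the five types, or equals $(t)$ when $d-1=1$. Adding a vertical $t$-strip to $\alpha$ to land inside $\alpha(t)$ and produce a partition with at most $d$ rows: the $t$ new boxes go into distinct rows among rows $1,\dots,d$, one box per row is impossible in general — rather each chosen row receives boxes, with the constraint that the result stays a partition and $\gamma\subseteq\alpha(t)$. The key point is that $\gamma$ and $\lambda$ must be two \emph{different} such completions of $\alpha$ (asymmetry), yet each is forced to lie on the short list of Corollary \ref{mult1}. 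Enumerating the ways a partition of the restricted shape can arise from $\alpha$ by a vertical $t$-strip, and intersecting with the list, cuts the possibilities down drastically. I expect that for $d=2$ one recovers exactly the degree-two situation ($\alpha=(t)$, and the Plücker-type quadrics), for $d=3$ one recovers precisely the partitions $\gamma^i,\lambda^i$ of \eqref{kerdiage}/\eqref{kerdiago} — using that $\alpha^i$ is the unique (symmetric, multiplicity-one) predecessor as computed in the proof of Theorem \ref{shapeink3} — and for $d\ge 4$ no pair $(\gamma|\lambda)$ survives: either every completion forced onto the list is symmetric (contradicting asymmetry), or the putative $\gamma$ or $\lambda$ fails to have a unique predecessor, or its unique predecessor fails to be $\alpha$, contradicting multiplicity one.

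The main obstacle I anticipate is the bookkeeping in the $d\ge 4$ case: one has to show that whenever $\gamma\ne\lambda$ are both on the Corollary \ref{mult1} list and share a common predecessor, that common predecessor cannot be symmetric, or the multiplicity is forced above one. Concretely, if $\gamma$ is a $d$-row rectangle $(c^d)$ and $\lambda$ a fat hook with $d$ rows, their common predecessor obtained by deleting a vertical $t$-strip would have to be simultaneously ``almost rectangular'' from the $\gamma$ side and constrained from the $\lambda$ side; checking that these force either $\gamma=\lambda$ or a non-unique predecessor (hence multiplicity $\ge 2$ by the recursion in Proposition \ref{tensordec}) requires examining the finitely many type-vs-type combinations. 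Since $d\ge 4$ gives partitions of $td\ge 16$ boxes with at least, say, three distinct row lengths available, the symmetric-predecessor condition becomes over-determined; I would organize the argument by the number of distinct parts of $\alpha$ (which is $1$ or $2$ by Corollary \ref{mult1} at level $d-1$), and show a vertical $t$-strip added to such an $\alpha$ can produce a partition on the list in only ``symmetric'' ways once $d\ge 4$. The degree-$2$ and degree-$3$ base cases are then a short finite check identifying the survivors with the shape relations, completing the proof.
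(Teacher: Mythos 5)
Your plan is essentially the paper's proof: multiplicity one in $T_t$ forces both $\gamma$ and $\lambda$ onto the five-type list of Corollary \ref{mult1}, and the rest is a finite case analysis against asymmetry and the symmetric unique predecessor, which the paper organizes by the pair of types $(A_i,A_j)$ of $\gamma$ and $\lambda$ rather than by the common predecessor $\alpha$, with exactly the outcome you predict (only $d=2$ and the $d=3$ shape relations survive). The only work left implicit in your sketch is carrying out that finite type-versus-type check, which is precisely what the paper's proof consists of.
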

\begin{proof}
Since $(\gamma | \lambda)$ is an irreducible $G$-subrepresentation of $T_d$, we have $|\gamma|=|\lambda|=td$. For the proof we have to reason by cases.
Set:
$$A_1:=\{\alpha \mbox{ $d$-admissible } \ : \ \alpha \mbox{ is a rectangle with $1$ row}\}$$
$$A_2:=\{\alpha \mbox{ $d$-admissible } \ : \ \alpha \mbox{ is a rectangle with $d-1$ rows}\}$$
$$A_3:=\{\alpha \mbox{ $d$-admissible } \ : \ \alpha \mbox{ is a rectangle with $d$ rows}\}$$
$$A_4:=\{\alpha \mbox{ $d$-admissible } \ : \ \alpha \mbox{ is a fat hook of the type $\lambda_1>\lambda_2=\ldots = \lambda_k$ with $k=d$}\}$$
$$A_5:=\{\alpha \mbox{ $d$-admissible } \ : \ \alpha \mbox{ is a fat hook of the type $\lambda_1=\ldots =\lambda_{k-1}> \lambda_k$ with $k=d$}\}$$

Since $(\gamma|\lambda)$ has multiplicity one, by Corollary \ref{mult1} there exist $i,j$ such that $\gamma\in A_i$ and $\lambda \in A_j$. Denote by $(\gamma'|\lambda')$ the only predecessor of $(\gamma|\lambda)$. We can assume that $d\geq 3$.
\begin{enumerate}
\item $\gamma \in A_1$.
\begin{compactenum}
\item If $\lambda\in A_1$ then $(\gamma|\lambda)$ would not be asymmetric;
\item If $\lambda\in A_2$ then $(\gamma'|\lambda')$ would not be symmetric;
\item If $\lambda\in A_3$ then $(\gamma'|\lambda')$ would not be symmetric;
\item If $\lambda\in A_4$ then $(\gamma'|\lambda')$ would not be symmetric;
\item If $\lambda\in A_5$ then $(\gamma'|\lambda')$ would not be symmetric;
\end{compactenum}
\item $\gamma\in A_2$.
\begin{compactenum}
\item If $\lambda\in A_2$ then $(\gamma|\lambda)$ would not be asymmetric;
\item If $\lambda\in A_3$ then $(\gamma'|\lambda')$ would not be symmetric;
\item If $\lambda\in A_4$ and if $d\geq 4$ then $(\gamma'|\lambda')$ would not be symmetric. If $d=3$ then actually $(\gamma|\lambda)$ is a shape relation;
\item If $\lambda\in A_5$ then $(\gamma'|\lambda')$ would not be symmetric ($\gamma'_1>\lambda'_1$);
\end{compactenum}
\item $\gamma\in A_3$.
\begin{compactenum}
\item If $\lambda\in A_3$ then $(\gamma|\lambda)$ would not be asymmetric;
\item If $\lambda\in A_4$ then $(\gamma'|\lambda')$ would not be symmetric ($\gamma'_2=t>\lambda'_2$);
\item If $\lambda\in A_5$ then $(\gamma'|\lambda')$ would not be symmetric ($\gamma'_1=t<\lambda'_1$);
\end{compactenum}
\item $\gamma\in A_4$.
\begin{compactenum}
\item If $\lambda\in A_4$ then, since $(\gamma'|\lambda')$ has to be symmetric, $(\gamma|\lambda)$ would have to be symmetric as well;
\item If $\lambda\in A_5$ and $d\geq 4$, then $(\gamma'|\lambda')$ would not be symmetric; if $d=3$, then $(\gamma|\ll)$ is a shape relation.
\end{compactenum}
\item $\gamma\in A_5$.
\begin{compactenum}
\item If $\lambda\in A_5$ then, since $(\gamma'|\lambda')$ has to be symmetric, $(\gamma|\lambda)$ would have to be symmetric as well.
\end{compactenum}
\end{enumerate}

\end{proof}
\index{relations!shape|)}\index{predecessor|)}

\section{The initial algebra and the algebra of $U$-invariants of $A_t$}

In this section we will exhibit a system of generators for: (i). The initial algebra of $A_t$ with respect to a diagonal term order. (ii). The subalgebra of $U$-invariants of $A_t$. In general both the initial algebra and the ring of invariants of a finitely generated $\kkk$-algebra might be not finitely generated. We will prove that the two $\kkk$-algebras considered above are finitely generated, but especially we will give a finite system of generators of them.

\subsection{A finite Sagbi basis of $A_t$}\index{initial algebra|(}\index{Sagbi bases|(}

In \cite[Theorem 3.10]{BC3}, Bruns and Conca described a Gr\"obner basis, with respect to any diagonal term order, for every powers of the determinantal ideals $I_t$. This allows us to describe when a monomial of $R=\kkk[X]$ belongs to $\init(A_d)$. In fact, we have
\[\init(A_t) = \bigoplus_{d\in \NN}\init(I_t^d \cap R_{td}).\]

\begin{lemma}\label{initialalgebra}
A monomial $M\in R$ belongs to $\init(A_t)_d$ if and only if $M=M_1\cdots M_k$ where the $M_q$'s are monomials of $R$ such that
\begin{compactenum}
\item For any $q=1,\ldots , k$ the monomial $M_q$ is the initial term of an $r_q$-minor.
\item The partition $(r_1,r_2,\ldots ,r_k)$ is $d$-admissible.
\end{compactenum}
\end{lemma}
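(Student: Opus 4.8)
The statement is essentially a reformulation of the description of a Gröbner basis of the powers $I_t^d$ given in \cite[Theorem 3.10]{BC3}. The first thing I would do is make the identification
\[\init(A_t)=\bigoplus_{d\in\NN}\init(I_t^d\cap R_{td})\]
precise: an element of $[A_t]_d$ is a $\kkk$-linear combination of products of $d$ many $t$-minors, hence lives in $I_t^d\cap R_{td}$; conversely $I_t^d\cap R_{td}$ is spanned by such products since $I_t^d$ is generated in degree $td$ by products of $d$ many $t$-minors and $R_{td}$ picks out exactly the degree-$td$ part. Since taking initial terms of a graded component commutes with this decomposition, it suffices to characterize which monomials of degree $td$ occur as $\init(F)$ for $F\in I_t^d\cap R_{td}$.

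Next I would invoke \cite[Theorem 3.10]{BC3}, which states (in the ASL/straightening language) that the products of minors whose shape is a $d$-admissible partition $(r_1,\dots,r_k)$ — i.e.\ $r_1\ge\dots\ge r_k$, $\sum r_q=td$, $r_q\le$ the size bounds, and $k\le d$, the precise notion of ``$d$-admissible'' being the one fixed earlier via Theorem \ref{pieri} — form a Gröbner basis of $I_t^d$ with respect to any diagonal term order. The key consequence is that $\init(I_t^d)$ is generated, as a monomial ideal, by the monomials $\init(\delta_1)\cdots\init(\delta_k)$ where each $\delta_q$ is an $r_q$-minor and $(r_1,\dots,r_k)$ is $d$-admissible; and since a diagonal term order sends an $r$-minor $[a_1\dots a_r|b_1\dots b_r]$ to $x_{a_1b_1}\cdots x_{a_rb_r}$, these generators are exactly the monomials $M_1\cdots M_k$ with $M_q=\init(\text{some }r_q\text{-minor})$ of the asserted form. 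This gives the ``if'' direction directly: such an $M$ is $\init(F)$ for the corresponding product of minors $F$, which is a homogeneous element of $I_t^d$ of degree $td$, hence of $I_t^d\cap R_{td}$.

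For the ``only if'' direction I would argue that if $M\in\init(A_t)_d=\init(I_t^d\cap R_{td})$, then $M\in\init(I_t^d)$ and $\deg M=td$, so $M$ is divisible by one of the Gröbner-basis leading monomials described above, say by $N=M_1'\cdots M_{k'}'$ with $(r_1',\dots,r_{k'}')$ $d'$-admissible for the appropriate $d'$; a degree count $td=\deg M\ge\deg N=td'$ forces $d'=d$ and $M=N$, which is of the required form. The only point needing a little care here is to make sure the degree bookkeeping (renormalizing so a $t$-minor has degree $1$ versus the natural grading where it has degree $t$) is handled consistently, and that ``$d$-admissible'' is read off correctly from the shape; but these are routine once the statement of \cite[Theorem 3.10]{BC3} is in hand. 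I do not anticipate a genuine obstacle: the whole content is packaging the cited Gröbner basis result through the diagonal initial-term map, the mildly delicate part being only to confirm that the combinatorial notion of $d$-admissibility in \cite{BC3} matches the one used in this chapter (Theorem \ref{pieri}), which I would verify by comparing the defining conditions on partitions directly.
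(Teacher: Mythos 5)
Your proposal is correct and follows essentially the same route as the paper, which in fact gives no argument beyond the identification $\init(A_t)=\bigoplus_{d}\init(I_t^d\cap R_{td})$ and the citation of \cite[Theorem 3.10]{BC3}; your write-up just makes that reduction explicit. The only point worth tightening is in the ``only if'' step: the Gr\"obner basis elements of $I_t^d$ are the products of minors lying in $I_t^d$ (whose degrees can exceed $td$), so one should note that all of them have degree at least $td$ and that those of degree exactly $td$ have $d$-admissible shape (equivalently, they span $[A_t]_d$ as in Theorem \ref{decat}), after which your divisibility-plus-degree count gives $M=N$ as claimed.
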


Although in \cite[Theorem 3.11]{BC3} the authors showed that $\init(A_t)$ is a finitely generated $\kkk$-algebra, they could not specify a finite Sagbi basis of $A_t$ (see \cite[Remark 7.11 (b)]{BC2}). Actually they could not even give an upper bound for the degree of a minimal generator of $\init(A_t)$. We will be able to do it. To this aim, first, we need some notions about partitions of integers (to delve more into such an argument see the sixth chapter of the book of Sturmfels \cite{sturmfels2}).

A {\it partition identity}\index{partition identity} is any identity of the form
\begin{equation}\label{pi}
a_1+a_2+\ldots +a_k = b_1+b_2+\ldots +b_l
\end{equation}
where $a_i,b_j \geq 1$ are integers. Fixed a positive integer $q$ we will say that \eqref{pi} is a {\it partition identity with entries in $[q]$}\index{partition identity!with entries in $[q]$} if furthermore $a_i,b_j\leq q$.
The partition identity \eqref{pi} is called {\it primitive}\index{partition identity!primitive} if there is no proper subidentity
\[a_{i_1}+a_{i_2}+\ldots +a_{i_r}=b_{j_1}+b_{j_2}+\ldots +b_{j_s}\]
with $r+s<k+l$.
We say that the partition identity \ref{pi} is {\it homogeneous}\index{partition identity!homogeneous} if $k=l$.
It is {\it homogeneous primitive}\index{partition identity!homogeneous!primitive} if there is no proper subidentity
\[a_{i_1}+a_{i_2}+\ldots +a_{i_r}=b_{j_1}+b_{j_2}+\ldots +b_{j_r}\]
with $r<k$.

\begin{thm}\label{sagbi}
Let $d$ denote the maximum degree of a minimal generator lying in $\init(A_t)$. Then $d\leq m-1$. Furthermore $d\leq m-2$ if and only if $\GCD(m-1,t-1)=1$.
\end{thm}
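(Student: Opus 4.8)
The plan is to reduce the statement about minimal generators of the initial algebra $\init(A_t)$ to a purely combinatorial fact about partition identities, and then resolve that combinatorial fact. By Lemma \ref{initialalgebra}, a monomial $M \in \init(A_t)_d$ is a product $M_1 \cdots M_k$ where $M_q = \init(\delta_q)$ for an $r_q$-minor $\delta_q$, and $(r_1, \ldots, r_k)$ is $d$-admissible, i.e.\ $r_1 + \cdots + r_k = td$ with $k \leq d$ (after sorting the $r_q$ in nonincreasing order). A monomial $M$ of degree $d$ is \emph{decomposable} in $\init(A_t)$ — hence not a minimal generator — precisely when the multiset $\{r_1, \ldots, r_k\}$ can be partitioned into two nonempty sub-multisets whose sums are $t d_1$ and $t d_2$ with $d_1 + d_2 = d$ and each piece still admissible. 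So the degree-$d$ part of $\init(A_t)$ contains a minimal generator iff there exists a $d$-admissible multiset of integers in $\{1, \ldots, m\}$ (the minors sit inside an $m \times n$ matrix, so $r_q \leq m$) that admits no such splitting. The first step is therefore to make this translation precise and observe that the constraint ``$k \leq d$'' together with ``$\sum r_q = td$'' is exactly a homogeneity-type condition: writing $r_q = t - c_q$ with $c_q \geq 0$ is not quite right since $r_q$ can exceed... actually $r_q \le m$ and we want the average $td/k$ with $k \le d$, so on average $r_q \ge t$. The clean reformulation: set $s_q := r_q$; admissibility says the identity $\sum_q s_q = \sum_{j=1}^{d} t$ holds with at most $d$ terms on the left, which I would massage into: the monomial is indecomposable iff the corresponding partition identity (comparing the multiset of $r_q$'s against a string of $t$'s) is \emph{primitive} in the sense recalled just before the theorem.

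\textbf{Key steps.} First I would establish the dictionary: a minimal generator of $\init(A_t)$ of degree $d$ corresponds to a primitive partition identity $a_1 + \cdots + a_k = b_1 + \cdots + b_l$ with all $a_i, b_j \in \{1, \ldots, m\}$ and (because of renormalization, a $t$-minor has degree $1$) with $l$ equal to $t$ times the number of ``blocks'', but the cleanest packaging is to take one side to be a repetition of $t$'s: indecomposability of $M = \prod \init(\delta_q)$ means no sub-collection of the $r_q$'s sums to a positive multiple of $t$, which is exactly primitivity of the identity $r_1 + \cdots + r_k = t + t + \cdots + t$ ($d$ copies). Then $\deg M = d = (\sum r_q)/t$, so maximizing the degree of a minimal generator is the same as maximizing $d$ over primitive identities of this shape with entries bounded by $m$. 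Second, I would invoke (or prove directly, it is elementary) the bound from the theory of primitive partition identities with entries in $[m]$: the maximal size/degree of such a primitive identity of the prescribed homogeneous type is controlled by $m$, and in fact the extremal identity is essentially $\underbrace{(m-1) + (m-1) + \cdots}_{t-1} + \underbrace{1 + 1 + \cdots}_{?} = \underbrace{t + \cdots + t}_{?}$; chasing the arithmetic of when $(t-1)(m-1)$ is reached gives $d \leq m - 1$. Third, for the refinement, I would analyze exactly when the value $d = m-1$ is attained: this forces an identity using the extremal pieces, and a gcd obstruction appears — $d = m-1$ is achievable iff one cannot ``shorten'' the extremal identity, which by a standard Sylvester–Frobenius / Bézout argument on $\{t-1, m-1\}$ happens iff $\GCD(m-1, t-1) > 1$. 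Conversely $\GCD(m-1,t-1) = 1$ lets one split, forcing $d \leq m-2$, and one exhibits an explicit primitive identity of degree $m-2$ to show this is sharp.

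\textbf{Main obstacle.} The hard part will be the precise combinatorial analysis in the second and third steps: converting ``indecomposable element of $\init(A_t)$'' into ``primitive partition identity of a constrained homogeneous type'' requires care because the two sides of the identity are not symmetric (one side is forced to be a string of $t$'s coming from the grading, while the other side ranges over admissible multisets bounded by $m$), and the admissibility condition $k \le d$ must be tracked throughout. Then pinning down the exact extremal degree — proving both the upper bound $m-1$ and that it drops to $m-2$ exactly under the coprimality hypothesis — is a delicate piece of elementary number theory about which linear combinations $\sum r_q = td$ with $1 \le r_q \le m$ admit a proper sub-identity summing to a multiple of $t$; the gcd condition $\GCD(m-1,t-1)=1$ is precisely the threshold at which the ``longest'' candidate extremal identity $(t-1)(m-1) = (m-1)t$ fails to be primitive (it decomposes along a common divisor of $m-1$ and $t-1$). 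I would handle this by first treating the generic case via an explicit construction, then showing any primitive identity of degree $\ge m-1$ must, up to rearrangement, coincide with the extremal one, at which point the divisibility dichotomy is transparent. I would also double-check the boundary behaviour when $t = 2$ (so $t - 1 = 1$, $\GCD(m-1,1) = 1$ always, giving $d \le m-2$) against the earlier results on $2$-minors to make sure the statement is consistent.
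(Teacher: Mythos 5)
Your high-level plan (reduce to partition identities with entries in $[m]$, then do an extremal/gcd analysis \`a la Sturmfels) is indeed the route the paper takes, but your dictionary between minimal generators and primitive identities has a genuine gap, and it is exactly at the point where the paper needs one extra idea. You assert that $M=\init(\delta_1)\cdots\init(\delta_k)$ (with $\delta_q$ an $r_q$-minor, $r_1+\cdots+r_k=td$, $k\le d$) is indecomposable in $\init(A_t)$ iff no sub-collection of the $r_q$'s sums to a positive multiple of $t$, i.e.\ iff the identity $r_1+\cdots+r_k=t+\cdots+t$ ($d$ copies) is primitive in the unrestricted sense. This is false: take $t=2$, $m=5$, $r=(5,3)$, so $5+3=2+2+2+2$ admits no proper subidentity, yet the corresponding degree-$4$ monomial decomposes, because for a diagonal term order the initial term of an $r$-minor with $r>t$ factors as the initial term of an $(r-t)$-minor times the initial term of a $t$-minor; here $\init(\delta_5)=\init(\delta_3')\init(\delta_2'')$ gives $M\in\init(A_2)_3\cdot\init(A_2)_1$, contradicting the bound $d\le m-2=3$ that (as your own $t=2$ sanity check notes) the theorem demands. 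This factorization of leading diagonals is the missing ingredient: the paper uses it to kill every monomial with $k<d$ factors, so that only the balanced case $k=d$ survives. In that case the identity is \emph{homogeneous} ($d$ terms on each side), and a factorization of $M$ into two admissible pieces forces the subidentity to be balanced as well (if $|S|$ parts sum to $tb$, admissibility of $S$ gives $|S|\le b$ and admissibility of the complement gives $|S|\ge b$), so the correct combinatorial notion is \emph{homogeneous} primitivity, not general primitivity; the bound $d\le m-1$ then comes from Sturmfels' theorem that homogeneous primitive identities with entries in $[m]$ have at most $m-1$ terms per side.

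Because your reduction is off, your extremal analysis also targets the wrong object: the candidate you write, built from parts of size $m-1$ (and the displayed ``identity'' $(t-1)(m-1)=(m-1)t$ is not even an equality), is either non-homogeneous, hence killed by the splitting trick above, or simply not the extremal configuration. The unique homogeneous primitive candidate with $m-1$ parts is $(m-t)$ copies of $1$ plus $(t-1)$ copies of $m$ on one side against $m-1$ copies of $t$ on the other, and the dichotomy in the theorem comes from deciding when this particular identity admits a balanced proper subidentity: it does iff there exist $p\le m-t$, $q\le t-1$ with $q(m-t)=p(t-1)$ and $p+q<m-1$, which happens iff $\GCD(m-t,t-1)=\GCD(m-1,t-1)>1$. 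Your B\'ezout/Sylvester intuition is aimed at the right arithmetic, but it must be applied to this identity and only after the reduction to $k=d$; as written, your scheme would certify spurious high-degree ``minimal generators'' and cannot yield the stated bounds.
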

\begin{proof}
Let $M:=M_1\cdots M_k$ be a product of initial terms of minors, say $M_i:=\init(\delta_i)$ where $\delta_i$ is an $m_i$-minor of $X$. Let $td$ be the degree of $M$, so that $\sum_{i=1}^k m_i=td$.

If $k<d$ the monomial $M$ cannot be a minimal generator of $A_t$: In fact, since $m_1>t$,
\[ M_1=\init(\delta_1')\cdot \init(\delta_1''),\]
where $\delta_1'$ is an $m_1-t$-minor and $\delta_1''$ is a $t$-minor. Since $k\leq d-1$, the monomial
\[ M':=\init(\delta_1')\cdot M_2 \cdots M_k\]
belongs to $\init(A_t)_{d-1}$ by Lemma \ref{initialalgebra}. Moreover $\init(\delta_1'')$ obviously belongs to $\init(A_t)_1$. Thus, as $M=M'\cdot \init(\delta_1'')$, it is not a minimal generator.

So we can assume that $k=d$. This means that we have a homogeneous partition identity with entries in $[m]$ of the kind
\[ m_1+m_2+ \ldots + m_d=\underbrace{t+t+\ldots +t}_{d \mbox{ times}}. \]
If $d\geq m$ then the above is not a homogeneous primitive partition identity by \cite[Theorem 6.4]{sturmfels2}. Therefore there is a subset $S\subsetneq [d]$ such that                          \[ \sum_{i\in S}m_i=|S|\cdot t, \]
and as a consequence
\[ \sum_{i\in [d]\setminus S}m_i=(d-|S|)\cdot t.\]
Therefore by Lemma \ref{initialalgebra} $M':=\prod_{i\in S}M_i\in \init(A_t)_{|S|}$ and $M'':=\prod_{i\in [d]\setminus S}M_i\in \init(A_t)_{d-|S|}$, so that $M=M'\cdot M''$ is not a minimal generator of $\init(A)$.

For the second part of the statement, arguing as in the proof of \cite[Theorem 6.4]{sturmfels2} one can deduce that the only possible homogeneous primitive partition identity with entries in $[m]$ of the kind
\[ \lambda_1+\lambda_2+\ldots +\lambda_{m-1}=\underbrace{t+t+\ldots +t}_{m-1 \mbox{ times}} \]
is the following one:
\[\underbrace{1+1+\ldots +1}_{m-t \mbox{ times}} + \underbrace{m+m+\ldots +m}_{t-1 \mbox{ times}}=\underbrace{t+t+\ldots +t}_{m-1 \mbox{ times}}.\]
The above is not a homogeneous primitive partition identiy if and only if there exist two natural numbers, $p\leq m-t$ and $q\leq t-1$, such that $q(m-t) = p(t-1)$ and $p+q<m-1$. This is possible if and only if $\GCD(m-t,t-1)=\GCD(m-1,t-1)> 1$.
\end{proof}

\begin{corollary}
A finite Sagbi basis of $A_t$ is formed by the product of minors belonging to it and of degree at most $m-1$.
\end{corollary}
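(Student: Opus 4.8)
The statement to prove is the following corollary: a finite Sagbi basis of $A_t$ is formed by the products of minors belonging to $A_t$ which have degree at most $m-1$.

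The plan is to deduce this directly from Theorem \ref{sagbi} together with the characterization of $\init(A_t)$ in Lemma \ref{initialalgebra}. Recall that a set $\mathcal{S}$ of elements of $A_t$ is a Sagbi basis precisely when the initial monomials $\init(f)$, for $f\in\mathcal{S}$, generate the initial algebra $\init(A_t)$ as a $\kkk$-algebra. So the first step is to take $\mathcal{S}$ to be the (finite!) set of all products of minors of $X$ that lie in $A_t$ and have degree at most $m-1$; here "degree" refers to the renormalized grading in which a $t$-minor has degree $1$, so a product of minors $\delta_1\cdots\delta_k$ with $\delta_i$ an $m_i$-minor lies in degree $d$ exactly when $\sum m_i=td$, and the condition is $d\le m-1$. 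Finiteness of $\mathcal{S}$ is clear since there are only finitely many minors of $X$ and the number of factors is bounded once the degree is bounded.

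Next I would argue that $\mathcal{S}$ generates $\init(A_t)$. By Theorem \ref{sagbi} every minimal generator of the $\kkk$-algebra $\init(A_t)$ has degree at most $m-1$. A minimal generating set of $\init(A_t)$ can be chosen among monomials, and by Lemma \ref{initialalgebra} every monomial of $\init(A_t)$ of degree $d$ is of the form $M_1\cdots M_k$ where each $M_q=\init(\delta_q)$ is the initial term of an $r_q$-minor and $(r_1,\dots,r_k)$ is $d$-admissible, i.e. $\sum r_q=td$ and $k\le d$. Hence such a monomial equals $\init(\delta_1\cdots\delta_k)$ (diagonal term orders are multiplicative on products of minors, since $\init$ of a product of minors is the product of their initial terms), and $\delta_1\cdots\delta_k$ is a product of minors lying in $A_t$ in degree $d$. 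Thus every minimal generator of $\init(A_t)$, being a monomial of degree $\le m-1$, is the initial term of some product of minors in $A_t$ of degree $\le m-1$, i.e. of an element of $\mathcal{S}$. Therefore the initial terms of the elements of $\mathcal{S}$ include a set of generators of $\init(A_t)$, so they generate $\init(A_t)$, which is exactly the assertion that $\mathcal{S}$ is a Sagbi basis of $A_t$.

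The only point requiring a little care — and the one I would single out as the main (though modest) obstacle — is the passage from "$M$ is a monomial in $\init(A_t)$" to "$M=\init(\Delta)$ for a genuine product of minors $\Delta$ of $A_t$": one must invoke that for a diagonal term order the initial term of a product of minors is the product of the initial terms, and that conversely the $d$-admissible shape condition of Lemma \ref{initialalgebra} guarantees the product $\delta_1\cdots\delta_k$ actually lies in $I_t^d\cap R_{td}$, hence in $A_t$ in degree $d$. Both facts are already contained in the discussion preceding Lemma \ref{initialalgebra} (the decomposition $\init(A_t)=\bigoplus_d\init(I_t^d\cap R_{td})$ and the Bruns–Conca Gröbner basis result for powers of $I_t$), so no new work is needed; the proof is essentially a one-line combination of Theorem \ref{sagbi} and Lemma \ref{initialalgebra}.
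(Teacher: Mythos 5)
Your proof is correct and is exactly the argument the paper intends: the corollary is stated as an immediate consequence of Theorem \ref{sagbi} (minimal generators of $\init(A_t)$ have degree at most $m-1$) combined with Lemma \ref{initialalgebra} (every monomial of $\init(A_t)_d$ is $\init(\Delta)$ for a product of minors $\Delta$ of $d$-admissible shape, hence an element of $[A_t]_d$), which is precisely how you argue. The only cosmetic remark is that the multiplicativity $\init(\delta_1\cdots\delta_k)=\init(\delta_1)\cdots\init(\delta_k)$ holds for any monomial order, not just diagonal ones, so even that small point needs no special care.
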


\begin{remark}
An upper bound for the degree of the minimal generators of $J_t$ might be gotten by studying the defining ideal of the initial algebra of $A_t$. The minimal generators of such an ideal are binomials, so it could be not impossible to find them. Although there is no hope that this ideal is generated in degree at most $3$, there is a hope to obtain, in this way, an upper bound linear in $m$ (the one we got in Corollary \ref{generalupper} is quadratic in $m$). Furthermore passing  through the initial algebra would save us from using representation theory, so the results would hold in any not exceptional characteristic ($\operatorname{char}(\kkk)=0$ or $\operatorname{char}(\kkk)>\min\{t,m-t\}$). In this direction we can prove that the ideal defining the initial algebra of $A_2(3,n)$ is generated in degree $2$, $3$ and $4$. However we will not include this result in the thesis, since its proof is boring and it would extend too weakly (with respect to the effort we would do proving it) our knowledge about the relations.
\end{remark}

\index{initial algebra|)}\index{Sagbi bases|)}

\subsection{A finite system of generators for $A_t^U$}\index{ring of invariants|(}\index{U-invariant@$U$-invariant|(}

Techniques similar to those used in the previous subsection can be also used to exhibit a finite  system of generators for an important subalgebra of $A_t$. Let $U_-(W)$ be the subgroup of the lower triangular matrices of $\GL(W)$ with $1$'s on the diagonal and $U_+(V)$ be the subgroup of the upper triangular matrices of $\GL(V)$ with $1$'s on the diagonal. Then, set 
\[U:=U_-(W)\times U_+(V)\subseteq G.\] 
The group $U$ plays an important role: In fact one can show that a polynomial $f\in R$ is $U$-invariant if and only if it belongs to the $\kkk$-vector space generated by the highest bi-weight vectors with respect to $B$ (for the definition of highest bi-weight vector and of $B$ see \ref{sacminors}, for the proof of the above fact see \cite[Proposition 11.22]{BrVe}). Therefore, we have a description for the ring of $U$-invariants of $A_t$, namely
\[A_t^U=\kkk \oplus <[c_{\ll}|c_{\ll}] \ : \ \ll \mbox{ is an admissible partition}>.\]
In this subsection we will find a finite set of ``basic invariants" of $A_t$, that is a finite system of $\kkk$-algebra generators of $A_t^U$.
In other words, we will solve what is known as the {\it first main problem} of invariant theory in the case of $A_t^U$.

\begin{thm}\label{invariant}
The ring of invariants $A_t^U$ is generated by the product of initial minors $[c_{\lambda}|c_{\lambda}]$ where $\lambda$ is an admissible partition with at most $m-1$ parts. Furthermore is generated up to degree $m$.
\end{thm}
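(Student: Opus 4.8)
The statement is the $U$-invariant analogue of Theorem \ref{sagbi}, and I would mimic that proof essentially verbatim, replacing products of initial terms of minors with the monomials $[c_\lambda|c_\lambda]$ attached to admissible partitions. The starting point is the description $A_t^U = \kkk \oplus \langle [c_\ll|c_\ll] : \ll \text{ admissible}\rangle$ recalled just before the statement, together with the multiplicative structure: given two admissible partitions $\mu$ and $\nu$, the product $[c_\mu|c_\mu]\cdot[c_\nu|c_\nu]$ is, up to the relevant term order, the invariant attached to the concatenation of the two partitions (i.e.\ the partition whose parts are the parts of $\mu$ together with the parts of $\nu$, reordered). Thus a degree-$d$ invariant $[c_\ll|c_\ll]$ fails to be a minimal generator precisely when the multiset of parts of $\ll$ can be split into two nonempty sub-multisets, each of which is again an admissible partition. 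Since $\ll \vdash td$ is $d$-admissible, it has at most $d$ parts, each at most $m$ and at least $1$.

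First I would handle the case where $\ll$ has strictly fewer than $d$ parts: then some part is $>t$ (since $td$ cannot be distributed among $<d$ parts all $\le t$), and that part, being $>t$, can be split off into a single box-part of size $t$ plus a smaller part, producing a factorization $[c_\ll|c_\ll] = [c_{\ll'}|c_{\ll'}]\cdot[c_{(t)}|c_{(t)}]$ with $[c_{(t)}|c_{(t)}]$ a generator of degree $1$; hence $\ll$ is not a minimal generator. This reduces to the case $\ll$ has exactly $d$ parts, which gives a homogeneous partition identity with entries in $[m]$ of the form $\ll_1 + \cdots + \ll_d = \underbrace{t + \cdots + t}_{d}$. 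Now invoke the combinatorics of \cite[Theorem 6.4]{sturmfels2}: if $d \ge m$ the identity is not homogeneous primitive, so there is a proper subset $S \subsetneq [d]$ with $\sum_{i\in S}\ll_i = |S|\cdot t$, which yields admissible sub-partitions on $S$ and on $[d]\setminus S$ and hence a nontrivial factorization of $[c_\ll|c_\ll]$ in $A_t^U$. Therefore every minimal generator has degree $\le m-1$, which gives finiteness as well, since there are only finitely many admissible partitions with at most $m-1$ parts.

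For the ``generated up to degree $m$'' clause (which is the weaker statement actually asserted), the bound $d \le m-1$ already suffices; I would simply note that the finitely many $[c_\lambda|c_\lambda]$ with $\height(\lambda)\le m-1$ generate $A_t^U$ as a $\kkk$-algebra, and in particular $A_t^U$ is generated by its elements of degree $\le m$. The routine verification to be careful about is the multiplicativity claim: that under a diagonal term order the leading term of $[c_\mu|c_\mu]\cdot[c_\nu|c_\nu]$ is the initial monomial attached to the concatenated partition, so that splitting the partition genuinely corresponds to a factorization of the invariant and not merely of its leading term. This is where the argument of Theorem \ref{sagbi} (via Lemma \ref{initialalgebra}) is used, and it is the only slightly delicate point; the partition-identity input from \cite{sturmfels2} is then applied exactly as in that proof. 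I do not expect any serious obstacle beyond transcribing these two ingredients into the $U$-invariant language.
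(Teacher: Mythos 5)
The gap is in your treatment of the case where the admissible partition $\lambda$ has $k<d$ parts, $|\lambda|=td$. You propose to split a part $\lambda_1>t$ into a part of size $t$ and a part of size $\lambda_1-t$ and conclude $[c_{\lambda}|c_{\lambda}]=[c_{\lambda'}|c_{\lambda'}]\cdot[c_{(t)}|c_{(t)}]$. That move is borrowed from the proof of Theorem \ref{sagbi}, but there it is applied to the monomial $\init(\delta_1)$, whose diagonal really does factor; the invariant $[c_{\lambda}|c_{\lambda}]$ is a product of honest minors, and a $\lambda_1$-minor is not the product of a $(\lambda_1-t)$-minor and a $t$-minor. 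In terms of your own (correct) criterion: multiplication of the $[c_{\mu}|c_{\mu}]$'s corresponds to merging the multisets of parts, so $[c_{\lambda'}|c_{\lambda'}]\cdot[c_{(t)}|c_{(t)}]$ equals $[c_{\mu}|c_{\mu}]$ where $\mu$ is obtained from $\lambda$ by replacing the single part $\lambda_1$ with the two parts $\lambda_1-t$ and $t$, and $\mu\neq\lambda$; splitting a part is not a splitting of the multiset of parts of $\lambda$ (only a part equal to $t$ can be stripped off as a degree-one factor). A concrete counterexample to your reduction and to your final bound: for $t=2$, $m=3$, the partition $\lambda=(3,3)$ is admissible of degree $d=3$ with $k=2<d$ parts, has no part equal to $t$, and admits no splitting into two admissible sub-multisets, since $(3)$ is not admissible; hence $[c_{(3,3)}|c_{(3,3)}]$ is a minimal generator of degree $3=m$. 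So the case $k<d$ cannot be reduced to $k=d$, and your conclusion that every minimal generator has degree at most $m-1$ is false; the degree bound in the statement really is $m$.

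The case $k<d$ is exactly where the paper's proof departs from the Sagbi argument. After discarding parts equal to $t$, it attaches to $\lambda$ the non-homogeneous partition identity with entries in $[m]$ whose left side is $\sum_{\lambda_i>t}(\lambda_i-t)$ and whose right side is $\sum_{\lambda_i<t}(t-\lambda_i)$ together with $d-k$ copies of $t$. If this identity is not primitive, a proper subidentity reassembles into a decomposition of $\lambda$ as a merge of two admissible partitions, hence a genuine factorization of $[c_{\lambda}|c_{\lambda}]$; if it is primitive, then \cite[Corollary 6.2]{sturmfels2} forces $d\le m$, whence $k\le m-1$, and this is where the degree bound $m$ comes from. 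Your case $k=d$, via homogeneous primitive partition identities and \cite[Theorem 6.4]{sturmfels2}, coincides with the paper's first case and is fine. Finally, the multiplicativity you flag as the delicate point is a non-issue: $[c_{\mu}|c_{\mu}]\cdot[c_{\nu}|c_{\nu}]$ is literally the product of initial minors whose shape is the merge of $\mu$ and $\nu$ (alternatively, both sides are $U$-invariants of the same bi-weight, and that weight space is one-dimensional by Theorem \ref{decat}), so no diagonal term order or Lemma \ref{initialalgebra} is needed; what is missing in your proposal is the treatment of admissible partitions whose degree strictly exceeds their number of parts.
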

\begin{proof}
Let $A$ be the subalgebra of $A_t^U$ generated by $[c_{\lambda}|c_{\lambda}]$ where $\lambda$ is an admissible partition with at most $m-1$ parts. We have to prove that actually $A=A_t^U$.

Thus let $\lambda = (\lambda_1,\ldots ,\lambda_k)$ be an admissible partition with $k\geq m$ such that, by contradiction, $[c_{\lambda}|c_{\lambda}]$ does not belong to $A$. Furthermore assume that among such partitions $\lambda$ is minimal with respect to $k$. First, let us suppose that $|\lambda|=kt$. Then, as in the proof of Theorem \ref{sagbi}, the following
\[ \lambda_1+\lambda_2+ \ldots + \lambda_k=\underbrace{t+t+\ldots +t}_{k \mbox{ times}} \]
is not a homogeneous primitive partition identity by \cite[Theorem 6.4]{sturmfels2}. Therefore there is a subset $S\subsetneq [k]$ such that $\sum_{i\in S}\lambda_i=|S|\cdot t$ and $\sum_{i\in [k]\setminus S}\lambda_i=(k-|S|)\cdot t$.
We set $\lambda_S:=(\lambda_i:i\in S)$ and $\lambda_{[k]\setminus S}:=(\lambda_i:i\in [k]\setminus S)$. These are both admissible partitions so $[c_{\lambda_S}|c_{\lambda_S}]$ and $[c_{\lambda_{[k]\setminus S]}}|c_{\lambda_{[k]\setminus S}}]$ are elements of $A$.  We claim that
\[[c_{\lambda_{S}}|c_{\lambda_{S}}]\cdot [c_{\lambda_{[k]\setminus S}}|c_{\lambda_{[k]\setminus S}}] = [c_{\lambda}|c_{\lambda}].\]
In fact $[c_{\lambda_{S}}|c_{\lambda_{S}}]\cdot [c_{\lambda_{[k]\setminus S}}|c_{\lambda_{[k]\setminus S}}]$ is obviously $U$-invariant. Moreover it has bi-weight $((\tl\ll_1,\ldots ,\tl\ll_m)|(-\tl\ll_n,\ldots ,-\tl\ll_1))$, so it is actually $[c_{\lambda}|c_{\lambda}]$.

It remains the case in which $|\lambda|=dt$ with $d>k$. Notice that we can assume that $\ll_i\neq t$ for all $i=1,\ldots ,k$: Otherwise, putting $\ll':=(\ll_j:j\neq i)$, we would have $[c_{\ll}|c_{\ll}]=[c_{\ll'}|c_{\ll'}]\cdot [12|12]$. We have the partition identity with entries in $[m]$
\[\lambda_1+\lambda_2+\ldots \lambda_k=\underbrace{t+t+\ldots +t}_{d \mbox{ times}}.\]
Set $s:=\max\{i \ : \ \lambda_i > t\}$. We can consider the partition identity with entries in $[m]$
\begin{equation}
\delta_1+ \delta_2 + \ldots +\delta_s = \delta_{s+1} + \delta_{s+2}+\ldots +\delta_d \label{proofinv}
\end{equation}
where $\delta_i := |\lambda_i-t|$ for $i\leq k$ and $\delta_i=t$ for $k<i\leq d$. We claim that the above partition identity is primitive. By the contrary, suppose that there exist $\{i_1,\ldots , i_p\}\subseteq [s]$ and $\{j_1,\ldots ,j_q\}\subseteq \{s+1,\ldots ,d\}$ with $p+q<d$ such that
\[\delta_{i_1}+ \delta_{i_2} + \ldots +\delta_{i_p} = \delta_{j_1} + \delta_{j_2}+\ldots +\delta_{j_q}.\]
Therefore if $\{u_1,\ldots ,u_{s-p}\}=[s]\setminus \{i_1,\ldots ,i_p\}$ and $\{v_1,\ldots ,v_{d-s-q}\}=\{s+1,\ldots ,d\}\setminus \{j_1,\ldots ,j_q\}$ we also have
\[\delta_{u_1}+ \delta_{u_2} + \ldots +\delta_{u_{s-p}} = \delta_{v_1} + \delta_{v_2}+\ldots +\delta_{v_{d-s-q}}.\]
Setting $r:=\max\{a \ : \ j_a\leq k \}$ and $l:=\max\{b \ : v_b\leq k \ \}$ we get
\[\lambda_{i_1}+ \lambda_{i_2} + \ldots +\lambda_{i_p}+\lambda_{j_1}+\ldots + \lambda_{j_r} = \underbrace{t+t+\ldots +t}_{p+q \mbox{ times}}\]
and
\[\lambda_{u_1}+ \lambda_{u_2} + \ldots +\lambda_{u_{s-p}}+\lambda_{v_1}+\ldots + \lambda_{v_{l}} = \underbrace{t+t+\ldots +t}_{d-p-q \mbox{ times}}.\]
Let $\alpha:=(\lambda_{i_1}, \ldots ,\lambda_{i_p}, \lambda_{j_1},\ldots , \lambda_{j_r})$ and $\beta:=(\lambda_{u_1}, \ldots ,\lambda_{u_{s-p}},\lambda_{v_1},\ldots , \lambda_{v_{l}} )$ be the respective two partitions. Since $p+r\leq p+q$ and $s-p+l\leq d-p-q$ both $\alpha$ and $\beta$ are admissible partitions. So, as above $ [c_{\lambda}|c_{\lambda}] = [c_{\alpha}|c_{\alpha}]\cdot [c_{\beta}|c_{\beta}]$ belongs to $A$, which is a contradiction. Therefore the partition \eqref{proofinv} must be primitive. So, using \cite[Corollary 6.2]{sturmfels2} we get, $d\leq m-t+t=m$, and therefore $k\leq m-1$.
\end{proof}

As a consequence of Theorem \ref{invariant} we have an upper bound on the maximum degree of a generator of any prime ideal of $A_t$ which is a $G$-subrepresentation, for short a {\it $G$-stable prime ideal}.

\begin{corollary}
A $G$-stable prime ideal of $A_t(m,n)$ is generated in degree less than or equal to $m$.
\end{corollary}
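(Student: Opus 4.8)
The statement to prove is: a $G$-stable prime ideal $\qq$ of $A_t(m,n)$ is generated in degree $\leq m$. The plan is to reduce the claim about an arbitrary $G$-stable prime to a claim about its $U$-invariants, and then invoke Theorem \ref{invariant}. The key point is a standard fact from the representation theory of $\GL(W)\times\GL(V)$ in characteristic $0$: since every graded component $[A_t]_d$ decomposes as a multiplicity-free-in-shape direct sum of irreducibles $\Wl\otimes\Vl^*$ (Theorem \ref{decat}), and since $\qq$ is $G$-stable, $\qq$ is a sum of some of these isotypic pieces; moreover each irreducible $G$-subrepresentation $\Wl\otimes\Vl^*$ is generated, as a $G$-module, by its highest bi-weight vector, which is exactly the product of initial minors $[c_\ll|c_\ll]$ (cf. the discussion in \ref{sacminors} and \cite[Proposition 11.22]{BrVe}). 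Hence $\qq$ is generated as a $G$-module — equivalently, up to taking the $G$-orbit, as an ideal — by the $U$-invariants it contains, i.e. by the $[c_\ll|c_\ll]$ lying in $\qq$.

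\textbf{Key steps, in order.} First I would record the structural fact that $A_t^U = \kkk\oplus\langle [c_\ll|c_\ll] : \ll \text{ admissible}\rangle$ and that a $G$-stable subspace of $A_t$ is determined by, and generated as a $G$-module by, its intersection with $A_t^U$; this is where the characteristic-$0$ hypothesis and Schur's Lemma \ref{Schur Lemma} enter, guaranteeing that $\qq\cap[A_t]_d$ is a direct sum of full isotypic components. Second, I would observe that if $\qq$ is an ideal (not merely a $G$-submodule), then $\qq\cap A_t^U$ generates $\qq$ as an ideal: indeed $\qq$ is generated as an ideal by $\qq\cap A_t^U$ together with its $G$-translates, but each $G$-translate of an element of $\qq$ already lies in $\qq$, so it suffices to take the ideal generated by the $G$-module generated by $\qq\cap A_t^U$, which equals the ideal generated by $\qq\cap A_t^U$. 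Third — and this is the crucial input — I would apply Theorem \ref{invariant}: the $\kkk$-algebra $A_t^U$ is generated in degree $\leq m$ by the $[c_\ll|c_\ll]$ with $\ll$ having at most $m-1$ parts. Therefore any element of $\qq\cap A_t^U$ of degree $>m$ is, modulo lower-degree elements of $A_t^U$, a polynomial in such generators; since $\qq$ is prime (this is the only place primeness is used), whenever a product $\prod [c_{\ll_i}|c_{\ll_i}]$ lies in $\qq$, some factor $[c_{\ll_i}|c_{\ll_i}]$ lies in $\qq$, and that factor has degree $\leq m$. Combining, $\qq\cap A_t^U$ is generated, as an ideal of $A_t^U$-invariants feeding into $\qq$, in degree $\leq m$, whence $\qq$ is generated in degree $\leq m$ as an ideal of $A_t$.

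\textbf{Main obstacle.} The routine parts are the bookkeeping with isotypic decompositions; the genuinely delicate point is the passage "$\prod_i [c_{\ll_i}|c_{\ll_i}]\in\qq$ and $\qq$ prime $\implies$ some $[c_{\ll_i}|c_{\ll_i}]\in\qq$." One must be careful that a product of highest bi-weight vectors is itself (up to scalar) the highest bi-weight vector of the representation with shape obtained by concatenating the partitions — this is exactly the computation carried out inside the proof of Theorem \ref{invariant}, where $[c_{\ll_S}|c_{\ll_S}]\cdot[c_{\ll_{[k]\setminus S}}|c_{\ll_{[k]\setminus S}}]=[c_\ll|c_\ll]$ — and that the straightening/standard monomial expansion does not destroy the argument; once that identity is available, primeness applies verbatim to the factors. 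I would also note in passing that the bound $m$ is sharp in spirit, since $A_t^U$ itself need not be generated below degree $m$. So the proof is short: cite $A_t^U=\kkk\oplus\langle[c_\ll|c_\ll]\rangle$, reduce to $U$-invariants by $G$-stability, invoke Theorem \ref{invariant} for the degree bound on generators of $A_t^U$, and use primeness of $\qq$ to descend from products to single $[c_\ll|c_\ll]$'s of degree $\leq m$.
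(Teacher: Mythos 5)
Your proof is correct and follows essentially the same route as the paper: by $G$-stability and linear reductivity in characteristic $0$ one reduces to a putative minimal generator of the form $[c_{\ll}|c_{\ll}]$ of degree $>m$, factors it as a product of lower-degree highest bi-weight vectors via Theorem \ref{invariant} (using the identity $[c_{\gamma}|c_{\gamma}]\cdot[c_{\delta}|c_{\delta}]=[c_{\ll}|c_{\ll}]$), and then primeness of $\qq$ contradicts minimality. The only difference is presentational — you phrase the reduction through $\qq\cap A_t^U$ generating $\qq$, while the paper directly chooses the minimal generator to be of type $[c_{\ll}|c_{\ll}]$ — so the two arguments coincide in substance.
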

\begin{proof}
Suppose to have a minimal generator of degree greater than $m$ in a $G$-stable prime ideal $\wp\subseteq A_t$. Since $\wp$ is a $G$-subrepresentation of $A_t$, than we can suppose that such an element is of the type $[c_{\ll} |c_{\ll}]$ with $|\ll |>mt$. By Theorem \ref{invariant} there exist two admissible partitions $\gamma$ and $\delta$ such that $|\gamma|,|\delta| < |\ll |$ and  $[c_{\gamma}|c_{\gamma}]\cdot[c_{\delta}|c_{\delta}]=[c_{\ll}|c_{\ll}]$. Since $[c_{\ll}|c_{\ll}]$ is a minimal generator, this contradicts the primeness of $\wp$.
\end{proof}

\index{ring of invariants|)}\index{U-invariant@$U$-invariant|)}
\index{generic matrix|)}\index{generic matrix!minors of|)}\index{relations|)}\index{relations!between minors|)}

\chapter{Symbolic powers and Matroids}\label{chapter4}\index{symbolic power|(}\index{matroid|(}

This chapter is shaped on our  paper \cite{va3}. It is easy to show that, if $S:=\kkk[x_1,\ldots ,x_n]$ is a polynomial ring in $n$ variables over a field $\kkk$ and $I$ is a complete intersection ideal, then $S/I^k$ is Cohen-Macaulay for all positive integer $k$. A result of Cowsik and Nori, see \cite{cowsiknori}, implies that the converse holds true, provided the ideal is homogeneous and radical. Therefore, somehow the above result says that there are no ideals with this property but the trivial ones. On the other hand, if $S/I^k$ is Cohen-Macaulay, then $I^k$ has not any embedded prime ideals, so by definition it is equal to the $k$th symbolic power of $I$, namely $I^k=I^{(k)}$ (see \ref{symbolicpowers}). So, it is natural to ask:

\vskip1mm

{\it For which $I\subseteq S$ is the ring $S/I^{(k)}$ Cohen-Macaulay for any positive integer $k$?}

\vskip1mm

We will give a complete answer to the above question in the case in which $I$ is a square-free monomial ideal. Notice that when $n=4$ this problem was studied by Francisco in \cite{Fra}. These kind of ideals, whose basic properties are summarized in \ref{appendixd}, supply a bridge between combinatorics and commutative algebra, given by attaching to any simplicial complex $\D$ on $n$ vertices the so-called {\it Stanley-Reisner ideal} $\Id$ and {\it Stanley-Reisner ring} $\kkk[\D]=S/\Id$. One of the most interesting parts of this theory is finding relationships between combinatorial or topological properties of $\D$ and ring-theoretic ones of $\kkk[\D]$. For instance, Reisner gave a topological characterization of those simplicial compexes $\Delta$ for which $\kkk[\D]$ is Cohen-Macaulay (for example see Miller and Sturmfels \cite[Theorem 5.53]{MS}). Such a characterization depends on certain singular homology groups with coefficients in $\kkk$ of topological spaces related to $\D$. In fact, the Cohen-Macaulayness of $\D$ may depend on $\chara(\kkk)$. At the contrary, a wide open problem is, once fixed the characteristic, to characterize in a purely graph-theoretic fashion those graphs $G$ for which $\kkk[\D(G)]$ is Cohen-Macaulay, where $\D(G)$ denotes the independence complex of $G$. For partial results around this problem see, for instance, Herzog and Hibi \cite{HH}, Herzog, Hibi and Zheng \cite{HHZ}, Kummini \cite{Ku} and our paper joint with Constantinescu \cite{CV}. Thus, in general, even if a topological characterization of the ``Cohen-Macaulay simplicial complex" is known, a purely combinatorial one is still a mystery. In this chapter we are going to give a combinatorial characterization of those simplicial complexes $\D$ such that $S/\Idm$ is Cohen-Macaulay for all positive integer $k$. The characterization is quite amazing, in fact the combinatorial counter-party of the algebraic one ``$S/\Idm$ Cohen-Macaulay" is a well studied class of simplicial complexes, whose interest comes besides this result. Precisely we will show in Theorem \ref{main}: 

\vskip1mm

{\it The ring $S/\Idm$ is Cohen-Macaulay for all $k>0$ $\iff$ $\D$ is a matroid.}

\vskip1mm

Matroids have been introduced as an abstraction of the concept of linear independence. We briefly discuss some basic properties of them in Section \ref{appdmatroids}, however there are entire books treating this subject, such as that of Oxley \cite{Ox} or the one of Welsh \cite{Wel}. It must be said that Theorem \ref{main} has been proved independently and with different methods by Minh and Trung in \cite[Theorem 3.5]{MT}. 

Actually, we will show a more general version of Theorem \ref{main} described above, namely Theorem \ref{maingeneral}, regarding a class of monomial ideals more general than the square-free ones. Let us say  that this generalization appears for the first time, since it was not present in our original paper \cite{va3}. As a first tool for our proof, we need a {\it duality for matroids} (see Theorem \ref{matroidduality}), which will allow us to switch the problem from $\Id$ to the cover ideal $J(\D)$. The {\it if-part} of the proof, at this point, is based on the investigation of the symbolic fiber cone of $\JD$, the so-called algebra of basic covers $\AD$ (more generally $\ADo$, where $\oo$ is a weighted function on the facets of $\D$). Such an algebra was introduced by Herzog during the summer school {\it Pragmatic 2008}, in Catania. Among other things, he asked for its dimension. Since then, even if a complete answer is not yet known, some progresses have been done: In our paper \cite{CV}, a combinatorial characterization of the dimension of $\AD$ is given for one-dimensional simplicial complexes $\D$. Here we will show that $\dim(\AD)$ is minimal whenever $\D$ is a matroid. To this purpose an {\it exchange property for matroids}, namely \eqref{exchangeproperty}, is fundamental. Eventually, Proposition \ref{thekey} implies the if-part of Theorem \ref{main}. 

In \cite{va3}, we actually showed that $\dim(\D)$ is minimal {\it exactly} when $\D$ is a matroid, getting also the {\it only-if part} of Theorem \ref{main}. Here, however, we decided to prove this part in an other way. Namely, we study the associated primes of the polarization of $\Idm$ (Corollary \ref{asspol1}), showing that if $\D$ is not a matroid, then there is a lack of connectedness of certain ideals related to $\D$, obstructing their Cohen-Macaulayness (Theorem \ref{cmforcesmatroid}). In particular, this will imply that if $S/\Id^{n-\dim(\D)+2}$ is Cohen-Macaulay, then $\D$ has to  be a matroid, a stronger statement than the only-if part of Theorem \ref{main}.

It turns out that Theorem \ref{main} has some interesting consequences. For instance, in Corollary \ref{multdim} we deduce that $\dim(\AD)=\kkk[\D]$ if and only if $\D$ is a matroid, and that, in this situation, the ``multiplicity of $\AD$" is bounded above from the multiplicity of $\kkk[\D]$ (here the commas are due to the fact that, in general, $\AD$ is not standard graded). An other consequence regards the problem of set-theoretic complete intersections (Corollary \ref{sci}): {\it After localizing at the maximal irrelevant ideal, $\Id$ is a set-theoretic complete intersection whenever $\D$ is a matroid.}

\section{Towards the proof of the main result}

In this section we prove the main theorem of the chapter. For the notation and the definitions of the basic objects we remind to Appendix \ref{appendixd}. In particular, $\kkk$ will denote a field, $S:=\kkk[x_1,\ldots ,x_n]$ will be the polynomial ring in $n$ variables over $\kkk$, and $\mm := (x_1,\ldots ,x_n)\subseteq S$ will denote the maximal irrelevant ideal of $S$.

\begin{thm}\label{main}
Let $\D$ be a simplicial complex on $[n]$. The following are equivalent:
\begin{compactitem} 
\item[(i)] $S/\Idm$ is Cohen-Macaulay for any $k\in \NN_{\geq 1}$.
\item[(ii)] $S/\JDm$ is Cohen-Macaulay for any $k\in \NN_{\geq 1}$.
\item[(iii)] $\D$ is a matroid.
\end{compactitem}
\end{thm}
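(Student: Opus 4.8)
The plan is to prove the chain of implications. The equivalence (i) $\iff$ (ii) should come first, and it is essentially formal: by the matroid duality of Theorem \ref{matroidduality}, $\D$ is a matroid if and only if its Alexander-dual complex $\D^*$ (whose faces are complements of non-faces, equivalently whose facets are complements of facets of $\D$) is a matroid. Since the Stanley-Reisner ideal $\Id$ and the cover ideal $\JD=I_{\D^*}$ are exchanged under this duality, and since symbolic powers commute with this correspondence (the $k$-th symbolic power of a squarefree monomial ideal is the intersection of the $k$-th powers of its minimal primes, and minimal primes of $\Id$ correspond to facets of $\D$), the statement ``$S/\Idm$ Cohen-Macaulay for all $k$'' for the complex $\D$ is literally the same as ``$S/\JDm$ Cohen-Macaulay for all $k$'' once one passes to $\D^*$. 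So (i) and (ii) say the same thing about $\D$ and about $\D^*$ respectively, and since ``$\D$ is a matroid'' $\iff$ ``$\D^*$ is a matroid'', proving (i) $\iff$ (iii) automatically gives (ii) $\iff$ (iii). Hence it suffices to prove (iii) $\implies$ (i) and $\neg$(iii) $\implies$ $\neg$(i).

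For the implication (iii) $\implies$ (i): assume $\D$ is a matroid. I would first reduce, via duality again, to showing $S/\JDm$ is Cohen-Macaulay for all $k$ (working with the cover ideal is more convenient because the combinatorics of ``basic covers'' is cleaner). The key object is the symbolic fiber cone of $\JD$, the algebra of basic covers $\AD$ (and more generally $\ADo$ for a weight function $\oo$), introduced by Herzog. The strategy here has two ingredients. First, use the exchange property for matroids \eqref{exchangeproperty} to show that $\dim(\AD)$ is as small as possible, i.e. equal to the number of connected components / the expected minimal value; this is the combinatorial heart of the argument, and it is where the matroid hypothesis is genuinely used (a non-matroid would force the existence of ``bad'' basic covers violating the exchange, pushing up the dimension of the fiber cone). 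Second, invoke Proposition \ref{thekey}, which translates the minimality of $\dim(\AD)$ into the Cohen-Macaulayness of $S/\JDm$ for all $k$ --- roughly, the fiber cone having minimal dimension forces the symbolic Rees algebra to behave well, and a standard argument (associated graded ring / local cohomology of the Rees algebra, or the Hochster-Huneke style reduction) then yields that each $S/\JDm$ is Cohen-Macaulay. I would carry out: (a) the dimension computation for $\AD$ using \eqref{exchangeproperty}; (b) the deduction of (i) from Proposition \ref{thekey}; (c) the back-translation through Theorem \ref{matroidduality}.

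For the converse $\neg$(iii) $\implies$ $\neg$(i): suppose $\D$ is \emph{not} a matroid. Rather than going through the fiber-cone dimension (the route of \cite{va3}), I would argue via connectedness obstructions. The plan is: study the polarization $\widetilde{\Idm}$ of the symbolic power and identify its associated primes (Corollary \ref{asspol1}); then show that when $\D$ fails the exchange property, for $k=n-\dim(\D)+2$ the ideal $\Id^{(k)}$ (equivalently its polarization) defines a ring that is \emph{not} connected in codimension $1$ in the relevant localization --- one locates a prime $\wp$ at which the minimal primes split into two groups whose sum is $\wp$, exactly as in the examples of Chapter \ref{chapter1} (e.g. Example \ref{esempioaldo}). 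Since Cohen-Macaulayness implies connectedness in codimension $1$ (Proposition \ref{barney}), this forces $S/\Id^{(n-\dim(\D)+2)}$ to not be Cohen-Macaulay, which in particular breaks (i). The main obstacle will be precisely this step: extracting from the bare failure of the matroid exchange axiom a concrete prime $\wp$ and a concrete value of $k$ at which the disconnectedness manifests, and controlling the primary decomposition of $\Idm$ well enough (through polarization) to see it. This requires a careful combinatorial analysis of which facet-intersections survive in $\Idm$, together with Theorem \ref{cmforcesmatroid}. The rest --- assembling the equivalences --- is then routine.
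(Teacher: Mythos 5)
Your proposal follows essentially the same route as the paper's proof: the if-part via the exchange-property rigidity bound on the dimension of the algebra of basic covers combined with Proposition \ref{thekey}, the converse via the associated primes of the polarization (Corollary \ref{asspol1}) together with the connectedness-in-codimension-one obstruction culminating in Theorem \ref{cmforcesmatroid}, and the passage between (i) and (ii) through Theorem \ref{matroidduality} and the identity $\Id=J(\D^c)$ — the only difference being that the paper first proves the more general weighted statement (Theorem \ref{maingeneral}) and then specializes. One small caution: the relevant complex is $\D^c$, whose \emph{facets} are the complements of the facets of $\D$ (the dual matroid); this is not the Alexander dual (whose faces are complements of non-faces), so the two descriptions you present as equivalent are in fact different complexes, though the operative facts you use are the correct ones for $\D^c$.
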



\begin{remark}
Notice that condition (iii) of Theorem \ref{main} does not depend on the characteristic of $\kkk$. Thus, as a consequence of Theorem \ref{main}, conditions (i) and (ii) do not depend on $\chara(\kkk)$ as well as (iii). This fact was not clear a priori.
\end{remark}

\begin{remark}
If $\D$ is the $m$-skeleton of the $(n-1)$-simplex, $-1\leq m\leq n-1$, then $\D$ is a matroid. So Theorem \ref{main} implies that all the symbolic powers of $\Id$ are Cohen-Macaulay.
\end{remark}

Actually, we will prove a slightly more general version of Theorem \ref{main}, since this does not require much more effort. More precisely, we will show Theorem \ref{main} for a larger class of pure monomial ideals than the pure square-free ones. We are going to introduce them below: For a simplicial complex $\D$ on $[n]$, a function 
\begin{displaymath}
\begin{array}{rcl}
\omega : \F(\D) & \rightarrow & \NN \setminus \{0\} \\
F & \mapsto & \omega_F
\end{array}
\end{displaymath} 
is called {\it weighted function}\index{simplicial complex!weighted}\index{weighted function} (this concept has been introduced by Herzog, Hibi and Trung in \cite{HHT}). Moreover, the pair $(\Delta ,\omega)$ is called a {\it weighted simplicial complex}. The authors of \cite{HHT} studied the properties of the {\it weighted monomial ideal}\index{monomial ideal!weighted} 
\[J(\D ,\omega) := \bigcup_{F\in \F(\D)}\wp_F^{\omega_F}.\]
If $\omega$ is the {\it canonical weighted function}\index{weighted function!canonical}, that is $\omega_F = 1$ for any $F\in \F(\D)$, it turns out than $J(\D ,\omega)=J(\D)$ is nothing but than the cover ideal of $\D$. In particular the class of all the weighted monomial ideals contains the square-free ones. The class we want to define stays between the pure square-free monomial ideals and the weighted pure monomial ideals. We say that a weighted function $\omega$ is a {\it good-weighted function} if it is induced by a {\it weight on the variables}, namely if there exists a function $\ll : [n] \rightarrow \RR_{>0}$ such that $\omega = \omega^{\ll}$, where
\[\omega^{\ll}_F := \sum_{i\in F} \ll(i) \ \ \ \forall \ F\in \F(\D).\]
In this case the pair $(\D ,\omega)$ will be called a {\it good-weighted simplicial complex}\index{simplicial complex!good weighted} and the ideal $J(\D,\omega)$ a {\it good-weighted monomial ideal}\index{monomial ideal!good-weighted}.

\begin{remark}\label{sfagw}
If $I\subseteq S$ is a pure square-free monomial ideal, then it is a good-weighted monomial ideal. In fact, we have that $I=J(\D)$ for some pure simplicial complex $\D$. Let $d-1$ be the dimension of $\D$. Then, because $\D$ is pure, $|F|=d$ for all facets $F\in \FD$. Defining the function $\ll : [n]\rightarrow \RR_{>0}$ as $\ll(i):=1/d$ \ for any $i\in \NN$, we have that the canonical weighted function is induced by $\ll$, and so $J(\D)=J(\D,\omega^{\ll})$ is a good-weighted monomial ideal.
\end{remark}

The assumption that $I$ is pure, in Remark \ref{sfagw}, is necessary, as we are going to show in the next example.

\begin{example}
Let $\D$ be the simplicial complex on $\{1,\ldots ,5\}$ such that
\[\F(\D)=\{\{1,2\},\{1,3\},\{1,4\},\{3,4\},\{2,3,5\}\}.\]
If the canonical weighted function on $\D$ were induced by some weight $\ll$ on the variables, then we should have 
$\ll(1)=\ll(2)=\ll(3)=\ll(4)=1/2$. This, since $\{2,3,5\}\in \F(\D)$, would imply $\ll(5)=0$, a contradiction.
\end{example}

We will prove the following theorem which, as we are going to show just below, implies Theorem \ref{main}.

\begin{thm}\label{maingeneral}
Let $J=J(\D,\omega)\subseteq S$ be a good-weighted monomial ideal. Then $S/J^{(k)}$ is Cohen-Macaulay for any $k\in \NN_{\geq 1}$ if and only if $\D$ is a matroid.
\end{thm}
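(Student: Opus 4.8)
The plan is to relate the Cohen-Macaulayness of all the rings $S/J^{(k)}$ to the behaviour of the symbolic Rees algebra $\mathcal{R}_s(J)=\bigoplus_{k\geq 0}J^{(k)}t^k$ and, above all, of its symbolic fiber cone $\ADo$, the algebra of basic covers. Since $\omega=\omega^{\lambda}$ is a good-weighting one has $J^{(k)}=\bigcap_{F\in\F(\D)}\wp_F^{k\omega_F}$, and $\mathcal{R}_s(J)$ is a finitely generated normal monomial algebra, so $\ADo$ is a well-defined finitely generated $\kkk$-algebra. The first, and decisive, reduction is supplied by Proposition \ref{thekey}: whenever $\dim\ADo$ attains the least value it can take for a weighted complex with the given facet cardinalities, the ring $S/J^{(k)}$ is Cohen-Macaulay for every $k\geq 1$. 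Thus the ``if'' part of Theorem \ref{maingeneral} will follow once we prove that $\D$ being a matroid forces $\dim\ADo$ to be minimal. (Matroid duality, Theorem \ref{matroidduality}, is what lets one pass between the Stanley--Reisner formulation of Theorem \ref{main} and the cover-ideal formulation of Theorem \ref{maingeneral}; for Theorem \ref{maingeneral} as stated we work directly on the cover side.)

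For the implication ``$\D$ matroid $\Rightarrow\dim\ADo$ minimal'' I would argue combinatorially. A point of $\ADo$ corresponds to a minimal \emph{basic cover} of $(\D,\omega)$, i.e.\ a function $b:[n]\to\NN$ with $\sum_{i\in F}b(i)\geq\omega_F$ for every facet $F$ and with $\sum_{i\in F}b(i)=\omega_F$ for sufficiently many facets $F$ (those witnessing minimality). The exchange property \eqref{exchangeproperty} of the matroid $\D$ constrains severely which families of facets can be simultaneously tight for such a $b$: starting from one tight facet and applying exchanges, one shows that the tight facets, together with the equalities they impose, pin $b$ down on $[n]$ up to a space of the expected minimal dimension. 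Bounding the transcendence degree of the subalgebra generated by the basic covers in this way gives $\dim\ADo\leq$ the a priori minimal value, while the reverse inequality is automatic; hence $\dim\ADo$ is minimal and Proposition \ref{thekey} yields the conclusion. Passing from the unweighted to the good-weighted setting only adds bookkeeping, since $\omega^{\lambda}$ is additive on the vertices.

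For the converse I would use polarization together with the connectedness results of Chapter \ref{chapter1}. Fix $k$; then $J^{(k)}$ is a monomial ideal, its polarization $\widetilde{J^{(k)}}\subseteq\Spol$ is a square-free monomial ideal, and $S/J^{(k)}$ is Cohen-Macaulay if and only if $\Spol/\widetilde{J^{(k)}}$ is. The associated primes of $\widetilde{J^{(k)}}$ are described by Corollary \ref{asspol1}. Assume now that $\D$ is not a matroid, so that basis exchange fails: there are facets $F,G$ and $i\in F\setminus G$ with $(F\setminus\{i\})\cup\{j\}\notin\F(\D)$ for every $j\in G\setminus F$. For a suitable $k$ (for $J=\JD$ one may take $k=n-\dim(\D)+2$) this failure forces, after polarization and localization at a suitable prime, a Stanley--Reisner ring whose punctured spectrum is disconnected, i.e.\ an ideal associated to $\D$ which is not connected in codimension one; this is Theorem \ref{cmforcesmatroid}. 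Since a Cohen-Macaulay ring is connected in codimension one (Proposition \ref{barney}), $\Spol/\widetilde{J^{(k)}}$, and hence $S/J^{(k)}$, cannot be Cohen-Macaulay. Therefore all symbolic powers being Cohen-Macaulay forces $\D$ to be a matroid.

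The main obstacle is the ``if'' direction: one has to actually compute, or tightly bound from above, $\dim\ADo$ for a matroid and match it with the a priori lower bound. This is where the exchange property must be exploited with care --- the delicate point being to rule out a ``too large'' family of algebraically independent basic covers --- and where the weighted generalization, while not conceptually new, requires the estimates to be carried out with the weight $\lambda$ tracked throughout. By contrast, once Corollary \ref{asspol1} and the machinery of Chapter \ref{chapter1} are available, the ``only if'' direction is essentially a translation of ``the exchange axiom fails'' into ``some localization is disconnected, hence not Cohen-Macaulay''.
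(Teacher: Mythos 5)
Your proposal follows essentially the same route as the paper: the if-part via Proposition \ref{thekey} combined with the exchange-property rigidity of basic $k$-covers (the paper's Lemma \ref{rigiditymatroids}, which it turns into the Hilbert-function estimate $\HFo(k)=O(k^{\dim \D})$ to get $\dim \ADo\leq \dim\D +1$), and the only-if part via polarization, Corollary \ref{asspol1}, Theorem \ref{cmforcesmatroid} and connectedness in codimension one. The only slip is your parenthetical exponent: for the cover ideal $\JD$ one applies Theorem \ref{cmforcesmatroid} to the symbolic power with $k=\dim(\D)+2$ (so that all weights $k\oo_F\geq \dim(\D)+2$), while an exponent of the form $n-\dim(\D)$ is what pertains to the Stanley--Reisner ideal $\Id$ after dualizing.
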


\begin{proof} (of Theorem \ref{main} from Theorem \ref{maingeneral}). (ii) $\implies$ (iii). Since $S/J(\D)$ is Cohen-Macaulay, $\D$ has to be pure. Therefore $J(\D)$ is good-weighted by Remark \ref{sfagw}, and Theorem \ref{maingeneral} implies that $\D$ is a matroid.

(iii) $\implies$ (ii). $\D$ is pure because it is a matroid (see \ref{appdmatroids}). Thus $J(\D)$ is good-weighted by Remark \ref{sfagw}, and Theorem \ref{maingeneral} yields that $S/\JDm$ is Cohen-Macaulay for all $k\in \NN_{\geq 1}$.

(i) $\iff$ (ii). By Theorem \ref{matroidduality} a simplicial complex $\D$ is a matroid if and only if $\D^c$ is a matroid. Since $\Id = J(\D^c)$ and $I_{\D^c}=J(\D)$, the equivalence between (ii) and (iii) yields the equivalence between (i) and (ii).
\end{proof}

\subsection{The algebra of basic $k$-covers and its dimension}

In Appendix \ref{appendixd} we introduced the concept of vertex cover of a simplicial complex, emphasizing how it lends the name to the cover ideal. The concept of vertex cover can be extended in the right way to a weighted simplicial complex as follows: For all natural number $k$, a nonzero function $\a : [n]\rightarrow \NN$ is a {\it $k$-cover}\index{simplicial complex!$k$-cover}\index{simplicial complex!weighted!$k$-cover} of a weighted simplicial complex $(\D ,\omega)$ on $[n]$ if 
\[\sum_{i\in F}\a(i)\geq k\omega_F \ \ \ \forall \ F\in \FD .\] 
If $\omega$ is the canonical weighted function, i.e. if $(\D ,\omega)$ is an ordinary simplicial complex, then vertex covers are in one-to-one correspondence with $1$-covers with entries in $\{0,1\}$. Moreover, it is easy to show
\[J(\D ,\omega)=(x_1^{\a(1)}\cdots x_n^{\a(n)} \ : \ \mbox{$\a$ is a $1$-cover}).\]
Actually $k$-covers come in handy to describe a set of generators of all the $k$th symbolic powers of $J(\D ,\omega)$. In fact, by \eqref{strucmonsymb}, we get
\[J(\D ,\omega)^{(k)}=\bigcap_{F\in \FD}\wp_F^{k\omega_F}.\]
Therefore, as before, one can show
\[J(\D ,\omega)^{(k)}=(x_1^{\a(1)}\cdots x_n^{\a(n)} \ : \ \mbox{$\a$ is a $k$-cover}).\]
A $k$-cover $\a$ of $(\D,\omega)$ is said to be {\it basic}\index{simplicial complex!$k$-cover!basic}\index{simplicial complex!weighted!$k$-cover!basic} if for any $k$-cover $\beta$ of $(\D ,\omega)$ with $\beta(i)\leq \a(i)$ for any $i\in [n]$, we have $\beta = \a$. Of course to the basic $k$-covers of $(\D,\omega)$ corresponds a minimal system of generators of $J(\D ,\omega)^{(k)}$.  

Now let us consider the multiplicative filtration $\J(\D,\omega):=\{\JDom\}_{k\in \NN_{\geq 0}}$ (for any ideal $I$ in a ring $R$, we set $I^{(0)}:=I^0=R$). We can form the Rees algebra of $S$ with respect to the filtration $\J(\D,\omega)$, namely
\[A(\D,\omega) := \bigoplus_{k\in \NN}\JDom.\]
Actually $A(\D,\oo)$ is nothing but than the symbolic Rees algebra of $J(\D , \omega)$. In \cite[Theorem 3.2]{HHT}, Herzog, Hibi and Trung proved that $A(\D,\oo)$ is noetherian. In particular, the associated graded ring of $S$ with respect to $\J(\D,\oo)$
\[G(\D , \omega) := \bigoplus_{k\in \NN}\JDom/J(\D , \omega)^{(k+1)}\]
and the special fiber
\[\ADo := A(\D , \omega)/\mm A(\D , \omega) = G(\D ,\omega)/\mm G(\D , \omega)\]
are noetherian too. The algebra $A(\D ,\omega)$ is known as the {\it vertex cover algebra}\index{vertex cover algebra} of $(\D ,\omega)$, and its properties have been intensively studied in \cite{HHT}. The name comes from the fact that, writing 
\[A(\D , \omega)=\bigoplus_{k\in \NN}\JDom \cdot t^k \subseteq S[t]\]
and denoting by $A(\D,\omega)_k = \JDom \cdot t^k$, it turns out that, for $k\geq 1$, a (infinite) basis for $A(\D,\omega)_k$ as a $\kkk$-vector space is 
\[\{x_1^{\a(1)}\cdots x_n^{\a(n)}\cdot t^k : \a \mbox{ is a $k$-cover of }(\D,\oo)\}.\]
The algebra $\ADo$ is more subtle to study with respect to the vertex cover algebra: For instance it is not evan clear which is its dimension. It is called the {\it algebra of basic covers}\index{algebra of basic covers} of $(\D,\omega)$, and its properties have been studied for the first time by the author with Benedetti and Constantinescu in \cite{BCV} when $\D$ is a bipartite graph. More generally, in \cite{CV}, we studied it for any $1$-dimensional simplicial complex $\D$. Clearly, the grading defined above on $A(\D,\oo)$ induces a grading on $\ADo$, and it turns out that a basis for $\ADo_k$, for $k\geq 1$, as a $\kkk$-vector space is
\[\{x_1^{\a(1)}\cdots x_n^{\a(n)}\cdot t^k : \a \mbox{ is a basic $k$-cover of }(\D,\oo)\}.\]
Notice that if $\a$ is a basic $k$-cover of $(\D,\oo)$, then $\a(i)\leq \ll k$ for any $i\in [n]$, where $\ll := \max\{\oo_F: F\in \FD\}$. This implies that $\ADo_k$ is a finite $\kkk$-vector space for any $k\in \NN$. So we can speak about the Hilbert function of $\ADo$, denoted by $\HFo$, and from what said above we have, for $k\geq 1$, 
\[\HFo(k)=|\{\mbox{basic $k$-covers of }(\D,\oo)\}|.\] 
The key to prove Theorem \ref{main} is to compute the dimension of $\ADo$. So we need a combinatorial description of $\dim(\ADo)$. Being in general non-standard graded, the algebra $\ADo$ could not have a Hilbert polynomial\index{Hilbert polynomial|(}. However, by \cite[Corollary 2.2]{HHT} we know that there exists $h\in \NN$ such that $(\JDo^{(h)})^k=\JDo^{(hk)}$ for all $k\geq 1$. It follows that the $h$th Veronese of the algebra of basic covers, namely 
\[\ADo^{(h)} :=  \bigoplus_{k\in \NN} \ADo_{hk},\]
is a standard graded $\kkk$-algebra. Notice that if a set $\{ f_1,\ldots ,f_q \}$ generates $\ADo$ as a $\kkk$-algebra, then the set $\{f_1^{i_1}\cdots f_q^{i_q} \ : \ 0\leq  i_1,\ldots ,i_q \leq h-1\}$ generates $\ADo$ as a $\ADo^{(h)}$-module. Thus $\dim (\ADo) = \dim (\ADo^{(h)})$. Since $\ADo^{(h)}$ has a Hilbert polynomial, we get a useful criterion to compute the dimension of $\ADo$. First let us remind that, for two functions $f,g:\NN \rightarrow \mathbb{R}$, the writing $f(k)=O(g(k))$ means that there exists a positive real number $\lambda$ such that $f(k)\leq \lambda \cdot g(k)$ for $k\gg 0$. Similarly, $f(k)=\Omega(g(k))$ if there is a positive real number $\lambda$ such that $f(k)\geq \lambda \cdot g(k)$ for $k\gg 0$

\vspace{1mm}

{\it Criterion for detecting the dimension of $\ADo$}. If $\HFo(k)=O(k^{d-1})$, then $\dim(\ADo)\leq d$. If $\HFo(k)=\Omega(k^{d-1})$, then $\dim(\ADo)\geq d$. 

\vspace{1mm}

The following proposition justifies the introduction of $\AD$. It is an alternative version of a result got by Eisenbud and Huneke in \cite{eisenbudhuneke}.
\begin{prop}\label{thekey}
For any simplicial complex $\D$ on $[n]$ we have
\[ \dim (\ADo) = n - \min \{\depth(S/\JDom) : k\in \NN_{\geq 1}\}\]
\end{prop}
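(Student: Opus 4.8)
The plan is to relate the dimension of the algebra of basic covers $\ADo$ to the depths of the quotients $S/\JDom$ via the Rees algebra $A(\D,\oo)$ and a local-cohomology computation, in the spirit of the Eisenbud--Huneke result. First I would observe that $A(\D,\oo) = \bigoplus_{k\in\NN} \JDom t^k$ is a finitely generated graded $S$-algebra (by \cite[Theorem 3.2]{HHT}), and that $\ADo = A(\D,\oo)/\mm A(\D,\oo)$ is its fiber cone. The Hilbert function of $\ADo$ in degree $k\geq 1$ counts the basic $k$-covers, and its dimension $d+1$ (say) is detected by the growth rate $\HFo(k) = \Theta(k^{d})$ using the criterion stated just before the proposition. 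On the other hand, $\dim_\kkk [\ADo]_k = \mu(\JDom) - \mu(\mm\JDom)$ is, up to the standard generator-counting, controlled by the minimal number of generators $\mu(\JDom)$, which by local duality / the structure of a minimal free resolution is governed by the top nonvanishing graded Betti number, hence by $\pd(S/\JDom) = n - \depth(S/\JDom)$.

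The key steps, in order, would be: (1) For each fixed $k$, express $\mu(\JDom)$ (equivalently $\dim_\kkk [\ADo]_k$, since a basic $k$-cover gives exactly a minimal monomial generator of $\JDom$) in terms of the graded Betti numbers $\beta_{i,j}(S/\JDom)$; in particular the number of minimal generators is $\beta_{1,j}$-data, but the relevant bound comes from controlling the \emph{whole} resolution, hence $\pd(S/\JDom)$. (2) Use the fact that $\JDom \subseteq \mm^k$ roughly, so that the generators of $\JDom$ sit in degrees that grow linearly in $k$ with bounded slope ($\a(i) \leq \ll k$ for a basic $k$-cover, $\ll = \max_F \oo_F$), which lets one compare the polynomial growth in $k$ of $\sum_j \beta_{1,j}(S/\JDom)$ with a quantity of the form $(\text{number of lattice points in a polytope of dimension } p-1)$ where $p = \max_k \pd(S/\JDom)$. (3) Conversely, produce enough basic $k$-covers to force $\HFo(k) = \Omega(k^{p-1})$ when $\pd(S/\JDom) = p$ for infinitely many (equivalently all large) $k$: this is where one invokes that the symbolic Rees algebra is Noetherian, so the sequence $\pd(S/\JDom)$ is eventually periodic/constant along a Veronese, and the stable value equals $n - \min_k \depth(S/\JDom)$ by Brodmann-type stability of depth. (4) Combine (2) and (3) with the dimension criterion to get $\dim(\ADo) = \max_k \pd(S/\JDom) = n - \min_k \depth(S/\JDom)$, being careful about the $+1$ coming from the grading convention (the fiber cone in degrees $\geq 1$ versus $\geq 0$).

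The main obstacle I anticipate is step (3): showing that the \emph{depth} (not just the projective dimension of a single power, but the minimum over all symbolic powers) is exactly captured by the polynomial growth rate of the basic-cover count. One direction — that small depth of some $S/\JDom$ forces many basic covers in all sufficiently large degrees — requires a careful argument that the relevant syzygies "propagate" as $k$ grows, which is cleanest if one passes to the Veronese subalgebra $\ADo^{(h)}$ (standard graded by \cite[Corollary 2.2]{HHT}) and its associated graded ring $G(\D,\oo)$, then uses that $\dim \ADo = \dim \ADo^{(h)}$ and that Hilbert polynomials of standard graded algebras have degree one less than their Krull dimension. The cleanest route is probably to quote the Eisenbud--Huneke machinery in \cite{eisenbudhuneke} directly, since the proposition is stated as "an alternative version" of their result; then the only real work is translating their statement about symbolic Rees algebras and their fiber cones into the present weighted-simplicial-complex language, checking that $\min_k \depth(S/\JDom)$ is attained and that the arithmetic of the grading shift works out. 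I would present the argument by first doing the unweighted case (which is literally \cite{eisenbudhuneke}) and then remarking that the weighted case is formally identical since $A(\D,\oo)$ and $\ADo$ are still Noetherian.
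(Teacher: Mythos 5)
There is a genuine gap, and it is precisely the ingredient your plan never names: the Cohen--Macaulayness of the associated graded ring $G(\D,\oo)$. The equality $\dim(\ADo)=n-\min_k\depth(S/\JDom)$ is a Burch/Eisenbud--Huneke--type equality; without a Cohen--Macaulay hypothesis on $G(\D,\oo)$ only the inequality $\dim(\ADo)\leq n-\min_k\depth(S/\JDom)$ holds in general, so no amount of counting basic $k$-covers, i.e.\ estimating the growth of $\mu(\JDom)=\dim_{\kkk}[\ADo]_k$, can ever yield the reverse inequality. Moreover your steps (1)--(2) rest on a link that does not exist: the number of minimal generators of $\JDom$ is the $\beta_{1,*}$-data only, and it neither controls nor is controlled by $\pd(S/\JDom)$ (ideals with few generators can have maximal projective dimension and vice versa), so ``controlling the whole resolution'' cannot be extracted from the Hilbert function of the fiber cone. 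Your fallback of quoting \cite{eisenbudhuneke} directly runs into the same wall: their equality is stated under the hypothesis that the associated graded ring (or Rees algebra) is Cohen--Macaulay, so the ``only real work'' is not a translation of language but the verification of that hypothesis for the symbolic filtration $\J(\D,\oo)$; also the depth-stability you invoke in step (3) is Brodmann's theorem for ordinary powers and is not available for symbolic powers without further argument.

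The paper's proof supplies exactly this missing input. It first identifies $\min_k\depth(S/\JDom)$ with $\grade(\mm G(\D,\oo))$ via \cite[Proposition 9.23]{BrVe} (valid for the Noetherian filtration $\J(\D,\oo)$), then proves that $G(\D,\oo)$ is Cohen--Macaulay by combining the theorem of Herzog--Hibi--Trung that the vertex cover algebra $A(\D,\oo)$ is Cohen--Macaulay \cite[Theorem 4.2]{HHT} with the two standard short exact sequences relating $A(\D,\oo)$, $A(\D,\oo)_+$, $S$ and $G(\D,\oo)$ and their long exact sequences in local cohomology. Cohen--Macaulayness gives $\grade(\mm G(\D,\oo))=\height(\mm G(\D,\oo))$, and since $\ADo=G(\D,\oo)/\mm G(\D,\oo)$ and $\dim G(\D,\oo)=n$, dimension counting yields the stated equality. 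If you want to salvage your outline, replace steps (1)--(3) by this chain; the Hilbert-function growth criterion you cite is used in the paper only later, for the matroid case, and only to bound $\dim(\ADo)$ from above.
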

\begin{proof}
Consider $G(\D,\oo)$, the associated graded ring of $S$ with respect to $\J(\D,\oo)$. Since $G(\D,\oo)$ is noetherian, it follows by Bruns and Vetter \cite[Proposition 9.23]{BrVe} that 
\[\min \{\depth(S/\JDom) : k\in \NN_{\geq 1}\} = \grade (\mm G(\D,\oo)).\]
We claim that {\it $G(\D,\oo)$ is Cohen-Macaulay.} In fact the Rees ring of $S$ with respect to the filtration $\J(\D,\oo)$, namely $A(\D,\oo)$, is Cohen-Macaulay by \cite[Theorem 4.2]{HHT}.
Let us denote by $\M := \mm \oplus A(\D,\oo)_+$ the unique bi-graded maximal ideal of $A(\D,\oo)$. The following short exact sequence
\[0\longrightarrow A(\D,\oo)_+ \longrightarrow A(\D,\oo) \longrightarrow S \longrightarrow 0\]
yields the long exact sequence on local cohomology
\[\ldots \longrightarrow H_{\M}^{i-1}(S)\longrightarrow H_{\M}^{i}(A(\D,\oo)_+)\longrightarrow H_{\M}^{i}(A(\D,\oo))\longrightarrow \ldots.\]
By the independence of the base in computing local cohomology modules (see Lemma \ref{basetheorems} (ii)) we have $H_{\M}^i(S)=H_{\mm}^i(S)=0$ for any $i<n$ by \eqref{depthloccoh}. Furthermore, using once again \eqref{depthloccoh}, $H_{\M}^i(A(\D,\oo))=0$ for any $i\leq n$ since $A(\D,\oo)$ is a Cohen-Macaulay $(n+1)$-dimensional (see Bruns and Herzog \cite[Theorem 4.5.6]{BH}) ring. Thus $H_{\M}^i(A(\D,\oo)_+)=0$ for any $i\leq n$ by the above long exact sequence. Now let us look at the other short exact sequence
\[0\longrightarrow A(\D,\oo)_+(1) \longrightarrow A(\D,\oo) \longrightarrow G(\D,\oo) \longrightarrow 0,\]
where $A(\D,\oo)_+(1)$ means $A(\D,\oo)_+$ with the degrees shifted by $1$, and the corresponding long exact sequence on local cohomology
\[\ldots \longrightarrow H_{\M}^i(A(\D,\oo))\longrightarrow H_{\M}^i(G(\D,\oo))\longrightarrow H_{\M}^{i+1}(A(\D,\oo)_+(1))\longrightarrow  \ldots.\]
Because $A(\D,\oo)_+$ and $A(\D,\oo)_+(1)$ are isomorphic $A(\D,\oo)$-module, we have that $H_{\M}^i(A(\D,\oo)_+(1))=0$ for any $i\leq n$. Thus $H_{\M}^i(G(\D,\oo))=0$ for any $i<n$. Since $G(\D,\oo)$ is an $n$-dimensional ring (see \cite[Theorem 4.5.6]{BH}) this implies, using once again Lemma \ref{basetheorems} (ii) and \eqref{depthloccoh}, that $G(\D,\oo)$ is Cohen-Macaulay. 

Since $G(\D,\oo)$ is Cohen-Macaulay, $\grade (\mm G(\D,\oo)) = \height(\mm G(\D,\oo))$ (for instance see Matsumura \cite[Theorem 17.4]{matsu}). So, because $\ADo = G(\D,\oo)/\mm G(\D,\oo)$, we get
\[ \dim (\ADo) = \dim(G(\D,\oo))-\height(\mm G(\D,\oo))=n-\grade(\mm G(\D,\oo)), \]
and the statement follows at once.
\end{proof}

\subsection{If-part of Theorem \ref{maingeneral}}

In this subsection we show the {\it if-part} of Theorem \ref{maingeneral}, namely: {\it If $\D$ is a matroid, then $S/\JDom$ is Cohen-Macaulay for each good-weighted function $\oo$ on $\D$}. To this aim we need the following lemma, which can be interpreted as a sort of rigidity property of the basic covers of a good-weighted matroid.
\begin{lemma}\label{rigiditymatroids}
Let $\D$ be a matroid on $[n]$, $\oo$ a good-weighted function on it and $k$ a positive integer. Let us fix a facet $F\in \FD$, and let $\a : F\rightarrow \NN$ be a function such that $\sum_{i\in F}\a(i)=k\oo_F$. Then there exists an unique way to extend $\a$ to a basic $k$-cover of $(\D,\omega)$.
\end{lemma}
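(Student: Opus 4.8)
The statement claims a \emph{rigidity property}: given a matroid $\D$ on $[n]$, a good-weighted function $\oo=\oo^{\ll}$ (so $\oo_F=\sum_{i\in F}\ll(i)$ for a weight $\ll:[n]\to\RR_{>0}$), a positive integer $k$, a fixed facet $F\in\FD$, and a function $\a:F\to\NN$ with $\sum_{i\in F}\a(i)=k\oo_F$, there is exactly one extension of $\a$ to a basic $k$-cover of $(\D,\oo)$. First I would address \emph{existence}. The defining condition of a basic $k$-cover $\beta$ is that $\sum_{i\in G}\beta(i)\geq k\oo_G$ for every facet $G$, and that $\beta$ is minimal with this property coordinatewise. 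One natural candidate is to set $\beta(i)$ for $i\notin F$ to be as small as possible given the constraints; more precisely, for $i\notin F$, put $\beta(i):=\max\bigl(0,\max_{G\ni i}\bigl(k\oo_G-\sum_{j\in G, j\neq i,\ j\ \text{already assigned}}\beta(j)\bigr)\bigr)$, processing vertices in some order, and then trim down to a basic cover. A cleaner route is to invoke the exchange property of matroids: any facet $G$ and the fixed facet $F$ are connected by a sequence of single-element exchanges, so one can propagate the values of $\a$ on $F$ to forced values on the neighbouring facets, using that $\oo$ comes from a weight on the vertices (this is where the good-weighted hypothesis is essential — the increment $\oo_G-\oo_{G'}$ for $G,G'$ differing by one element equals $\ll(i)-\ll(j)$, which matches the change one must make in $\a$).

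The heart of the argument — and the step I expect to be the main obstacle — is \emph{uniqueness}, equivalently the fact that the constraints on $F$ already pin down the basic cover everywhere. Here the key is the following matroid exchange fact: if $F,G\in\FD$ with $|F\setminus G|=|G\setminus F|=1$, say $F\setminus G=\{i\}$ and $G\setminus F=\{j\}$, then for any basic $k$-cover $\beta$ one has $\sum_{h\in G}\beta(h)=k\oo_G$ \emph{tightly}, i.e. the inequality is forced to be an equality on every facet. This tightness on facets is itself something to establish: it should follow from basicness (minimality) together with the matroid exchange axiom — if $\sum_{h\in G}\beta(h)>k\oo_G$ for some facet $G$, one finds a vertex $h\in G$ whose value can be decreased without violating any facet inequality (using that every facet containing $h$ either contains it "slackly" or can be exchanged with $G$), contradicting minimality. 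Once equality holds on \emph{all} facets, the difference $\beta-\beta'$ of two extensions of $\a$ vanishes on $F$ and satisfies $\sum_{h\in G}(\beta-\beta')(h)=0$ for every facet $G$; connectivity of the facet exchange graph of a matroid then forces $\beta-\beta'\equiv 0$ by induction on the distance from $F$ in that graph (each single exchange determines one new coordinate from the previously known ones).

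In carrying this out I would first record the exchange property \eqref{exchangeproperty} and the fact that the "basis exchange graph" of a matroid is connected (standard; can be cited from Oxley \cite{Ox} or proved directly from the exchange axiom). Then I would prove the \emph{tightness lemma}: every basic $k$-cover of a matroid is tight on every facet — this is the technical crux and the place where good-weightedness and the matroid structure interact most delicately, since for a general simplicial complex basic covers need not be tight on all facets. With tightness in hand, both existence and uniqueness of the extension reduce to a propagation-along-the-exchange-graph argument: starting from the prescribed values on $F$, each adjacent facet $G$ (differing by swapping $i$ for $j$) forces $\beta(j)=k\oo_G-\bigl(k\oo_F-\a(i)\bigr)=\a(i)+k(\oo_G-\oo_F)=\a(i)+k(\ll(j)-\ll(i))$, which is a well-defined nonnegative integer (nonnegativity requiring a short check, or a normalization of $\ll$), and consistency around cycles in the exchange graph follows because $\oo$ is additive in the vertex weights. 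Finally I would verify that the function so produced is indeed a $k$-cover (all facet inequalities hold, again by tightness obtained via propagation) and is basic (minimality: decreasing any coordinate breaks the tight inequality on some facet containing that vertex). This completes the proof.
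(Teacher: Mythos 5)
Your uniqueness argument rests on a ``tightness lemma'' that is false: it is not true that a basic $k$-cover of a matroid (even a good-weighted one, even one extending a tight assignment on the fixed facet $F$) is tight on \emph{every} facet. Take $\D$ the uniform matroid on $[3]$ with facets $\{1,2\},\{1,3\},\{2,3\}$, the canonical weights $\ll(i)=1/2$ (so $\oo_G=1$ for every facet $G$), $k=2$, $F=\{1,2\}$ and $\a(1)=0$, $\a(2)=2$. The unique basic extension is $\a(3)=2$: it is a $2$-cover, and no coordinate can be lowered, yet the facet $\{2,3\}$ has slack ($2+2>2$). Consequently your propagation scheme breaks down: transporting the values from $F$ across the exchange edge $F\to\{2,3\}$ (swap $1\leftrightarrow 3$) would force $\beta(3)=\a(1)+k(\ll(3)-\ll(1))=0$, which is not even a cover (the facet $\{1,3\}$ fails), while the edge $F\to\{1,3\}$ forces $\beta(3)=2$; so the ``consistency around cycles'' you invoke simply does not hold, because the forcing equation $\beta(j)=\a(i)+k(\ll(j)-\ll(i))$ is valid only on facets that happen to be tight. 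Since the whole uniqueness argument (and your ``cleaner'' existence route) is built on propagating equalities along the basis-exchange graph, this is a genuine gap, not a fixable detail of the same proof.

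The mechanism that actually works is more local and uses the \emph{symmetric} exchange property \eqref{exchangeproperty} rather than connectivity of the exchange graph. Existence needs neither the matroid structure nor good-weightedness: extend $\a$ by huge values off $F$ and lower coordinates until the cover is basic (values on $F$ cannot be lowered since $F$ is already tight). For uniqueness, fix a basic extension $\a'$ and a vertex $i_0\notin F$; basicness gives \emph{some} tight facet $G\ni i_0$, and \eqref{exchangeproperty} applied to $G$, $F$, $i_0$ produces $j_0\in F$ with both $G'=(G\setminus\{i_0\})\cup\{j_0\}$ and $F'=(F\setminus\{j_0\})\cup\{i_0\}$ facets. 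The cover inequality on $G'$ together with tightness of $G$ gives $\a'(j_0)-k\ll(j_0)\geq \a'(i_0)-k\ll(i_0)$, and the cover inequality on $F'$ together with tightness of $F$ gives the reverse inequality; hence $\a'(i_0)=\a(j_0)+k\bigl(\ll(i_0)-\ll(j_0)\bigr)$, and since the cover inequalities on the facets $(F\setminus\{j\})\cup\{i_0\}$ force $\a'(i_0)-k\ll(i_0)\geq \a(j)-k\ll(j)$ for every admissible $j\in F$, the value $\a'(i_0)$ is pinned to $k\ll(i_0)+\max\{\a(j)-k\ll(j)\}$, a quantity depending only on $\a$, $F$, $\ll$, $k$ --- whence uniqueness. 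Your local formula for the new value is the right one, but it can only be asserted for the exchange produced from a tight facet through $i_0$, not for every edge of the exchange graph; that squeeze between the two exchanged facets is exactly what your draft is missing.
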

\begin{proof}
That such an extension exists is easy to prove, and to get it we do not even need that $\D$ is a matroid nor that $\oo$ is a good-weighted function. Set $\a' : [n] \rightarrow \NN$ the nonzero function such that $\a'\mid_F=\a$ and $\a'(i):=M k$ if $i\in [n]\setminus F$, where $M := \max\{\oo_F: F\in \FD\}$. Certainly $\a'$ will be a $k$-cover of $(\D ,\oo)$. If it is not basic, then there exists a vertex $i\in [n]$ such that $\sum_{j\in G}\a'(j)>k\oo_G$ for any $G$ containing $i$. Thus we can lower the value $\a'(i)$ of one, getting a new $k$-cover. Such a new $k$-cover is still an extension of $\a$, since obviously $i\notin F$. Going on in such a way we will eventually find a basic $k$-cover extending $\a$.

The uniqueness, instead, is a peculiarity of matroids. Let $i_0\in [n]$ be a vertex which does not belong to $F$, and let us denote by $\a'$ a basic $k$-cover of $(\D ,\oo)$ extending $\a$. Since $\a'$ is basic, there must exist a facet $G$ of $\D$ such that $i_0\in G$ and $\sum_{i\in G}\a'(i)=k\oo_G$. By the exchange property for matroids \eqref{exchangeproperty}, there exists a vertex $j_0\in F$ such that 
\[G':=(G\setminus \{i_0\})\cup \{j_0\}\in \FD \mbox{ \ \ \ and \ \ \ } F':=(F\setminus \{j_0\})\cup \{i_0\}\in \FD.\] 
So, denoting by $\ll$ the weight on the variables inducing $\a'$, we have  
\[\sum_{i\in G'}\a'(i) \geq k\oo_{G'}=\sum_{i\in G'}k\ll(i),\]
which yields $\a'(j_0)-k\ll(j_0)\geq \a(i_0)-k\ll(i_0)$, and
\[\sum_{j\in F'}\a'(j) \geq k\oo_{F'}=\sum_{j\in F'}k\ll(j),\]
which yields $\a'(j_0)-k\ll(j_0)\leq \a(i_0)-k\ll(i_0)$. Therefore there is only one possible value to assign to $j_0$, namely: 
\[\a'(j_0)=\a(i_0)-k\ll(i_0)+k\ll(j_0).\]
\end{proof}

Now we are ready to prove the if-part of Theorem \ref{maingeneral}.

\begin{proof}
{\it If-part of Theorem \ref{maingeneral}.} Let us consider a basic $k$-cover $\a$ of our good-weighted matroid $(\D,\oo)$. Since $\a$ is basic there is a facet $F$ of $\D$ such that $\sum_{j\in F}\a(j)=k\oo_F$. Set $d:=|F|$. By Lemma \ref{rigiditymatroids} we deduce that the values assumed by $\a$ on $[n]$ are completely determined by those on $F$. Furthermore, the ways to give values on the vertices of $F$ in such a manner that $\sum_{i\in F}\a(i)=k\oo_F$, are exactly: 
\[\displaystyle \binom{k\oo_F+d-1}{d-1},\]
which is a polynomial in $k$ of degree $d-1$. This implies that, for $k\geq 1$,
\[\displaystyle \HFo(k) = |\{\mbox{basic $k$-covers of }(\D,\oo)\}|\leq |\FD| \cdot \binom{kM+d-1}{d-1},\]
where $M := \max\{\oo_F: F\in \FD\}$. Since $|\FD|$ does not depend on $h$, we get 
\[\HFo(k)=O(k^{d-1}).\]
So $\dim(\ADo)\leq d= \dim(\D)+1$ (the last equality is because $\D$ is pure, see \ref{appdmatroids}). Since $\dim(S/J(\D))=n-d$, by Proposition \ref{thekey} we get
\[d \geq \dim(\ADo) = n - \min \{\depth(S/\JDom):k\in \NN_{\geq 1}\} \geq d,\]
from which $S/\JDom$ is Cohen-Macaulay for any $k\in \NN_{\geq 1}$.
\end{proof}

\begin{remark}
If $\D$ is a matroid on $[n]$, may exist a weighted function $\oo$ of $\D$ such that $S/\JDom$ is Cohen-Macaulay for all $k\in \NN_{\geq 1}$ and $\oo$ is not good. For instance, consider the complete graph $\D=K_4$ on $4$ vertices, which clearly is a matroid. Furthermore consider the following ideal:
\[J(K_4,\oo):=(x_1,x_2)^2\cap (x_1,x_3)\cap (x_1,x_4)\cap (x_2,x_3)\cap (x_2,x_4)\cap (x_3,x_4).\]
The corresponding weighted function is clearly not good, however one can show that $S/J(K_4,\oo)^{(k)}$ is Cohen-Macaulay for all $k\in \NN_{\geq 1}$ using the result of Francisco \cite[Theorem 5.3 (iii)]{Fra}.
\end{remark}
\index{Hilbert polynomial|)}
\subsection{Only if-part of Theorem \ref{maingeneral}}\index{polarization|(}

To this aim we need to describe the associated prime ideals of the polarization (see \ref{polarization}) of a weighted monomial ideal $\JDo$, namely 
\[ \widetilde{\JDo}\subseteq \widetilde{S}.\]
Since polarization commutes with intersections (see \eqref{intersectionpol1}), we get:
\[\widetilde{\JDo}=\bigcap_{F\in \FD}\widetilde{\wp_F^{\oo_F}}.\]
Thus, to understand the associated prime ideals of $\widetilde{\JDo}$ we can focus on the description of $\Ass(\widetilde{\wp_F^{\oo_F}})$.
So let us fix a subset $F\subseteq [n]$ and a positive integer $k$. We have:
\[\widetilde{\wp_F^k}\subseteq \widetilde{S}=\kkk[x_{i,j} \ : \ i=1,\ldots ,n, \ j=1,\ldots ,k].\]
Being a monomial ideal, the associated prime ideals of $\widetilde{\wp_F^k}$ are ideals of variables. For this reason is convenient to introduce the  following notation: For a subset $G:=\{i_1,\ldots ,i_d\}\subseteq [n]$ and a vector $\aaa:=(a_{1},\ldots ,a_{d})\subseteq \NN^d$ with $1\leq a_i\leq k$ for any $i=1,\ldots ,d$, we set
\[\wp_{G,\aaa}:=(x_{i_1,a_1}, x_{i_2,a_2},\ldots ,x_{i_d,a_d}).\]

\begin{lemma}\label{asspol}
Let $F:=\{i_1,\ldots ,i_d\}\subseteq [n]$ and $k$ be a positive integer. Then, a prime ideal $\wp\subseteq \widetilde{S}$ is associated to $\widetilde{\wp_F^k}$ if and only if $\wp=\wp_{F,\aaa}$ with $|\aaa|:=a_1+\ldots +a_d\leq k+d-1$.
\end{lemma}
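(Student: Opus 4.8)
The statement describes $\Ass(\widetilde{\wp_F^k})$ where $\wp_F = (x_i : i \in F)$. The plan is to proceed by polarizing the classical primary decomposition of $\wp_F^k$ and then reading off the associated primes. First I would recall that $\wp_F^k$ has the irredundant irreducible decomposition
\[
\wp_F^k = \bigcap_{\substack{(b_1,\ldots,b_d)\in \NN^d \\ b_1+\cdots+b_d = k+d-1,\ b_i \geq 1}} (x_{i_1}^{b_1}, x_{i_2}^{b_2}, \ldots, x_{i_d}^{b_d}),
\]
a standard fact about powers of a prime generated by a subset of the variables (each irreducible component $(x_{i_1}^{b_1},\ldots,x_{i_d}^{b_d})$ is $\wp_F$-primary, and a monomial lies in all of them iff its exponents on $F$ sum to at least $k$). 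Then I would invoke that polarization commutes with intersections (the cited \eqref{intersectionpol1}) to get
\[
\widetilde{\wp_F^k} = \bigcap_{b_1+\cdots+b_d = k+d-1} \widetilde{(x_{i_1}^{b_1},\ldots,x_{i_d}^{b_d})},
\]
and observe that the polarization of the irreducible monomial ideal $(x_{i_1}^{b_1},\ldots,x_{i_d}^{b_d})$ is the squarefree (hence radical, hence prime-intersection) ideal $\bigcap$ of the $\wp_{F,\aaa}$ with $a_\ell \leq b_\ell$; more precisely $\widetilde{(x_{i_1}^{b_1},\ldots,x_{i_d}^{b_d})} = (x_{i_1,1}\cdots x_{i_1,b_1},\ldots, x_{i_d,1}\cdots x_{i_d,b_d})$, whose minimal primes are exactly the $\wp_{F,\aaa}$ with $1 \leq a_\ell \leq b_\ell$ for all $\ell$.

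Next I would assemble these facts. Since polarization preserves the property of being radical when the original ideal is — actually more to the point, $\widetilde{\wp_F^k}$ need not be radical, so I should instead argue directly about associated primes. The cleaner route: $\widetilde{\wp_F^k}$ is a squarefree monomial ideal (polarization always produces squarefree monomial ideals), hence it is radical, hence $\Ass(\widetilde{S}/\widetilde{\wp_F^k}) = \Min(\widetilde{S}/\widetilde{\wp_F^k})$ and equals the set of minimal primes of the intersection above. A prime $\wp_{G,\aaa}$ is minimal over $\widetilde{\wp_F^k}$ iff it is minimal over one of the polarized irreducible components and contained in no other component's... no — iff it appears as a minimal prime of the whole intersection. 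Combining: the minimal primes are precisely those $\wp_{F,\aaa}$ (the index set must be $F$, since every component involves only variables $x_{i_\ell,\cdot}$) such that $\wp_{F,\aaa}$ is minimal over some $\widetilde{(x_{i_1}^{b_1},\ldots,x_{i_d}^{b_d})}$, i.e. $a_\ell \leq b_\ell$ for all $\ell$ with $\sum b_\ell = k+d-1$, AND $\wp_{F,\aaa}$ is not strictly contained in another such prime of the intersection. The existence of a valid $\bbb$ with $a_\ell \leq b_\ell$ and $\sum b_\ell = k+d-1$ is equivalent to $\sum a_\ell \leq k+d-1$ (take $b_\ell = a_\ell$ and distribute the remaining $k+d-1-\sum a_\ell \geq 0$ units arbitrarily). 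Conversely if $\sum a_\ell > k+d-1$ there is no such $\bbb$, and one checks $\wp_{F,\aaa}$ is not minimal. The minimality-among-primes-of-the-intersection check: if $\aaa \leq \aaa'$ componentwise and both satisfy the bound, then $\wp_{F,\aaa} \subseteq \wp_{F,\aaa'}$, but actually since all $a_\ell \geq 1$ and we only decrease, the minimal ones under inclusion are exactly those with $\sum a_\ell$ maximal, i.e. $= k+d-1$; yet the lemma claims $\leq k+d-1$. So I need to be careful — the point is that $\wp_{F,\aaa}$ with $\sum a_\ell < k+d-1$ is NOT contained in $\wp_{F,\aaa'}$ for $\aaa' \neq \aaa$ unless $\aaa \leq \aaa'$ componentwise, and distinct $\aaa$ with the same support $F$ and $a_\ell$ vs $a'_\ell$ can be incomparable. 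So many such primes are genuinely minimal. I would verify directly that every $\wp_{F,\aaa}$ with $\sum a_\ell \leq k+d-1$ is minimal over $\widetilde{\wp_F^k}$ by exhibiting a monomial generator of $\widetilde{\wp_F^k}$ divisible only by variables in $\wp_{F,\aaa}$ among those with index in $F$ — using the generator coming from the polarization of $x_{i_1}^{a_1}\cdots x_{i_d}^{a_d}$ times appropriate padding — and that no proper subideal of variables works.

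The main obstacle, I expect, is precisely this last bookkeeping: pinning down exactly which $\wp_{F,\aaa}$ are minimal (equivalently associated) versus merely containing the ideal, and handling the inequality $|\aaa| \leq k+d-1$ rather than the naive guess $|\aaa| = k+d-1$. Concretely I would show: (a) if $|\aaa| \leq k+d-1$ then the monomial $m_{\aaa} := \prod_{\ell=1}^d (x_{i_\ell,1} x_{i_\ell,2}\cdots x_{i_\ell,a_\ell})$ lies in $\widetilde{\wp_F^k}$ (since its depolarization $\prod x_{i_\ell}^{a_\ell}$... wait, one must check it's actually in the polarized ideal, which follows because $x_{i_1}^{a_1}\cdots x_{i_d}^{a_d} \cdot (\text{padding to degree } k+d-1 \text{ in a valid irreducible component})$ polarizes correctly), giving $\widetilde{\wp_F^k} \subseteq \wp_{F,\aaa}$ after checking every generator of $\widetilde{\wp_F^k}$ meets $\wp_{F,\aaa}$; and (b) minimality by a dimension/localization count or by showing that dropping any variable $x_{i_\ell,a_\ell}$ from $\wp_{F,\aaa}$ fails to contain some generator. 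For the converse inclusion ($\Ass \subseteq \{\wp_{F,\aaa} : |\aaa|\leq k+d-1\}$) I would localize at a candidate prime and use that associated primes of a polarized ideal correspond, via the general theory of polarization, to associated primes of the original together with the combinatorics of which variable-powers got separated — or, more elementarily, directly compute from the intersection decomposition that any minimal prime must be of the form $\wp_{F,\aaa}$ with the stated bound. This is essentially a self-contained combinatorial lemma about monomial ideals, so once the decomposition of $\wp_F^k$ and the behavior of polarization on irreducible monomial ideals are set up, the rest is careful but routine verification.
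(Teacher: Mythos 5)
Your argument is correct, but it runs along a genuinely different track than the paper's. The paper works directly with the minimal generators of $\widetilde{\wp_F^k}$ (the polarizations $x_{F,\bbb}$ of the degree-$k$ monomials, $|\bbb|=k$) and uses Theorem \ref{cmpol} to transfer Cohen--Macaulayness from $\wp_F^k$ to its polarization: unmixedness then forces every associated prime to be a height-$d$ prime of variables, and associatedness reduces to the containment check ``every $x_{F,\bbb}$ is divisible by some generator of $\wp$,'' which is settled by exactly the padding computation you describe. You instead start from the irreducible decomposition $\wp_F^k=\bigcap_{|\bbb|=k+d-1,\,b_\ell\geq 1}(x_{i_1}^{b_1},\ldots,x_{i_d}^{b_d})$, push it through \eqref{intersectionpol1}, note that each polarized component is a complete intersection of squarefree monomials in disjoint variable sets with minimal primes $\wp_{F,\aaa}$, $\aaa\leq\bbb$, and use radicality of squarefree monomial ideals to get $\Ass=\Min$; this buys you the conclusion without ever invoking Cohen--Macaulayness or Betti numbers, at the price of needing the (standard, and correctly stated) irreducible decomposition of $\wp_F^k$. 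Two small remarks that tidy your middle section: your worry about which $\wp_{F,\aaa}$ are minimal evaporates, because distinct $\wp_{F,\aaa}$, $\wp_{F,\aaa'}$ are \emph{always} incomparable (each contains exactly one variable with first index $i_\ell$ for every $\ell$), so every prime occurring in the intersection $\bigcap\{\wp_{F,\aaa}: a_\ell\geq 1,\ |\aaa|\leq k+d-1\}$ is minimal and the bound $|\aaa|\leq k+d-1$ (rather than $=k+d-1$) comes out automatically from the existence of a padding $\bbb$; and the auxiliary monomial $m_{\aaa}$ of degree $|\aaa|$ generally does \emph{not} lie in $\widetilde{\wp_F^k}$ (which is generated in degree $k$), but your fallback -- the generator $x_{F,\bbb}$ with $b_j=a_j-1$ for $j\neq\ell$ and $b_\ell=k-\sum_{j\neq\ell}(a_j-1)\geq a_\ell$ -- does the job and is in fact the same $\bbb$ the paper uses for its ``only if'' direction, so the endgame of the two proofs coincides.
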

\begin{proof}
A minimal generator of $\widetilde{\wp_F^k}$ is of the form
\[x_{F,\bbb}:=\prod_{p=1}^{b_1}x_{i_1,p}\cdot \prod_{p=1}^{b_2}x_{i_2,p}\cdots \prod_{p=1}^{b_d}x_{i_d,p},\]
where $\bbb := (b_1,\ldots ,b_d)\in \NN^d$ is such that $|\bbb|=k$. Let us call $B_k\subseteq \NN^d$ the set of such vectors. An associated prime of $\widetilde{\wp_F}$ is forced to be generated by $d$ variables: In fact, $\widetilde{\wp_F^k}$ is Cohen-Macaulay of height $d$ like $\wp_F^k$ from Theorem \ref{cmpol}. Moreover, it is easy to check that a prime ideal of variables
\[\wp:=(x_{j_1,c_1}, \ x_{j_2,c_2}, \ \ldots , \ x_{j_d,c_d})\subseteq \widetilde{S},\]
 where ${\bf c}:=(c_1,\ldots ,c_d)\in[k]^d$, is associated to $\widetilde{\wp_F^k}$ if and only if
\begin{equation}\label{proofasspol}
\forall \ \bbb\in B_k \ \exists \ p\in [d] \ \mbox{ such that } \ x_{j_p,c_p}|x_{F,\bbb}.
\end{equation}
So, if we choose $\bbb:=(0,0,\ldots,0,k,0,\ldots ,0)$, where the nonzero entry is at the place $p$, we get that $i_p$ is in $\{j_1,\ldots ,j_d\}$. Letting vary $p\in [d]$, we eventually get $\{j_1,\ldots ,j_d\}=F$, i.e. $\wp=\wp_{F,{\bf c}}$. Moreover notice that
\begin{equation}\label{proofasspol1}
x_{i_p,c_p}|x_{F,\bbb} \ \iff \ c_p\leq b_p.
\end{equation}
Suppose by contradiction that $|{\bf c}|\geq k+d$ and set $p_0:=\min\{p:\sum_{i=1}^p c_i\geq k+p\}\leq d$. Then choose $\bbb:=(b_1,\ldots,b_d)\in B_k$ in this way:
\[b_p:= \begin{cases} c_p-1  & \mbox{if \ } p<p_0 \\
k-\sum_{i=1}^{p_0-1}(c_i-1) & \mbox{if \ } p=p_0 \\
0 & \mbox{if \ } p>p_0
\end{cases}
\]
Notice that the property of $p_0$ implies that $b_{p_0}<c_{p_0}$. Moreover its minimality implies $b_{p_0}>0$. Since $b_p<c_p$ for all $p=1,\ldots ,d$, it cannot exist any $p\in [d]$  such that $x_{i_p,c_p}|x_{F,\bbb}$ by \eqref{proofasspol1}. Therefore $\wp_{F,{\bf c}}\notin \Ass(\widetilde{\wp_F^k})$ by \eqref{proofasspol}.

On the other hand, if ${\bf c}\in [k]^d$ is such that $|{\bf c}|\leq k+d-1$, then for all $\bbb\in B_k$ there exists $p\in [d]$ such that $c_p\leq b_p$, otherwise $|\bbb|\leq |{\bf c}|-d\leq k-1$. Therefore $x_{i_p,c_p}|x_{F,\bbb}$ by \eqref{proofasspol1} and $\wp_{F,{\bf c}}\in \Ass(\widetilde{\wp_F^k})$ by \eqref{proofasspol}.
\end{proof}

\begin{corollary}\label{asspol1}
Let $\D$ be a simplicial complex on $[n]$ and $\oo$ a weighted function on it. A prime ideal $\wp\subseteq \widetilde{S}$ is associated to $\widetilde{\JDo}$ if and only if $\wp=\wp_{F,\aaa}$ with $F\in\FD$ and $|\aaa|\leq \oo_F+|F|-1$.
\end{corollary}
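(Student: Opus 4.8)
The plan is to reduce Corollary \ref{asspol1} to Lemma \ref{asspol} via the fact that polarization commutes with intersections. First I would recall that, since $\JDo = \bigcap_{F \in \FD} \wp_F^{\oo_F}$ and polarization commutes with finite intersections (equation \eqref{intersectionpol1}), we have $\widetilde{\JDo} = \bigcap_{F \in \FD} \widetilde{\wp_F^{\oo_F}}$. The key structural input is then the standard fact that for an intersection of monomial ideals $I = \bigcap_j I_j$ whose intersection is irredundant, $\Ass(\widetilde{S}/\widetilde{I})$ is related to the union $\bigcup_j \Ass(\widetilde{S}/\widetilde{I_j})$; more precisely, since each $\widetilde{\wp_F^{\oo_F}}$ is an unmixed (Cohen-Macaulay, by Theorem \ref{cmpol}) ideal of height $|F|$, one shows that a prime $\wp$ is associated to the intersection exactly when it is associated to one of the $\widetilde{\wp_F^{\oo_F}}$ and is not ``covered'' in a way that makes it embedded away.

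The cleanest route avoids delicate embedded-prime bookkeeping: I would argue that $\Ass(\widetilde{\JDo}) = \bigcup_{F \in \FD} \Ass(\widetilde{\wp_F^{\oo_F}})$. The inclusion $\supseteq$ is immediate because each $\widetilde{\wp_F^{\oo_F}}$ appears as a component of the intersection and localizing at an associated prime $\wp_{F,\aaa}$ of $\widetilde{\wp_F^{\oo_F}}$ kills the other components (they involve variables $x_{i,j}$ with $i \notin F$, so they become the unit ideal or contribute nothing to the socle computation), hence $\wp_{F,\aaa}$ remains associated to the intersection. For the inclusion $\subseteq$, one uses that for a monomial ideal $I$ the associated primes are computed from the minimal primary components and the saturations $I : x^{\infty}$; since each component $\widetilde{\wp_F^{\oo_F}}$ is itself a monomial ideal whose associated primes are all of the form $\wp_{F,\aaa}$ by Lemma \ref{asspol}, and intersecting monomial ideals cannot introduce associated primes not supported on the union of the supports of the components, every associated prime of the intersection must be $\wp_{F,\aaa}$ for some $F \in \FD$ and some admissible $\aaa$. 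Then I would invoke Lemma \ref{asspol} directly: $\wp_{F,\aaa} \in \Ass(\widetilde{\wp_F^{\oo_F}})$ iff $|\aaa| \leq \oo_F + |F| - 1$, which is precisely the claimed condition.

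Combining, $\wp$ is associated to $\widetilde{\JDo}$ if and only if $\wp = \wp_{F,\aaa}$ for some facet $F \in \FD$ with $|\aaa| \leq \oo_F + |F| - 1$, which is the statement of the corollary.

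The main obstacle I anticipate is justifying the equality $\Ass(\widetilde{\JDo}) = \bigcup_{F} \Ass(\widetilde{\wp_F^{\oo_F}})$ rigorously, i.e. ruling out ``new'' embedded primes created by the intersection. The safe way to handle this is to exploit that each $\widetilde{\wp_F^{\oo_F}}$ is unmixed of height $|F|$ with a primary decomposition supported only on the primes $\wp_{F,\aaa}$, so the irredundant primary decomposition of $\widetilde{\JDo}$ is obtained by collecting these components and discarding the redundant ones; no prime outside $\bigcup_F \{\wp_{F,\aaa}\}$ can appear, and each surviving $\wp_{F,\aaa}$ is genuinely associated because it already is associated to its own component and that component is not contained in the intersection of the others (one checks this by exhibiting a suitable monomial). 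Once this is in place, Lemma \ref{asspol} finishes the proof with no further computation.
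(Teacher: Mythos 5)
Your argument is correct, but it takes a somewhat heavier route than the paper. Both proofs start the same way, using \eqref{intersectionpol1} to write $\widetilde{\JDo}=\bigcap_{F\in\FD}\widetilde{\wp_F^{\oo_F}}$ and then feeding in Lemma \ref{asspol}; the difference is how the associated primes of the intersection are extracted. The paper exploits the simple observation that a polarization is a square-free monomial ideal, hence radical: Lemma \ref{asspol} then says each $\widetilde{\wp_F^{\oo_F}}$ \emph{equals} $\bigcap_{|\aaa|\leq \oo_F+|F|-1}\wp_{F,\aaa}$, so $\widetilde{\JDo}$ is itself an intersection of the primes $\wp_{F,\aaa}$; since each such prime contains exactly one variable $x_{i,a_i}$ for each $i\in F$ and facets are incomparable, no containments occur among them, so they are precisely the minimal primes, and $\Ass=\Min$ because the ideal is radical. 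This also shows that your anticipated obstacle --- ruling out ``new'' embedded primes --- is vacuous: $\widetilde{\JDo}$ has no embedded primes at all. Your route instead proves $\Ass(\widetilde{S}/\widetilde{\JDo})=\bigcup_{F}\Ass(\widetilde{S}/\widetilde{\wp_F^{\oo_F}})$ directly, and this does work, though two points should be tightened. For $\subseteq$ you do not need any discussion of primary components or saturations: the embedding $\widetilde{S}/\widetilde{\JDo}\hookrightarrow \bigoplus_{F\in\FD}\widetilde{S}/\widetilde{\wp_F^{\oo_F}}$ gives the inclusion for arbitrary intersections. For $\supseteq$, your parenthetical reason that the other components ``involve variables $x_{i,j}$ with $i\notin F$'' is imprecise, since a facet $G\neq F$ may well meet $F$; the correct point is that $G\not\subseteq F$ (facets are incomparable), so picking $g\in G\setminus F$ the generator $x_{g,1}\cdots x_{g,\oo_G}$ of $\widetilde{\wp_G^{\oo_G}}$ lies outside $\wp_{F,\aaa}$, whence every component with $G\neq F$ becomes the unit ideal after localizing at $\wp_{F,\aaa}$ and $(\widetilde{\JDo})_{\wp_{F,\aaa}}=(\widetilde{\wp_F^{\oo_F}})_{\wp_{F,\aaa}}$, so $\wp_{F,\aaa}$ stays associated. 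With these repairs your proof is complete; what the paper's radicality shortcut buys is that the whole corollary follows from Lemma \ref{asspol} in two lines, with no localization and no $\Ass$-of-intersection bookkeeping.
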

\begin{proof}
Recall that $\JDo = \bigcap_{F\in \FD}\wp^{\oo_F}$. By \eqref{intersectionpol1}, we have 
\[\widetilde{\JDo}=\bigcap_{F\in \FD}\widetilde{\wp_F^{\oo_F}}.\]
Therefore, by Lemma \ref{asspol}, we get
\[\displaystyle \widetilde{\JDo}=\bigcap_{F\in \FD}\left(\bigcap_{|\aaa|\leq \oo_F+|F|-1}\wp_{F,\aaa}\right).\]
Thus we conclude.
\end{proof}

\begin{thm}\label{cmforcesmatroid}
Let $\D$ be a simplicial complex on $[n]$ and $\oo$ a weighted function on it. Furthermore assume that \ $\oo_F\geq \dim(\D)+2$ \ for all facet $F$ of $\D$. \ If \ $S/\JDo$ \ is Cohen-Macaulay, then $\D$ is a matroid.
\end{thm}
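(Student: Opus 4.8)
The plan is to argue by contradiction: assuming $S/\JDo$ is Cohen--Macaulay while $\D$ is not a matroid, I would produce a Cohen--Macaulay Stanley--Reisner ring having a disconnected link of positive dimension, which is forbidden by Reisner's criterion. First, Cohen--Macaulayness forces $S/\JDo$ to be unmixed, and the minimal primes of $\JDo=\bigcap_{F\in\FD}\wp_F^{\oo_F}$ are exactly the $\wp_F$, so all facets of $\D$ have a common cardinality $d$, i.e. $\D$ is pure of dimension $d-1$. Hence $\D$ not being a matroid means the exchange property fails: there are facets $F,G\in\FD$ and a vertex $i_0\in F\setminus G$ such that, with $H:=F\setminus\{i_0\}$, the set $H\cup\{j\}$ is \emph{not} a face of $\D$ for any $j\in G\setminus F$. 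I would record the elementary consequences $G\cap F\subseteq H$, $\operatorname{link}_\D(H)\cap G=\emptyset$ and (using the failure) $|F\cap G|\le d-2$, so $|F\cup G|\ge d+2$; among all such bad triples I fix one minimizing $|F\cup G|$. By Theorem \ref{cmpol} polarization preserves Cohen--Macaulayness, so $\widetilde S/\widetilde{\JDo}=\kkk[\Gamma]$ is Cohen--Macaulay, where (Corollary \ref{asspol1}) $\Gamma$ is the pure complex on the vertex set $V$ of all variables of $\widetilde S$ whose facets are exactly the sets $V\setminus\{x_{i,a(i)}:i\in F'\}$ for $F'\in\FD$ and $a\colon F'\to\NN_{\ge1}$ with $\sum_{i\in F'}a(i)\le\oo_{F'}+d-1$.

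The heart of the argument is to choose a face $\sigma$ of $\Gamma$ with $\operatorname{link}_\Gamma(\sigma)$ disconnected of dimension $\ge1$. I would look for a ``column function'' $c\colon F\cup G\to\NN_{\ge1}$, set $W:=\{x_{i,c(i)}:i\in F\cup G\}$ and $\sigma:=V\setminus W$. Since the rows occurring in $W$ are exactly those in $F\cup G$ and each contributes a single column, the facets of $\operatorname{link}_\Gamma(\sigma)$ are precisely the sets $W\setminus\{x_{i,c(i)}:i\in F'\}$ as $F'$ ranges over the facets of $\D$ with $F'\subseteq F\cup G$ and $\sum_{i\in F'}c(i)\le\oo_{F'}+d-1$, and two of them are ridge--adjacent exactly when the underlying $\D$--facets differ by a single exchange. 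The goal is to choose $c$ so that $F$ and $G$ themselves satisfy the inequality (hence $\sigma$ is a face and both appear in the link) but lie in different connected components of $\operatorname{link}_\Gamma(\sigma)$; then $\operatorname{link}_\Gamma(\sigma)$ has the two disjoint facets $\{x_{i,c(i)}:i\in G\setminus F\}$ and $\{x_{i,c(i)}:i\in F\setminus G\}$ and dimension $|F\cup G|-d-1\ge1$, which is what we need. The hypothesis $\oo_F\ge\dim(\D)+2=d+1$ is exactly what makes this achievable: taking $c\equiv1$ on $H$, as large as $\sum_{i\in F}c(i)\le\oo_F+d-1$ permits at $i_0$, and suitably moderate on $G\setminus F$, one forces $\sum_{i\in F'}c(i)>\oo_{F'}+d-1$ for the would--be ``shortcut'' facets $F'$, while the single facet $H\cup\{j\}$ ($j\in G\setminus F$) that would most directly join the two sides is ruled out from the start since it is not a face of $\D$.

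Finally, Reisner's criterion applied to the Cohen--Macaulay ring $\kkk[\Gamma]$ yields $\widetilde H_0(\operatorname{link}_\Gamma(\tau);\kkk)=0$ for every face $\tau$ with $\dim\operatorname{link}_\Gamma(\tau)\ge1$; taking $\tau=\sigma$ shows $\operatorname{link}_\Gamma(\sigma)$ is connected, contradicting the previous paragraph. Hence $\D$ is a matroid.

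The real work — and the main obstacle — lies in the second paragraph: one must arrange the column function $c$ so that \emph{every} facet $F'\subseteq F\cup G$ distinct from $F$ and $G$ is eliminated from $\operatorname{link}_\Gamma(\sigma)$, or at least cannot link the $F$--component to the $G$--component. This requires a case analysis according to whether $i_0\in F'$ and to the sizes of $F'\cap(G\setminus F)$ versus $F'\cap\operatorname{link}_\D(H)$, using both the minimality of $|F\cup G|$ and the precise slack $\oo_F\ge\dim(\D)+2$ in the defining inequality of $\Gamma$; this is also where the numerical threshold gets pinned down, explaining why in the square--free case the Cohen--Macaulayness of the $\bigl(n-\dim(\D)+2\bigr)$-th symbolic power of $\Id$ already forces $\D$ to be a matroid.
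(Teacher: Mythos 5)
Your plan has the same skeleton as the paper's proof: purity from Cohen--Macaulayness, passage to the polarization via Theorem \ref{cmpol}, the description of the facets of the polarized complex via Corollary \ref{asspol1}, a distinguished face/prime assembled from one chosen column for each vertex of $F\cup G$, and a connectedness-type consequence of Cohen--Macaulayness played against the failure of the exchange property. But the step you defer to the second paragraph is not a technical afterthought --- it \emph{is} the proof. In the paper the entire argument consists of exhibiting the explicit column vectors $\aaa=(d+1,1,\dots,1)$ and $\bbb=(2,\dots,2)$ (the entry $d+1$ is legitimate precisely because $\oo_F\geq\dim(\D)+2$), and then checking, against the bound $|\cdot|\leq\oo_{F'}+|F'|-1$ of Corollary \ref{asspol1}, which associated primes can appear next to $\wp_{F,\aaa}$ in a height-$(d+1)$ chain inside $\wp_{F,\aaa}+\wp_{G,\bbb}$; the exchange failure kills the case $p=1$ and the numerics are meant to kill the rest. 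You have produced no analogue of this verification, so as it stands the proposal is a strategy, not a proof.

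There is also a structural problem with the tool you chose. Reisner's criterion only gives you $\widetilde H_0(\operatorname{link}_\Gamma(\sigma);\kkk)=0$, i.e.\ plain connectedness of the link, and for that the relevant adjacency between the link-facets attached to $F'$ and $F''$ is ``$F'\cup F''\neq F\cup G$'' (then they share a vertex of $W$), \emph{not} ``differ by a single exchange'' as you assert (that is the ridge/codimension-one relation). Consequently any admissible facet $F'\subseteq F\cup G$ containing neither $F\setminus G$ nor $G\setminus F$ joins your two sides all by itself, and the non-exchange hypothesis at $i_0$ says nothing about such $F'$: every one of them must be eliminated by the numerical condition. Your sketch does not show this is possible, and in general it is not: if $i\in F$ gives $c(i)\leq\oo_F$ and $i\in G$ gives $c(i)\leq\oo_G$, then for any $F'\subseteq F\cup G$ one has $\sum_{i\in F'}c(i)\leq(\oo_F+d-1)+(\oo_G+d-1)$, so as soon as $\oo_{F'}\geq\oo_F+\oo_G+d-1$ the facet $F'$ is admissible whenever $F$ and $G$ are. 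Concretely, for $d=2$, $\FD=\{\{1,2\},\{2,3\},\{3,4\}\}$, $\oo_{\{1,2\}}=\oo_{\{3,4\}}=3$ and $\oo_{\{2,3\}}$ huge, the facet $\{2,3\}$ survives for every admissible $c$ and its link-facet $\{x_{1,c(1)},x_{4,c(4)}\}$ meets both $\{x_{3,c(3)},x_{4,c(4)}\}$ and $\{x_{1,c(1)},x_{2,c(2)}\}$, so the link of your $\sigma$ is connected no matter how you choose $c$: your construction cannot yield the contradiction there. The paper sidesteps exactly this by invoking the stronger consequence of Cohen--Macaulayness --- the localization at $\wp_{F,\aaa}+\wp_{G,\bbb}$ is connected in codimension one (Proposition \ref{barney} with Lemma \ref{hilbert}) --- so that only chains of single-variable exchanges need to be obstructed, which is the adjacency your analysis is actually calibrated for. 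So you must at least upgrade from $\widetilde H_0$ of the link to connectedness in codimension one, and then still supply the numerical balancing that your proposal leaves open.
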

\begin{proof}
Suppose, by contradiction, that $\D$ is not a matroid. Then there exist two facets $F,G \in \FD$ and a vertex $i\in F$ such that
\begin{equation}\label{proofpol}
(F\setminus \{i\})\cup\{j\} \notin \FD \ \ \ \forall \ j \in G.
\end{equation}
Being $S/\JDo$ Cohen-Macaulay, then $\D$ has to be pure. So, setting $\dim(\D):=d-1$, we can assume:
\[F=\{i_1,\ldots ,i_d\}, \ \ G=\{j_1,\ldots ,j_d\} \ \ \mbox{and} \ \ i=i_1.\]
Let us define:
\[\aaa:=(d+1,1,1,\ldots ,1)\in \NN^d \ \ \mbox{and} \ \ \bbb:=(2,2,\ldots ,2)\in \NN^d.\]
Notice that $|\aaa|=|\bbb|=2d$. So, since by assumption $\omega_F$ and $\omega_G$ are at least $d+1$, Corollary \ref{asspol1} implies:
\[\wp_{F,\aaa}, \ \wp_{G,\bbb}\in \Ass(\widetilde{J(\D,\oo)}).\]
Since $S/\JDo$ is Cohen-Macaulay, $\widetilde{S}/\widetilde{\JDo}$ has to be Cohen-Macaulay as well by Theorem \ref{cmpol}. So, the localization 
\[(\widetilde{S}/\widetilde{\JDo})_{\wp_{F,\aaa}+\wp_{G,\bbb}}\]
is Cohen-Macaulay. Particularly it is connected in codimension $1$ (see Proposition \ref{barney}). This translates into the existence of a sequence of prime ideals 
\[\wp_{F,\aaa}=\wp_0,\ \wp_1, \ \ldots , \ \wp_s=\wp_{G,\bbb}\]
such that $\wp_i\in \Ass(\widetilde{\JDo})$, \ $\wp_i\subseteq \wp_{F,\aaa}+\wp_{G,\bbb}$ and $\height(\wp_i+\wp_{i-1})\leq d+1$ for each index $i$ making sense (see Lemma \ref{hilbert}). In particular, $\height(\wp_{F,\aaa}+\wp_1)\leq d+1$. Since $\wp_1\in \Ass(\widetilde{\JDo})$ and it is contained in $ \wp_{F,\aaa}+\wp_{G,\bbb}$, there must exist $p,q\in [d]$ such that ${\bf c}=(a_1,a_2,\ldots , a_{p-1},b_q,a_{p+1},\ldots ,a_d)\in\NN^d$ and
\[\wp_1=(x_{i_1,a_1}, \ \ldots , \ x_{i_{p-1},a_{p-1}}, \ x_{j_q,b_q}, \ x_{i_{p+1},a_{p+1}}, \ \ldots , \ x_{i_d,a_d})=\wp_{(F\setminus \{i_p\})\cup\{j_q\},{\bf c}}.\]
But this is eventually a contradiction: In fact, if $p=1$, then $(F\setminus \{i\})\cup \{j_q\}$ would be a facet of $\D$, a contradiction to \eqref{proofpol}. If $p\neq 1$, then $|{\bf c}|=2d+1$. This contradicts Lemma \ref{asspol}, since $\wp_1=\wp_{(F\setminus \{i_p\})\cup\{j_q\},{\bf c}}$ is associated to $\widetilde{\JDo}$.
\end{proof}

Eventually, Theorem \ref{cmforcesmatroid} implies the only-if part of Theorem \ref{maingeneral}.

\begin{proof} {\it Only if-part of Theorem \ref{maingeneral}}.
Since $\oo_F$ is a positive integer for any facet $F\in \FD$, then $(\dim(\D)+2)\oo_F\geq \dim(\D)+2$ for all facets $F\in \FD$. So, since $S/\JDo^{(\dim(\D)+2)}$ is Cohen-Macaulay, $\D$ has to be a matroid by Theorem \ref{cmforcesmatroid}.
\end{proof}

\begin{remark}
Actually Theorem \ref{cmforcesmatroid} is much stronger than the only-if part of Theorem \ref{maingeneral}: In fact there are not any assumptions on the weighted function $\oo$. Moreover, Theorem \ref{cmforcesmatroid} implies that it is enough that an opportune symbolic power of $\JDo$ is Cohen-Macaulay to force $\D$ to be a matroid!
\end{remark}

\index{polarization|)}
\section{Two consequences}

We end the the chapter stating some applications of Theorem \ref{maingeneral}. We recall that we already introduced in Chapter \ref{chapter2} the concept of multiplicity\index{multiplicity} of a standard graded $\kkk$-algebra $R$, namely $e(R)$, in terms of the Hilbert series of $R$. It is well known that $e(R)$ can be also defined as the leading  coefficient of the Hilbert polynomial\index{Hilbert polynomial} times $(\dim(R)-1)!$, see \cite[Proposition 4.1.9]{BH}. Geometrically, let $\Proj R \subseteq \PP^N$, i.e. $R=K[X_0,\ldots ,X_N]/J$ for a homogeneous ideal $J$. The multiplicity $e(R)$ counts the number of distinct points of $\Proj R \cap H$, where $H$ is a generic linear subspace of $\PP^N$ of dimension $N - \dim (\Proj R)$. Before stating the next result, let us say that for a simplicial complex $\D$, if $\oo$ is the canonical weighted function, then we denote $\ADo$ simply by $\AD$.

\begin{corollary}\label{multdim}
Let $\D$ be a simplicial complex and $\oo$ a good-weighted function on it. Then $\D$ is a $(d-1)$-dimensional matroid if and only if:
\[\dim(\ADo) = \dim (\kkk[\D]) = d.\]
Moreover, if $\D$ is a matroid, then 
\[\displaystyle \HF(k)\leq \frac{e(\kkk[\D])}{(\dim(\AD)-1)!}k^{\dim(\AD)-1}+O(k^{\dim(\AD)-2}).\]
\end{corollary}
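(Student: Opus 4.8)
\textbf{Plan of proof for Corollary \ref{multdim}.} The strategy is to combine the equivalence between matroids and Cohen--Macaulayness of all symbolic powers (Theorem \ref{maingeneral}, applied with the good-weighted function $\oo$), Proposition \ref{thekey}, and the criterion for detecting the dimension of $\ADo$ stated just before Proposition \ref{thekey}.

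\emph{Dimension statement.} First I would observe that $\dim(\kkk[\D]) = d$ exactly means that $\D$ is pure of dimension $d-1$ (the dimension of a Stanley--Reisner ring equals one plus the dimension of the complex). By Proposition \ref{thekey} we always have $\dim(\ADo) = n - \min\{\depth(S/\JDom) : k\in \NN_{\geq 1}\}$, and since $\depth(S/\JDom)\leq \dim(S/\JDom) = \dim(S/J(\D)) = n-d$ for every $k$ (the symbolic powers $\JDom$ all have the same radical $J(\D)$, hence the same dimension, which forces $\dim(S/\JDom)=n-d$ provided $\D$ is pure of dimension $d-1$), we get $\dim(\ADo)\geq d$ whenever $\D$ is pure. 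Conversely, if $\dim(\ADo)=d=\dim(\kkk[\D])$, then $\min_k \depth(S/\JDom) = n-d = \dim(S/\JDo^{(k)})$, so $S/\JDom$ is Cohen--Macaulay for every $k\geq 1$; by the only-if part of Theorem \ref{maingeneral}, $\D$ is a matroid. For the forward implication, if $\D$ is a $(d-1)$-dimensional matroid, then by the if-part of Theorem \ref{maingeneral} $S/\JDom$ is Cohen--Macaulay for all $k$, so $\min_k\depth(S/\JDom)=n-d$ and Proposition \ref{thekey} gives $\dim(\ADo)=d=\dim(\kkk[\D])$. This closes the equivalence.

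\emph{Multiplicity bound.} Now assume $\D$ is a matroid (so $\oo$ is the canonical weighted function, $\ADo = \AD$, and $\dim(\AD)=d$). I want to bound $\HF(k)=|\{\text{basic $k$-covers of }\D\}|$ from above. Here I would re-run the counting argument from the proof of the if-part of Theorem \ref{maingeneral}, but keeping track of the constant more carefully. Fix a basic $k$-cover $\a$; there is a facet $F\in\FD$ with $\sum_{i\in F}\a(i)=k$ (using $\oo_F=|F|/d\cdot$ \dots\ — actually with the canonical weight $\oo_F$ for a matroid is constant, equal to $d$ after the normalisation in Remark \ref{sfagw}, or we may simply keep $\oo_F$), and by Lemma \ref{rigiditymatroids} the cover $\a$ is determined by its restriction to $F$. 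Hence the number of basic $k$-covers that ``saturate'' a given facet $F$ is at most the number of functions $F\to\NN$ summing to $k\oo_F$, which is $\binom{k\oo_F+d-1}{d-1}$, a polynomial in $k$ of degree $d-1$ with leading coefficient $\oo_F^{d-1}/(d-1)!$. Summing over $F\in\FD$ gives $\HF(k)\leq \sum_{F\in\FD}\binom{k\oo_F+d-1}{d-1}$. The key identification is that for a matroid the bases $F\in\FD$ are in bijection with the facets, and $e(\kkk[\D])=|\FD|$ (the multiplicity of a Stanley--Reisner ring equals the number of facets of top dimension). Taking $\oo$ to be the canonical weighted function, $\oo_F=1$ for all $F$, so the leading term of $\sum_{F}\binom{k+d-1}{d-1}$ is $\frac{|\FD|}{(d-1)!}k^{d-1} = \frac{e(\kkk[\D])}{(\dim(\AD)-1)!}k^{\dim(\AD)-1}$, and the remainder is $O(k^{d-2})$, which is exactly the claimed inequality.

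\emph{Main obstacle.} The delicate point is making the multiplicity identification $e(\kkk[\D])=|\FD|$ and the leading-coefficient bookkeeping fully rigorous under the correct normalisation of $\oo$ — one must be careful that $\ADo=\AD$ in the statement refers to the canonical weighted function, for which $\oo_F=1$, and hence the bound involves $\binom{k+d-1}{d-1}$ rather than $\binom{k\oo_F+d-1}{d-1}$ with $\oo_F>1$. Equally one should double-check that $e(\kkk[\D])$, defined via the Hilbert polynomial of the standard graded ring $\kkk[\D]$, indeed counts the facets of the (pure) matroid $\D$; this is a standard fact about Stanley--Reisner rings (the $h$-vector evaluated at $1$, or the count of top-dimensional faces), and I would cite \cite[Corollary 4.1.8, Proposition 4.1.9]{BH} together with the combinatorial description of the Hilbert series of $\kkk[\D]$. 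Everything else is a routine consequence of Lemma \ref{rigiditymatroids}, Proposition \ref{thekey}, Theorem \ref{maingeneral} and the dimension criterion.
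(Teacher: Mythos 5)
Your argument is essentially the paper's own proof: the dimension equivalence is obtained, exactly as in the paper, by combining Theorem \ref{maingeneral} with Proposition \ref{thekey}, and the multiplicity bound is the same bookkeeping of the inequality $\HF(k)\leq |\FD|\cdot\binom{k+d-1}{d-1}$ established in the if-part of Theorem \ref{maingeneral} (with the canonical weights, so $\oo_F=1$), combined with the standard fact $e(\kkk[\D])=|\FD|$ for pure complexes, for which the paper cites \cite[Corollary 5.1.9]{BH}. The one delicate point you flag parenthetically but do not settle — that in the converse direction one needs $\dim(S/\JDom)=n-d$, i.e.\ purity of $\D$, which is not an explicit hypothesis there — is left equally implicit in the paper's one-line proof of the first assertion, so your write-up is faithful to the paper's argument rather than divergent from it.
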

\begin{proof}
The first fact follows putting together Theorem \ref{maingeneral} and Proposition \ref{thekey}.
For the second fact, we have to recall that, during the proof of the if-part of Theorem \ref{maingeneral}, we showed that for a $(d-1)$-dimensional matroid $\D$ we have the inequality
\[\displaystyle \HF(k)\leq |\FD| \cdot \binom{k+d-1}{d-1}.\]
It is well known that if $\D$ is a pure simplicial complex then $|\FD|=e(K[\D])$ (for instance see \cite[Corollary 5.1.9]{BH}), so we get the conclusion.
\end{proof}

\begin{example}
If $\D$ is not a matroid the inequality of Corollary \ref{multdim} may not be true. For instance, take $\D:=C_{10}$ the decagon (thus it is a $1$-dimensional simplicial complex). Since $C_{10}$ is a bipartite graph $\bar{A}(C_{10})$ is a standard graded $\kkk$-algebra by \cite[Theorem 5.1]{HHT}. In particular it admits a Hilbert polynomial, and for $k\gg 0$ we have
\[\displaystyle \operatorname{HF}_{\bar{A}(C_{10})}(k)=\frac{e(\bar{A}(C_{10}))}{(\dim(\bar{A}(C_{10}))-1)!}k^{\dim(\bar{A}(C_{10}))-1}+O(k^{\dim(\bar{A}(C_{10}))-2}).\]
In \cite{CV} it is proved that for any bipartite graph $G$ the algebra $\bar{A}(G)$ is a homogeneous ASL\index{Algebra with Straightening Laws} on a poset described in terms of the minimal vertex covers of $G$. So the multiplicity of $\bar{A}(G)$ can be easily read from the above poset. In our case it is easy to check that $e(\bar{A}(C_{10}))=20$, whereas $e(\kkk[C_{10}])=10$.
\end{example}

\index{arithmetical rank|(}\index{set-theoretic complete intersection|(}
For the next result we need the concepts of ``arithmetical rank" of an ideal and ``set-thoeretic complete intersections", introduced in \ref{secara}. By \ref{aragcd}, if an ideal $\aa$ of a ring $R$ is a set-theoretical complete intersection, then $\cd(R,\aa)=\height(\aa)$. In the case in which $R=S$ and $\aa = \Id$ where $\D$ is some simplicial complex on $[n]$, by a result of Lyubeznik got in \cite{Ly} (see Theorem \ref{lyubmon}), it turns out that $\cd(S,\Id) = n - \depth(\kkk[\D])$. So, if $\Id$ is a set-theoretic complete intersection, then $\kkk[\D]$ will be Cohen-Macaulay.

\begin{remark}
In general, even if $\kkk[\D]$ is Cohen-Maculay, then $\Id$ might not be a set-theoretic complete intersection. For instance, if $\D$ is the triangulation of the real projective plane with 6 vertices described in \cite[p. 236]{BH}, then $\kkk[\D]$ is Cohen-Macaulay whenever $\chara(\kkk)\neq 2$. However, for any characteristic of $\kkk$, $\Id$ need at least (actually exactly) $4$ polynomials of $\kkk[x_1,\ldots ,x_6]$ to be defined up to radical (see the paper of Yan \cite[p. 317, Example 2]{Ya}). Since $\height(\Id)=3$, this means that $\Id$ is not a complete intersection for any field.
\end{remark}

\begin{corollary}\label{sci}
Let $\kkk$ be an infinite field. For any matroid $\D$, the ideal $\Id S_{\mm}$ is a set-theoretic complete intersection in $S_{\mm}$.
\end{corollary}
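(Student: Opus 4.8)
The idea is to combine the Cohen--Macaulayness of all the symbolic powers of $\Id$, which we have just established for matroids in Theorem \ref{maingeneral} (via the equivalence (i)$\iff$(iii) of Theorem \ref{main}), with the characterization of cohomological dimension for square-free monomial ideals. Let $\D$ be a matroid on $[n]$, set $d-1:=\dim(\D)$ and $c:=\height(\Id)=n-d$. By Theorem \ref{main} the ring $S/\Id$ is Cohen--Macaulay, hence $\depth(\kkk[\D])=d$, and by Lyubeznik's Theorem \ref{lyubmon} together with \eqref{depthloccoh} we get
\[
\cd(S,\Id)=\pd(S/\Id)=n-\depth(\kkk[\D])=n-d=c=\height(\Id).
\]
Since by \eqref{aragcd} and \eqref{hauptidealsatz} we always have $\cd(S,\Id)\leq \ara(\Id)$ and $\ara(\Id)\geq \height(\Id)$, the quantity $\ara$ is squeezed between $c$ and $\ara_h$, but this alone does not give $\ara(\Id)=c$: we only know $\cd=\height$, which is a necessary but not in general sufficient condition for being a set-theoretic complete intersection. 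This is exactly why the statement is asserted only after localizing at $\mm$.

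\textbf{Main step.} The core of the argument is therefore to pass to $S_\mm$ and use a criterion guaranteeing that an ideal of height $c$ in a local ring, whose associated graded ring (with respect to the symbolic filtration) is Cohen--Macaulay, is generated up to radical by $c$ elements. Concretely, I would invoke the fact --- which is what makes the symbolic Rees algebra relevant --- that $G(\D,\omega)$ (for $\omega$ the canonical weight, so $G(\D)=\bigoplus_k \Idm/I_\D^{(k+1)}$) is a Cohen--Macaulay ring: this was proved inside the proof of Proposition \ref{thekey}. When $\D$ is a matroid, we moreover showed $\dim(\AD)=d$, which forces $\grade(\mm G(\D))=c$ by Proposition \ref{thekey}. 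Having a Cohen--Macaulay associated graded ring of the symbolic filtration with $\mm$-grade equal to the height of $\Id$ is precisely the hypothesis in the classical result of Cowsik (and its refinements by Schenzel, Lyubeznik, etc.) that the symbolic Rees algebra being Noetherian plus a depth condition yields $\ara$ equal to the height after localization; equivalently, one can argue directly that because $S_\mm/\Id S_\mm$ is Cohen--Macaulay and all its symbolic powers are Cohen--Macaulay, a generic choice of $c$ elements from $\Id$ (possible since $\kkk$ is infinite) forms a system of parameters whose radical is $\Id S_\mm$. The infiniteness of $\kkk$ is used exactly here, to perform the generic linear combinations / prime avoidance in choosing the $c$ elements.

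\textbf{Expected obstacle.} The delicate point is justifying that the $c$ generic elements generate $\Id S_\mm$ \emph{up to radical}, not merely up to height: one needs that no minimal prime of the ideal generated by these $c$ elements other than $\Id S_\mm$ itself survives, and this is where the Cohen--Macaulayness of \emph{all} the symbolic powers (not just of $S/\Id$) enters --- it controls the asymptotic behaviour of $\Idm$ and prevents "spurious" components from appearing. I would make this precise by the standard reduction: an ideal $\aa$ in a local ring is a set-theoretic complete intersection if $\aa$ is generated up to radical by $\height(\aa)$ elements, and a sufficient condition for this, given that the symbolic Rees algebra is Noetherian, is that $\grade(\mm G)=\height(\aa)$ with $G$ the symbolic associated graded ring (so that $\mm$ is not an associated prime of any $\Idm$, giving $\depth S_\mm/\Idm \geq 1$ uniformly, which is enough to run the cohomological argument showing $\cd$ localizes correctly and a generic sequence cuts out $\Id S_\mm$ set-theoretically). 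Thus the proof would read: apply Theorem \ref{maingeneral} to get $S/\Idm$ Cohen--Macaulay for all $k$; deduce via Proposition \ref{thekey} and its proof that $G(\D)$ is Cohen--Macaulay with $\grade(\mm G(\D))=c$; conclude by the Noetherianity of $A(\D)$ (\cite[Theorem 3.2]{HHT}) and the infiniteness of $\kkk$ that a generic $c$-tuple in $\Id$ generates $\Id S_\mm$ up to radical, i.e. $\Id S_\mm$ is a set-theoretic complete intersection.
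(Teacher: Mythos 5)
Your overall strategy (all symbolic powers Cohen--Macaulay $\Rightarrow$ the symbolic fiber cone has dimension equal to the height $\Rightarrow$ Noetherianity of the symbolic Rees algebra plus reduction theory over an infinite field gives the set-theoretic complete intersection after localizing) is the right one and is in fact the skeleton of the paper's proof; but as written there are concrete gaps. First, a duality/bookkeeping gap: $\AD$ is the symbolic fiber cone of the \emph{cover} ideal $\JD$, and for a $(d-1)$-dimensional matroid Corollary \ref{multdim} gives $\dim(\AD)=d=\height(\JD)$, while $\height(\Id)=n-d$. So your chain ``$\dim(\AD)=d$, hence $\grade(\mm G(\D))=c$'' mixes the two ideals: the number $n-\dim(\AD)=c$ is $\min_k\depth(S/\JD^{(k)})$, which matches $\height(\JD)$ only when $2d=n$. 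To talk about $\Id$ you must either invoke Theorem \ref{matroidduality} and prove the statement for $\JD$ (as the paper does), or work throughout with $\bar{A}(\Dc)$, the symbolic fiber cone of $\Id=J(\Dc)$, whose dimension is $n-d=\height(\Id)$.

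More seriously, the step that converts the Cohen--Macaulayness of all symbolic powers into ``generated up to radical by $\height(\Id)$ elements in $S_{\mm}$'' is left to a black box that does not apply as cited: Cowsik's theorem concerns primes $P$ with $\dim R/P=1$, where unmixedness alone gives $\min_k\depth(R/P^{(k)})\geq 1=\dim R/P$; in general uniform depth $\geq 1$ only bounds the analytic spread by $n-1$, so your ``expected obstacle'' paragraph does not close the gap, and no cohomological argument can, since cohomology only provides lower bounds for $\ara$. Likewise ``a generic choice of $c$ elements from $\Id$ ... whose radical is $\Id S_{\mm}$'' is unjustified: generic elements of $\Id$ itself control the \emph{ordinary} fiber cone of $\Id$, i.e.\ the analytic spread $\ell(\Id S_{\mm})$, which can exceed $\height(\Id)$ because ordinary powers of $\Id$ need not be Cohen--Macaulay; this is precisely why one must pass to a symbolic power. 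The missing mechanism, which the paper writes out, is: by \cite[Corollary 2.2]{HHT} choose $h$ with $(\Id^{(h)})^k=\Id^{(hk)}$ for all $k$, so the $h$th Veronese of $\bar{A}(\Dc)$ is standard graded and is the ordinary fiber cone of $\Id^{(h)}$; its dimension equals $\dim \bar{A}(\Dc)=\height(\Id)$ by Corollary \ref{multdim} applied to the matroid $\Dc$; then the Northcott--Rees theory of reductions \cite[p. 151]{NR} (this is where the infiniteness of $\kkk$ enters) yields a minimal reduction of $(\Id S_{\mm})^{(h)}$ generated by $\height(\Id)$ elements, and any reduction has the same radical as $(\Id S_{\mm})^{(h)}$, namely $\Id S_{\mm}$. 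With these repairs your outline becomes essentially the paper's argument.
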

\begin{proof}
By the duality on matroids it is enough to prove that $\JD S_{\mm}$ is a set-theoretic complete intersection.
For $h \gg 0$ it follows by \cite[Corollary 2.2]{HHT} that the $h$th Veronese of $\AD$,  
\[ \AD^{(h)}=\bigoplus_{m\geq 0}\AD_{hm}, \]
is standard graded. Therefore $\AD^{(h)}$ is the ordinary fiber cone of $\JD^{(h)}$. Moreover $\AD$ is finite as a $\AD^{(h)}$-module. So the dimensions of $\AD$ and of $\AD^{(h)}$ are the same. Therefore, using Corollary \ref{multdim}, we get
\[ \height(\JD S_{\mm})=\height(\JD)=\dim \AD^{(h)}=\ell(\JD^{(h)})=\ell((\JD S_{\mm})^{(h)}), \]
where $\ell(\cdot)$ is the analytic spread of an ideal, i.e. the Krull dimension of its ordinary fiber cone.
From a result by Northcott and Rees in \cite[p. 151]{NR}, since $\kkk$ is infinite, it follows that the analytic spread of $(\JD S_{\mm})^{(h)}$ is the cardinality of a set of minimal generators of a minimal reduction of $(\JD S_{\mm})^{(h)}$. 
Clearly the radical of such a reduction is the same as the radical of $(\JD S_{\mm})^{(h)}$, i.e. $\JD S_{\mm}$, so we get the statement.
\end{proof}

\begin{remark}
Notice that a reduction of $I S_{\mm}$, where $I$ is a homogeneous ideal of $S$, might not provide a reduction of $I$. So localizing at the maximal irrelevant ideal is a crucial assumption of Corollary \ref{sci}. It would be interesting to know whether $\Id$ is a set-theoretic complete intersection in $S$ whenever $\D$ is a matroid.
\end{remark}
\index{arithmetical rank|)}\index{set-theoretic complete intersection|)}
\section{Comments}

One of the keys to get our proof of Theorem \ref{main} is to characterize the simplicial complexes $\D$ on $[n]$ for which the Krull dimension of $\AD$ is the least possible. We succeeded in this task showing that this is the case if and only if $\D$ is a matroid (Corollary \ref{multdim}). In view of this, it would be interesting to characterize in general the dimension of $\AD$ in terms of the combinatorics of $\D$. In \cite[Theorem 3.7]{BCV}, we located  the following range for the dimension of the algebra of basic covers:
\[\dim \D +1 \leq \dim \AD \leq n - \displaystyle \left\lfloor \frac{n-1}{\dim \D +1} \right\rfloor.\]
In \cite{CV}, we characterized in a combinatorial fashion the dimension of $\bar{A}(G)$ for a graph $G$, that is a $1$-dimensional simplicial complex. However, already in that case, matters have been not easy at all. We will not say here which is the graph-theoretical invariant allowing to read off the dimension of $\bar{A}(G)$, since it is a bit technical. It suffices to say that, for reasonable graphs, one can immediately compute such an invariant, and therefore the dimension of the algebra of basic covers. The following is an interesting example.

\begin{example}\label{esagono}
Let $G=C_6$ the hexagon, namely:
\[
\setlength{\unitlength}{2mm}
\begin{picture}(40,18)
\put(5.5, 9.5){$C_6 : $}
\put(12,10){\circle*{0.75}}
\put(15,5){\circle*{0.75}}
\put(15,15){\circle*{0.75}}
\put(20,5){\circle*{0.75}}
\put(20,15){\circle*{0.75}}
\put(23,10){\circle*{0.75}}

\put (12,10){\line(3,-5){3.15}}
\put (12,10){\line(3,5){3.15}}
\put (20,5){\line(-1,0){4.625}}
\put (20,5){\line(3,5){3.15}}
\put (20,15){\line(-1,0){4.625}}
\put (20,15){\line(3,-5){3.15}}

{\footnotesize
\put(10.5,9.5){$6$}
\put(19.5,16){$2$}
\put(19.5,2.9){$4$}
\put(14.5,16){$1$}
\put(14.5,2.9){$5$}
\put(24,9.5){$3$}
}

\end{picture}
\]
By \cite[Theorem 3.8]{CV}, it follows that $\dim \bar{A}(C_6)=3$. So, Proposition \ref{thekey} yields:
\[\min\{\depth(S/J(C_6)^{(k)}):k\in\NN_{\geq 1}\}=6-3=3,\]
where $J(C_6)$ is the cover ideal:
\[J(C_6)=(x_1,x_2)\cap (x_2,x_3)\cap (x_3,x_4)\cap (x_4,x_5)\cap (x_5,x_6)\cap (x_6,x_1).\]
Notice that $S/J(C_6)$ is a $4$-dimensional $\kkk$-algebra which is not Cohen-Macaulay. To see this it suffices to consider the localization of $S/J(C_6)$ at the residue class of the prime ideal $(x_1,x_2,x_4,x_5)\subseteq S=\kkk[x_1,\ldots ,x_6]$. The resulting $\kkk$-algebra is a $2$-dimensional local ring not $1$-connected. Therefore, Proposition \ref{barney} implies that it is not Cohen-Macaulay. A fortiori, the original ring $S/J(C_6)$ is not Cohen-Macaulay too. So we get $\depth(S/J(C_6))<\dim S/J(C_6) =4$, which implies $\depth(S/J(C_6))=3$ (from what said just below the picture). Moreover, it follows from a general fact (see Herzog, Takayama and Terai \cite[Theorem 2.6]{HTT}) that $\depth(S/J(C_6))\geq \depth(S/J(C_6)^{(k)})$ for all integers $k\geq 1$. Thus, eventually, we have:
\[\depth(S/J(C_6)^{(k)})=3<\dim(S/J(C_6)) \ \ \ \ \ \ \forall \ k\in \NN_{\geq 1}\]
\end{example}

Example \ref{esagono} suggests us the following question: {\it Which are the simplicial complexes $\D$ on $[n]$ such that $\depth(S/\Idm)$ is constant with $k$ varying among the positive integers?} Equivalently, which are the simplicial complexes $\D$ such that $\depth(S/\JD)=\dim \AD$? Theorem \ref{main} shows that matroids are among these simplicial complexes, however Example \ref{esagono} guarantees that there are others.

\index{symbolic power|)}\index{matroid|)}

\appendix
\chapter{Other cohomology theories}

Besides local cohomology, in mathematics many other cohomologies are available. In this section we want to introduce some cohomology theories we will use in Chapter \ref{chapter1}, underlying the relationships between them. 

\section{Sheaf cohomology}

In algebraic geometry the most used cohomology is {\it sheaf cohomology}\index{sheaf cohomology|(}. It can be defined once given a topological space $X$ and a sheaf $\F$ of abelian groups on $X$. When $X$ is an open subset of an affine or of a projective scheme and $\F$ is a quasi-coherent sheaf, then sheaf cohomology and local cohomology are strictly related, as we are going to show.

Actually to define sheaf cohomology it is not necessary to have a topological space, but just to have a special category $\CCC$: For a topological space $X$ such a category is simply $\Op(X)$ (see Example \ref{opx} below). The definition in this more general context does not involve much more effort and is useful since this thesis makes use of  \'etale cohomology, so we decided to present sheaf cohomology in such a generality. \'Etale cohomology was introduced by Grothendieck in \cite{sga4}.

Let $\CCC$ be a category. For each object $U$ of $\CCC$ suppose to have a distinguished set of family of morphisms $(U_i \rightarrow U)_{i\in I}$, called {\it coverings} of $U$, satisfying the following conditions:
\begin{compactitem}
\item[(i)] For any covering $(U_i \rightarrow U)_{i\in I}$ and any morphism $V\rightarrow U$ the fiber products $U_i \times_U V$ exist, and $(U_i\times_U V)_{i\in I}$ is a covering of $V$.
\item[(ii)] If $(U_i \rightarrow U)_{i\in I}$ is a covering of $U$ and $(U_{ij} \rightarrow U_i)_{j\in J_i}$ is a covering of $U_i$, then $(U_{ij} \rightarrow U)_{(i,j)\in I\times J_i}$ is a covering of $U$.
\item[(iii)] For each object $U$ of $\CCC$ the family $(U\xrightarrow{\mbox{id}_U} U)$ consisting of one morphism is a covering of $U$.
\end{compactitem}
The system of coverings is called {\it Grothendieck topology} , and the category $\CCC$ together with it is called {\it site}\index{site}.

\begin{example}\label{opx}
Given a topological space $X$, we can consider the category $\Op(X)$ in which the objects are the open subsets of $X$ and whose morphisms are the inclusion maps: So $\Hom(V,U)$ is non-empty if and only if $V\subseteq U$. Moreover, in such a case it consists in only one element. There is a natural Grothendieck topology on $\Op(X)$, namely the coverings of an object $U$ are its open coverings. Clearly, if $V$ and $W$ are open subsets of an open subset $U$, we have $V\times_{U}W=V\cap W$.
\end{example}

A {\it presheaf (of Abelian groups)}\index{sheaf}\index{presheaf} on a cite $\CCC$ is a contravariant functor $\F : \CCC \rightarrow \Ab$, where $\Ab$ denotes the category of the Abelian groups. If $\F$ is a presheaf of Abelian groups and $V\xrightarrow{\phi} U$ is a morphism in $\CCC$, for any $s\in \F(U)$ the element $\F(\phi)(s)\in V$ is denoted by $s|_V$, although this can be confusing because there may be more than one morphism from $V$ to $U$. A presheaf $\F$ of Abelian groups is a {\it sheaf} if for any object $U$ of $\CCC$ and any covering $(U_i\rightarrow U)_{i\in I}$, we have:
\begin{compactitem}
\item[(i)] For any $s\in \F(U)$, if $s|_{U_i}=0$ for any $i\in I$, then $s=0$.
\item[(ii)] If $s_i\in \F(U_i)$ for any $i\in I$ are such that $s_i|_{U_i\times_U U_j} = s_j|_{U_i\times_U U_j}$ for all $(i,j)\in I\times I$, then there exists $s\in \F(U)$ such that $s|_{U_i} = s_i$ for every $i\in I$.
\end{compactitem}
A {\it morphism of presheaves} is a natural transformation between functors. A {\it morphism of sheaves} is a morphism of presheaves.

The definition of Grothendieck topology gives rise to the study of various cohomology theories: (Zariski) cohomology, \'etale cohomology, flat cohomology etc. Actually all of these cohomologies are constructed in the same way: What changes is the site.

\begin{example}\label{sites}
In this thesis we are interested just in two sites, both related to a scheme $X$.
\begin{compactitem}
\item[(i)] The {\it Zariski site}\index{site!Zariski} on $X$, denoted by $X_{Zar}$, is the site on $X$ regarded as a topological space as in Example \ref{opx}. Since this is the most natural structure of site on $X$, we will not denote it by $X_{Zar}$, but simply by $X$.
\item[(ii)] The {\it \'etale site}\index{site!etale@\'etale} on $X$, denoted by $X_{\et}$, is constructed as follows: The objects are the \'etale morphisms $U\rightarrow X$. The arrows are the $X$-morphisms $U\rightarrow V$, and the coverings are the families of \'etale $X$-morphisms $(U_i\xrightarrow{\phi_i} U)_{i\in I}$ such that $U=\cup\phi_i(U_i)$.
\end{compactitem}
\end{example}

The functor from the category of sheaves on a site $\CCC$ to $\Ab$, defined by $\Gamma(U,\cdot) \ : \ \F \mapsto \Gamma(U,\F):=\F(U)$, is left exact for every object $U$ of $\CCC$, and the category of sheaves on $\CCC$ has enough injective objects (see Artin \cite[p. 33]{artin}). Therefore it is possible to define the right derived functors of $\Gamma(U,\cdot)$, which are denoted by $H^i(U,\cdot)$.  For a scheme $X$ and a sheaf  $\F$ on $X$, the {$i$th cohomology of $\F$} is $H^i(X,\F)$. If $\F$ is a sheaf on $X_{\et}$, the {\it $i$th \'etale cohomology of $\F$} is \index{etale cohomology@\'etale cohomology} $H^i(X\rightarrow X,\F)$, and it is denoted by $H^i(X_{\et},\F)$. 

As we already anticipated, the cohomology of a quasi-coherent sheaf on an open subset of a noetherian affine scheme $X$ is related to local cohomology: Let $X:=\Spec(R)$, $\aa \subseteq R$ an ideal and $U:=X\setminus \V(\aa)$. Recall that if $\F$ is a quasi-coherent sheaf on $X$ then $\F$ is the sheafication of the $R$-module $M:=\Gamma(X,\F)$. By combining  \cite[Theorem 20.3.11]{BS} and \cite[Theorem 2.2.4]{BS}, shown in the book of Brodmann and Sharp, we have that there is an exact sequence
\begin{equation}\label{shafloc1}
0\rightarrow \Ga(M)\rightarrow M \rightarrow H^0(U,\F)\rightarrow \Ha^1(M)\rightarrow 0,
\end{equation}
and, for all $i\in \NN_{\geq 1}$, isomorphisms
\begin{equation}\label{shafloc2}
H^i(U,\F)\cong \Ha^{i+1}(M).
\end{equation}
There is also a similar correspondence in the graded case: Let $R$ be a graded ring and $\aa$ be a graded ideal of $R$. Set $X:=\Proj(R)$ and $U:=X\setminus \V_+(\aa)$. Let $M$ be a $\ZZ$-graded $R$-module. In this¤ case the homomorphism appearing in \cite[Theorem 2.2.4]{BS} are graded. So by \cite[Theorem 20.3.15]{BS} we have an exact sequence of graded $R$-modules
\begin{equation}\label{shafloc3} 
0\rightarrow \Ga(M)\rightarrow M \rightarrow \bigoplus_{d\in \ZZ}H^0(U,\widetilde{M}(d))\rightarrow \Ha^1(M)\rightarrow 0,
\end{equation}
and, for all $i\in \NN_{\geq 1}$, graded isomorphisms
\begin{equation}\label{shafloc4}
\bigoplus_{d\in \ZZ}H^i(U,\widetilde{M}(d))\cong \Ha^{i+1}(M).
\end{equation}
(The notation \ $\widetilde{\cdot}$ \ above means the graded sheafication, see Hartshorne's book \cite[p. 116]{hart}. We used the same notation also for the ``ordinary" sheafication, however it should always be clear from the context which sheafication is meant). In particular, if $\mm:=R_+$,
\[\bigoplus_{d\in \ZZ}H^i(X,\widetilde{M}(d))\cong H_{\mm}^{i+1}(M) \ \ \forall \ i\in \NN_{\geq 1}.\]

Given a scheme $X$, the {\it cohomological dimension}\index{cohomological dimension} of $X$ is
\[\cd(X):=\min \{i \ : \ H^j(X,\F)=0 \mbox{ for all quasi-coherent sheaves $\F$ and $j>i$}\}.\]
Suppose that $\aa$ is an ideal of a ring $R$ which is not nilpotent. Then \eqref{shafloc1} and \eqref{shafloc2} imply
\begin{equation}\label{cdcd1}
\cd(R,\aa)-1=\cd(\Spec(R)\setminus \V(\aa)).
\end{equation}
Moreover, if $R$ is standard graded over a field $\kkk$ and $\aa$ is a homogeneous ideal which is not nilpotent, using \eqref{shafloc3} and \eqref{shafloc4} we also have
\begin{equation}\label{cdcd2}
\cd(R,\aa)-1=\cd(\Spec(R)\setminus \V(\aa))=\cd(\Proj(R)\setminus \V_+(\aa)).
\end{equation}

For a scheme $X$ it is also available the {\it \'etale cohomological dimension} \index{etale cohomological dimension@\'etale cohomological dimension}. To give its definition we recall that a sheaf $\F$ on $X_{\et}$ is {\it torsion} if for any \'etale morphism $U\rightarrow X$ such that $U$ is quasi-compact, $\F(U\rightarrow X)$ is a torsion Abelian group.
\begin{example}
We can associate to any abelian group $G$ the {\it constant sheaf} on a topological space $X$ \index{constant sheaf}. It is denoted by $\underline{G}$, and for any open subset $U\subseteq X$ it is defined as
\[\underline{G}(U):=G^{\pi_0(U)},\]
where $\pi_0(U)$ is the number of connected components of $U$. If $X$ is a scheme, we can associate to $G$ the constant sheaf on $X_{\et}$ in the same way:
\[\underline{G}(U\rightarrow X):=G^{\pi_0(U)}.\]
Clearly, if $G$ is a finite group, then $\underline{G}$ is a torsion sheaf on $X_{\et}$.
\end{example}

We can define the \'etale cohomological dimension of a scheme $X$ as:
\[ \ecd(X):=\min \{i \ : \ H^j(X_{\et},\F)=0 \mbox{ for all torsion sheaves $\F$ and $j>i$}\}.\] 
If $X$ is a $n$-dimensional scheme of finite type over a separably closed field, then $\ecd(X)$ is bounded above by $2n$ (see Milne's book \cite[Chapter VI, Theorem 1.1]{milne}). If moreover $X$ is affine, then $\ecd(X) \leq n$ (\cite[Chapter VI, Theorem 7.2]{milne}).

We have seen in \eqref{aragcd} that the cohomological dimension provides a lower bound for the arithmetical rank. An analog fact holds true also for the \'etale cohomological dimension. 
Assume that $X$ is a scheme and pick a closed subscheme $Y \subseteq X$. Suppose that $U:=X \setminus Y$ can be cover by $k$ affine subsets of $X$. The \'etale cohomological dimension of these affine subsets is less than or equal to $n$ for what said above. So, using repetitively the Mayer-Vietoris sequence (\cite[Chapter III, Exercise 2.24]{milne}), it is easy to prove that
\begin{equation}\label{aragecd}
\ecd(U) \leq n+k -1
\end{equation} 
Notice that if $X:=\Spec(R)$ and $Y:=\V(\aa)$, where $\aa$ is an ideal of the ring $R$, then $X\setminus Y$ can be covered by $\ara(\aa)$\index{arithmetical rank} affine subsets of $R$. The same thing happens in the graded setting, with $\ara(\aa)$ replaced by $\ara_h(\aa)$. Therefore \eqref{aragcd} actually provides a lower bound for the arithmetical rank!

We want to finish the section noticing the formula for \'etale cohomoligical dimension analog to \eqref{cdcd2}. Let $R$ be standard graded over a separably closed field $\kkk$ and $\aa$ a homogeneous ideal of $R$. Then, using a result of Lyubeznik \cite[Proposition 10.1]{ly3}, one can prove that:\index{arithmetical rank!homogeneous}
\begin{equation}\label{ecdecd}
\ecd(\Spec(R)\setminus \V(\aa))=\ecd(\Proj(R)\setminus \V_+(\aa))+1.
\end{equation}
 
\index{sheaf cohomology|)}

\section{GAGA}\index{GAGA|(}\label{GAGAsec}

To compare the cohomology theories we are going to define we need to introduce some notions from a foundamental paper by Serre \cite{GAGA}. When we consider a quasi-projective scheme over the field of complex numbers $\CC$, two topologies are available: The euclideian one and the Zariski one. Serre's work allows us to use results from complex analysis for the algebraic study of such varieties.
The name GAGA comes from the title of the paper, ``Geometrie Algebrique et Geometrie Analytique".

If $X$ is a quasi-projective scheme over $\CC$, we can associate to it an analytic space $X^h$. Roughly speaking, we have to consider an affine covering $\{U_i\}_{i\in I}$ of $X$. Let us embed every $U_i$ as a closed subspace of a suitable $\AA_{\CC}^N$. The ideal defining each $U_i$ is generated by a set of polynomials of $\CC[x_1,\ldots ,x_N]$. Because a polynomial is a holomorphic function, such a set defines a closed analytic subspace of $\CC^N$, which we denote by $U_i^h$. We obtain $X^h$ gluing the $U_i^h$'s together. Below follow some expected properties of this construction we will use during the thesis:
\begin{compactitem}
\item[(i)] $(\PP^n)^h\cong \PP(\CC^{n+1})$.
\item[(ii)] If $Y$ is another quasi-projective scheme over $\CC$, then $X^h\times Y^h\cong (X\times Y)^h$.
\end{compactitem} 
Furthermore, to any sheaf $\F$ of $\O_X$-modules we can associate functorially a sheaf $\F^h$ of $\O_{X^h}$-modules (\cite[Definition 2]{GAGA}). The sheaf $\F^h$ is, in many cases, the one we expect: For instance, denoting by $\Omega_{X/\CC}$ the sheaf of K\"ahler differentials of $X$ over $\CC$, the sheaf $\Omega_{X/\CC}^h$ is nothing but than the sheaf of holomorphic $1$-forms on $X^h$, namely $\Omega_{X^h}$. More generally, setting $\Omega_{X/\CC}^p := \Lambda^p \Omega_{X/\CC}$, the sheaf $(\Omega_{X/\CC}^p)^h$ is  $\Omega_{X^h}^p$, the sheaf of holomorphic $p$-forms on $X^h$. Another nice property is that $\F^h$ is coherent whenever $\F$ is coherent (\cite[Proposition 10 c)]{GAGA}). One of the main results of \cite{GAGA} is the following (\cite[Th\'eorem\`e 1]{GAGA}):

\begin{thm}\label{GAGA}
Let $X$ be a projective scheme over $\CC$ and $\F$ be a coherent sheaf on $X$. For any $i\in \NN$ there are functorial isomorphisms
\[H^i(X,\F)\cong H^i(X^h,\F^h).\]
\end{thm}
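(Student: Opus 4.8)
The statement to prove is Serre's GAGA comparison theorem (Theorem \ref{GAGA}): for a projective scheme $X$ over $\CC$ and a coherent sheaf $\F$ on $X$, one has functorial isomorphisms $H^i(X,\F)\cong H^i(X^h,\F^h)$ for all $i$.

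Since this is a classical theorem whose proof is well documented, let me sketch how I'd prove it.

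The theorem to prove is Theorem \ref{GAGA} (Serre's GAGA), asserting functorial isomorphisms $H^i(X,\F)\cong H^i(X^h,\F^h)$ for $X$ projective over $\CC$ and $\F$ coherent.

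\textbf{Reduction to projective space.} The plan is first to reduce to the case $X=\PP^n$. A closed immersion $\iota\colon X\hookrightarrow\PP^n$ gives, on the analytic side, a closed immersion $\iota^h\colon X^h\hookrightarrow(\PP^n)^h=\PP(\CC^{n+1})$. Since pushforward along a closed immersion is exact and preserves cohomology (both algebraically and analytically), and since analytification commutes with $\iota_*$ — that is $(\iota_*\F)^h\cong\iota^h_*(\F^h)$ — it suffices to prove the statement for coherent sheaves on $\PP^n$. So from now on $X=\PP^n$.

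\textbf{The case of twisting sheaves and dévissage.} Next I would prove the theorem for the sheaves $\O_{\PP^n}(k)$, $k\in\ZZ$, by explicit computation: the algebraic cohomology $H^i(\PP^n,\O(k))$ is known (nonzero only for $i=0$, where it is the space of degree-$k$ forms, and for $i=n$), and the analytic cohomology $H^i(\PP(\CC^{n+1})^h,\O^h(k))$ can be computed to agree, e.g.\ via the exponential/Bott-type vanishing or by an explicit Čech computation on the standard cover, using that holomorphic functions on $(\CC^*)$-charts admit Laurent expansions exactly as rational functions do in the relevant range. The comparison maps are the natural ones and one checks they are isomorphisms degree by degree. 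Then, for a general coherent $\F$ on $\PP^n$, I would perform descending induction on $i$ together with dévissage: by Serre's theorem every coherent sheaf admits a surjection $\E:=\bigoplus_j\O(-k_j)^{a_j}\twoheadrightarrow\F$ with kernel $\G$ again coherent, giving a short exact sequence $0\to\G\to\E\to\F\to0$. Analytification is exact (this is where coherence of $\G$ is used, via \cite[Proposition 10 c)]{GAGA}), so we get $0\to\G^h\to\E^h\to\F^h\to0$. The long exact cohomology sequences on both sides, together with the natural transformation between them, let one run the Five Lemma. The delicate point is that the naive induction is circular, so one argues as follows: the comparison map is surjective in all degrees for all $\F$ (since it holds for $\E$ and using the right portion of the Five Lemma downward from $i=n+1$ where everything vanishes), and then, feeding surjectivity for $\G$ back into the sequence, one upgrades to bijectivity for $\F$. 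This is the classical induction of Serre.

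\textbf{The main obstacle.} The genuinely hard analytic input — and the step I expect to be the main obstacle — is the finiteness theorem of Cartan–Serre: the cohomology groups $H^i(X^h,\F^h)$ are finite-dimensional $\CC$-vector spaces. This compactness-of-$X^h$ input is what makes the dévissage argument terminate and what allows the comparison with the (manifestly finite-dimensional) algebraic side; without it the Five Lemma bookkeeping collapses. I would invoke Cartan's Theorems A and B for Stein spaces (to handle the affine charts $U_i^h$ and to know that the Čech complex for the cover $\{U_i^h\}$ computes cohomology) and the Cartan–Serre finiteness theorem for the global statement, treating these as the standard complex-analytic black boxes. Everything else — functoriality of the comparison map, compatibility with the long exact sequences, and the explicit projective-space computations — is routine once these analytic foundations are in place.
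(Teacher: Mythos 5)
The paper does not prove this statement at all: Theorem \ref{GAGA} sits in the appendix as a quoted classical result, justified only by the citation to Serre's \cite[Th\'eor\`eme 1]{GAGA}, so there is no in-paper argument to measure your proposal against. Your sketch is essentially Serre's original proof (and the one reproduced in standard references): reduction to $\PP^n$ via a closed immersion, the explicit case of the twisting sheaves $\O(k)$, and d\'evissage by descending induction on the cohomological degree, with Cartan's Theorems A and B and the Cartan--Serre finiteness theorem supplying the analytic input — which is exactly the route the cited source takes, so as an outline it is correct and consistent with how the thesis uses the theorem as a black box.
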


Thanks to GAGA, we can borrow techniques from complex analysis when we study algebraic varieties over a field of characteristic $0$. Often this is useful because euclideian topology is much finer than the Zariski one: For instance, if $X$ is irreducible over $\CC$, the cohomology groups $H^i(X,\underline{\CC})$ vanish for any $i>0$. Instead, $H^i(X^h,\underline{\CC})\cong H_{Sing}^i(X^h,\CC)$ (see Subsection \ref{subssing}), which in general are nonzero. When the characteristic of the base field is positive these methods are not available. However, a valuable analog of euclideian topology is the \'etale site. The following comparison theorem of Grothendieck (see Milne's notes \cite[Theorem 21.1]{milnel}) confirms the effectiveness of the \'etale site.

\begin{thm}\label{etvsangro}
Let $X$ be an affine or a projective scheme smooth over $\CC$ and let $G$ be a finite abelian group. For any $i\in \NN$ there are isomorphisms
\[H^i(X_{\et},\underline{G})\cong H^i(X^h,\underline{G}).\] 
\end{thm}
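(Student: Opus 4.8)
The statement to prove is Grothendieck's comparison theorem (Theorem \ref{etvsangro}): for $X$ affine or projective smooth over $\CC$ and $G$ a finite abelian group, $H^i(X_{\et},\underline{G})\cong H^i(X^h,\underline{G})$ for all $i$. Since this is quoted from Milne's notes as a known result, the plan is not to reprove it from scratch but to indicate the standard architecture of the proof, which rests on two reductions: (1) reduce to the case $G=\ZZ/\ell\ZZ$ for a prime $\ell$, and (2) reduce the general smooth variety to the case of curves and affine space (or small pieces) by dévissage, using comparison of cohomology with supports and purity. First I would dispose of the coefficient group: any finite abelian $G$ is a finite direct sum of cyclic groups $\ZZ/\ell^m\ZZ$, both $H^i(X_{\et},-)$ and $H^i(X^h,-)$ commute with finite direct sums, and the short exact sequences $0\to\ZZ/\ell\ZZ\to\ZZ/\ell^{m}\ZZ\to\ZZ/\ell^{m-1}\ZZ\to 0$ together with the five lemma let one bootstrap from $\ZZ/\ell\ZZ$ to all $\ZZ/\ell^m\ZZ$; so it suffices to treat constant sheaves $\underline{\ZZ/\ell\ZZ}$.

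Next I would set up the comparison map itself. There is a natural morphism of sites $\varepsilon\colon X^h\to X_{\et}$ (more precisely, a functor from the étale site to the category of open sets of $X^h$ sending an étale $U\to X$ to $U^h\to X^h$, which is a local homeomorphism), inducing for each étale sheaf $\F$ a canonical map $H^i(X_{\et},\F)\to H^i(X^h,\varepsilon^*\F)$, and $\varepsilon^*\underline{\ZZ/\ell\ZZ}=\underline{\ZZ/\ell\ZZ}$. The theorem asserts this map is an isomorphism. The proof proceeds by Noetherian induction on $\dim X$: using that $X$ is smooth, one covers it by Zariski opens, uses the Mayer--Vietoris sequences on both sides (they are compatible with $\varepsilon$) to localize, and then treats a distinguished open together with its closed complement via the long exact sequences of cohomology with supports $\dots\to H^i_Z(X_{\et},\F)\to H^i(X_{\et},\F)\to H^i(U_{\et},\F)\to\dots$ and its analytic counterpart. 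The closed-support terms are handled by the \emph{purity} isomorphism (étale and topological), which identifies $H^i_Z$ with a shifted cohomology of $Z$ when $Z\hookrightarrow X$ is a smooth closed immersion of pure codimension $c$; this reduces the codimension and lets the induction run. The base case is $\dim X\le 1$: for a smooth curve, one computes both sides explicitly (for a smooth projective curve of genus $g$ one gets $(\ZZ/\ell\ZZ)$, $(\ZZ/\ell\ZZ)^{2g}$, $(\ZZ/\ell\ZZ)$ in degrees $0,1,2$ on each side, by the Kummer sequence relating $\underline{\ZZ/\ell\ZZ}$ to $\GG_m$ étale-locally on one side and to the exponential sequence analytically on the other), and for affine curves one uses the excision/compactification arguments.

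The main obstacle, and the genuinely hard input, is \textbf{purity}: proving that for a smooth pair $(X,Z)$ the étale local cohomology sheaves $\underline{H}^i_Z(\underline{\ZZ/\ell\ZZ})$ vanish except in degree $2c$ where they are (locally) $\underline{\ZZ/\ell\ZZ}(-c)$, matching the topological fact that a smooth codimension-$c$ submanifold has a tubular neighbourhood with sphere-bundle complement. In the generality needed here (characteristic $0$, hence over $\CC$) this is classical and due to Artin--Grothendieck in SGA\,4, so for the purposes of this appendix I would simply cite SGA\,4 and Milne's book \cite{milne} for purity and for the reduction steps, and cite Milne's notes \cite{milnel} for the packaging of the argument; the role of the $\CC$-coefficients and of GAGA (Theorem \ref{GAGA}) enters only through the curve computations, where the Kummer and exponential sequences are compared. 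Thus the writeup would be: (i) reduce to $G=\ZZ/\ell\ZZ$; (ii) construct $\varepsilon$ and the comparison map; (iii) Noetherian induction on $\dim X$ using Mayer--Vietoris and the localization sequences, with purity supplying the dévissage; (iv) settle the base case of curves and affine space by direct computation. No step past (i) and (iv) requires calculation; the content is the two cited structural theorems.
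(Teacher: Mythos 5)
The thesis does not actually prove this statement: Theorem \ref{etvsangro} is quoted as a known comparison theorem of Grothendieck, with a citation to Milne's notes \cite[Theorem 21.1]{milnel}, and it is only ever used as a black box (e.g.\ in the proof of Theorem \ref{lyub}). So there is no proof in the paper to compare yours with; your sketch, which likewise delegates the substantial inputs to SGA~4 and to \cite{milne,milnel}, already says more than the thesis does, and its outer architecture (reduction to $\underline{\ZZ/\ell\ZZ}$ coefficients, the comparison map induced by the morphism of sites $X^h\to X_{\et}$, induction on dimension, curves handled via Kummer on the \'etale side and the exponential sequence plus GAGA on the analytic side) is the standard one.

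Two caveats on the middle of your outline, in case you mean it as more than a pointer to the literature. First, the classical Artin--Grothendieck argument (SGA~4, Exp.~XI, which is what Milne reproduces) runs the induction not through purity and cohomology with supports but through \emph{elementary fibrations} (``good neighbourhoods''): every point of a smooth complex variety has a Zariski neighbourhood fibred over a lower-dimensional smooth base with smooth affine curves as fibres, and one compares the Leray spectral sequences on the two sides, the curve case resting on the Riemann existence theorem. Your purity-based d\'evissage can be organized into a proof, but not as stated: the closed complement $Z=X\setminus U$ of a Zariski open is in general not smooth, so purity does not apply to the pair $(X,Z)$ directly and one must stratify $Z$ and induct on the dimension of the strata as well. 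Second, the ``base case'' vanishing $H^i(\AA^n_{\et},\underline{\ZZ/\ell\ZZ})=0$ for $i>0$ is not a direct computation; it is usually deduced from the fibration argument or from the smooth base change theorem, so quoting it as elementary hides a genuine input. With those two points either repaired or explicitly cited, your write-up is an accurate summary of the standard proof, which is all that the thesis itself relies on.
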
 
\index{GAGA|)}
\section{Singular homology and cohomology}\label{subssing}

If $X$ is a topological space and $G$ an abelian group we will denote by $H^{Sing}_i(X,G)$ the {\it $i$-th singular homology group of $X$ with coefficients in $G$} \index{Singular homology} (for instance see Hatcher's book \cite[p. 153]{hatcher}). When $G=\ZZ$, we just write $H^{Sing}_i(X)$ for $H^{Sing}_i(X,\ZZ)$. We recall that $H^{Sing}_i(X,G)$ and $H^{Sing}_i(X)$ are related by the universal coefficient theorem for homology \index{universal coefficient theorem for homology} (\cite[Theorem 3A.3]{hatcher}):
\begin{thm}\label{unihom}
If $X$ is a topological space and $G$ an abelian group, for any $i\in \NN$ there is a split exact sequence
\[0\longrightarrow H^{Sing}_i(X)\otimes G \longrightarrow H^{Sing}_i(X,G)\longrightarrow \Tor_1(H^{Sing}_{i-1}(X),G)\longrightarrow 0\]
\end{thm}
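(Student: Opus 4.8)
The statement is purely a piece of homological algebra once one recalls the standard fact that the singular chain complex $C_\bullet^{Sing}(X)$ is, in each degree, a free abelian group (free on the set of singular $i$-simplices of $X$) and that by definition $C_\bullet^{Sing}(X)\otimes_{\ZZ}G$ is the complex computing $H_i^{Sing}(X,G)$. So the plan is: prove the algebraic lemma that for a chain complex $C_\bullet$ of free abelian groups and any abelian group $G$ there is a split short exact sequence $0\to H_i(C_\bullet)\otimes G\to H_i(C_\bullet\otimes G)\to \Tor_1(H_{i-1}(C_\bullet),G)\to 0$, and then specialize $C_\bullet=C_\bullet^{Sing}(X)$.

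First I would put $Z_i:=\Ker(\partial_i)\subseteq C_i$ and $B_i:=\Im(\partial_{i+1})\subseteq C_i$. Since a subgroup of a free abelian group is free (this is where $\ZZ$ being a PID enters), both $Z_i$ and $B_i$ are free, so the short exact sequence $0\to Z_i\to C_i\xrightarrow{\partial_i}B_{i-1}\to 0$ splits, and tensoring preserves split exactness, giving $0\to Z_i\otimes G\to C_i\otimes G\to B_{i-1}\otimes G\to 0$ exact. Viewing $Z_\bullet$ and $B_\bullet$ as complexes with zero differential (with the obvious degree shift on $B$), these assemble into a short exact sequence of complexes whose long exact homology sequence reads $\cdots\to B_i\otimes G\xrightarrow{\iota_i\otimes 1}Z_i\otimes G\to H_i(C_\bullet\otimes G)\to B_{i-1}\otimes G\xrightarrow{\iota_{i-1}\otimes 1}Z_{i-1}\otimes G\to\cdots$, where $\iota_i:B_i\hookrightarrow Z_i$ is the inclusion and one checks the connecting map is precisely $\iota_i\otimes 1$. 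Now $0\to B_i\xrightarrow{\iota_i}Z_i\to H_i(C_\bullet)\to 0$ is a length-one free resolution of $H_i(C_\bullet)$, so tensoring with $G$ identifies $\Ker(\iota_i\otimes 1)$ with $\Tor_1(H_i(C_\bullet),G)$ and $\operatorname{coker}(\iota_i\otimes 1)$ with $H_i(C_\bullet)\otimes G$; feeding this back into the long exact sequence produces the desired short exact sequence. For the splitting I would use the retraction $r_i:C_i\to Z_i$ from the split sequence above: composing with the projection $Z_i\to H_i(C_\bullet)$ gives a chain map $C_\bullet\to H_\bullet(C_\bullet)$ into the complex with zero differential (legitimate because $r_i$ fixes $B_i\subseteq Z_i$), and after $\otimes G$ and passing to homology this yields a retraction of the inclusion $H_i(C_\bullet)\otimes G\hookrightarrow H_i(C_\bullet\otimes G)$.

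Finally I would specialize to $C_\bullet=C_\bullet^{Sing}(X)$, invoking freeness of singular chains and the identification $C_\bullet^{Sing}(X)\otimes G=C_\bullet^{Sing}(X,G)$, which gives Theorem~\ref{unihom}. The only mildly delicate points are the bookkeeping in the long exact sequence — identifying the connecting homomorphism with $\iota_i\otimes 1$ — and the fact that the splitting is not natural in $X$, so one must be careful to claim only a non-canonical splitting; none of this is hard, this being a textbook result included purely for completeness, and for the full details I would simply cite Hatcher~\cite{hatcher}.
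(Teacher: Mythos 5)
Your argument is correct and it is exactly the standard proof that the paper itself relies on: the thesis states Theorem \ref{unihom} without proof, simply citing Hatcher \cite[Theorem 3A.3]{hatcher}, and your chain-level argument (freeness of singular chains, the split sequences $0\to Z_i\to C_i\to B_{i-1}\to 0$, the long exact sequence with connecting map $\iota_i\otimes 1$, the free resolution $0\to B_i\to Z_i\to H_i\to 0$ identifying the Tor term, and the retraction-induced, non-natural splitting) is precisely that textbook proof. So there is nothing to correct or compare beyond noting the agreement.
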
 
The {\it $i$-th singular cohomology group of $X$ with coefficients in $G$} \index{Singular cohomology} will be denoted by $H_{Sing}^i(X,G)$ (see \cite[p. 197]{hatcher}). Singular cohomology and singular homology are related by the universal coefficient theorem for cohomology \index{universal coefficient theorem for cohomology} (\cite[Theorem 3.2]{hatcher}):
\begin{thm}\label{unicohom}
If $X$ is a topological space and $G$ an abelian group, for any $i\in \NN$ there is a split exact sequence
\[0\longrightarrow \Ext^1(H^{Sing}_{i-1}(X),G) \longrightarrow H_{Sing}^i(X,G)\longrightarrow \Hom(H^{Sing}_{i}(X),G)\longrightarrow 0\]
\end{thm} 

\begin{example}\label{bettiproj}
Let us consider the $n$-dimensional projective space over the complex numbers, $\PP(\CC^{n+1})$, supplied with the euclideian topology. Then it is possible to compute its singular homology groups:
\begin{align*}
H^{Sing}_i(\PP(\CC^{n+1}))\cong 
\begin{cases}
\ZZ & \text {if } i=0,2,4,\ldots ,2n\\
0& \text {otherwise}
\end{cases}
\end{align*}
Using Theorems \ref{unihom} and \ref{unicohom}, since $\CC$ is a free abelian group, we get 
\begin{align*}
H^{Sing}_i(\PP(\CC^{n+1}),\CC)\cong H^{Sing}_i(\PP(\CC^{n+1}))\otimes \CC \cong 
\begin{cases}
\CC & \text {if } i=0,2,4,\ldots ,2n\\
0& \text {otherwise}
\end{cases}
\end{align*}
and
\begin{align*}
H_{Sing}^i(\PP(\CC^{n+1}),\CC)\cong \Hom(H_{Sing}^i(\PP(\CC^{n+1})),\CC) \cong 
\begin{cases}
\CC & \text {if } i=0,2,4,\ldots ,2n\\
0& \text {otherwise}
\end{cases}
\end{align*}
\end{example}
Suppose that $X$ is an analytic space and that $G=\CC$. Then, for any $i\in \NN$, we have a functorial isomorphism
\begin{equation}\label{singsheaf}
H_{Sing}^i(X,\CC)\cong H^i(X,\underline{\CC}),
\end{equation}
for instance see the notes of Deligne \cite[Proposition 1.1]{deligne}.

\section{Algebraic De Rham cohomology}\label{subderham}\index{De Rham cohomology}\index{De Rham cohomology!algebraic}

For any regular projective scheme $X$ over a field $\kkk$ of characteristic $0$, and actually for much more generals schemes, it is possible to define its {\it algebraic De Rham cohomology groups}, which we will denote by $H_{DR}^i(X)$ as shown by Grothendieck  in \cite{gro}. Consider the complex of sheaves
\[\Omega_{X/\kkk}^{\bullet} \ : \ \O_{X} \longrightarrow \Omega_{X/\kkk}\longrightarrow \Omega_{X/\kkk}^2\longrightarrow \ldots . \]
The De Rham cohomology is defined to be the hypercohomology of the complex $\Omega_{X/\kkk}^{\bullet}$. In symbols $H_{DR}^i(X):=\mathbb{H}^i(X,\Omega_{X/\kkk}^{\bullet})$. The De Rham cohomology theory can also be developed in the singular case, see Hartshorne \cite{hartder}, but we do not need it in this thesis.

If $\kkk = \CC$, we can consider the complex 
\[\Omega_{X^h}^{\bullet} \ : \ \O_{X^h} \longrightarrow \Omega_{X^h}\longrightarrow \Omega_{X^h}^2\longrightarrow \ldots . \]
A theorem of Grothendieck \cite[Theorem 1']{gro} tells us that, under the above hypothesis,
\begin{equation}\label{groder}
H_{DR}^i(X)\cong \mathbb{H}^i(X^h,\Omega_{X^h}).
\end{equation}
The complex form of Poincar\'e's lemma shows that $\Omega_{X^h}^{\bullet}$ is a resolution of the constant sheaf $\underline{\CC}$. Therefore $\mathbb{H}^i(X^h,\Omega_{X^h})$ is nothing but than $H^i(X^h,\underline{\CC})$. So \eqref{groder} and \eqref{singsheaf} yield functorial isomorphisms
\begin{equation}\label{groder2}
H_{DR}^i(X)\cong H_{Sing}^i(X^h,\CC)
\end{equation}  
for all $i\in \NN$.

\chapter{Connectedness in noetherian topological spaces}\label{connectednesssection}
\index{connected|(}\index{connected!r-@$r$-|(}\index{connected!in codimension $1$|(}\index{connected!in codimension $d$|(}

In this appendix, we discuss a notion which generalizes that of connectedness on Noetherian topological spaces. As example of Noetherian topological spaces, we suggest to keep in mind schemes.

\begin{remark}
Let $X$ be an affine or a projective scheme smooth over the complex numbers. Then $X^h$ is endowed with a finer topology than the Zariski one. Therefore, if $X^h$ is connected, then $X$ is connected as well. What about the converse implication? Quite surprisingly, $X^h$ is connected if and only if $X$ is connected. This can be shown using Theorem \ref{etvsangro}. In fact, if $G$ is a finite abelian group, then
\[G^{\pi_0(X)}\cong H^0(X_{\et},\underline{G})\cong H^0(X^h,\underline{G})\cong G^{\pi_0(X^h)},\]
where $\pi_0$ counts the connected components of a topological space. Therefore, to figure a connected projective scheme, one can trust the Euclideian perception. However, we recommend carefulness about the fact that this occurrence fails as soon as $X$ is not an affine or a projective scheme. For instance, if $X$ is the affine line without a point, then it is connected, but $X^h$ is obviously not.
\end{remark}

For simplicity, from now on it will always be implied that our noetherian topological spaces have finite dimension.

\begin{definition}\label{r-conn} 
A noetherian topological space $T$ is said to be {\it $r$-connected} \index{r-connected|see{connected, $r$-}} if the following holds:
if $Z$ is a closed subset of $T$ such that $T\setminus Z$ is disconnected, then $\dim Z \geq r$.
(We use the convention that the emptyset is disconnected of dimension $-1$.)
\end{definition}

\begin{remark}\label{bprc}
Let us list some easy facts about Definition \ref{r-conn}:
\begin{compactitem}
\item[(i)] If $T$ is $r$-connected, then $r\leq \dim T$. Moreover $T$ is always $(-1)$-connected.
\item[(ii)] If $T$ is $r$-connected, then it is $s$-connected for any $s\leq r$.
\item[(iii)] $T$ is connected if and only if it is $0$-connected.
\item[(iv)] $T$ is irreducible if and only if it is $(\dim T)$-connected.
\end{compactitem}
\end{remark} 

If, for a positive integer $d$, $T$ is $(\dim(T)-d)$-connected we
say that $T$ is {\it connected in codimension $d$}. 
If $R$ is a ring, we say that $R$ is $r$-connected if $\Spec (R)$ is. 

\begin{example}\label{punctureddu}
Besides $\Spec(R)$ we will consider other two noetherian topological spaces related to special noetherian rings $R$.
\begin{compactitem}
\item[(i)] If $(R,\mm)$ is a local ring the {\it punctured spectrum}\index{punctured spectrum} of $R$ is $\Spec(R)\setminus \{\mm\}$, where the topology is induced by the Zariski topology on $\Spec(R)$. Notice that the punctured spectrum of $R$ is $r$-connected if and only if $R$ is $(r+1)$-connected (a local ring is obviously always $0$-connected).
\item[(ii)] If $R$ is graded, we can consider the noetherian topological space $\Proj(R)$. One can easily show that if $R_+$ is a prime ideal, then $\Proj(R)$ is $r$-connected  if and only if the punctured spectrum of $R_{R_+}$ is $r$-connected if and only if $R$ is $(r+1)$-connected.
\end{compactitem}
\end{example}

We list three useful lemmas which allow us to interpret in different ways the concept introduced in Definition \ref{r-conn}.

\begin{lemma}\label{milhouse}
Let $T$ be a nonempty noetherian $r$-connected topological space. Let $T_1, \ldots, T_m$ be the irreducible components of $T$:
\begin{compactitem}
\item[(i)] If $A$ and $B$ are disjoint nonempty subsets of $\{1,\ldots ,m\}$ such that $A\cup B = \{1,\ldots ,m\}$, then
\begin{equation}\label{connirr} 
\dim (( \bigcup_{i \in A}T_i) \cap( \bigcup_{j \in B}T_j))\geq r. 
\end{equation}
Moreover, if $T$ is not $(r+1)$-connected, then it is possible to choose $A$ and $B$ in such a way that equality holds in \eqref{connirr}.
\item[(ii)]  If $T$ is not $(r+1)$-connected, then the dimension of each $T_i$ is at least $r$. Moreover, if $T$ has is reducible, then $\dim T_i > r$ for any $i=1,\ldots ,m$ (we recall our assumption $\dim(T)<\infty$).
\end{compactitem}
\end{lemma}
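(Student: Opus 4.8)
\textbf{Proof plan for Lemma \ref{milhouse}.}

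The plan is to derive both (i) and (ii) directly from Definition \ref{r-conn}, using the fact that the irreducible components $T_1,\dots,T_m$ of a noetherian space are finite in number and that any closed subset decomposes into those of its components that it meets. First I would set up the basic dictionary: for a partition $\{1,\dots,m\}=A\sqcup B$ into nonempty disjoint subsets, put $Y_A:=\bigcup_{i\in A}T_i$ and $Y_B:=\bigcup_{j\in B}T_j$; both are closed, $T=Y_A\cup Y_B$, and $Z:=Y_A\cap Y_B$ is closed. The key observation is that $T\setminus Z=(Y_A\setminus Z)\sqcup(Y_B\setminus Z)$ is a disjoint union of two \emph{open} subsets of $T\setminus Z$, each of which is nonempty (since $Y_A\not\subseteq Y_B$ and vice versa, as no $T_i$ is contained in another component). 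Hence $T\setminus Z$ is disconnected, and $r$-connectedness of $T$ forces $\dim Z\ge r$, which is exactly \eqref{connirr}. This handles the first assertion of (i).

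For the ``moreover'' part of (i): suppose $T$ is not $(r+1)$-connected. By Definition \ref{r-conn} there is a closed subset $Z_0\subseteq T$ with $T\setminus Z_0$ disconnected and $\dim Z_0<r+1$, i.e. $\dim Z_0\le r$; combined with the first part (applied to any partition) we will want to produce a partition realizing dimension exactly $r$. Write $T\setminus Z_0=U_1\sqcup U_2$ with $U_1,U_2$ nonempty, open, disjoint. Each irreducible component $T_i$, being irreducible, cannot be split by this decomposition once we remove $Z_0$; more precisely $T_i\setminus Z_0$ is irreducible (possibly empty) and hence lies entirely in $U_1$ or entirely in $U_2$ (if $T_i\subseteq Z_0$ we may place $i$ in either block, but this case cannot exhaust all components since $T\setminus Z_0\ne\emptyset$). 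This yields a partition $\{1,\dots,m\}=A\sqcup B$ with $Y_A\setminus Z_0\subseteq \overline{U_1}$-type containment; the crucial point to check is $Y_A\cap Y_B\subseteq Z_0$, so that $\dim(Y_A\cap Y_B)\le\dim Z_0\le r$, while the already-proven inequality \eqref{connirr} gives $\dim(Y_A\cap Y_B)\ge r$. Thus equality holds. I expect the fiddly point here to be the bookkeeping when some components lie inside $Z_0$ — one must argue that $A$ and $B$ can still both be made nonempty, using that $U_1$ and $U_2$ are each nonempty and each meets at least one component not contained in $Z_0$.

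For (ii): assume $T$ is not $(r+1)$-connected and fix $i$. I would apply part (i) with the partition $A=\{i\}$, $B=\{1,\dots,m\}\setminus\{i\}$ (legitimate once $m\ge 2$); then $Z=T_i\cap\bigl(\bigcup_{j\ne i}T_j\bigr)$ has $\dim Z\ge r$ by \eqref{connirr}, and since $Z\subsetneq T_i$ (a proper closed subset of the irreducible space $T_i$, as $T_i$ is not covered by the other components), we get $\dim T_i>\dim Z\ge r$, hence $\dim T_i\ge r+1>r$; note this simultaneously proves the ``moreover'' (reducible) case. When $m=1$, $T$ is irreducible, hence $(\dim T)$-connected by Remark \ref{bprc}(iv), so ``not $(r+1)$-connected'' forces $\dim T\le r$, and trivially also $\dim T\ge r$ is not needed — wait, here one only needs $\dim T_1=\dim T\ge r$, which I would get by observing that $r$-connectedness of $T$ together with $T$ nonempty gives $\dim T\ge r$ directly from Remark \ref{bprc}(i). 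The main obstacle throughout is the second half of (i): carefully converting an abstract disconnection of $T\setminus Z_0$ into a \emph{component-wise} partition, which rests on the irreducibility of each $T_i\setminus Z_0$ and on not accidentally leaving one block empty.
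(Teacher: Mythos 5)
Your proposal is correct in substance, but be aware that the paper does not prove Lemma \ref{milhouse} at all: its ``proof'' is just a citation to Brodmann and Sharp \cite[Lemmas 19.1.15 and 19.2.2]{BS}. So your write-up is a genuine, self-contained replacement, and it follows what is essentially the standard (and, as far as I can tell, Brodmann--Sharp's own) line of argument: for \eqref{connirr} you note that $Y_A\setminus Z$ and $Y_B\setminus Z$ are complementary closed (hence open) pieces of $T\setminus Z$, nonempty because an irreducible component cannot lie in the union of the other components, so $r$-connectedness applies; for the ``moreover'' you convert a disconnection $T\setminus Z_0=U_1\sqcup U_2$ with $\dim Z_0\le r$ into a partition of the components, using that each nonempty $T_i\setminus Z_0$ is irreducible and therefore lands in exactly one $U_k$, and then $Y_A\cap Y_B\subseteq Z_0$ forces equality; and for (ii) the partition $A=\{i\}$ together with the chain-extension argument (append $T_i$ to a maximal chain of irreducible closed subsets of the proper closed subset $Z\subsetneq T_i$) gives $\dim T_i>\dim Z\ge r$, which indeed covers the ``reducible'' refinement at the same time, while the case $m=1$ is Remark \ref{bprc}(i).

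Two small points worth adding when you write this up. First, in the ``moreover'' of (i) you silently assume $T\setminus Z_0\neq\emptyset$; the paper's convention allows the degenerate witness $Z_0=T$ (empty complement), which would give $\dim T\le r$. Rule it out by observing that for $m\ge 2$ your own argument for (ii) yields $\dim T_i\ge r+1$ for every $i$, hence $\dim T>r$, so any witness of non-$(r+1)$-connectedness has nonempty complement; when $m=1$ there is no admissible pair $(A,B)$ at all, so the ``moreover'' is really a statement about reducible $T$ --- an imprecision of the statement as reproduced in the thesis, not a defect of your argument. Second, your proof of (ii) never uses the hypothesis that $T$ fails to be $(r+1)$-connected; that is harmless (you prove slightly more), but it is worth saying explicitly so the reader does not look for where the hypothesis enters.
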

\begin{proof}
For (i) see Brodmann and Sharp \cite[Lemma 19.1.15]{BS}, for (ii) look at \cite[Lemma 19.2.2]{BS}).
\end{proof}

\begin{lemma}\label{lemmaconn}
A connected ring $R$ with more than one minimal prime ideals is not $r$-connected if and only if there exist two ideals $\aa$ and $\bb$ such that:
\begin{compactitem}
\item[(i)] $\sqrt{\aa}$ and $\sqrt{\bb}$ are incomparable.
\item[(ii)] $\aa \cap \bb$ is nilpotent.
\item[(iii)] $\dim R/(\aa + \bb) < r$.
\end{compactitem}
\end{lemma}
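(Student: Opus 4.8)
The statement to prove is Lemma \ref{lemmaconn}: a connected ring $R$ with more than one minimal prime is not $r$-connected if and only if there exist ideals $\aa$ and $\bb$ with $\sqrt{\aa},\sqrt{\bb}$ incomparable, $\aa\cap\bb$ nilpotent, and $\dim R/(\aa+\bb)<r$.

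\medskip

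The plan is to reduce everything to the decomposition of $\Spec(R)$ into irreducible components and apply Lemma \ref{milhouse}. Write $T:=\Spec(R)$ and let $\wp_1,\dots,\wp_m$ be the minimal primes of $R$, so $T_i:=\V(\wp_i)$ are the irreducible components; by hypothesis $m>1$ and $T$ is connected. Recall that for a closed subset $\V(\cc)\subseteq T$, the complement $T\setminus\V(\cc)$ is disconnected precisely when $\Spec(R/\sqrt{\cc})$ decomposes, equivalently when the index set $\{1,\dots,m\}$ can be partitioned into two nonempty disjoint subsets $A,B$ with no $\wp_i$ ($i\in A$) comparable to the chain structure forcing a connection — more precisely, using that $T$ is connected, $T$ fails to be $(r+1)$-connected iff there is a closed $Z$ with $\dim Z\le r$ disconnecting $T$, and by Lemma \ref{milhouse}(i) the minimal such $Z$ comes from a partition $A\sqcup B=\{1,\dots,m\}$ with $Z=(\bigcup_{i\in A}T_i)\cap(\bigcup_{j\in B}T_j)$.

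\medskip

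For the forward direction: assume $R$ is not $r$-connected, i.e. $T$ is not $r$-connected. Then there is a disconnecting closed set of dimension $<r$; by Lemma \ref{milhouse}(i) applied with the partition coming from the irreducible components, we may take $A,B$ disjoint nonempty with $A\cup B=\{1,\dots,m\}$ and $\dim\big((\bigcup_{i\in A}T_i)\cap(\bigcup_{j\in B}T_j)\big)<r$. Now set $\aa:=\bigcap_{i\in A}\wp_i$ and $\bb:=\bigcap_{j\in B}\wp_j$. Then $\V(\aa)=\bigcup_{i\in A}T_i$ and $\V(\bb)=\bigcup_{j\in B}T_j$, so $\V(\aa+\bb)=\V(\aa)\cap\V(\bb)$ has dimension $<r$, giving (iii). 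Since $A,B$ are nonempty and the $\wp_i$ are pairwise incomparable minimal primes, $\sqrt{\aa}$ and $\sqrt{\bb}$ are radical ideals whose sets of minimal primes are $\{\wp_i:i\in A\}$ and $\{\wp_j:j\in B\}$, which are disjoint nonempty sets of pairwise incomparable primes; hence neither contains the other, giving (i). Finally $\aa\cap\bb=\bigcap_{i=1}^m\wp_i=\operatorname{nil}(R)$ is nilpotent (as $R$ is Noetherian), giving (ii).

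\medskip

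For the converse: suppose such $\aa,\bb$ exist. From (ii), $\sqrt{\aa\cap\bb}=\sqrt{(0)}$, so every minimal prime of $R$ contains $\aa$ or $\bb$ (since $\aa\bb\subseteq\aa\cap\bb$ is nilpotent, hence contained in each $\wp_i$, and $\wp_i$ prime forces $\aa\subseteq\wp_i$ or $\bb\subseteq\wp_i$). Let $A=\{i:\aa\subseteq\wp_i\}$, $B=\{i:\wp_i\in\{1,\dots,m\}\}\setminus A$; then $B\subseteq\{i:\bb\subseteq\wp_i\}$ and $A\cup B=\{1,\dots,m\}$. Both $A$ and $B$ are nonempty: if say $B=\emptyset$ then $\aa\subseteq\wp_i$ for all $i$, so $\aa\subseteq\operatorname{nil}(R)\subseteq\sqrt{\bb}$, contradicting the incomparability in (i); symmetrically $A\neq\emptyset$. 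Now $\V(\aa)\supseteq\bigcup_{i\in A}T_i$ and $\V(\bb)\supseteq\bigcup_{j\in B}T_j$, and together these cover $T$, so $\V(\aa)\cup\V(\bb)=T$ with $\V(\aa)\cap\V(\bb)=\V(\aa+\bb)$ of dimension $<r$. The closed set $Z:=\V(\aa+\bb)$ disconnects $T$ (its complement is the disjoint union of the nonempty opens $T\setminus\V(\aa)$ and $T\setminus\V(\bb)$, which are nonempty because $A,B\neq\emptyset$ means neither $\V(\aa)$ nor $\V(\bb)$ is all of $T$ — here one uses that the $T_i$ not in $A$ are not contained in $\V(\aa)$, which follows from minimality and incomparability), and $\dim Z<r$, so $R$ is not $r$-connected.

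\medskip

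The main obstacle I expect is bookkeeping the equivalence "disconnecting closed subset of small dimension" $\Leftrightarrow$ "partition of the irreducible components with small-dimensional overlap" cleanly in both directions — in particular making sure in the converse that $T\setminus\V(\aa)$ and $T\setminus\V(\bb)$ are genuinely nonempty, which is exactly where hypothesis (i) (incomparability of the radicals) is used rather than being decorative. Everything else is a routine translation between ideals and closed subsets via Lemma \ref{milhouse}.
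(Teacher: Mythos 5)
Your proof is correct. The ``if'' direction (ideals $\Rightarrow$ not $r$-connected) is essentially the paper's argument: take $Z:=\V(\aa+\bb)$ and observe that $\V(\aa)$ and $\V(\bb)$ split $T\setminus Z$ into two disjoint nonempty pieces, nilpotency of $\aa\cap\bb$ giving the covering and incomparability of the radicals giving nonemptiness (a point the paper dismisses with ``it is clear'' and you verify explicitly). The ``only if'' direction is where you genuinely diverge: the paper starts from an arbitrary disconnecting closed set $Z$ of dimension $s<r$ (reducing to $R$ being $s$-connected), takes \emph{any} ideals $\aa,\bb$ cutting out the two clopen pieces of $T\setminus Z$, and then needs Lemma \ref{milhouse}(ii) to see that every minimal prime has $\dim R/\wp>s$, hence lies outside $Z$, which is what forces $\aa\cap\bb$ to be nilpotent. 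You instead invoke the ``moreover'' part of Lemma \ref{milhouse}(i) to get a partition $A\sqcup B$ of the minimal primes with small-dimensional overlap and define $\aa:=\bigcap_{i\in A}\wp_i$, $\bb:=\bigcap_{j\in B}\wp_j$; this makes (ii) and (i) immediate (the ideals are even radical), avoids Lemma \ref{milhouse}(ii) altogether, and produces a slightly more structured pair of ideals, at the cost of one glossed step: Lemma \ref{milhouse}(i) is stated for $r$-connected spaces, so to extract the partition from ``not $r$-connected'' you should take the largest $s$ for which $T$ is $s$-connected (it exists since every space is $(-1)$-connected and $\dim T<\infty$) and apply the lemma with that $s$, getting a partition with intersection of dimension $s<r$ — the same implicit maneuver the paper performs in the proof of Theorem \ref{bart}, so this is a cosmetic rather than substantive omission.
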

\begin{proof}
Set $T:=\Spec(R)$.

If $\aa$ and $\bb$ are ideals of $R$ satisfying (i), (ii) and (iii), then consider $Z:=\V(\aa+\bb)$. It is clear that $\V(\aa)\cap (T\setminus Z)$ and $\V(\bb)\cap (T\setminus Z)$ provide a disconnection for $T\setminus Z$. Since $\dim Z < r$ we have that $R$ is not $r$-connected.

If $R$ is not $r$-connected, then there exists a closed subset $Z\subseteq T$ such that $T\setminus Z$ is disconnected and $\dim Z =s < r$. We can assume that $R$ is $s$-connected. Because $T\setminus Z$ is disconnected, there are two closed subsets $A$ and $B$ of $T\setminus Z$ such that:
\begin{compactitem}
\item[(i)] $A$ and $B$ are nonempty.
\item[(ii)] $A\cup B = T\setminus Z$.
\item[(iii)] $A\cap B = \emptyset$.
\end{compactitem}
Let us choose two ideals $\aa$ and $\bb$ of $R$  such that $A = \V(\aa) \setminus Z$ and $B = \V(\bb) \setminus Z$. The fact that $A\cap B = \emptyset$ implies $\V(\aa+\bb)\subseteq Z$. Therefore $\dim R /(\aa+\bb) \leq s < r$. 
Moreover, since $A\cup B = T\setminus Z$, we have that $\V(\aa \cap \bb) \supseteq T\setminus Z$. If $\wp$ is a minimal prime ideal of $R$, by Lemma \ref{milhouse} (ii) $\dim R/\wp > s$. Then $\wp \notin Z$, so it belongs to $\V(\aa \cap \bb)$. This implies that $\aa \cap \bb$ is nilpotent. Finally, since $A$ and $B$ are nonempty, the radicals of $\aa$ and $\bb$ cannot be comparable: For instance, if $\sqrt{\aa}$ were contained in $\sqrt{\bb}$, we would get $\V(\aa)\supseteq \V(\bb)$. Since $A\cap B = \emptyset$, we would deduce $\V(\bb)\subseteq Z$, but this would contradict the fact that $B$ is non-empty.
\end{proof}

\begin{lemma}\label{hilbert}
For a noetherian topological space $T$, the following are
equivalent:
\begin{compactitem}
\item[(i)] $T$ is $r$-connected; 
\item[(ii)] For each $T'$ and $T''$ irreducible
components of $T$, there exists a sequence $T'=T_{i_0}, T_{i_1}, \ldots,
T_{i_s}=T''$ such that $T_i$ is an irreducible component of $T$ for
all $i=0, \ldots, s$ and $\dim (T_{i_j} \cap T_{i_{j-1}}) \geq r$ for all
$j= 1, \ldots, s$.
\end{compactitem}
A sequence like in (ii) will be referred as an {\it $r$-connected sequence}\index{connected!r-@$r$-!sequence}.
\end{lemma}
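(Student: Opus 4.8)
The statement is Lemma \ref{hilbert}: for a Noetherian topological space $T$, $r$-connectedness is equivalent to the existence, between any two irreducible components, of an $r$-connected sequence of components. The plan is to prove the two implications separately, using the description of $r$-connectedness in terms of partitions of the set of irreducible components provided by Lemma \ref{milhouse} (i). Write $T_1,\ldots,T_m$ for the irreducible components of $T$.

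\textbf{First I would prove (i)$\Rightarrow$(ii).} Fix an irreducible component $T'$ and let $A\subseteq\{1,\ldots,m\}$ be the set of indices $i$ such that $T_i$ can be joined to $T'$ by an $r$-connected sequence of components; clearly $T'\in\{T_i:i\in A\}$, so $A\neq\emptyset$. I claim $A=\{1,\ldots,m\}$. If not, set $B:=\{1,\ldots,m\}\setminus A\neq\emptyset$; then $A$ and $B$ are disjoint nonempty subsets with union $\{1,\ldots,m\}$, so Lemma \ref{milhouse} (i) gives $\dim\bigl((\bigcup_{i\in A}T_i)\cap(\bigcup_{j\in B}T_j)\bigr)\geq r$. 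Now this intersection is $\bigcup_{i\in A,\,j\in B}(T_i\cap T_j)$, a finite union, so some $T_i\cap T_j$ with $i\in A$, $j\in B$ has dimension $\geq r$. But then appending $T_j$ to an $r$-connected sequence from $T'$ to $T_i$ yields an $r$-connected sequence from $T'$ to $T_j$, forcing $j\in A$, a contradiction. Hence every component is $r$-connected-sequence-joinable to $T'$, and since $T'$ was arbitrary, any two components $T'$, $T''$ can be joined via $T'$ and concatenation, giving (ii).

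\textbf{Then I would prove (ii)$\Rightarrow$(i).} Suppose $T$ is not $r$-connected; I must produce two components with no $r$-connected sequence between them. By definition there is a closed $Z\subseteq T$ with $T\setminus Z$ disconnected and $\dim Z<r$; we may take $Z$ minimal, so $T$ is $(\dim Z)$-connected and hence, by Lemma \ref{milhouse}, $\dim T_i>\dim Z$ for each $i$ (when $m>1$; the case $m=1$ makes $T$ irreducible, hence $r$-connected for all $r\le\dim T$, so (i) holds trivially and there is nothing to show). Writing $T\setminus Z=A\sqcup B$ as a disjoint union of nonempty (relatively) closed sets and taking closures in $T$, we partition $\{1,\ldots,m\}=I_A\sqcup I_B$ according to whether the generic point of $T_i$ lies in $\overline{A}$ or $\overline{B}$ (each generic point avoids $Z$ by the dimension bound, so lands in exactly one of $A$, $B$). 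For $i\in I_A$ and $j\in I_B$ the intersection $T_i\cap T_j$ is contained in $\overline{A}\cap\overline{B}\subseteq Z$, hence has dimension $\le\dim Z<r$. Therefore no $r$-connected sequence can cross from a component indexed in $I_A$ to one indexed in $I_B$: any such sequence is constrained to stay within the components consecutively joined by intersections of dimension $\geq r$, and the partition blocks all such crossings. Choosing $T'$ with index in $I_A$ and $T''$ with index in $I_B$ contradicts (ii).

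\textbf{The main obstacle} is the bookkeeping in (ii)$\Rightarrow$(i): one must be careful that the minimal choice of $Z$ really yields $\dim T_i>\dim Z$ for all $i$ (this is exactly where Lemma \ref{milhouse} (ii) is invoked, and it needs the reducibility of $T$), and that the generic point of each component genuinely lies outside $Z$ so that the partition $I_A\sqcup I_B$ is well defined. Once this is set up, the crossing obstruction is immediate. A minor subtlety is the degenerate cases $m=1$ (handled by irreducibility) and $T=\emptyset$ (handled by the stated conventions on the emptyset being disconnected of dimension $-1$), which should be dispatched at the start. Everything else is routine manipulation of finite unions of closed sets and their dimensions.
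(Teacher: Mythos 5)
Your proof is correct, and the first direction is essentially the paper's argument: the paper grows a set of components iteratively, at each stage applying Lemma \ref{milhouse} (i) to the partition (current set, complement) and extracting an irreducible closed subset of dimension $\geq r$ inside a single pairwise intersection; your maximal-set-plus-contradiction phrasing, using that the dimension of a finite union of closed sets is the maximum of the dimensions, is the same argument in different clothing. The second direction is where you genuinely diverge. The paper argues contrapositively by invoking the ``Moreover'' clause of Lemma \ref{milhouse} (i) to produce a partition $A\sqcup B$ of the set of components whose two unions meet in dimension $<r$, and then observes that any $r$-connected sequence joining opposite sides must have a crossing step, giving an intersection of dimension $\geq r$ inside that small set. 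You instead work straight from the definition: take a disconnecting closed set $Z$ of minimal dimension $<r$, use Lemma \ref{milhouse} (ii) to see no component lies in $Z$, sort the components by which piece of $T\setminus Z$ they meet, and check $\overline{A}\cap\overline{B}\subseteq Z$ so that cross-intersections have dimension $<r$; the crossing argument is then the same. What the paper buys is brevity (the partition of components is already packaged in Lemma \ref{milhouse} (i)); what your route buys is independence from that clause, at the cost of the closure bookkeeping and of Lemma \ref{milhouse} (ii). Two small remarks: your appeal to ``the generic point of $T_i$'' presumes the space is sober, which a general noetherian topological space need not be; replace it by the observation that $T_i\setminus Z$ is a nonempty (by $\dim T_i>\dim Z$) irreducible subset of $A\sqcup B$ with $A,B$ relatively closed and disjoint, hence lies in exactly one of them and is dense in $T_i$ --- this also shows $I_A$ and $I_B$ are both nonempty, a point you should state. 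Second, your explicit caveat $r\leq\dim T$ in the case $m=1$ is in fact needed (for $r>\dim T$ condition (ii) holds vacuously while (i) fails by the convention on the empty set), a degenerate case the paper's proof also passes over silently, so you handle it slightly more carefully than the original.
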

\begin{proof}
Let $T_1,\ldots ,T_m$ be the irreducible components of $T$.

(ii) $\implies$ (i). Suppose that $T$ is not $r$-connected. Pick two disjoint nonempty subsets $A,B\subseteq \{1,\ldots ,m\}$ such that $A\cup B = \{1,\ldots ,m\}$, and choose $p\in A$ and $q\in B$. Set $T':=T_{p}$ and $T'':=T_{q}$. By the hypothesis there is an $r$-connected sequence between $T'$ and $T''$, namely:
\[T'=T_{i_0}, T_{i_1}, \ldots,T_{i_s}=T''.\]
Of course there exists $k\in \{1,\ldots ,s\}$ such that $T_{i_{k-1}}\in A$ and $T_{i_k}\in B$. Therefore,
\[\dim (( \bigcup_{i \in A}T_i) \cap( \bigcup_{j \in B}T_j))\geq \dim(T_{i_{k-1}}\cap T_{i_k})\geq r.\]
This contradicts Lemma \ref{milhouse} (i).

(i) $\implies$ (ii). Set $T'=T_p$ for some $p\in\{1,\ldots ,m\}$. By Lemma \ref{milhouse} (i), 
\[\dim(T'\cap (\bigcup_{i\neq p}T_i))\geq r.\]
Therefore there exists an irreducible closed subset $S_1\subseteq T'\cap (\bigcup_{i\neq p}T_i)$ of dimension bigger than or equal to $r$. Being $S_1$ irreducible, there exists $j_1\neq p$ such that $S_1\subseteq T'\cap T_{j_1}$. So
\[\dim(T'\cap T_{j_1})\geq r.\]
Set $A_1:=\{p,j_1\}$ and $B_1:=\{1,\ldots ,m\}\setminus \{p,j_1\}$. Arguing as above, Lemma \ref{milhouse} implies that there is $j_2\in B_1$ such that:
 \[\dim(T'\cap T_{j_2})\geq r \ \ \ \mbox{or} \ \ \ \dim(T_{j_1}\cap T_{j_2})\geq r.\]
 Going on this way, we can show that the graph whose vertices are the $T_i$'s and such that $\{T_i,T_j\}$ is an edge if and only if $\dim(T_i\cap T_j)\geq r$ is a connected graph. This is in turn equivalent to (ii).
\end{proof}

As we saw in Equation \eqref{cohomcompl1}, the cohomological dimension does not change under completion. This is not the case for the connectedness. Since in Chapter \ref{chapter1} we compared the cohomological dimension $\cd(R,\aa)$ with the connectedness of $R/\aa$, the following lemma is necessary.

\begin{lemma}\label{boe}
Let $R$ be a local ring. The following hold:
\begin{compactitem}
\item[(i)] If $\widehat{R}$ is $r$-connected, then $R$ is $r$-connected as well. 
\item[(ii)] if $\wp \widehat{R} \in \Spec(\widehat{R})$ for all minimal prime ideals
$\wp$ of $R$, then $\widehat{R}$ is $r$-connected if and only if $R$ is $r$-connected.
\end{compactitem}
\end{lemma}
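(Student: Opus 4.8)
\textbf{Proof plan for Lemma \ref{boe}.}

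The statement is a routine but somewhat delicate comparison between $\Spec(R)$ and $\Spec(\widehat R)$, which I would prove using the faithful flatness of the completion map $R \to \widehat R$ together with the going-down and lying-over properties it entails. The plan is to work throughout with the characterization of $r$-connectedness given in Lemma \ref{hilbert}, i.e.\ via $r$-connected sequences of irreducible components, since this is the most flexible tool for transporting connectedness information along a ring map.

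For part (i), I would argue by contraposition: assume $R$ is not $r$-connected and deduce the same for $\widehat R$. By Lemma \ref{lemmaconn} there exist ideals $\aa, \bb \subseteq R$ with $\sqrt{\aa}, \sqrt{\bb}$ incomparable, $\aa \cap \bb$ nilpotent, and $\dim R/(\aa+\bb) < r$. Extend to $\widehat R$: since $\widehat R$ is flat over $R$, we have $(\aa \cap \bb)\widehat R = \aa \widehat R \cap \bb \widehat R$, which is still nilpotent; moreover $\dim \widehat R/(\aa+\bb)\widehat R = \dim R/(\aa+\bb) < r$ because completion preserves dimension of a local ring and $R/(\aa+\bb)$ is local. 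The one genuine point to check is that $\sqrt{\aa \widehat R}$ and $\sqrt{\bb \widehat R}$ remain incomparable; here I would use that $\aa \widehat R \cap R$-type statements follow from faithful flatness (if $\sqrt{\aa\widehat R} \subseteq \sqrt{\bb \widehat R}$ then contracting back gives $\sqrt{\aa} \subseteq \sqrt{\bb}$, because $\widehat R / \sqrt{\bb}\widehat R$-considerations, or more directly $\V(\aa\widehat R) \subseteq \V(\bb \widehat R)$ contracts to $\V(\aa)\subseteq \V(\bb)$ by surjectivity of $\Spec \widehat R \to \Spec R$). Applying Lemma \ref{lemmaconn} to $\widehat R$ then shows $\widehat R$ is not $r$-connected. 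Actually, since we only need the direction ``$\widehat R$ $r$-connected $\Rightarrow$ $R$ $r$-connected'', this contrapositive is exactly what is required.

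For part (ii), the hypothesis $\wp \widehat R \in \Spec(\widehat R)$ for every minimal prime $\wp$ of $R$ is what lets us run the converse. Given this, the minimal primes of $\widehat R$ are precisely (the radicals of, or up to nilpotents) the ideals $\wp \widehat R$ as $\wp$ runs over $\Min(R)$ — this uses that completion preserves the dimension of each $R/\wp$, so the $\wp \widehat R$ are minimal and there are no others lying over a non-minimal prime. Thus $\Min(R)$ and $\Min(\widehat R)$ are in inclusion-preserving bijection, and for minimal primes $\wp, \wp'$ of $R$ one checks $\dim \widehat R/(\wp \widehat R + \wp' \widehat R) = \dim R/(\wp + \wp')$, again since these quotients are local and completion is dimension-preserving (here one may need $\wp\widehat R + \wp'\widehat R$ to have the ``right'' radical, which follows from flatness). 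Consequently an $r$-connected sequence of irreducible components of $\Spec R$ transports verbatim to one for $\Spec \widehat R$ and vice versa, so by Lemma \ref{hilbert} the two spaces are $r$-connected simultaneously. Combined with part (i) this finishes (ii).

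The main obstacle I anticipate is the bookkeeping around \emph{radicals of extended ideals}: statements like $\sqrt{\aa \widehat R} = \sqrt{\aa}\widehat R$ or the identification of $\Min(\widehat R)$ with $\{\wp\widehat R\}$ are true under the stated hypotheses but require a careful argument, because in general extension does not commute with taking radicals and $\widehat R$ may have more minimal primes than $R$ (that is exactly why the hypothesis in (ii) is imposed, and why (i) is only a one-way implication). Everything else is a formal manipulation of the $r$-connected-sequence criterion and of dimensions of local rings under completion.
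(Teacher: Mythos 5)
The paper itself offers no proof of Lemma \ref{boe}: it is quoted verbatim from \cite[Lemma 19.3.1]{BS}. So your argument is an independent one, and its overall strategy is sound: for (i), contraposition through Lemma \ref{lemmaconn} works, since $(\aa\cap\bb)\widehat{R}=\aa\widehat{R}\cap\bb\widehat{R}$ by flatness, $\dim \widehat{R}/(\aa+\bb)\widehat{R}=\dim R/(\aa+\bb)$ because $\widehat{R}/(\aa+\bb)\widehat{R}\cong \widehat{R/(\aa+\bb)}$, and incomparability of $\sqrt{\aa\widehat{R}}$ and $\sqrt{\bb\widehat{R}}$ does follow from faithful flatness (surjectivity of $\Spec(\widehat{R})\rightarrow \Spec(R)$, or $I\widehat{R}\cap R=I$); for (ii), transporting $r$-connected sequences through Lemma \ref{hilbert} along a bijection $\Min(R)\leftrightarrow \Min(\widehat{R})$ preserving $\dim$ of pairwise sums is exactly the right mechanism.

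Two steps need tightening. First, in (i), Lemma \ref{lemmaconn} is stated only for connected rings with more than one minimal prime, so the case where $R$ has a unique minimal prime must be disposed of separately: then $R$ is irreducible, hence $s$-connected for all $s\leq \dim R$, so ``$R$ not $r$-connected'' forces $r>\dim R=\dim \widehat{R}$ and $\widehat{R}$ trivially fails to be $r$-connected. (Applying the converse direction of Lemma \ref{lemmaconn} to $\widehat{R}$ is harmless: incomparability of the two radicals plus nilpotency of the intersection already forces $\widehat{R}$ to have at least two minimal primes.) Second, and more substantively, your justification in (ii) for $\Min(\widehat{R})=\{\wp\widehat{R}:\wp\in\Min(R)\}$ --- ``completion preserves the dimension of each $R/\wp$'' --- does not prove minimality: dimension counts say nothing here when the ring is not equidimensional. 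The correct argument is: by going-down for the flat map $R\rightarrow \widehat{R}$, every minimal prime $Q$ of $\widehat{R}$ contracts to a minimal prime $\wp$ of $R$; then $Q\supseteq \wp\widehat{R}$, which is prime by hypothesis, so $Q=\wp\widehat{R}$ by minimality of $Q$; conversely each $\wp\widehat{R}$ contains some minimal prime of $\widehat{R}$, which by the previous step equals $\wp'\widehat{R}$ with $\wp'\in\Min(R)$, and contracting via $\wp\widehat{R}\cap R=\wp$, $\wp'\widehat{R}\cap R=\wp'$ gives $\wp'=\wp$. Once this is fixed, the rest of (ii) goes through as you say: $\wp\widehat{R}+\wp'\widehat{R}=(\wp+\wp')\widehat{R}$ needs no flatness and no radical bookkeeping, its quotient is $\widehat{R/(\wp+\wp')}$, so the relevant dimensions agree and Lemma \ref{hilbert} transfers $r$-connected sequences in both directions.
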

\begin{proof}
The reader can find the proof in \cite[Lemma
19.3.1]{BS}.
\end{proof}

Because Lemma \ref{boe} (ii), it is interesting to know local rings whose minimal prime ideals extend to prime ideals of the completion.  The following Lemma provides a good class of such rings. Even if the proof is standard, we include it here for the convenience of the reader.

\begin{lemma}\label{krusty}
Let $R$ be a graded ring and $\mathfrak{m} :=
R_+$ denote the irrelevant ideal of $R$. If $\wp$ is a graded prime of $R$, then $\wp
\widehat{R^{\mathfrak{m}}}\in \Spec(\widehat{R^{\mathfrak{m}}})$.
In particular, if $R$ is a domain, $\widehat{R^{\mathfrak{m}}}$ is
a domain as well.
\end{lemma}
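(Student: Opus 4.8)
The statement to prove is Lemma~\ref{krusty}: for a graded ring $R$ with irrelevant ideal $\mm = R_+$, if $\wp$ is a graded prime, then $\wp\widehat{R^{\mm}} \in \Spec(\widehat{R^{\mm}})$; in particular if $R$ is a domain then so is $\widehat{R^{\mm}}$.

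\begin{proof}
Since the completion with respect to $\mm$ factors through $R/\wp$ in a compatible way, namely $\widehat{R^{\mm}}/\wp\widehat{R^{\mm}} \cong \widehat{(R/\wp)^{\bar{\mm}}}$ where $\bar{\mm}$ is the image of $\mm$ (this uses that completion is exact on finitely generated modules over a Noetherian ring, or directly that $\widehat{R^{\mm}}\otimes_R R/\wp \cong \widehat{(R/\wp)}$ as in \eqref{cohomcompl}), it suffices to treat the case $\wp = 0$, i.e. to show that if $R$ is a graded domain then $\widehat{R^{\mm}}$ is a domain. So the plan is: first reduce to the domain case by the above quotient identity, then prove that the $\mm$-adic completion of a graded domain is a domain.

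For the domain case, the key point is that the $\mm$-adic topology on a graded ring is controlled by the grading. Concretely, I would observe that $\mm^k = \bigoplus_{i\geq k}R_i$ when $R$ is standard graded (and more generally $\mm^k \supseteq \bigoplus_{i\geq k d}R_i$ and $\mm^k \subseteq \bigoplus_{i \geq k}R_i$ for $R$ generated in degrees $\leq d$), so that the graded pieces $R_i$ inject into the completion and $\widehat{R^{\mm}}$ can be described as the set of formal sums $\sum_{i\geq 0} a_i$ with $a_i \in R_i$. In other words, $\widehat{R^{\mm}}$ is naturally identified with $\prod_{i\in\NN} R_i$ equipped with the multiplication induced by $R_i R_j \subseteq R_{i+j}$ — the "ring of formal power series with homogeneous coefficients". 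Granting this description, the argument that it is a domain is the standard leading-term argument: given nonzero $f = \sum_{i\geq p}a_i$ and $g = \sum_{j\geq q}b_j$ with $a_p \neq 0$ and $b_q \neq 0$, the degree-$(p+q)$ component of $fg$ is exactly $a_p b_q$, which is nonzero because $R$ is a domain; hence $fg \neq 0$.

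The main obstacle is making the identification $\widehat{R^{\mm}}\cong \prod_{i} R_i$ precise and checking that the ring structure is the expected one — one has to verify that $\bigcap_k \mm^k = 0$ (so $R \hookrightarrow \widehat{R^{\mm}}$), that a Cauchy sequence has a well-defined "homogeneous expansion", and that multiplication is continuous and agrees with the convolution product on $\prod_i R_i$. All of this follows from the cofinality of the filtrations $(\mm^k)$ and $(\bigoplus_{i\geq k}R_i)$, which in turn is immediate once we fix homogeneous algebra generators $y_1,\dots,y_s$ of $R$ over $R_0$ of degrees $d_1,\dots,d_s$: then $\mm^k$ contains every monomial in the $y_\ell$ of total degree $\geq k$, hence contains $\bigoplus_{i\geq k \max d_\ell} R_i$, while conversely $\mm^k \subseteq \bigoplus_{i\geq k\min d_\ell} R_i$ when $R_0 \cap \mm = 0$ (e.g. $R_0$ a field, which is the relevant situation). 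Once these routine verifications are in place, the leading-coefficient argument closes the proof, and the general statement for graded $\wp$ follows by applying the domain case to the graded domain $R/\wp$.
\end{proof}
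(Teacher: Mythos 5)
Your proof is correct and follows essentially the same route as the paper: both reduce the graded prime $\wp$ to the case of the graded domain $R/\wp$ (via $\widehat{R^{\mm}}/\wp\widehat{R^{\mm}}\cong\widehat{(R/\wp)}$) and both rest on the cofinality of the $\mm$-adic filtration with the filtration $I_k=\bigoplus_{i\geq k}R_i$ coming from the grading. The only difference is presentational: the paper handles the domain case by noting that the associated graded ring of this filtration on the completion is isomorphic to $R$ (a domain) and the filtration is separated, whereas you unwind that abstract fact into the explicit model $\widehat{R^{\mm}}\cong\prod_{i}R_i$ with convolution product and run the lowest-degree-term argument directly — the same mechanism in concrete form.
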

\begin{proof}
We prove first that if $R$ is a domain then
$\widehat{R^{\mathfrak{m}}}$ is a domain as well. Consider the multiplicative
filtration $\mathcal{F}:=(I_m)_{m \in \mathbb{N}}$ of ideals of $R$, where $I_m$ are defined as $I_m = (\{ f \in R_j: j \geq m
\})$. Let us consider the associated graded ring associated to $\F$, namely $\mbox{gr}_{\mathcal{F}}(R) =
\oplus_{m=0}^{\infty} I_m/I_{m + 1}$. Obviously there is a graded
isomorphism of $R_0$-algebras between $R$ and
$\mbox{gr}_{\mathcal{F}}(R)$. Now, let $\widehat{R^{\mathcal{F}}}$
denote the completion of $R$ with respect to the filtration
$\mathcal{F}$, and let $\mathcal{G}$ be the filtration $(I_m
\widehat{R^{\mathcal{F}}})_{m \in \mathbb{N}}$. It is well known
that \[ \mbox{gr}_{\mathcal{F}}(R) \cong
\mbox{gr}_{\mathcal{G}}(\widehat{R^{\mathcal{F}}}).\] 
By these considerations we can assert
that $\mbox{gr}_{\mathcal{G}}(\widehat{R^{\mathcal{F}}})$ is a
domain; since $\cap_{m \in \mathbb{N}}I_m
\widehat{R^{\mathcal{F}}}=0$, $\widehat{R^{\mathcal{F}}}$ is a
domain as well. Since
the inverse families of ideals $(I_j)_{j \in \mathbb{N}}$ and
$(\mathfrak{m}^j)_{j \in \mathbb{N}}$ are cofinal, $\widehat{R^{\mm}}$ is a domain.
For the more general claim of the lemma we have only to note that,
if $\wp$ is a graded prime of $R$, then $R/\wp$ is an graded 
domain and use the
previous part of the proof.
\end{proof}

\begin{remark}
Thanks to Lemma \ref{krusty}, we can apply Lemma \ref{boe} (ii) to (localizations of) graded rings $R$. In fact, every minimal prime ideals of $R$ is graded (see Bruns and Herzog \cite[Lemma 1.5.6]{BH}). 
\end{remark}

\section{Depth and connectedness}\index{depth|(}

A result of Hartshorne in \cite{hartshorne} (see also Eisenbud's book \cite[Theorem
18.12]{eisenbud}), asserts  that a Cohen-Macaulay ring is
connected in codimension 1. In fact there is a relationship between depth and connectedness. To explain it we need the following lemma.

\begin{lemma}\label{maggie}
Let $(R,\mathfrak{m})$ be local. Then
\[ \dim R/\mathfrak{a} \geq \depth(R) - \grade(\mathfrak{a},R). \]
\end{lemma}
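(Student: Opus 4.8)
The plan is to kill a maximal regular sequence contained in $\mathfrak{a}$, thereby reducing to the case $\grade(\mathfrak{a},R)=0$, and then invoke the standard upper bound $\depth \leq \dim R/\wp$ over associated primes. This makes the inequality essentially formal once two classical facts are in hand: Rees's theorem (already recalled before \eqref{gradeloccoh}) that $\grade(\mathfrak{a},R)$ equals the common length of maximal $R$-regular sequences in $\mathfrak{a}$, and the fact that an $R$-regular sequence of length $g$ drops depth by exactly $g$.

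First I would set $g:=\grade(\mathfrak{a},R)$ and pick a maximal $R$-regular sequence $x_1,\dots,x_g\in\mathfrak{a}$. Put $\bar R:=R/(x_1,\dots,x_g)$. Since $\mathfrak{a}\subseteq\mathfrak{m}$ we have $\bar R\neq 0$ and $\mathfrak{a}\bar R\neq\bar R$, so both $\depth\bar R$ and $\grade(\mathfrak{a}\bar R,\bar R)$ make sense; and $\depth\bar R=\depth R-g$. The maximality of $x_1,\dots,x_g$ says precisely that $\mathfrak{a}$ contains no $\bar R$-regular element, i.e. $\mathfrak{a}\bar R$ is contained in the union of the (finitely many) associated primes of $\bar R$; by prime avoidance there is a single $\wp\in\Ass(\bar R)$ with $\mathfrak{a}\bar R\subseteq\wp$. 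Note that when $g=0$ one simply has $\bar R=R$, so no separate base case is needed.

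Then I would chain the inequalities: using $R/\mathfrak{a}\cong\bar R/\mathfrak{a}\bar R$ and $\mathfrak{a}\bar R\subseteq\wp$ we get
\[
\dim R/\mathfrak{a}=\dim\bar R/\mathfrak{a}\bar R\geq\dim\bar R/\wp\geq\depth\bar R=\depth R-g,
\]
where the middle inequality is the standard estimate $\depth M\leq\dim R/\wp$ for a finitely generated module $M$ over a Noetherian local ring and any $\wp\in\Ass M$ (applied to $M=\bar R$; see Matsumura \cite{matsu}). This is exactly the desired bound $\dim R/\mathfrak{a}\geq\depth R-\grade(\mathfrak{a},R)$. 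There is no real obstacle here: the entire content is the two cited black boxes together with the observation that passing to $\bar R$ simultaneously lowers the depth by $g$ and renders $\mathfrak{a}$ of grade zero; the only thing to watch is keeping $\bar R$ and $\mathfrak{a}\bar R$ within the range where grade and depth are defined, which is automatic from $\mathfrak{a}\subseteq\mathfrak{m}$.
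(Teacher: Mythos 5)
Your proof is correct and follows essentially the same route as the paper: choose a maximal $R$-regular sequence of length $g=\grade(\mathfrak{a},R)$ inside $\mathfrak{a}$, use prime avoidance to find an associated prime of the quotient containing $\mathfrak{a}$, and conclude with the standard inequality $\depth M\leq\dim R/\wp$ for $\wp\in\Ass(M)$ (Matsumura, Theorem 17.2). The only difference is cosmetic: you phrase the argument inside $\bar R=R/(x_1,\dots,x_g)$, while the paper works with $\Ass(R/J)$ directly in $R$.
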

\begin{proof}
Set $k:= \depth(R)$ and $g:=\grade(\mathfrak{a},R)$. Let $f_1,
\ldots, f_g \in \mathfrak{a}$ be an $R$-sequence. If $J:=(f_1,
\ldots, f_g)$ we must have 
\[ \displaystyle \mathfrak{a} \subseteq \bigcup_{\wp \in
Ass(R/J)} \wp ,\]
so there exists $\wp \in \mbox{Ass}(R/J)$ such that
$\mathfrak{a} \subseteq \wp$.
Since $\depth(R/J) = k - g$, moreover, we have $\dim R/ \wp \geq k - g$ (for instance see Matsumura \cite[Theorem
17.2]{matsu}). 
\end{proof}

The relation between connectedness and depth is expressed by the following two results.

\begin{prop}\label{barney}
If $R$ is a local ring such that $\depth(R)=r+1$, then it is $r$-connected.
\end{prop}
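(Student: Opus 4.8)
If $R$ is a local ring such that $\depth(R)=r+1$, then it is $r$-connected.

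The plan is to argue by contradiction following the standard Hartshorne-type argument, using Lemma \ref{lemmaconn} to convert a failure of $r$-connectedness into a purely ideal-theoretic statement, and then using Lemma \ref{maggie} to derive a contradiction with the depth hypothesis. First I would dispose of the degenerate cases: if $R$ has a unique minimal prime then $\Spec(R)$ is irreducible, hence $(\dim R)$-connected, and since $\depth(R)\leq \dim R = r+1$ we get $r+1 > r$, so $R$ is certainly $r$-connected; thus we may assume $R$ has at least two minimal primes. Note also that $R$ is connected (a local ring always is), so if $r\leq 0$ there is nothing to prove and we may assume $r\geq 1$.

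Now suppose for contradiction that $R$ is not $r$-connected. By Lemma \ref{lemmaconn} applied to the connected ring $R$ (which has more than one minimal prime), there exist ideals $\aa$ and $\bb$ such that $\sqrt{\aa}$ and $\sqrt{\bb}$ are incomparable, $\aa\cap\bb$ is nilpotent, and $\dim R/(\aa+\bb) < r$. The key step is to extract from this a short exact sequence that forces $\depth$ to be small. Since $\aa\cap\bb$ is nilpotent, $\sqrt{\aa}\cap\sqrt{\bb}=\sqrt{\aa\cap\bb}=\sqrt{(0)}$, so replacing $\aa,\bb$ by their radicals we may assume $\aa\cap\bb$ is the nilradical; and since depth and the relevant dimensions are unchanged modulo the nilradical, we may even assume $R$ is reduced and $\aa\cap\bb=(0)$. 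Then the natural map $R\longrightarrow R/\aa\oplus R/\bb$ is injective with cokernel supported on $\V(\aa+\bb)$; more precisely there is an exact sequence
\[
0\longrightarrow R\longrightarrow R/\aa\oplus R/\bb\longrightarrow R/(\aa+\bb)\longrightarrow 0.
\]
The incomparability of $\sqrt{\aa}$ and $\sqrt{\bb}$ guarantees that both $R/\aa$ and $R/\bb$ are nonzero and that the map $R/\aa\oplus R/\bb\to R/(\aa+\bb)$ is indeed surjective (it is the difference of the two quotient maps, whose images together generate $R/(\aa+\bb)$).

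The final step is a depth count on this sequence. Applying $H_{\mm}^\bullet(-)$ and the long exact sequence \eqref{longexact}, together with the characterization $\depth M = \min\{i : H_{\mm}^i(M)\neq 0\}$ from \eqref{depthloccoh}, one gets the standard inequality
\[
\depth(R)\ \geq\ \min\{\depth(R/\aa\oplus R/\bb),\ \depth(R/(\aa+\bb))+1\}.
\]
Now $\depth(R/(\aa+\bb))\leq \dim R/(\aa+\bb) < r$ by Theorem \ref{vanishinggro}, so $\depth(R/(\aa+\bb))+1 \leq r$. On the other hand, I claim $\depth(R/\aa\oplus R/\bb)\leq r$ as well: by Lemma \ref{maggie} applied in $R$ with the ideal $\aa$ (noting $\grade(\aa,R)\geq 1$ since $\aa\neq 0$ as $R$ is reduced with $\aa\cap\bb = 0$ and $\bb\neq$ nilradical) we get $\dim R/\aa \geq \depth(R) - \grade(\aa,R)$, but this inequality goes the wrong way, so instead I would argue directly: $\depth(R/\aa) \leq \dim R/\aa$, and since $\sqrt{\bb}\not\subseteq\sqrt{\aa}$ there is a minimal prime $\wp$ of $R$ with $\wp\supseteq\aa$ but $\wp\not\supseteq\bb$; as $\aa\cap\bb=(0)$ forces every minimal prime to contain $\aa$ or $\bb$, the minimal primes over $\aa$ are exactly those not containing $\bb$, and their union is $\V(\aa)$. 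The cleanest route is: $\depth(R/\aa\oplus R/\bb) = \min\{\depth R/\aa,\depth R/\bb\}$, and I would bound $\min\{\dim R/\aa, \dim R/\bb\}$ using that $\V(\aa)\cap\V(\bb) = \V(\aa+\bb)$ has dimension $<r$ together with Lemma \ref{milhouse} — indeed the components of $\Spec(R)$ (which is $\depth(R)=r+1$-dimensional or less) fall into those in $\V(\aa)$ and those in $\V(\bb)$, and the main obstacle is exactly pinning down that this forces $\min\{\depth R/\aa, \depth R/\bb\} \leq r$ rather than something larger. Granting this, both terms in the displayed minimum are $\leq r$, so $\depth(R)\leq r < r+1$, contradicting the hypothesis. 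Hence $R$ is $r$-connected. The hard part will be organizing the reduction to the reduced case cleanly and justifying the depth bound on $R/\aa\oplus R/\bb$; I expect this to follow from the same circle of ideas (Lemma \ref{maggie} and \eqref{depthloccoh}) but it requires care to avoid circularity with the degenerate cases.
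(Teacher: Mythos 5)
Your opening moves coincide with the paper's: the degenerate cases, and the use of Lemma \ref{lemmaconn} to turn a failure of $r$-connectedness into ideals $\aa,\bb$ with incomparable radicals, $\aa\cap\bb$ nilpotent and $\dim R/(\aa+\bb)<r$. The way you then try to contradict $\depth(R)=r+1$, however, has genuine gaps. First, passing to the reduced ring is not harmless: depth is not preserved modulo the nilradical (already $\kkk[x,y]_{(x,y)}/(x^2,xy)$ has depth $0$ while its reduction has depth $1$), so after reducing you may no longer assume $\depth(R)=r+1$; what you actually have, with no reduction, is the sequence $0\to R/(\aa\cap\bb)\to R/\aa\oplus R/\bb\to R/(\aa+\bb)\to 0$, whose left term need not have the same depth as $R$. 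Second, and decisively, the inequality you extract from the long exact sequence of local cohomology, $\depth(R)\geq\min\{\depth(R/\aa\oplus R/\bb),\ \depth(R/(\aa+\bb))+1\}$, points the wrong way: a lower bound on $\depth(R)$ can never contradict $\depth(R)=r+1$, however small the right-hand side is, so even granting your claimed bound the conclusion $\depth(R)\leq r$ does not follow. Third, the bound you say you would need, $\min\{\depth(R/\aa),\depth(R/\bb)\}\leq r$, has no visible proof: the only upper bound available is $\depth(R/\aa)\leq\dim(R/\aa)$, and since every associated prime $\wp$ of $R$ satisfies $\dim R/\wp\geq\depth(R)=r+1$ (\cite[Theorem 17.2]{matsu}), both $\dim(R/\aa)$ and $\dim(R/\bb)$ are at least $r+1$; in analogous situations these depths really do exceed $\depth(R)-1$ (for $R=\kkk[[x_1,\ldots,x_5]]/((x_1,x_2)\cap(x_3,x_4))$ one has $\depth(R)=2$ while the quotients by the two component ideals both have depth $3$).

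What your argument is missing is the actual content of Hartshorne's connectedness theorem, which is where the locality of $R$ enters beyond Lemma \ref{lemmaconn}: from $\sqrt{\aa}$, $\sqrt{\bb}$ incomparable and $\aa\cap\bb$ nilpotent one deduces $\grade(\aa+\bb,R)\leq 1$. Indeed, if $\grade(\aa+\bb,R)\geq 2$ then $H_{\aa+\bb}^0(R)=H_{\aa+\bb}^1(R)=0$, and the Mayer--Vietoris sequence \eqref{mayervietoris} gives $R=H_{\aa\cap\bb}^0(R)\cong H_{\aa}^0(R)\oplus H_{\bb}^0(R)$ with both summands nonzero by incomparability, i.e.\ a nontrivial idempotent in the local ring $R$, which is absurd. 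The paper simply quotes this as \cite[Theorem 18.12]{eisenbud} and then applies Lemma \ref{maggie} to the ideal $\aa+\bb$ --- not to $\aa$, which, as you noticed yourself, gives an inequality in the useless direction --- obtaining $\dim R/(\aa+\bb)\geq\depth(R)-\grade(\aa+\bb,R)\geq (r+1)-1=r$, in direct contradiction with $\dim R/(\aa+\bb)<r$. Once the grade bound is in hand, the proof closes in two lines; without it, no amount of depth chasing along your exact sequence will produce the contradiction.
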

\begin{proof}
If $R$ has only one minimal prime, then the proposition is obvious, therefore we can suppose that $R$ has at least two minimal prime ideals.
If $R$ were not $r$-connected, by Lemma \ref{lemmaconn} there would exist two ideals $\aa$ and $\bb$ whose radicals are incomparable, such that $\aa \cap \bb$ is nilpotent and such that $\dim R/ (\aa + \bb) < r$.
The first two conditions together with
(\cite[Theorem 18.12]{eisenbud}) imply $\grade(\mathfrak{a}
+ \mathfrak{b}, R) \leq 1$. Then, from Lemma \ref{maggie}, we would have
$\dim R/(\mathfrak{a} + \mathfrak{b}) \geq r$, which is a
contradiction.
\end{proof}

\begin{corollary}\label{serre2}
Let $R$ be a catenary local ring satisfying $S_2$ Serre's condition. Then $R$ is connected in codimension 1.
\end{corollary}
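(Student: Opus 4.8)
The plan is to argue by contradiction, localizing at a well-chosen prime so that the $S_2$ hypothesis supplies enough depth to force $1$-connectedness locally (via Proposition \ref{barney}), in contradiction with a disconnection produced by Lemma \ref{lemmaconn}. Set $d:=\dim R$. If $d\leq 1$ the claim is trivial since a local ring is $0$-connected, and if $\Spec R$ is irreducible then $R$ is $d$-connected by Remark \ref{bprc}, so I may assume $d\geq 2$ and that $R$ has minimal primes $\wp_1,\dots,\wp_m$ with $m\geq 2$. Suppose $R$ is not connected in codimension $1$, i.e. not $(d-1)$-connected. As $R$ is local, $\Spec R$ is connected, so there is a largest $r$ for which $R$ is $r$-connected, and $0\leq r\leq d-2$. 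By Lemma \ref{milhouse} (i) I can then partition $\{1,\dots,m\}$ into nonempty disjoint sets $A,B$ so that, putting $\aa:=\bigcap_{i\in A}\wp_i$ and $\bb:=\bigcap_{j\in B}\wp_j$, one has $\dim R/(\aa+\bb)=r$; note $\sqrt{\aa}=\aa$ and $\sqrt{\bb}=\bb$ are incomparable and $\aa\cap\bb$ is the nilradical.

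Next I would choose a minimal prime $\wp$ of $\aa+\bb$ with $\dim R/\wp=\dim R/(\aa+\bb)=r$. Concatenating a saturated chain below $\wp$ with one from $\wp$ up to $\mm$ gives $\height\wp\geq d-\dim R/\wp=d-r\geq 2$ (this is the only inequality used, and it requires nothing beyond Noetherianness; catenariness is available but not actually needed here). Since Serre's condition $S_2$ localizes, $R_\wp$ is $S_2$, hence $\depth R_\wp\geq\min\{2,\dim R_\wp\}=2$ because $\dim R_\wp=\height\wp\geq 2$. By Proposition \ref{barney} (together with Remark \ref{bprc}) the ring $R_\wp$ is $1$-connected.

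The contradiction will come from showing $R_\wp$ is \emph{not} $1$-connected. Because $\aa\subseteq\wp$ and $\bb\subseteq\wp$, primeness of $\wp$ forces $\wp_i\subseteq\wp$ for some $i\in A$ and $\wp_j\subseteq\wp$ for some $j\in B$; since $\wp_i\neq\wp_j$, the local ring $R_\wp$ has at least two minimal primes. One checks that $\aa R_\wp$ and $\bb R_\wp$ satisfy the three conditions of Lemma \ref{lemmaconn}: their intersection $(\aa\cap\bb)R_\wp$ is contained in the nilradical of $R_\wp$, and $\dim R_\wp/(\aa R_\wp+\bb R_\wp)=\dim R_\wp/\wp R_\wp=0<1$ since $\wp$ is minimal over $\aa+\bb$. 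The main obstacle is the remaining condition, incomparability of $\sqrt{\aa R_\wp}$ and $\sqrt{\bb R_\wp}$: here I would observe that $\sqrt{\aa R_\wp}$ is the intersection of the $\wp_i R_\wp$ over the nonempty family $\{i\in A:\wp_i\subseteq\wp\}$, and $\sqrt{\bb R_\wp}$ the analogous intersection over $B$; these two families of minimal primes of $R_\wp$ are disjoint, and distinct minimal primes of $R$ contained in $\wp$ remain distinct after localizing, so a containment between the two intersections would force two of them to coincide, which is impossible. Thus Lemma \ref{lemmaconn} yields that $R_\wp$ is not $1$-connected, contradicting the previous step; hence $R$ is connected in codimension $1$. (In passing, the same argument applied with $\depth R_\wp=\dim R_\wp$ recovers the fact, recalled before Lemma \ref{maggie}, that Cohen--Macaulay local rings are connected in codimension $1$.)
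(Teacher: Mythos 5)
Your overall strategy is the paper's: localize at a prime $\wp$ minimal over $\aa+\bb$, use $S_2$ to get $\depth R_{\wp}\geq 2$, hence $1$-connectedness of $R_{\wp}$ by Proposition \ref{barney}, and contradict this with the disconnection coming from $\aa$ and $\bb$ (your verification of the hypotheses of Lemma \ref{lemmaconn} in $R_{\wp}$ is correct and more detailed than the paper's one-line remark). The genuine gap is the inequality $\height \wp\geq d-\dim R/\wp$. Concatenating a saturated chain below $\wp$ with one from $\wp$ up to $\mm$ produces a chain of length $\height\wp+\dim R/\wp$ in $\Spec(R)$, which proves $\height\wp+\dim R/\wp\leq d$, i.e. the \emph{opposite} inequality; the inequality you need is the dimension formula, and it is false in general Noetherian local rings and even in catenary ones: for $R=\kkk[[x,y,z,w]]/((x)\cap(y,z,w))$, which is catenary being a quotient of a regular ring, and $\wp=(y,z,w)$ one has $\height\wp=0$ while $d-\dim R/\wp=2$. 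So the parenthetical claim that the step ``requires nothing beyond Noetherianness'' is wrong, and catenariness alone does not save it either: what catenariness honestly gives is $\height\wp\geq\height(\wp/\wp_i)=\dim R/\wp_i-\dim R/\wp$ for a minimal prime $\wp_i\subseteq\wp$ of $R$, and in your setup Lemma \ref{milhouse} (ii) only guarantees $\dim R/\wp_i\geq r+1$, hence $\height\wp\geq 1$, which is not enough for $S_2$ to force $\depth R_{\wp}\geq 2$. What is really needed is that $\wp$ contains a minimal prime of coheight $d$, i.e. essentially the equidimensionality of $R$ — and that is precisely where the hypotheses you set aside at this step must enter.

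The proof can be repaired, but it costs one more run of the same machinery: first show that a catenary $S_2$ local ring of dimension $d\geq 2$ is equidimensional. If it were not, let $\aa'$ be the intersection of the minimal primes of coheight $d$ and $\bb'$ the intersection of the remaining ones, and let $\qq$ be a prime minimal over $\aa'+\bb'$; then $\V(\aa' R_{\qq})$ and $\V(\bb' R_{\qq})$ disconnect the punctured spectrum of $R_{\qq}$, so $\depth R_{\qq}\leq 1$ by Proposition \ref{barney}, hence $\height\qq\leq 1$ by $S_2$; since $\qq$ strictly contains minimal primes from both families, $\height\qq=1$, and catenariness computes $\dim R/\qq=d-1$ through the first family but $\dim R/\qq\leq d-2$ through the second, a contradiction. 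Once $R$ is known to be catenary and equidimensional, $\height\wp=d-\dim R/\wp$ holds for every prime, your bound $\height\wp\geq d-r\geq 2$ is justified, and the rest of your argument (which coincides with the paper's) goes through. Be aware that the paper compresses this same point into the sentence ``since $R$ is catenary $\height(\wp)\geq 2$''; the honest justification is the one above, not a chain-concatenation argument.
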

\begin{proof}
If $R$ has only one minimal prime, then it is clearly connected in codimension 1, therefore we can assume that $R$ has at least two minimal prime ideals. If $R$ were not connected in codimension 1, by Lemma \ref{lemmaconn} there would exist ideals $\aa$ and $\bb$ whose radicals are incomparable, such that $\aa \cap \bb$ is nilpotent and such that $\dim R/ (\aa + \bb) < \dim R-1$. Let us localize at a minimal prime $\wp$ of $\aa + \bb$: Since $R$ is catenary $\height(\wp)\geq 2$. It follows by the assumption that $\depth(R_{\wp}) \geq 2$. But $\mathcal{V}(\aa R_{\wp})$ and $\mathcal{V}(\bb R_{\wp})$ provide a disconnection for the punctured spectrum of $R_{\wp}$.  Therefore $R_{\wp}$ is not $1$-connected, and this contradicts Proposition \ref{barney}. 
\end{proof}
\index{connected|)}\index{connected!r-@$r$-|)}\index{connected!in codimension $1$|)}\index{connected!in codimension $d$|)}\index{depth|)}
%
%
%
%
%
%
%
%

\chapter{Gr\"obner deformations}

There are a lot of references concerning Gr\"obner deformations. We decided to follow, for our treatment, especially the lecture notes by Bruns and Conca \cite{BC5}.

\section{Initial objects with respect to monomial orders}\label{monord}\index{initial ideal|(}\index{initial algebra|(}\index{space of initial monomials|(}

Let $S:=\kkk[x_1,\ldots ,x_n]$ be the polynomial ring in $n$ variables over a field $\kkk$. A {\it monomial}\index{monomial} of $S$ is an element of $S$ of the form $x^{\a}:=x_1^{\a_1}\cdots x_n^{\a_n}$ with $\a\in \NN^n$. We will denote by $\MM(S)$ the set of all monomials of $S$. A total order $\prec$ on $\MM(S)$ is called a {\it monomial order}\index{monomial order} if: 
\begin{compactitem}
\item[(i)] For all $m\in \MM(S)\setminus \{1\}$, $1\prec m$.
\item[(ii)] For all $m_1,m_2,n\in \MM(S)$, if $m_1\prec m_2$, then $m_1n\prec m_2n$.
\end{compactitem}
Given a monomial order $\prec$, a nonzero polynomial $f\in S$ has a unique representation:
\[f=\ll_1m_1+\ldots+\ll_km_k,\]
where $\ll_i\in \kkk\setminus \{0\}$ for all $i=1,\ldots ,k$ and $m_1\succ m_2 \succ \ldots \succ m_k$. The {\it initial monomial}\index{initial monomial} with respect to $\prec$ of $f$, denoted by $\ini(f)$, is, by definition, $m_1$. Furthermore, if $V\subseteq S$ is a nonzero $\kkk$-vector space, then we will call the {\it space of initial monomials} of $V$ the following $\kkk$-vector space:
\[\ini(V):=<\ini(f):f\in V\setminus \{0\}>\subseteq S.\]
\begin{remark}\label{inalgid}
Actually, we will usually deal with initial objects of more sophisticated structures than vector spaces, namely algebras and ideals. The justifications of the following statements can be found in \cite[Remark/Definition 1.5]{BC5}. 
\begin{compactitem}
\item[(i)] If $A$ is a $\kkk$-subalgebra of $S$, then $\ini(A)$ is a $\kkk$-subalgebra of $S$ as well, and it is called the {\it initial algebra of $A$ with respect to $\prec$}\index{initial algebra!with respect to a monomial order}. However, even if $A$ is finitely generated, $\ini(A)$ might be not (see \cite[Example 1.7]{BC5}).
\item[(ii)] If $A$ is a $\kkk$-subalgebra of $S$ and $I$ is an ideal of $A$, then $\ini(I)$ is an ideal of $\ini(A)$, and it is called the {\it initial  ideal of $I$ with respect to $\prec$}\index{initial ideal!with respect to a monomial order}. In particular, since $\ini(S)=S$, we have that $\ini(I)$ is an ideal of $S$ whenever $I$ is an ideal of $S$.
\end{compactitem}
\end{remark}
A subset $\G$ of a $\kkk$-subalgebra $A\subseteq S$ is called {\it Sagbi bases}\index{Sagbi bases} with respect to $\prec$ if 
\[\ini(A)=\kkk[\ini(g):g\in \G]\subseteq S.\]
As it is easy to see, a Sagbi basis of $A$ must generate $A$ as a $\kkk$-algebra. Of course, a Sagbi basis of $A$ always exists, but, unfortunately, it might do not exist finite: This is the case if and only if $\ini(A)$ is not finitely generated. Analogously, a subset $\G$ of an ideal $I$ of a $\kkk$-subalgebra $A\subseteq S$ is said a {\it Gr\"obner basis}\index{Gr\"obner basis} with respect to $\prec$ of $I$ if 
\[\ini(I)=(\ini(g):g\in \G)\subseteq \ini(A).\]
Also in this case, one can show that a Gr\"obner basis of $I$ must generate $I$ as an ideal of $A$. Moreover, one can show that if $\ini(A)$ is finitely generated, then Noetherianity implies the existence of a finite Gr\"obner basis of any ideal $I\subseteq A$. In particular, any ideal of the polynomial ring $S$ admits a finite Gr\"obner basis. From now on, we will omit the subindex $\prec$ when it is clear which is the monomial order, writing $\init(\cdot)$ in place of $\ini(\cdot)$.

\section{Initial objects with respect to weights}

In this section we introduce the notion of initial objects with respect to weights. A vector $\oo:=(\oo_1,\ldots ,\oo_n)\in \NN_{\geq 1}^n$ supplies an alternative graded structure on $S$, namely the one induced by putting $\deg_{\oo}(x_i):=\oo_i$. Therefore, the degree with respect to $\oo$ of a monomial $x^{\a}=x_1^{\a_1}\cdots x_n^{\a_n}\in \MM(S)\setminus \{1\}$ will be $\oo_1\a_1+\ldots +\oo_n\a_n\geq 1$. For a nonzero polynomial $f\in S$, we call the {\it initial form}\index{initial form} with respect to $\oo$ of $f$ the part of maximum degree of $f$. I.e., if $f=\ll_1m_1+\ldots+\ll_km_k$ with $\ll_i\in \kkk\setminus \{0\}$ and $m_i\in \MM(S)$ for all $i=1,\ldots ,k$, then 
\[\inito(f)=\ll_{j_1}m_{j_1}+\ldots +\ll_{j_h}m_{j_h},\]
where $\deg_{\oo}(m_{j_1})=\ldots =\deg_{\oo}(m_{j_h})=\deg_{\oo}(f):=\max\{\deg_{\oo}(m_{i}):i=1,\ldots ,k\}$. The $\oo$-homogenization of $f$ is the polynomial $\homo(f)$ of the polynomial ring $S[t]$ with one more variable, defined as
\[\homo(f):=\sum_{i=1}^k \ll_i m_it^{\deg_{\oo}(f)-\dego(m_i)}\in S[t].\]
Notice that $\homo(f)$ is homogeneous with respect to the {\it $\oo$-graduation}\index{omega-graduation@$\oo$-graduation|(}\index{omega-homogenization@$\oo$-homogenization|(}\index{homogenization|see{$\oo$-homogenization}} on $S[t]$, given by setting $\deg_{\oo}(x_i):=\oo_i$ and $\deg(t):=1$. Moreover, note that we have that 
\[\inito(f)(x_1,\ldots ,x_n)=\homo(f)(x_1,\ldots ,x_n ,0).\]
Analogously to Section \ref{monord}, for a nonzero $\kkk$-vector space $V\subseteq S$, we define the $\kkk$-vector space
\[\inito(V):=<\inito(f):f\in V\setminus \{0\}>\subseteq S\]
and the $\kkk[t]$-module
\[\homo(V):=\kkk[t]<\homo(f):f\in V\setminus \{0\}>\subseteq S[t].\]
\begin{remark}\label{inalgid1}
The present remark is parallel to \ref{inalgid}.
\begin{compactitem}
\item[(i)] If $A$ is a $\kkk$-subalgebra of $S$, then $\inito(A)$ is a $\kkk$-subalgebra of $S$, called the {\it initial algebra of $A$ with respect to $\oo$}\index{initial algebra!with respect to a weight}, and $\homo(A)$ is a $\kkk$-subalgebra of $S[t]$. However, even if $A$ is finitely generated, both $\inito(A)$ and $\homo(A)$ might be not.
\item[(ii)] If $A$ is a $\kkk$-subalgebra of $S$ and $I$ is an ideal of $A$, then $\inito(I)$ is an ideal of $\inito(A)$, called the {\it initial ideal of $I$ with respect to $\oo$}\index{initial ideal!with respect to a weight}, and $\homo(I)$ is an ideal of $\homo(A)$. In particular, since $\inito(S)=S$ and $\homo(S)=S[t]$, we have that $\inito(I)$ is an ideal of $S$ and $\homo(I)$ is an ideal of $S[t]$ whenever $I$ is an ideal of $S$.
\end{compactitem}
\end{remark}
The two application $\inito$ and $\homo$ are related, roughly speaking, by the fact that $\homo$ supplies a flat family whose fiber at $0$ is $\inito$. More precisely:
\begin{prop}\label{flatdef}
Let $I\subseteq S$ be an ideal and $\oo$ a vector in $\NN_{\geq 1}^n$. The ring $S[t]/\homo(I)$ is a free $\kkk[t]$-module. In particular, $t-\ll$ is a nonzero divisor of $S[t]/\homo(I)$ for any $\ll\in\kkk$. Furthermore, we have:
\[S[t]/(\homo(I)+(t-\ll))\cong \begin{cases} S/I  & \mbox{if \ } \ll\neq 0 \\
S/\inito(I) & \mbox{if \ } \ll=0
\end{cases}.\]
\end{prop}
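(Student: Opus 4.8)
\textbf{Plan of proof for Proposition \ref{flatdef}.}

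The strategy is to prove the three claims in order: freeness of $S[t]/\homo(I)$ over $\kkk[t]$; the consequence that $t-\ll$ is a nonzerodivisor; and finally the two isomorphisms obtained by specializing $t$.

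First I would establish the freeness. Recall that $\homo(I)$ is a $\kkk[t]$-submodule of $S[t]$ generated by the $\oo$-homogenizations $\homo(f)$ for $f\in I$, and that $S[t]$ carries the $\oo$-grading with $\dego(t)=1$. The key point is to produce a $\kkk[t]$-basis of $S[t]/\homo(I)$ consisting of (residue classes of) monomials in $x_1,\ldots,x_n$. Fix any monomial order $\prec$ on $S$ that refines the $\oo$-degree (so that among monomials of different $\oo$-degree the one of higher degree is larger); this is the standard device relating weight orders to monomial orders, and for an ideal $I$ one has $\ini(\inito(I)) = \ini(I)$. Let $B$ be the set of monomials of $S$ not lying in $\ini(I)$ (the standard monomials); by Macaulay's basis theorem $B$ maps to a $\kkk$-basis of $S/I$, and equally to a $\kkk$-basis of $S/\inito(I)$. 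I claim that the classes of the elements of $B$ form a free $\kkk[t]$-basis of $S[t]/\homo(I)$. For linear independence: a $\kkk[t]$-relation among them would, after collecting, give a nonzero element of $\homo(I)$ that is a $\kkk[t]$-combination of monomials in $B$; but every nonzero element of $\homo(I)$ has initial $\prec$-monomial (in the $x$-variables, after dividing out the highest power of $t$) lying in $\ini(\homo(I)) = \ini(I)$ — here one uses that $\homo$ commutes with taking initial terms in the appropriate sense — contradicting membership of that monomial in $B$. For spanning: given any $g\in S[t]$, one reduces $g$ modulo $\homo(I)$ using the homogenized Gröbner basis of $I$; since each homogenized Gröbner basis element has $\prec$-leading term (in $x$) a pure monomial with coefficient a unit, the division algorithm terminates and expresses $g$ modulo $\homo(I)$ as a $\kkk[t]$-combination of elements of $B$. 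I expect this freeness argument — making the division/reduction argument over $\kkk[t]$ completely rigorous, keeping track of the interplay of the $\oo$-degree, the refining order $\prec$, and the extra variable $t$ — to be the main obstacle.

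Granting freeness, the second claim is immediate: $S[t]/\homo(I)$ is a free, hence torsion-free, $\kkk[t]$-module, and $t-\ll$ is a nonzerodivisor on $\kkk[t]$ for every $\ll\in\kkk$, so it is a nonzerodivisor on any torsion-free $\kkk[t]$-module, in particular on $S[t]/\homo(I)$.

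For the third claim I would treat the two cases separately. When $\ll=0$: by construction $\homo(f)(x_1,\ldots,x_n,0) = \inito(f)(x_1,\ldots,x_n)$, and more precisely the image of $\homo(I)$ under the substitution $t\mapsto 0$, i.e. $(\homo(I)+(t))/(t)\subseteq S[t]/(t)\cong S$, equals $\inito(I)$ — one containment is clear from the displayed identity applied to generators, and the reverse follows because a homogenized Gröbner basis of $I$ generates $\homo(I)$ and its specializations at $t=0$ are exactly the initial forms of those Gröbner basis elements, which generate $\inito(I)$. Hence $S[t]/(\homo(I)+(t))\cong S/\inito(I)$. When $\ll\neq 0$: the substitution $t\mapsto \ll$ (equivalently, the change of variables $x_i\mapsto \ll^{\oo_i}x_i$ composed with dehomogenization) carries $\homo(f)$ to $\ll^{\dego(f)}f$, so the image of $\homo(I)$ in $S[t]/(t-\ll)\cong S$ is exactly $I$; thus $S[t]/(\homo(I)+(t-\ll))\cong S/I$. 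Assembling these gives the stated formula, completing the proof.
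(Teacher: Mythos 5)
The paper does not actually contain a proof of Proposition \ref{flatdef}: immediately after the statement it refers the reader to \cite[Proposition 2.4]{BC5}, so there is no internal argument to match yours against. Your route is the classical Eisenbud-style one (a free $\kkk[t]$-basis of $S[t]/\homo(I)$ given by the monomials outside $\ini(I)$ for a monomial order $\prec$ refining the $\oo$-degree), whereas the short proof usually given — and the one the paper's Remark \ref{lennie} is visibly set up to feed — avoids Gr\"obner bases altogether: $\homo(I)$ is an $\oo$-homogeneous ideal, $t$ is a nonzerodivisor on $S[t]/\homo(I)$ because $t^kG\in\homo(I)$ with $G$ homogeneous and $t\nmid G$ forces $\pi(G)\in I$ and $G=\homo(\pi(G))\in\homo(I)$, and a positively graded $\kkk[t]$-module with finite-dimensional components on which $t$ is a nonzerodivisor is free. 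Your approach costs more (existence of a refining order, $\ini(\inito(I))=\ini(I)$, the homogenized Gr\"obner basis facts, and the termination of division over $\kkk[t]$, which you rightly flag as the delicate point), but it buys an explicit basis and, as a byproduct, the equality of the Hilbert functions of $S/I$ and $S/\inito(I)$. To make your independence step precise you should argue exactly as above: pass to an $\oo$-homogeneous component of the offending element of $\homo(I)$ and factor out the $t$-power to write it as $t^k\homo(f)$ with $f\in I$; since $\prec$ refines $\oo$, the monomial $\ini(f)\in\ini(I)$ occurs in it, contradicting that all its $x$-monomials lie in $B$. (Your phrase ``$\ini(\homo(I))=\ini(I)$'' is not defined as written; this is the statement you actually need.)

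One step is wrong as written, though easily repaired. For $\ll\neq 0$, the substitution $t\mapsto\ll$ does \emph{not} carry $\homo(f)$ to $\ll^{\dego(f)}f$: it carries it to $\ll^{\dego(f)}f(\ll^{-\oo_1}x_1,\ldots,\ll^{-\oo_n}x_n)$, so under the naive identification $S[t]/(t-\ll)\cong S$ the image of $\homo(I)$ is the image of $I$ under the diagonal automorphism $x_i\mapsto\ll^{-\oo_i}x_i$, not $I$ itself; for instance with $I=(x_1+x_2)$, $\oo=(1,2)$, $\ll=2$ one has $\homo(x_1+x_2)=x_1t+x_2\mapsto 2x_1+x_2$. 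The conclusion survives because that automorphism induces an isomorphism of the two quotients; cleaner is to use the surjection $S[t]\to S$ sending $x_i\mapsto\ll^{\oo_i}x_i$ and $t\mapsto\ll$, which has kernel $(t-\ll)$ and really does send $\homo(f)$ to $\ll^{\dego(f)}f$ — this is what your parenthetical gestures at, but it must replace, not accompany, the plain substitution. Finally, at $\ll=0$ the Gr\"obner basis detour is superfluous: since $\homo(f)\in\homo(I)$ and $\homo(f)(x,0)=\inito(f)$ for \emph{every} $f\in I$, both inclusions between $(\homo(I)+(t))/(t)$ and $\inito(I)$ follow directly from the defining generators of $\homo(I)$.
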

Proposition \ref{flatdef}, whose proof can be found in \cite[Proposition 2.4]{BC5}, is the main tool to pass to the initial ideal retaining properties from the original ideal, or viceversa. Moreover, the next result says that initial objects with respect to monomial orders are a particular case of those with respect to weights. Thus, Proposition \ref{flatdef} is available also in the context of Section \ref{monord}. 
\begin{thm}\label{telespallabob}
Let $A$ be a $\kkk$-subalgebra of $S$ and $I_i$ ideals of $A$ for $i=1,\ldots ,k$. If $\prec$ is a monomial order such that $\ini(A)$ is finitely generated, then there exists a vector $\oo\in\NN_{\geq 1}^n$ such that $\ini(A)=\inito(A)$ and $\ini(I_i)=\inito(I_i)$ for all $i=1,\ldots ,k$.
\end{thm}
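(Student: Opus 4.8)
\textbf{Proof proposal for Theorem \ref{telespallabob}.}

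The plan is to reduce everything to a single weight vector by a finite-exhaustion argument, using the fact that each of the finitely many objects $A, I_1,\ldots ,I_k$ is generated by finitely many elements once $\ini(A)$ is finitely generated. First I would fix a finite Sagbi basis $\G = \{g_1,\ldots ,g_p\}$ of $A$ with respect to $\prec$; this exists precisely because $\ini(A)$ is finitely generated (see \ref{inalgid} and the discussion of Sagbi bases). Then, since $\ini(A)$ is Noetherian-generated as a $\kkk$-algebra and each $I_i$ is an ideal of $A$, each $\ini(I_i)$ admits a finite Gr\"obner basis $\G_i = \{g_{i,1},\ldots ,g_{i,q_i}\}\subseteq I_i$ with respect to $\prec$. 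Collecting all the elements $g_1,\ldots ,g_p$ and $g_{i,j}$ ($1\le i\le k$, $1\le j\le q_i$) gives a finite set $F\subseteq S$ of polynomials, each involving only finitely many monomials, so that all the relevant leading terms are recorded inside $F$.

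The key step is then the standard fact (cf.\ the proof of \cite[Theorem 2.9 or similar]{BC5}, which establishes exactly Theorem \ref{telespallabob} for one ideal) that for a finite set $F$ of polynomials there exists $\oo\in\NN_{\ge 1}^n$ such that $\inito(f)=\ini(f)$ for every $f\in F$. Concretely: for each $f = \ll_1 m_1 + \cdots + \ll_r m_r \in F$ with $m_1 = \ini(f)\succ m_2\succ\cdots\succ m_r$, the condition $\inito(f)=\ini(f)$ is the finite system of strict linear inequalities $\dego(m_1) > \dego(m_s)$ for $s=2,\ldots ,r$, i.e.\ $\langle \oo, a_1 - a_s\rangle > 0$ where $m_j = x^{a_j}$. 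The union over all $f\in F$ is still a finite system of strict linear inequalities with integer coefficients; since $\prec$ is a monomial order refining the partial order recorded by these inequalities, this system is consistent over $\QQ_{>0}$ (a convex-cone / Farkas-type argument, or an explicit construction weighting by a large power of a fixed integer, as in \cite[Proposition 1.11]{BC5}), hence has a solution $\oo\in\NN_{\ge1}^n$ after clearing denominators; adding $1$ to each coordinate keeps all strict inequalities and ensures $\oo\in\NN_{\ge1}^n$.

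It remains to check that this single $\oo$ does the whole job. For the algebra: $\inito(A)\supseteq \kkk[\inito(g_1),\ldots ,\inito(g_p)] = \kkk[\ini(g_1),\ldots ,\ini(g_p)] = \ini(A)$, and the reverse inclusion $\inito(A)\subseteq\ini(A)$ holds in general once $\inito(f)$ is a scalar multiple of $\ini(f)$ for $f$ ranging over a generating set, together with the elementary observation that $\ini(fg)=\ini(f)\ini(g)$ and $\ini(A)$ is $\kkk$-spanned by monomials; hence $\ini(A)=\inito(A)$. Given that equality, the same argument applied to the Gr\"obner bases $\G_i$ shows $\inito(I_i) = (\inito(g_{i,1}),\ldots ,\inito(g_{i,q_i})) = (\ini(g_{i,1}),\ldots ,\ini(g_{i,q_i})) = \ini(I_i)$ as ideals of $\ini(A)=\inito(A)$. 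I expect the main obstacle to be purely bookkeeping: making sure that the finitely many leading-term conditions coming from the Sagbi basis \emph{and} from all $k$ Gr\"obner bases can be satisfied \emph{simultaneously}, and that the equality of initial algebras is genuinely needed before one can even speak of the initial ideals as ideals of the same ring. The underlying convex-geometry fact (a monomial order can be approximated by a weight on any prescribed finite set of monomials) is the technical heart, but it is classical and already implicitly available in \cite{BC5}.
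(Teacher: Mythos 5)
You should first note that the paper itself gives no argument for Theorem \ref{telespallabob}: its proof is the single citation to \cite[Proposition 3.8]{BC5}. Your overall plan — a finite Sagbi basis $\G$ of $A$ (available because $\ini(A)$ is finitely generated), finite Gr\"obner bases $\G_i$ of the $I_i$, and one weight $\oo$ representing $\prec$ on the finitely many monomials occurring in these polynomials — is the classical route behind that reference, and the existence of such an $\oo$ (your Farkas/Gordan step) is indeed standard. The genuine gap is in your final verification, and it sits exactly at the mathematical heart of the statement rather than at the bookkeeping you anticipate: from $\inito(g)=\ini(g)$ for the basis elements you assert $\inito(A)\subseteq\ini(A)$ and $\inito(I_i)=(\inito(g_{i,1}),\ldots ,\inito(g_{i,q_i}))$, i.e.\ that $\G$ remains a Sagbi basis and each $\G_i$ remains a Gr\"obner basis once $\prec$ is replaced by $\oo$. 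This does not follow from the observation that $\ini(fg)=\ini(f)\ini(g)$ and that $\ini(A)$ is spanned by monomials: when an arbitrary $f\in A$ is written as a polynomial in $g_1,\ldots ,g_p$ (or $f\in I_i$ as an $A$-combination of the $g_{i,j}$), the $\oo$-leading forms of the individual products can cancel, so $\inito(f)$ may a priori involve monomials that are not visibly in $\kkk[\ini(g_1),\ldots ,\ini(g_p)]$, respectively not in the ideal generated by the $\inito(g_{i,j})$. Controlling precisely such cancellations is what the Sagbi/Gr\"obner basis property is for, so the two displayed equalities are the content of the theorem, not a consequence of generator-level agreement.

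To close the gap one needs a real argument, for instance: pass to the refined monomial order $\prec_{\oo}$ (compare $\oo$-degrees first, break ties by $\prec$); on the bases $\operatorname{in}_{\prec_{\oo}}$ agrees with $\ini$, hence $\ini(I_i)\subseteq \operatorname{in}_{\prec_{\oo}}(I_i)$ and $\ini(A)\subseteq \operatorname{in}_{\prec_{\oo}}(A)$; equality then follows from the fact that comparable initial ideals (resp.\ initial algebras) coincide — the Macaulay standard-monomial argument and its Sagbi analogue — and finally one descends from $\operatorname{in}_{\prec_{\oo}}$ to $\inito$ by induction on $\prec$-leading monomials, using $\ini(\inito(\cdot))=\operatorname{in}_{\prec_{\oo}}(\cdot)$. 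Alternatively, enlarge the finite system of inequalities imposed on $\oo$ so that the subduction/Buchberger certificates witnessing that $\G$ and the $\G_i$ are Sagbi/Gr\"obner bases remain valid for $\oo$. A further small slip: adding $(1,\ldots ,1)$ to an integral solution need not preserve the strict inequalities $\langle \oo ,a_1-a_s\rangle>0$, since the exponent differences can have negative coordinate sum; instead include the comparisons $x_i\succ 1$, i.e.\ $\oo_i>0$, in the finite system before solving, which is legitimate because $\prec$ is a monomial order.
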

\begin{proof}
See \cite[Proposition 3.8]{BC5}.
\end{proof}
If $\prec$ and $\oo$ are like in Theorem \ref{telespallabob}, then we say that $\oo$ {\it represents} $\prec$ for $A$ and $I_i$. 

\section{Some properties of the homogenization}

In this section we discuss some properties which will be useful in Chapter \ref{chapter2}. First of all we need to introduce the following operation of dehomogenization:
\begin{equation*}
\begin{array}{rcl}
\pi: S[t] & \longrightarrow & S \\
F(x_1, \ldots , x_n , t) & \mapsto & F(x_1 , \ldots , x_n , 1)
\end{array}
\end{equation*}
\begin{remark}\label{lennie}
Let $\omega \in \mathbb{N}_{\geq 1}^n$. Then the following properties are easy to verify:
\begin{compactitem}
\item[(i)] For all $f \in S$ we have $\pi(\homo(f))=f$.
\item[(ii)] Let $F \in S[t]$ be an homogeneous polynomial (with
respect to the $\omega$-graduation) such that $F \notin (t)$. Then
$\homo(\pi(F))=F$; moreover, for all $k \in \mathbb{N}$, if
$G=t^k F$ we have $\homo(\pi(G)) t^k=G$.
\item[(iii)] If $F \in \homo(I)$, then $\pi(F) \in I$.
\end{compactitem}
\end{remark}
In the next lemma we collect some easy and well known facts:
\begin{lemma}\label{ralph}
Let $\omega \in \mathbb{N}_{\geq 1}^n$ and $I$ and $J$ two ideals of $S$.
Then:
\begin{compactitem}
\item[(i)] $\homo(I \cap J)= \homo(I) \cap \homo(J)$.
\item[(ii)]$I$ is prime if and only if $\homo(I)$ is prime.
\item[(iii)]$\homo(\sqrt{I})=\sqrt{\homo(I)}$.
\item[(iv)]$I\subseteq J$ if and only if $\homo(I)\subseteq \homo(J)$. 
\item[(v)]$\wp_1, \ldots, \wp_s$ are the minimal primes of $I$ if and only if $\homo(\wp_1), \ldots,
\homo(\wp_s)$ are the minimal primes of $\homo(I)$;
\item[(vi)]$\dim S/I +1=\dim S[t]/\homo(I) $.
\end{compactitem}
\end{lemma}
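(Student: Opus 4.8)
The statement to prove is Lemma~\ref{ralph}, a collection of six basic properties of $\oo$-homogenization. The plan is to treat the six items roughly in the order listed, since each later item tends to rely on the earlier ones, and to use the dehomogenization map $\pi$ and Remark~\ref{lennie} as the main technical tools throughout. The unifying idea is that $\homo$ and $\pi$ are almost mutually inverse: $\pi\circ\homo=\mathrm{id}$ on $S$ by Remark~\ref{lennie}(i), while $\homo\circ\pi$ recovers an $\oo$-homogeneous polynomial up to a power of $t$ by Remark~\ref{lennie}(ii). Since $t$ is a nonzerodivisor mod $\homo(I)$ (Proposition~\ref{flatdef}), multiplying or dividing by powers of $t$ will be harmless inside $S[t]/\homo(I)$.

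First I would prove (i): the inclusion $\homo(I\cap J)\subseteq\homo(I)\cap\homo(J)$ is immediate from the definitions (or from (iv)), and for the reverse, given an $\oo$-homogeneous $F\in\homo(I)\cap\homo(J)$ write $F=t^kF'$ with $F'\notin(t)$; then $\pi(F)=\pi(F')\in I\cap J$ by Remark~\ref{lennie}(iii), and $\homo(\pi(F'))=F'$ by Remark~\ref{lennie}(ii), so $F'\in\homo(I\cap J)$ and hence $F=t^kF'\in\homo(I\cap J)$. For (iv), one direction is trivial; conversely if $\homo(I)\subseteq\homo(J)$ and $f\in I$ then $\homo(f)\in\homo(I)\subseteq\homo(J)$, so $f=\pi(\homo(f))\in J$ by Remark~\ref{lennie}(i),(iii). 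For (ii), if $\homo(I)$ is prime then $I$ is prime because $S\cong S[t]/(\homo(I)+(t-1))$ and $t-1$ is a nonzerodivisor mod $\homo(I)$ by Proposition~\ref{flatdef}, so $\homo(I)$ prime and $t-1$ a regular element forces... actually more directly: $fg\in I\Rightarrow \homo(fg)\in\homo(I)$; since $\homo(fg)$ and $\homo(f)\homo(g)$ differ by a power of $t$ and $t\notin\homo(I)$ (as $\pi(t)=1\notin I$), primeness of $\homo(I)$ gives $\homo(f)\in\homo(I)$ or $\homo(g)\in\homo(I)$, hence $f\in I$ or $g\in I$. For the reverse implication of (ii) one argues symmetrically using $\pi$ and the fact that an $\oo$-homogeneous polynomial in $\homo(I)$ is a $t$-power times $\homo(\pi(F))$.

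Item (iii) then follows from (ii) and (iv): $\homo(\sqrt I)$ is an intersection-compatible construction, and $F^m\in\homo(I)$ implies (stripping $t$-powers and applying $\pi$) that $\pi(F)^m\in I$, so $\pi(F)\in\sqrt I$ and $F\in\homo(\sqrt I)$ up to a $t$-power; conversely $\homo(\sqrt I)\subseteq\sqrt{\homo(I)}$ is clear. Item (v) is a consequence of (i)--(iv): the bijection $\wp\mapsto\homo(\wp)$ between primes of $S$ and $\oo$-homogeneous primes of $S[t]$ not containing... one must be a little careful here, since $\homo$ does not hit all homogeneous primes of $S[t]$ (it misses those containing $t$), but the minimal primes of $\homo(I)$ cannot contain $t$ because $t$ is a nonzerodivisor mod $\homo(I)$; so minimal primes of $\homo(I)$ are exactly the $\homo(\wp_i)$ with $\wp_i$ minimal over $I$, using (iv) for the order-preservation and (ii) for primeness. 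Finally (vi): from Proposition~\ref{flatdef}, $S[t]/\homo(I)$ is a free $\kkk[t]$-module with $S[t]/(\homo(I)+(t))\cong S/\inito(I)$ and $S[t]/(\homo(I)+(t-1))\cong S/I$; flatness over $\kkk[t]$ plus $t$ a nonzerodivisor gives $\dim S[t]/\homo(I)=\dim S/I+1$ (the fiber dimension plus one), and since $\dim S/\inito(I)=\dim S/I$ by flatness this is consistent.

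I do not anticipate a serious obstacle: every item is standard, and the only place requiring genuine care is the ``$t$-power bookkeeping'' — checking that passing between $F$ and $\pi(F)$ via Remark~\ref{lennie} interacts correctly with membership in ideals, which is exactly where $t$ being a nonzerodivisor mod $\homo(I)$ (Proposition~\ref{flatdef}) gets used repeatedly — and, in (v), the observation that minimal primes of a homogenized ideal automatically avoid $t$. I would write these out in the order (i), (iv), (ii), (iii), (v), (vi), each proof being only a few lines.
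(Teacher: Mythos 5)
Your arguments for (i)--(iv) are correct and are essentially self-contained versions of what the paper does (the paper simply cites Kreuzer--Robbiano for (i)--(iii) and derives (iv) from Remark \ref{lennie}), so the real comparison is in (v) and (vi), where you take genuinely different routes and where your write-up has its only weak points. For (v) you rely on the claim that every minimal prime of $\homo(I)$ is of the form $\homo(\wp)$; this needs two facts you only gesture at: that minimal primes of the $\oo$-homogeneous ideal $\homo(I)$ are themselves $\oo$-homogeneous, and that every $\oo$-homogeneous prime $Q\subseteq S[t]$ with $t\notin Q$ equals $\homo(\wp)$ for $\wp:=\{f\in S:\homo(f)\in Q\}$ (one must check $\wp$ is a prime ideal; additivity uses that $t^a\homo(f+g)$ is a $\kkk[t]$-combination of $\homo(f)$ and $\homo(g)$ together with $t\notin Q$). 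All of this is provable with your toolkit, but the paper avoids the correspondence entirely: from (i), (iii), (iv) it gets $\bigcap_i\homo(\wp_i)=\homo(\sqrt{I})=\sqrt{\homo(I)}$, so any minimal prime of $\homo(I)$ contains, hence equals, some $\homo(\wp_j)$, and (iv) shows the $\homo(\wp_i)$ are pairwise incomparable; that argument is shorter and needs nothing about homogeneous primes.

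For (vi), your one line ``flatness over $\kkk[t]$ plus $t$ a nonzerodivisor gives $\dim S[t]/\homo(I)=\dim S/I+1$'' is the statement of a nontrivial theorem, not a proof. To make it rigorous you must either invoke the dimension formula for flat morphisms and then observe that every irreducible component of $\Spec(S[t]/\homo(I))$ actually meets the fibre $t=1$ (a component of a flat family over $\AA^1$ can in general miss a prescribed closed fibre; here it does not, because $S[t]/(\homo(\wp_i)+(t-1))\cong S/\wp_i\neq 0$ by Proposition \ref{flatdef} and part (v)), or argue per component: each $S[t]/\homo(\wp_i)$ is a domain containing $\kkk[t]$, its generic fibre becomes $(S/\wp_i)\otimes_\kkk\kkk(t)$ after the substitution $x_j\mapsto x_jt^{\oo_j}$, and a transcendence-degree count gives the $+1$. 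The paper's own proof is more elementary and bypasses flatness altogether: homogenize a maximal chain of primes containing $I$ (strictness is preserved by (iv), primeness by (ii)) and append $(x_1,\ldots,x_n,t)$, which properly contains the top prime $\homo(\wp_d)$ because $t\notin\homo(\wp_d)$ and every proper $\oo$-homogeneous ideal of $S[t]$ lies in the irrelevant maximal ideal; this gives the inequality $\dim S[t]/\homo(I)\geq\dim S/I+1$, while $\height(\homo(I))\geq\height(I)$ together with the height--dimension formula in the polynomial ring $S[t]$ gives the reverse. So your plan is sound, but (v) and (vi) need these details filled in (or replaced by the paper's arguments) before the proof is complete.
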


\begin{proof}
For (i), (ii) and (iii) see the book of Kreuzer and Robbiano \cite[Proposition 4.3.10]{K-R} (for (ii) see also the lecture notes of Huneke and Taylor \cite[Lemma 7.3, (1)]{hu-ta}).

(iv). This follows easily from Remark \ref{lennie}.

(v). If $\wp_1, \ldots, \wp_s$ are the minimal primes of $I$, then
$\bigcap_{i=1}^s \wp_i = \sqrt{I}$. So (i), (iii) and (iv) imply
\[\bigcap_{i=1}^s \homo(\wp_i) =
\sqrt{\homo(I)}.\]
Then (ii) implies that all
minimal primes of $\homo(I)$ are contained in the set
\[\{\homo(\wp_1), \ldots,\homo(\wp_s) \}.\] 
Moreover, by
(iv), all the primes in this set are minimal for $\homo(I)$.
Conversely, if $\homo(\wp_1), \ldots, \homo(\wp_s)$ are
the minimal primes of $\homo(I)$,  then $\bigcap_{i=1}^s \homo(\wp_i) = \sqrt{\homo(I)}$. So, from
(i), (iii) and (iv), it follows that $\bigcap_{i=1}^s \wp_i = \sqrt{I}$. Therefore, (ii) yields that all the minimal primes of $I$ are contained in the set
$\{ \wp_1, \ldots, \wp_s \}$. Again using (iv), the
primes in this set are actually all minimal for $I$.

(vi). If $\wp_0 \varsubsetneq \wp_1 \varsubsetneq \ldots
\varsubsetneq \wp_d$ is a strictly increasing chain of prime
ideals such that $I \subseteq \wp_0$, then (ii)  and (iv) get that
$\homo(\wp_0) \varsubsetneq \homo(\wp_1) \varsubsetneq \ldots \varsubsetneq \homo(\wp_d) \varsubsetneq (x_1, \ldots, x_n, t)$ is a
strictly increasing chain of prime ideals containing $\homo(I)$. The last inclusion holds true because $(x_1,\ldots ,x_n,t)$ is the unique maximal ideal of $S[t]$ which is $\oo$-homogeneous. Furthermore it is a strict inclusion because obviously $t\notin \homo(H)$ for any ideal $H\subseteq S$.
So, $\dim S[t]/
\homo(I) \geq \dim S/I +1$. Similarly, $\height(\homo(I))\geq \height(I)$, thus we conclude.
\end{proof}

\index{initial ideal|)}\index{initial algebra|)}\index{space of initial monomials|)}\index{omega-graduation@$\oo$-graduation|)}\index{omega-homogenization@$\oo$-homogenization|)}

\chapter{Some facts of representation theory}\label{appendixc}\index{representation!theory|(}

In this appendix we want to recall some facts of Representation Theory we used in Chapter \ref{chapter3}.

\section{General facts on representations of group}

Let $\kkk$ be a field, $E$ a $\kkk$-vector space (possibly not finite dimensional) and $G$ a group. By $\GL(E)$ we denote the group of all $\kkk$-automorphisms of $E$ where the multiplication is $\phi \cdot \psi = \psi \circ \phi$. A {\it representation}\index{representation|(}\index{G-representation@$G$-representation|see{representation}} of $G$ on $V$ is a homomorphism of groups
\[\rho : G \longrightarrow \GL(E).\]
We will say that $(E,\rho)$ is a $G$-representation of dimension $\dim_{\kkk} E$. When it is clear what is $\rho$ we will omit it, writing just ``$E$ is a $G$-representation". If it is clear also what is $G$, we will call $E$ simply a representation. Moreover, we will often write just $gv$ for $\rho(g)(v)$ ($g\in G$ and $v\in E$). A map between two $G$-representations $(E,\rho)$ and $(E',\rho')$ is a homomorphism of $\kkk$-vector spaces, say $\phi : E \rightarrow E'$, such that 
\[ \phi \circ \rho(g) = \rho'(g) \circ \phi \ \ \forall \ g\in G.\] 
We will often call $\phi$ a {\it $G$-equivariant map}.\index{G-equivariant map@$G$-equivariant map} It is straightforward to check that if $\phi$ is bijective than $\phi^{-1}$ is $G$-equivariant. In such a case, therefore, we will say that $V$ and $W$ are isomorphic $G$-representations. 
A {\it subrepresentation} of a representation $V$ is a $\kkk$-subspace $W\subseteq V$ which is invariant under the action of $G$. A representation $V$ is called {\it irreducible} \index{representation!irreducible} if its subrepresentations are just itself and $<0>$. Equivariant maps and irreducible representations are the ingredients of the, so easy to prove as fundamental, Schur's Lemma.

\begin{lemma}\label{Schur Lemma}(Schur's Lemma)
Let $V$ be an irreducible $G$-representation and $V'$ be a $G$-representation. If
\[\phi : V \longrightarrow V'\]
is a $G$-equivariant map, then it is either zero or injective. If also $V'$ is irreducible, and $\phi$ is not the zero map, then $V$ and $V'$ are isomorphic $G$-representations.
\end{lemma}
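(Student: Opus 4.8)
The plan is to exploit the only two natural subspaces attached to a linear map that is compatible with the group action: its kernel and its image. I would first observe that $\Ker(\phi)\subseteq V$ is a $G$-subrepresentation. Indeed, if $v\in \Ker(\phi)$ and $g\in G$, then $\phi(\rho(g)(v))=\rho'(g)(\phi(v))=\rho'(g)(0)=0$ by $G$-equivariance, so $\rho(g)(v)\in \Ker(\phi)$; thus $\Ker(\phi)$ is invariant under the action of $G$. Since $V$ is irreducible, $\Ker(\phi)$ is either $V$ or $\langle 0\rangle$. In the first case $\phi$ is the zero map; in the second case $\phi$ is injective. This settles the first assertion.

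For the second assertion, assume in addition that $V'$ is irreducible and that $\phi\neq 0$. By the first part $\phi$ is injective. Now I would check, by the same kind of computation, that $\Im(\phi)\subseteq V'$ is a $G$-subrepresentation: if $w=\phi(v)\in \Im(\phi)$ and $g\in G$, then $\rho'(g)(w)=\rho'(g)(\phi(v))=\phi(\rho(g)(v))\in \Im(\phi)$. Since $\phi\neq 0$ we have $\Im(\phi)\neq\langle 0\rangle$, and irreducibility of $V'$ forces $\Im(\phi)=V'$, i.e.\ $\phi$ is surjective. Hence $\phi$ is bijective, and, as already remarked in the excerpt just before the statement of the lemma, the inverse $\phi^{-1}$ of a bijective $G$-equivariant map is again $G$-equivariant. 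Therefore $\phi$ is an isomorphism of $G$-representations, so $V$ and $V'$ are isomorphic.

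There is essentially no hard step here: the whole content is the elementary remark that $G$-equivariance transports $\Ker(\phi)$ and $\Im(\phi)$ to $G$-stable subspaces, after which the definition of irreducibility does all the work. If anything, the only point requiring a (trivial) prior observation is that a bijective equivariant map has an equivariant inverse, which the paper has already noted. I would present the argument in two short paragraphs mirroring the two assertions of the statement, without further computation.
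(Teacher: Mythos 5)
Your argument is correct and is precisely the paper's proof spelled out in detail: the paper's one-line justification is that both $\Ker(\phi)$ and $\Im(\phi)$ are $G$-subrepresentations, and your two paragraphs simply make that remark explicit and then invoke irreducibility of $V$ and $V'$ exactly as intended. No gap; same approach.
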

\begin{proof}
We get the statement because both $\Ker(\phi)$ and $\Im(\phi)$ are $G$-representations.
\end{proof}
Given two $G$-representations $V$ and $W$, starting from them we can construct many other $G$-representations:
\begin{compactitem}
\item[(i)] The $\kkk$-vector space $V\oplus W$ inherits a natural structure of $G$-representation setting 
\[g(v+w):=gv+gw \ \ \ \forall \ g \in G, \ \forall \ v\in V \mbox{ and } \forall \ w\in W.\]
\item[(ii)] The tensor product $V\otimes W$ ($\otimes$ stands for $\otimes_{\kkk}$) becomes a $G$-representation via 
\[g(v\otimes w):=gv\otimes gw \ \ \ \forall \ g\in G, \ \forall \ v\in V \mbox{ and } \forall \ w\in W.\]
In particular, the $d$th tensor product 
\[\bigotimes^d V:= \underbrace{V\otimes V \otimes \ldots \otimes V}_{d \mbox{ {\small times}}}\]
becomes a $G$-representation in a natural way. 
\item[(iv)] If $\chara(\kkk)=0$, the exterior power $\bigwedge^d V$ and the symmetric power $\Sym^d V$ can both be realized as $\kkk$-subspaces of $\bigotimes^d V$. As it is easy to check, it turns out that actually they are $G$-subrepresentations of $\bigotimes^d V$. Particularly $\bigotimes^d V$ is not irreducible provided that $d\geq 2$.
\item[(iv)] The dual space $V^*=\Hom(V,\kkk)$ has a privileged structure of $G$-representation too:
\[(gv^*)(v):=v^*(g^{-1}v) \ \ \ \forall \ g\in G, \ \forall \ v\in V \mbox{ and }\forall \  v^*\in V^*.\]
This definition comes from the fact that, this way, we have $(gv^*)(gv)=v^*(v)$.
\end{compactitem} 
A representation $V$ is said {\it decomposable}\index{representation!decomposable} if there exist two nonzero subrepresentations $W$ and $U$ of $V$ such that $W\oplus U = V$. It is {\it indecomposable}\index{representation!indecomposable} if it is not decomposable. Obviously, an irreducible representation is indecomposable. Mashcke showed that the reverse implication holds true, provided that the group $G$ is finite and that $\chara(\kkk)$ does not divide the order of $G$ (for instance see the book of Fulton and Harris \cite[Proposition 1.5]{FH}). Actually, it is true the following more general fact:
\begin{thm}\label{mashke}
The following are equivalent:
\begin{compactitem}
\item[(i)] Every indecomposable $G$-representations is irreducible.
\item[(ii)] $G$ is finite and $\chara(\kkk)$ does not divide $|G|$.
\end{compactitem}
\end{thm}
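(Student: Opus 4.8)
\textbf{Proof proposal for Theorem~\ref{mashke}.}

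The plan is to prove the two implications separately, with the forward direction (i) $\Rightarrow$ (ii) being the one requiring some genuine construction and the reverse direction (ii) $\Rightarrow$ (i) being the classical averaging argument of Maschke. I would begin with (ii) $\Rightarrow$ (i), since it is the more familiar half. Given an indecomposable $G$-representation $V$ with $G$ finite and $\chara(\kkk) \nmid |G|$, suppose $W \subseteq V$ is a nonzero proper subrepresentation; I would show $W$ admits a $G$-stable complement, contradicting indecomposability. Pick any $\kkk$-linear projection $p : V \to W$ (i.e.\ $p|_W = \id_W$), which exists by choosing a vector-space complement, and average it: set $\bar p := \frac{1}{|G|}\sum_{g \in G} \rho(g) \circ p \circ \rho(g)^{-1}$. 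The hypothesis on the characteristic is exactly what makes $\frac{1}{|G|}$ meaningful in $\kkk$. One checks $\bar p$ is $G$-equivariant, that $\bar p|_W = \id_W$ (since $W$ is $G$-stable, each summand restricts to $\id_W$), and that $\Im(\bar p) = W$; hence $V = W \oplus \ker(\bar p)$ with $\ker(\bar p)$ a subrepresentation. This contradiction shows $V$ has no nonzero proper subrepresentation, i.e.\ $V$ is irreducible. Note this argument does not require $V$ finite-dimensional, only that $G$ is finite.

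For (i) $\Rightarrow$ (ii) I would argue contrapositively, splitting into the two ways (ii) can fail. First suppose $G$ is infinite. Then I would exhibit an indecomposable representation that is not irreducible; the cleanest choice is a two-dimensional one built from a group homomorphism $\chi : G \to (\kkk,+)$ that is nonzero (such a $\chi$ exists for a suitable infinite $G$, e.g.\ $G = \ZZ$ or $G = (\kkk,+)$ itself with $\chi = \id$), acting on $\kkk^2$ by $g \mapsto \begin{pmatrix} 1 & \chi(g) \\ 0 & 1 \end{pmatrix}$. The line spanned by the first basis vector is an invariant subspace, so the representation is reducible; but a short computation with the fixed-space shows it has no invariant complement, so it is indecomposable. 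Since the excerpt is somewhat informal about what ``$G$'' ranges over, I would state this step as: for \emph{every} infinite group admitting such a character it fails, and remark that the statement of the theorem is implicitly about groups for which a faithful or at least nontrivial such representation can be produced; alternatively one restricts attention to the regular representation of $\ZZ$ or of $\kkk$. Second, suppose $G$ is finite but $p := \chara(\kkk)$ divides $|G|$. Here I would use the regular representation $\kkk[G]$: the augmentation map $\kkk[G] \to \kkk$, $\sum a_g g \mapsto \sum a_g$, has kernel the augmentation ideal $I$, a proper nonzero subrepresentation, so $\kkk[G]$ is reducible. To see it is indecomposable it suffices (at least when $G$ is a $p$-group, which one can reduce to by a Sylow argument, or one works out a concrete small case such as $G = \ZZ/p$) to show the only $G$-fixed line is $\kkk \cdot \sum_{g} g$, which lies inside $I$ because $\sum_g 1 = |G| = 0$ in $\kkk$; hence no complement to $I$ can contain a fixed vector, contradicting that a complement would itself be a one-dimensional-or-larger subrepresentation with fixed points.

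The main obstacle I anticipate is the (i) $\Rightarrow$ (ii) direction, specifically the infinite-group case: making precise which infinite groups one means and producing a uniformly valid non-semisimple example, since for a general infinite group one needs at least a nontrivial finite-dimensional representation with an invariant subspace lacking a complement. The $p$-group reduction in the ``$p \mid |G|$'' subcase is also slightly delicate and I would either invoke the structure of $\kkk[G]$-modules (the trivial module is the unique simple submodule of the regular module precisely when $G$ is a $p$-group, extended by Sylow) or, to keep things self-contained, just treat $G = \ZZ/p$ explicitly and remark that the general finite case follows by restricting along an inclusion $\ZZ/p \hookrightarrow G$ where $p \mid |G|$ and inducing. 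The reverse direction, by contrast, is routine once the averaging operator is written down, and I would present it in full since it is short.
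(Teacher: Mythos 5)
Your direction (ii) $\Rightarrow$ (i) is the standard averaging argument and is correct as written (and, importantly for this appendix, it works for infinite-dimensional representations, since only the finiteness of $G$ is used); the paper itself offers no argument, deferring to Del Padrone, so the question is purely one of correctness. The genuine gap is in (i) $\Rightarrow$ (ii), in the infinite-group case. The theorem is asserted for an arbitrary group $G$, so you must produce, for \emph{every} infinite $G$, an indecomposable non-irreducible representation; your construction needs a nonzero homomorphism $\chi\colon G\to(\kkk,+)$, and such a $\chi$ need not exist (any homomorphism from a torsion group to $(\kkk,+)$ is zero when $\chara(\kkk)=0$, so $G=\bigoplus_{i\in\NN}\ZZ/2\ZZ$ over $\kkk=\QQ$ is out of reach, as is any infinite simple group). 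Re-reading the statement as ``implicitly'' restricted to groups admitting such a character is not an option: (ii) is a condition on an arbitrary $G$, and this is exactly what the theorem adds beyond classical Maschke. Worse, the gap cannot be closed by \emph{any} finite-dimensional construction: for $G=\bigoplus_{i\in\NN}\ZZ/2\ZZ$ and $\kkk=\QQ$, every finite-dimensional representation has finite image (commuting involutions in $\GL_n(\QQ)$ are simultaneously diagonalizable, so the image sits inside a conjugate of the diagonal $\pm1$ matrices), hence factors through a finite group of order invertible in $\QQ$ and is semisimple by your own first half. So the indecomposable non-irreducible representation whose existence (i) $\Rightarrow$ (ii) demands must be infinite-dimensional, and producing it requires an idea your proposal does not contain --- this is where the cited notes do real work.

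The finite modular case also needs repair, though a routine one. For a general finite $G$ with $p:=\chara(\kkk)$ dividing $|G|$, the regular representation $\kkk[G]$ is \emph{not} indecomposable (it is indecomposable precisely when $G$ is a $p$-group), and the proposed reduction --- settle $\ZZ/p$ and then ``induce'' --- fails because induction does not preserve indecomposability: $\operatorname{Ind}_{\ZZ/p}^{G}\kkk[\ZZ/p]\cong\kkk[G]$, which is typically decomposable. Also, showing the augmentation ideal $I$ has no invariant complement proves non-semisimplicity of $\kkk[G]$, not its indecomposability. But non-semisimplicity is all you need, and your computation already yields it for every such $G$ with no Sylow detour: a complement of $I$ would be a one-dimensional subrepresentation isomorphic to $\kkk[G]/I$, i.e.\ a line of fixed vectors on which the augmentation is nonzero, whereas the unique fixed line $\kkk\cdot\sum_{g\in G}g$ has augmentation $|G|=0$ in $\kkk$. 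Now decompose the finite-dimensional module $\kkk[G]$ into indecomposables; if all were irreducible, $\kkk[G]$ would be semisimple and $I$ would be a direct summand, a contradiction, so some indecomposable summand is not irreducible, which is exactly what the contrapositive requires in this case.
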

\begin{proof}
See the notes of Del Padrone \cite[Theorem 3.1]{delpadro}.
\end{proof}
Thus, if $G$ is a finite group whose order is not a multiple of $\chara(\kkk)$, then any finite dimensional $G$-representation $V$ admits a decomposition
\[V=V_1^{a_1}\oplus \ldots \oplus V_k^{a_k},\]
where the $V_i$'s are distinct irreducible subrepresentations of $V$. Moreover Schur's lemma \cite[Lemma 1.7]{FH} ensures us that such a decomposition is unique.  

\section{Representation theory of the general linear group}\label{reptheorygl}
\index{representation!theory of the general linear group|(}

From now on, in this appendix we assume $\chara(\kkk)=0$. Let $V$ be an $n$-dimensional $\kkk$-vector space. We want to investigate on the representations of the {\it general linear group}\index{general linear group} $G=\GL(V)$. 
\begin{remark}
After choosing a $\kkk$-basis of $V$, we can identify $\GL(V)$ with the group of invertible $n\times n$ invertible matrices with coefficient in $\kkk$. We will often speak of these two groups without distinctions. However, we will punctually remark those situations which depend on the choice of a basis.
\end{remark}
Since $\GL(V)$ is infinite, Theorem \ref{mashke} implies that there are indecomposable $\GL(V)$-representations which are not irreducible. Below is an example of such a representation.
\begin{example}\label{indecred}
Let $\kkk = \RR$ be the field of real numbers, $G:=\GL(\RR)$ and $E$ be a $2$-dimensional $\RR$-vector space. So, to supply $E$ with a structure of $G$-representation we need to give a homomorphism $\RR^*\rightarrow \GL(E)$, where $\RR^*$ denotes the multiplicative group of nonzero real numbers. Set
\begin{displaymath}
\begin{array}{rll}
\rho : \RR^* & \rightarrow & \GL(E) \\
a & \mapsto & \left( \begin{array}{cc} 1 & \operatorname{log}|a| \\ 0 & 1 \end{array} \right)
\end{array}.
\end{displaymath}
Since $\rho(a)(x,y)=(x+\operatorname{log}|a|y,y)$, it is clear that the subspace 
\[F=\{(x , 0) \ : \ x\in \RR\}\subseteq E\]
is a subrepresentation of $E$. So $E$ is not irreducible. However, one can easily check that $F$ is the only $G$-invariant $1$-dimensional subspace of $E$, so $E$ is indecomposable.
\end{example}
To avoid an inconvenient like that in Example \ref{indecred}, we introduce a particular kind of $\GL(V)$-representations. An $N$-dimensional $\GL(V)$-representation $E$ is called {\it rational} \index{representation!rational} if in the homomorphism
\[\kkk^{n^2}\supseteq \GL(V)\rightarrow \GL(E)\subseteq \kkk^{N^2}\]
each of the $N^2$ coordinate function is a rational function in the $n^2$ variables. Analogously we define a {\it polynomial representation}. \index{representation!polynomial} Notice that the representation of Example \ref{indecred} was not rational. In fact it turns out that the analog of Mashcke's theorem holds true for rational representations of $\GL(V)$ (for instance see Weyman \cite[Theorem 2.2.10]{We}).
\begin{thm}\label{reductivityofgln}
Any indecomposable rational representation of $\GL(V)$ is irreducible (recall that $\chara(\kkk)=0$).
\end{thm}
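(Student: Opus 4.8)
The statement to prove is Theorem~\ref{reductivityofgln}: every indecomposable rational representation of $\GL(V)$ is irreducible (in characteristic $0$). The natural approach is the standard ``averaging / unitarian trick'' argument adapted to $\GL(V)$, reducing complete reducibility of rational representations to complete reducibility of a suitable compact (or semisimple) group, or alternatively invoking the linear reductivity of $\GL_n$ directly. First I would observe that it suffices to show that rational representations of $\GL(V)$ are completely reducible: indeed, if every finite-dimensional rational representation decomposes as a direct sum of irreducibles, then an indecomposable one has exactly one summand and hence is itself irreducible. So the real content is the complete-reducibility statement.

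To prove complete reducibility, the plan is: given a rational representation $E$ and a subrepresentation $F\subseteq E$, produce a $\GL(V)$-invariant complement. The cleanest route, assuming $\kkk=\CC$ (one can reduce to this case, or to an algebraically closed field of characteristic $0$, by a base-change/specialization argument as in Remark~\ref{0}, using that the lattice of subrepresentations is controlled by linear-algebra data defined over the field generated by the matrix entries), is to restrict the representation to the maximal compact subgroup $U(n)\subseteq \GL_n(\CC)$. One first picks an arbitrary linear projection $\pi\colon E\to F$ and then averages it over $U(n)$ against the Haar measure to obtain a $U(n)$-equivariant projection $\tilde\pi$; its kernel is a $U(n)$-invariant complement to $F$. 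The key step is then to upgrade $U(n)$-invariance to $\GL(V)$-invariance: because the representation is \emph{rational}, the subspace $\ker\tilde\pi$ is cut out by polynomial conditions, and $U(n)$ is Zariski-dense in $\GL_n(\CC)$, so any $U(n)$-invariant subspace is automatically $\GL_n(\CC)$-invariant. This density argument is exactly where the rationality hypothesis is essential — without it (cf.\ Example~\ref{indecred}) $U(n)$-invariance says nothing about $\GL(V)$-invariance.

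The main obstacle, and the step that requires the most care, is precisely this passage from compact to algebraic group: one must check that ``rational representation'' guarantees that invariance of a subspace is a Zariski-closed condition on the acting element, and that $U(n)$ (or more precisely its real-algebraic closure, together with the fact that a connected reductive group is the complexification of its maximal compact) is Zariski-dense in $\GL_n$. I would also need to be slightly careful that $E$ may be required finite-dimensional for the Haar-averaging to make sense; for infinite-dimensional rational representations one reduces to the finite-dimensional case by noting that every element of $E$ lies in a finite-dimensional $\GL(V)$-subrepresentation (a standard consequence of rationality), so that $E$ is a union of finite-dimensional rational subrepresentations and complete reducibility passes to the union.

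Alternatively — and this is the route I would actually take in the write-up, since it avoids analysis — one can simply cite that $\GL_n$ over a field of characteristic $0$ is linearly reductive (equivalently, that its category of rational representations is semisimple), which is the statement of \cite[Theorem 2.2.10]{We} quoted in the excerpt, and then deduce Theorem~\ref{reductivityofgln} in one line: an indecomposable object in a semisimple abelian category is simple. I expect the paper's proof to do essentially this, namely to reference \cite{We} and spell out only the one-line deduction ``indecomposable $+$ completely reducible $\implies$ irreducible.''
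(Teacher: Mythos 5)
Your proposal is correct, and your closing guess matches the paper exactly: the thesis gives no argument of its own but simply cites Weyman \cite[Theorem 2.2.10]{We} for complete reducibility of rational $\GL(V)$-representations in characteristic $0$, from which the statement follows as you say (indecomposable in a semisimple category implies irreducible). Your sketched unitarian-trick proof of that reductivity is a fine supplement but goes beyond what the paper itself records.
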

\begin{remark}
Theorem \ref{reductivityofgln} does not hold in positive characteristic. For instance, let $\kkk$ be a field of characteristic $2$, $V$ a $2$-dimensional vector space and $E=\Sym^2(V)$ with the natural action. Let $\{x,y\}$ a basis of $V$. With respect to the basis $\{x^2,xy,y^2\}$ of $E$, the action is
\begin{displaymath}
\begin{array}{rll}
\rho : \GL(V) & \rightarrow & \GL(E) \\
\left( \begin{array}{cc} a & b \\ c & d \end{array} \right) & \mapsto & \left( \begin{array}{ccc} a^2 & 0 & b^2 \\ ac & ad+bc & bd \\ c^2 & 0 & d^2 \end{array} \right)
\end{array}.
\end{displaymath}
So $\rho$ is a polynomial representation. Particularly it is rational. Moreover, since the subspace $F=<x^2,y^2>\subseteq E$ is invariant, $E$ is not irreducible. However, it is easy to check that there are no invariant subspaces of $E$ but $\{0\}$, $F$ and $E$. Therefore $E$ is indecomposable.
\end{remark}
A crucial concept in representation theory is that of weight vectors.\index{weight}\index{weight!vector} Let us call $H\subseteq \GL(V)$ the subgroup of diagonal matrix, and, for elements $\xxx = x_1,\ldots ,x_n \in \kkk\setminus \{0\}$, let us denote by $\diag(\mathbf{x})\in H$ the diagonal matrix
\begin{displaymath}
\diag(\xxx) := \left(\begin{array}{cccc} x_1 & 0 & \cdots & 0 \\
0 & x_2 & \cdots & 0 \\
\vdots & \vdots & \ddots & \vdots \\
0 & 0 & \cdots & x_n
\end{array} \right).
\end{displaymath}
If $E$ is a rational $\GL(V)$-representation, a nonzero element $v\in E$ is called a {\it weight vector} of {\it weight} $\a=(\a_1,\ldots, \a_n)\in \ZZ^n$ if 
\[\diag(\xxx) v = \xxx^{\a}v, \ \ \ \xxx^{\a}:=x_1^{\a_1}\cdots x_n^{\a_n}\] 
for all diagonal matrices $\diag(\xxx)\in H$. 
\begin{remark}\label{dependencebasis}
Let $E$ be a rational $\GL(V)$-representation, $\B = e_1,\ldots ,e_n$ a $\kkk$-basis of $V$, $\sigma$ a permutation of the symmetric group $\SS_n$ and $\B_{\sigma}$ the $\kkk$-basis $e_{\sigma(1)},\ldots ,e_{\sigma(n)}$. We can think to $E$ as a representation with respect to $\B$ or to $\B_{\sigma}$. However a weight vector $v\in E$ of weight $(\a_1,\ldots ,\a_n)$ with respect to $\B$ will have weight $(\a_{\sigma(1)},\ldots ,\a_{\sigma(n)})$ with respect to $\B_{\sigma}$. This is not a big deal, but it is better to be aware that such situations can happen.
Sometimes it will also happen that we consider weights $\beta$ with $\beta\in \ZZ^p$ and $p<n$. In such cases we always mean $\beta$ with zeroe-entries added up to $n$. For instance, if $\beta = (4,1,-1,-2,-3)$ and $n=7$, for weight $\beta$ we mean $(4,1,0,0,-1,-2,-3)$.
\end{remark} 
As one can show, it turns out that $E$ is the direct sum of its {\it weight spaces} \index{weight!space}:
\[E = \bigoplus_{\a\in \ZZ^n}E_{\a}, \ \ \ E_{\a}:=\{v\in E \ : \ \diag(\xxx)v=\xxx^{\a}v \ \ \forall \ \diag(\xxx)\in H\}.\]
Let $B_-(V)\subseteq \GL(V)$ be the subgroup of the lower triangular matrices. A nonzero element $v\in E$ is called a {\it highest weight vector} \index{weight!vector!highest} if $B_-(V) v \subseteq <v>$. It is straightforward to check that a highest weight vector actually is a weight vector. (One can also define the highest weight vector with respect to the subgroup $B_+(V)\subseteq \GL(V)$ of all the upper triangular matrices: the theory does not change). Highest weight vectors supply the key to classify the irreducible rational representations of $\GL(V)$. In fact:

{\it (i) A rational representation is irreducible $\iff$ it has only one highest weight vector (up to multiplication by scalars).}\\
By what said in Remark \ref{dependencebasis}, we can, and from now on we do, suppose that the only highest weight of an irreducible rational representation is $\a = (\a_1,\ldots ,\a_n)$ with $\a_1\geq \ldots \geq \a_n$. After this observation, we can state a second crucial property of highest weight:

{\it (ii) Two rational irreducible representations are isomorphic $\iff$ their highest weight vectors have the same weight.}\\
The third and last fact we want to let the reader know is:

{\it (iii) Given $\a = (\a_1,\ldots , \a_n)\in \ZZ^n$ with $\a_1\geq \ldots \geq \a_n$, there exists an irreducible rational representation whose highest weight vector has weight $\a$.}

\subsection{Schur modules}\label{subsecshurmodules}

In this subsection we are going to give an explicit way to construct all the irreducible rational representations of $\GL(V)$. The plan is first to describe such representations when $\a\in \NN^n$. In this case the corresponding representation is actually polynomial. To get the general case it will be enough to tensorize the polynomial representations by a suitable negative power of the determinant representation. We will soon be more precise. Let us start introducing the notion of partition: A vector $\lambda = (\lambda_1,\ldots , \lambda_k)\in \NN^k$ is a {\it partition} \index{partition} of a natural number $m$, written $\lambda \vdash m$, if $\lambda_1 \geq \lambda_2 \geq \ldots \geq \lambda_k \geq 1$ and $\lambda_1 + \lambda_2 + \ldots + \lambda_k = m$. We will say that the partition $\lambda$ has $k$ {\it parts} 
\index{partition!parts of} and {\it height} \index{partition!height of} $\height(\lambda)=\lambda_1$. Sometimes will be convenient to group together the equal terms of a partition: For instance we will write $(k^d)$ for the partition $(k,k,\ldots ,k)\in \NN^d$, or $(3^3,2^2,1^4)$ for $(3,3,3,2,2,1,1,1,1)$.
From a representation $E$, we can build new representations for any partition $\ll$, namely $L_{\lambda}E$. If $E=V$ is the representation with the obvious action of $\GL(V)$ the obtained representations $\Vl$, called {\it Schur module}\index{Schur module}, will be polynomial and irreducible.
The $\kkk$-vector space $\Vl$ is obtained as a suitable quotient of
\[ \bigwedge^{\lambda_1}V\otimes \bigwedge^{\lambda_2}V \otimes \ldots \otimes \bigwedge^{\lambda_k}V,\]
and the action of $\GL(V)$ is the one induced by the natural action on $V$. To see the precise definition look at \cite[Chapter 2]{We}. We can immediately notice that $\Vl = 0$ if $\height(\lambda)>n=\dim_{\kkk} V$. However in characteristic $0$, that is our case, the above Schur module can be defined as a subrepresentation of $\bigotimes^mV$: Since we think that for this thesis is more useful the last interpretation, we decided to give the details for it.

We can feature a partition $\lambda$ as a {\it (Young) diagram} \index{Young diagram}\index{diagram|see{Young diagram}}, that we will still denote by $\lambda$, namely:
\[\lambda := \{(i,j)\in \NN \setminus \{0\}\times \NN \setminus \{0\} \ : \ i\leq k \mbox{ and }j\leq \lambda_i\}.\]
It is convenient to think at a diagram as a sequence of rows of boxes, for instance the diagram associated to the partition $\lambda=(6,5,5,3,1)$ features as
\[
{\setlength{\unitlength}{1mm}
\begin{picture}(30,25)(-5,0)

\put(-11,12){$\lambda \ =$}

\put(0,25){\line(1,0){30}}
\put(0,20){\line(1,0){30}}
\put(0,15){\line(1,0){25}}
\put(0,10){\line(1,0){25}}
\put(0,5){\line(1,0){15}}
\put(0,0){\line(1,0){5}}

\put(0,25){\line(0,-1){25}}
\put(5,25){\line(0,-1){25}}
\put(10,25){\line(0,-1){20}}
\put(15,25){\line(0,-1){20}}
\put(20,25){\line(0,-1){15}}
\put(25,25){\line(0,-1){15}}
\put(30,25){\line(0,-1){5}}

\end{picture}}
\]
Obviously we can also recover the partition $\lambda$ from its diagram, that is why we will speak indifferently about diagrams or partitions. Sometimes will be useful to use the partial order on diagram given by inclusion. That is, given two partitions $\gamma = (\gamma_1,\ldots ,\gamma_h)$ and $\ll = (\ll_1,\ldots ,\ll_k)$, by $\gamma \subseteq \ll$ we mean $h\leq k$ and $\gamma_i\leq \ll_i$ for all $i=1,\ldots ,h$. We will denote by $|\lambda|$ the number of boxes of a diagram $\lambda$, that is $|\lambda|:=\lambda_1+\ldots +\lambda_k$. So $\ll \vdash |\ll |$. Given a diagram $\lambda$, a {\it (Young) tableu} \index{Young tableu}\index{tableu|see{Young tableu}} $\Lambda$ of shape $\lambda$ on $[r]:=\{1,\ldots ,r\}$ is a filling of the boxes of $\lambda$ by letters in the alphabet $[r]$. For instance the following is a tableu of shape $(6,5,5,3,1)$ on $\{1,\ldots ,r\}$, provided $r\geq7$ .
\[
{\setlength{\unitlength}{1mm}
\begin{picture}(30,25)(-5,0)

\put(-11,12){$\Lambda \ =$}

\put(0,25){\line(1,0){30}}
\put(0,20){\line(1,0){30}}
\put(0,15){\line(1,0){25}}
\put(0,10){\line(1,0){25}}
\put(0,5){\line(1,0){15}}
\put(0,0){\line(1,0){5}}

\put(0,25){\line(0,-1){25}}
\put(5,25){\line(0,-1){25}}
\put(10,25){\line(0,-1){20}}
\put(15,25){\line(0,-1){20}}
\put(20,25){\line(0,-1){15}}
\put(25,25){\line(0,-1){15}}
\put(30,25){\line(0,-1){5}}

\put(1.8,21.3){\small{$3$}}
\put(6.8,21.3){\small{$5$}}
\put(11.8,21.3){\small{$4$}}
\put(16.8,21.3){\small{$3$}}
\put(21.8,21.3){\small{$2$}}
\put(26.8,21.3){\small{$7$}}
\put(1.8,16.3){\small{$2$}}
\put(6.8,16.3){\small{$1$}}
\put(11.8,16.3){\small{$7$}}
\put(16.8,16.3){\small{$6$}}
\put(21.8,16.3){\small{$4$}}
\put(1.8,11.3){\small{$2$}}
\put(6.8,11.3){\small{$2$}}
\put(11.8,11.3){\small{$3$}}
\put(16.8,11.3){\small{$1$}}
\put(21.8,11.3){\small{$2$}}
\put(1.8,6.3){\small{$5$}}
\put(6.8,6.3){\small{$6$}}
\put(11.8,6.3){\small{$7$}}
\put(1.8,1.3){\small{$1$}}

\end{picture}}
\]
Formally, a tableu $\Lambda$ of shape $\lambda$ on $[r]$ is a map $\Lambda:\lambda \rightarrow [r]$. The {\it content} \index{Young tableu!content of} of $\Lambda$ is the vector $c(\Lambda)=(c(\Lambda)_1,\ldots ,c(\Lambda)_r)\in \NN^r$ such that $c(\Lambda)_p:=|\{(i,j) \ : \ \Lambda(i,j)=p\}|$. In order to define the Schur modules we need to introduce the Young symmetrizers: Let $\lambda$ be a partition of $m$, and $\Lambda$ be a tableu of shape $\ll$ such that $c(\Lambda)=(1,1,\ldots ,1)\in \NN^m$. Let $\SS_m$ be the symmetric group on $m$ elements, and let us define the following subsets of it:
\[\C_{\Lambda}:=\{\sigma \in \SS_m \ : \ \sigma \mbox{ preserves each column of }\Lambda\},\]
\[\R_{\Lambda}:=\{\tau \in \SS_m \ : \ \tau \mbox{ preserves each row of }\Lambda\}.\]
The symmetric group $\SS_m$ acts on $\otimes^mV$ extending by $\kkk$-linearity the rule
\[\sigma(v_1\otimes \ldots \otimes v_m):=v_{\sigma(1)}\otimes \ldots \otimes v_{\sigma(m)}, \ \ \ \sigma \in \SS_m, \ v_i\in V. \]
With these notation, the {\it Young symmetrizer} \index{Young symmetrizer} is the following map:
\begin{displaymath}
\begin{array}{rll}
\displaystyle
e(\Lambda) : \bigotimes^m V & \rightarrow & \displaystyle \bigotimes^m V\\ 
v = v_1\otimes \ldots \otimes v_m & \mapsto & \sum_{\sigma \in \C_{\Lambda}}\sum_{\tau \in \R_{\Lambda}}\operatorname{sgn}(\tau)\sigma \tau (v)
\end{array}.
\end{displaymath}
One can check that the image of $e(\Lambda)$ is a $\GL(V)$-subrepresenation of $\bigotimes^m V$. Moreover, up to $\GL(V)$-isomorphism, it just depends from the partition $\lambda$, and not from the particular chosen tableu. 
Thus we define the Schur module $\Vl$ as
\[\Vl:=e(\Lambda)(\bigotimes^m V).\]
To see that this definition coincides with the one given at the beginning of this subsection see \cite[Lemma 2.2.13 (b)]{We}. (Actually in \cite{We} the definition of the Young symmetrizers is different from the one given here, and the statement of \cite[Lemma 2.2.13 (b)]{We} is wrong: However, the argument of its proof is the correct one). 
\begin{example}
The Schur modules somehow fill the gap between exterior and symmetric powers. This example should clarify what we mean.
\begin{compactitem}
\item[(i)] Let $\ll := (m)$ and $\Lambda(1,j):=j$ for any $j=1,\ldots ,m$. In this case $\C_{\Lambda} = \{\id_{[m]}\}$ and $\R_{\Lambda} = \SS_m$. Therefore, if $v = v_1 \otimes \ldots \otimes v_m \in \bigotimes^m V$, then
\[e(\Lambda)(v) = \sum_{\tau \in \SS_m}\operatorname{sgn}(\tau) \tau (v).\]
The image of such a map is exactly $\bigwedge^m V \subseteq \bigotimes^m V$. So $L_{(m)}V \cong \bigwedge^m V$.
\item[(ii)] Let $\ll := (1^m)\in \NN^m$ and $\Lambda(j,1):=j$ for any $j=1,\ldots ,m$. This is the opposite case of the above one, in fact we have $\C_{\Lambda} = \SS_m$ and $\R_{\Lambda} = \{\id_{[m]}\}$. Therefore, if $v = v_1 \otimes \ldots \otimes v_m \in \bigotimes^m V$, then
\[e(\Lambda)(v) = \sum_{\sigma \in \SS_m}\sigma (v).\]
The image of such a map is exactly $\Sym^m V \subseteq \bigotimes^m V$. So $L_{(1^m)}V \cong \Sym^m V$.
\end{compactitem}
\end{example}
Before stating the following crucial theorem, let us recall that the {\it transpose partition} \index{partition!transpose} of a partition $\ll = (\ll_1,\ldots ,\ll_k)$ is the partition 
\[\tl\ll = (\tl\ll_1,\ldots , \tl\ll_{\ll_1} ) \ \mbox{where} \ \tl\ll_i := |\{j:\ll_j\geq i\}|.\]
Notice that $|\ll|=|\tl\ll|$, $\tl\ll$ has $\height(\ll)$ parts, $\ll$ has $\height(\tl\ll)$ parts and $\tl(\tl\ll)=\ll$.
\begin{thm}\label{schur}
The Schur module $\Vl$ is an irreducible polynomial representation with highest weight $\tl\lambda$. So all the irreducible polynomial representations are of this kind.
\end{thm}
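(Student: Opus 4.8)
\textbf{Proof strategy for Theorem \ref{schur}.}

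The plan is to prove three assertions in turn: that $\Vl := e(\Lambda)(\bigotimes^m V)$ is a polynomial representation, that it is irreducible, and that its highest weight is $\tl\lambda$; the final sentence (``all irreducible polynomial representations arise this way'') then follows by counting, using facts (i), (ii), (iii) about highest weights recalled in Section \ref{reptheorygl}. First I would observe that $\bigotimes^m V$ is a polynomial $\GL(V)$-representation in a manifest way (the matrix entries of the action on a tensor basis are monomials in the entries of $g\in\GL(V)$), and that $e(\Lambda)$ is $\GL(V)$-equivariant because the $\SS_m$-action on $\bigotimes^m V$ commutes with the diagonal $\GL(V)$-action. Hence $\Im(e(\Lambda))$ is a $\GL(V)$-subrepresentation of a polynomial representation, so it is polynomial; it is also finite-dimensional, so by Theorem \ref{reductivityofgln} it decomposes as a direct sum of irreducibles, and by facts (i)–(iii) each irreducible summand is determined by the weight of its (unique) highest weight vector.

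The core of the argument is to pin down the weights. I would compute the weight-space decomposition of $e(\Lambda)(\bigotimes^m V)$ with respect to the diagonal torus $H$: on the tensor basis $e_{i_1}\otimes\cdots\otimes e_{i_m}$, the weight is $\sum_k \epsilon_{i_k}$ where $\epsilon_j$ is the $j$-th standard vector, so every weight of $\bigotimes^m V$, hence of $\Vl$, is a composition of $m$ into $n$ parts. The key combinatorial step is to show that the weight $\tl\lambda$ occurs in $\Vl$ and, moreover, that the corresponding weight vector can be taken to be a highest weight vector — i.e. killed (up to scalar) by the lower-triangular subgroup $B_-(V)$. Concretely, take the tableau $\Lambda$ realized so that column $c$ of shape $\lambda$ gets filled in $\bigotimes^m V$ by $e_1\otimes e_2\otimes\cdots\otimes e_{\tl\lambda_c}$ (reading the column top-to-bottom), tensored over all columns; then $e(\Lambda)$ applied to this tensor is, after antisymmetrizing over columns, a tensor product over columns of wedge products $e_1\wedge\cdots\wedge e_{\tl\lambda_c}\in\bigwedge^{\tl\lambda_c}V$. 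This element has weight $(\tl\lambda_1',\dots)$... more precisely its $H$-weight is exactly $\tl\lambda$ (the $j$-th entry counts in how many columns $e_j$ appears, which is $|\{c:\tl\lambda_c\ge j\}| = \lambda_j$ — wait, one must be careful: it is $\tl{\tl\lambda}=\lambda$ reversed; I would double-check the transpose bookkeeping here, since this is precisely where the transpose in the statement enters). Then I would verify that this distinguished vector is annihilated by every strictly-lower-triangular elementary matrix $I + t E_{ab}$ with $a>b$: this amounts to the standard fact that $e_1\wedge\cdots\wedge e_r$ is $B_-$-stable in $\bigwedge^r V$ together with the Plücker-type relations for the column-antisymmetrized tensor, i.e. the defining relations of the Schur module.

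Next I would argue irreducibility. Having produced a highest weight vector of weight $\tl\lambda$ inside $\Vl$, it suffices (by facts (i)–(iii)) to show $\Vl$ has \emph{no other} highest weight. Here I would invoke that any highest weight $\mu$ appearing in $\Vl$ must be a composition of $m$ that is dominant ($\mu_1\ge\cdots\ge\mu_n$), must be $\le \tl\lambda$ in the dominance order (because all weights of $\Vl$ are obtained from $\tl\lambda$ by subtracting positive roots — a consequence of $\Vl$ being a quotient/sub generated under $\GL(V)$ by the distinguished vector), and that the $\tl\lambda$-weight space of $\Vl$ is one-dimensional (this last point follows from analyzing which tensor-basis monomials of weight $\tl\lambda$ survive under $e(\Lambda)$, a direct combinatorial check using the row-symmetrizing and column-antisymmetrizing structure). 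These force the unique highest weight to be $\tl\lambda$ with multiplicity one, so $\Vl$ is irreducible with highest weight $\tl\lambda$. Finally, the concluding sentence: as $\lambda$ ranges over all partitions with at most $n$ parts, $\tl\lambda$ ranges over all dominant $\mu\in\NN^n$ with $\mu_1\le m$ summing to $|\lambda|$; letting $m$ vary we hit every dominant weight in $\NN^n$, so by fact (iii) every irreducible \emph{polynomial} representation is some $\Vl$ (the general rational case, tensoring by powers of $\det$, is the remark immediately following and not part of this statement).

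\textbf{Main obstacle.} The delicate point is the transpose bookkeeping together with the precise claim that the distinguished vector is a highest weight vector and that the $\tl\lambda$-weight space is one-dimensional; equivalently, making rigorous the passage between the ``quotient of $\bigwedge^{\lambda_1}V\otimes\cdots$'' description and the ``image of $e(\Lambda)$ in $\bigotimes^m V$'' description. In the write-up I would lean on the correspondence already cited from \cite[Chapter 2]{We} (Lemma 2.2.13(b)) to identify the two models of $\Vl$, and then only need to exhibit one explicit highest weight vector and count one weight space, rather than redo the full representation theory from scratch.
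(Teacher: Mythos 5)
There is an important preliminary point: the paper gives no proof of Theorem \ref{schur} at all. It sits in the appendix of recalled facts about representation theory, stated as a classical result with the surrounding references (\cite{We}, \cite{Fu}, \cite{FH}, \cite{procesi}) standing in for a proof, so there is nothing internal to compare your argument against. Your sketch follows the standard textbook line (exhibit one highest weight vector, show the corresponding weight space is one-dimensional and dominates all other weights, then count dominant weights), which is the right strategy in principle.

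However, the step you yourself flag as needing a ``double-check'' is a genuine gap, and it is exactly where the content of the theorem lives. The paper's Young symmetrizer puts the sign on the row-preserving permutations $\R_{\Lambda}$ and symmetrizes over $\C_{\Lambda}$ without signs; correspondingly $\Vl$ is built from $\bigwedge^{\lambda_1}V\otimes\cdots\otimes\bigwedge^{\lambda_k}V$, the exterior powers being indexed by the \emph{row} lengths. Your distinguished vector, with column $c$ filled by $e_1,\ldots,e_{\tl\lambda_c}$ and ``antisymmetrized over columns,'' belongs to the opposite convention: under the paper's $e(\Lambda)$ its weight is $\lambda$, not $\tl\lambda$, and in fact it is annihilated outright whenever some $\lambda_i\geq 2$, since its rows then contain a repeated basis vector and the row antisymmetrization kills such tensors. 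The correct choice is the vector attached to the canonical tableau $c_{\lambda}$ (row $i$ filled by $1,\ldots,\lambda_i$), i.e.\ the class of $(e_1\wedge\cdots\wedge e_{\lambda_1})\otimes\cdots\otimes(e_1\wedge\cdots\wedge e_{\lambda_k})$, whose weight is $\tl\lambda$ --- precisely what the paper records at the end of the subsection when it says $c_{\lambda}$ corresponds to the highest weight vector of $\Vl$. With that substitution your remaining plan (Borel-stability of this vector --- taking care that the triangularity check matches the paper's choice of $B_-(V)$ versus $B_+(V)$ --- one-dimensionality of the $\tl\lambda$-weight space, the dominance bound excluding other highest weights, the remark that a polynomial representation has highest weight in $\NN^n$, and the counting of transposes) is the standard proof; but as written the central computation produces the zero vector, so either these steps must be redone in the paper's convention or one should simply cite \cite{We} or \cite{FH}, as the paper implicitly does.
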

From Theorem \ref{schur} it is quite simple to get all the irreducible rational $\GL(V)$-representations. For $k\in \ZZ$, let us define the $k$th {\it determinant representation} \index{representation!determinant} $D^k$ to be the $1$-dimensional representation $\GL(V)\rightarrow \kkk^*$ which to a matrix $g\in \GL(V)$ associates the $k$th power of its determinant, namely $\det(g)^k$. It urns out that $D:=D^1$ is the determinant representation $\bigwedge^n V$. 
\begin{remark}
In this remark we list some basic properties of the determinant representations.
\begin{compactitem}
\item[(i)] Clearly $D^k$ is a rational representation for all $k\in \ZZ$. Moreover it is polynomial precisely when $k\in \NN$.
\item[(ii)] Being $1$-dimensional, $D^k$ is obviously irreducible. Furthermore it is straightforward to check that its highest weight is $(k,k,\ldots ,k)\in \ZZ^n$. Therefore, if $k\geq 0$ Theorem \ref{schur} implies that $D^k\cong \Vl$ where $\lambda = (n,n,\ldots ,n)\in \ZZ^k$.
\item[(iii)] If $E$ is a irreducible rational representation with highest weight $(\alpha_1,\ldots ,\alpha_n)$, one can easily verify that $E\otimes D^k$ is a rational irreducible representation with highest weight $(\a_1+k,\ldots ,\a_n+k)$.
\end{compactitem}
\end{remark}
Eventually we are able to state the result which underlies the representation theory of the general linear group.
\begin{thm}\label{irrrepgl}
A rational representation $E$ is irreducible if and only if there exists a partition $\lambda$ and $k\in \ZZ$ such that $E\cong \Vl\otimes D^k$. In particular:
\begin{compactitem}
\item[(i)] For each vector $\a = (\a_1,\ldots ,\a_n)\in \ZZ^n$ with $\a_1\geq \ldots\geq \a_n$ the unique irreducible rational representation of weight $\a$ is given by $L_{ \ \tl\ll}V\otimes D^k$ where $\ll_i=\a_i-k\geq 0$ for $i=1,\ldots ,n$.
\item[(ii)] Two irreducible rational representations $\Vl\otimes D^k$ and $L_{\gamma}V\otimes D^h$ are isomorphic if and only if $\tl\ll_i+k = \tl\gamma_i+h$ for all $i=1,\ldots ,n$.
\end{compactitem}
\end{thm}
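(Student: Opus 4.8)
The plan is to deduce everything from the highest-weight machinery recalled above (facts (i)--(iii) in Subsection~\ref{reptheorygl}) together with the classification of irreducible \emph{polynomial} representations in Theorem~\ref{schur}. The only real idea is that twisting by a determinant power $D^k$ shifts the highest weight by $(k,\dots,k)$, which lets one pass back and forth between rational and polynomial representations.

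First I would record the weight bookkeeping. The representation $D^k$ is rational, one-dimensional, hence irreducible, with highest weight $(k,\dots,k)$; consequently, if $F$ is any irreducible rational representation of highest weight $\beta=(\beta_1,\dots,\beta_n)$, then $F\otimes D^k$ is again rational (a Kronecker product of rational matrices has rational entries) and irreducible (tensoring an irreducible by a one-dimensional representation identifies its submodules with those of $F$), with highest weight $(\beta_1+k,\dots,\beta_n+k)$. In particular, by Theorem~\ref{schur} the Schur module $\Vl$ is irreducible polynomial, hence rational, with highest weight $\tl\ll$, so $\Vl\otimes D^k$ is irreducible rational with highest weight $(\tl\ll_1+k,\dots,\tl\ll_n+k)$ (parts of $\tl\ll$ beyond its length taken to be $0$). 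This already proves the ``if'' part of the first assertion, and combined with fact~(ii) it yields part~(ii): $\Vl\otimes D^k\cong L_\gamma V\otimes D^h$ if and only if their highest weights agree, i.e. if and only if $\tl\ll_i+k=\tl\gamma_i+h$ for all $i$.

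For the ``only if'' part, let $E$ be irreducible rational. By fact~(i) it has a unique highest weight vector up to scalars, whose weight, by the normalization fixed after fact~(iii), may be taken to be $\a=(\a_1,\dots,\a_n)$ with $\a_1\geq\dots\geq\a_n$. Put $k:=\a_n\in\ZZ$ and $\mu:=(\a_1-k,\dots,\a_n-k)$; then $\mu$ is a weakly decreasing sequence of at most $n$ nonnegative integers, so after discarding trailing zeros it is a partition (possibly empty, in which case $L_{\tl\mu}V=\kkk$) with $\height(\tl\mu)\leq n$, whence $L_{\tl\mu}V\neq 0$. By Theorem~\ref{schur}, $L_{\tl\mu}V$ is the irreducible polynomial representation of highest weight $\tl(\tl\mu)=\mu$, so by the computation above $L_{\tl\mu}V\otimes D^k$ is irreducible rational with highest weight $\mu+(k,\dots,k)=\a$. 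Since $E$ and $L_{\tl\mu}V\otimes D^k$ are both irreducible rational with highest weight $\a$, fact~(ii) gives $E\cong L_{\tl\mu}V\otimes D^k$; taking the partition to be $\tl\mu$ proves the general statement, and the explicit form in part~(i) is precisely this construction (the theorem's $\ll$ being my $\mu$). Finally, the same argument with any $k\leq\a_n$ produces an isomorphic representation, which re-proves the uniqueness in part~(i) via part~(ii).

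The substantive inputs — the existence of a highest weight vector (which underlies fact~(i)), the uniqueness statements in facts~(i) and~(ii), and the Schur-module classification of Theorem~\ref{schur} — are all available, so the argument is essentially bookkeeping and I do not expect a serious obstacle. The only points needing a little care are the edge case $\a_1=\dots=\a_n$, where $\mu$ is empty and $E\cong D^k$, and keeping the transpose straight: Theorem~\ref{schur} attaches to a partition $\nu$ the module $L_\nu V$ of highest weight $\tl\nu$, so the representation of a \emph{prescribed} weight $\mu$ is $L_{\tl\mu}V$, not $L_\mu V$.
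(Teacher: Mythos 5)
Your argument is correct, and it is essentially the argument the paper intends: Theorem \ref{irrrepgl} is stated in the background appendix without proof, and your derivation — twisting by $D^k$ to shift the highest weight by $(k,\ldots,k)$ (which is precisely the paper's remark (iii) on determinant representations stated just before the theorem) and then combining the highest-weight facts (i)--(iii) with the Schur-module classification of Theorem \ref{schur} — is the standard route those recalled facts are set up for. You also handle the one delicate point correctly, namely the paper's transpose convention that the irreducible of prescribed weight $\mu$ is $L_{\tl\mu}V$ rather than $L_{\mu}V$, together with the nonvanishing check $\height(\tl\mu)\leq n$ and the degenerate case $\a_1=\cdots=\a_n$.
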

\begin{remark}\label{dualrepresentation}
If $E$ is an irreducible rational representation with highest weight $\a = $ $(\a_1,\ldots ,\a_n)$, then it easy to check that its dual representation $E^*$ is an irreducible rational representation with highest weight $(-\a_n,\ldots ,-\a_1)$. In particular, if $\ll = (\ll_1,\ldots ,\ll_k)$ is a partition, we denote by $\ll^*=(\ll_1^*,\ldots ,\ll_k^*)$ the partition such that $\ll_i^* = n-\ll_{k-i+1}$. By Theorem \ref{irrrepgl} we have a $\GL(V)$-isomorphism
\[(\Vl)^* \cong L_{\ll^*} \otimes D^{-k}, \]
Moreover one can show that $(\Vl)^*\cong L_{\ll}(V^*)$ (see the book of Procesi \cite[Chapter 9, Section 7.1]{procesi}). For this reason, from now on we will write $\Vl^*$ for $(\Vl)^*$: Each interpretation of such a notation is correct!
\end{remark}

We want to end this subsection describing a $\kkk$-basis of the Schur modules $\Vl$ in terms of the tableux of shape $\ll$. A tableu $\Lambda$ is said to be {\it standard}\index{Young tableu!standard} if its rows are increasing ($\Lambda(i,j)<\Lambda(i,j+1)$) and its columns are not decreasing ($\Lambda(i,j)\leq \Lambda(i+1,j)$). Among the standard tableux of fixed shape $\lambda$ one plays a crucial role: The {\it canonical tableu}\index{Young tableu!canonical} $c_{\lambda}$. For any $i,j$ such that $j\leq \lambda_i$, we have $\Kl(i,j):=j$. For instance the canonical tableu of shape $\lambda=(6,5,5,3,1)$ is:
\[
{\setlength{\unitlength}{1mm}
\begin{picture}(30,25)(-5,0)

\put(-11,12){$\Lambda \ =$}

\put(0,25){\line(1,0){30}}
\put(0,20){\line(1,0){30}}
\put(0,15){\line(1,0){25}}
\put(0,10){\line(1,0){25}}
\put(0,5){\line(1,0){15}}
\put(0,0){\line(1,0){5}}

\put(0,25){\line(0,-1){25}}
\put(5,25){\line(0,-1){25}}
\put(10,25){\line(0,-1){20}}
\put(15,25){\line(0,-1){20}}
\put(20,25){\line(0,-1){15}}
\put(25,25){\line(0,-1){15}}
\put(30,25){\line(0,-1){5}}

\put(1.8,21.3){\small{$1$}}
\put(6.8,21.3){\small{$2$}}
\put(11.8,21.3){\small{$3$}}
\put(16.8,21.3){\small{$4$}}
\put(21.8,21.3){\small{$5$}}
\put(26.8,21.3){\small{$6$}}
\put(1.8,16.3){\small{$1$}}
\put(6.8,16.3){\small{$2$}}
\put(11.8,16.3){\small{$3$}}
\put(16.8,16.3){\small{$4$}}
\put(21.8,16.3){\small{$5$}}
\put(1.8,11.3){\small{$1$}}
\put(6.8,11.3){\small{$2$}}
\put(11.8,11.3){\small{$3$}}
\put(16.8,11.3){\small{$4$}}
\put(21.8,11.3){\small{$5$}}
\put(1.8,6.3){\small{$1$}}
\put(6.8,6.3){\small{$2$}}
\put(11.8,6.3){\small{$3$}}
\put(1.8,1.3){\small{$1$}}

\end{picture}}
\]
Notice that the content of the canonical tableu $c_{\lambda}$ is the transpose partition $\tl\lambda$ of $\lambda$. If not already guessed, the reason of the importance of the canonical tableux will soon be clear. Fixed a $\kkk$-basis $\{e_1,\ldots ,e_n\}$ of $V$, it turns out that there is a $1$-$1$ correspondence between standard tableux of shape $\lambda$ on $[n]$ and a $\kkk$-basis of $\Vl$. The correspondence associates to $\Lambda$ the equivalence class of the element
\[(e_{\Lambda(1,1)}\wedge \ldots \wedge e_{\Lambda(1,\lambda_1)})\otimes \ldots \otimes (e_{\Lambda(k,1)}\wedge \ldots \wedge e_{\Lambda(k,\lambda_k)}).\]
By meaning of the Young symmetrizers, once fixed a tableu $\Gamma$ of shape $\ll$ on $[| \lambda |]$ of content $(1,1,\ldots ,1)\in \ZZ^{|\ll |}$, the correspondence is given by 
\[\Lambda \mapsto e(\Gamma)(e_{\Lambda(1,1)}\otimes \ldots \otimes e_{\Lambda(1,\lambda_1)}\otimes \ldots \otimes e_{\Lambda(k,1)}\otimes \ldots \otimes e_{\Lambda(k,\lambda_k)}).\]
With respect to both the correspondences above, any tableu $\Lambda$ is a weight vector of weight $c(\Lambda)$. Moreover the canonical tableu $c_{\ll}$ corresponds to the highest weight vector of $\Vl$. 

\subsection{Plethysms}

What said up to now implies that, given a finite dimensional rational $GL(V)$-representation $E$, there is a unique decomposition of it in irreducible representations, namely
\begin{equation}\label{plethysm}
E \cong \bigoplus_{{\height(\ll)<n}\atop {k\in \ZZ}} (\Vl \otimes D^k)^{m(\ll ,k)}.
\end{equation}
The numbers $m(\ll ,k)$ are the {\it multiplicities}\index{representation!multiplicities} of the irreducible representation $\Vl \otimes D^k$ appears in $E$ with. To find these numbers is a fascinating problem, still open even for some very natural representations. When the representation $E$ is polynomial,
the decomposition, rather than as in \eqref{plethysm}, is usually written as
\begin{equation}\label{plethysmpoly}
E \cong \bigoplus_{\height(\ll)\leq n} \Vl^{m(\ll)}
\end{equation}
For instance, the decomposition of $\Sym^p(\bigwedge^q V)$ is unknown in general, and to find it falls in the so-called {\ plethysm}'s problems\index{plethysm}. Unfortunately, this fact caused some obstructions to our investigations in Chapter \ref{chapter3}. At the contrary, an help from representation theory came from Pieri's formula\index{Pieri's formula}, a special case of the Littlewood-Richardson rule.
\begin{thm}\label{pieri}(Pieri's formula)
Let $\lambda=(\ll_1,\ldots,\ll_k)\vdash r$ and $\lambda(j)=(\ll_1+j,\ll_1,\ll_2,\ll_3,\ldots , \ll_k)$. Then
\[\Vl \otimes \bigwedge^jV \cong \bigoplus_{{\mu \vdash r+j, \ \height(\mu)\leq n}\atop {\ll \subseteq \mu \subseteq \ll(j)}}L_{\mu}V\]
and
\[\Vl^*\otimes \bigwedge^j V^* \cong \bigoplus_{{\mu \vdash r+j, \ \height(\mu)\leq n}\atop {\ll \subseteq \mu \subseteq \ll(j)}}L_{\mu}V^*\]
\end{thm}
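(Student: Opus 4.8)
The statement to prove is Pieri's formula (Theorem \ref{pieri}): for a partition $\ll = (\ll_1, \ldots, \ll_k) \vdash r$, setting $\ll(j) = (\ll_1 + j, \ll_1, \ll_2, \ldots, \ll_k)$, we have the decompositions $\Vl \otimes \bigwedge^j V \cong \bigoplus_\mu L_\mu V$ and $\Vl^* \otimes \bigwedge^j V^* \cong \bigoplus_\mu L_\mu V^*$, where $\mu$ runs over partitions of $r+j$ with $\height(\mu) \le n$ satisfying $\ll \subseteq \mu \subseteq \ll(j)$.

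The plan is to treat this as a special case of the Littlewood--Richardson rule, which governs the decomposition of $\Vl \otimes L_\nu V$ into irreducible $\GL(V)$-representations. First I would recall (citing a standard reference such as \cite{FH}, \cite{Fu}, or \cite[Chapter 9]{procesi}) that $\bigwedge^j V \cong L_{(1^j)}V$, so the left-hand side is $\Vl \otimes L_{(1^j)}V$. By the Littlewood--Richardson rule, the multiplicity of $L_\mu V$ in $\Vl \otimes L_\nu V$ is the Littlewood--Richardson coefficient $c^\mu_{\ll \nu}$, which counts skew semistandard tableaux of shape $\mu/\ll$ and content $\nu$ whose reverse reading word is a lattice word. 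I would then specialize to $\nu = (1^j)$: a semistandard filling of $\mu/\ll$ with content $(1^j)$ forces each entry to appear exactly once and the column-strictness forces all $j$ boxes to lie in distinct rows; the lattice-word condition then pins down which boxes of $\mu/\ll$ can carry which label. A short combinatorial check shows $c^\mu_{\ll,(1^j)}$ equals $1$ precisely when $\mu/\ll$ is a ``horizontal-strip-free'' vertical strip of size $j$ — i.e.\ $\mu/\ll$ has at most one box in each row and at most one box in each column — and is $0$ otherwise. The condition ``$\mu$ a partition of $r+j$ with $\ll \subseteq \mu$ and $\mu/\ll$ a vertical strip'' should then be translated into the stated inequality $\ll \subseteq \mu \subseteq \ll(j)$: since $\mu/\ll$ has at most one box per column and $\ll_1$ is the length of the first row, the only freedom is to add at most $\min(1,\ldots)$ boxes... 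I would verify carefully that ``vertical strip added to $\ll$'' is equivalent to $\mu_1 \le \ll_1 + j$ together with $\mu_{i+1} \le \ll_i$ for all $i$, which is exactly $\mu \subseteq \ll(j)$ componentwise (recalling that $\ll(j) = (\ll_1+j, \ll_1, \ll_2, \ldots)$). The height restriction $\height(\mu) \le n$ is automatic from the requirement that $L_\mu V \ne 0$, i.e.\ $\mu$ has at most $n = \dim_\kkk V$ parts (Theorem \ref{schur}).

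For the dual statement, I would use Remark \ref{dualrepresentation}: dualizing is an exact contravariant functor on rational representations with $(\Vl)^* \cong L_\ll(V^*)$ and $\bigl(\bigwedge^j V\bigr)^* \cong \bigwedge^j(V^*)$. Applying $(-)^*$ to the first isomorphism and using that the dual of a direct sum is the direct sum of duals gives $\Vl^* \otimes \bigwedge^j V^* \cong \bigoplus_\mu L_\mu V^*$ over the same index set, since the indexing condition on $\mu$ is self-contained and does not change under dualization. Alternatively one can run the same Littlewood--Richardson argument directly with $V$ replaced by $V^*$, which needs no extra work.

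The main obstacle I expect is the purely combinatorial lemma at the heart of the argument: proving that $c^\mu_{\ll,(1^j)} = 1$ when $\mu/\ll$ is a vertical strip of size $j$ and $0$ otherwise, and then correctly matching ``$\mu/\ll$ is a vertical strip'' with the compact inclusion $\mu \subseteq \ll(j)$. The first part is the dual Pieri rule and is standard, but it does require being careful about the lattice-word (Yamanouchi) condition on the reading word; the second part is an elementary but slightly fiddly manipulation of the defining inequalities of a skew vertical strip. Since both are classical facts about symmetric functions, in the write-up I would state the dual Pieri rule as a known result with a reference and spend the bulk of the proof on the translation of the combinatorial condition into the form $\ll \subseteq \mu \subseteq \ll(j)$, plus the observation that $\height(\mu) \le n$ is forced by non-vanishing of $L_\mu V$.
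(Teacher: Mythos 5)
Your overall strategy (specializing the Littlewood--Richardson rule for the first formula and dualizing via Remark \ref{dualrepresentation} for the second) is a legitimate route, and in fact more than the paper provides, since the paper simply cites \cite[Chapter 9, Section 10.2]{procesi} for the first formula and declares the dual one straightforward. However, your execution contains a genuine error coming from a convention mismatch, and it breaks the key combinatorial step. In this thesis $\Vl$ is the irreducible representation of highest weight $\tl\ll$, so that $\bigwedge^j V\cong L_{(j)}V$ and $\Sym^jV\cong L_{(1^j)}V$ (this is stated explicitly in the example following the definition of the Young symmetrizers); your starting identification $\bigwedge^jV\cong L_{(1^j)}V$ is the Fulton--Harris convention and is false here. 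Moreover, the combinatorial facts you invoke are misstated: the dual Pieri specialization $c^{\mu}_{\ll,(1^j)}$ equals $1$ exactly when $\mu/\ll$ is a vertical strip, meaning $\mu_i\leq \ll_i+1$ for every $i$ (at most one new box in each \emph{row}, with no restriction on columns), not ``at most one box in each row and in each column''; and a vertical strip is \emph{not} equivalent to the interlacing condition $\mu_{i+1}\leq\ll_i$ --- that condition characterizes \emph{horizontal} strips. So the chain of equivalences you propose indexes the summands by the wrong set of $\mu$'s, and it only appears to land on the stated answer because the wrong step ``vertical strip $\iff \mu\subseteq\ll(j)$'' compensates for the wrong identification of $\bigwedge^jV$.

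The repair is short: work with characters in the paper's convention. The character of $\Vl$ is the Schur polynomial $s_{\tl\ll}$ and the character of $\bigwedge^jV$ is the elementary symmetric polynomial $e_j=s_{(1^j)}$, so the dual Pieri rule gives $s_{\tl\ll}\,e_j=\sum s_{\nu}$ over $\nu$ with $\nu/\tl\ll$ a vertical strip of size $j$; writing $\nu=\tl\mu$ and transposing (vertical strips transpose to horizontal strips) this is precisely the sum over $\mu\vdash r+j$ with $\ll_i\leq\mu_i$ and $\mu_{i+1}\leq\ll_i$ for all $i$, which is the condition $\ll\subseteq\mu\subseteq\ll(j)$ in the statement. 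Also note that the constraint $\height(\mu)\leq n$ is, in this paper's notation, the condition $\mu_1\leq n$ (height is the largest part, not the number of parts), which is exactly the nonvanishing condition for $L_{\mu}V$ here --- your justification again quietly used the other convention. With these corrections your argument goes through, and your handling of the dual formula by applying $(-)^*$ and Remark \ref{dualrepresentation} agrees with what the paper intends by ``straightforward''.
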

\begin{proof}
For the proof of the first formula see \cite[Chapter 9, Section 10.2]{procesi}. The dual formula is straightforward to get from the first one.
\end{proof}
\index{representation!theory of the general linear group|)}

\section{Minors of a matrix and representations}\label{sacminors}

Let $\kkk$ be a field of characteristic $0$, $m$ and $n$ two positive integers such that $m\leq n$ and
\begin{displaymath}
X := \left(\begin{array}{ccccc} x_{11} & x_{12} & \cdots & \cdots &  x_{1n} \\
x_{21} & x_{22} & \cdots & \cdots & x_{2n} \\
\vdots & \vdots & \ddots & \ddots & \vdots \\
x_{m1} & x_{m2} & \cdots & \cdots & x_{mn}
\end{array} \right)
\end{displaymath}
a $m\times n$ matrix of indeterminates over $\kkk$. Moreover let
\[R:=\kkk[x_{ij} \ : \ i=1,\ldots ,m, \ j=1,\ldots ,n ]\]
be the polynomial ring in $m\cdot n$ variables over $\kkk$. Let $W$ and $V$ be $\kkk$-vector spaces of dimension $m$ and $n$, and set $G:=\GL(W)\times \GL(V)$. The rule
\[(A,B)*X := A\cdot X\cdot B^{-1} \ \ \]
induces an action of the group $G$ on $R_1$: Namely $(A,B)*x_{ij}=x'_{ij}$ where $x'_{ij}$ denotes the $(ij)$th entry of the matrix $(A,B)*X$. Extending this action $R$ becomes a $G$-representation. Actually, each graded component $R_d$ of $R$ is a (finite dimensional) rational $G$-representation. 

In Chapter \ref{chapter3} we dealt with the above action of $G=\GL(W)\times \GL(V)$: Therefore we need to introduce some notation about $G$-representations of this kind. Let $F$ be a $\GL(W)$-representation, and $E$ be a $\GL(V)$-representation. Then $T := F\otimes E$ becomes a $G$-representation. Furthermore, if $F$ and $E$ are irreducible,  $T$ is irreducible as well. More generally, two potential decompositions in irreducible representations
\[F = \bigoplus_i F_i \mbox{ \ \ \ and \ \ \ }E = \bigoplus_j E_j,\]
yield a decomposition in irreducible $G$-representation of $F\otimes E$, namely
\[T = \bigoplus_{i,j}F_i\otimes E_j.\]
Let us assume that $F$ and $E$ are both rational. If $f\in F$ is a weight vector of weight $\beta\in \ZZ^m$ and $e\in E$ is a weight vector of weight $\a\in \ZZ^n$, then we say that $t := f\otimes e$ is a {\it bi-weight vector} of {\it bi-weight} $(\beta |\a)$.\index{bi-weight}\index{bi-weight!vector} This is equivalent to say that, for any $\diag(\yyy)\in \GL(W)$ and $\diag(\xxx)\in \GL(V)$, we have
\[(\diag(\yyy),\diag(\xxx))\cdot t = \yyy^{\beta}\xxx^\a t.\]
Let $f$ and $e$ be highest weight vectors of, respectively, $F$ and $E$. 
For questions of notation we suppose that they are invariant, respectively, with respect to $B_-(W)$ and to $B_+(V)$. Setting $B = B_-(W)\times B_+(V)$, we have that
\[B \cdot t \subseteq <t>.\]
We call $t$ an {\it highest bi-weight vector}\index{bi-weight!vector!highest} of $T$. 

Let us consider the symmetric algebra 
\[ \Sym(W\otimes V^*) = \bigoplus_{d\in \NN} \Sym^d (W\otimes V^*).\]
Let $\{e_1,\ldots ,e_m\}$ be a basis of $W$ and $\{f_1,\ldots ,f_n\}$ be a basis of $V$. Denoting the dual basis of $\{f_1,\ldots ,f_n\}$ by $\{f_1^*,\ldots ,f_n^*\}$, consider the isomorphism of graded $\kkk$-algebras
\begin{displaymath}
\begin{array}{rll}
\phi : R & \rightarrow & \Sym(W\otimes V^*) \\
x_{ij} & \mapsto & e_i\otimes f_j^*
\end{array}.
\end{displaymath}
As one can check, $\phi$ is $G$-equivariant. Thus, it is an isomorphism of $G$-representations. Furthermore the $G$-representation $\Sym^d(W\otimes V^*)$ can be decomposed in irreducible representations: There is an explicit formula for such a decomposition, known as the {\it Cauchy formula}\index{Cauchy formula} \cite[Chapter 9, Section 7.1]{procesi}:
\begin{equation}\label{cauchy}
\Sym^d(W\otimes V^*)\cong \displaystyle{\bigoplus_{{\lambda \vdash d}\atop {\height(\ll)\leq m}}} \Wl \otimes \Vl^*.
\end{equation}
Since $R_d$ and $\Sym^d(W\otimes V^*)$ are isomorphic $G$-representations, we will describe which polynomials belong to the isomorphic copy of $\Wl \otimes \Vl^*$ in $R$, underlining which one is $U$-invariant. We need a notation do denote the minors of the matrix $X$. Given two sequences
$1\leq i_1<\ldots <i_s\leq m$ and $1\leq j_1<\ldots <j_s\leq n$, we write
\[[i_1,\ldots ,i_s|j_1,\ldots ,j_s]\]
for the $s$-minor which insists on the rows $i_1,\ldots ,i_s$ and the columns $j_1,\ldots ,j_s$ of $X$.
Since an element of $G$ takes an $s$-minor in a linear combination of $s$-minors, the $\kkk$-vector space spanned by the $s$-minors is a finite dimensional $G$-representation. So it is plausible to expect a description of the irreducible representations $\Wl \otimes \Vl^*$ in terms of minors. For any pair of standard tableu $\Lambda$ and $\Gamma$ of shape $\ll=(\ll_1,\ldots ,\ll_k)$, respectively on $[m]$ and on $[n]$, we define the following polynomial of $R$:
\[ [\Lambda | \Gamma]:=\delta_1 \cdots \delta_k\]
where
\[\delta_i := [\Lambda(i,1),\ldots ,\Lambda(i,\lambda_i)|\Gamma(i,1),\ldots ,\Gamma(i,\lambda_i)] \ \ \ \forall \ i=1,\ldots ,k\]
We say that the product of minors $[\Lambda |\Gamma]$ has {\it shape} $\lambda$. One can show that the product of minors $[c_{\ll}|c_{\ll}]$ is a highest bi-weight vector of bi-weight $((\tl\ll_1,\ldots ,\tl\ll_m)|(-\tl\ll_n,\ldots ,-\tl\ll_1))$, so there is an isomorphism of $G$-representations
\[M_{\ll}:=G*[c_{\lambda}|\c_{\lambda}]\cong \Wl\otimes \Vl^*.\]
Actually, it turns out that the set 
\[\{[\Lambda|\Gamma] \ : \ \Lambda \mbox{ and $\Gamma$ are tableu of shape $\ll$, respectively on $[m]$ and on $[n]$}\}\]
is a $\kkk$-basis of $M_{\ll}$ (see the paper of DeConcini, Eisenbud and Procesi \cite{DEP1} or the book of Bruns and Vetter \cite[Section 11]{BrVe}). 

In Chapter \ref{chapter3} we were especially interested in the study of the $\kkk$-subalgebra of $R$
\[A_t=A_t(m,n)\]
generated by the $t$-minors of $X$, where $t$ is a positive integer smaller than or equal to $m$. These algebras are known as {\it algebras of minors}\index{algebra of minors}. It turns out that the algebra of minors $A_t$ is a $G$-subrepresentation of $R$. Therefore it is natural to ask about its decomposition in irreducibles. Equivalently, which $M_{\ll}$ are in $A_t$? The answer to this question is known. Before describing it, we fix a definition.
\begin{definition}
Given a partition $\lambda=(\lambda_1,\ldots , \lambda_k)$ we say that it is {\it admissible}\index{partition!admissible} if $t$ divides $|\lambda|$ and $tk \leq |\lambda|$. Furthermore we say that $\lambda$ is {\it $d$-admissible}\index{partition!d-admissible@$d$-admissible} if $|\lambda|=td$.
\end{definition}
Let us denote by $[A_t]_d$ the $\kkk$-vector space consisting in the elements of $A_t$ of degree $td$ in $R$. This way we are defining a new grading over $A_t$ such that the $t$-minors have degree $1$. The below result follows at once by the description of the powers of determinantal ideals got in \cite{DEP1}. 
\begin{thm}\label{decat}
For each natural number $d$ and for each $1\leq t\leq m$ we have
\[[A_t]_d = \bigoplus_{{\mbox{ {\footnotesize $\ll$ is $d$-admissible}}}\atop {\mbox{ {\footnotesize $\height(\ll)\leq m$}}}} M_{\ll} \cong \bigoplus_{{\mbox{ {\footnotesize $\ll$ is $d$-admissible}}}\atop {\mbox{ {\footnotesize $\height(\ll)\leq m$}}}} \Wl \otimes \Vl^*.\]
\end{thm}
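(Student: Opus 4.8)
The plan is to obtain the decomposition by combining the standard monomial theory of $R=R(m,n)$, Cauchy's formula \eqref{cauchy}, and the description of the powers of determinantal ideals due to De Concini, Eisenbud and Procesi \cite{DEP1}. First I would note that, under the renormalized grading, $[A_t]_d$ is exactly the $\kkk$-span of all products $\delta_1\cdots\delta_d$ of $d$ many $t$-minors of $X$; equivalently $[A_t]_d=(I_t^d)_{td}$, the lowest-degree graded component of the $d$-th power of the ideal $I_t$ generated by the $t$-minors. Since the $t$-minors span a $G$-stable subspace of $R_t$ and $A_t$ is a $\kkk$-subalgebra, $[A_t]_d$ is a $G$-submodule of $R_{td}$. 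By Cauchy's formula \eqref{cauchy} the $G$-module $R_{td}$ decomposes multiplicity-freely as $\bigoplus_{|\lambda|=td,\ \height(\lambda)\le m} M_\lambda$, where $M_\lambda=G*[c_\lambda|c_\lambda]\cong\Wl\otimes\Vl^*$ has the standard bitableaux of shape $\lambda$ as a $\kkk$-basis. Hence $[A_t]_d=\bigoplus_{\lambda\in S_d}M_\lambda$ for a uniquely determined set $S_d$ of partitions of $td$ of height $\le m$, and the theorem reduces to the identity $S_d=\{\lambda:|\lambda|=td,\ \lambda\text{ has at most }d\text{ parts}\}$, i.e. to the claim that $S_d$ is precisely the set of $d$-admissible shapes.

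For the inclusion $S_d\subseteq\{d\text{-admissible}\}$ I would straighten. A product $\delta_1\cdots\delta_d$ of $d$ $t$-minors is a bitableau with $d$ rows, each of size $t$; applying the straightening law of \cite{DEP1} (see also \cite[Section 11]{BrVe}) expresses it as a $\kkk$-combination of standard bitableaux, and each straightening step rewrites a product of two minors as a combination of products of two minors of the same total size, so along the way the number of factors never increases and their sizes stay positive. Therefore every standard bitableau $[\Lambda|\Gamma]$ that occurs has a shape $\mu$ with $|\mu|=td$ and at most $d$ parts, i.e. $\mu$ is $d$-admissible. As these standard bitableaux span $[A_t]_d$, this gives $S_d\subseteq\{d\text{-admissible}\}$.

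For the reverse inclusion I would invoke the genuinely nontrivial input of \cite{DEP1}: the span of the products of $d$ $t$-minors is straightening-closed, equivalently the standard bitableaux of $d$-admissible shape and height $\le m$ form a $\kkk$-basis of $(I_t^d)_{td}=[A_t]_d$. This is read off from the standard-basis description of the ordinary power $I_t^d$ in \cite{DEP1}, together with the linear independence of standard bitableaux in $R$: every standard bitableau of $d$-admissible shape lies in $I_t^d$ and has $R$-degree $td$, hence lies in $[A_t]_d$; in particular $[c_\lambda|c_\lambda]\in[A_t]_d$ for each $d$-admissible $\lambda$, and the $G$-stability of $[A_t]_d$ together with the irreducibility of $M_\lambda$ forces $M_\lambda\subseteq[A_t]_d$, so $\lambda\in S_d$. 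Combining the two inclusions yields $[A_t]_d=\bigoplus M_\lambda$ over the $d$-admissible $\lambda$ of height $\le m$, and substituting $M_\lambda\cong\Wl\otimes\Vl^*$ gives the representation-theoretic form.

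The main obstacle is the inclusion $S_d\supseteq\{d\text{-admissible}\}$: one must know that every $d$-admissible shape is actually produced by products of $d$ equal-size $t$-minors, not merely by products of minors of mixed sizes, and this rests on the straightening-closedness established in \cite{DEP1}. (Alternatively one could argue by induction on $d$, using $[A_t]_1\cdot[A_t]_{d-1}\subseteq[A_t]_d$ and a Pieri-type analysis of $M_{(t)}\otimes M_\mu$ to see that the $(d-1)$-admissible shapes already generate all $d$-admissible ones under multiplication by a $t$-minor.) The remaining ingredients — the identification $[A_t]_d=(I_t^d)_{td}$, its $G$-submodule structure, Cauchy's formula, and the bookkeeping on the number of rows under straightening — are routine.
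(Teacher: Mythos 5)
Your argument is correct and rests on the same key input as the paper: the paper's proof of Theorem \ref{decat} consists precisely in citing the De Concini--Eisenbud--Procesi description of powers of determinantal ideals \cite{DEP1}, which is exactly the nontrivial ingredient you invoke for the inclusion $S_d\supseteq\{d\text{-admissible}\}$. The surrounding reductions you supply --- identifying $[A_t]_d$ with $(I_t^d)_{td}$, the multiplicity-free decomposition via the Cauchy formula \eqref{cauchy}, and the straightening bookkeeping giving $S_d\subseteq\{d\text{-admissible}\}$ --- are the routine details the paper leaves implicit, so your proposal is essentially an expanded version of the paper's argument.
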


\index{representation|)}
\index{representation!theory|)}

\chapter{Combinatorial commutative algebra}\label{appendixd}

Below we recall some basic facts concerning Stanley-Reisner rings. Standard references for this topic are Bruns and Herzog \cite[Chapter 5]{BH}, Stanley \cite{St} or Miller and Sturmfels \cite{MS}

 
Let $\kkk$ be a field, $n$ a positive integer and $S:=\kkk[x_1,\ldots ,x_n]$ the polynomial ring on $n$ variables over $\kkk$. Moreover, let us denote by $\mm:=(x_1,\ldots ,x_n)$ the maximal irrelevant ideal of $S$. We write $[n]$ for $\{1,\ldots ,n\}$. By a {\it simplicial complex}\index{simplicial complex} $\D$ on $[n]$ we mean a collection of subsets of $[n]$ such that for any $F \in \D$, if $G\subseteq F$ then $G\in \D$. An element $F\in \D$ is called a {\it face}\index{simplicial complex!faces of} of $\D$. The dimension of a face $F$ is $\dim F := |F|-1$ and the dimension of $\D$ is $\dim \D := \max\{\dim F : F\in \D\}$. The faces of $\D$ which are maximal under inclusion are called {\it facets}\index{simplicial complex!facets of}. We denote the set of the facets of $\D$ by $\FD$. Obviously a simplicial complex on $[n]$ is univocally determined by its set of facets. A simplicial complex $\D$ is {\it pure}\index{simplicial complex!pure} if all its facets have the same dimension. It is {\it strongly connected} if \index{simplicial complex!strongly connected} for any two facets $F$ and $G$ there exists a sequence $F=F_0,F_1,\ldots ,F_s=G$ such that $F_i\in \FD$ and $|F_i\setminus (F_i\cap F_{i-1})|=|F_{i-1}\setminus (F_i\cap F_{i-1})|=1$ for any $i=1,\ldots ,s$. For a simplicial complex $\D$ we can consider a square-free monomial ideal, known as the {\it Stanley-Reisner ideal}\index{Stanley-Reisner ideal} of $\D$,
\begin{equation}
\Id := (x_{i_1}\cdots x_{i_s} : \{i_1,\ldots ,i_s\}\notin \D).
\end{equation}
Such a correspondence turns out to be one-to-one between simplicial complexes and square-free monomial ideals\index{monomial ideal}\index{monomial ideal!square-free}, its inverse being
\[I \mapsto \D(I):=\{F\in [n] \ : \ \prod_{i\notin F}x_i\in I\}\]
for any square-free monomial ideal $I\subseteq S$.
The $\kkk$-algebra $\kkk[\D]:=S/\Id$ is called the {\it Stanley-Reisner ring}\index{Stanley-Reisner ring} of $\D$, and it turns out that 
\[ \dim (\kkk[\D]) = \dim \D + 1.\] 
More precisely, with the convention of denoting by $\wp_A:=(x_i:i\in A)$ the prime ideal of $S$ generated by the variables correspondent  to a given subset $A\subseteq [n]$, we have
\[\Id = \bigcap_{F \in \FD} \wp_{[n]\setminus F}. \]
\begin{remark}\label{sccd1}
Thanks to the above interpretation and to Lemma \ref{hilbert}, we have that a simplicial complex $\D$ on $[n]$ is strongly connected if and only if $\kkk[\D]$ is connected in codimension $1$.
\end{remark}
We will use another correspondence between simplicial complexes and square-free monomial ideals, namely
\begin{equation}\label{coveridealdef}
\D \mapsto J(\D):=\bigcap_{F\in \FD} \wp_F.
\end{equation}
The ideal $J(\D)$ is called the {\it cover ideal}\index{cover ideal} of $\D$. The name ``cover ideal" comes from the following fact: A subset $A\subseteq [n]$ is called a {\it vertex cover}\index{simplicial complex!vertex cover} of $\D$ if $A\cap F \neq \emptyset$ for any $F\in \FD$. Then it is easy to see that
\[\JD = (x_{i_1}\cdots x_{i_s} : \{i_1,\ldots ,i_s\}\mbox{ is a vertex cover of }\D).\]
Let $\Delta^c$ be the simplicial complex on $[n]$ whose facets are $[n]\setminus F$ such that $F\in \FD$. Clearly we have $I_{\D^c}=\JD$ and $I_{\D}=J(\D^c)$. Furthermore $(\D^c)^c=\D$, therefore also the correspondence \eqref{coveridealdef} is one-to-one between simplicial complexes and square-free monomial ideals.
\index{vertex cover|see{simplicial complex}}\index{k-cover@$k$-cover|see{simplicial complex}}

\section{Symbolic powers}\label{symbolicpowers}

If $R$ is a ring, and $\aa\subseteq R$ is an ideal, then the $m$th {\it symbolic power}\index{symbolic power} of $\aa$ is the ideal of $R$
\[\aa^{(m)}:=(\aa^m R_W)\cap R \subseteq R,\]
where the multiplicative system $W$ is the complement in $R$ of the union of the associated prime ideals of $\aa$. If $\aa =\wp$ is a prime ideal, then $\wp^{(m)}=(\wp^mR_{\wp})\cap R$. One can show that $\wp^{(m)}$ is the $\wp$-primary component of $\wp^m$, so that $\wp^{(m)}=\wp^m$ if and only if $\wp^m$ is primary. Furthermore, one can show the following:

\begin{prop}\label{decomposition of symbolic powers}
Let $\aa$ be an ideal of $R$ with no embedded primes, that is $\Ass(\aa)=\Min(\aa)$. If $\aa=\bigcap_{i=1}^k \qq_i$ is a minimal primary decomposition of $\aa$, then
\[\aa^{(m)}=\bigcap_{i=1}^k \qq_i^{(m)}.\]
\end{prop}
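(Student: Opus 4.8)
\textbf{Proof plan for Proposition~\ref{decomposition of symbolic powers}.}

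The plan is to reduce the statement about symbolic powers of $\aa$ to the already-established fact that $\wp^{(m)}$ is the $\wp$-primary component of $\wp^m$ when $\wp$ is prime, together with the analogous statement for a general primary ideal $\qq$ with $\sqrt{\qq}=\wp$: namely that $\qq^{(m)}$ is the $\wp$-primary component of $\qq^m$. The key observation is that since $\aa$ has no embedded primes, the set $W$ used to define $\aa^{(m)}$ is exactly $R\setminus \bigcup_{i=1}^k \wp_i$ where $\wp_i=\sqrt{\qq_i}=\Min(\aa)$, and localization at $W$ interacts well with the finite intersection $\aa=\bigcap_i \qq_i$.

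First I would fix notation: write $\wp_i := \sqrt{\qq_i}$ for $i=1,\dots,k$, so that $\Ass(\aa)=\Min(\aa)=\{\wp_1,\dots,\wp_k\}$ by hypothesis, and $W=R\setminus\bigcup_i \wp_i$. Since localization commutes with finite intersections, $\aa^m R_W = \bigl(\bigcap_i \qq_i\bigr)^m R_W$; here one first notes $\bigl(\bigcap_i\qq_i\bigr)^m$ and $\bigcap_i \qq_i^m$ have the same radical and, more to the point, the same primary components at each $\wp_i$, so it suffices to track $\qq_i^m R_W$ for each $i$. The next step is the local analysis: for a fixed $j$, localize further at $\wp_j$. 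Because the $\wp_i$ are the minimal primes of $\aa$ (no containments among them), for $i\neq j$ the ideal $\qq_i$ meets $W':=R\setminus\wp_j$, hence $\qq_i^m R_{\wp_j}=R_{\wp_j}$, so that $\aa^m R_{\wp_j}=\qq_j^m R_{\wp_j}$. Contracting back, $\qq_j^{(m)}=\qq_j^m R_{\wp_j}\cap R = \aa^m R_{\wp_j}\cap R$.

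Then I would assemble the pieces. On one hand, $\aa^{(m)}=\aa^m R_W\cap R$; on the other, an element $x\in R$ lies in $\aa^m R_W\cap R$ if and only if it lies in $\aa^m R_{\wp_j}\cap R$ for every $j$ (this is the standard fact that $R_W = \bigcap_j R_{\wp_j}$ inside the total ring of fractions, valid since the $\wp_j$ are precisely the maximal primes avoided by $W$, using that $\Ass(\aa)$ is a finite set with no inclusions). Combining with the previous paragraph, $\aa^{(m)}=\bigcap_j\bigl(\aa^m R_{\wp_j}\cap R\bigr)=\bigcap_j \qq_j^{(m)}$, which is the claim. The main obstacle, and the point to be careful about, is justifying the equality $R_W=\bigcap_j R_{\wp_j}$ and the compatibility of contraction with this intersection in the non-domain setting; this is where the hypothesis that $\aa$ has no embedded primes is essential, since otherwise $W$ would involve the embedded primes and the localizations $R_{\wp_j}$ at the minimal primes alone would not recover $R_W$. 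Once that is in place the rest is routine localization bookkeeping.
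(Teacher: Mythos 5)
The paper itself states this proposition without proof (it is invoked as a standard fact in the appendix), so you can only be measured against the standard argument. Your overall route is the right one: localize at each minimal prime $\wp_j=\sqrt{\qq_j}$, note that $\qq_iR_{\wp_j}=R_{\wp_j}$ for $i\neq j$ because $\wp_i\nsubseteq\wp_j$, deduce $\aa^m R_{\wp_j}=\qq_j^m R_{\wp_j}$, and identify $\qq_j^{(m)}=\qq_j^m R_{\wp_j}\cap R$ from the definition; all of that is fine. The gap is precisely the step you yourself flag as "the main obstacle": you justify $\aa^m R_W\cap R=\bigcap_j\bigl(\aa^m R_{\wp_j}\cap R\bigr)$ by appealing to "$R_W=\bigcap_j R_{\wp_j}$ inside the total ring of fractions." In a general Noetherian ring this is not even well posed: $W$ and the sets $R\setminus\wp_j$ may contain zerodivisors, so the maps $R\to R_W$ and $R\to R_{\wp_j}$ need not be injective and these localizations are not subrings of the total quotient ring. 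So, as written, the one nontrivial equality of your proof is left unjustified.

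The fix is cheap and bypasses total fractions entirely: for any ideal $I\subseteq R$ and $x\in R$, one has $x\in IR_W\cap R$ if and only if $(I:_R x)$ meets $W$, that is $(I:_R x)\nsubseteq\bigcup_j\wp_j$; by prime avoidance this holds if and only if $(I:_R x)\nsubseteq\wp_j$ for every $j$, that is if and only if $x\in IR_{\wp_j}\cap R$ for every $j$. Applying this with $I=\aa^m$, and using that $\Ass(\aa)=\{\wp_1,\ldots,\wp_k\}$ exactly because $\aa$ has no embedded primes (so that $W=R\setminus\bigcup_j\wp_j$), gives $\aa^{(m)}=\bigcap_j\bigl(\aa^m R_{\wp_j}\cap R\bigr)=\bigcap_j\bigl(\qq_j^m R_{\wp_j}\cap R\bigr)=\bigcap_j\qq_j^{(m)}$. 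Incidentally, the aside in your second paragraph about $(\bigcap_i\qq_i)^m$ versus $\bigcap_i\qq_i^m$ having the same primary components is never used and should be dropped; the equalities $\aa R_{\wp_j}=\qq_jR_{\wp_j}$ and hence $\aa^m R_{\wp_j}=\qq_j^m R_{\wp_j}$ are all you need.
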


In the case in which $\aa$ is a power of a prime monomial ideal of the polynomial ring $S$, i.e. there exists $F\subseteq [n]$ and $k\in \NN$ such that $\aa=\wp_F^k$, then it is easy to show that
\[\aa^m=\aa^{(m)} \ \ \forall \ m\in \NN.\]
Therefore if $\D$ is a simplicial complex on $[n]$ and $I=\bigcap_{F\in \FD}\wp_F^{\omega_F}$,
where $\omega_F$ are some positive integers, then Proposition \ref{decomposition of symbolic powers} yields
\begin{equation}\label{strucmonsymb}
I^{(m)}=\bigcap_{F\in \F(\D)}\wp^{m\omega_F}.
\end{equation}
In particular 
\[\Id^{(m)} = \bigcap_{F \in \FD} \wp_{[n]\setminus F}^m \ \ \ \mbox{and} \ \ \ \JD^{(m)} = \bigcap_{F \in \FD} \wp_{F}^m. \]

\section{Matroids}\label{appdmatroids}

A simplicial complex $\D$ on $[n]$ is a {\it matroid}\index{matroid} if, for any two facets $F$ and $G$ of $\D$ and any $i\in F$, there exists a $j\in G$ such that $(F\setminus \{i\})\cup \{j\}$ is a facet of $\D$. 

\begin{example}
The following is the most classical example of matroid. Let $V$ be a $\kkk$-vector space and let $A := \{v_1,\ldots ,v_n\}$ a set of distinct vectors of $V$. We define a simplicial complex on $[n]$ as follows: A subset $F\subseteq [n]$ is a face of $\D$ if and only if $\dim_{\kkk}(<v_i \ : \ i\in F>) = |F|$. This way the facets of $\D$ are the subsets of $[n]$ corresponding to the bases in $A$ of the $\kkk$-vector subspace $<v_1,\ldots ,v_m>\subseteq V$. Actually, the concept of matroid was born as an ``abstraction of the bases of a $\kkk$-vector space". 
\end{example}

A matroid $\D$ has very good properties. For an exhaustive account the reader can see the book of Welsh \cite{Wel} or the one of Oxley \cite{Ox}. We use matroids in Chapter \ref{chapter4} and, essentially, we need three results about them. The first one, the easier to show, is that a matroid is a pure simplicial complex (\cite[Lemma 1.2.1]{Ox}). The second one, known as the {\it exchange property} of matroids\index{matroid!exchange property of}, states that for any matroid $\D$, we have
\begin{equation}\label{exchangeproperty}
\forall \ F, G \in \FD, \ \forall \ i \in F, \ \ \exists \ j\in G \ : \ (F\setminus \{i\})\cup \{j\}, \ (G\setminus \{j\})\cup \{i\} \ \in \FD,
\end{equation}
(this result is of Brualdi \cite{Br}, see also \cite[p. 22, Exercise 11]{Ox}). The last fact is a basic result of matroid theory, which is a kind of duality. For the proof see \cite[Theorem 2.1.1]{Ox}.
\begin{thm}\label{matroidduality}
A simplicial complex $\D$ on $[n]$ is a matroid if and only if $\D^c$ is a matroid.
\end{thm}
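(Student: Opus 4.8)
The plan is to reduce the biconditional to a single implication and then exploit the symmetric exchange property of matroids. First I would observe that complementing facets is an involution: the facets of $\Dc$ are the sets $[n]\setminus F$ with $F$ a facet of $\D$, so $(\Dc)^c=\D$ (this is already noted in \ref{appdmatroids}). Hence it suffices to prove that if $\D$ is a matroid then $\Dc$ is a matroid; applying this statement to $\Dc$ in place of $\D$ then yields the converse. I would also record at the outset the fact that a matroid is pure, so that when $\D$ is a matroid all its facets share a common cardinality $d$, and every facet of $\Dc$ consequently has cardinality $n-d$; in particular $\Dc$ is a genuine simplicial complex with the prescribed facet set.

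Assuming $\D$ is a matroid, I would verify the exchange axiom for $\Dc$ directly. Take facets $F^c=[n]\setminus F$ and $G^c=[n]\setminus G$ of $\Dc$ (so $F,G\in\F(\D)$) and an element $i\in F^c$, i.e. $i\notin F$; the goal is to produce $j\in G^c$ with $(F^c\setminus\{i\})\cup\{j\}\in\F(\Dc)$, which upon taking complements amounts to finding $j\notin G$ with $(F\cup\{i\})\setminus\{j\}\in\F(\D)$. If $i\notin G$, I would simply take $j=i$: then $i\in G^c$ and $(F^c\setminus\{i\})\cup\{i\}=F^c$ is already a facet of $\Dc$. If instead $i\in G$ (so $i\in G\setminus F$), I would apply the symmetric exchange property \eqref{exchangeproperty} to the facets $G$ and $F$ and the element $i\in G$, obtaining $j\in F$ with both $(G\setminus\{i\})\cup\{j\}\in\F(\D)$ and $(F\setminus\{j\})\cup\{i\}\in\F(\D)$. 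Purity of $\D$ forces the first of these sets to have $d$ elements, whereas $G\setminus\{i\}$ has $d-1$; since $j\in F$ and $i\notin F$ give $j\neq i$, this forces $j\notin G$, i.e. $j\in G^c$. The second membership, rewritten as $(F\cup\{i\})\setminus\{j\}\in\F(\D)$, says exactly that its complement $(F^c\setminus\{i\})\cup\{j\}$ is a facet of $\Dc$. This completes the verification of the exchange axiom for $\Dc$.

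The step I expect to be the crux is producing an exchanged element $j$ that actually lies in $G^c$ rather than merely in $F$: the plain basis-exchange axiom in the definition of a matroid only guarantees $j\in F$, which is of no use for $\Dc$, whereas Brualdi's symmetric exchange \eqref{exchangeproperty} — together with purity of $\D$ to rule out $j\in G$ — delivers precisely what is needed. Everything else is bookkeeping with complements in $[n]$, and, as explained above, the reverse implication requires no new argument beyond the identity $(\Dc)^c=\D$.
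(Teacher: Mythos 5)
Your argument is correct, but it is not the paper's: the paper does not prove Theorem \ref{matroidduality} at all, it simply refers the reader to \cite[Theorem 2.1.1]{Ox}. Your reduction to one implication via $(\Dc)^c=\D$, the complementation bookkeeping (the complement of $(F^c\setminus\{i\})\cup\{j\}$ is $(F\cup\{i\})\setminus\{j\}=(F\setminus\{j\})\cup\{i\}$ once $j\in F$, $j\neq i$, $i\notin F$), the trivial case $i\notin G$ with $j=i$, and the use of purity to force $j\notin G$ in the case $i\in G$ are all sound, and you apply \eqref{exchangeproperty} with the roles of the two facets chosen correctly. The real difference between the two routes is what they take as input: you lean on Brualdi's symmetric exchange theorem \cite{Br}, which is itself a nontrivial strengthening of the basis-exchange axiom (the paper quotes it without proof as one of its three matroid facts), and in exchange you get a two-line verification of the exchange axiom for $\Dc$; Oxley's proof of \cite[Theorem 2.1.1]{Ox} works directly from the ordinary exchange/augmentation axioms and is therefore more self-contained but longer. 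Within the logic of this thesis, where \eqref{exchangeproperty} is already assumed, your derivation is a perfectly legitimate and arguably more transparent substitute for the citation; just be aware that it does not make the appendix independent of Brualdi's theorem, whereas the standard proof of duality is.
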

If $\D$ is a matroid, $\D^c$ is called its {\it dual matroid}\index{matroid!dual}.

\section{Polarization and distractions}\index{polarization}\label{polarization}

Sometimes, problems regarding (not necessarily square-free) monomial ideals $I\subseteq S$ can be faced passing to a square-free monomial ideal associated to $I$, namely its {\it polarization} $\widetilde{I}$, which preserves many invariants of $I$, such as its minimal free resolution.

More precisely, the polarization of a monomial $t=x_1^{a_1}\cdots x_n^{a_n}$ is
\[\widetilde{t}:=\prod_{j=1}^{a_1}x_{1,j}\cdot \prod_{j=1}^{a_2}x_{2,j} \cdots \prod_{j=1}^{a_n}x_{n,j}\subseteq \Spol , \]
where the $x_{i,j}$'s are new variables over $\kkk$ and $\Spol := \kkk[x_{i,j}: i=1,\ldots ,n \ j=1,\ldots ,a_i]$. Actually it is not so important that $\Spol$ is generated by variables $x_{i,j}$ with $j\leq a_i$. The significant thing is that it contains all such variables, in such a way that $\widetilde{t}\in \Spol$. For example, it is often convenient to consider $\Spol=\kkk[x_{i,j}:i=1,\ldots ,n \ j=1,\ldots ,d]$ where $d$ is the degree of $t$.  

The concept of polarization fits in a more general context, that of {\it distractions}\index{distractions} (see Bigatti, Conca and Robbiano \cite{bcr}), which is useful to introduce. Let $P:=\kkk[y_1,\ldots ,y_N]$ be a polynomial ring in $N$ variables over $\kkk$. An infinite matrix $\L=(L_{i,j})_{i\in [N],j\in \NN}$ with entries $L_{i,j}\in P_1$ is called a {\it distraction matrix} if $<L_{1,j_1},\ldots ,L_{N,j_N}>=P_1$ for all $j_i\in \NN$ and there exists $k\in \NN$ such that $L_{i,j}=L_{i,k}$ for any $j>k$. If $t=y_1^{a_1}\cdots y_N^{a_N}$ is a monomial of $P$, the {\it $\L$-distraction} of $t$ is the monomial
\[D_{\L}(t):=\prod_{j=1}^{a_1}L_{1,j}\cdot \prod_{j=1}^{a_2}L_{2,j} \cdots \prod_{j=1}^{a_N}L_{N,j}\subseteq P.\]

\begin{remark}\label{polsubdis}
In this remark we want to let the reader noticing how the polarization is a particular distraction. Let $t=x_1^{a_1}\cdots x_n^{a_n}$ be a monomial of $S$ and $\widetilde{t}$ is polarization in the polynomial ring $\Spol =\kkk[x_{i,j}:i\in [n], \ j\in[d]]$, where $d$ is the degree of $t$. Let us think at $\Spol$ as the polynomial ring $P$ given for the definition of distractions. So $N=nd$. Moreover, we can look at $t$ as an element of $P$, namely
\[t=x_{1,1}^{a_1}\cdots x_{n,1}^{a_n}.\] 
Let us consider the following matrix $\L$:
\[L_{i,j}:= \begin{cases} x_{i,j}  & \mbox{if \ } i\leq n \mbox{ and }j\leq d \\
x_{r,q}+x_{r,q+1} & \mbox{if \ } n<i\leq N, \ j\leq d \mbox{ \ and \ } i=qn+r \mbox{ with $0< r\leq n$} \\
L_{i,d} & \mbox{if \ } j>d
\end{cases}\]
One can easily check that $\L$ is a distraction matrix, and that
\[D_{\L}(t)=\widetilde{t}.\]
\end{remark}

By Remark \ref{polsubdis}, we can state the results we need during the thesis in the more general context of distractions, even if we will actually use them just for the case of the polarization. 

Fixed a distraction matrix $\L$, we can extend $\kkk$-linearly $D_{\L}$, getting a $\kkk$-linear map:
\[D_{\L}:P\longrightarrow P.\]
Of course $D_{\L}$ is not a ring homomorphism, however we have that $D_{\L}(I)$ is an ideal of $P$ for any monomial ideal $I\subseteq P$ (\cite[Corollary 2.10 (a)]{bcr}). Moreover, if $I=(t_1,\ldots ,t_m)$, then $D_{\L}(I)=(D_{\L}(t_1),\ldots ,D_{\L}(t_m))$ (\cite[Corollary 2.10 (b)]{bcr}) and $\height(D_{\L}(I))=\height(I)$  (\cite[Corollary 2.10 (c)]{bcr}). Furthermore, \cite[Proposition 2.9 (d)]{bcr} implies that, if $I=\cap_{i=1}^p I_i$, where the $I_i$'s are monomial ideals of $P$, then:
\begin{equation}\label{intersectionpol}
D_{\L}(I)=\bigcap_{i=1}^pD_{\L}(I_i).
\end{equation}

\begin{remark}\label{remarkintpol}
If $I=(t_1,\ldots ,t_m)\subseteq S$, the polarization of $I$ is defined to be:
\[\widetilde{I}=(\widetilde{t_1},\ldots ,\widetilde{t_m})\subseteq \Spol = \kkk[x_{i,j}:i\in [n], \ j\in [d]],\] 
where $d$ is the maximum of the degrees of the $t_i$'s. By the discussion previous to the remark we deduce that $\widetilde{I}$ and $D_{\L}(I)$ are basically the same object. The equality \eqref{intersectionpol} can be interpreted as
\begin{equation}\label{intersectionpol1}
\widetilde{I}=\bigcap_{i=1}^p\widetilde{I_i}\subseteq \Spol.
\end{equation}
where all the ideal involved are polarized in the same polynomial ring $\Spol$.
\end{remark}

\begin{remark}\label{dimunderpol}
As it already got out by Remark \ref{remarkintpol}, among the things which distinguish polarization and distractions, is that the ambient ring, contrary to what happens for the former, does not change under the latter operation. Of course, this is just a superficial problem, since we can add variables before polarizing, as we did in Remark \ref{polsubdis}. However the reader should play attention to this fact: If $J$ is a monomial ideal of $P$ and $\L$ is a distraction matrix, then we have the equality $\Hf_{P/J}=\Hf_{P/D_{\L}(J)}$ (\cite[Corollary 2.10 (c)]{bcr}). Of course, it is not true the same fact for polarization: Namely, in general, if $I$ is a monomial ideal of $S$, it might happen that $\Hf_{S/I}\neq \Hf_{\Spol/\widetilde{I}}$. Actually, even $\dim S/I$ is in general different from $\dim \Spol/\widetilde{I}$. However, in view of Remark \ref{polsubdis}, one should expect that the properties of distractions hold true also for polarization. This is actually the case, but the right way to think is ``for codimension": In fact, we have $\height(I)=\height(\widetilde{I})$; moreover, suppose that the Hilbert series\index{Hilbert series} of $S/I$ is $\Hs_{S/I}(z)=h(z)/(1-z)^d$, where $h(z)\in \ZZ[z]$ is such that $h(1)\neq 0$ and $d$ is the dimension of $S/I$ (for instance see the book of Bruns and Herzog \cite[Corollary 4.1.8]{BH}). Then, $\Hs_{\Spol/\widetilde{I}(z)}=h(z)/(1-z)^e$, where $e=\dim(\Spol/\widetilde{I})$.
\end{remark}

In \cite[Theorem 2.19]{bcr}, the authors showed that the minimal free resolution of a monomial ideal $I\subseteq P$ can be carried to a minimal free resolution of $D_{\L}(I)$. In particular, we get the following:

\begin{thm}\label{cmpol}
Given a distraction matrix $\L$, the graded Betti numbers of $P/I$ and those of $P/D_\L(I)$ are the same. Particularly, $P/I$ is Cohen-Macaulay if and only if $P/D_{\L}(I)$  is Cohen-Macaulay.
\end{thm}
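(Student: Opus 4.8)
The plan is to establish the sharper fact that $P/I$ and $P/D_{\L}(I)$ have equal graded Betti numbers over $P$, i.e. $\beta_{k,l}^{P}(P/I)=\beta_{k,l}^{P}(P/D_{\L}(I))$ for all $k,l$, and then to deduce the Cohen--Macaulay equivalence from this together with the equality of Hilbert functions $\Hf_{P/I}=\Hf_{P/D_{\L}(I)}$ recorded in \cite[Corollary 2.10 (c)]{bcr}. The substance of the first point is to transport a minimal graded free resolution of $P/I$ to one of $P/D_{\L}(I)$ with the same graded free modules, which is precisely \cite[Theorem 2.19]{bcr}; I would invoke it, and below I outline how one arrives at it.

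First I would normalize the situation. A distraction matrix stabilizes after finitely many columns, so $D_{\L}(I)$ involves only finitely many of the forms $L_{i,j}$, and the hypothesis $\langle L_{1,j_1},\dots,L_{N,j_N}\rangle=P_1$ says that each column is a linear system of coordinates on $P$. Composing with the linear automorphism $g$ of $P$ given by the stable column (which changes nothing, being an isomorphism $P/I\cong P/g(I)$) we may assume that $\L$ stabilizes to the trivial column $(y_1,\dots,y_N)$ and differs from it in only finitely many entries. One then constructs a homogeneous one--parameter flat family: a $\ZZ$--graded ideal $\mathcal{I}\subseteq P[t]$ (with the $P$--grading kept and $\deg t=0$), flat over $\kkk[t]$, whose fibre over $t=1$ is a coordinate change of $I$ and whose fibre over $t=0$ is $D_{\L}(I)$. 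Flatness makes $t$ and $t-1$ nonzerodivisors on $P[t]/\mathcal{I}$, so a \emph{minimal} $\ZZ$--graded $P[t]$--free resolution, tensored with $\kkk[t]/(t-1)$ and with $\kkk[t]/(t)$, yields resolutions of $P/I$ and of $P/D_{\L}(I)$; since the specializations of a minimal graded resolution of the family stay minimal, they have the same graded free modules, giving the claimed equality of Betti numbers. For the only instance actually needed later, the polarization $\widetilde{I}\subseteq\Spol$, one can shortcut all of this: the linear forms $x_{i,j}-x_{i,j-1}$ form a regular sequence modulo $\widetilde{I}$ whose quotient recovers $S/I$ up to a polynomial extension, and passing to the quotient by a regular sequence of linear forms leaves graded Betti numbers unchanged.

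Granting $\beta_{k,l}^{P}(P/I)=\beta_{k,l}^{P}(P/D_{\L}(I))$, the last assertion follows at once. Indeed $\pd_{P}(P/I)=\max\{k:\beta_{k,l}^{P}(P/I)\neq 0\text{ for some }l\}$ is then the same for both, so the Auslander--Buchsbaum formula gives $\depth(P/I)=N-\pd_{P}(P/I)=\depth(P/D_{\L}(I))$; and $\dim(P/I)=\dim(P/D_{\L}(I))$ because Krull dimension is determined by the Hilbert function, which is preserved by \cite[Corollary 2.10 (c)]{bcr}. Hence $\depth(P/I)=\dim(P/I)$ if and only if $\depth(P/D_{\L}(I))=\dim(P/D_{\L}(I))$, i.e. $P/I$ is Cohen--Macaulay if and only if $P/D_{\L}(I)$ is. The real obstacle is the transport-of-resolutions step: one must check that the specialized complex remains acyclic and minimal at $t=0$, and it is here that the combinatorics of the distraction operation genuinely enters — this is the technical core of \cite[Theorem 2.19]{bcr}.
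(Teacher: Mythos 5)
Your proposal is correct and follows essentially the same route as the paper: the paper states this theorem as an immediate consequence of \cite[Theorem 2.19]{bcr} (transport of the minimal graded free resolution under distraction), which is exactly the citation you lean on. Your explicit deduction of the Cohen--Macaulay equivalence via Auslander--Buchsbaum together with the preservation of the Hilbert function (hence of dimension) from \cite[Corollary 2.10 (c)]{bcr} just spells out what the paper leaves implicit, and your polarization shortcut via the regular sequence of linear forms $x_{i,j}-x_{i,j-1}$ is a standard, valid alternative for the only case actually used later.
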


%

\printindex

\end{document}